\documentclass[12pt,english]{amsart}
\usepackage{amssymb,amsmath,amscd,graphicx,fontenc,amsthm,mathrsfs}
\usepackage[pdftex,bookmarks,colorlinks,breaklinks]{hyperref} 
\hypersetup{linkcolor=blue,citecolor=red}
\usepackage{multicol}
\usepackage{enumitem}
\usepackage[numbers]{natbib}
\usepackage{tikz}
\usetikzlibrary{matrix,cd}
\setcounter{tocdepth}{3}
\renewcommand{\H}{\mathscr{H}}

\newtheorem{theorem}{Theorem}
\newtheorem{lem}[theorem]{Lemma}
\newtheorem{proposition}[theorem]{Proposition}
\newtheorem{claimm}[theorem]{Claim}
\newtheorem{corollary}[theorem]{Corollary}

\newtheorem*{sublemma}{Sublemma}
\newtheorem*{claim}{Claim}

 \newtheorem{thmA}{Theorem}

\theoremstyle{definition}

\newtheorem{definition}{Definition}

 
\newcommand{\ef}{\mathrm{e}}

\newcommand{\ZZ}{\mathbb{Z}}   
\newcommand{\CC}{\mathbb{C}}  
\newcommand{\RR}{\mathbb{R}} 

 \renewcommand{\d}[1]{\ \mathrm{d}#1}
\def\ncal{\mathcal{N}}

\newcommand{\G}{\mathbb{G}}  

\newcommand{\B}{\mathscr{B}}   
 


\newcommand{\T}{\mathscr{T}} 

\newcommand{\cpl}{\mathscr{C}} 


\DeclareMathAlphabet{\mathbbold}{U}{bbold}{m}{n}
\def\bb1{\mathbbold{1}}
\def\bbz{\mathbb{Z}}
\def\bbq{\mathbb{Q}}

\def\bbr{\mathbb{R}}

\def\bbc{\mathbb{C}}

\def\bbe{\mathbb{E}}

\def\bbn{\mathbb{N}}
\def\bbg{\mathbb{G}}

\def\bbp{\mathbb{P}}


\def\fcal{\mathcal{F}}
\def\ucal{\mathcal{U}}

\def\ocal{\mathcal{O}}
\def\ocal{\mathcal{O}}

\def\cal{\mathcal{H}}
\def\lcal{\mathcal{L}}
\def\pcal{\mathcal{P}}


\def\gl{\mathfrak{gl}}
\def\gfr{\mathfrak{g}}

\def\abf{\mathbf{a}}
\def\ybf{\mathbf{y}}
\def\xbf{\mathbf{x}}
\def\ibf{\mathbf{i}}

\def\wbf{\mathbf{w}}
\def\vbf{\mathbf{v}}
\def\ebf{\mathbf{e}}
\def\mbf{\mathbf{m}}
\def\nbf{\mathbf{n}}
\def\acts{\curvearrowright}

\DeclareMathOperator\SL{SL}
\DeclareMathOperator\GL{GL}
\DeclareMathOperator\PGL{PGL}
\DeclareMathOperator\Ad{Ad}
\DeclareMathOperator\ad{ad}
\DeclareMathOperator\Lie{Lie}

\DeclareMathOperator\im{Im}
\DeclareMathOperator\M{M}

\DeclareMathOperator\diag{diag}

\DeclareMathOperator\End{End}
\DeclareMathOperator\pr{pr}
\DeclareMathOperator\tr{Tr}

\DeclareMathOperator\op{op}
\DeclareMathOperator\SU{SU}

\DeclareMathOperator\dif{d\!}


\def\h{\hspace{1mm}}

\def\vare{\varepsilon}
\def\be{\begin{equation}}
\def\ee{\end{equation}}

\newcommand{\wt}[1]{\widetilde{#1}}
\newcommand{\wh}[1]{\widehat{#1}}


\newcounter{consta}

\newcounter{constb}[section]

\newcounter{constc}[section]
\renewcommand{\theconstc}{{{\mathsf{D}}_{\arabic{constc}}}}
\newcounter{constA}

\newcommand{\constc}{\refstepcounter{constc}\theconstc}


\makeatletter
\newcommand*\bigcdot{\mathpalette\bigcdot@{.5}}
\newcommand*\bigcdot@[2]{\mathbin{\vcenter{\hbox{\scalebox{#2}{$\m@th#1\bullet$}}}}}
\makeatother


\newcommand{\Dim}{\mathrm{DC}}
 \newcommand{\HS}{\textrm{HS}}
\newcommand{\cf}{\bb1}
\newcommand\dGi{d_{0i}}
\newcommand\diam{{\rm diam}}
\newcommand\crho{\rho}
\newcommand\rhonu{\nu^\crho}
\newcommand\rhonup{\nu}
\newcommand{\SO}{{\rm SO}}

  \newcommand\done{d_1}
 \newcommand\dtwo{d_2}
 \newcommand\cone{C_1}
 \newcommand\ctwo{C_2}
 
 \newcommand{\X}{\Bigl(2 C_{01}C_{02}C_{11}C_{12}   \Bigr)^{ O_{_{d_{01},d_{02}, L }} ( 1)}}

 
\usepackage[top=2cm,bottom=2cm,right=2cm,left=2cm]{geometry}
\title{Spectral independence}
\author[Alireza S Golsefidy]{Alireza S Golsefidy}
\address{Mathematics Department, University of California, San Diego, CA 92093-0112, USA}
\email{golsefidy@ucsd.edu}
\thanks{A.S.G.\ acknowledges support by the NSF grants 1602137, 1902090, 2302519.}
\author[Keivan Mallahi-Karai]{Keivan Mallahi-Karai}
\address{ Constructor University, Campus Ring I, 28759, Bremen, Germany}
\email{kmallahikarai@constructor.university}
\thanks{K.MK.\ acknowledges support by the DFG grant DFG MA 6503/1-1.}
\author[Amir Mohammadi]{Amir Mohammadi}
\address{Mathematics Department, University of California, San Diego, CA 92093-0112, USA }
\email{ammohammadi@ucsd.edu}
\thanks{A.M.\ acknowledges support by the NSF, grants 1764246 and 2055122.}

\begin{document}
\begin{abstract}
We prove the spectral gap property for random walks on the product of two non-locally isomorphic analytic real or $p$-adic compact groups with simple Lie algebras, under the necessary condition that the marginals posses a spectral gap. 
Furthermore, we give additional control on the spectral gap depending on certain specific properties of the given groups and marginals; in particular, we prove some new cases of the super-approximation conjecture. 

One ingredient of the proof is a local Ulam stability result which is introduced and proved in this paper. This result characterizes partially defined almost homomorphisms between two analytic compact groups with simple Lie algebras. 
\end{abstract}
\maketitle

\setcounter{tocdepth}{1}
\tableofcontents

\section{Introduction}\label{intro}

Let $G$ be a compact group, and let $\mu$ be a Borel symmetric probability measure on $G$. An $\ell$-step random walk with respect to $\mu$ is 
\[
X^{(\ell)}:=X_1\cdots X_\ell
\]
where  $X_1, X_2, \dots$ is a sequence of independent random variables with probability law $\mu$. If the group generated by the support of $\mu$ is dense in $G$, then for every continuous function $f$, 
\[
\lim_{\ell\to \infty} \bbe[f(X^{(\ell)})]=\int_G f(x)\h dm_G(x),
\]
where $m_G$ is the probability Haar measure of $G$. The rate of convergence is governed by the operator norm $\lambda(\mu)$ of the convolution operator 
\[
T_{\mu}:L_0^2(G,m_G)\to L_0^2(G,m_G),\qquad (T_{\mu}(f))(x):=\int_G f(g^{-1}x) d\mu(g),
\]
where $L^2_0(G,m_G)$ is the orthogonal complement of constant functions. We say $\mu$ has the \emph{spectral gap property} if $\lambda(\mu)<1$. Likewise, a symmetric random variable $X$ with values in $G$ is said to have the spectral gap property if its probability law has spectral gap.   

In this work, we will investigate the spectral gap property of a pair  $(X,Y)$ of random variables on $G_1\times G_2$ where $G_i$'s are compact groups. It is clear that if $(X,Y)$ has the spectral gap property, then both $X$ and $Y$ must have this property. Motivated by this, we will say $G_1$ and $G_2$ are {\em spectrally independent} if this necessary condition is also sufficient. The main result of this paper shows that $G_1$ and $G_2$ are spectrally independent for a wide class of compact groups. 

\begin{thmA}\label{thm1:intro}
For $i=1,2$, let $F_i$ be the field of real $\mathbb R$ or $p$-adic $\bbq_p$ numbers. 
Let $\mathbb G_i$ be an $F_i$-almost simple group, and let $G_i$ be an open compact subgroup of $\mathbb G_i(F_i)$. Assume that $G_1$ and $G_2$ are not locally isomorphic. Then $G_1$ and $G_2$ are spectrally independent.
\end{thmA}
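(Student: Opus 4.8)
The plan is to reduce the statement to a quantitative separation between the two random walks and then combine it with the spectral gap of each marginal. Suppose $(X,Y)$ is a symmetric probability measure $\mu$ on $G_1\times G_2$ whose support generates a dense subgroup, and whose marginals $\mu_1,\mu_2$ each have the spectral gap property. Write $T_\mu$ for the convolution operator on $L^2_0(G_1\times G_2)$. Decompose $L^2(G_1\times G_2)=L^2(G_1)\otimes L^2(G_2)$, so that $L^2_0$ splits as $\big(L^2_0(G_1)\otimes \mathbf 1\big)\oplus\big(\mathbf 1\otimes L^2_0(G_2)\big)\oplus\big(L^2_0(G_1)\otimes L^2_0(G_2)\big)$. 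On the first two summands $T_\mu$ acts (after a convolution power) essentially as $T_{\mu_1}$ and $T_{\mu_2}$, which contract by the marginal spectral gaps. The whole problem is therefore concentrated on the ``diagonal'' piece $L^2_0(G_1)\otimes L^2_0(G_2)$, and one must show $T_\mu$ has norm bounded away from $1$ there.

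First I would set up the Fourier/representation-theoretic picture on this diagonal piece: a vector there is a sum over pairs $(\rho,\sigma)$ of nontrivial irreducible representations of $G_1,G_2$, and $T_\mu$ does not couple distinct ``types'' of such pairs beyond what is forced by the joint law. The key mechanism for a gap is that a failure of spectral gap on the diagonal piece forces the existence, for large $\ell$, of pairs $(\rho,\sigma)$ for which $X^{(\ell)}$ restricted near the identity behaves, through $\rho$ on the first coordinate and $\sigma$ on the second, like a near-homomorphism between (neighborhoods in) $G_1$ and $G_2$. This is exactly the situation the paper's \emph{local Ulam stability} result is designed to handle: a partially defined almost homomorphism between two analytic compact groups with simple Lie algebras must, after correction, come from an actual local isomorphism of the Lie algebras. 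Since $G_1$ and $G_2$ are assumed not locally isomorphic, no such local isomorphism exists, which yields a contradiction — hence a uniform gap on the diagonal piece.

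Concretely, the steps in order are: (1) reduce, by taking a suitable convolution power of $\mu$ and using symmetry, to estimating $\|T_\mu\|$ on $L^2_0(G_1)\otimes L^2_0(G_2)$; (2) assuming for contradiction that $\|T_\mu^\ell\|\to 1$ on this space, extract a sequence of almost-invariant vectors and, via the structure of $L^2$ under the two commuting actions, produce from them a nontrivial partially defined almost homomorphism from a neighborhood of the identity in $G_1$ into $G_2$ (this is where one must quantify ``almost invariant'' into ``almost multiplicative on a positive-measure set,'' likely using the spectral gap of the marginals to control the supports and a pigeonhole/convolution argument à la the standard proof that spectral gap implies quasirandomness); (3) apply the local Ulam stability theorem to conclude this almost homomorphism is close to a genuine local homomorphism $\g_1\to\g_2$ of Lie algebras, which would have to be an isomorphism because both are simple — contradicting non-local-isomorphism; (4) conclude there is $\varepsilon>0$ with $\|T_\mu^\ell\|\le 1-\varepsilon$ on the diagonal piece, combine with the marginal gaps on the other two summands, and deduce $\lambda(\mu)<1$.

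The main obstacle is step (2): turning an analytic/functional-analytic statement (norm of an operator approaching $1$, i.e. existence of almost-invariant vectors) into the combinatorial/geometric input required by the Ulam stability theorem, namely an honest partially defined almost homomorphism on a set of definite size. This requires carefully tracking how the randomness spreads — presumably using that each marginal has a spectral gap to guarantee that after $O(\log)$ steps the walk equidistributes at a fixed scale on each factor, so that the joint walk's failure to equidistribute can only come from a genuine algebraic relation (an approximate graph of a homomorphism) between the two factors — and then extracting that relation uniformly in $\ell$. Getting the quantitative bookkeeping here, and making sure the ``almost homomorphism'' produced satisfies the precise hypotheses (domain size, defect size, relation between scales) of the local Ulam stability result, is the heart of the argument; the rest is assembling known reductions.
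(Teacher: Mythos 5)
Your three-piece decomposition of $L^2_0(G_1\times G_2)$ is legitimate: each of the summands $L^2_0(G_1)\otimes\mathbf{1}$, $\mathbf{1}\otimes L^2_0(G_2)$, and $L^2_0(G_1)\otimes L^2_0(G_2)$ is $T_\mu$-invariant, $T_\mu$ restricted to the first two is exactly the convolution by the corresponding marginal (no convolution power is even needed), and the whole problem does concentrate in the diagonal piece. This is not the paper's own reduction --- the paper instead works scale by scale via Littlewood--Paley theory for locally random groups and reduces to functions living at a definite scale $\eta$ --- but it is a correct first step, and you have also correctly identified the endgame: applying Theorem~\ref{thm:approximate-hom} to derive a contradiction from a partial approximate homomorphism when $G_1$ and $G_2$ are not locally isomorphic.

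The genuine gap is in your step~(2), and the mechanisms you name there (``pigeonhole/convolution, in the style of spectral gap implies quasirandomness'') are not the ones that carry it. First, an almost-invariant vector in $L^2_0(G_1)\otimes L^2_0(G_2)$ does not, by itself, produce an approximate subgroup of $G_1\times G_2$; the approximate subgroup in the paper comes from the \emph{law} $\mu^{(\ell)}$ smoothed at scale $\eta$, via the Bourgain--Gamburd $\ell^2$-flattening and non-commutative Balog--Szemer\'edi--Gowers machinery (Lemma~\ref{lem:coupling-BG}, from \cite{MMSG}): non-flattening of $\|\mu^{(\ell)}\ast P_\eta\|_2$ forces concentration near a coset of an $\eta^{-O(\gamma)}$-approximate subgroup $H$ with controlled metric entropy. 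This is hard additive combinatorics rather than pigeonhole, and even reaching that statement from ``failure of spectral gap'' already requires the multiscale reduction you omit. Second --- and this is the part your sketch has no trace of --- to invoke Theorem~\ref{thm:approximate-hom} you must certify that $H$ projects onto a full $\eta^{O(\gamma)}$-ball in \emph{each} factor and has a metrically tiny fiber over $1^{(1)}$; otherwise the induced map $\pr_1(H_\eta)\to G_2$ fails to be a $\rho$-partial $\rho^m$-approximate homomorphism with large image in the precise sense required by Definition~\ref{def: approxmiate local hom}, and the stability theorem says nothing. The paper obtains this shape from a discretization-and-transportation step (Theorem~\ref{thm: continuity prop of couplings}, resting on the Klee--Witzgall vertex theorem for the transportation polytope, Theorem~\ref{thm: KW Transport}): because the marginals of $\mu$ have spectral gap, after $O(\log(1/\eta))$ steps the marginals of $\mu^{(\ell)}$ are $\eta^{O(1)}$-close to Haar, and one may replace $\mu^{(\ell)}$ at scale $\eta$ by an honest coupling of $m_{G_1}$ and $m_{G_2}$; the entropy estimates of Section~\ref{s:contraction-couplings} then force the claimed shape of $H$. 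This coupling step is exactly where the hypothesis $\min(\mathcal{L}(X_1),\mathcal{L}(X_2))\geq c_0$ is used quantitatively, and without something playing its role your proposal cannot land in the regime where the Ulam stability theorem applies.
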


Note that if $G_1$ and $G_2$ are spectrally independent, then they should be {\em algebraically independent}, that means that they have no common nontrivial topological quotients.
Indeed, if $\phi_i:G_i\to H$ is a common nontrivial quotient, then $K=\{(g_1,g_2)\in G_1\times G_2: \phi_1(g_1)=\phi_2(g_2)\}$ carries a measure with marginals $m_{G_1}$ and $m_{G_2}$, however, $K$ is a proper closed subgroup of $G_1\times G_2$.    
It is also worth noting that in Appendix~\ref{app:abelian}, we provide examples of algebraically independent groups which are not spectrally independent.  

\medskip
One of the key ingredients in the proof of Theorem \ref{thm1:intro} is Theorem~\ref{thm:approximate-hom}, which is a local stability theorem developed in this paper and is of independent interest.
Roughly speaking, this stability result states that a {\em partial approximate homomorphism}, see Definition~\ref{def: approxmiate local hom}, between $G_1$ and $G_2$ as in Theorem \ref{thm1:intro} is close to an isogeny between $\bbg_1$ and $\bbg_2$. Stability theorems in this vein have a long history, 
indeed Grove, Karcher, Ruh~\cite[Theorem 4.3]{GKR-AlmostHom} studied approximate homomorphisms between compact Lie groups; later Kazhdan~\cite{Kazhdan-approx-hom} studied approximate homomorphisms from amenable groups to unitary operators on a Hilbert space. One of the main novelties of our result is that we only require the approximate homomorphism to be defined on a neighborhood of the identity rather than the entire group; this generalization requires new techniques as the averaging techniques, used in aforementioned results, can no longer be applied.
We refer the reader to Theorem~\ref{thm:approximate-hom} for the precise statement, here we only state two special cases of that theorem. 

In the following, $1_\rho$ denotes the ball of radius $\rho$ in $G$ with respect to the operator norm. 
Also, for any prime $p$ and any positive integer $k$, we let $\SL_n(\ZZ_p)[k]$ denote the kernel of the reduction mod $p^k$ from $\SL_n(\ZZ_p)$ to $\SL_n(\ZZ/p^k\ZZ)$. 

\begin{thmA}\label{thm2:intro}
    \begin{enumerate}
\item {\rm (Real Case)} Let $G_i=\SU(n_i)$ for $n_i\geq 2$. Then there exist $m_0>1$ and $0<c<1$, depending only on $n_1$ and $n_2$, such that for every $m>m_0m'$, the following holds. For $\rho<1/2$, suppose $f:1^{(1)}_\rho\to G_2$ is a map which satisfies
\[
f(g^{-1})f(g)\in 1^{(2)}_{\rho^m} \qquad\text{and}\qquad f(g_1g_2)(f(g_1)f(g_2))^{-1}\in 1^{(2)}_{\rho^m}
\]
whenever these expressions are defined. Assume further that the $\rho^m$ neighborhood of ${\rm Im}(f)$ contains the $\rho^{m'}$ neighborhood of the identity in $G_2$. Then $n_1=n_2=n$ and there is an isomorphism $\Psi:\SU(n)\to \SU(n)$ so that 
\[
f(g)\Psi(g)^{-1}\in 1_{\rho^{cm}} \qquad\text{for all $g\in 1^{(1)}_{\rho^m}$}.
\]

\item {\rm ($p$-adic Case)} Let $n_1, n_2\geq 2$ be two integers. There exist $m_0>1$ and $0<c<1$, depending only on $n_1$ and $n_2$, so that for every $m>m_0m'$ the following holds. Let $k\in\bbz^+$. Suppose  
\[
f:\SL_{n_1}(\ZZ_{p})[k]/\SL_{n_1}(\bbz_p)[km]\to \SL_{n_2}(\ZZ_{p})/\SL_{n_2}(\bbz_p)[km]
\]
is a group homomorphism such that 
\[
\SL_{n_2}(\ZZ_{p})[km']/\SL_{n_2}(\bbz_p)[km]\subseteq {\rm Im}(f).
\]
Then $n_1=n_2=n$ and there is an isomorphism $\Psi:\SL_n(\bbq_p)\to \SL_n(\bbq_p)$ so that 
\[
f\bigl(g\SL_{n}(\bbz_p)[km]\bigr)\equiv \Psi(g) \pmod{p^{ckm}}\qquad\text{for all $g\in\SL_{n}(\ZZ_{p})[k]$}.
\]
    \end{enumerate}
\end{thmA}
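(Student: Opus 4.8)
The plan is to deduce both items from the general local stability theorem, Theorem~\ref{thm:approximate-hom}, applied to the compact analytic groups with simple Lie algebras at hand: in the real case $G_1=\SU(n_1)$ and $G_2=\SU(n_2)$, with Lie algebras the compact simple algebras $\mathfrak{su}(n_1)$ and $\mathfrak{su}(n_2)$; in the $p$-adic case $G_i=\SL_{n_i}(\ZZ_p)$, an open compact subgroup of $\SL_{n_i}(\QQ_p)$ with simple Lie algebra $\mathfrak{sl}_{n_i}(\QQ_p)$. Three things have to be done: (i) check that the hypotheses of each item put us in the setting of Theorem~\ref{thm:approximate-hom}; (ii) translate its conclusion---that $f$ is $\rho^{cm}$-close, on a small ball about the identity, to the restriction of an isogeny $\bbg_1\to\bbg_2$---into the stated conclusion; and (iii) since Theorem~\ref{thm:approximate-hom} carries all the weight, indicate the mechanism behind it and the step I expect to be the main obstacle.

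For (i): in the real case the ball $1^{(1)}_\rho$ is exactly the domain on which a partial approximate homomorphism is required to be defined, the two displayed containments say precisely that $f$ has defect $\rho^m$ with respect to inversion and multiplication in the sense of Definition~\ref{def: approxmiate local hom}, and the assumption that the $\rho^m$-neighborhood of $\mathrm{Im}(f)$ contains the $\rho^{m'}$-neighborhood of the identity is the ``thick image'' hypothesis of Theorem~\ref{thm:approximate-hom}---the condition preventing $f$ from factoring through a proper analytic subgroup. In the $p$-adic case we identify $\SL_{n_1}(\ZZ_p)[k]$ with the ball of radius $\rho:=p^{-k}$ about the identity of $G_1$; pulling $f$ back along $\SL_{n_1}(\ZZ_p)[k]\to\SL_{n_1}(\ZZ_p)[k]/\SL_{n_1}(\ZZ_p)[km]$ and composing with any set-theoretic lift $\SL_{n_2}(\ZZ_p)/\SL_{n_2}(\ZZ_p)[km]\to\SL_{n_2}(\ZZ_p)$ fixing the identity produces a map $F\colon\SL_{n_1}(\ZZ_p)[k]\to\SL_{n_2}(\ZZ_p)$ which is a genuine homomorphism modulo $\SL_{n_2}(\ZZ_p)[km]$, i.e.\ a partial approximate homomorphism of defect $\rho^m$, while the inclusion $\SL_{n_2}(\ZZ_p)[km']/\SL_{n_2}(\ZZ_p)[km]\subseteq\mathrm{Im}(f)$ says exactly that the $\rho^m$-neighborhood of $\mathrm{Im}(F)$ contains the $\rho^{m'}$-ball. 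In both cases the requirement $m>m_0m'$ with $m_0$ depending only on $n_1,n_2$ matches what Theorem~\ref{thm:approximate-hom} needs, its implied constants depending only on the two Lie algebras.

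For (ii): Theorem~\ref{thm:approximate-hom} provides an isogeny of algebraic groups $\Phi\colon\bbg_1\to\bbg_2$ whose restriction to $1^{(1)}_{\rho^m}$ approximates $f$ (respectively $F$) to within $\rho^{cm}$. Since $\bbg_1$ and $\bbg_2$ are both simply connected (being the simply connected groups $\SU_{n_i}$, resp.\ $\SL_{n_i}$), such an isogeny must be an isomorphism; in particular $\bbg_1\cong\bbg_2$, so the two have equal root systems and $n_1=n_2=:n$. Restricting $\Phi$ to the compact real form, respectively to $\SL_n(\QQ_p)$, produces the desired $\Psi$, and the estimate $f(g)\Psi(g)^{-1}\in 1_{\rho^{cm}}$---and in the $p$-adic case the congruence $f\bigl(g\SL_n(\ZZ_p)[km]\bigr)\equiv\Psi(g)\pmod{p^{ckm}}$---is then the approximation clause of Theorem~\ref{thm:approximate-hom} read back through the lift, at worst after replacing $c$ by a smaller constant depending only on $n_1,n_2$.

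For (iii): the natural route inside Theorem~\ref{thm:approximate-hom} is to linearize. One first shows that for each $X\in\mathfrak g_1$ of bounded size the map $t\mapsto f(\exp(tX))$, for $t$ of size comparable to $\rho^m$, is an \emph{approximate} one-parameter subgroup of $G_2$, hence within a small error of a genuine one $t\mapsto\exp(t\phi(X))$; then, using the Baker--Campbell--Hausdorff formula for additivity and the group-commutator identity $(\exp tX)(\exp tY)(\exp tX)^{-1}(\exp tY)^{-1}=\exp(t^2[X,Y]+O(t^3))$ for the bracket, that $\phi\colon\mathfrak g_1\to\mathfrak g_2$ is an approximate Lie-algebra homomorphism; quantitative rigidity of Lie-algebra homomorphisms out of the simple algebra $\mathfrak g_1$ (a {\L}ojasiewicz-type inequality for the homomorphism variety, available because $\mathfrak g_1$ is semisimple) promotes $\phi$ to a genuine homomorphism $\bar\phi$, which the thick-image hypothesis forces to be surjective and hence---$\mathfrak g_1$ being simple---an isomorphism; exponentiating $\bar\phi$ gives $\Psi$. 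The step I expect to be the main obstacle is what makes the local setting genuinely new: $f$ is given only near the identity and need not be close to a homomorphism, or even to the identity, on any fixed scale, so there is no group over which to average and the classical averaging arguments of Grove--Karcher--Ruh and of Kazhdan do not apply; the whole correction must be extracted at the Lie-algebra level while controlling how the $\rho^m$ defect accumulates through the iterated products needed to produce and then straighten the infinitesimal data, and keeping this loss linear in the exponent---so that the final error is still $\rho^{cm}$ with $c>0$ depending only on $n_1$ and $n_2$---is the technical heart of the matter.
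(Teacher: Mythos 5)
Your deduction is correct and follows exactly the route the paper intends: the paper presents Theorem~\ref{thm2:intro} as a special case of the local stability result Theorem~\ref{thm:approximate-hom}, and your translation of the hypotheses (identifying $\SL_{n}(\ZZ_p)[k]$ with the ball $1^{(1)}_{p^{-k}}$ and the group-homomorphism-mod-$\SL_{n_2}(\ZZ_p)[km]$ condition with a $\rho^m$-approximate homomorphism, and recognizing the thick-image hypotheses) together with the observation that a central isogeny from $\wt{\bbg}_1$ onto a simply connected target is an isomorphism is precisely what the paper leaves implicit. Your item (iii) also correctly summarizes the machinery inside the proof of Theorem~\ref{thm:approximate-hom}, with the minor caveat that in the $p$-adic case the role of the {\L}ojasiewicz inequality is played by Greenberg's theorem rather than by {\L}ojasiewicz itself.
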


In the next section, we will present more precise statements of results of this work, including Theorems~\ref{thm1:intro} and~\ref{thm2:intro}. We will also state a {\em global} version, Theorem~\ref{thm:Q-form}, of Theorem~\ref{thm1:intro} where we obtain  spectral gap which is uniform across all places.

\section{Statement of results}

Recall that for a symmetric Borel probability measure $\mu$ on a compact group $G$, the contraction factor $\lambda(\mu)$ of $\mu$ is the operator norm of the convolution operator 
\[
T_{\mu}:L_0^2(G,m_G)\to L_0^2(G,m_G),\qquad (T_{\mu}(f))(x):=\int_G f(g^{-1}x) d\mu(g),
\]
where $L^2_0(G,m_G)$ is the orthogonal complement of constant functions. Given a symmetric random variable $X$ with values in $G$ and probability law $\mu$, we define
\[
\mathcal L(X;G):=-\log(\lambda(\mu)).
\]
When $G$ is clear from the context, we often denote $\lcal(X;G)$ by $\lcal(X)$. 

\begin{theorem}\label{thm:main1}
For $i=1,2$, let $F_i$ be the field of real $\bbr$ or $p$-adic $\bbq_p$ numbers. 
Let $\mathbb G_i$ be an $F_i$-almost simple group and let $G_i$ be an open compact subgroup of $\mathbb G_i(F_i)$. Suppose $G_1$ and $G_2$ are not locally isomorphic and
$X=(X_1, X_2)$ is a $G_1 \times G_2$-valued
random variable with a Borel probability law. Then the marginal bounds $\min  \left(  \mathcal{L}(X_1), \mathcal { L}(X_2)  \right) \ge c_0 >0$ imply
\[
\mathcal{L}(X) \gg_{c_0, G_1, G_2} 1.
\]
In particular, $G_1$ and $G_2$ are spectrally independent.
\end{theorem}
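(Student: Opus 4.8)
The plan is to reduce the spectral gap statement for the pair $X=(X_1,X_2)$ on $G_1\times G_2$ to the local stability theorem (Theorem~\ref{thm:approximate-hom}) by a Bourgain--Gamburd style argument combined with a ``no approximate common quotient'' input. Since the conclusion is qualitative (a lower bound depending on $c_0,G_1,G_2$), I would argue by contradiction: suppose that there is a sequence of $G_1\times G_2$-valued random variables $X^{(j)}=(X_1^{(j)},X_2^{(j)})$ with $\min(\mathcal L(X_1^{(j)}),\mathcal L(X_2^{(j)}))\ge c_0$ but $\mathcal L(X^{(j)})\to 0$. The first step is to pass to convolution powers and use the flattening/$L^2$-mixing machinery: because each marginal has a spectral gap, a bounded number of self-convolutions of $X_i^{(j)}$ is close to Haar measure on $G_i$ at every scale down to exponentially small balls (this is the standard consequence of $\mathcal L(X_i^{(j)})\ge c_0$, via the multiscale structure of analytic pro-$p$ / compact Lie groups). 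The failure of the spectral gap for the joint law then forces, at some scale $\rho$, a near-invariant function on $G_1\times G_2$ that is genuinely ``diagonal'', i.e.\ not coming from either factor.

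The second and main step is to convert this near-invariant joint function into a \emph{partial approximate homomorphism} in the sense of Definition~\ref{def: approxmiate local hom}. The idea is that an $L^2$-almost-invariant vector for $T_{\mu^{(j)}}$ on $L^2_0(G_1\times G_2)$ which is nontrivial in the ``off-diagonal'' part of the Peter--Weyl / Fourier decomposition produces, after localization to a small neighborhood $1_\rho^{(1)}$ of the identity in $G_1$, a map $f\colon 1_\rho^{(1)}\to G_2$ (or to a quotient $G_2/1_{\rho^m}^{(2)}$) whose failure to be a homomorphism is controlled by the spectral defect, and whose image is large enough to meet the hypothesis that the $\rho^m$-neighborhood of $\mathrm{Im}(f)$ contains a fixed-radius neighborhood of the identity (this largeness is exactly where the marginal spectral gap on $G_2$ and algebraic/local non-isomorphism get used: the projection to the second coordinate cannot collapse). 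Here one needs the multiscale/commutator structure of $\mathbb G_i(F_i)$: the relevant ``correlation'' data at consecutive scales organizes into an approximate homomorphism between the graded Lie algebra pieces, and one integrates it up using the Baker--Campbell--Hausdorff structure of the analytic group, just as in the two special cases stated as Theorem~\ref{thm2:intro}.

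The third step is to invoke Theorem~\ref{thm:approximate-hom}: the partial approximate homomorphism $f$ must be $\rho^{cm}$-close to (the restriction of) an isogeny $\mathbb G_1\to\mathbb G_2$. But an isogeny between $F_1$-almost simple and $F_2$-almost simple groups would force $F_1=F_2$ and would make $G_1$ and $G_2$ locally isomorphic, contradicting the hypothesis. (If $F_1\ne F_2$ one rules this out directly; if $F_1=F_2$ the isogeny exhibits the local isomorphism.) This contradiction closes the argument and yields $\mathcal L(X)\gg_{c_0,G_1,G_2}1$; the final sentence, that $G_1$ and $G_2$ are spectrally independent, is then immediate from the definition since any $X$ with spectral-gap marginals has $\min(\mathcal L(X_1),\mathcal L(X_2))>0$, so we may take $c_0$ to be that minimum and apply the bound just proved.

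The hard part will be the second step: producing, from an abstract almost-invariant vector for the joint convolution operator, a \emph{genuinely defined} partial approximate homomorphism with the quantitative largeness of image required by Theorem~\ref{thm:approximate-hom}. This requires a careful multiscale bookkeeping — tracking how much $L^2$ mass sits in ``mixed'' representations at each dyadic scale, using the marginal gaps to kill the pure-factor contributions, and using the group structure (BCH, commutator filtration, bounded generation of congruence-type quotients) to glue the scale-by-scale near-linear data into a single map on a neighborhood of the identity — and it is precisely the ingredient that forces the new, non-averaging techniques the introduction advertises.
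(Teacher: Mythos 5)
Your overall architecture matches the paper's --- reduce to small scales, extract a partial approximate homomorphism from the failure of contraction, and close by invoking Theorem~\ref{thm:approximate-hom} --- and your final step (an isogeny forces $F_1=F_2$ and local isomorphism, contradiction) is exactly right. But the step you flag as ``the hard part'' is where the proof actually lives, and your proposed route does not supply the mechanism. Going directly from an almost-invariant $f\in L^2_0(G_1\times G_2)$ (in the ``off-diagonal'' part of Peter--Weyl) to a genuine \emph{map} $1^{(1)}_\rho\to G_2$ with the large-image hypothesis of Theorem~\ref{thm:approximate-hom} is not feasible as stated: an almost-invariant function is not a correspondence, and there is no canonical way to read a partial map off of it with quantitative error control. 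What the paper actually does is (i) \emph{replace $\mu^{(\ell)}$ by a coupling of the two Haar measures}: the marginal spectral gaps make $\mu^{(\ell)}$ nearly Haar on each factor at scale $\eta^{O(1)}$, and a discretization plus transportation argument (Proposition~\ref{prop: finding coupling for finite sets}, Theorem~\ref{thm: continuity prop of couplings}) then produces an actual coupling $\sigma$ of $m_{G_1}$ and $m_{G_2}$ that is $L^2$-close to $\mu^{(\ell)}$ on functions living at that scale; (ii) run the multiscale flattening/$\ell^2$-growth dichotomy of Lemma~\ref{lem:coupling-BG} on $\sigma$: either R\'enyi entropy grows, or one gets an approximate subgroup $H\subseteq G_1\times G_2$ concentrating the measure; (iii) show, via the bounded-generation estimate of Proposition~\ref{prop:conjugation-by-large-ball-multiplication-large-image} and a product theorem from~\cite{MMSG}, that $H$ projects onto large neighborhoods of the identity in both factors while its ``fiber'' over $1^{(1)}$ has small metric entropy. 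It is precisely this \emph{graph-like} shape of the approximate subgroup $H$ that yields a partial approximate homomorphism with large image (Lemmas~\ref{lem:constructing-approx-hom},~\ref{lem:constructing-approx-hom-nbhd}, Corollary~\ref{cor:almost-kernels-are-small}); the map comes from $H$, not from a Fourier-side decomposition of a near-invariant vector, and the coupling step is what makes the approximate-subgroup argument even start.

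A secondary point: the compactness/contradiction framing buys nothing and is subtly wrong. The quantity $\lambda(\mu)=\sup_{\pi\in\widehat{G}}\|\widehat{\mu}(\pi)\|_{\op}$ is only lower semicontinuous in the weak-$*$ topology on probability measures, so a sequence $\mu^{(j)}$ with $\lambda(\mu^{(j)})\to 1$ may have a weak-$*$ limit with $\lambda<1$ (the offending frequencies escape to infinity), and no useful limit object is produced. The effective bound $\mathcal L(X)\gg_{c_0,G_1,G_2}1$ comes from running the multiscale argument directly on a single $\mu$ down to an explicit scale $\eta_0$ depending on $G_1,G_2$, and then applying Theorem~\ref{thm:functions-scale-spec-gap}, which yields contraction on the complement of a finite-dimensional exceptional subspace $\cal_0$ of controlled dimension; that subspace still has to be handled, and it is, but it is not correct to say the marginal gaps alone kill the low-frequency part.
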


Recall that the {\em super-approximation conjecture} states that for a finite symmetric subset $\Omega$ of $\GL_n(\bbz[1/q_0])$, the connected component of the Zariski closure of the group $\Gamma=\langle\Omega\rangle$ is perfect if and only if the family of Cayley graphs $\{{\rm Cay}(\pi_m(\Gamma),\pi_m(\Omega)\}$ forms a family of \emph{expanders} as $m$ ranges over the set of positive integers with $\gcd(q_0,m)=1$.
Our next theorem proves a special case of the super-approximation conjecture. 

Expander graphs are roughly highly connected sparse graphs that have arbitrarily large number of vertices. The first explicit construction of expander graphs was done by Margulis~\cite{Margulis73} using Kazhdan's property (T). Later, the most optimal expanders, known as Ramanujan graphs, were constructed by Lubotzky, Phillips, and Sarnak~\cite{LPS}, and independently by Margulis~\cite{Margulis88}. The reader can see more on expanders and their connections with the spectral gap property in \cite{Lubotzky-book}. 

In the past two decades, there has been a lot of progress on the super-approximation conjecture starting with the seminal work of Bourgain and Gamburd~\cite{Bourgain-Gamburd-08}. 
We refer the reader to the following articles to see the more recent results on this conjecture~\cite{Bourgain-Gamburd-Sarnak,Varju12,Bourgain-Varju,Golsefidy-Varju12,Golsefidy-IMRN-SA,SG-super-approx-II, Saxce-He}. 

It is worth pointing out that in \cite{Saxce-He}, the super-approximation conjecture is proved if $\Omega$ consists of \emph{integral} matrices and the Zariski closure of $\Gamma$ is $\bbq$-almost simple. By refining the tools developed for the proof of Theorem~\ref{thm:main1}, we provide  
an affirmative answer to the super-approximation conjecture with no integrality assumption in the case where the Zariski closure is absolutely almost simple and moduli have at most two distinct prime factors. 
 
\begin{theorem}\label{thm:Q-form}
Let $ \bbg$ be an absolutely almost simple $\bbq$-algebraic group. Let $ \Omega \subseteq \bbg( \bbq)$ be a 
finite symmetric subset such that the group  $\Gamma=\langle \Omega\rangle$ is Zariski dense in $ \bbg$.
Denote by $V_{ \Gamma }$  the set of all places $\nu$ of $ \bbq$ such that $\Gamma$ is a bounded subset of $ \bbg( \bbq_\nu)$.
For distinct $\nu_1, \nu_2 \in V_{ \Gamma}$, let $\Gamma_{\nu_1, \nu_2}$ denote the closure of $\Gamma$ in 
$ \bbg( \bbq_{\nu_1} ) \times   \bbg( \bbq_{\nu_2} )$. Then 
\[ \inf_{ \nu_1 \neq \nu_2  \in V_{ \Gamma } }  \mathcal{L} ( X; \Gamma_{ \nu_1, \nu_2} ) >0,  \]
 where $X$ is a random variable with the uniform distribution on $ \Omega$. 
\end{theorem}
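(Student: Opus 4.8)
The plan is to reduce the global statement to a two-place local statement to which Theorem~\ref{thm:main1} can be applied, with the only real work being to establish the necessary \emph{uniformity} in the pair of places $(\nu_1,\nu_2)$. First I would recall the structure of $V_\Gamma$: since $\Gamma \subseteq \bbg(\bbq)$ is finitely generated, there is a finite set $S$ of places (containing $\infty$ and the primes dividing the denominators appearing in the entries of elements of $\Omega$) such that $\Gamma \subseteq \bbg(\bbz[1/S'])$ for $S' = S \setminus \{\infty\}$; then $V_\Gamma$ is precisely the set of places \emph{not} in $S$ (at places outside $S$ the group $\Gamma$ lands in the compact group $\bbg(\bbz_\nu)$, while at places in $S$ it is typically unbounded because $\Gamma$ is Zariski dense and infinite). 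In particular $V_\Gamma$ is cofinite, hence infinite, and for $\nu \in V_\Gamma$ the closure $\Gamma_\nu$ is an open compact subgroup of $\bbg(\bbq_\nu)$ — for all but finitely many $\nu$ it is in fact all of $\bbg(\bbz_\nu)$ by strong approximation for the simply connected cover together with the fact that the reductions $\pi_\nu(\Gamma)$ surject onto $\bbg(\bbf_\nu)$ for large $\nu$ (Nori--Weisfeiler / Matthews--Vaserstein--Weisfeiler). Since $\bbg$ is absolutely almost simple, for two distinct places $\nu_1 \ne \nu_2$ the compact groups $\Gamma_{\nu_1}$ and $\Gamma_{\nu_2}$ are open compact subgroups of $\bbg(\bbq_{\nu_1})$ and $\bbg(\bbq_{\nu_2})$ respectively, and these are \emph{never} locally isomorphic: a local isomorphism would give an isomorphism of the Lie algebras $\gfr(\bbq_{\nu_1}) \cong \gfr(\bbq_{\nu_2})$, impossible since for $\nu_1 \ne \nu_2$ (archimedean versus non-archimedean, or two distinct residue characteristics) these topological Lie algebras are not isomorphic. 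Thus for each fixed pair $(\nu_1,\nu_2)$, Theorem~\ref{thm:main1} applies to $X = $ the image of the uniform variable on $\Omega$ in $\Gamma_{\nu_1,\nu_2} \subseteq \bbg(\bbq_{\nu_1}) \times \bbg(\bbq_{\nu_2})$, provided its two marginals have spectral gap.

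Next I would supply the marginal input: for every $\nu \in V_\Gamma$, the pushforward $X_\nu$ of $X$ onto $\Gamma_\nu \subseteq \bbg(\bbq_\nu)$ has $\mathcal L(X_\nu;\Gamma_\nu) \ge c_0 > 0$ with $c_0$ \emph{independent of} $\nu$. This is exactly the content of the single-place super-approximation theorem in the absolutely-almost-simple case, and crucially the known proofs (de Saxcé--He for the integral/$\bbq$-almost-simple case, and the ingredients developed for Theorem~\ref{thm:main1}) give a spectral gap constant depending only on $\Omega$ and $\bbg$, uniformly over the places $\nu \notin S$; for the finitely many places one handles by strong approximation they are literally the same Cayley graphs on $\bbg(\bbz/\nu^k)$ and the uniform bound is the standard expander statement. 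So $\min(\mathcal L(X_{\nu_1}), \mathcal L(X_{\nu_2})) \ge c_0$ for all pairs. Plugging into Theorem~\ref{thm:main1} then yields, for each pair, $\mathcal L(X;\Gamma_{\nu_1,\nu_2}) \gg_{c_0,\Gamma_{\nu_1},\Gamma_{\nu_2}} 1$ — but the implied constant a priori depends on the two groups, and since there are infinitely many pairs I must control this dependence to conclude the infimum is positive.

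The heart of the argument is therefore to make Theorem~\ref{thm:main1} \emph{effective and uniform} in the pair of groups, which is where I expect the main obstacle. I would revisit the proof of Theorem~\ref{thm:main1}: its quantitative version (whose precise form is promised in the next section of the paper, via the local Ulam stability Theorem~\ref{thm:approximate-hom}) should give $\mathcal L(X;\Gamma_{\nu_1,\nu_2})$ bounded below in terms of $c_0$ and finitely many discrete invariants of $\Gamma_{\nu_1}$ and $\Gamma_{\nu_2}$ — the common type of the root system of $\bbg$, the residue characteristics, the degree of the splitting field, and bounds on the "level" at which $\Gamma_\nu = \bbg(\bbz_\nu)$. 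For all but finitely many pairs $(\nu_1,\nu_2)$ both groups are the full $\bbg(\bbz_{\nu_i})$ with $\nu_i$ unramified, so they fall into finitely many isomorphism classes of "shape" and the bound is uniform; the finitely many exceptional places contribute only finitely many pairs, each giving a positive (if possibly smaller) constant. Taking the minimum over the finitely many exceptional pairs and the uniform bound over the generic pairs gives a single positive lower bound, proving $\inf_{\nu_1 \ne \nu_2} \mathcal L(X;\Gamma_{\nu_1,\nu_2}) > 0$. The delicate point to verify carefully is that the constants in the local Ulam stability result (the exponents $c$, $m_0$ in Theorem~\ref{thm2:intro} and their analogues) depend only on the type of the group and not on the residue characteristic $p$, so that the passage to the limit over places is legitimate; this is the step I would scrutinize most.
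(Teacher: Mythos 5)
There is a genuine gap in the final step, and it is not a small one. You correctly identify the marginal input (uniform single-place spectral gap from \cite{Golsefidy-IMRN-SA,SG-super-approx-II,Benoist-Saxce-16}), correctly reduce to a two-place statement, and correctly flag that the dependence of the implied constant in Theorem~\ref{thm:main1} on the pair $(\Gamma_{\nu_1},\Gamma_{\nu_2})$ is the crux. But your proposed resolution --- that the quantitative version of Theorem~\ref{thm:main1} is ``bounded below in terms of $c_0$ and finitely many discrete invariants,'' so that all but finitely many pairs fall into finitely many isomorphism classes of ``shape'' --- does not work, because the dependence on the places is \emph{not} through a discrete invariant of bounded range.

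Concretely: the quantitative content of the proof of Theorem~\ref{thm:main1} (packaged as Claim~\ref{claim: proof 1} plus Theorem~\ref{thm:functions-scale-spec-gap}) gives spectral gap only on the orthogonal complement of an exceptional subrepresentation $\cal_0 \subset L^2(\Gamma_{\nu_1,\nu_2})$, and $\dim \cal_0 \leq 2C_0\eta_0^{-d_0}$ where the admissible scale satisfies $\eta_0 = \max\{p_{\nu_1},p_{\nu_2}\}^{-O_{\bbg}(1)}$. The restriction on $\eta_0$ is forced by the ``moreover'' clause of the Ulam-stability result Theorem~\ref{thm:approximate-hom}: for a coherent family the nonexistence of approximate homomorphisms is established only at scales $\rho \leq \rho_\fcal\cdot \min\{p_{\nu_1}^{-\kappa},p_{\nu_2}^{-\kappa}\}$. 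So even when both $\Gamma_{\nu_i}$ are the ``generic'' hyperspecial groups $\bbg(\bbz_{\nu_i})$, the exceptional subspace $\cal_0$ has dimension growing polynomially in the primes, and there is no finite list of shapes to reduce to. Your phrase ``the constants in the local Ulam stability result... depend only on the type of the group and not on the residue characteristic'' is true for the exponents $c,m_0$ but misses that the lower cutoff on $\rho$ is what generates the unbounded exceptional piece. Scrutinizing only $c$ and $m_0$ would give you a false sense of security.

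What the paper actually does to close this gap is an entirely separate argument, occupying all of \S\ref{sec: spec indep and exceptional}. Proposition~\ref{prop: spectral indep exceptional} shows directly that $\mathcal L(X;\cal_0)$ is bounded below uniformly in the pair of places, by observing (via \cite[Proposition 33]{Golsefidy-IMRN-SA}) that $\cal_0$ factors through a finite quotient $\Gamma_{\nu_1}/\Gamma_{\nu_1,n_1}\times\Gamma_{\nu_2}/\Gamma_{\nu_2,n_2}$ whose size is polynomially bounded in the primes, and then running a Bourgain--Gamburd-type expansion argument on this \emph{finite} group. The key ingredients there are the uniform bounded generation result Theorem~\ref{thm: SG II bdd generation} (from \cite{SG-super-approx-II}), the $\ell^2$-flattening lemma, metric quasi-randomness, and Proposition~\ref{prop:conjugation-by-large-ball-multiplication-large-image} to show that an approximate subgroup with structured projections must have large fibers. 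This is the genuinely new work required for the global statement, and your proposal has no substitute for it.
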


As it was explained in the introduction, one of the main ingredients in the proof of Theorem~\ref{thm:main1} is a classification of \emph{partial almost homomorphisms} between two compact analytic groups. To formulate our result, we need a few definitions and notation.

For a metric compact group $G$ and $g\in G$, we let $g_\rho$ denote the $\rho$-neighborhood of $g$. 

\begin{definition}\label{def: approxmiate local hom}
Suppose $G_1$ and $G_2$ are two compact groups equipped with bi-invariant metrics compatible with their topology. Let $\delta>0$ and $S$ be a symmetric subset of $G_1$ containing the identity element $1^{(1)}$. 
A function $f:S\rightarrow G_2$ is called an 
{\em $S$-partial, $\delta$-approximate homomorphism} if the following three properties are satisfied 
\begin{enumerate}
\item $f(1^{(1)})=1^{(2)}$, 
\item $f(g^{-1})\in (f(g)^{-1})_{\delta}$  for every $g\in S$, and 
\item for all $g_1,g_2\in S$ with $g_1g_2\in S$, we have $f(g_1g_2)\in \bigl(f(g_1)f(g_2)\bigr)_{\delta}$.
\end{enumerate}
We will refer to a $1^{(1)}_{\rho}$-partial $\delta$-approximate homomorphism simply as {\em $\rho$-partial, $\delta$-approximate homomorphism}.
\end{definition}

Let $\bbg\subseteq(\SL_{N})_\bbq$ be a Zariski connected, absolutely almost simple $\bbq$-algebraic subgroup. Let $\Sigma_\bbg$ be the set of places $v$ of $\bbq$ where $\bbg(\bbq_\nu)$ contains a compact open subgroup; note that $\Sigma_\bbg$ equals the set of all places if $\bbg(\bbr)$ is compact, and equals the set of all finite places otherwise. We say a family 
\[
\{G_\nu:\nu\in\Sigma_\bbg\}
\]
of compact groups is a \emph{coherent family} attached to $\bbg$ if the following holds: $G_\nu\subseteq \bbg(\bbq_\nu)$ is a compact open  and $G_\nu=\bbg(\bbq_\nu)\cap \SL_N(\bbz_\nu)$ for all but finitely many places $\nu$. 

In the sequel, we let $p_\nu=\nu$ if $\nu$ is non-Archimedean and $p_\nu=2$ if $\nu=\infty$.

\begin{theorem}\label{thm:thm:approximate-hom-intro}\label{thm:thm:approximate-hom}\label{thm:approximate-hom}
Let $\nu_1$ and $\nu_2$ be two (possibly equal) places of $\bbq$, and let $F_i:=\bbq_{\nu_i}$. 
Let $\bbg_i\subseteq (\SL_{n_i})_{F_i}$ be a Zariski connected, $F_i$-almost simple subgroup, and let $G_i\subseteq \bbg_i(F_i)$ be a compact open subgroup. 

If $F_i=\bbr$, we assume that $\bbg_i$ is given by an $\bbr$-embedding in $(\SL_{n_i})_\bbr$ 
such that $\bbg_i(\bbr)\subseteq {\rm SO}_{n_i}(\bbr)$. 
If $F_i=\bbq_p$, then we assume that $G_i\subseteq \SL_{n_i}(\bbz_p)$. 
In both cases, we consider the metric induced by the operator norm. 

Then there is a positive number $c=c(\dim \bbg_1,\dim \bbg_2)$ such that for every 
$m\gg_{G_1,G_2} m'$ and positive number $\rho\ll_{G_1,G_2} 1$ the following holds: 

If $f:1_{\rho}^{(1)}\rightarrow  G_2$ is a $\rho$-partial, $\rho^{m}$-approximate homomorphism which satisfies 
\[
1^{(2)}_{\rho^{m'}}\subseteq ({\rm Im}(f))_{\rho^m}\qquad\qquad\text{\emph{(Large image)}},
\]
then $F_1=F_2=F$, $\Lie(\bbg_1)(F)\simeq \Lie(\bbg_2)(F)$, and $f$ is \emph{near an isogeny} in the following sense: There is an $F$-central isogeny $\Psi:\wt{\bbg}_1\rightarrow \bbg_2$ where $\wt{\bbg}_1$ is the simply-connected cover of $\bbg_1$ so that  
\[
\text{$f(g)\in \Psi(\wt{g})_{\rho^{cm}}\quad$ for every $g\in 1^{(1)}_{\rho}$,}
\]
where $\wt{g}$ is the unique lift of $g$ under the covering map from 
$\wt{\bbg}_1$ to $\bbg_1$ which belongs to the image of $4\rho$-neighborhood of $0$ under the exponential map.   

Moreover, if $\fcal=\{G_\nu\}$ is a coherent family attached to a $\bbq$-group $\bbg$, then there exist $\kappa:=\kappa_\bbg$ and $m_\fcal,\rho_\fcal>0$ so that the following holds.   
For all distinct $\nu_1,\nu_2$ and $\rho\leq\rho_\fcal\cdot\min\{p_{\nu_1}^{-\kappa},p_{\nu_2}^{-\kappa}\}$, there is no $\rho$-partial, $\rho^{m_\fcal}$-approximate homomorphism $f:1_{\rho}^{(1)}\rightarrow G_{\nu_2}$
which satisfies $1^{(2)}_{\rho}\subseteq ({\rm Im}(f))_{\rho^{m_\fcal}}$.
\end{theorem}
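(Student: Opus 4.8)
\textbf{Proof proposal for Theorem~\ref{thm:approximate-hom}.}

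The plan is to split the argument into the ``local'' statement (the first part, valid for a single pair of places) and the ``uniform over places'' addendum (the coherent-family statement), since the latter should follow from the former by tracking how the implied constants $m'\mapsto m$, $\rho\ll 1$ depend on the residue characteristics. I will concentrate on the local statement, as that is where the essential new content lies, and indicate at the end how to bootstrap to the coherent family.

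First I would set up the infinitesimal picture. Using the exponential map, identify a $4\rho$-neighborhood of $0$ in $\Lie(\bbg_1)(F_1)$ with $1^{(1)}_{4\rho}$ and similarly on the $G_2$ side, so that $f$ becomes a map $\phi$ between neighborhoods of $0$ in the two Lie algebras. The approximate-homomorphism hypotheses translate, via the Baker--Campbell--Hausdorff formula, into the statement that $\phi(x+y)=\phi(x)+\phi(y)+O(\rho^{m})$ on the relevant scale (the quadratic BCH correction being absorbed once $m$ is large compared to the logarithm of $\rho^{-1}$, i.e.\ once the scale $\rho$ is small enough that $\rho^2\gg\rho^m$). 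Running the standard ``almost-additive implies near-linear'' argument at a fixed small scale — but carefully, because $\phi$ is only partially defined, so one must chain the additivity relation through dyadic rescalings $x\mapsto x/2$ staying inside the domain — produces an $F_1$-linear map $T\colon\Lie(\bbg_1)(F_1)\to\Lie(\bbg_2)(F_2)$ with $\phi(x)=Tx+O(\rho^{cm})$ for $x$ in a $\rho$-neighborhood. The Large image hypothesis forces $T$ to be surjective, hence (comparing dimensions, which requires the almost-simplicity to rule out intermediate subalgebras appearing through the bracket relation below) an isomorphism of $F_1$-vector spaces; in particular $\dim\bbg_1=\dim\bbg_2$, and since $F_i=\bbq_{\nu_i}$ this already pins down whether both places are Archimedean or both the same $p$, giving $F_1=F_2=F$.

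Next I would upgrade $T$ from a linear map to a Lie-algebra homomorphism. Feeding group commutators $[g_1,g_2]=g_1g_2g_1^{-1}g_2^{-1}$ into the three defining properties of an approximate homomorphism and expanding both sides by BCH shows $T([x,y])=[Tx,Ty]+O(\rho^{(1+c)m})$ on the small scale, and since the error is lower order than the main term once $m$ is large, homogeneity (rescale $x,y$) forces $T[x,y]=[Tx,Ty]$ exactly. Thus $T$ is a Lie-algebra isomorphism $\Lie(\bbg_1)(F)\xrightarrow{\sim}\Lie(\bbg_2)(F)$. Now integrate: passing to the simply connected cover $\wt{\bbg}_1$, the isomorphism $T$ of Lie algebras integrates to an $F$-isogeny $\Psi\colon\wt{\bbg}_1\to\bbg_2$ (this is where one must invoke that over $\bbr$ a Lie algebra isomorphism between compact groups integrates on the simply connected cover, and over $\bbq_p$ the analogous statement for the $p$-adic analytic/algebraic group, together with the fact that $T$ being defined over $F$ makes $\Psi$ an $F$-morphism — this uses $\bbg_i$ almost simple so that $\Aut(\Lie\bbg)$ is $\Aut$ of the algebraic group up to the Weyl/diagram part). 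Finally, to pass from ``$\phi$ agrees with $T$ near $0$'' to ``$f$ agrees with $\Psi$ on all of $1^{(1)}_\rho$'', I would use the approximate-homomorphism property to propagate the estimate: any $g\in 1^{(1)}_\rho$ is a bounded product (bounded in terms of $\diam$ versus the small working scale, i.e.\ $O(\log\rho^{-1})$ factors) of elements on the small scale where the comparison is known, and each multiplication costs one factor of the error $\rho^m$ and one application of $\Psi$ being a homomorphism; choosing $c$ small enough and $m\gg m'$ (with the threshold depending on $\dim\bbg_1,\dim\bbg_2$ through the number of chaining steps and the BCH constants) absorbs all accumulated errors into $\rho^{cm}$. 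The uniqueness of the lift $\wt{g}$ via the $4\rho$-neighborhood of $0$ is just the local injectivity of $\exp$.

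The main obstacle I anticipate is precisely the partiality of $f$: the classical Grove--Karcher--Ruh/Kazhdan route averages over the whole group to manufacture a genuine homomorphism, and that is unavailable here, so every step — extracting $T$, verifying it is a bracket map, and propagating from the germ to $1^{(1)}_\rho$ — must be done by hand through rescaling and chaining inside the domain, keeping quantitative control of how many multiplications are used so that the final error stays below $\rho^{cm}$; getting the bookkeeping of constants right (the dependence of the threshold $m/m'$ and of $\rho\ll 1$ on $\dim\bbg_1,\dim\bbg_2$) is the delicate part. For the coherent-family addendum: if such an $f$ existed for $\rho$ as small as allowed and $m_\fcal$ the (place-uniform) threshold, the local statement would yield an $F$-isogeny $\wt{\bbg}_{\nu_1}\to\bbg_{\nu_2}$; but for $\nu_1\neq\nu_2$ we have $\bbq_{\nu_1}\not\cong\bbq_{\nu_2}$, so $F_1=F_2=F$ fails, a contradiction — provided the hypotheses of the local statement are met, which is why one needs $\rho\leq\rho_\fcal\min\{p_{\nu_1}^{-\kappa},p_{\nu_2}^{-\kappa}\}$: the BCH-convergence radius and the $\exp$-injectivity radius for a coherent family at a place of residue characteristic $p$ shrink polynomially in $p$, and $\kappa=\kappa_\bbg$ is chosen to dominate that rate, so that for $\rho$ below this place-dependent threshold the single-pair argument applies verbatim with a threshold $m_\fcal$ that no longer depends on $\nu_1,\nu_2$.
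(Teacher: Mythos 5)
Your proposed strategy is the right shape at a high level — linearize via exponential/BCH, extract an approximately linear map, show it approximately preserves brackets, integrate to an isogeny, then propagate — but there is a genuine gap in the step where you pass from ``$T$ approximately preserves the bracket'' to ``$T$ preserves the bracket exactly.'' You argue that ``homogeneity (rescale $x,y$) forces $T[x,y]=[Tx,Ty]$ exactly,'' but this rescaling argument cannot work: the error $O(\rho^{(1+c)m})$ you obtain from the approximate-homomorphism property is an \emph{absolute} error (it comes from the fixed quantity $\rho^m$ in the hypothesis) and does not scale with $x,y$. If you replace $x,y$ by $\lambda x,\lambda y$ with $\lambda<1$, both $T[\lambda x,\lambda y]$ and $[T\lambda x,T\lambda y]$ scale as $\lambda^2$, but the error stays $\rho^m$ — so shrinking $\lambda$ only makes the estimate \emph{weaker}, giving no contradiction and hence no exactness. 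There really are approximate Lie algebra homomorphisms that are not exact ones; the issue is whether a given approximate homomorphism is \emph{near} an exact one. The paper resolves precisely this: it views the locus of Lie-algebra homomorphisms $\gfr_1\to\gfr_2$ as an affine variety $V$ cut out by the quadratic structure-constant equations, shows the linear map $\wt\theta_{2k}$ is a near-solution of these equations, and then applies Greenberg's theorem on lifting approximate $\bbz_p$-points to exact $\bbz_p$-points in the $p$-adic case (Theorem~\ref{thm:p-adic-approx-hom}), and the effective {\L}ojasiewicz inequality~\cite{Eff-Loj-SemiAlgebraic} in the real case (Lemma~\ref{lem:close-Lie-algebra-hom}), to produce a genuinely nearby exact Lie-algebra homomorphism $\wh\theta$. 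Without one of these algebraic approximation tools, the step from approximate to exact simply does not go through.

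A secondary gap: you treat the mixed cases $(F_1,F_2)=(\bbr,\bbq_p)$ and $(\bbq_p,\bbr)$ as if the linearization/dimension count automatically forces $F_1=F_2$, but the object ``$F_1$-linear map $T:\Lie\bbg_1(F_1)\to\Lie\bbg_2(F_2)$'' is not even well-defined when the scalar fields differ, so the almost-additive-to-linear step is unavailable before $F_1=F_2$ is established, not a consequence of it. The paper handles these cases separately and earlier: Lemma~\ref{lem: F1 R F2 Qp not possible} rules out $(\bbr,\bbq_p)$ because the image of $\bar f$ would be forced to factor through a finite quotient of $1^{(1)}_\rho$ that is much too small to contain $1^{(2)}_{\rho^{m'}}/1^{(2)}_{\rho^m}$; and Corollary~\ref{cor:p-adic-to-real-not-possible} rules out $(\bbq_p,\bbr)$ because a nontrivial $p$-adic element has $g^k\in 1^{(1)}_{\rho^{7C}}$ for all $k$, while its real image $f(g)^k$ must escape, contradicting Lemma~\ref{lem:image-large-ball-under-approx-hom}. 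These arguments use nothing linear and are genuinely separate from the main line; they cannot be absorbed into the linearization. You would need to add them.

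Finally, for the coherent-family addendum: your instinct (apply the local statement and note $F_1\neq F_2$ gives a contradiction) only covers the case where exactly one of $\nu_1,\nu_2$ is archimedean. For two distinct finite places the local statement does \emph{not} contradict itself — the paper instead tracks that the constants $c_4(V),C_4(V)$ in Greenberg's theorem depend only on the $\bbq$-form $\wt\bbg$ when $\bbg=\wt\bbg\otimes_\bbq\bbq_p$, so the ``Lie ring homomorphism over $\bbz_p$'' produced has a $p$-independent bound, and then uses that two local fields $\bbq_{p}$ and $\bbq_{q}$ with $p\neq q$ have incompatible residue characteristics (the map $\bar f$ factors through a $p$-group mapping into a $q$-group). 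That is the content behind the choice of $\kappa_\bbg$ and $m_\fcal$, and it is not merely the place-dependent shrinkage of the BCH/exp radii you describe.
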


\subsection{Outline of the arguments} 
We start with outlining the proof of Theorem~\ref{thm:main1}. The proof will be carried out in several steps and relies heavily on the results of~\cite{MMSG}.

{\bf Step 1.}\ The first step of the proof is to reduce proof of the spectral gap property of the random walk by $\mu$, the probability law of $X$, to the study of functions which live at small scales. This is done using the notion of locally random group and the Littlewood-Paley theory for these groups, which was developed in~\cite{MMSG}. In particular, we showed in \cite[Theorem 2.10, Theorem 9.3]{MMSG} that if for every $\eta\leq \eta_0\ll_{G_1,G_2} 1$ and every function $f$ that lives at scale $\eta$, we have the polynomial contraction 
\be\label{eq: contr scale eta outline}
\|\mu^{(\ell)} \ast f\|_2\leq \eta^c \|f\|_2\qquad \text{for some integer $\ell\leq C \log(1/\eta)$,}
\ee
then for every function $g\in L^2(G_1\times G_2)$ which is orthogonal to an \emph{exceptional} finite dimensional subspace $\cal_0$, we have   $\|\mu\ast g\|_2\leq 2^{-c/C} \|g\|_2$ --- spectral gap for the space orthogonal to the exceptional subspace. We refer the reader to Definition \ref{def:fun-at-scale} for the definition of functions living at scale $\eta$.

{\bf Step 2.}\ To obtain~\eqref{eq: contr scale eta outline}, we discretize the groups $G_1$ and $G_2$ at scale $\eta^{O(1)}$ and use the spectral gap of the marginals of $\mu$ to 
find a coupling $\sigma$ of the Haar measures $m_{G_1}$ and $m_{G_2}$ which is {\em close} to $\mu^{(\ell)}$ at scale $\eta^{O(1)}$. The aforementioned coupling is constructed using the transportation problem, see~\S\ref{sec: transport}. This reduces the proof of~\eqref{eq: contr scale eta outline} to showing that for all $f$ that live at scale $\eta^{O(1)}$, we have  
\be\label{eq: contraction coupling outline}
\|\sigma^{(n_0)} \ast f\|_2\leq \eta^c \|f\|_2\qquad\text{for some $n_0$ depending only on $\dim G_1$ and $\dim G_2$.}
\ee

{\bf Step 3.}\ In this step, we use the mixing inequality~\cite[Theorem 2.6]{MMSG} and the multiscale Bourgain-Gamburd proved in~\cite[Theorem 2.12]{MMSG} to show that the failure of~\eqref{eq: contraction coupling outline} yields a partial approximate homomorphism between $G_1$ and $G_2$, in the sense of Definition~\ref{def: approxmiate local hom}.

{\bf Step 4.}\ This step relies on Theorem~\ref{thm:approximate-hom}, which is of independent interest. Indeed by loc.\ cit.\ we conclude that such partial approximate homomorphisms exists only if $G_1$ and $G_2$ are locally isomorphic. This concludes the proof of Theorem~\ref{thm:main1}. 

\medskip

Let us now briefly outline the proof of Theorems~\ref{thm:approximate-hom}. The proof relies on the Baker-Campbell-Hausdorff formula and bounded generation properties of simple Lie groups at small scales, see Proposition~\ref{prop:conjugation-by-large-ball-multiplication-large-image}. 
Our argument also relies on the effective version of Nullstellensatz in the form of {\L}ojaswicz inequality (real case) and the work of Greenberg ($p$-adic case). The details occupy \S\ref{sec: proof of thm approx} in the paper. 

We end this outline by discussing the proof of Theorem~\ref{thm:Q-form}. As alluded to before, the proof relies on Theorem~\ref{thm:main1}. Indeed, our proof of Theorem~\ref{thm:main1} yields  estimates on the implied constants in terms of group theoretic properties of the groups $G_1$ and $G_2$ in loc.\ cit. In the case considered in Theorem~\ref{thm:Q-form}, these estimates depend only on the set $\Omega$ --- this deduction relies on strong approximation theorem.
This uniformity reduces the proof to the analysis of the exceptional representation $\mathcal H_0$ appearing in Step 1. In~\cite[Theorem 9.3]{MMSG} we showed that this representation has dimension bounded by $(p_{\nu_1}p_{\nu_2})^{O(1)}$. We use this fact and results in~\cite{Golsefidy-IMRN-SA, SG-super-approx-II} to adapt the above general outline  and study functions in $\mathcal H_0$, thus completing the proof of Theorem~\ref{thm:Q-form}.

\section{Preliminaries and notation}\label{sec:def}
In this section, we will set some notation needed for the paper and recall a number of basic facts that we will refer to in the sequel.

\subsection{Analysis on compact groups}
Let $G$ be a compact (separable) topological group. As it is well known, $G$ can be equipped with a bi-invariant metric that induces the topology of $G$. All measures on $G$ in this paper will be assumed to be finite measures; thus they are automatically Radon measures. We let $m_G$ denote the unique bi-invariant probability Haar measure on $G$.
For any Borel subset $A\subseteq G$, the Haar measure of $A$ is denoted by $m_G(A)$ or $|A|$. The cardinality of a finite set $A$ will be denoted by $\# A$.

For a Borel measurable function $f: G \to \CC$, the integral of $f$ with respect to  the Haar measure is denoted, interchangeably, by  $\int_G f$ or $ \int_G f(y) \d{y}$. 
We denote by $L^p(G)$ the space $L^p(G, m_G)$. For $ f\in L^p(G)$, 
we write 
\[
\| f \|_p = \Big(  \int_G |f(x)|^p \d{x} \Big)^{1/p}.
\]

We also denote by $C(G)$ the Banach space of complex-valued continuous functions $f: G \to \CC$, equipped with the supremum norm.  
For $f, g \in L^1(G)$ the convolution $f \ast g$ is defined by 
\begin{equation}\label{eq:def-conv-func}
  ( f \ast g) (x)= \int_G f(y) g( y^{-1} x) \d{y}. 
\end{equation}
It is a fact that $(L^1(G), +, \ast)$ is a Banach algebra and if $f \in L^1(G)$ is a class function, then $f$ is in the center of this Banach algebra.

For Borel measures $\mu$ and $\nu$ on $G$, the convolution $\mu\ast \nu$ is the unique Borel measure on $G$ such that for all $f\in C(G)$,
\[ 
\int_G f \d{(\mu \ast \nu)} = \int_G \int_G f(xy) \d\mu(x) \d{\nu}(y).
\]
For a Borel measure $\mu$ on $G$ and $f \in L^1(G)$, the convolution $ \mu \ast f$ is defined by   
\begin{equation}\label{eq:def-conv-measure}
(\mu \ast f)(x) = \int_G f(y^{-1} x) \d{\mu}(y). 
\end{equation}
The following special cases of Young's inequality will be freely used in this paper: for $f, g \in L^2(G)$ and probability measure $\mu$,
\be\label{eq:Young-ineq}
\|  f \ast g   \|_2 \le \| f \|_1 \ \| g \|_2, \quad    \| f \ast g  \|_{\infty}   \le \| f \|_2 \ \| g \|_2, \quad \| \mu \ast f \|_2 \le \| f \|_2.
\ee

Let $G$ be a compact Hausdorff second countable topological group. When $\H$ is a Hilbert space and $T: \H \to \H$ is a bounded linear operator, we define the operator norm of $T$ by 
\[ \| T \|_{\op}:= \sup_{ v \in \H \setminus \{ 0 \}  } \frac{ \| Tv \|}{ \| v \|}. \]
When $\H$ is finite-dimensional, the Hilbert-Schmidt norm of $T$ is defined by 
\[ \| T \|_{\HS} := ( \tr (  TT^{\ast}) ) ^{1/2}, \]
where $T^{\ast}$ denotes the conjugate transpose of the operator $T$. Note that when $S$ and $T$ are linear operators on a finite-dimensional Hilbert space $\H$, the following 
inequality holds 
\[ \| TS \|_{\HS} \le \| T \|_{\op}  \|  S \|_{\HS}. \]

\subsection{The Peter-Weyl theorem}
The set of equivalence classes of irreducible unitary representations of $G$ is called the unitary dual of $G$ and is denoted by $ \widehat{G}$. 

The group $G$ acts on $L^2(G)$  via $ (g \cdot f)( x) = f( g^{-1}x)$, preserving the $L^2$-norm. Hence, it defines a unitary representation of $G$ on $L^2(G)$, 
the regular representation of $G$. 

Let us enumerate a number of well known facts about unitary representations of $G$. It is well known that every $ \pi \in \widehat{G}$ is of finite dimension, and that every unitary representation of $G$ can be decomposed as an orthogonal direct sum of $ \pi \in \widehat{G}$. 
A function $f \in  L^2(G)$ is called $G$-finite if there exists a finite-dimensional $G$-invariant subspace of $L^2(G)$ containing $f$. It is clear that 
$G$-finite functions form a  subspace of $L^2(G)$. We will denote this subspace by $\mathcal{E}(G)$. It follows from the classical theorem of Peter-Weyl that $\mathcal{E}(G) \subseteq C(G)$
and that $\mathcal{E}(G)$ is dense in $L^2(G)$.

For  $\pi \in \widehat{G}$ and $f \in L^1(G)$, the Fourier coefficient $ \widehat{f}(\pi)$ is defined by 
\[ \widehat{f}(\pi)= \int_G f(g) \pi(g)^{\ast} \d\mu(g). \]
One can show that for $f, g \in L^1(G)$ and $\pi \in  \widehat{G}$, we have
\[ 
\widehat{f \ast g} (  \pi)=  \widehat{g}( \pi) \widehat{f}(\pi). 
\]
Parseval's theorem states that for all $f \in L^2(G)$ the following identity holds:
\[ 
\| f \|_2^2= \sum_{\pi\in \widehat{G} } \dim \pi \h \|\widehat{f}(\pi)\|_{\HS}^2. 
\]

\subsection{Spectral Gap}

Let $\mu$ be a Borel probability measure on $G$ and $(X_i)_{i \ge 1}$ be a sequence of independent  $G$-valued random variables with probability law $\mu$. An $\ell$-step random walk on $G$ with respect to $\mu$ is given by the random variable 
\[X^{(\ell)} := X_1 \cdots X_\ell.\] 
Let us note that the law of $X^{(\ell)}$ is given by the $\ell$-fold convolution $\mu^{(\ell)}$ of $\mu$. 
Assume that $\mu$ is symmetric and that the subgroup generated by the support of $\mu$ is dense in $G$. 
Consider the averaging operator
\[ T_{\mu}: L^2_0(G) \to L^2_0(G), \quad T_{\mu} (f)= \mu \ast f,  \]
 where $ L^2_0(G):=\{g\in L^2(G)\mid \int f=0\}$.

\begin{definition}
We say that $\mu$ has \emph{the spectral gap property} if  
\[ \lambda(\mu;G):= \| {T_{\mu}} \|_{\op}  <1. \]
\end{definition}

More generally, for a subrepresentation $(\pi, \cal_\pi)$ of 
$L^2_0(G)$, we let $$\lambda(\mu;\cal_\pi):= \|T_{\mu}|_{\cal_{\pi}}\|_{\op} \quad \text{and} \quad \lcal(\mu;\cal_\pi):=-\log \lambda(\mu;\cal_\pi).$$

\subsection{Metric and R\'enyi entropy} 
In this subsection, we will collect a number of definitions from additive combinatorics and \cite{MMSG} that will be needed later. Let $G$ be as above, and let $d$ denote a bi-invariant metric on $G$.  The ball of radius $ \eta>0$ centered at $x \in G$ is denoted by $x_{\eta}$. The $ \eta$-neighborhood of a set $A$, denoted by $A_\eta$, is the union of all $x_{\eta}$ with $x \in A$.    

A subset $A \subseteq G$ is said to be  $\eta$-separated if the distance between every two points in $A$ is at least $\eta$. An $\eta$-cover for $A$ is a collection of balls of radius 
$\eta$ with centers in $A$ whose union covers $A$.  Recall that the minimum size of an $\eta$-cover of $A$ (which is finite by compactness of $G$) is denoted by $\ncal_{\eta}(A)$.
The value 
$$ h(A;\eta):=\log \ncal_{\eta}(A)$$ is called the metric entropy of $A$ at scale $\eta$. 

The characteristic function of a set $A$ is denoted by $\cf_A$. For $ \eta >0$, we write $P_{ \eta}= \frac{\cf_{1_{\eta} }}{|1_{\eta}|}$. Note that $P_\eta$ belongs to the center
of the Banach algebra $L^1(G)$.  For $ f\in L^1(G)$ ($\mu$ a probability measure on $G$, respectively) we write $f_{\eta}$ ($\mu_\eta$, respectively) instead
of $f \ast P_{\eta}$ ($\mu \ast P_{\eta}$, respectively).  

The R\'{e}nyi entropy of a $G$-valued Borel random variable $X$ at scale $\eta>0$ is defined by 
\[
H_2(X;\eta):=\log(1/|1_{\eta}|)-\log \|\mu_{\eta}\|_2^2,
\]
where $\mu$ is the probability law of $X$. As $H_2(X;\eta)$ depends only on the law $\mu$ of $X$, we will sometimes write $H_2(\mu;\eta)$ instead of
$H_2(X;\eta)$. 

Let us also recall~\cite[Definition 8.7]{MMSG}:

\begin{definition}\label{def:fun-at-scale}
We say $f\in L^2(G)$ \emph{lives at scale} $\eta$ \emph{(with parameter $0<a<1$)} if 
\begin{itemize}
	\item (Averaging to zero) $\|f_{\eta^{1/a}}\|_2\le \eta^{1/(2a)} \|f\|_2$.
	\item (Almost invariant) $\|f_{\eta^{a^2}}-f\|_2\le \eta^{a/2} \|f\|_2$.
\end{itemize} 
\end{definition}

\subsection{Local randomness and the dimension condition}\label{sec: local randomness}
In this subsection, we will recall the definitions of local randomness and the dimension condition from \cite{MMSG}.
\begin{definition}\label{def:dim-cond-intro}
Let $G$ be a compact group equipped with a compatible metric $d$. We say $(G,d)$ satisfies a dimension condition {$\Dim(C_1, d_0)$} if 
there exist $C_1 \ge 1$ and $d_0>0$ such that for all $\eta \in (0, 1 )$ the following bounds hold.
\be\label{eq:dimension-condition-intro}\tag{$\Dim$}
\frac{1}{C_1}\eta^{d_0} \le |1_{\eta}| \le C_1 \eta^{d_0}. 
\ee
\end{definition}

If the investigation involves two groups $G_1$ and $G_2$, 
we will distinguish their corresponding constants by an additional subscript, e.g., $C_{11}$ and $C_{12}$.  

\begin{definition}\label{def:local-random-with-metric}
	Suppose $G$ is a compact group and $d$ is a compatible bi-invariant metric on $G$. For parameters $C_0 \ge 1$ and $L \ge 1$, we say $(G,d)$ is $L$\emph{-locally random} with coefficient $C_0$ if for every irreducible unitary representation $\pi$ of $G$ and all $x,y\in G$ the following inequality holds:
	\be\label{eq:L-loc-rand-given-scale}
	\|\pi(x)-\pi(y)\|_{\op}\le C_0 (\dim \pi)^L d(x,y).
	\ee
	
We say a compact group $G$ is \emph{locally random} if $(G,d)$ is $L$-locally random with coefficient $C_0$ for some bi-invariant metric $d$ on $G$,  and some values of  $L$ and $C_0$.  
	\end{definition}

\subsection{Standard metrics on analytic Lie groups}\label{sec: metric p-adic real Lie}

We will be primarily interested in compact analytic real and $p$-adic Lie groups with simple Lie algebras. Let $G$ be such a group, a bi-invariant metric $d$ on $G$ will be said to be {\em standard} if $\diam(G)\leq 1$ and the following properties are satisfied: 
\begin{enumerate}
    \item If $G$ is a compact real Lie group, let $d_0$ be the metric induced from the Killing form. We assume there is $c_{d}\geq 1$ so that 
    \[
    c_{d}^{-1} d_0(g,1)\leq d(g,1)\leq c_{d} d_0(g,1)\quad\text{for all $g\in G$}
    \]
    For instance, if $G\subseteq \SO_n(\bbr)$, the metric induced by the operator norm satisfies the above property.  
    \item In the $p$-adic case, we assume $G\subseteq \SL_n(\bbz_p)$ and take $d$ to be the metric induced from the operator norm on $\SL_n(\bbz_p)$. Note that in this case: 
    \begin{enumerate}
        \item $1_\eta$ is a subgroup for every $0<\eta\leq 1$, and 
        \item the condition~\ref{eq:dimension-condition-intro}$(C,d_0)$ holds for some positive constants $C$ and $d_0=\dim G$.
    \end{enumerate}  
\end{enumerate}

\begin{lem}\label{lem: locally random family}
Both of the following hold.  
\begin{enumerate}
\item Let $G$ be a real and $p$-adic Lie groups with simple Lie algebra, then $G$ is $L$-locally random with coefficient $C_0$.

\item Let $ \bbg$ be an absolutely almost simple simply connected $\bbq$-algebraic group.
Let $ \Gamma \subseteq \bbg( \bbq)$ be a 
finitely generated Zariski dense subgroup in $ \bbg$.
Denote by $V_{ \Gamma }$  the set of all places $\nu$ of $ \bbq$ such that $\Gamma$ is a bounded subset of $ \bbg( \bbq_\nu)$.
For distinct $\nu_1, \nu_2 \in V_{ \Gamma}$, let $\Gamma_{\nu_1, \nu_2}$ denote the closure of $\Gamma$ in 
$ \bbg( \bbq_{\nu_1} ) \times \bbg( \bbq_{\nu_2} )$, and assume that $\Gamma_{\nu_1,\nu_2}=\Gamma_{\nu_1}\times\Gamma_{\nu_2}$. Then $\Gamma_{\nu_1,\nu_2}$ is $L$-locally random with coefficient $C_0$ where $L$ and $C_0$ depend only on $\Gamma$.
\end{enumerate}
\end{lem}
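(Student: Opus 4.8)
\textbf{Proof proposal for Lemma~\ref{lem: locally random family}.}

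The plan is to treat the two parts in turn, reducing both to the key local estimate \eqref{eq:L-loc-rand-given-scale} on how fast an irreducible representation can move points, measured in terms of the dimension of the representation.

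For part (1), I would first fix the standard metric $d$ described in \S\ref{sec: metric p-adic real Lie}: the operator-norm (equivalently Killing-form) metric in the real case, and the operator-norm metric inherited from $\SL_n(\bbz_p)$ in the $p$-adic case. In the real case, one uses that a compact Lie group with simple Lie algebra has, up to finite cover and finite center, a unique faithful-enough representation theory controlled by the highest weight: for an irreducible $\pi$ with highest weight $\lambda$, the operator norm $\|\pi(\exp X)-\Id\|_{\op}$ is bounded by $\|d\pi(X)\|_{\op}\le (\text{const})\cdot\|\lambda\|\cdot\|X\|$, and one relates $\|\lambda\|$ polynomially to $\dim\pi$ via the Weyl dimension formula (which gives $\dim\pi \gg \prod_{\alpha>0}\langle\lambda,\alpha\rangle$, hence $\|\lambda\|\ll (\dim\pi)^L$ for a suitable $L$ depending only on the root system). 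Integrating along a geodesic from $1$ to $g$ then yields \eqref{eq:L-loc-rand-given-scale} for $x=1$, and bi-invariance upgrades it to all $x,y$. In the $p$-adic case, the mechanism is different but cleaner: the balls $1_\eta$ form a descending chain of open compact (normal-in-$1_{\eta'}$) subgroups, and for $g\in 1_\eta$ one can write $g=\exp X$ with $\|X\|_{\op}\le\eta$ (for $\eta$ small), so $\pi(g)$ is unipotent-close-to-identity; the relevant input is that an irreducible representation of $G$ that is nontrivial on $1_\eta$ has dimension at least a power of $\eta^{-1}$ (a standard consequence of the fact that $1_\eta/1_{\eta^2}$ is an abelian group of order $\eta^{-\dim G}$ on which the representation must act with controlled multiplicities). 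This last bound is exactly what makes $\|\pi(x)-\pi(y)\|_{\op}$ small compared to $(\dim\pi)^L d(x,y)$. I would either cite the corresponding statement from \cite{MMSG} if it is available there, or assemble it from the Weyl dimension formula plus Kirillov-orbit-type counting; I expect this representation-theoretic dimension lower bound to be the technical heart of part (1).

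For part (2), the point is that $\Gamma_{\nu_1,\nu_2}$ is, by the splitting hypothesis $\Gamma_{\nu_1,\nu_2}=\Gamma_{\nu_1}\times\Gamma_{\nu_2}$, a direct product of two compact groups, each of which is an open compact subgroup of $\bbg(\bbq_{\nu_i})$ and hence a compact analytic Lie group with simple Lie algebra (simplicity over $\bbq_{\nu_i}$ follows since $\bbg$ is absolutely almost simple). So each factor $\Gamma_{\nu_i}$ is $L_i$-locally random with coefficient $C_{0,i}$ by part (1), with the metric the restriction of the operator-norm metric; equip the product with the max (or sum) of the two metrics. The remaining issue is that the irreducible representations of a product are tensor products $\pi_1\boxtimes\pi_2$, so one needs $\|\pi_1(x_1)\otimes\pi_2(x_2)-\pi_1(y_1)\otimes\pi_2(y_2)\|_{\op}\le C_0(\dim\pi_1\dim\pi_2)^L d\bigl((x_1,x_2),(y_1,y_2)\bigr)$; this follows by the triangle inequality, inserting the mixed term $\pi_1(y_1)\otimes\pi_2(x_2)$, using $\|A\otimes B\|_{\op}=\|A\|_{\op}\|B\|_{\op}$ and $\|\pi_i\|_{\op}=1$ (unitarity), and then applying part (1) to each factor, with $\dim\pi_i\le\dim\pi_1\dim\pi_2$. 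The only genuine subtlety is uniformity of the constants: the groups $\Gamma_{\nu_i}$ vary with $\nu$, so I need the $L$ and $C_0$ from part (1) to depend only on the root system / Lie algebra type of $\bbg$ (which is fixed once $\bbg$ and hence $\Gamma$ are fixed) and not on the residue characteristic. The real-case bound is clearly type-dependent only; for the $p$-adic case one must check that the implied constants in the dimension lower bound and in the exponential coordinate estimate can be taken independent of $p$ for $p$ large, and absorb the finitely many small primes into $C_0$. This $p$-uniformity is the main obstacle in part (2), and I would handle it by noting that for all but finitely many $\nu$ one has $\Gamma_\nu=\bbg(\bbq_\nu)\cap\SL_N(\bbz_\nu)$ with $\bbg$ smooth over $\bbz_\nu$, so the exponential map and the subgroup structure of the congruence filtration are uniform in $p$, giving uniform constants there, while the finitely many exceptional $\nu$ contribute only to enlarging $C_0$ by a constant depending on $\Gamma$.
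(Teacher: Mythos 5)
Your overall route matches the paper's: part (1) is a representation-theoretic estimate relating $\|\pi(g)-\pi(1)\|_{\op}$ to $\dim\pi$ and $d(g,1)$ (the paper simply cites~\cite[Section~5]{MMSG} for this), and part (2) reduces to part (1) via the splitting $\Gamma_{\nu_1,\nu_2}=\Gamma_{\nu_1}\times\Gamma_{\nu_2}$ and the fact that local randomness passes to products (the paper's~\cite[Lemma~5.2]{MMSG}, which is exactly your triangle-inequality-plus-tensor observation). You also correctly pinpoint that the crux of part (2) is uniformity of the constants over $\nu\in V_\Gamma$.

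Where your proposal has a genuine gap is in how you close that uniformity argument. You write that because for almost all $\nu$ one has $\Gamma_\nu=\bbg(\bbq_\nu)\cap\SL_N(\bbz_\nu)$ with $\bbg$ smooth over $\bbz_\nu$, ``the exponential map and the subgroup structure of the congruence filtration are uniform in $p$, giving uniform constants there.'' But uniformity of the filtration and of exponential coordinates does \emph{not} by itself give a $p$-uniform version of the estimate~\eqref{eq:L-loc-rand-given-scale}: what is needed is a $p$-uniform lower bound of the form $\dim\rho\gg p^{c\ell(\rho)}$ (level-vs-dimension) for irreducible representations of $\Gamma_\nu$, and that is a representation-theoretic fact about Chevalley groups over $\bbz/p^k\bbz$ which must be proved (or cited), not deduced from the smoothness of the group scheme. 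Moreover, $\Gamma_\nu$ need not equal the full hyperspecial maximal compact even for all $\nu$ in $V_\Gamma$, so the argument must also be stable under passing to the specific open subgroup cut out by $\Gamma$. The paper supplies exactly this missing input by invoking~\cite[Proposition~19]{SG-super-approx-II}, which gives $\ell(\rho)<\dim(\rho)^A$, with $A$ depending only on $\Gamma$, for all but finitely many representations of the global closure $\hat\Gamma\subset\prod_{\nu}\bbg(\bbz_\nu)$; specializing to each factor then yields that each $\Gamma_\nu$ is $(\bar C,A)$-metric quasi-random in the sense of~\cite[Definition~5.7]{MMSG}, and~\cite[Proposition~5.9]{MMSG} converts this into local randomness with constants depending only on $\Gamma$. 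Your sketch names the right obstacle but asserts rather than proves the level-vs-dimension bound that does the actual work; to complete the argument you would need to either reproduce a uniform-in-$p$ dimension lower bound for these congruence quotients or appeal, as the paper does, to a packaged result of that type.
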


\begin{proof}
Part~(1) is proved in~\cite[Section 5]{MMSG}.

We now turn to the proof of part~(2). In view of~\cite[Lemma 5.2]{MMSG} and the fact that $\Gamma_{\nu_1,\nu_2}=\Gamma_{\nu_1}\times \Gamma_{\nu_2}$. It suffices to prove that $\Gamma_\nu$ is $L'$-locally random with coefficient $C'_0$ where $L'$ and $C_0'$ depend only on $\Gamma$ for all $\nu\in V_\Gamma$. To see this, let $\hat\Gamma$ denote the closure of $\Gamma$ in $\prod_{\nu\in V_{\Gamma,{\rm f}}}\bbg(\bbz_{\nu})$, where $ V_{\Gamma,{\rm f}}=V_\Gamma\setminus\{\infty\}$. Then  by~\cite[Proposition 19]{SG-super-approx-II}, for all but finitely many representations $\rho$ of $\hat\Gamma$,  we have  
\[
\ell(\rho) < \dim(\rho)^A
\]
where $A$ depends only on $\Gamma$ and $\ell(\rho)$ denotes the smallest integer $k$ so that $\rho$ factors through $k$-th congruence quotient of $\hat \Gamma$.  

Let now $\bar C$ denote the maximum of levels of the finitely many exceptional representations (in the above sense); then $\Gamma_\nu$ is $(\bar C,A)$-metric quasi random in sense of~\cite[Definition 5.7]{MMSG}. This and~\cite[Proposition 5.9]{MMSG} finish the proof.  
\end{proof}

\subsection{The transportation problem and coupling of measures}\label{sec: transport}

Recall that a coupling of probability measures $\mu_1$ and $\mu_2$ defined on probability spaces $( \Omega_1, \B_1, \mu_1)$ and $(\Omega_2, \B_2, \mu_2)$ is a probability measure $\mu$ on the product probability space $( \Omega_1 \times \Omega_2, \B_1 \otimes \B_2)$ such that 
$$\pr_i(\mu)=\mu_i$$
holds for $i=1,2$.  The set of all couplings of the probability measures $\mu_1$ and $\mu_2$ is denoted by $ \cpl(\mu_1, \mu_2)$. 
A simple example of coupling of  two measures $ \mu_1$ and $\mu_2$  is the product measure  $ \mu_1 \otimes \mu_2$. There are, however, 
many more examples. For instance, when $\Omega_1$ and $\Omega_2$ are finite sets, respectively of cardinality $n_1$ and $n_2$ elements,
then $ \cpl(\mu_1, \mu_2)$ is a convex set of dimension $(n_1-1)(n_2-1)+1$. 
When the probability spaces $\Omega_1, \Omega_2$ are finite, the question of determining the couplings has been of interest in operation research. In the special case that $ \Omega_1$ and $\Omega_2$ have the same cardinality, couplings of $\mu_1$ and $\mu_2$ correspond to doubly stochastic matrices. It is a well-known theorem that every doubly stochastic matrix can be expressed as a convex combination of 
permutation matrices. We will use a less well-known generalization of this result, which is established in~\cite{Klee-Witzgall}.

\begin{proposition}\label{prop: finding coupling for finite sets}
Let $Y_1$ and $Y_2$ be two finite sets with $|Y_i|=N_i>1$, $i=1,2$. Let $\tilde\mu$ be a probability measure on $Y_1\times Y_2$ which satisfies the following: there exists some $A> 2$ so that
\be\label{eq: marginal close to uniform}
\bigl|\pi_i\tilde\mu(y)-\tfrac{1}{N_i}\bigr|\leq \tfrac{1}{(N_1N_2)^A}\qquad\text{for all $y\in Y_i$ and $i=1,2$},
\ee
where $\pi_i$ denotes the projection onto the $i$-th coordinate. 
Then there exists a coupling $\tilde\nu$ of the uniform measures on $Y_i$ so that
\[
|\tilde\mu(y_1,y_2)-\tilde\nu(y_1,y_2)|\leq \tfrac{1}{(N_1N_2)^{A-1}}.
\] 
\end{proposition}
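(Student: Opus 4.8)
The plan is to reduce the statement to the classical Birkhoff--von Neumann theorem via a perturbation argument. First I would renormalize the marginals: set $p_i(y) := \pi_i\tilde\mu(y)$ for $y \in Y_i$, so that by \eqref{eq: marginal close to uniform} each $p_i$ differs from the uniform measure $u_i$ on $Y_i$ by at most $(N_1N_2)^{-A}$ in the sup norm, hence by at most $N_i(N_1N_2)^{-A}$ in total variation. The difficulty is that $\tilde\mu$ is a coupling of the slightly-off marginals $p_1,p_2$, not of the uniform ones, so it is not itself a convex combination of ``permutation-type'' couplings. The idea is to first correct $\tilde\mu$ to a genuine coupling $\tilde\mu'$ of $u_1$ and $u_2$ by adding a small signed correction supported where the marginals are deficient, then apply a Birkhoff-type decomposition to $\tilde\mu'$, and finally round that decomposition to obtain $\tilde\nu$, controlling the error at each stage.

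In more detail, the key steps in order are as follows. \textbf{Step 1 (marginal correction).} Write $\tilde\mu = \tilde\mu' + E$ where $\tilde\mu'$ has both marginals uniform and $E$ is a signed measure with $\|E\|_1 \ll N_1 N_2 (N_1N_2)^{-A} = (N_1N_2)^{1-A}$; such a correction exists because the marginal discrepancies $u_i - p_i$ have total mass at most $N_i (N_1N_2)^{-A}$ each, and one can absorb them by a standard transportation/bipartite-flow construction (e.g.\ distribute the deficit of $p_1$ uniformly in the second coordinate, similarly for $p_2$, then reconcile; the total mass moved is bounded by the sum of the two discrepancies). One must check that $\tilde\mu'$ can be taken nonnegative: since $\min_y \tilde\mu(y_1,y_2)$ could be $0$, but the correction $E$ has entries of size at most $O((N_1N_2)^{1-A}/(N_1N_2)) = O((N_1N_2)^{-A})$ \emph{per entry} if spread out, whereas... actually this is the delicate point — one should instead argue that $\tilde\mu'$ need not be nonnegative at this stage and defer positivity to the rounding step, or alternatively observe that if some entries of $\tilde\mu'$ are slightly negative they are negative by at most $O((N_1N_2)^{1-A})$ and can be clipped with a further error of that order. \textbf{Step 2 (Birkhoff decomposition).} Apply the generalization of Birkhoff--von Neumann from \cite{Klee-Witzgall} (valid for rectangular doubly stochastic arrays with possibly $N_1 \neq N_2$) to write $\tilde\mu' = \sum_j \lambda_j P_j$ with $\lambda_j \geq 0$, $\sum_j \lambda_j = 1$, and each $P_j$ an extreme point of $\cpl(u_1,u_2)$. \textbf{Step 3 (conclusion).} Take $\tilde\nu := \tilde\mu'$ itself (it is already a coupling of the uniform measures), and estimate
\[
|\tilde\mu(y_1,y_2) - \tilde\nu(y_1,y_2)| = |E(y_1,y_2)| \leq \|E\|_\infty \leq \tfrac{1}{(N_1N_2)^{A-1}},
\]
where the last inequality comes from the explicit bound on $E$ in Step 1. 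The role of \cite{Klee-Witzgall} is then really only to guarantee that $\cpl(u_1,u_2)$ is nonempty and well-behaved, i.e.\ that a correction to an honest coupling exists; in fact the cleanest route may be to skip the decomposition entirely and just exhibit $\tilde\nu$ as the marginal-corrected measure.

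The main obstacle I expect is Step 1: producing the correction $E$ with a sup-norm (not just total-variation) bound of order $(N_1N_2)^{1-A}$ while keeping $\tilde\nu = \tilde\mu - E$ a genuine nonnegative coupling of the uniform measures. The total-variation bound is immediate, but the per-entry bound $\|E\|_\infty \leq (N_1N_2)^{1-A}$ requires spreading the marginal deficits across many entries rather than concentrating them; one natural construction is $E(y_1,y_2) = (p_1(y_1)-u_1(y_1))\tfrac{1}{N_2} + \tfrac{1}{N_1}(p_2(y_2)-u_2(y_2)) - (\text{second-order term to fix both marginals})$, whose entries are each $O((N_1N_2)^{-A})$ — comfortably within the claimed bound — but one then needs $\tilde\mu - E \geq 0$, which holds provided $\tilde\mu(y_1,y_2) \geq O((N_1N_2)^{-A})$ is \emph{not} assumed; resolving this may require the full strength of \cite{Klee-Witzgall} to redistribute near-zero entries, or a slightly more careful bookkeeping showing the negative part of $\tilde\mu - E$ contributes only $O((N_1N_2)^{1-A})$ in total and can be truncated. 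Everything after Step 1 is routine.
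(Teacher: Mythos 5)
Your proposal takes a genuinely different route from the paper, and the gap you yourself flag at the end is a real one that your argument does not close.

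The paper's proof does not correct the marginals directly. It invokes the Klee--Witzgall theorem in an essential, structural way: it decomposes $\tilde\mu$ as a convex combination $\sum_\tau c_\tau M^\tau_{\tilde\mu_1,\tilde\mu_2}$ over spanning trees $\tau$ of $K_{Y_1,Y_2}$, then defines $\tilde\nu := \sum_\tau c_\tau M^\tau_{m_{Y_1},m_{Y_2}}$ by replacing the marginals \emph{inside each extreme point} while keeping the same convex weights $c_\tau$. Two things then happen simultaneously and cheaply: (i) each $M^\tau_{m_{Y_1},m_{Y_2}}$ is a genuine nonnegative coupling of the uniform measures because the hypothesis $A>2$ forces the set inclusion $\T(\tilde\mu_1,\tilde\mu_2)\subset\T(m_{Y_1},m_{Y_2})$ of ``good'' spanning trees, so $\tilde\nu$ is automatically a nonnegative coupling; and (ii) the per-entry error $|M^\tau_{\tilde\mu_1,\tilde\mu_2}-M^\tau_{m_{Y_1},m_{Y_2}}|$ is controlled directly from the definition of $M^\tau$ in terms of partial sums of the marginals, which is exactly the $\ell^\infty$ bound you need. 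So Klee--Witzgall is the engine that delivers nonnegativity and the per-entry bound at the same time; it is not, as you suggest near the end, ``really only to guarantee that $\cpl(u_1,u_2)$ is nonempty.''

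Your alternative route --- subtract a linear correction $E(y_1,y_2)=(p_1(y_1)-\tfrac1{N_1})/N_2+(p_2(y_2)-\tfrac1{N_2})/N_1$ to uniformize the marginals --- does give a signed measure $\tilde\mu-E$ with uniform marginals, total mass one, and per-entry change $O((N_1N_2)^{-A})$. (Incidentally no ``second-order term'' is needed: a direct check shows both marginals are already exactly uniform.) But the construction fails exactly where you predict: $\tilde\mu-E$ can be negative on entries where $\tilde\mu$ vanishes, and neither of your two suggested repairs works as stated. Clipping the negative part destroys the marginal constraints, and rebalancing by a transportation argument moves mass between entries in a way that has no a priori $\ell^\infty$ control --- you only control the \emph{total} displaced mass, not where it lands, and concentrating it on a few entries would blow the per-entry bound. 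Quantifying and distributing that repair is precisely the nontrivial combinatorial content that the spanning-tree decomposition supplies; without it, Step 1 of your outline is incomplete and the proof does not go through.
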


We begin by fixing some notation. 
Let $T(Y_1,Y_2)$ denote the set of spanning trees of the complete bipartite graph $K_{Y_1,Y_2}$.
Given probability measures $\sigma_i$ on $Y_i$, for $i=1,2$, and a spanning tree $\tau\in T(Y_1,Y_2)$, define 
\[
M_{\sigma_1,\sigma_2}^\tau: Y_1\times Y_2\to\bbr\qquad\text{by} \qquad M_{\sigma_1,\sigma_2}^\tau(y_1,y_2):=\sigma_1(Y_1')-\sigma_2(Y_2')
\] 
where $Y_i'\subset Y_i$ and $Y'_1\cup Y'_2$ is the connected component of $\tau\setminus \overline{y_1y_2}$ that contains $y_1$; 
as usual, $ \overline{y_1y_2}$ denotes the edge connecting $y_1$ to $y_2$. Put
\be\label{eq: def T sigma1 sigma2}
\T(\sigma_1,\sigma_2):=\{\tau\in T(Y_1,Y_2): M^\tau_{\sigma_1,\sigma_2}\geq 0\}.
\ee

The proof of Proposition~\ref{prop: finding coupling for finite sets} is based on the following.   

\begin{theorem}[~\cite{Klee-Witzgall}]\label{thm: KW Transport}
Let $\sigma\in\cpl(\sigma_1,\sigma_2)$. Then $\sigma$ belongs to the convex hull of 
\[
\{M^\tau_{\sigma_1,\sigma_2}: \tau\in \T(\sigma_1,\sigma_2)\}.
\] 
\end{theorem}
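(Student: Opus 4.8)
The plan is to recognize $\cpl(\sigma_1,\sigma_2)$ as a transportation polytope and to determine its extreme points. Identify a coupling with a matrix $\sigma=(\sigma(y_1,y_2))$ having nonnegative entries, row sums $\sigma_1(y_1)$, and column sums $\sigma_2(y_2)$. Then $\cpl(\sigma_1,\sigma_2)$ is a bounded polyhedron in $\bbr^{Y_1\times Y_2}$ --- it lies in $[0,1]^{Y_1\times Y_2}$ and is cut out by finitely many affine equalities together with the inequalities $\sigma(y_1,y_2)\ge 0$ --- hence a polytope, and therefore equals the convex hull of its finitely many extreme points. So it suffices to prove two things: (i) $M^\tau_{\sigma_1,\sigma_2}\in\cpl(\sigma_1,\sigma_2)$ for every $\tau\in\T(\sigma_1,\sigma_2)$; and (ii) every extreme point of $\cpl(\sigma_1,\sigma_2)$ equals $M^\tau_{\sigma_1,\sigma_2}$ for some $\tau\in\T(\sigma_1,\sigma_2)$. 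Granting both, $\cpl(\sigma_1,\sigma_2)=\mathrm{conv}\{\text{extreme points}\}$ is squeezed between $\mathrm{conv}\{M^\tau_{\sigma_1,\sigma_2}:\tau\in\T(\sigma_1,\sigma_2)\}$ and $\cpl(\sigma_1,\sigma_2)$, which gives the claim.

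For (i): if $\overline{y_1y_2}$ is not an edge of $\tau$, then $\tau\setminus\overline{y_1y_2}=\tau$ is connected, so $Y_1'=Y_1$, $Y_2'=Y_2$, and $M^\tau_{\sigma_1,\sigma_2}(y_1,y_2)=\sigma_1(Y_1)-\sigma_2(Y_2)=0$; hence $M^\tau_{\sigma_1,\sigma_2}$ is supported on the edges of $\tau$, and it is nonnegative by the definition of $\T(\sigma_1,\sigma_2)$. It remains to check the marginals, a bookkeeping exercise on the tree. Fix a row vertex $y_1$ with tree-neighbors $z_1,\dots,z_k$; deleting the edge $\overline{y_1z_j}$ from $\tau$ leaves the component of $y_1$ equal to $y_1$ together with all branches of $\tau\setminus\{y_1\}$ except the one through $z_j$. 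Summing the resulting formulas for $M^\tau_{\sigma_1,\sigma_2}(y_1,z_j)$ over $j$ and simplifying with the identity $\sigma_1(Y_1)=\sigma_2(Y_2)=1$ (each branch contribution appears $k-1$ times and is killed by this identity) collapses everything to $\sigma_1(y_1)$. The column marginals come out the same way, in fact more directly, because for a column vertex the component appearing in the definition of $M^\tau_{\sigma_1,\sigma_2}$ is a single branch. Thus $M^\tau_{\sigma_1,\sigma_2}$ is a coupling of $\sigma_1$ and $\sigma_2$.

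For (ii): let $\sigma$ be an extreme point and let $G_\sigma$ be the bipartite support graph of $\sigma$ on $Y_1\sqcup Y_2$. If $G_\sigma$ contained a cycle --- necessarily of even length, since the graph is bipartite --- then placing $+1$ and $-1$ alternately on its edges gives a nonzero matrix $\Delta$ with all row and column sums zero and $\supp(\Delta)\subseteq\supp(\sigma)$, so $\sigma\pm\varepsilon\Delta\in\cpl(\sigma_1,\sigma_2)$ for small $\varepsilon>0$, contradicting extremality. Hence $G_\sigma$ is a forest, and we may extend it to a spanning tree $\tau$ of the connected graph $K_{Y_1,Y_2}$ by adjoining edges that create no cycle. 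A matrix supported on the edges of a tree is determined by its row and column sums (the flow on a leaf edge is forced; delete that leaf and induct), so the coupling $\sigma$, being supported on $\tau$ with marginals $\sigma_1,\sigma_2$, must equal $M^\tau_{\sigma_1,\sigma_2}$ by part (i); and $\sigma\ge 0$ then forces $\tau\in\T(\sigma_1,\sigma_2)$.

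The step I expect to be the main obstacle is the marginal computation in (i): one must track sums of $\sigma_i$-mass over the branches of a bipartite tree hanging off a chosen vertex and verify the exact cancellation that reproduces the prescribed marginal, while keeping straight the row/column asymmetry built into the definition of $M^\tau_{\sigma_1,\sigma_2}$ (it is always the component of the \emph{first} argument, a row vertex, that is used). The remaining ingredients --- the reduction to extreme points via polytope theory, the cycle-perturbation characterization of non-extreme points, and the uniqueness of a tree-supported flow with given marginals --- are routine.
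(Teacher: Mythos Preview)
The paper does not supply its own proof of this theorem; it is quoted as a result of Klee--Witzgall and used as a black box in the proof of Proposition~\ref{prop: finding coupling for finite sets}. Your argument is a correct, self-contained proof via the standard description of extreme points of a transportation polytope: the marginal computation in (i) checks out exactly as you outlined (the row-sum telescopes to $\sigma_1(y_1)$ and the column-sum to $\sigma_2(y_2)$), and the cycle-perturbation plus tree-uniqueness reasoning in (ii) is the classical argument. So there is nothing to compare --- you have supplied a proof where the paper only gives a reference.
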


\begin{proof}[Proof of Proposition~\ref{prop: finding coupling for finite sets}]
Let us denote the uniform measure on $Y_i$ by $m_{Y_i}$, and write $\tilde\mu_i=\pi_i(\tilde\mu)$ for $i=1,2$. 
We first show the following: 
\be\label{eq: polytope of uniform}
\T(\tilde\mu_1, \tilde\mu_2)\subset \T(m_{Y_1}, m_{Y_2}). 
\ee

To see~\eqref{eq: polytope of uniform}, let $\tau\in\T(\tilde\mu_1, \tilde\mu_2)$. Then  
\begin{align*}
M^{\tau}_{m_{Y_1}, m_{Y_2}}(y_1,y_2)&=m_{Y_1}(Y'_1)-m_{Y_2}(Y'_2)\\
&=M^{\tau}_{\tilde\mu_1,\tilde\mu_2}(y_1,y_2)+\bigl(m_{Y_1}(Y'_1)-\tilde\mu_1(Y'_1)\bigr)-\bigl(m_{Y_2}(Y'_2)-\tilde\mu_2(Y'_2)\bigr)
\end{align*}
Since $M^{\tau}_{\tilde\mu_1,\tilde\mu_2}(y_1,y_2)\geq0$, we conclude from the above and~\eqref{eq: marginal close to uniform} that 
\be\label{eq: lower bound for M tau uniform}
M^{\tau}_{m_{Y_1}, m_{Y_2}}(y_1,y_2)\geq -\tfrac{|Y'_1|}{(N_1N_2)^A}-\tfrac{|Y'_2|}{(N_1N_2)^A}\geq -\tfrac{1}{(N_1N_2)^{A-1}}(\tfrac{1}{N_1}+\tfrac{1}{N_2})> -\tfrac{1}{N_1N_2},
\ee
where in the last inequality we used $A>2$.

On the other hand, if $M^{\tau}_{m_{Y_1}, m_{Y_2}}(y_1,y_2)<0$, then 
\[
M^{\tau}_{m_{Y_1}, m_{Y_2}}(y_1,y_2)=\tfrac{|Y'_1|}{N_1}-\tfrac{|Y'_2|}{N_2}\leq -\tfrac{1}{N_1N_2}.
\]
This and~\eqref{eq: lower bound for M tau uniform}, imply that $M^{\tau}_{m_{Y_1}, m_{Y_2}}(y_1,y_2)\geq0$ 
as was claimed in~\eqref{eq: polytope of uniform}.

Recall now that $\tilde\mu\in\cpl(\tilde\mu_1,\tilde\mu_2)$, by Theorem~\ref{thm: KW Transport}, thus, there exists 
$\{c_\tau\in [0,1]: \tau\in\T(\tilde\mu_1,\tilde\mu_2)\}$, with $\sum c_\tau=1$ so that 
\[
\tilde\mu=\sum_{\tau\in\T(\tilde\mu_1,\tilde\mu_2)} c_\tau M^\tau_{\tilde\mu_1,\tilde\mu_2}.
\]
Define $\tilde\nu:=\sum_{\tau\in\T(\tilde\mu_1,\tilde\mu_2)} c_\tau M^\tau_{m_{Y_1},m_{Y_2}}$; 
we will show that the proposition holds with $\tilde\nu$. 
In view of~\eqref{eq: polytope of uniform}, $\tilde\nu\in\cpl(m_{Y_1},m_{Y_2})$. 
Moreover, since $\sum c_\tau=1$, we have 
\begin{align*}
|\tilde\mu(y_1,y_2)-\tilde\nu(y_1,y_2)|&\leq \max_{\tau\in\T(\tilde\mu_1,\tilde\mu_2)}|M^\tau_{m_{Y_1},m_{Y_2}}(y_1,y_2)-M^\tau_{\tilde\mu_1,\tilde\mu_2}(y_1,y_2)|\\
&\leq |m_{Y_1}(Y'_1)-\tilde\mu_1(Y'_1)|+|m_{Y_2}(Y'_2)-\tilde\mu_1(Y'_2)|\\
&\leq \tfrac{1}{(N_1N_2)^{A-1}}(\tfrac{1}{N_1}+\tfrac{1}{N_2})\leq \tfrac{1}{(N_1N_2)^{A-1}}.
\end{align*}

The proof is complete.
\end{proof}

\section{Approximate homomorphisms}\label{sec: proof of thm approx}

The main goal of this section is to prove Theorem~\ref{thm:thm:approximate-hom-intro}, in which we investigate {\em partial approximate homomorphisms} 
between open compact subgroups of almost simple analytic groups, see Definition~\ref{def: approxmiate local hom}. 

A $G_1$-partial $\delta$-approximate homomorphism is 
simply called a $\delta$-approximate homomorphism. 
Approximate homomorphisms have been studied extensively. 
In~\cite{Kazhdan-approx-hom}, Kazhdan used cohomological methods to show that approximate homomorphisms from an amenable group to ${\rm U}_n(\bbc)$ are close to group homomorphisms. His argument is based on defining an averaging operator on the space of cocycles and proving that this operator is a {\em contraction}. These arguments do not appear to work when the domain of $f$ is not a group or when the target is a $p$-adic group. In~\cite{GKR-AlmostHom}, authors used a similar approach to prove that approximate homomorphisms between Lie groups are close to homomorphisms. 

In~\cite{Farah1,Farah2}, Farah, using more combinatorial techniques, proved a similar result for approximate homomorphisms from finite groups of product form. 

Our arguments here are different from these works. We rely on local analysis and passing to an infinitesimal setting, we also appeal to effective Nullstellensatz and the \L ojasiewicz inequality. 

It will be more convenient to treat the cases where $F_2=\bbq_p$ and $F_2=\bbr$, separately. 

	\subsection{Target is $p$-adic analytic}\label{sec: F2 = Qp}
In order to simplify the notation in this section, 
we will write $\done$ for $d_{01}$ and $\dtwo$ for $d_{02}$, 
also, we will write $\cone$ for $C_{11}$ and $\ctwo$ for $C_{12}$. 
Without loss of generality we will assume $C_i\geq 2$; note also that $d_i\geq 3$. 
 
	In this section, we assume that $F_2=\bbq_p$. 
	Note that in this case, $1^{(2)}_{\rho^m}$ is a normal subgroup of $G_2$. Therefore, 
	\[
	\bar{f}:1^{(1)}_{\rho}\rightarrow G_2/1^{(2)}_{\rho^m},\quad \bar{f}(g_1):=[f(g_1)]=f(g_1)_{\rho^m}
	\]
	has the following properties: for every $g_1,g_1'\in 1^{(1)}_\rho$ if $g_1g_1'\in 1^{(1)}_\rho$, then
	\[
	\bar{f}(g_1g_1')=\bar{f}(g_1)\bar{f}(g_1')\quad\text{and} \quad 
	\bar{f}(g_1^{-1})=\bar{f}(g_1)^{-1}.
	\]
	
\begin{lem}\label{lem: F1 R F2 Qp not possible}
Suppose that $F_1=\bbr$, $F_2=\bbq_p$, and $\rho<1/3$. 
Then there is no $\rho$-partial $\rho^m$ almost homomorphism from $1_\rho^{(1)}$ into $G_2$ 
so that $1_{\rho^{m'}}^{(2)}\subset \bigl(f(1_\rho^{(1)})\bigr)_{\rho^m}$ so long as $m\gg _{d_1,d_2}m'$. 
\end{lem}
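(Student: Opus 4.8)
The plan is to derive a contradiction by transporting the ``large image'' hypothesis across the (purported) partial almost homomorphism and showing it forces a dimension mismatch that the real source group $G_1$ cannot supply. First I would pass, as the authors already suggest in the displayed reduction preceding the lemma, to the induced genuine partial homomorphism $\bar f:1_\rho^{(1)}\to G_2/1^{(2)}_{\rho^m}$, which is an honest local homomorphism: $\bar f(g_1g_1')=\bar f(g_1)\bar f(g_1')$ whenever all three arguments lie in $1_\rho^{(1)}$, and $\bar f(g_1^{-1})=\bar f(g_1)^{-1}$. The large-image condition $1^{(2)}_{\rho^{m'}}\subseteq (f(1_\rho^{(1)}))_{\rho^m}$ says precisely that the image of $\bar f$ contains the finite $p$-group $1^{(2)}_{\rho^{m'}}/1^{(2)}_{\rho^m}$. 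Since $m\gg_{d_1,d_2}m'$, the dimension condition $\Dim(C_{12},d_2)$ forces this quotient to have cardinality at least $(C_{12}^{-1}\rho^{m'd_2})/(C_{12}\rho^{md_2})=C_{12}^{-2}\rho^{-(m-m')d_2}$, which is an unbounded power of $p$ as $m-m'\to\infty$; in particular its order is divisible by $p^{r}$ for $r$ as large as we like.

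The heart of the argument is then a ``no large $p$-torsion from a real ball'' statement: the domain $1_\rho^{(1)}$ is a small ball in a compact real analytic group with simple Lie algebra, and I would use the local analysis available near the identity — essentially the Baker--Campbell--Hausdorff picture and the fact that $1_\rho^{(1)}$ is (for $\rho$ small) bi-Lipschitz to a ball in $\Lie(\bbg_1)(\bbr)\cong\bbr^{d_1}$ via $\exp$, together with the standard metric assumption $\bbg_1(\bbr)\subseteq\SO_{n_1}(\bbr)$ — to control how ``much'' of a $p$-group can be realized. Concretely, I would argue that a local homomorphism out of such a real ball cannot have an image (inside any group) whose reduction modulo successive ``scales'' produces arbitrarily deep $p$-power index jumps: on the real side, the relevant local subgroup structure is filtered by real scales $\rho^{j}$ with \emph{bounded} (indeed archimedean, not $p$-adic) index behavior, whereas the $p$-adic target $G_2/1^{(2)}_{\rho^m}$ has a congruence filtration whose consecutive quotients are elementary abelian $p$-groups of rank $d_2$. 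Matching a continuous (local-analytic) object against this $p$-adic filtration and exploiting that $\bar f$ is a genuine homomorphism on overlapping pieces, one gets that $\mathrm{Im}(\bar f)$ is contained in a bounded-level piece of the target, contradicting the lower bound on $|1^{(2)}_{\rho^{m'}}/1^{(2)}_{\rho^m}|$ once $m\gg_{d_1,d_2}m'$. I would phrase the quantitative version so that the implied constant in $m\gg_{d_1,d_2}m'$ only has to beat the metric-entropy constants $C_{11},C_{12}$ and the dimensions $d_1,d_2$.

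The step I expect to be the main obstacle is making precise the claim that a partial homomorphism out of a real ball cannot create arbitrarily deep $p$-adic torsion — i.e.\ turning the heuristic ``$\bbr$ has no $p$-adic congruence filtration'' into a rigorous bound in the \emph{partial} setting, where $\bar f$ is not defined on all of $G_1$ and averaging arguments of Grove--Karcher--Ruh and Kazhdan are unavailable. I would handle this the way the rest of \S\ref{sec: proof of thm approx} does: use the bounded generation of a small real ball (Proposition~\ref{prop:conjugation-by-large-ball-multiplication-large-image}) to express a suitable small neighborhood $1^{(1)}_{\rho'}$ as a bounded product of conjugates, push this structure through $\bar f$ to show $\mathrm{Im}(\bar f)$ is generated by boundedly many elements lying at a fixed scale, and then invoke the fact that in $G_2\subseteq\SL_{n_2}(\bbz_p)$ a subgroup boundedly generated by elements of $1^{(2)}_{\rho^{O(1)}}$ cannot surject onto a deep congruence quotient. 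A clean alternative, if one wants to avoid re-deriving bounded generation here, is to quote the already-proved first part of Theorem~\ref{thm:approximate-hom}: were such an $f$ to exist, that theorem would force $F_1=F_2$, which is false since $\bbr\neq\bbq_p$; so the lemma is in fact a formal corollary, and the only real content is checking that the hypotheses of Theorem~\ref{thm:approximate-hom} (the range $m\gg_{G_1,G_2}m'$, $\rho\ll_{G_1,G_2}1$, and the large-image condition in the exact form stated there) are implied by those assumed here, which is routine given $\rho<1/3$.
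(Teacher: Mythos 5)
Your setup is correct (pass to $\bar f\colon 1^{(1)}_\rho\to G_2/1^{(2)}_{\rho^m}$, note the image must contain a large finite $p$-group), but the core of your argument — the ``no large $p$-torsion from a real ball'' heuristic — is never pinned down, and the mechanisms you reach for (bounded generation, matching congruence filtrations against archimedean scales) are both heavier and vaguer than what is needed. The paper's argument is a one-line use of divisibility of $\bbr$ together with Lagrange's theorem: let $r:=|G_2/1^{(2)}_{\rho^m}|$ and take any $g_1\in 1^{(1)}_{\rho/6}$. Set $x:=\tfrac{1}{r}\log g_1$, so $\|x\|\le\rho/(3r)$ and $\exp(jx)=\exp(x)^j\in 1^{(1)}_\rho$ for all $0\le j\le r$. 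Then $\bar f(g_1)=\bar f(\exp(x))^r=[1^{(2)}]$, because the target group has order $r$. Thus $\bar f$ is \emph{identically trivial} on $1^{(1)}_{\rho/6}$, and since $\bar f$ respects multiplication, $|\im\bar f|\le e^{h(1^{(1)}_\rho;\rho/6)}\le C_{11}^2\cdot 6^{d_{01}}$, an absolute constant. The large-image hypothesis forces $|\im\bar f|\ge C_{12}^{-1}\rho^{(m'-m)d_{02}}$, contradiction once $m\gg_{d_1,d_2}m'$. No bounded generation or congruence-filtration matching is required; the point is purely that in the archimedean domain every element near the identity is an $r$-th power of a nearby element, while the finite $p$-group target annihilates $r$-th powers.

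Your proposed ``clean alternative'' — quoting Theorem~\ref{thm:approximate-hom} to conclude $F_1=F_2$ — is circular: this lemma is one of the three cases into which the proof of Theorem~\ref{thm:approximate-hom} is split (the proof of that theorem begins ``In view of Lemma~\ref{lem: F1 R F2 Qp not possible}, Theorem~\ref{thm:p-adic-approx-hom}, and Corollary~\ref{cor:p-adic-to-real-not-possible}, we may assume $F_1=F_2=\bbr$''). You cannot use the theorem's conclusion $F_1=F_2$ to prove the lemma that is itself part of establishing the theorem.
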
	

\begin{proof}
Assume contrary to the claim that $f:1_\rho^{(1)}\to G_2$ is a $\rho$-partial, $\rho^m$-almost homomorphism 
which satisfies $1_{\rho^{m'}}^{(2)}\subset \bigl(f(1_\rho^{(1)})\bigr)_{\rho^m}$. Define $\bar f$ as above. 

Since $F_1=\bbr$, $\log:1^{(1)}_\rho\rightarrow \gfr_1$ is a convergent series, and $\|\log g_1\|\leq2\|g_1-I\|$. 
We also note that 
\be\label{eq: exp and log are inverses}
\|\exp(x)-I\|\leq 3\|x\|
\ee
for every $x\in \gfr_1$ with $\|x\|<1$. 

Let $r:=|G_2/1^{(2)}_{\rho^m}|$, and for $g_1\in 1^{(1)}_{\rho/6}$, let $x:=\frac{\log g_1}{r}$. Then $\|x\|\leq \rho/(3r)$ and for every integer $0\leq j\leq r$ we have $\exp(jx)=\exp(x)^j\in 1^{(1)}_\rho$. Hence
\be\label{eq:large-kernel}
\bar{f}(g_1)=\bar{f}(\exp(x))^r=[1^{(2)}].
\ee
As $\bar{f}$ preserves multiplication, \eqref{eq:large-kernel} and~\eqref{eq:dimension-condition-intro} imply that  
\[
|\im \bar{f}|\leq e^{h(1^{(1)}_\rho;\rho/6)}\leq \cone^26^{d_{01}}
\]  
where $h(1^{(1)}_\rho;\rho/6)$ denotes the metric entropy of $1^{(1)}_\rho$ at scale $\rho/6$.  

By our assumption, $1^{(2)}_{\rho^{m'}}/1^{(2)}_{\rho^m}\subseteq\im \bar{f}$, 
which implies that $\ctwo\rho^{(m'-m)d_{02}}\leq \cone^26^{d_{01}}$. 
This is a contradiction for $m\gg_{\done,\dtwo} m
'$ as $\cone$ depends only on $\done$.  
\end{proof}

In view of Lemma~\ref{lem: F1 R F2 Qp not possible}, 
we will assume $F_1=\bbq_q$ in the remaining parts of \S\ref{sec: F2 = Qp}. 
In particular, $1^{(1)}_\rho$ is a pro-$q$ group, and $1^{(2)}_{\rho}/1^{(2)}_{\rho^m}$ is a finite $p$-group 
which is in the image of the group homomorphism $\bar{f}$. This implies that $p=q$.  
In this case, all the balls centered at the identity $1^{(i)}$ are congruence subgroups of $G_i$. The ball of radius $p^{-k}$ centered at the identity $1^{(i)}$ is 
\be\label{eq:congruence-subgp}
G_{i,k}:=\{g\in G_i|\h g\equiv I \pmod{p^k}\}.
\ee

The following theorem applied with $\varphi=\bar f$ implies Theorem~\ref{thm:thm:approximate-hom} in this case. 

\begin{theorem}\label{thm:p-adic-approx-hom}
Let $\bbg_1$ and $\bbg_2$ be almost $\bbq_p$-simple groups.
Suppose $G_i\subseteq \GL_{n_i}(\bbz_p)$ are open compact subgroups of $\bbg_i(\bbq_p)$ for $i=1,2$. 
Then there is $0<c_3\leq 1$ and a positive integer $m_0$ both depending only on $\dim \bbg_1$ and $\dim\bbg_2$ 
such that the following holds: 

If $k_0\gg_{G_1,G_2} 1$ and $m\geq m_0m'$, and
\[
\varphi:G_{1,k_0}\rightarrow G_2/G_{2,k_0m}
\]
 is a group homomorphism, satisfying that $G_{2,k_0m'}/G_{2,k_0m}\subseteq \im(\varphi)$, 
 then there is a group homomorphism $\pi:G_{1,k_0}\rightarrow G_2$ so that  
 \[
 \varphi\equiv \pi \pmod{p^{\lfloor c_3k_0 m\rfloor}}
 \]
 and $\dif \pi$ induces an isomorphism between $\Lie(\bbg_1)(\bbq_p)$ and $\Lie(\bbg_2)(\bbq_p)$.
 Moreover if $\bbg=\wt{\bbg}\otimes_{\bbq}\bbq_p$ where $\wt{\bbg}$ is an absolutely almost simple $\bbq$-group, then the constant $k_0$ depends only on $\wt\bbg$. 
\end{theorem}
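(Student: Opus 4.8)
\textbf{Proof proposal for Theorem~\ref{thm:p-adic-approx-hom}.}

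The plan is to pass to the infinitesimal picture via the $p$-adic logarithm and exploit bounded generation of $1^{(1)}_\rho$ by a small number of one-parameter subgroups. First I would fix $k_0$ large enough (depending on $G_1,G_2$) so that $\log:G_{1,k_0}\to \gfr_1(\bbq_p)$ and $\exp$ on the relevant ball are mutually inverse, convergent, and isometric up to a bounded factor; and similarly for $G_2$ on scales $\geq p^{-k_0m'}$. The homomorphism $\varphi$ then transports, on each one-parameter subgroup $t\mapsto \exp(tX)$ with $X\in\gfr_1(\bbz_p)$, to an additive map $\bbz_p\to G_2/G_{2,k_0m}$; using that $\bbz_p$ is torsion-free and pro-$p$ together with the almost-multiplicativity I would produce a well-defined element $\psi(X)\in \Lie(\bbg_2)(\bbq_p)/p^{k_0m}\Lie(\bbg_2)(\bbz_p)$ with $\varphi(\exp(tX))\equiv \exp(t\psi(X)) \pmod{p^{k_0m}}$ for all $t\in\bbz_p$. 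This is where the Baker--Campbell--Hausdorff formula enters: commutators $[\exp(sX),\exp(tY)]$ are, to high $p$-adic order, $\exp(st[X,Y])+O(\text{higher order})$, so the map $\psi$ is forced to be additive and to respect Lie brackets modulo an error controlled by a fixed power of $p$ times $k_0m$; the denominators appearing in BCH are bounded in terms of $\dim\bbg_i$ only, which is the source of the dependence of $c_3$ and $m_0$ on the dimensions.

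Next I would upgrade this ``approximate Lie algebra homomorphism'' $\psi$ to an honest one. The large image hypothesis $G_{2,k_0m'}/G_{2,k_0m}\subseteq \im(\varphi)$ guarantees that $\psi$ is close to surjective onto $\Lie(\bbg_2)(\bbq_p)$ modulo $p^{k_0m'}$, hence (by counting, using the dimension condition as in Lemma~\ref{lem: F1 R F2 Qp not possible}) that $\dim\bbg_1\geq \dim\bbg_2$; running the BCH/bracket argument shows $\ker\psi$ is an ideal of $\gfr_1$ that is trivial mod a bounded power of $p$, and since $\gfr_1$ is $\bbq_p$-simple this forces $\psi$ to be, up to the allowed error, an isomorphism $\Lie(\bbg_1)(\bbq_p)\xrightarrow{\sim}\Lie(\bbg_2)(\bbq_p)$. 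Here is where effective Nullstellensatz / Greenberg's theorem is invoked: the conditions ``$\psi$ additive, bracket-preserving'' cut out a $\bbq_p$-variety (the variety of Lie algebra isomorphisms, a torsor under $\Aut$), $\psi$ is a mod-$p^{k_0m}$ approximate point of it, so Greenberg's theorem yields a genuine $\bbq_p$-point $\psi_0$ with $\psi_0\equiv\psi\pmod{p^{\lfloor c_3 k_0m\rfloor}}$, where $c_3$ depends only on the degree and number of variables of the defining equations, i.e.\ only on $\dim\bbg_1,\dim\bbg_2$. Exponentiating, $\pi:=\exp\circ\,\psi_0\circ\log$ is a genuine analytic group homomorphism $G_{1,k_0}\to\bbg_2(\bbq_p)$ (it lands in $G_2$ after possibly enlarging $k_0$) with $\dif\pi=\psi_0$ an isomorphism of Lie algebras.

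It remains to check $\varphi\equiv\pi\pmod{p^{\lfloor c_3 k_0m\rfloor}}$ on all of $G_{1,k_0}$, not merely on one-parameter subgroups. For this I would use Proposition~\ref{prop:conjugation-by-large-ball-multiplication-large-image} (bounded generation of small balls): every $g\in G_{1,k_0}$ is a product of boundedly many $\exp(t_iX_i)$ with $X_i$ in a fixed finite generating set of $\gfr_1(\bbz_p)$, and both $\varphi$ and $\pi$ are genuine homomorphisms on the $\exp(t X_i)$ agreeing there mod $p^{\lfloor c_3k_0m\rfloor}$; multiplicativity of $\varphi$ and $\pi$ then propagates the congruence to $g$, with the $p$-adic error degrading only by a bounded additive amount absorbed into a slightly smaller $c_3$. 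Finally, for the uniformity statement: when $\bbg=\wt\bbg\otimes_\bbq\bbq_p$, the group $\Aut(\Lie(\bbg))$ and the BCH denominators are defined over $\bbq$ independently of $p$, and the threshold $k_0$ needed for $\log/\exp$ convergence, for bounded generation, and for Greenberg's theorem can be taken from the $\bbq$-data of $\wt\bbg$ (with only finitely many bad primes handled by enlarging $k_0$), so $k_0$ depends only on $\wt\bbg$. I expect the main obstacle to be the bookkeeping in step one: controlling the $p$-adic valuations of the BCH error terms uniformly and showing that the approximate additivity/bracket relations for $\psi$ hold to order $k_0m - O(1)$ rather than losing a multiplicative constant in the exponent, since the subsequent Greenberg step needs the error to be $p^{k_0m - O(1)}$ for $c_3$ to stay bounded below in terms of the dimensions alone.
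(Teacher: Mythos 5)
Your proposal follows the same broad strategy as the paper's proof---pass to the Lie algebra via the $p$-adic logarithm, produce a (near-)Lie-ring homomorphism, apply Greenberg's theorem to lift it to a genuine $\bbq_p$-point of the variety of Lie ring homomorphisms, then exponentiate and compare with $\varphi$. The two places where your route genuinely diverges from the paper's are worth recording, because one of them resolves the obstacle you yourself flag at the end.

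First, the paper does not run BCH on one-parameter subgroups at all. Instead it uses the finite logarithmic functions $\Psi_{p^l}^{p^n}:G_{i,l}/G_{i,n}\to\gfr_i/p^{n-l}\gfr_i$ from~\cite[Lemma 34]{Golsefidy-IMRN-SA}, restricted to ranges $l\leq n\leq 2l-k_0$, where the quotient $G_{i,l}/G_{i,n}$ is abelian and the commutator identity \eqref{eq:commutator-finite-log} holds \emph{exactly}, with no error term. By picking $l_m\approx\tfrac{2}{3}k_0m$ and $n_m\approx k_0m$ in that window, the induced map $\theta_m:\gfr_1/p^{n_m-l_m}\gfr_1\to\gfr_2/p^{n_m-l_m}\gfr_2$ is an honest Lie ring homomorphism on a subquotient of depth comparable to $k_0m$; Greenberg's theorem is then applied to an \emph{exact} modular point, not an approximate one. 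This is precisely how the paper sidesteps what you correctly identify as "the main obstacle": there is simply no BCH error to control. Your approach through $\psi(X):=\log\varphi(\exp X)$ and explicit BCH estimates would work, but you would need to verify that $\psi$ preserves sums and brackets to order $p^{k_0m-O(1)}$ rather than $p^{ck_0m}$ for some $c<1$ that silently drifts; the paper's subquotient device makes that tracking unnecessary.

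Second, your step 1 has a hidden prerequisite that the paper makes explicit via a Frattini argument: to know that $\psi(X)$ lands at the right depth in $\gfr_2$ (so that the BCH first-order approximation is actually a good approximation), one needs $\varphi(G_{1,k_0+l})\subseteq G_{2,1+l}/G_{2,k_0m}$, which follows from $\Phi(G_{i,n})=G_{i,n+1}$~\cite[Lemma 39]{SG-super-approx-II}; see \eqref{eq:image-of-congruence-subgroups}. Without that control, $\psi$ could blow up the $p$-adic valuation and the BCH expansion you invoke would not converge to the needed accuracy. This is a real gap in your sketch as written, though a fillable one once you add the Frattini lemma. Third, for the final propagation the paper argues through the $\Ad$ representation together with the large-image hypothesis, while you propose propagating the congruence along a generating set of one-parameter subgroups. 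Your version is fine---in fact a touch more elementary, and it does not even need the full strength of Proposition~\ref{prop:conjugation-by-large-ball-multiplication-large-image}, since topological generation suffices and $G_{2,\lfloor c_3 k_0 m\rfloor}$ is normal in $G_2$, so the congruence $\pi(g)^{-1}\varphi(g)\in G_{2,\lfloor c_3 k_0 m\rfloor}$ is closed under products with no loss. The paper's $\Ad$-route is what it uses to invoke the large-image hypothesis quantitatively and is where the $m\geq m_0 m'$ threshold is finally cashed in.
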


\begin{proof}
Let $m_0\geq 1$ be a constant which will be explicated at the end of the argument. 

	Let $\gfr_i:=\gl_{n_i}(\bbz_p)\cap \Lie(\bbg_i)(\bbq_p)$ be the Lie $\bbz_p$-algebra of $G_i$. We start by recalling properties of finite logarithmic functions from \cite[Lemma 34]{Golsefidy-IMRN-SA}. We can and will assume that $k_0$ is large enough such that for every integer $n\geq k_0$, 
	\[
	\exp: p^n \gfr_i \rightarrow G_{i,n}
	\quad\text{and} \quad
	\log: G_{i,n}\rightarrow p^n \gfr_i
	\]
	 are well-defined and inverse of each other; in particular, $\|\exp(x)-I\|=\|x\|$ and $\|\log g\|=\|g-I\|$. For every integers $l\geq k_0$ and $n\in [l, 2l-k_0]$, the function 
	 \[
	 \Psi_{p^{l}}^{p^{n}}:G_{i,l}/G_{i,n}\rightarrow \gfr_i/p^{n-l}\gfr_i,
	 \quad
	 \Psi_{p^{l}}^{p^{n}}(gG_{i,n}):=\frac{g-1^{(i)}}{p^{l}}+p^{n-l}\gfr_i
	 \]
	  is a well-defined bijective $G_{i}$-equivariant function, where $G_{i}$ acts by conjugation on $G_{i,l}/G_{i,n}$ and on $\gfr_i/p^{n-l}\gfr_i$ via the adjoint representation. Notice that we further have 
\[
 \Psi_{p^{l}}^{p^{n}}(gG_{i,n})=\frac{\log g}{p^l}+p^{n-l}\gfr_i.
\]	    
	  If $l,l'\geq k_0$, $n\in [l,2l-k_0]$, and $n'\in [l',2l'-k_0]$, then 
	  \be\label{eq:commutator-finite-log}
	  \Psi_{p^{l+l'}}^{p^{n''}}([g_i,g'_i]G_{i,n''})=[\Psi_{p^l}^{p^n}(gG_{i,n}) , \Psi_{p^{l'}}^{p^{n'}}(gG_{i,n'})]+p^{n''-l-l'}\gfr_i
	  \ee
	  where $n'':=\min(n+l',n'+l)$.
	  
	    By \cite[Lemma 39]{SG-super-approx-II},  the Frattini subgroup $\Phi(G_{i,n})$ of $G_{i,n}$ is $G_{i,n+1}$ 
	    for every integer $n\geq k_0$. Hence for every positive integer $l\leq k_0(m-1)$, we have 
	  \be\label{eq:image-of-congruence-subgroups}
	  G_{2,k_0m'+l}/G_{2,k_0m}\subseteq  \varphi(G_{1,k_0+l}) \subseteq G_{2,1+l}/G_{2,k_0m}.
	  \ee
	  In view of \eqref{eq:image-of-congruence-subgroups}, $\varphi$ induces a group homomorphism
	  \[
	  \varphi_{l,n}:G_{1,l}/G_{1,n}\rightarrow G_{2,l-k_0+1}/G_{2,n-k_0+1},
	  \quad
	   \varphi_{l,n}(g_1 G_{1,n}):=\varphi(g_1) \pmod{G_{2,n-k_0+1}},
	  \]
	  where $k_0< l\leq n\leq k_0(m-1)$. 
	  
	  Using the finite logarithmic maps, for integers $k_0<l\leq n\leq 2l-2k_0+1\leq k_0(m-1)$, there is an additive group homomorphism $\theta_{l,n}$ such that the following is a commuting diagram:
\be\label{eq:maps-on-lie-alg}
\begin{tikzcd}
G_{1,l}/G_{1,n} \arrow[r, "\varphi_{l,n}"] \arrow[d,"\Psi_{p^l}^{p^n}"]
& G_{2,l-k_0+1}/G_{2,n-k_0+1} \arrow[d, "\Psi_{p^{l-k_0+1}}^{p^{n-k_0+1}}"] \\ 
\gfr_1/p^{n-l}\gfr_1 \arrow[r, "\theta_{l,n}"] &\gfr_2/p^{n-l}\gfr_2. 
\end{tikzcd}	  
\ee
By \eqref{eq:commutator-finite-log} and \eqref{eq:maps-on-lie-alg}, we deduce that
\[
\theta_{2l-k_0+1,l+n-k_0+1}: \gfr_1/p^{n-l}\gfr_1 \rightarrow \gfr_2/p^{n-l}\gfr_2
\]
is a Lie ring homomorphism  for integers $k_0<l\leq n\leq 2l-2k_0+1$ and $l+n-k_0+1\leq k_0(m-1)$. 
We get a Lie ring homomorphism $\theta_m:=\theta_{l_m,n_m}$ such that the following is a commuting diagram
\be\label{eq:commuting-finite-log-critical-level}
\begin{tikzcd}
G_{1,l_m}/G_{1,n_m} \arrow[r, "\varphi_{l_m,n_m}"] \arrow[d,"\Psi_{p^{l_m}}^{p^{n_m}}"]
& G_{2,l_m-k_0+1}/G_{2,n_m-k_0+1} \arrow[d, "\Psi_{p^{l_m-k_0+1}}^{p^{n_m-k_0+1}}"] \\ 
\gfr_1/p^{n_m-l_m}\gfr_1 \arrow[r, "\theta_{m}"] &\gfr_2/p^{n_m-l_m}\gfr_2,
\end{tikzcd}	  
\ee
where 
\[
l_m:=2\Big\lceil \frac{k_0}{3}m+\frac{2k_0-2}{3} \Big\rceil-k_0+1,
\quad
\text{and}
\quad
n_m:= k_0(m-1)-2.
\]
Notice that $n_m-l_m\geq  \frac{k_0}{3}(m-4)-\frac{8}{3}$.

Let $\{e_{1}^{(i)},\ldots,e_{d_i}^{(i)}\}$ be a $\bbz_p$-basis of $\gfr_i$, and suppose $c_{jks}^{(i)}\in \bbz_p$ are the corresponding structural constants. That is,
\[
[e_{j}^{(i)},e_{k}^{(i)}]=\sum_{s=1}^{d_i} c_{jks}^{(i)} e_{s}^{(i)}.
\]
Then a $\bbz_p$-linear map $T:\gfr_1\rightarrow \gfr_2, T(e_j^{(1)})=\sum_{s=1}^{d_2} x_{js} e_s^{(2)}$ is a Lie ring homomorphism if and only if $T\bigl(\bigl[e_j^{(1)},e_k^{(1)}\bigr]\bigr)=\bigl[T(e_j^{(1)}),T(e_k^{(1)})\bigr]$ for every $1\leq j,k\leq d_1$. This is equivalent to having the following equations:
\be\label{eq:equations-Lie-ring-hom}
\sum_{1\leq i_1,i_2\leq d_2} c_{i_1 i_2 r}^{(2)} x_{j i_1} x_{k i_2}=\sum_{i=1}^{d_2} c_{jki}^{(1)} x_{ri}
\ee
for every integers $1\leq j,k\leq d_1$ and $1\leq r\leq d_2$. 

Let $V$ be the $\bbz_p$-affine scheme given by equations in \eqref{eq:equations-Lie-ring-hom}. 
By the main theorem of \cite{Greenberg-local-solutions}, there are positive integers $C_4(V)$ and $c_4(V)$ such that for every point in $\overline{\xbf}\in V(\bbz_p/p^n\bbz_p)$ there is a point $\xbf\in V(\bbz_p)$ such that 
\[
\xbf \equiv \overline{\xbf} \pmod{p^{\frac{n-c_4(V)}{C_4(V)}}}.
\] 
A close examination of the argument in \cite{Greenberg-local-solutions} 
yields the following: $C_4(V)$ depends only on the number of variables and the degree of 
the defining equations; hence $C_4(V)$ only depends on $\dim \bbg_1$ and $\dim\bbg_2$. 
The constant  $c_4(V)$ could depend on the defining equations of $V$ viz.\ the complexity of rational numbers $c_{jks}^{(i)}$. 
In particular, if $\bbg=\wt{\bbg}\otimes_{\bbq}\bbq_p$ where $\wt{\bbg}$ is an absolutely almost simple $\bbq$-group, then the constant $c_4(V)$ depends only on $\wt\bbg$. 

Note that the map $\theta_m$ in~\eqref{eq:commuting-finite-log-critical-level} is a point in $V(\bbz_p/p^{n_m-l_m}\bbz_p)$. 
Applying the above with $\overline{\xbf}=\theta_m$, there exists 
a Lie ring homomorphism $\theta:\gfr_1\rightarrow \gfr_2$ such that 
\be\label{eq:finding-Lie-hom}
\theta\equiv \theta_m \pmod{p^{\lfloor c_3'k_0 m\rfloor}},
\ee
where $c_3':=(8C_4(V))^{-1}$ so long as $k_0$ is large enough so that 
\[
n_m-l_m\geq \tfrac{k_0}{3}(m-4)-\tfrac{8}{3}\geq \tfrac{k_0}{4}m\geq 2c_4(V).
\]
In view of the above discussion thus, if $\bbg=\wt{\bbg}\otimes_{\bbq}\bbq_p$ where $\wt{\bbg}$ is an absolutely almost simple $\bbq$-group, then the constant $k_0$ depends only on $\wt\bbg$ --- recall that $m\geq m_0m'\geq 1$.

Note that for $k_0\geq 2$, $p^{k_0}\gfr_i$ is a powerful Lie ring. Hence by the Baker-Campbell-Hausdorff formula (see \cite[Chapter 9.4]{Book-pAdicAnalyticProPGroups}), $\pi:G_{1,k_0}\rightarrow G_{2,k_0}, \pi(g_1):=\exp(\theta(\log(g_1)))$ is a group homomorphism. 
Moreover, by the definition the following is a commuting diagram
\be\label{eq:group-hom}
\begin{tikzcd}
G_{1,k_0} \arrow[r, "\pi"] \arrow[d,"\log"]
& G_{2,k_0} \arrow[d, "\log"] \\ 
p^{k_0}\gfr_1 \arrow[r, "\theta"] &p^{k_0} \gfr_2.
\end{tikzcd}
\ee
Using the fact that $\log$ is $G_i$-equivariant function, where $G_i$ acts on $G_{i,k_0}$ 
by conjugation and on $p^{k_0}\gfr_i$ via the adjoint action, 
by \eqref{eq:commuting-finite-log-critical-level}, \eqref{eq:finding-Lie-hom}, and \eqref{eq:group-hom}, 
we deduce that for every $\overline{x}\in \gfr_2/p^{\lfloor c_3'k_0m\rfloor}\gfr_2$ which is in the image of 
\be\label{eq:finite-log-of-approx-hom}
\Psi_{p^{l_m-k_0+1}}^{p^{l_m-k_0+1+\lfloor c_3'k_0m\rfloor}}\circ \varphi
\ee
and every $g\in G_{1, k_0}$, the following holds:
\be\label{eq:showing-pi-is-a-lift}
\Ad(\varphi(g))(\overline{x})=
 \Ad(\pi(g)) (\overline{x}).
 \ee
 Note also that, applying~\eqref{eq:image-of-congruence-subgroups} with $\ell=\ell_m$,  we conclude $p^{k_0m'} \gfr_2/p^{\lfloor c_3'k_0m\rfloor}\gfr_2$ is contained 
 in the image of the map in \eqref{eq:finite-log-of-approx-hom}. 
 Therefore, by \eqref{eq:showing-pi-is-a-lift}, 
 \[
 \Ad(\varphi(g))\equiv \Ad(\pi(g)) \pmod{p^{\lfloor c_3'k_0m\rfloor-k_0m'}}
 \]
 where $\Ad(\varphi(g))$ and $\Ad(\pi(g))$ are written in matrix form with respect to a 
 $\bbz_p$-basis of $\gfr_2$. Since $\bbg_2$ is an almost simple $\bbq_p$-group, choosing $k_0$ large enough we deduce that 
 \[
 \varphi(g)\equiv \pi(g) \pmod{p^{\lfloor c_3'k_0m\rfloor-k_0m'-k_0}}.
 \] 
The claim follows with $c_3=c'_3/2$ so long as $m\geq m_0m'\geq 8m'/c'_3$. 
\end{proof}

\subsection{Target is a Lie group}\label{sec: target Lie}
In this section, we assume that $F_2=\bbr$. 

Let $p_0:=2$ if $F_1=\bbr$, and $p_0:=p$ if $F_1=\bbq_p$. We will also use the condition~\eqref{eq:dimension-condition-intro}; 
recall that in order to simplify the notation in this section we will write $\done$ for $d_{01}$ and $\dtwo$ for $d_{02}$, 
also, we will write $\cone$ for $C_{11}$ and $\ctwo$ for $C_{12}$. 
Without loss of generality we will assume $C_i\geq 2$; note also that $d_i\geq 3$.

\begin{lem}
\label{lem:finding-far-from-id-mapped-to-not-so-far}
In the setting of Theorem~\ref{thm:approximate-hom}, suppose $F_2=\bbr$. If $m\geq\done$ and $\log_{p_0} (1/\rho)\gg_{d_i} 1$. Then there is $g_1\in 1^{(1)}_{\rho}$ such that $d(g_1,1^{(1)})\geq \rho^2$ and 
$d(f(g_1),1^{(2)})\leq \rho^{\done/(4\dtwo)}$.
\end{lem}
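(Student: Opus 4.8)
The goal is to locate a point $g_1$ in the $\rho$-ball of $G_1$ which is not too close to the identity (distance at least $\rho^2$) yet whose image under $f$ lands within distance $\rho^{\done/(4\dtwo)}$ of the identity in $G_2$. The natural strategy is a counting/pigeonhole argument on a well-chosen set of points. First I would take a maximal $\rho^2$-separated set $\{g_1,\dots,g_N\}$ inside $1^{(1)}_\rho$; by the dimension condition~\eqref{eq:dimension-condition-intro} applied to $G_1$, such a set has size $N\gg \cone^{-2}\rho^{-\done}\cdot\rho^{2\done}=\cone^{-2}\rho^{\done}$ up to constants --- more precisely, the balls of radius $\rho^2/2$ around the $g_i$ are disjoint inside $1^{(1)}_{2\rho}$, giving $N \gtrsim \rho^{-2\done}$-many points while a $\rho^2/2$-cover of $1^{(1)}_\rho$ also has size comparable to $\rho^{-(\done-?)}$; I need to be careful to extract the inequality $N \geq (\text{const})\,\rho^{\done - 2\done}$... let me instead compare $|1^{(1)}_\rho|/|1^{(1)}_{\rho^2}| \asymp \rho^{\done}/\rho^{2\done} = \rho^{-\done}$, so a $\rho^2$-separated subset of $1^{(1)}_\rho$ has at least on the order of $\rho^{-\done}$ points, up to the constant $\cone$.

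\textbf{Key steps.} Second, I would look at the images $f(g_i)\in G_2$. Since $\bigl(\im f\bigr)_{\rho^m}$ contains $1^{(2)}_{\rho^{m'}}$ but that is not directly what we want; rather I want to argue that if \emph{all} the $f(g_i)$ were far from $1^{(2)}$, we could cover $1^{(1)}_\rho$ (or rather its image) too efficiently or derive a contradiction with how many distinct values are available. The cleaner route: partition $1^{(1)}_\rho$ by the cosets of a small ball, or use the almost-multiplicativity of $f$. Consider the set $A=\{g\in 1^{(1)}_\rho : d(f(g),1^{(2)})\leq \rho^{\done/(4\dtwo)}\}$ and suppose for contradiction $A$ contains no point at distance $\geq\rho^2$ from $1^{(1)}$, i.e.\ $A\subseteq 1^{(1)}_{\rho^2}$. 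Then for $g\in 1^{(1)}_\rho\setminus 1^{(1)}_{\rho^2}$ we have $d(f(g),1^{(2)})>\rho^{\done/(4\dtwo)}$. Now I would run a covering argument: the number of $\rho^{\done/(4\dtwo)}$-balls needed to cover $G_2$ (or the relevant ball in $G_2$) is at most $\ctwo\,\rho^{-\done/4}$ by~\eqref{eq:dimension-condition-intro} for $G_2$ with $d_0 = \dtwo$. Meanwhile, using the approximate-homomorphism property (properties (2),(3) of Definition~\ref{def: approxmiate local hom}) together with the large-image hypothesis, one shows $f$ cannot collapse $1^{(1)}_\rho$ too much: roughly, $f$ restricted to a suitable subset is ``$\rho^{O(m)}$-close'' to being injective at scale $\rho^{\done/(4\dtwo)}$ modulo the identity ball, forcing $\#\{f(g_i)_{\rho^{\done/(4\dtwo)}}\}$ to be at least of order $\rho^{-\done}$ (the size of the separated set), which exceeds $\rho^{-\done/4}$ once $\rho$ is small --- a contradiction.

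\textbf{Main obstacle.} The delicate point is making the last count rigorous: I need to know that $f$ does not send many of the $\rho^2$-separated points $g_i$ into a single $\rho^{\done/(4\dtwo)}$-ball. This is where the quantitative hypotheses $m\geq\done$ and $\log_{p_0}(1/\rho)\gg_{d_i}1$ enter. If $f(g_i)$ and $f(g_j)$ were within $\rho^{\done/(4\dtwo)}$, then by the approximate-homomorphism relations $f(g_i^{-1}g_j)$ would be within roughly $\rho^{\done/(4\dtwo)}+O(\rho^m)$ of $1^{(2)}$, i.e.\ within $\rho^{\done/(4\dtwo)}$ (using $m \geq \done \geq \done/(4\dtwo)$ and $\rho$ small so the error is absorbed), so $g_i^{-1}g_j\in A$. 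If $A\subseteq 1^{(1)}_{\rho^2}$ this would force $d(g_i,g_j)<\rho^2$, contradicting separation --- hence the $f(g_i)$ lie in distinct $\rho^{\done/(4\dtwo)}$-balls, giving $N$ such balls inside (a bounded neighborhood of) $G_2$, so $N\leq \ctwo\rho^{-\done/4}$. But $N\geq \cone^{-1}\rho^{-\done}$ (or with a harmless extra factor), and $\rho^{-\done}>\ctwo\cone\,\rho^{-\done/4}$ for $\log_{p_0}(1/\rho)$ large enough in terms of $\done,\dtwo$ and the constants $\cone,\ctwo$ (which depend only on the $d_i$). This contradiction shows $A\not\subseteq 1^{(1)}_{\rho^2}$, i.e.\ the desired $g_1$ exists. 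I expect the bookkeeping of the error terms (ensuring $O(\rho^m)$ and $O(\rho^{\done/(4\dtwo)})$ perturbations stay below the separation threshold, and tracking the bi-invariance of the metric so that $d(f(g_i),f(g_j))$ controls $d(f(g_i^{-1}g_j),1^{(2)})$) to be the only real work; the skeleton is a clean two-sided entropy comparison.
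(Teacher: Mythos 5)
Your proposal is correct and follows essentially the same route as the paper's proof: both hinge on a pigeonhole/covering argument with a maximal $\rho^2$-separated set in $1^{(1)}_\rho$, a covering of $G_2$ by $\rho^{O(\done/\dtwo)}$-balls counted via the dimension condition, and the approximate-homomorphism property to pass from a collision $d(f(h_i),f(h_j))$ small to $d(f(h_ih_j^{-1}),1^{(2)})$ small. You phrase it as a contradiction ("if all such $g_1$ were forced into $1^{(1)}_{\rho^2}$, then the images of the separated points would have to occupy distinct balls, contradicting the covering bound") whereas the paper applies pigeonhole directly to produce the colliding pair and sets $g_1=h_ih_j^{-1}$; these are logically the same argument, and the bookkeeping issues you flag (absorbing the $O(\rho^m)$ error by choosing a pigeonhole radius a bit smaller than $\rho^{\done/(4\dtwo)}$, and noting $h_ih_j^{-1}$ must stay in the domain $1^{(1)}_\rho$, which really forces taking the separated set in a slightly smaller ball) are the same ones the paper's proof silently absorbs into its constants.
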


\begin{proof}
Let $\{h_1,\ldots,h_l\}$ be a set of maximal $\rho^2$-separated points in $1^{(1)}_\rho$. 
Hence $l\geq \rho^{-\done/2}$. The group $G_2$ can be covered by 
$\leq \ctwo \rho'^{-\dtwo}$-many balls of radius $\rho':=\ctwo \rho^{\done/(2\dtwo)}$ (recall that $\ctwo\geq 2$). 
Note that 
\[
\ctwo \rho'^{-\dtwo} =\ctwo^{1-\dtwo} \rho^{-\done/2}< \rho^{-\done/2} \leq l.
\]
Hence there are $i\neq j$ such that $d(f(h_i),f(h_j))<\rho'$. Let  $g_1:=h_ih_j^{-1}$. Then $d(g_1,1^{(1)})\geq  \rho^2$, and 
\begin{align*}
	d(f(g_1),1^{(2)})\leq &  d(f(h_i)f(h_j^{-1}),1^{(2)})+\rho^m
	\\
	 \leq  & d(f(h_i)f(h_j)^{-1},1^{(2)})+2\rho^m
	\\
  \leq &  \rho'+2\rho^m = \ctwo \rho^{\done/(2\dtwo)} +2\rho^m 
  \\
  \leq  & \rho^{\done/(4\dtwo)}.
\end{align*}
The claim follows.
\end{proof}

In the next lemma we will use the constants $C$ and $c$ appearing in Proposition~\ref{prop:conjugation-by-large-ball-multiplication-large-image}. We note that these two constants depend only on $\wt{\bbg}$ if $\bbg=\wt{\bbg}\otimes_\bbq\bbq_p$, see Proposition~\ref{prop:conjugation-by-large-ball-multiplication-large-image}.

\begin{lem}\label{lem:image-large-ball-under-approx-hom}
Let $C$ and $c$ be as in Proposition~\ref{prop:conjugation-by-large-ball-multiplication-large-image}. In the setting of Theorem~\ref{thm:approximate-hom}, suppose $m\gg_{d_{i}} 1$ and assume $\rho\leq c^2$ and $\log_{p_0}(1/\rho)\gg_{d_{i}} 1$. Then 
	\[
	f(1^{(1)}_{\rho^{7C}})\subseteq 1^{(2)}_{\rho^{{\done}/(8{\dtwo})}}.
	\]
\end{lem}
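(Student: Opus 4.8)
\textbf{Proof strategy for Lemma~\ref{lem:image-large-ball-under-approx-hom}.}
The plan is to combine Lemma~\ref{lem:finding-far-from-id-mapped-to-not-so-far} with the bounded-generation statement of Proposition~\ref{prop:conjugation-by-large-ball-multiplication-large-image}. First I would invoke Lemma~\ref{lem:finding-far-from-id-mapped-to-not-so-far} to produce an element $g_1\in 1^{(1)}_\rho$ with $d(g_1,1^{(1)})\geq \rho^2$ but $d(f(g_1),1^{(2)})\leq \rho^{\done/(4\dtwo)}$; so $f$ does not expand the scale of this one ``large'' element too much. The idea is then to propagate this single control statement to the whole small ball $1^{(1)}_{\rho^{7C}}$ by writing every element there as a bounded product of conjugates $h g_1^{\pm 1} h^{-1}$ (and their analogues), with $h$ ranging over $1^{(1)}_\rho$. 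This is exactly the kind of ``bounded generation by conjugates of one nontrivial element at small scale'' that Proposition~\ref{prop:conjugation-by-large-ball-multiplication-large-image} supplies, with the constants $C$ and $c$: since $\rho\le c^2$, the element $g_1$ lies in the right range, and one gets that $1^{(1)}_{\rho^{7C}}$ (or some comparable power, say $1^{(1)}_{\rho^{O(C)}}$) is covered by products of $O(1)$ conjugates of $g_1^{\pm1}$ by elements of $1^{(1)}_\rho$.

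The second half of the argument is to check that the almost-homomorphism $f$ approximately respects this presentation. Concretely, for $h\in 1^{(1)}_\rho$ one has
\[
f(h g_1 h^{-1})\in \bigl(f(h) f(g_1) f(h^{-1})\bigr)_{O(\rho^m)}\subseteq \bigl(f(h) f(g_1) f(h)^{-1}\bigr)_{O(\rho^m)},
\]
using properties (2) and (3) of Definition~\ref{def: approxmiate local hom} a bounded number of times (here one must be careful that all the intermediate products stay inside $1^{(1)}_\rho$ so that $f$ is defined on them; this is where the smallness of the exponent $7C$ relative to $1$ is used). Since the metric on $G_2$ is bi-invariant, conjugation by $f(h)$ is an isometry, so $d(f(h)f(g_1)f(h)^{-1},1^{(2)})=d(f(g_1),1^{(2)})\leq \rho^{\done/(4\dtwo)}$. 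Multiplying $O(1)$ such nearly-central-distance elements together, and absorbing the $O(1)$ accumulated errors of size $O(\rho^m)$ (which are negligible once $m\gg_{d_i}1$) and the $O(1)$ multiplicative constants from the metric/dimension estimates (negligible once $\log_{p_0}(1/\rho)\gg_{d_i}1$), we get $d(f(g),1^{(2)})\leq \rho^{\done/(8\dtwo)}$ for every $g\in 1^{(1)}_{\rho^{7C}}$, which is the claim. The slack between the exponent $\done/(4\dtwo)$ coming out of the previous lemma and the target exponent $\done/(8\dtwo)$ is precisely the room needed to swallow these bounded errors.

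The main obstacle I anticipate is bookkeeping the domain-of-definition constraints: $f$ is only a \emph{partial} homomorphism defined on $1^{(1)}_\rho$, so when we expand $g\in 1^{(1)}_{\rho^{7C}}$ as a bounded word in conjugates $h g_1^{\pm1}h^{-1}$, every prefix of that word, and every element to which we apply property (3) of Definition~\ref{def: approxmiate local hom}, must be verified to lie in $1^{(1)}_\rho$ — and conjugates $h g_1 h^{-1}$ with $h$ of size $\rho$ and $g_1$ of size up to $\rho$ need not a priori be that small, so one needs the more refined output of Proposition~\ref{prop:conjugation-by-large-ball-multiplication-large-image} guaranteeing the word can be chosen to stay in a controlled ball (this is why the statement is phrased for $1^{(1)}_{\rho^{7C}}$ rather than a fixed ball, and why the precise form of that proposition matters). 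A secondary but routine point is tracking how the $O(1)$ number of factors depends only on $\dim\bbg_1,\dim\bbg_2$ (hence the hypotheses $m\gg_{d_i}1$ and $\log_{p_0}(1/\rho)\gg_{d_i}1$ suffice), and confirming that $g_1^{-1}$ is handled symmetrically using property (2).
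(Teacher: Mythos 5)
Your proposal follows essentially the same route as the paper's proof: first invoke Lemma~\ref{lem:finding-far-from-id-mapped-to-not-so-far} to obtain an $h_0 \in 1^{(1)}_\rho$ that is far from the identity but has $f(h_0)$ close to the identity, then use Proposition~\ref{prop:conjugation-by-large-ball-multiplication-large-image} to write every element of $1^{(1)}_{\rho^{7C}}$ as a bounded (depending only on $d_1$) word in conjugates of $h_0^{\pm1}$, and finally use bi-invariance of the metric on $G_2$ together with the approximate-homomorphism estimates to conclude that $f$ sends such a word near the identity. One small clarification: the paper's generating words are the conjugated commutators $g_{i,\rho}[h_0,a_i]g_{i,\rho}^{-1}$ with the conjugators $g_{i,\rho}$ \emph{fixed} and the $a_i$ varying over small balls (which, unwound, is a product of conjugates of $h_0^{\pm1}$ exactly as you describe), and the exponent $7C$ simply records the bound $c^2\|h_0-I\|^2\rho^{2C}\geq\rho^{7C}$ valid since $\rho\leq c^2$ and $\|h_0-I\|\geq\rho^2$; the factor-counting loss ($\leq 2d_1^2\cdot\rho^{d_1/(4d_2)}$ plus $O(d_1^2\rho^m)$ of accumulated error) is absorbed precisely by the slack between $d_1/(4d_2)$ and $d_1/(8d_2)$, as you anticipate.
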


\begin{proof}
By Lemma~\ref{lem:finding-far-from-id-mapped-to-not-so-far}, there is $h_0\in 1^{(1)}_{\rho}$ such that $d(h_0,1^{(1)})\geq \rho^2$ and $d(f(h_0),1^{(2)})\leq \rho^{{\done}/(4{\dtwo})}$. 
Then by Proposition~\ref{prop:conjugation-by-large-ball-multiplication-large-image}, applied with $G_1$, 
we obtain that the following holds:
\be\label{eq:generating-large-ball}
\bigl\{(g_{1,\rho} [h_0,a_1] g_{1,\rho}^{-1})\cdots (g_{\done^2,\rho}[h_0,a_{\done^2}] g_{\done^2,\rho}^{-1})|\h a_i\in 1_{c \|h_0-I\| \rho^C}\bigr\}\supseteq 1_{c^2 \|h_0-I\|^2 \rho^{2C}},
\ee	
for some $g_{i,\rho}\in 1^{(1)}_\rho$ where $c$ is as in Proposition~\ref{prop:conjugation-by-large-ball-multiplication-large-image}.  

Recall that $\rho\leq c^2$; thus 
\be\label{eq:estimating-radius-of-the-inner-ball}
c^2 \|h_0-I\|^2 \rho^{2C} \geq c^2 \rho^{4+2C} \geq \rho^{5+2C}\geq \rho^{7C}.
\ee
Since $f$ is $\rho^m$-almost homomorphism, 
\begin{multline}\label{eq:towards-upper-bound-distance-from-identity}	
\qquad \qquad d\biggl( f\Big((g_{1,\rho} [h_0,  a_1] g_{1,\rho}^{-1}) \cdots  (g_{\done^2,\rho}[h_0,a_{\done^2}] g_{\done^2,\rho}^{-1})\Big),1^{(2)} \biggr)\leq \\d\biggl( \Big(f(g_{1,\rho}) [f(h_0),f(a_1)] f(g_{1,\rho})^{-1}\Big)\cdots \Big(f(g_{\done^2,\rho})[f(h_0),f(a_{\done^2})] f(g_{{\dtwo}^2,\rho})^{-1}\Big),1^{(2)} \biggr) +9\done^2 \rho^m.
\end{multline}
Since $d(hh',1^{(2)})\leq d(h,1^{(2)})+d(h',1^{(2)})$ and $d(hh'h^{-1},1^{(2)})=d(h',1^{(2)})$ 
for every $h,h'\in G_2$, by \eqref{eq:towards-upper-bound-distance-from-identity} we obtain the following upper bound:
\be\label{eq:upper-bound-distance-from-identity-2nd}
d\biggl( f\Big((g_{1,\rho} [h_0,  a_1] g_{1,\rho}^{-1}) \cdots  (g_{\done^2,\rho}[h_0,a_{\done^2}] g_{\done^2,\rho}^{-1})\Big),1^{(2)} \biggr)\leq \done^2 \max_{i} d([f(h_0),f(a_i)],1^{(2)})+ 9\done^2\rho^m.
\ee
Note also that $d([h,h'],1^{(2)})\leq d(h,1^{(2)})+d(h'h^{-1}h'^{-1},1^{(2)})\leq 2 d(h,1^{(2)})$ for all $h,h'\in G_2$.
Therefore, using \eqref{eq:upper-bound-distance-from-identity-2nd}, 
we deduce the following
\be\label{eq:upper-bound-distance-from-identity}
d\biggl( f\Big((g_{1,\rho} [h_0,  a_1] g_{1,\rho}^{-1}) \cdots  (g_{\done^2,\rho}[h_0,a_{\done^2}] g_{\done^2,\rho}^{-1})\Big),1^{(2)} \biggr)\leq
2 \done^2 \rho^{{\done}/(4{\dtwo})} +9 \done^2 \rho^m\leq \rho^{{\done}/(8{\dtwo})}.
\ee
By \eqref{eq:generating-large-ball}, \eqref{eq:estimating-radius-of-the-inner-ball}, 
and \eqref{eq:upper-bound-distance-from-identity}, the claim follows.
\end{proof}

\begin{lem}\label{lem: f is non-trivial on 1 rho-7C}
Let the notation be as above. If $m\geq \frac{(7C-1)\done}{\dtwo}+m'+1$ and $\log_{p_0}(1/\rho)\gg_{d_i}1$, 
then there exists some $g_1\in 1^{(1)}_{\rho^{7C}}$ so that 
\[
d(f(g_1),1^{(2)})\geq \rho^{\frac{(7C-1)\done}{\dtwo}+m'+1}.
\]
\end{lem}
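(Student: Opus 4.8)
The plan is to argue by contradiction. Write $s:=\frac{(7C-1)\done}{\dtwo}+m'+1$ for the target exponent, and note that the hypothesis on $m$ is precisely $m\geq s$, so in particular $\rho^m\leq\rho^s$. I would assume that $f\bigl(1^{(1)}_{\rho^{7C}}\bigr)\subseteq 1^{(2)}_{\rho^{s}}$ and derive a contradiction with the Large image hypothesis $1^{(2)}_{\rho^{m'}}\subseteq(\mathrm{Im}(f))_{\rho^m}$ of Theorem~\ref{thm:approximate-hom}. The underlying idea: since translates of the small ball $1^{(1)}_{\rho^{7C}}$ cover $1^{(1)}_\rho$ and $f$ is an approximate homomorphism, if $f$ were this close to trivial on $1^{(1)}_{\rho^{7C}}$, then all of $\mathrm{Im}(f)$ would be concentrated in a controlled number of small balls, which is too sparse to be $\rho^m$-dense in $1^{(2)}_{\rho^{m'}}$ once $\rho$ is small.

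First I would pick a maximal $\rho^{7C}$-separated subset $\{h_1,\dots,h_l\}\subseteq 1^{(1)}_\rho$. Since $7C\geq 7$ and $\rho\ll 1$, the balls $(h_i)_{\rho^{7C}/2}$ are pairwise disjoint and contained in $1^{(1)}_{2\rho}$, so a volume comparison using the dimension condition~\eqref{eq:dimension-condition-intro} for $G_1$ yields $l\leq \cone^2 4^{\done}\rho^{-(7C-1)\done}$. Next, by maximality each $g\in 1^{(1)}_\rho$ satisfies $d(g,h_i)<\rho^{7C}$ for some $i$, hence $h_i^{-1}g\in 1^{(1)}_{\rho^{7C}}$ and $g=h_i\cdot(h_i^{-1}g)$ with $h_i$, $h_i^{-1}g$, $g$ all in $1^{(1)}_\rho$. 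Applying Definition~\ref{def: approxmiate local hom}(3), bi-invariance of the metric on $G_2$, and the contradiction hypothesis,
\[
d\bigl(f(g),f(h_i)\bigr)\leq d\bigl(f(g),f(h_i)f(h_i^{-1}g)\bigr)+d\bigl(f(h_i^{-1}g),1^{(2)}\bigr)\leq \rho^m+\rho^s\leq 2\rho^s.
\]
Consequently $\mathrm{Im}(f)\subseteq\bigcup_{i=1}^l (f(h_i))_{2\rho^s}$, and hence $(\mathrm{Im}(f))_{\rho^m}\subseteq\bigcup_{i=1}^l (f(h_i))_{3\rho^s}$ (again using $m\geq s$).

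I would then feed in the Large image hypothesis: $1^{(2)}_{\rho^{m'}}\subseteq(\mathrm{Im}(f))_{\rho^m}\subseteq\bigcup_{i=1}^l(f(h_i))_{3\rho^s}$. Taking Haar measures and using the dimension condition~\eqref{eq:dimension-condition-intro} for $G_2$ gives $\ctwo^{-1}\rho^{m'\dtwo}\leq l\,\ctwo 3^{\dtwo}\rho^{s\dtwo}$, i.e.\ $l\geq (\ctwo^2 3^{\dtwo})^{-1}\rho^{(m'-s)\dtwo}$; since $(m'-s)\dtwo=-(7C-1)\done-\dtwo$ by the choice of $s$, this reads $l\geq (\ctwo^2 3^{\dtwo})^{-1}\rho^{-(7C-1)\done-\dtwo}$. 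Comparing with the upper bound $l\leq \cone^2 4^{\done}\rho^{-(7C-1)\done}$ from the previous paragraph forces $\rho^{-\dtwo}\leq \cone^2\ctwo^2 3^{\dtwo}4^{\done}$, i.e.\ a lower bound on $\rho$ depending only on $\done,\dtwo$ (through $\cone,\ctwo$); this contradicts $\log_{p_0}(1/\rho)\gg_{d_i}1$ and proves the lemma.

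I do not expect a genuine obstacle here: the whole argument is a covering/pigeonhole count resting on the dimension condition and the approximate-homomorphism relation. The only points demanding care are the bookkeeping of the exponent $s$ together with the several $\rho^m$-versus-$\rho^s$ comparisons (all valid because $m\geq s$ is exactly the hypothesis), and checking at each step that the products $h_i\cdot(h_i^{-1}g)$ remain inside the domain $1^{(1)}_\rho$ of $f$, which holds since $\rho^{7C}<\rho$.
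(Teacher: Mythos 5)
Your proposal is correct and follows essentially the same route as the paper. The paper also argues by contradiction, covering $1^{(1)}_\rho$ by roughly $\rho^{-(7C-1)\done}$ translates of $1^{(1)}_{\rho^{7C}}$ (phrased as ``coset representatives for $1^{(1)}_\rho/1^{(1)}_{\rho^{7C}}$,'' whereas you use a maximal $\rho^{7C}$-separated set, which amounts to the same thing up to a bounded factor), then using the approximate-homomorphism property and the assumed smallness of $f$ on $1^{(1)}_{\rho^{7C}}$ to conclude that $({\rm Im}\,f)_{\rho^m}$ is covered by that many balls of radius $3\rho^s$ in $G_2$, and finally comparing Haar volumes against $|1^{(2)}_{\rho^{m'}}|$ to reach a contradiction once $\rho$ is small enough in terms of $\cone,\ctwo,\done,\dtwo$.
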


\begin{proof}
Let us write $b=\frac{(7C-1)\done}{\dtwo}+m'+1$.
We prove the lemma by contradiction. Thus, assume that $f(1^{(1)}_{\rho^{7C}})\subset 1^{(2)}_{\rho^b}$. 
Fix a set $\{a_1,\ldots, a_k\}$ of coset representatives 
for $1^{(1)}_{\rho}/1^{(1)}_{\rho^{7C}}$ where $k\leq \cone^2\rho^{(1-7C)\done}$.

Recall also our assumption that
\[
1^{(2)}_{\rho^{m'}}\subset ({\rm Im}(f))_{\rho^m}=f(1^{(1)}_\rho)_{\rho^m}.
\]
Consequently, for every $h\in 1^{(2)}_{\rho^{m'}}$, there exists some $1\leq i\leq k$ and some $g'\in 1^{(1)}_{\rho^{7C}}$
so that $d(h, f(a_ig'))\leq \rho^m$. Since $f$ is $\rho^m$-almost homomorphism, we have $d(f(a_ig'), f(a_i)f(g'))\leq \rho^m$. 
Therefore,
\[
d(h, f(a_i))\leq d(h, f(a_ig'))+d(f(a_ig'), f(a_i)f(g'))+d(f(a_i)f(g'), f(a_i))\leq 3\rho^b
\] 
we used $d(f(g'),1^{(2)})\leq \rho^b$ and $m\geq b$. 
We thus conclude that   
\be\label{eq: rho ball is covered by cosets}
1^{(2)}_{\rho^{m'}}\subset \{f(a_1),\ldots, f(a_k)\}_{3\rho^b}.
\ee

In view of~\eqref{eq:dimension-condition-intro}, one gets
\[
\Bigl|\{f(a_1),\ldots, f(a_k)\}_{3\rho^b}\Bigr|\leq \cone^2\rho^{(1-7C)\done}\cdot \ctwo(3\rho^b)^{\dtwo}=3^{\dtwo}\cone^2\ctwo \rho^{(1-7C)\done+b\dtwo}\leq 3^{\dtwo}\cone^2\ctwo \rho^{(m'+1)\dtwo}.
\]
This contradicts~\eqref{eq: rho ball is covered by cosets}, if $\rho^{\dtwo}< 3^{-\dtwo}\cone^{-2}\ctwo^{-2}$. 
\end{proof}

\begin{corollary}\label{cor:p-adic-to-real-not-possible}
In the setting of Theorem~\ref{thm:thm:approximate-hom-intro}, we cannot have $F_1=\bbq_p$ and $F_2=\bbr$ if 
$m\gg_{\done,\dtwo, C} m'$ and $\log_p(1/\rho) \gg_{\done, \dtwo} 1$.
\end{corollary}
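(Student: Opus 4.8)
The plan is to suppose, toward a contradiction, that $F_1=\bbq_p$, $F_2=\bbr$, and that an $f:1^{(1)}_\rho\to G_2$ as in Theorem~\ref{thm:approximate-hom} exists, and then to exploit the asymmetry between source and target. When $F_1=\bbq_p$ the ball $1^{(1)}_{\rho^{7C}}$ is a (congruence) \emph{subgroup} of $G_1$, hence closed under taking $p$-power iterates; a real Lie group, on the other hand, has no small subgroups, so any element of $G_2$ forced to lie very close to $1^{(2)}$ has a high $p$-power that is \emph{not} close to $1^{(2)}$. The lemmas of this subsection are available here (they only assume $F_2=\bbr$), and I would use two of them: Lemma~\ref{lem:image-large-ball-under-approx-hom}, which confines $f$ on $1^{(1)}_{\rho^{7C}}$ to the tiny ball $1^{(2)}_{\rho^{\done/(8\dtwo)}}$, and Lemma~\ref{lem: f is non-trivial on 1 rho-7C}, which produces $g\in 1^{(1)}_{\rho^{7C}}$ with $d(f(g),1^{(2)})\geq\rho^{b}$ where $b:=\tfrac{(7C-1)\done}{\dtwo}+m'+1$.

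Fix such a $g$. Then $f(g)\neq 1^{(2)}$, and since $f(g)$ lies in a tiny ball around $1^{(2)}$ the element $X:=\log f(g)\in\gfr_2$ is well-defined, with $\tfrac{1}{3}\,d(f(g),1^{(2)})\leq\|X\|\leq 2\,d(f(g),1^{(2)})\leq 2\rho^{\done/(8\dtwo)}$; thus $\|X\|>0$, while $\|X\|$ is as small as we wish once $\log_p(1/\rho)\gg_{\done,\dtwo}1$. Next I would record two facts about the powers $g^{p^s}$ ($s\geq 0$), which all remain in the subgroup $1^{(1)}_{\rho^{7C}}$: (i) by Lemma~\ref{lem:image-large-ball-under-approx-hom}, $d(f(g^{p^s}),1^{(2)})\leq\rho^{\done/(8\dtwo)}$; and (ii) the restriction of $f$ to $1^{(1)}_{\rho^{7C}}$ is an honest $\rho^m$-approximate homomorphism of groups, so iterating property~(3) of Definition~\ref{def: approxmiate local hom} together with bi-invariance of $d$ gives $d(f(g^{k}),f(g)^{k})\leq(k-1)\rho^m$ for every integer $k\geq 1$.

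I would then take $s_0$ maximal with $p^{s_0}\|X\|\leq\tfrac{1}{10}$; this is well-defined and $s_0\geq1$, since $\|X\|\leq 2\rho^{\done/(8\dtwo)}<\tfrac{1}{10p}$ when $\log_p(1/\rho)\gg_{\done,\dtwo}1$ while $\|X\|>0$. Maximality yields $p^{s_0}\|X\|>\tfrac{1}{10p}$ and $p^{s_0}\leq\tfrac{1}{10\|X\|}\leq 3\rho^{-b}$. As $f(g)=\exp X$, we have $f(g)^{p^{s_0}}=\exp(p^{s_0}X)$ with $\|p^{s_0}X\|\leq\tfrac{1}{10}$, whence the elementary estimate $\|\exp Y-I\|\geq\tfrac{1}{2}\|Y\|$ (valid for $\|Y\|\leq\tfrac{1}{10}$), applied with $Y=p^{s_0}X$, gives $d(f(g)^{p^{s_0}},1^{(2)})>\tfrac{1}{20p}$. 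On the other hand, fact~(ii) with $k=p^{s_0}$ gives $d(f(g^{p^{s_0}}),f(g)^{p^{s_0}})<p^{s_0}\rho^m\leq 3\rho^{m-b}$, so together with fact~(i),
\[
\tfrac{1}{20p}<d(f(g)^{p^{s_0}},1^{(2)})\leq d(f(g^{p^{s_0}}),1^{(2)})+3\rho^{m-b}\leq\rho^{\done/(8\dtwo)}+3\rho^{m-b}.
\]
Multiplying by $20p$ gives $20p\,\rho^{\done/(8\dtwo)}+60p\,\rho^{m-b}>1$, which is impossible once $m\gg_{\done,\dtwo,C}m'$ (so $m-b\geq1$) and $\log_p(1/\rho)\gg_{\done,\dtwo}1$, since each of the two summands is then $<\tfrac{1}{2}$; this is the desired contradiction.

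The only delicate point, and the reason both quantitative hypotheses of the corollary appear, is the calibration of $p^{s_0}$: it must be large enough that $\exp(p^{s_0}X)$ sits at ``macroscopic'' distance $\gtrsim 1/p$ from $1^{(2)}$, yet small enough that the error $p^{s_0}\rho^m$ accumulated in (ii) --- bounded only through $p^{s_0}\leq 3\rho^{-b}$, with $b$ a fixed affine function of $m'$ --- stays below that distance. This is precisely what $m\gg_{\done,\dtwo,C}m'$ buys, while $\log_p(1/\rho)\gg_{\done,\dtwo}1$ absorbs the remaining factors of $p$; the rest (the $\exp/\log$ inequalities on $\SO_{n_2}(\bbr)$, the linear error bound in (ii)) is routine.
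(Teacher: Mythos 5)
Your proof is correct and runs along essentially the same lines as the paper's: both arguments locate, via Lemma~\ref{lem:image-large-ball-under-approx-hom} and Lemma~\ref{lem: f is non-trivial on 1 rho-7C}, an element $g\in 1^{(1)}_{\rho^{7C}}$ with $f(g)$ at controlled nonzero distance from $1^{(2)}$, then exploit the fact that $1^{(1)}_{\rho^{7C}}$ is a subgroup (so $g^k$ stays inside and Lemma~\ref{lem:image-large-ball-under-approx-hom} traps $f(g^k)$ in a small ball) while the real target has no small subgroups (so $f(g)^k$ escapes), and close by iterating the $\rho^m$-approximate homomorphism inequality. The only cosmetic difference is that you run the escape-from-the-small-ball step through $\log$ and a maximal $p$-power $p^{s_0}$, whereas the paper works directly with $d(f(g)^k,1^{(2)})\ge\tfrac12 k\rho^{b_1}$ for an arbitrary $k\le 3\rho^{b_2-b_1}$, which sidesteps the explicit $1/p$ factor in your final contradiction; both are absorbed by the hypothesis $\log_p(1/\rho)\gg_{\done,\dtwo}1$.
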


\begin{proof}
Suppose to the contrary that there is such an approximate homomorphism. 
By Lemma~\ref{lem:image-large-ball-under-approx-hom} and Lemma~\ref{lem: f is non-trivial on 1 rho-7C}, 
there exists $g_1\in 1^{(1)}_{\rho^{7C}}$ such that 
\be\label{eq: f g1 is of controlled size}
\rho^{b_1}\leq d(f(g_1), 1^{(2)})\leq \rho^{b_2}.
\ee
where $b_1=\frac{(7C-1)\done}{\dtwo}+m'+1$ and $b_2=\frac{\done}{8\dtwo}$.

Since $F_2=\bbr$, assuming $\rho^{b_2}\leq 1/10$, for all $0< k\leq 3\rho^{b_2-b_1}$ we have 
\[
d(f(g_1)^k, 1^{(2)})\geq \frac12 k \rho^{b_1};
\]
therefore, for some $0< k\leq 3\rho^{b_2-b_1}$, we have  
\be\label{eq: f g1 is of controlled size'}
d(f(g_1)^k, 1^{(2)})>2\rho^{b_2}.
\ee

Let $m$ be large enough so that $3\rho^{b_2-b_1}\rho^{m}<\rho^{b_2}$.
Since $F_1=\bbq_p$, we have $g_1^k\in 1^{(1)}_{\rho^{7C}}$, and
\begin{align}
\notag
	d(f(g_1^k),1^{(2)})&\geq d(f(g_1)^k,1^{(2)}) -(k-1)\rho^m\\
\label{eq:getting-large-in-euclidean}
&\geq 
 2\rho^{b_2} -3\rho^{b_2-b_1}\rho^{m} > \rho^{b_2}
\end{align}
where we used~\eqref{eq: f g1 is of controlled size'} and $3\rho^{b_2-b_1}\rho^{m}<\rho^{b_2}$. 

However, since $g_1^k\in 1^{(1)}_{\rho^{7C}}$, Lemma~\ref{lem:image-large-ball-under-approx-hom}
implies that $d(f(g_1^k),1^{(2)})< \rho^{b_2}$. This contradicts~\eqref{eq:getting-large-in-euclidean} and finishes the proof.   
\end{proof}

\subsection*{The case where $F_1=F_2=\bbr$}
There are certain similarities  between this case and the case where $F_1=F_2=\bbq_p$. 
However, since in this case there is no reduction mod $p$ map, the argument is more involved.

The following lemmas can be viewed as the Archimedean analogue of \eqref{eq:image-of-congruence-subgroups}. 
We start with finding a {\em large} subset, see also Lemma~\ref{lem: f is non-trivial on 1 rho-7C}.

\begin{lem}\label{lem:image-of-balls-under-approximate-hom-lower-bound}
		In the setting of Theorem~\ref{thm:approximate-hom}, suppose $F_1=F_2=\bbr$. Then there is $0<c'':=c''(G_2)\leq 1$  
		such that for every positive integer $k$ the following holds
		\[
		1^{(2)}_{c''^{k-1}\rho^{km'}}\subseteq f(1^{(1)}_{2^{k-1}\rho^k})_{6\rho^m}.
		\]
\end{lem}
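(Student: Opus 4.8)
The statement is that the image of a small ball under $f$ is large, in the sense that $f(1^{(1)}_{2^{k-1}\rho^k})_{6\rho^m}$ contains $1^{(2)}_{c''^{k-1}\rho^{km'}}$ for a constant $c''=c''(G_2)\in(0,1]$ and every positive integer $k$. The plan is to induct on $k$. The base case $k=1$ is exactly the hypothesis (Large image) of Theorem~\ref{thm:approximate-hom}, which says $1^{(2)}_{\rho^{m'}}\subseteq (\im f)_{\rho^m}\subseteq f(1^{(1)}_\rho)_{6\rho^m}$ (the looser radius $6\rho^m$ only helps).

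\textbf{Inductive step.} Suppose the inclusion holds for $k$; I want it for $k+1$. The idea is that an element of the small ball $1^{(2)}_{c''^{k}\rho^{(k+1)m'}}$ in $G_2$ should be realized as a product of two pieces: one coming from $1^{(2)}_{\rho^{m'}}$ (the base-case image, hit by $f$ on $1^{(1)}_\rho$ up to error $\rho^m$) and a correction living in the smaller ball $1^{(2)}_{c''^{k-1}\rho^{km'}}$, which by the inductive hypothesis is hit by $f$ on $1^{(1)}_{2^{k-1}\rho^k}$ up to error $6\rho^m$. Concretely, given $h\in 1^{(2)}_{c''^{k}\rho^{(k+1)m'}}$, I would first use the base case to find $g_1\in 1^{(1)}_\rho$ with $d(f(g_1),h')\le 6\rho^m$ for a suitably chosen $h'\in 1^{(2)}_{\rho^{m'}}$ approximating $h$ at the appropriate scale, then set $h''=f(g_1)^{-1}h$ (or $h'^{-1}h$), observe that $h''$ lies in a ball of radius $\lesssim c''^{k-1}\rho^{km'}$ once $c''$ is chosen small enough depending on the local geometry of $G_2$ (this is where the constant $c''=c''(G_2)$ and the factors $2^{k-1}$, $c''^{k-1}$ enter, absorbing the bi-invariant-metric distortion and the accumulated additive errors $6\rho^m$), apply the inductive hypothesis to find $g_2\in 1^{(1)}_{2^{k-1}\rho^k}$ with $d(f(g_2),h'')\le 6\rho^m$, and finally take $g:=g_1g_2$. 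Then $g\in 1^{(1)}_{2^{k-1}\rho^k\cdot\rho}\cdot 1^{(1)}_{\rho}$, which by bi-invariance of the metric is contained in $1^{(1)}_{2^k\rho^{k+1}}$ after adjusting constants (using $\rho\le 1$ and $2^{k-1}\rho^k\le 1$), and the approximate-homomorphism property gives $d(f(g),f(g_1)f(g_2))\le\rho^m$, so that $d(f(g),h)\le d(f(g_1)f(g_2),h'h'')+2\cdot 6\rho^m+\rho^m$, which is $\le 6\rho^m$ after one more tightening of the error bookkeeping and the choice of $c''$. Thus $h\in f(1^{(1)}_{2^k\rho^{k+1}})_{6\rho^m}$, closing the induction.

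\textbf{Main obstacle.} The routine parts are the triangle-inequality error tracking and the bi-invariance manipulations; the one genuinely delicate point is the bookkeeping of the geometric constants, namely verifying that a single constant $c''=c''(G_2)\in(0,1]$ (independent of $k$ and of $\rho$) can be chosen so that at every stage the "correction term" $h''$ actually falls inside the ball to which the inductive hypothesis applies, i.e.\ that the loss incurred when passing from $h\in 1^{(2)}_{c''^k\rho^{(k+1)m'}}$ to $h''$ is controlled by the factor $c''$ rather than accumulating. This is why the statement carries the product factors $2^{k-1}$ and $c''^{k-1}$ rather than fixed radii: each inductive step multiplies the domain radius by roughly $2\rho$ and shrinks the allowable target radius by roughly $c''$, and one must check these exponentials stay consistent with the fixed error scale $\rho^m$ for all $k$ up to the relevant range (which will ultimately be controlled by the hypothesis $m\gg_{G_1,G_2}m'$ and $\rho\ll 1$). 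I expect no conceptual difficulty beyond carefully setting up which ball radius is used where.
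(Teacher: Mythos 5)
Your inductive framework is right, and the base case is right, but the inductive step has a genuine gap: you decompose the target $h\in 1^{(2)}_{c''^{k}\rho^{(k+1)m'}}$ as a \emph{product} $h=h'h''$ with $h'\in 1^{(2)}_{\rho^{m'}}$ and $h''\in 1^{(2)}_{c''^{k-1}\rho^{km'}}$, pull back each factor, and set $g=g_1g_2$. This cannot close the induction on the domain side. If $g_1\in 1^{(1)}_{\rho}$ and $g_2\in 1^{(1)}_{2^{k-1}\rho^k}$, the triangle inequality only gives $g_1g_2\in 1^{(1)}_{\rho+2^{k-1}\rho^k}$, which is of size roughly $\rho$ --- vastly larger than the required $1^{(1)}_{2^k\rho^{k+1}}$. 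The assertion in your argument that $g=g_1g_2$ lies in $1^{(1)}_{2^{k-1}\rho^k\cdot\rho}\cdot 1^{(1)}_\rho$ and hence in $1^{(1)}_{2^k\rho^{k+1}}$ has no justification: multiplying two balls produces a ball of roughly the \emph{larger} radius, not the product of the radii. No choice of $c''$ can repair this, because the problem is a polynomial gap in $\rho$, not a constant.

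What the paper does instead, and what the lemma actually requires, is a \emph{commutator} decomposition. By Proposition~\ref{prop:commutator-small-nhbds}, one has $1^{(2)}_{c''^{k}\rho^{(k+1)m'}}\subseteq [1^{(2)}_{\rho^{m'}},1^{(2)}_{c''^{k-1}\rho^{km'}}]$, so any $h$ in the small ball can be written as a commutator $[h',g']$ of two elements $h'$, $g'$ living at much larger scales. One then pulls these back under $f$ (base case and inductive hypothesis), perturbs inside the commutator using the Solovay--Kitaev-type bound $\|[g,h]-I\|\le 2\|g-I\|\|h-I\|$ (which you correctly quote but do not exploit), and uses the approximate-homomorphism property to compare $f([h,g])$ with $[f(h),f(g)]$. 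The very same commutator bound then shows $[1^{(1)}_\rho,1^{(1)}_{2^{k-1}\rho^k}]\subseteq 1^{(1)}_{2^k\rho^{k+1}}$, which is exactly the domain shrinkage your product argument lacks. In other words, the whole content of the lemma is that commutators of nearby elements land \emph{quadratically} close to the identity, on both the source and target sides; a product decomposition only gives linear control and therefore cannot bootstrap the radius from $\rho$ down to $\rho^{k+1}$. You should replace your Step~2 decomposition $h=h'h''$ with $h=[h',g']$ and rerun the error bookkeeping through the commutator estimate.

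There is a second, smaller point of confusion worth flagging: if $h$ is already in the tiny ball $1^{(2)}_{c''^k\rho^{(k+1)m'}}$, then it is certainly in $1^{(2)}_{c''^{k-1}\rho^{km'}}$, so your "correction term'' $h''=h'^{-1}h$ is not genuinely smaller than $h$ was --- subtracting off $h'$ does not improve the scale. This is another symptom of the same issue: the product decomposition makes no use of the depth gain that the commutator provides.
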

\begin{proof}
We proceed by the induction on $k$. The base of induction $k=1$ is part of the assumption. By Proposition~\ref{prop:commutator-small-nhbds}, we have 
\be\label{eq:commutator-induction-step}
1^{(2)}_{c''^{k}\rho^{m'+km'}}\subseteq [1^{(2)}_{\rho^{m'}},1^{(2)}_{c''^{k-1}\rho^{km'}}].
\ee
By the hypothesis and the induction hypothesis, for every $h'\in 1^{(2)}_{\rho^{m'}}$ and $g'\in 1^{(2)}_{c''^{k-1}\rho^{km'}}$, there are $h\in 1^{(1)}_{\rho}$ and $g\in 1^{(1)}_{2^{k-1}\rho^{k}}$ such that
\be\label{eq:induction-hypothesis-comm}
f(h)\in h'_{\rho^m},\quad \text{and} \quad 
f(g)\in g'_{6\rho^m}.
\ee
By \eqref{eq:induction-hypothesis-comm}, as part of the Solovay-Kitaev theorem (see the following claim) 
for $0<\rho\ll 1$, 
\be\label{eq:perturbation-comm}
[f(h),f(g)]\in [h',g']_{O(\rho^{2m})}\subseteq  [h',g']_{\rho^{2m-1}}.
\ee
Since $f$ is a $\rho^m$-approximate homomorphism, 
\be\label{eq:approx-hom-comm}
f([h,g])\in [f(h),f(g)]_{5\rho^m}.
\ee
By \eqref{eq:approx-hom-comm} and \eqref{eq:perturbation-comm},  we obtain that the following holds
\be\label{eq:comm-close-image-approx-hom}
[h',g']\in f([h,g])_{5\rho^m+\rho^{2m-1}} \subseteq f([h,g])_{6\rho^m}.
\ee
By \eqref{eq:commutator-induction-step} and \eqref{eq:comm-close-image-approx-hom}, we deduce that
\be\label{eq:ball-image-approx-hom}
1^{(2)}_{c''^{k}\rho^{(k+1)m'}}\subseteq f([1^{(1)}_\rho,1^{(1)}_{2^{k-1}\rho^k}])_{6\rho^m}.
\ee
{\bf Claim.} Suppose $g,h\in {\rm SU}(n)$, $\|g-I\|\leq r$ and $\|h-I\|\leq r'$. Then $\|[g,h]-I\|\leq 2rr'$.

\medskip

\noindent
{\em Proof of Claim.} Suppose $x,y\in \M_n(\bbc)$ such that $\|x\|=\|y\|\leq 1$ and $g=I+rx$ and $h=I+r'y$. Then 
\begin{align*}
\|[g,h]-I\| = & \|ghg^{-1}h^{-1}-I\| = \|(gh-hg)g^{-1}h^{-1}\| \\
= & \|(I+rx)(I+r'y)-(I+r'y)(I+rx)\| \\ 
= & rr' \|xy-yx\| 
\leq  2rr'.
\end{align*}
The claim follows.

By the above claim, $1^{(1)}_{2^k\rho^{k+1}}\supseteq [1^{(1)}_\rho,1^{(1)}_{2^{k-1}\rho^k}]$.
This and \eqref{eq:ball-image-approx-hom} imply that 
$
1^{(2)}_{c''^{k}\rho^{(k+1)m'}}\subseteq f(1^{(1)}_{2^k\rho^{k+1}})_{6\rho^m},
$
which finishes the proof.
\end{proof}

In order to study the image of the restriction of $f$ to a small ball, we will be using the $n$-th roots of elements of a compact group. In the next lemma, we recall some basic properties of taking the $n$-th roots. 

For $g\in 1_{1/3}$ in a compact Lie group and positive integer $n$, we let 
\[
g^{1/n}:=\exp\biggl(\frac{1}{n}\log g\biggr);
\]
recall from the discussion leading to~\eqref{eq: exp and log are inverses}, that $\exp$ and $\log$ are well-defined on the considered neighborhoods.

\begin{lem}
    \label{lem:properties of n th root}
    In the above setting and $n\in \bbn$, the following statements hold.
    \begin{enumerate}
        \item $1_{\eta/n}\subseteq 1_\eta^{1/n}\subseteq 1_{6\eta/n}$ for every $\eta<1/3$. 
        \item $(g^{1/n})_{\eta/n}\subseteq (g_\eta)^{1/n}\subseteq (g^{1/n})_{18\eta/n}$ for every $0<\eta<1/3$ and $\|g-I\|\ll 1$ where the implied constants are universal.
    \end{enumerate}
\end{lem}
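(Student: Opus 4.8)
The plan is to extract everything from the definition $g^{1/n}=\exp(\tfrac1n\log g)$ together with the two crude estimates already available, $\|\log g\|\le 2\|g-I\|$ and \eqref{eq: exp and log are inverses} (that is, $\|\exp x-I\|\le 3\|x\|$ for $\|x\|<1$), plus one structural fact: every element of $G$ is an orthogonal (or, in the unitary case, unitary) matrix, so $\|g^{j}\|_{\mathrm{op}}=1$ for all $j\ge 0$. From the telescoping identity $u^{n}-v^{n}=\sum_{k=0}^{n-1}u^{\,n-1-k}(u-v)v^{k}$ this yields
\[
\|u^{n}-v^{n}\|\le n\,\|u-v\|\qquad\text{for all }u,v\in G,
\]
which is the only quantitative input needed beyond the two crude estimates. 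One also uses repeatedly that, in the regime $\eta<1/3$ (and, in part~(2), $\|g-I\|\ll 1$), all the elements that come up lie in a fixed ball around $I$ on which $\log$ and $\exp$ are mutually inverse and $t\mapsto\exp(t\log g)$ is a one-parameter subgroup; in particular $(g^{1/n})^{n}=g$, and $(h^{n})^{1/n}=h$ whenever $h$ is close enough to $I$.

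\textbf{Lower inclusions.} In both (1) and (2) I would take the $n$-th power as the preimage. For (1): if $\|h-I\|\le\eta/n$, then $\|h^{n}-I\|=\|h^{n}-I^{n}\|\le n\|h-I\|\le\eta$, so $h^{n}\in 1_{\eta}$ and $(h^{n})^{1/n}=h$, giving $1_{\eta/n}\subseteq 1_{\eta}^{1/n}$. For (2): if $\|h-g^{1/n}\|\le\eta/n$, then using $g=(g^{1/n})^{n}$ we get $\|h^{n}-g\|=\|h^{n}-(g^{1/n})^{n}\|\le n\|h-g^{1/n}\|\le\eta$, so $h^{n}\in g_{\eta}$ and $(h^{n})^{1/n}=h$, giving $(g^{1/n})_{\eta/n}\subseteq(g_{\eta})^{1/n}$.

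\textbf{Upper inclusions.} For (1): if $g\in 1_{\eta}$ then $\|\tfrac1n\log g\|\le\tfrac{2\eta}{n}<1$, so \eqref{eq: exp and log are inverses} gives $\|g^{1/n}-I\|=\|\exp(\tfrac1n\log g)-I\|\le\tfrac{6\eta}{n}$, i.e.\ $g^{1/n}\in 1_{6\eta/n}$. For (2): when $\|g-I\|\ll 1$ and $g'\in g_{\eta}$ with $\eta<1/3$, both $g$ and $g'$ sit in a fixed small neighbourhood of $I$ on which $\log$ is $C$-Lipschitz for a universal $C$, while $\exp$ is $C'$-Lipschitz for a universal $C'$ on the corresponding neighbourhood of $0$; hence
\[
\|(g')^{1/n}-g^{1/n}\|=\bigl\|\exp(\tfrac1n\log g')-\exp(\tfrac1n\log g)\bigr\|\le\frac{C'}{n}\,\|\log g'-\log g\|\le\frac{CC'}{n}\,\eta\le\frac{18\,\eta}{n},
\]
where $18$ is a safe bound for $CC'$. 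The step I expect to be the main obstacle is precisely this last one, (2b): it requires fixing, once and for all, a neighbourhood of $I$ --- whose radius is the universal constant hidden in ``$\|g-I\|\ll 1$'' --- on which $\log$ is defined and Lipschitz, $\exp$ is Lipschitz on the corresponding neighbourhood of $0$, $\exp\circ\log$ and $\log\circ\exp$ are identities, and the identities $(g^{1/n})^{n}=g$ and $(h^{n})^{1/n}=h$ hold, so that all the manipulations above go through with universal constants. Granting that bookkeeping, part~(1) is entirely elementary once the telescoping bound $\|u^{n}-v^{n}\|\le n\|u-v\|$ is in place.
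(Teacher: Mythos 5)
Your proof is correct, and it diverges from the paper's in the upper inclusion of part~(2), which is where the real content is. For that inclusion the paper sets up the Baker--Campbell--Hausdorff machinery: writing $\log h = \tfrac1n(x\#y)$ with $x=\log g$, $y\in 0_{2\eta}$, and $z=\log(hg^{-1/n})$, it solves $z\#(x/n)=\tfrac1n(x\#y)$ for $z$ using the first-order BCH estimate \eqref{eq:BCH-1st-order-approx} and obtains $\|z\|\le\frac{6}{2n-1}\eta$, whence $\|hg^{-1/n}-I\|\le 3\|z\|\le 18\eta/n$. You instead treat $g\mapsto g^{1/n}$ directly as $\exp(\tfrac1n\log(\cdot))$ and compose two universal Lipschitz bounds: $\log$ on a ball of fixed small radius around $I$ (constant $\le 2$, say, via the telescoped power-series expansion of $\log(I+A)-\log(I+B)$), and $\exp$ on a ball around $0$ (constant $\le e$ for the same reason); the $1/n$ scaling of the argument of $\exp$ produces the $1/n$ in the final bound. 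That yields $CC'/n\le 6/n<18/n$, safely inside the stated constant, and it is more elementary than the paper's BCH route. The trade-off is that the paper's BCH calculation produces the numerology somewhat mechanically, while your route requires the bookkeeping you yourself flag --- pinning down a radius $\epsilon_0$ small enough that $\|g-I\|\le\epsilon_0$, $\eta<1/3$ keep $g$, every $g'\in g_\eta$, and $\tfrac1n\log g$, $\tfrac1n\log g'$ all inside the domains where $\log$, $\exp$, $\log\circ\exp=\mathrm{id}$, $\exp\circ\log=\mathrm{id}$, and the Lipschitz bounds hold, and also that $(h^n)^{1/n}=h$ and $(g^{1/n})^n=g$ on those domains (one needs $n\|\log h\|$ to stay below the injectivity radius of $\exp$; this holds because $\|\log h\|=O(\eta/n + \|g-I\|/n)$). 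For the two lower inclusions the paper and you are essentially doing the same thing: your telescoping bound $\|u^n-v^n\|\le n\|u-v\|$ is the unpacked form of the paper's conjugation rearrangement $(g^{1/n}h)^n=\prod_j(g^{j/n}hg^{-j/n})\cdot g$, both resting on bi-invariance of the metric.
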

\begin{proof}
    For $g\in 1_{\eta/n}$, $g^n\in 1_{\eta}$, and so $g\in 1_{\eta}^{1/n}$. 

    For $g\in 1_{\eta}^{1/n}$, we have that $g^n\in 1_{\eta}$, and so $\|\log g^n\|\leq 2 \eta$. Hence $\|\frac{1}{n}\log g^n\| \leq 2\eta/n$, which implies 
    \[
    \|g-I\|=\|\exp\biggl(\frac{1}{n}\log g^n\biggr)-I\|\leq 3\|\frac{1}{n}\log g^n\|\leq 6\eta/n.
    \]
    The first set of inclusions follows.

    To show the second claim, we use the Baker-Campbell-Hausdorff formula. We give an extended discussion on this around \eqref{eq:BCHD}. For now, we just mention that for $x,y$ in a ball of radius $1/6$ in the Lie algebra $\gfr$, 
    \[
    x\# y :=\log(\exp(x)\exp(y))\in \gfr,
    \]
        and 
    \be\label{eq:BCH-1st-order-approx}
    \|x\# y -x -y\|\leq \bar C \|x\|\|y\|,
    \ee
    where $\bar C$ is a fixed universal constant (see \ref{eq:BCHD-2nd-approx}). 

    Suppose $h\in (g_\eta)^{1/n}$. Then $\log h=\frac{1}{n} (x\# y)$ where $x:=\log g$ and $y\in 0_{2\eta}$. Let $z:=\log (hg^{-1/n})$. Then, we have
    \be\label{eq:perturb-n-th-root}
    \frac{1}{n} (x\# y)=\log ((hg^{-1/n})g^{1/n})=z\# (x/n).
    \ee
    By \eqref{eq:BCH-1st-order-approx} and \eqref{eq:perturb-n-th-root}, we deduce that
    \be\label{eq:Lipschitz-n-th-root}
        \|z\|-\frac{1}{n}(C\|x\|+1)\|y\|\leq 
        \| \frac{1}{n} (x\# y)-\frac{x}{n}-z\| \leq
        \frac{\bar C}{n}\|z\|\|x\|. 
    \ee
    Hence if $\|x\|\leq 1/(2\bar C)$, 
    \[
    \|z\|\leq 
    \biggl(\frac{1+\bar C\|x\|}{n-\bar C\|x\|}\biggr)\|y\|\leq \frac{6}{2n-1} \eta,
    \]
    which implies that $hg^{-1/n}\in 1_{6\eta/n}$. 

    On the other hand, for every $h\in 1_{\eta/n}$, we have 
    \[
    (g^{1/n}h)^n=\underbrace{g^{1/n}h \cdots g^{1/n}h}_{n \text{ times }}
    =(g^{1/n}hg^{-1/n})(g^{2/n}hg^{-2/n})
    \cdot (g^{n/n}hg^{-n/n}) g \in g_{\eta}; 
    \]
    and so $(g^{1/n})_{\eta/n}\subseteq (g_{\eta})^{1/n}$.
\end{proof}

The next lemma extends the result of Lemma~\ref{lem:image-large-ball-under-approx-hom} to smaller balls. Essentially, 
we show that $f(1^{(1)}_r)\subseteq 1^{(2)}_a$ for small values of $r$ with the property that $a/r$ is bounded by $\rho^{-O_{G_i}(1)}$.

\begin{lem}\label{lem:image-of-balls-under-approx-hom}
In the setting of Theorem~\ref{thm:thm:approximate-hom-intro}, suppose $F_2=\bbr$. If $m\gg_{d_{0i}} 1$ and $C:=C(G_1)$ is as in Proposition~\ref{prop:conjugation-by-large-ball-multiplication-large-image}, see Lemma~\ref{lem:image-large-ball-under-approx-hom}, then for every $4\rho^{7C+m}\leq r\leq \rho^{7C}/3$ the following holds
\[
f(1^{(1)}_r)\subseteq 1^{(2)}_{s_{\rho}(r)}
\] 
where $s_\rho(r)=6(2\rho^{\done/(8\dtwo)}\rho^{-7C}r+\rho^m)$. 
\end{lem}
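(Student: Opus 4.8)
The plan is to deduce the bound for a ball of radius $r$ from the already-established bound on the big ball $1^{(1)}_{\rho^{7C}}$ (Lemma~\ref{lem:image-large-ball-under-approx-hom}, which gives $f(1^{(1)}_{\rho^{7C}})\subseteq 1^{(2)}_{\rho^{\done/(8\dtwo)}}$) by realizing an element $g\in 1^{(1)}_r$ as an $n$-th power inside the big ball, where $n\approx \rho^{7C}/r$, and then comparing $f(g^{1/n})^n$ with $f(g)$. Concretely, set $n:=\lfloor \rho^{7C}/(3r)\rfloor$ or a similar choice, so that the $n$-th root $g^{1/n}:=\exp(\tfrac1n\log g)$ of $g$ still lies in $1^{(1)}_{\rho^{7C}/3}\subseteq 1^{(1)}_{\rho^{7C}}$ — here I would invoke part (1) of Lemma~\ref{lem:properties of n th root}, which bounds $1^{(1)}_{r}{}^{1/n}$ inside $1^{(1)}_{6r/n}$, to control where the root lands. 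Then $f(g^{1/n})\in 1^{(2)}_{\rho^{\done/(8\dtwo)}}$, and since $f$ is a $\rho^m$-approximate homomorphism, an iterated application of property (3) of Definition~\ref{def: approxmiate local hom} gives $f(g)\in \bigl(f(g^{1/n})^n\bigr)_{n\rho^m}$, provided all the intermediate partial products $g^{j/n}$ lie in $1^{(1)}_\rho$ so that the approximate-homomorphism relations are actually defined (this is where the hypothesis $r\geq 4\rho^{7C+m}$ and $m\gg 1$ enter — they guarantee $n$ is large enough for the error $n\rho^m$ to be a genuinely small correction, and small enough that the powers stay in the relevant neighborhoods).

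The second ingredient is to bound the size of $f(g^{1/n})^n$ in $G_2=\bbg_2(\bbr)$. Writing $h:=f(g^{1/n})$ with $d(h,1^{(2)})\leq \rho^{\done/(8\dtwo)}=:\epsilon$, and using the standard metric on the Lie group, the $n$-th power satisfies $d(h^n,1^{(2)})\leq n\epsilon$ up to lower-order corrections — essentially because $\|\log h^n\|\leq n\|\log h\|$ once $\epsilon\ll 1$. Combining this with $n\approx \rho^{7C}/r$ produces the bound $d(f(g),1^{(2)})\leq 6(2\rho^{\done/(8\dtwo)}\rho^{-7C}r+\rho^m)$: the term $2\epsilon\cdot\rho^{-7C}r$ is $2n\epsilon$ rewritten via $n\asymp\rho^{-7C}r$, the $\rho^m$ term is $n\rho^m$ rewritten similarly, and the overall factor $6$ absorbs the constant slack coming from the $n$-th root estimates in Lemma~\ref{lem:properties of n th root} and from rounding $n$.

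I expect the main obstacle to be bookkeeping the error terms carefully: one must verify that every time the approximate-homomorphism relation is used along the chain $h,h^2,\dots,h^n$ the relevant elements $g^{j/n}$ (and their $f$-images) lie in the neighborhoods where properties (2)–(3) of Definition~\ref{def: approxmiate local hom} apply, and that the compounding of $n$ separate $\rho^m$-errors together with the $O(n\epsilon)$ Lie-theoretic error does not exceed $s_\rho(r)$. The constraint $4\rho^{7C+m}\leq r\leq \rho^{7C}/3$ is exactly what makes $n$ lie in the usable range, and checking the endpoints — that $r\leq\rho^{7C}/3$ keeps $g^{1/n}\in 1^{(1)}_{\rho^{7C}}$, while $r\geq 4\rho^{7C+m}$ keeps $n\rho^m$ subordinate to the main term — is the delicate part. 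The constant $6$ in $s_\rho(r)$ strongly suggests the intended proof routes the root estimate through the factor-of-$6$ inclusion in Lemma~\ref{lem:properties of n th root}(1), so I would organize the computation to reuse that lemma verbatim rather than re-deriving BCH bounds.
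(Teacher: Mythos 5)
Your plan reverses the two key operations and as a consequence obtains a bound that goes in the wrong direction. Tracking your own quantities: with $n\approx\rho^{7C}/(3r)\geq 1$, the $n$-th root $g^{1/n}$ of $g\in 1^{(1)}_r$ lands in a \emph{smaller} ball $1^{(1)}_{O(r/n)}\subseteq 1^{(1)}_{\rho^{7C}}$, so all the big-ball bound of Lemma~\ref{lem:image-large-ball-under-approx-hom} tells you is the \emph{same} estimate $f(g^{1/n})\in 1^{(2)}_{a_0}$ with $a_0=\rho^{\done/(8\dtwo)}$; it does not improve as $g^{1/n}$ shrinks. Raising this to the $n$-th power in $G_2$ then inflates the bound to $d(f(g^{1/n})^n,1^{(2)})\lesssim n a_0\approx \rho^{7C}a_0/r$, which \emph{diverges} as $r\to 0$, whereas the target $s_\rho(r)\approx 12 a_0\rho^{-7C}r+6\rho^m$ shrinks linearly in $r$. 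The place this surfaces in your write-up is the inconsistent substitution at the end: you define $n\approx\rho^{7C}/r$ but then assert ``$n\asymp\rho^{-7C}r$'' when rewriting $2n\epsilon$ as $2\epsilon\rho^{-7C}r$; since $r\leq\rho^{7C}/3$ the second expression is $\leq 2/3$, contradicting the first, and the intended bound never actually follows from your computation.

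The correct maneuver (which the paper carries out) goes the other way: for $g\in 1^{(1)}_{r}$, \emph{raise} $g$ to the $k$-th power in $G_1$ so that $g^k\in 1^{(1)}_{\rho^{7C}}$ (taking $k\approx\rho^{7C}/r$), apply the big-ball bound to get $f(g^k)\in 1^{(2)}_{a_0}$, deduce $f(g)^k\in 1^{(2)}_{a_0+k\rho^m}$ from the approximate-homomorphism property, and \emph{then} take $k$-th roots inside $G_2$ using Lemma~\ref{lem:properties of n th root}(1) applied to the \emph{target} ball: $(1^{(2)}_{a_0+k\rho^m})^{1/k}\subseteq 1^{(2)}_{6(a_0/k+\rho^m)}$. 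The essential step you are missing is this last one — taking roots in $G_2$ of the ball $1^{(2)}_{a_0+k\rho^m}$ is what \emph{divides} $a_0$ by $k$ and produces the factor $a_0/k\approx a_0\rho^{-7C}r$ in $s_\rho(r)$. You invoked Lemma~\ref{lem:properties of n th root}(1) only on the domain side (to locate $g^{1/n}$), where it is inert; the leverage in the lemma comes from applying it to shrink the image ball, not the source ball.
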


\begin{proof}
	Let $C=C(G_1)$ be as in Lemma~\ref{lem:image-large-ball-under-approx-hom}; let $r_0:=\rho^{7C}$ and $a_0:=\rho^{{\done}/(8{\dtwo})}$. Then 
	\be\label{eq:initial-step-image-ball}
	f(1^{(1)}_{r_0})\subseteq 1^{(2)}_{a_0}.
	\ee
For every positive integer $k$ and $g\in 1^{(1)}_{r_0/k}$, $g^k\in 1^{(1)}_{r_0}$. 
Hence using \eqref{eq:initial-step-image-ball} and the fact that $f$ is $\rho^m$-approximate homomorphism, we deduce 
\be\label{eq:f-and-1/k-roots}
f(g)^k\in f(g^k)_{k\rho^m} \subseteq 1^{(2)}_{a_0+k\rho^m}. 
\ee
Assuming that $a_0+k\rho^m<1/3$, by \eqref{eq:f-and-1/k-roots} and part~(1) of Lemma~\ref{lem:properties of n th root}, we obtain the following 
\be\label{eq:image-1/k-radius}
f(g) \in (1^{(2)}_{a_0+k\rho^m})^{1/k} \subseteq 1^{(2)}_{6(\frac{a_0}{k}+\rho^m)}.
\ee
For every $4\rho^m\leq \vare<1/3$, let $k$ be an integer such that $(k+1)^{-1}< \vare \leq k^{-1}$. 
Then $a_0+k\rho^m<1/3$, and by \eqref{eq:image-1/k-radius}, we have 
\be\label{eq:epsilon-times-r-0}
f(1^{(1)}_{r_0 \vare})\subseteq f(1^{(1)}_{r_0/k}) \subseteq 1^{(2)}_{6(\frac{a_0}{k}+\rho^m)}\subseteq 1^{(2)}_{6(2a_0\vare+\rho^m)}.
\ee
Therefore, for every $4\rho^{7C+m}\leq r < \rho^{7C}/3$, we get 
\[
f(1^{(1)}_{r})\subseteq 1^{(2)}_{6(2a_0\rho^{-7C}r+\rho^m)}
\]
which finishes the proof.
\end{proof}

\begin{lem}\label{lem:image-of-balls-under-approximate-hom-lower-bound'}
		In the setting of Theorem~\ref{thm:approximate-hom}, suppose $F_1=F_2=\bbr$. Then there is $0<\hat c:=\hat c(G_2)\leq 1$  
		such that for every positive integer $7C\leq k\leq m/3$ the following holds
		\be\label{eq: induction on k image large real}
		1^{(2)}_{{\hat c}7^{-k}\rho^{k+7Cm'}}\subseteq f\bigl(1^{(1)}_{\rho^{k-1}}\bigr)_{18k\rho^{m}}.
		\ee
  so long as $0<\rho< 1$ is small enough depending on $G_1$.  
\end{lem}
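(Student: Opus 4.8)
The plan is to mirror the argument of Lemma~\ref{lem:image-of-balls-under-approximate-hom-lower-bound}, but carry along the sharper radius control we now have from Lemma~\ref{lem:image-of-balls-under-approx-hom}, so that the shrinking factor becomes $7^{-k}$ rather than losing a full power of $\rho$ at each stage. I would argue by induction on $k$, starting from $k=7C$, where the base case is exactly the inclusion $1^{(2)}_{\rho^{7C\cdot m'/\, ?}}\subseteq f(1^{(1)}_{\rho^{7C-1}})_{\ldots}$ supplied by Lemma~\ref{lem:image-large-ball-under-approx-hom} together with the Large image hypothesis and Lemma~\ref{lem:image-of-balls-under-approximate-hom-lower-bound} specialized to a bounded number of commutator steps; the constant $\hat c$ absorbs the finitely many universal constants accrued there.

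For the inductive step, suppose \eqref{eq: induction on k image large real} holds for $k$. The key commutator input is Proposition~\ref{prop:commutator-small-nhbds} (as used in Lemma~\ref{lem:image-of-balls-under-approximate-hom-lower-bound}), which gives
\[
1^{(2)}_{\hat c\,7^{-(k+1)}\rho^{(k+1)+7Cm'}}\subseteq \bigl[1^{(2)}_{\rho},\,1^{(2)}_{\hat c\,7^{-k}\rho^{k+7Cm'}}\bigr],
\]
after readjusting $\hat c$ by a universal factor to account for the loss in the commutator estimate. Then for $h'\in 1^{(2)}_{\rho}$ and $g'\in 1^{(2)}_{\hat c\,7^{-k}\rho^{k+7Cm'}}$ the induction hypothesis and the Large image assumption produce $h\in 1^{(1)}_{\rho}$ with $f(h)\in h'_{\rho^m}$ and $g\in 1^{(1)}_{\rho^{k-1}}$ with $f(g)\in g'_{18k\rho^m}$. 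Applying the Solovay–Kitaev perturbation estimate (the Claim inside Lemma~\ref{lem:image-of-balls-under-approximate-hom-lower-bound}) to $[f(h),f(g)]$ versus $[h',g']$, then the $\rho^m$-approximate homomorphism property relating $f([h,g])$ to $[f(h),f(g)]$, yields $[h',g']\in f([h,g])_{18(k+1)\rho^m}$ provided $\rho$ is small (the error terms $5\rho^m+O(\rho^{2m-1})+18k\rho^m$ are dominated by $18(k+1)\rho^m$). Since by the SU$(n)$ commutator Claim $[1^{(1)}_\rho,1^{(1)}_{\rho^{k-1}}]\subseteq 1^{(1)}_{2\rho^k}\subseteq 1^{(1)}_{\rho^{k-1}}$—or, more efficiently, one can first take $(k+1)$-st roots via Lemma~\ref{lem:properties of n th root} to land in a ball of radius $\rho^{(k+1)-1}$—we conclude $1^{(2)}_{\hat c\,7^{-(k+1)}\rho^{(k+1)+7Cm'}}\subseteq f\bigl(1^{(1)}_{\rho^{k}}\bigr)_{18(k+1)\rho^m}$, closing the induction. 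The condition $k\le m/3$ ensures the cumulative error $18k\rho^m$ stays well below the radius of the ball being described, and $k\geq 7C$ ensures we can bootstrap from Lemma~\ref{lem:image-large-ball-under-approx-hom}.

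The main obstacle is bookkeeping the constants so that a \emph{single} geometric factor $7^{-k}$ suffices at every step: the commutator shrinkage from Proposition~\ref{prop:commutator-small-nhbds}, the $18/n$ factor from $n$-th roots in Lemma~\ref{lem:properties of n th root}(1), and the additive $\rho^m$ errors each contribute, and one must verify that choosing the base constant $\hat c=\hat c(G_2)$ small enough (and $\rho$ small depending on $G_1$, $G_2$) makes all of them fit inside the factor $7$. I would handle this by writing the induction with an explicit slack—e.g. carrying $\hat c\,7^{-k}$ with $7=6\cdot\frac{7}{6}$, using $6$ for the root estimate and the spare $\frac76$ for the commutator loss—so that no step ever needs more than the allotted factor. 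The exponent bookkeeping $(k+1)+7Cm'$ versus $k+7Cm'$ is automatic since each commutator with $1^{(2)}_\rho$ (i.e.\ with $g$-side radius $\rho$, not $\rho^{m'}$) raises the $\rho$-exponent by exactly $1$; the $7Cm'$ offset is inherited unchanged from the base case and never grows.
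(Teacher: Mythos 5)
Your induction is built around commutators, mirroring Lemma~\ref{lem:image-of-balls-under-approximate-hom-lower-bound}, but that route cannot deliver the stated constant $\hat c\,7^{-k}$: Proposition~\ref{prop:commutator-small-nhbds} shrinks the ball by a factor $c''(G_2)$ at \emph{every} commutator step, and $c''$ is a $G_2$-dependent constant which there is no reason to believe is $\geq 1/7$. Iterating $k-7C$ times therefore produces a factor of $c''^{\,k-7C}$, which is exactly the exponential $G_2$-dependent decay that already appears in the unprimed Lemma~\ref{lem:image-of-balls-under-approximate-hom-lower-bound} and which the primed lemma is designed to eliminate. Your ``spare $\tfrac76$'' heuristic can absorb a fixed universal constant per step, but not the possibly-tiny $c''(G_2)$; and since $\hat c$ is a single constant appearing on both sides of the induction, it cannot be ``readjusted'' at each step.

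A second gap: to apply the commutator inclusion you need an approximate $f$-preimage of an arbitrary $h'\in 1^{(2)}_{\rho}$, and you attribute this to the Large image hypothesis. But that hypothesis only gives $1^{(2)}_{\rho^{m'}}\subseteq (\operatorname{Im} f)_{\rho^m}$, i.e.\ preimages for $h'\in 1^{(2)}_{\rho^{m'}}$, not $h'\in 1^{(2)}_{\rho}$. Commutating with $1^{(2)}_{\rho^{m'}}$ instead raises the $\rho$-exponent by $m'$ per step, giving $\rho^{km'}$ in the exponent rather than $\rho^{k+7Cm'}$. This is again exactly what the unprimed lemma produces.

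The paper's proof avoids both obstructions by inducting via $n$-th roots rather than commutators. The two inputs are $f(g^n)\in(f(g)^n)_{n\rho^m}$ and $1^{(1)}_{\rho^{k-1}}\subseteq(1^{(1)}_{\rho^k})^n$ for $n\geq 6/\rho$, combined with Lemma~\ref{lem:properties of n th root}, all of whose constants are \emph{universal} (the factors $6$ and $18$, not $c''(G_2)$). Taking $n=\lceil 6/\rho\rceil\leq 7/\rho$, each passage from $k$ to $k+1$ shrinks the image-ball radius by $\rho_k\mapsto\rho_k/n\geq\rho_k\cdot(\rho/7)$, with only an additive error $18\rho^m + 18\cdot18k\rho^m/n$, which is what gives the geometric factor $(\rho/7)^{k-7C}$ with a single universal base $7$. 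The $c''$ dependence appears only finitely many times in the base case (via Lemma~\ref{lem:image-of-balls-under-approximate-hom-lower-bound} at $k_0=7C$), where it is absorbed into $\hat c$. In short, roots give a scale-free universal shrinkage that commutators do not, and this is the idea your proposal is missing.
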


\begin{proof}
We will use the following two facts. First,
\be\label{eq: nth power approx hom}
f(g^n)\in (f(g)^n)_{n\rho^m}
\ee
if $g$ and $n$ are so that $f(g^i)$ is defined for all $1\leq i\leq n$. Second, the following consequence of the first part of Lemma~\ref{lem:properties of n th root}:
\be\label{eq: fist part of nth root use}
1^{(1)}_{\rho^{k-1}}\subseteq (1^{(1)}_{6\rho^{k-1}/n})^n\subseteq (1^{(1)}_{\rho^{k}})^n.
\ee

Let $C$ be as in Lemma~\ref{lem:image-of-balls-under-approx-hom}. Then, by Lemma~\ref{lem:image-of-balls-under-approx-hom}, $\bigl(f(1^{(1)}_{\rho^{k-1}})_{18k\rho^m}\bigr)^{1/n}$ is defined for all $7C\leq k\leq m/3$. Thus, we will assume throughout that $7C\leq k\leq m/3$.  

Suppose that $1^{(2)}_{\rho_k}\subseteq f(1^{(1)}_{\rho^{k-1}})_{18k\rho^m}$ for some $\rho_k<1/3$. Then, for $n\geq \max\{6/\rho,18\}$, we have 

\begin{align*}
    (1^{(2)})_{\rho_k/n} \subseteq  & (1^{(2)}_{\rho_k})^{1/n}
    &
    \text{by part 1 of Lemma~\ref{lem:properties of n th root}}\\
    \subseteq  & \bigl(f(1^{(1)}_{\rho^{k-1}})_{18k\rho^m}\bigr)^{1/n}
    &
    \\
    \subseteq & (f(1^{(1)}_{\rho^{k-1}})^{1/n})_{18\times 18k\rho^m/n}
    &
    \text{by part 2 of Lemma~\ref{lem:properties of n th root}}
    \\
    \subseteq  & 
    (f((1^{(1)}_{\rho^{k}})^n)^{1/n})_{18\times 18k\rho^m/n}
    &\text{by~\eqref{eq: fist part of nth root use}}
    \\
    \subseteq & (((f(1^{(1)}_{\rho^{k}})^n)_{n\rho^m})^{1/n})_{18\times 18k\rho^m/n}
    &\text{by~\eqref{eq: nth power approx hom}}
    \\
    \subseteq &
    f(1^{(1)}_{\rho^{k}})_{18\rho^m+18\times 18k\rho^m/n}
    &\text{by part 2 of Lemma~\ref{lem:properties of n th root}}
    \\
    \subseteq & f(1^{(1)}_{\rho^{k}})_{18(k+1)\rho^m}
    &\text{since $n\geq 18$.}
\end{align*}
Hence, if we assume $\rho<1/3$ and put $n=\lceil 6/\rho\rceil$, then applying the above, repeatedly, we conclude that, $1^{(2)}_{\rho_{k_0} } \subseteq f(1^{(1)}_{\rho^{k_0-1}})_{18k_0 \rho^m}$ for some $k_0\in \bbn$ implies
\be\label{eq:large-image-almost-done}
1^{(2)}_{\rho_{k_0}(\rho/7)^{k-k_0}}\subseteq f(1^{(1)}_{\rho^{k-1}})_{18k\rho^m}
\ee
for all $k\geq k_0$.

Applying Lemma~\ref{lem:image-of-balls-under-approximate-hom-lower-bound} with $7C$, we deduce that \eqref{eq:large-image-almost-done} holds for $k=k_0=7C$, $\rho\leq 2^{-7C}$, and $\rho_{k_0}:=c''^{7C-1}\rho^{7Cm'}$. Therefore for all $7C\leq k\leq m/3$, we have
 \[
1^{(2)}_{{c''}^{7C-1}7^{-k+7C}\rho^{k+7Cm'-7C}}\subseteq f\bigl(1^{(1)}_{\rho^{k-1}}\bigr)_{18k\rho^{m}},
\]
and the claim follows.
\end{proof}

For every positive integer $k$, let 
\[
\theta_k:0_{\rho/2}\rightarrow \gfr_2,\quad \theta_k(x):=\frac{\log(f(\exp(\rho^k x)))}{\rho^k}.
\]
By the definition, we have
\be\label{eq:interpretation-theta}
\exp(\rho^k \theta_k(x))=f(\exp(\rho^k x))
\ee
for every $x\in 0_{\rho/2}$. 

Lemma~\ref{lem:image-of-balls-under-approx-hom} implies that $\theta_k$ is {\em approximately} Lipschitz. 
\begin{lem}\label{lem:theta-lip}
In the above setting, for $x\in 0_{\rho/2}$, $\rho\ll 1$, and $7C< k<m$, we have
\be\label{eq:bound-norm-theta-1}
\|\theta_k(x)\|\ll \rho^{-7C} \|x\|+\rho^{m-k}.
\ee
In particular, if $\rho\ll 1$ and $7C<k< m$, then
\be\label{eq:bound-norm-theta}
\|\theta_k(x)\| < \rho^{-7C}
\ee	
for all $x\in 0_{\rho/2}$.
\end{lem}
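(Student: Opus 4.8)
The plan is to read the bound off Lemma~\ref{lem:image-of-balls-under-approx-hom} directly. Writing $g := \exp(\rho^k x)$ we have $\theta_k(x) = \rho^{-k}\log f(g)$, so the whole matter reduces to controlling $d(f(g), 1^{(2)})$. First I would do the bookkeeping: for $x \in 0_{\rho/2}$ one has $\|\rho^k x\| \le \rho^{k+1}/2 < 1$, so $g$ is defined and $\|g - I\| \le 3\rho^k\|x\|$ by \eqref{eq: exp and log are inverses}, and since $k > 7C$ and $\rho \ll 1$ this puts $g$ in the domain $1^{(1)}_\rho$ of $f$. I would then set $r := \max\{3\rho^k\|x\|,\, 4\rho^{7C+m}\}$, so that $g \in 1^{(1)}_r$, and verify that $4\rho^{7C+m} \le r \le \rho^{7C}/3$ --- the upper inequality uses $\|x\| \le \rho/2$, $k \ge 7C$, and $\rho \ll 1$ --- which is exactly the admissible range in Lemma~\ref{lem:image-of-balls-under-approx-hom}.

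Next I would apply Lemma~\ref{lem:image-of-balls-under-approx-hom} to obtain $f(g) \in 1^{(2)}_{s_\rho(r)}$ with $s_\rho(r) = 6(2\rho^{\done/(8\dtwo)}\rho^{-7C}r + \rho^m)$, and estimate $s_\rho(r)$ using $r \le 3\rho^k\|x\| + 4\rho^{7C+m}$ together with the trivial bound $\rho^{\done/(8\dtwo)} \le 1$, which gives $s_\rho(r) \ll \rho^{k-7C}\|x\| + \rho^m$ and in particular $s_\rho(r) \ll 1$. Therefore $\log$ is defined at $f(g)$ and $\|\log f(g)\| \le 2\,d(f(g), 1^{(2)}) \le 2 s_\rho(r)$, the bound $\|\log h\| \le 2\|h - I\|$ near the identity being recorded around \eqref{eq: exp and log are inverses}; dividing by $\rho^k$ yields \eqref{eq:bound-norm-theta-1}. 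Finally \eqref{eq:bound-norm-theta} follows by plugging $\|x\| \le \rho/2$ and $m - k \ge 1$ into \eqref{eq:bound-norm-theta-1}: its right-hand side is then $\ll \rho^{1-7C} + \rho = \rho^{-7C}(\rho + \rho^{7C+1}) \ll \rho\cdot\rho^{-7C}$, hence $< \rho^{-7C}$ once $\rho$ is small enough to absorb the implied constant.

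I do not expect a genuine obstacle; the argument is essentially a dimension count resting entirely on Lemma~\ref{lem:image-of-balls-under-approx-hom}. The one point needing care is the definition of $r$ as a maximum with the lower cutoff $4\rho^{7C+m}$ (taking $r = 3\rho^k\|x\|$ would fail when $\|x\|$ is too small, since then $r$ drops below the range for which Lemma~\ref{lem:image-of-balls-under-approx-hom} is stated) together with the check that this $r$ stays below the upper cutoff $\rho^{7C}/3$; both are immediate from $x \in 0_{\rho/2}$ and $k > 7C$.
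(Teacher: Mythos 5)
Your proof is correct and follows essentially the same route as the paper's: both reduce the bound to a direct application of Lemma~\ref{lem:image-of-balls-under-approx-hom} to $g=\exp(\rho^k x)$, then divide by $\rho^k$. Your version is a bit more scrupulous than the paper's (which silently takes $g\in 1^{(1)}_{\rho^k\|x\|}$ and does not address the lower cutoff on $r$); your definition of $r$ as $\max\{3\rho^k\|x\|,\,4\rho^{7C+m}\}$ is exactly the right fix for the case of tiny $\|x\|$, and since $s_\rho$ is increasing this only improves the bound by monotonicity of containment, so nothing is lost. The paper absorbs all of these details into implied constants.
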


\begin{proof}
	Since $\exp(\rho^kx)\in 1^{(1)}_{\rho^k\|x\|}$, by Lemma~\ref{lem:image-of-balls-under-approx-hom} we have 
	\[
	\exp(\rho^k \theta_k(x))=f(\exp(\rho^k x))\in 1^{(2)}_{b(\rho^{k-7C} \|x\|+\rho^m)}.
	\] 
	Hence
	$\|\rho^k \theta_k(x)\|\ll \rho^{k-7C} \|x\|+\rho^m$,
	as we claimed in~\eqref{eq:bound-norm-theta-1}.
	
	The claim in~\eqref{eq:bound-norm-theta} follows from~\eqref{eq:bound-norm-theta-1}. 
\end{proof}

To understand further properties of $\theta_k$, we start by recalling some of the consequences of the Baker-Campbell-Hausdorff-Dynkin and the Zassenhaus formulas. 

For $x,y\in \gfr_i$ with $\|x\|,\|y\|< 1/2$, put
\[
x\# y:=\log(\exp(x)\exp(y)).
\]
By the Baker-Campbell-Hausdorff-Dynkin formula, we have 
\be\label{eq:BCHD}
x\# y=\sum_{k=1}^{\infty} \frac{(-1)^{k-1}}{k} 
\sum_{m_i,n_i\geq 0, m_i+n_i>0} \frac{1}{(\sum_{i=1}^k(m_i+n_i)) \prod_{i=1}^k(m_i!n_i!)}Z_{\mbf,\nbf}(x,y),
\ee
where $\mbf:=(m_1,\ldots,m_k)$, $\nbf:=(n_1,\ldots,n_k)$, and 
\[
Z_{\mbf,\nbf}(x,y):=[\underbrace{x,\ldots,x}_{m_1},\underbrace{y,\ldots,y}_{n_1},\ldots,\underbrace{x,\ldots,x}_{m_k},\underbrace{y,\ldots,y}_{n_k}]
\]
is a long commutator. Let us observe that for every $\mbf$, $\nbf$, $x$ and $y$ we have 
\be\label{eq:trivial-upper-bound-terms-in-BCHD}
\|Z_{\mbf,\nbf}(x,y)\| \leq 2^{\|\mbf\|_1+\|\nbf\|_1-1} \|x\|^{\|\mbf\|_1} \|y\|^{\|\nbf\|_1}.
\ee
Suppose $x,y\in \gfr_i$ and $\|v\|\leq \eta \|x\|$ for some $0<\eta< 1$. Using the multi-linearity of long commutators and \eqref{eq:trivial-upper-bound-terms-in-BCHD}, we deduce that 
\be\label{eq:Lip-terms-BCHD} 
\|Z_{\mbf,\nbf}(x+v,y)- Z_{\mbf,\nbf} (x,y)\| 
\leq 2^{\|\mbf\|_1+\|\nbf\|_1+1} \eta \|x\|^{\|\mbf\|_1}\|y\|^{\|\nbf\|_1}.
\ee
We show this by induction on $\|\mbf\|_1+\|\nbf\|_1$. Here is the induction step:
\begin{align}
\notag	\|Z_{\mbf,\nbf}(x+v,y)-& Z_{\mbf,\nbf} (x,y)\|  = \|\ad(x+v)(Z_{\mbf- \ebf_1,\nbf}(x+v,y))-\ad(x)(Z_{\mbf-\ebf_1,\nbf}(x,y))\| 
	\\
\notag \leq & 2 \eta \|x\| \|Z_{\mbf- \ebf_1,\nbf}(x+v,y)\| + 2 \|x\| \|Z_{\mbf- \ebf_1,\nbf}(x+v,y)-Z_{\mbf-\ebf_1,\nbf}(x,y)\|
	\\
\notag 
\leq & 2^{\|\mbf\|_1+\|\nbf\|_1} \eta \|x\|^{\|\mbf\|_1} \|y\|^{\|\nbf\|_1} + 2^{\|\mbf\|_1+\|\nbf\|_1} \eta \|x\|^{\|\mbf\|_1} \|y\|^{\|\nbf\|_1}
\\
\notag
\leq &  2^{\|\mbf\|_1+\|\nbf\|_1+1} \eta \|x\|^{\|\mbf\|_1}\|y\|^{\|\nbf\|_1}.
\end{align}
By \eqref{eq:Lip-terms-BCHD} and \eqref{eq:BCHD}, we obtain the following  perturbation estimate: for every $x,y,v\in \gfr_i$ with $\|v\|\leq \eta \|x\|$, $\|x\|,\|y\|\ll 1$, and $0<\eta<1$,
\begin{align}
\notag
	\|(x+v)\# y- x\#y\|  & 
	\leq \Big( \sum_{k=1}^{\infty} \frac{1}{k} \sum_{m_i,n_i\geq 0, m_i+n_i>0} \frac{2^{\|\mbf\|_1+\|\nbf\|_1+1} \|x\|^{\|\mbf\|_1}\|y\|^{\|\nbf\|_1}.}{(\|\mbf\|_1+\|\nbf\|_1) \prod_{i=1}^k(m_i!n_i!)}\Big) \eta
	\\
	\label{eq:BCH-perturbation}
	\leq & 
	2\Big( \sum_{k=1}^{\infty} \frac{(e^{2\|x\|+2\|y\|}-1)^k}{k} \Big) \eta
	= 2\log((2-e^{2\|x\|+2\|y\|})^{-1}) \eta. 
\end{align}

Next we note that by \eqref{eq:BCHD} and an argument similar to \eqref{eq:BCH-perturbation}, for every $0<\eta<1 $ and $\|x\|,\|y\|< 1$ the following holds
\be\label{eq:BCHD-2nd-approx}
(\eta x) \# (\eta y)= \eta (x+ y)+ \eta^2 z, \quad\text{for some } z:=z(x,y)\in \gfr_i \text{ with } \|z\|\ll \|x\|\|y\|.
\ee
By \eqref{eq:BCH-perturbation} and \eqref{eq:BCHD-2nd-approx}, for $\|x\|\leq \|y\|<1$ and $0<\eta\ll 1$, we obtain the following upper bound,
\begin{align}
\notag
\| ((\eta x+\eta y) \# (-\eta x)) \# (-\eta y)\| = & \|(\eta y+ \eta^2 z(\eta (x+y),-\eta x))\# (-\eta y)\| 
\\
\notag
= & \|(\eta y+ \eta^2 z)\# (-\eta y)- (\eta y)\# (-\eta y)\| 
\\
\label{eq:almost-product}
\ll &  \log(2-e^{C\eta \|y\|})^{-1} \eta \ll \eta^2 \|y\|,
\end{align}
where $C$ is a universal constant and  the last inequality holds  as $\lim_{s \rightarrow 0^+} \ln(2-e^s)^{-1}/s=1$. By \eqref{eq:almost-product}, we deduce that for $\|x\|,\|y\|<1$ and $0<\eta\ll 1$, there is $z':=z'(x,y)\in\gfr_i$ such that 
$\|z'\|\ll \max\{ \|x\|,\|y\| \}$ and 
\be\label{eq:exp-almost-hom}
\exp(\eta (x+y))=\exp(\eta x)\exp(\eta y) \exp(\eta^2 z').
\ee

\begin{lem}\label{lem:almost-additive-theta}
	In the above setting for $x,y\in 0_{\rho/4}$, $ \rho\ll 1$, and $C\ll k <m$, we have 
	\[
	\| \theta_k(x+y)-(\theta_k(x)+\theta_k(y))\| \ll \rho^{k-14C}.
	\]
\end{lem}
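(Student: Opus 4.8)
The plan is to use the definition of $\theta_k$ together with the approximate-homomorphism property of $f$ and the Baker–Campbell–Hausdorff estimates collected in \eqref{eq:BCH-perturbation}–\eqref{eq:exp-almost-hom}. Write $a=\theta_k(x)$, $b=\theta_k(y)$, $c=\theta_k(x+y)$, so that by \eqref{eq:interpretation-theta} we have $f(\exp(\rho^k x))=\exp(\rho^k a)$, and similarly for $b$ and $c$. By Lemma~\ref{lem:theta-lip}, and more precisely \eqref{eq:bound-norm-theta-1}, each of $\|a\|,\|b\|,\|c\|$ is $\ll \rho^{-7C}(\|x\|+\|y\|) + \rho^{m-k}\ll \rho^{1-7C}$, which is $\ll 1$ once $k\gg C$ and $\rho\ll 1$; so all the BCHD manipulations below take place in the convergence regime.

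The first step is to relate $\exp(\rho^k(x+y))$ to $\exp(\rho^k x)\exp(\rho^k y)$ via \eqref{eq:exp-almost-hom}: there is $z'=z'(x,y)\in\gfr_1$ with $\|z'\|\ll \max\{\|x\|,\|y\|\}\ll\rho$ so that $\exp(\rho^k(x+y))=\exp(\rho^k x)\exp(\rho^k y)\exp(\rho^{2k}z')$. Applying $f$ and using that $f$ is a $\rho^m$-approximate homomorphism (property (3) of Definition~\ref{def: approxmiate local hom}, applied twice, together with Lemma~\ref{lem:image-of-balls-under-approx-hom} to control $d(f(\exp(\rho^{2k}z')),1^{(2)})\ll \rho^{k-7C}\cdot\rho^k\|z'\| + \rho^m \ll \rho^{2k-7C}$) gives
\[
f(\exp(\rho^k(x+y))) \in \bigl(f(\exp(\rho^k x))\,f(\exp(\rho^k y))\bigr)_{O(\rho^m+\rho^{2k-7C})}
= \bigl(\exp(\rho^k a)\exp(\rho^k b)\bigr)_{O(\rho^m+\rho^{2k-7C})}.
\]
On the other hand the left side is $\exp(\rho^k c)$. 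Taking $\log$ (which is bi-Lipschitz near the identity, with a universal constant, on the relevant neighborhoods) we get
\[
\|\rho^k c - (\rho^k a \,\#\, \rho^k b)\| \ll \rho^m+\rho^{2k-7C}.
\]

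The second step is to estimate $\rho^k a\,\#\,\rho^k b$. Apply \eqref{eq:BCHD-2nd-approx} with $\eta=\rho^k$ to $a,b$: since $\|a\|,\|b\|\ll\rho^{1-7C}<1$, we obtain $\rho^k a\,\#\,\rho^k b = \rho^k(a+b) + \rho^{2k} w$ with $\|w\|\ll \|a\|\|b\|\ll \rho^{2-14C}$. Combining with the previous display,
\[
\|\rho^k c - \rho^k(a+b)\| \ll \rho^m + \rho^{2k-7C} + \rho^{2k+2-14C}\ll \rho^m+\rho^{2k-14C},
\]
hence $\|c-(a+b)\|\ll \rho^{m-k}+\rho^{k-14C}$. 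Since $k<m$ we have $\rho^{m-k}\le\rho$, but we need the sharper bound $\rho^{k-14C}$; this holds because $m-k \ge k-14C$ is equivalent to $m\ge 2k-14C$, which is guaranteed by the hypothesis $k<m$ together with $k\gg C$ (indeed $2k-14C<2k\le \ldots$; more carefully, one arranges $k\le m/2+7C$ within the stated range, or simply notes $\rho^{m-k}\le\rho^{k-14C}$ whenever $m\ge 2k-14C$, which is in force under the running assumptions $C\ll k<m$ as used throughout \S\ref{sec: proof of thm approx}). This yields $\|\theta_k(x+y)-(\theta_k(x)+\theta_k(y))\|\ll \rho^{k-14C}$, as claimed.

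The main obstacle is bookkeeping of the error terms: one must verify that every application of the approximate-homomorphism property, of the BCHD perturbation bound \eqref{eq:BCH-perturbation}, and of Lemma~\ref{lem:image-of-balls-under-approx-hom} produces an error that, after dividing by $\rho^k$, is dominated by $\rho^{k-14C}$ — in particular that the cross term $\rho^{2k}w$ from \eqref{eq:BCHD-2nd-approx} and the $z'$-correction term both contribute $O(\rho^{k-14C})$, which forces the appearance of the constant $14C$ (roughly $7C$ from each of the two $\theta_k$-norm bounds feeding into $\|w\|\ll\|a\|\|b\|$). The range $C\ll k<m$ and $\rho\ll 1$ is exactly what is needed to keep all these quantities small and in the BCHD convergence domain.
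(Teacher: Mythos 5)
Your proposal follows the same BCHD-based route as the paper, and the final estimate is right, but there is one genuine error in the justification of the second step. You claim $\|a\|,\|b\|\ll\rho^{1-7C}<1$ and then apply \eqref{eq:BCHD-2nd-approx} directly with $\eta=\rho^k$ to $a,b$. This is false: $C$ is a positive integer (see Lemma~\ref{lem:polytop-group-ring-adjoint} and Proposition~\ref{prop:conjugation-by-large-ball-multiplication-large-image}), so $1-7C\le -6$ and $\rho^{1-7C}\to\infty$ as $\rho\to 0$. Hence the hypothesis $\|x\|,\|y\|<1$ of \eqref{eq:BCHD-2nd-approx} need not hold for $a,b$, and you cannot invoke it as written. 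The correct move, which is what the paper does in \eqref{eq:theta-addition-first-step}, is to rescale first using \eqref{eq:bound-norm-theta}: write
\[
(\rho^k\theta_k(x))\#(\rho^k\theta_k(y))=\bigl(\rho^{k-7C}(\rho^{7C}\theta_k(x))\bigr)\#\bigl(\rho^{k-7C}(\rho^{7C}\theta_k(y))\bigr),
\]
observe $\|\rho^{7C}\theta_k(x)\|,\|\rho^{7C}\theta_k(y)\|<1$, and then apply \eqref{eq:BCHD-2nd-approx} with $\eta=\rho^{k-7C}$. This yields the error term $\rho^{2k-14C}z$ with $\|z\|\ll 1$, which is where $14C$ actually comes from; your heuristic explanation of the constant at the end is correct, but the displayed computation does not supply a legitimate justification.

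Your handling of the $z'$-correction (collapsing it to an error $O(\rho^{2k-7C})$ via Lemma~\ref{lem:image-of-balls-under-approx-hom} rather than keeping $\theta_{2k}(z')$ explicit as the paper does and bounding it with \eqref{eq:bound-norm-theta}) is a legitimate stylistic variant and gives the same bound. The issue you flag in the last step --- that absorbing the $\rho^{m-k}$ error into $\rho^{k-14C}$ requires $m\ge 2k-14C$, which is not literally implied by $C\ll k<m$ --- is a real observation; the paper's proof implicitly has the same constraint, and it is harmless because every downstream application of this lemma works in the range $k<m/2$ or $k<m/3$. Flagging it explicitly, as you do, is a reasonable thing to point out.
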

\begin{proof} We start with the following computation of $\theta_k(x+y)$. By the definition of $\theta_k$,~\eqref{eq:interpretation-theta}, we have 
	\begin{align}
	\notag \exp(\rho^k \theta_k(x+y))= & 
	f(\exp(\rho^k(x+y))) = 
	f(\exp(\rho^k x)\exp(\rho^k y) \exp(\rho^{2k} z'))
	&\text{(by \eqref{eq:exp-almost-hom})} 	\\
	\notag
	= & f(\exp(\rho^kx))f(\exp(\rho^k y))f(\exp(\rho^{2k} z')) \exp(w)
	& \text{(for some } \|w\|\ll \rho^m) \\
	\notag
	= &
	\exp(\rho^k \theta_k(x)) \exp(\rho^k \theta_k(y)) \exp(\rho^{2k} \theta_{2k}(z')) \exp(w) \\
	\label{eq:theta-addition-1}
	= &
	\exp\Big((\rho^k \theta_k(x))\# (\rho^k \theta_k(y)) \# (\rho^{2k} \theta_{2k}(z')) \# w\Big)
	\end{align}
	By \eqref{eq:theta-addition-1}, we obtain the following
	\be\label{eq:theta-addition}
	\rho^k \theta_k(x+y)=(\rho^k \theta_k(x))\# (\rho^k \theta_k(y)) \# (\rho^{2k} \theta_{2k}(z')) \# w.
	\ee
	By \eqref{eq:bound-norm-theta} and \eqref{eq:BCHD-2nd-approx}, we deduce the following 
	\begin{align}
	\notag
		(\rho^k \theta_k(x))\# (\rho^k \theta_k(y)) = & 
			(\rho^{k-7C} (\rho^{7C}\theta_k(x)))\# (\rho^{k-7C} (\rho^{7C}\theta_k(y)))
	\\
	\label{eq:theta-addition-first-step}
	= & \rho^k(\theta_k(x)+\theta_k(y))+\rho^{2k-14C} z,
	\end{align}
	for some $z\in \gfr_2$ with $\|z\|\ll \rho^{14C}\|\theta_k(x)\|\|\theta_k(y)\|$. By \eqref{eq:theta-addition-first-step} and \eqref{eq:BCHD-2nd-approx}, we obtain that the following holds
	\begin{align}
	\notag
		(\rho^k \theta_k(x))\# (\rho^k \theta_k(y))\# (\rho^{2k} \theta_{2k}(z')) =
		&
		(\rho^k(\theta_k(x)+\theta_k(y))+\rho^{2k-14C} z)\# (\rho^{2k} \theta_{2k}(z')) 
		\\
	\notag
	= &	
	   (\rho^k(\theta_k(x)+\theta_k(y))+\rho^{2k-14C} z)+ (\rho^{2k} \theta_{2k}(z'))
	   + \rho^{2k-14C} \overline{z}
	   \\
	   \label{eq:theta-addition-second-step}
	   = &	   
	   \rho^k(\theta_k(x)+\theta_k(y))+ \rho^{2k-14C} (z+\overline{z}+\rho^{14C} \theta_{2k}(z')).
		\end{align}
By \eqref{eq:theta-addition-second-step}, \eqref{eq:BCHD-2nd-approx}, and \eqref{eq:theta-addition}, we deduce the following
\[
\| \theta_k(x+y)-(\theta_k(x)+\theta_k(y))\| \ll \rho^{k-14C},
\]
and the claim follows.
	\end{proof}
	
Using Lemma~\ref{lem:almost-additive-theta} and Lemma~\ref{lem:theta-lip}, we can show that $\theta_k$ almost preserves scaler multiplication. 
\begin{lem}\label{lem:theta-scaler-multiplication}
	In the above setting, for $x\in 0_{\rho/2}$, $\rho\ll 1$, $-1\leq t\leq 1$, and $C\ll k<m/2$, we have
	\[
	\|\theta_k(tx)-t\theta_k(x)\|\ll \rho^{\frac{k}{2}-14C}.
	\]
\end{lem}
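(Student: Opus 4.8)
The plan is to run the classical Ulam-stability bootstrap: upgrade the approximate additivity of $\theta_k$ supplied by Lemma~\ref{lem:almost-additive-theta} to approximate homogeneity, keeping careful track of which ball each intermediate point lies in (Lemma~\ref{lem:almost-additive-theta} requires \emph{both} of its arguments to lie in $0_{\rho/4}$) and of how the errors accumulate. Set $\vare_0:=\rho^{k-14C}$, so that Lemma~\ref{lem:almost-additive-theta} reads $\theta_k(x+y)=\theta_k(x)+\theta_k(y)+O(\vare_0)$, and recall from Lemma~\ref{lem:theta-lip} that $\|\theta_k(x)\|\ll\rho^{-7C}\|x\|+\rho^{m-k}$ and in particular $\|\theta_k(x)\|\ll\rho^{-7C}$ on $0_{\rho/2}$.

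First I would dispose of the easy reductions. Since $\theta_k(0)=\rho^{-k}\log f(1^{(1)})=0$, applying Lemma~\ref{lem:almost-additive-theta} to $(x,-x)$ gives $\theta_k(-x)=-\theta_k(x)+O(\vare_0)$, so it suffices to treat $0\le t\le 1$. Splitting $x=\tfrac x2+\tfrac x2$ and $tx=t\tfrac x2+t\tfrac x2$ and invoking additivity reduces the case $x\in 0_{\rho/2}$ to the case $x\in 0_{\rho/4}$ at the cost of one more $O(\vare_0)$; and for $t\in(\tfrac12,1]$, writing $tx=\tfrac x2+(t-\tfrac12)x$ reduces to $0\le t\le\tfrac12$. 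So from now on assume $x\in 0_{\rho/4}$ and $0\le t\le\tfrac12$; then every point produced below remains in $0_{\rho/4}$, so Lemma~\ref{lem:almost-additive-theta} always applies.

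Next, for a positive integer $q$, telescoping additivity along $\tfrac1q x,\tfrac2q x,\dots,x$ yields $\theta_k(x)=q\,\theta_k(\tfrac1q x)+O(q\vare_0)$, hence $\theta_k(\tfrac1q x)=\tfrac1q\theta_k(x)+O(\vare_0)$; one further round of additivity gives, for integers $0\le p\le q/2$, the bound $\theta_k(\tfrac pq x)=\tfrac pq\theta_k(x)+O(q\vare_0)$. Finally, given arbitrary $t\in[0,\tfrac12]$, fix a large $q$ and an integer $p$ with $p/q\le\tfrac12$ and $|t-p/q|\le 1/q$, and split $tx=\tfrac pq x+(t-\tfrac pq)x$. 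By Lemma~\ref{lem:almost-additive-theta}, $\theta_k(tx)-t\theta_k(x)$ equals, up to $O(\vare_0)$, the sum of $\theta_k(\tfrac pq x)-\tfrac pq\theta_k(x)$, of $\theta_k((t-\tfrac pq)x)$, and of $-(t-\tfrac pq)\theta_k(x)$; the first is $O(q\vare_0)$ by the rational estimate, while Lemma~\ref{lem:theta-lip} bounds the last two by $\|\theta_k((t-\tfrac pq)x)\|\ll q^{-1}\rho^{1-7C}+\rho^{m-k}$ and $|t-\tfrac pq|\,\|\theta_k(x)\|\ll q^{-1}\rho^{-7C}$. Altogether $\|\theta_k(tx)-t\theta_k(x)\|\ll q\rho^{k-14C}+q^{-1}\rho^{-7C}+\rho^{m-k}$; choosing $q$ of size $\rho^{-(k-7C)/2}$ balances the first two terms at $\rho^{(k-21C)/2}\le\rho^{k/2-14C}$, and $\rho^{m-k}\le\rho^{k/2-14C}$ because $k<m/2$ forces $m-k>k/2$. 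This yields $\|\theta_k(tx)-t\theta_k(x)\|\ll\rho^{k/2-14C}$; the constraints $C\ll k<m$ needed for Lemmas~\ref{lem:almost-additive-theta} and~\ref{lem:theta-lip} are implied by $C\ll k<m/2$.

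The main obstacle is not a single deep point but the error accounting, in particular the trade-off that produces the factor $\tfrac12$ in the exponent: running additivity over $q$ summands costs a factor $q$, whereas rounding $t$ to a rational with denominator $q$ — whose effect on $\theta_k$ is controlled through the approximate-Lipschitz estimate of Lemma~\ref{lem:theta-lip} — costs a factor $q^{-1}$, and optimizing $q$ turns $\rho^{k}$ into $\rho^{k/2}$. One must also be disciplined about domains: all the partial sums $\tfrac iq x$ and the remainder $(t-\tfrac pq)x$ must lie in $0_{\rho/4}$ for Lemma~\ref{lem:almost-additive-theta} to apply, which is exactly what dictates the preliminary reduction to $x\in 0_{\rho/4}$ and $t\le\tfrac12$ and the order in which the cases ($t\le\tfrac12$, then $t\in(\tfrac12,1]$, then $t<0$) are treated.
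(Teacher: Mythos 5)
Your proof is correct and follows the same essential strategy as the paper: approximate $t$ by a rational, telescope Lemma~\ref{lem:almost-additive-theta} to control $\theta_k$ on the rational multiple, and treat the remainder through the approximate-Lipschitz bound~\eqref{eq:bound-norm-theta-1}. The small differences are worth noting. The paper fixes the rounding denominator at $s\asymp\rho^{-k/2}$ and runs the telescoping in two stages (first $x\mapsto x/s$, then $x/s\mapsto rx/s$), which gives exactly $\rho^{k/2-14C}$; you instead use a single denominator $q$ and optimize it, landing at $q\asymp\rho^{-(k-7C)/2}$ and the marginally sharper $\rho^{(k-21C)/2}$, still $\ll\rho^{k/2-14C}$ as required. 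More substantively, you are careful to first reduce to $x\in 0_{\rho/4}$ and $0\le t\le\tfrac12$ so that every intermediate point $\tfrac iq x$ in the telescope lies inside the ball $0_{\rho/4}$ where Lemma~\ref{lem:almost-additive-theta} applies; the paper's telescoping from $x\in 0_{\rho/2}$ to $x/s$ passes through points of norm up to $\|x\|$ and thus implicitly needs the same reduction, which it does not spell out. Your version is therefore a cleaner instance of the same argument.
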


\begin{proof}
	There is a rational number ${r}/{s}$ such that $|t-\frac{r}{s}|\leq \rho^{k/2}$ and $|r|,|s|\leq \rho^{-k/2}$. 
	By Lemma~\ref{lem:almost-additive-theta}, we have $\|s\theta_k(\frac{x}{s})-\theta_k(x)\|\ll s\rho^{k-14C}$, which implies
	\be\label{eq:theta-scaler-denom}
	\Big\|\theta_k\Big(\frac{x}{s}\Big)-\frac{1}{s}\theta_k(x)\Big\|\ll \rho^{k-14C}.
	\ee
	Similarly, we have
	\be\label{eq:theta-scaler-num}
	\Big\|\theta_k\Big(\frac{r}{s}x\Big)-r\theta_k\Big(\frac{x}{s}\Big)\Big\|\ll r \rho^{k-14C}.
	\ee 
	Combining \eqref{eq:theta-scaler-denom} and \eqref{eq:theta-scaler-num}, we conclude that
	\be\label{eq:theta-scaler-rational}
	\Big\|\theta_k\Big(\frac{r}{s}x\Big)-\frac{r}{s}\theta_k(x)\Big\|\ll r\rho^{k-14C}\ll \rho^{\frac{k}{2}-14C}.
	\ee
	
	Now Lemma~\ref{lem:almost-additive-theta} and Lemma~\ref{lem:theta-lip}, imply that 
	\begin{align}
		\notag 
			\Big\|\theta_k(tx)-\theta_k\Big(\frac{r}{s} x\Big)\Big\|\ll 
			&
			\Big\|\theta_k\Big(tx-\frac{r}{s}x\Big)\Big\| + \rho^{k-14C}
\\
\notag
\ll
&
\rho^{-7C} \Big\|tx-\frac{r}{s}x\Big\|+ \rho^{k-14C}
\\
\label{eq:rational-scale-approx}
\ll 
&
\rho^{\frac{k}{2}-14C}.
	\end{align}
	
Note also that $\|(t-\frac{r}{s})\theta_k(x)\|\ll \rho^{-7C} |t-\frac{r}{s}|\leq \rho^{\frac{k}{2}-7C}$. 
Therefore, by \eqref{eq:theta-scaler-rational} and~\eqref{eq:rational-scale-approx}, we have
\[
\|\theta_k(tx)-t\theta_k(x)\|\ll \rho^{\frac{k}{2}-14C},
\]
as it was claimed.
\end{proof}

\begin{corollary}\label{cor:theta-approx-linear}
In the above setting, suppose $\rho\ll 1$ and $C\ll k<m/2$. Then there is a linear function $\wt{\theta}_k:\gfr_1\rightarrow \gfr_2$ such that for every $x\in 0_{\rho/2}$ we have 
\[
\|\theta_k(x)-\wt{\theta}_k(x)\|\ll \rho^{\frac{k}{2}-14C}.
\]
\end{corollary}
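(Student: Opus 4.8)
The plan is to produce $\wt{\theta}_k$ by linearly extending the values of $\theta_k$ on a well-chosen small basis of $\gfr_1$, and then to compare $\theta_k$ with this linear map by repeatedly invoking the approximate additivity of Lemma~\ref{lem:almost-additive-theta} and the approximate homogeneity of Lemma~\ref{lem:theta-scaler-multiplication}. The one genuine subtlety is that Lemma~\ref{lem:almost-additive-theta} is only available for arguments lying in $0_{\rho/4}$, so the comparison has to be performed not at $x$ itself but at the rescaled point $\tfrac1M x$ for a suitable constant $M$ (depending only on $\gfr_1$), where the partial sums of the relevant decomposition are automatically small.

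Concretely, first I would fix an $\bbr$-basis $e_1,\dots,e_{\done}$ of $\gfr_1$ with $\|e_i\|=1$ and a constant $\kappa_0=\kappa_0(\gfr_1)\geq 1$ such that $\|\sum_i a_i e_i\|\leq r$ forces $\max_i|a_i|\leq \kappa_0 r$ (equivalence of norms on the finite-dimensional space $\gfr_1$). Put $v_i:=\tfrac{\rho}{8}e_i\in 0_{\rho/4}$ and let $\wt{\theta}_k\colon\gfr_1\to\gfr_2$ be the unique $\bbr$-linear map with $\wt{\theta}_k(v_i)=\theta_k(v_i)$ for $1\leq i\leq\done$; this is the candidate.

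To compare: fix $x\in 0_{\rho/2}$ and write $x=\sum_i t_i v_i$, so $|t_i|\leq 4\kappa_0$ by the choice of $\kappa_0$. Set $N:=\lceil 4\kappa_0\rceil$, $M:=\done N$, and decompose $\tfrac1M x=\sum_{i=1}^{\done}\sum_{j=1}^{N}a_{i,j}$ with $a_{i,j}:=\tfrac{t_i}{NM}v_i$; each $a_{i,j}$ has norm $\leq\tfrac{\rho}{8M}$, so every partial sum of this list of $M$ vectors has norm $\leq\tfrac{\rho}{8}$ and in particular lies in $0_{\rho/4}$. I would then (i) apply Lemma~\ref{lem:almost-additive-theta} along the $M$ summands — valid since all partial sums and all $a_{i,j}$ lie in $0_{\rho/4}$ — to get $\|\theta_k(\tfrac1M x)-\sum_{i,j}\theta_k(a_{i,j})\|\ll\rho^{k-14C}$; (ii) apply Lemma~\ref{lem:theta-scaler-multiplication} to each term (with $v_i\in 0_{\rho/2}$ and scalar $\tfrac{t_i}{NM}$, of absolute value $\leq 1$) and sum, using $\theta_k(v_i)=\wt{\theta}_k(v_i)$ and linearity of $\wt{\theta}_k$, to get $\|\sum_{i,j}\theta_k(a_{i,j})-\tfrac1M\wt{\theta}_k(x)\|\ll\rho^{k/2-14C}$; and (iii) apply Lemma~\ref{lem:theta-scaler-multiplication} once more with $t=\tfrac1M$ and the point $x$ to get $\|\theta_k(\tfrac1M x)-\tfrac1M\theta_k(x)\|\ll\rho^{k/2-14C}$. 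Combining (i)--(iii) and multiplying through by the constant $M$ yields $\|\theta_k(x)-\wt{\theta}_k(x)\|\ll\rho^{k/2-14C}$, as desired.

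The main obstacle — really the only thing that needs care — is this domain bookkeeping: since the additivity estimate lives on $0_{\rho/4}$ one must not decompose $x$ directly but pass to $\tfrac1M x$, and one must keep track that $M$, hence the number of applications of each lemma, depends only on $\gfr_1$ (through the condition number $\kappa_0$ of the fixed basis). Granting that, the errors are harmless: the $\rho^{k-14C}$ coming from Lemma~\ref{lem:almost-additive-theta} is dominated by the $\rho^{k/2-14C}$ coming from Lemma~\ref{lem:theta-scaler-multiplication} since $\rho\ll 1$ and $k<m$, and both survive multiplication by the constant $M$.
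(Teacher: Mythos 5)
Your proof is correct and follows the paper's strategy: define $\wt{\theta}_k$ by linearly extending $\theta_k$ from a small scalar multiple of a basis, then compare via Lemmas~\ref{lem:almost-additive-theta} and~\ref{lem:theta-scaler-multiplication}. Your rescaling by $\tfrac1M$ is in fact slightly more careful than the printed proof, which decomposes $x=\sum t_ie_i$ directly and applies Lemma~\ref{lem:almost-additive-theta} to partial sums that can have norm up to $\rho/2$ (outside the stated domain $0_{\rho/4}$ of that lemma), whereas your decomposition of $\tfrac1M x$ keeps every intermediate argument inside $0_{\rho/8}$, with $M$ depending only on $\dim\gfr_1$ and hence absorbable into the implied constant.
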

\begin{proof}
Suppose $\{e_1,\ldots,e_{{\done}}\}$ is an orthonormal basis of $\gfr_1$. 
Let $\wt{\theta}_k:\gfr_1\rightarrow \gfr_2$ be the linear map defined by 
$\wt{\theta}_k(e_i):=(\rho/2)^{-1}\theta_k((\rho/2) e_i)$ for every $i$. 

For every $x:=\sum_{i=1}^{{\done}}t_ie_i \in 0_{\rho/2}$, we have
\begin{align*}
	\|\theta_k(x)-\wt{\theta}_k(x)\| = &
	\|\theta_k(\sum_{i=1}^{{\done}} t_i e_i)-\wt{\theta}_k(\sum_{i=1}^{{\done}}t_i e_i)\|
	\\
	\ll 
	&
	\rho^{k-14C}+\Big\|\sum_{i=1}^{{\done}} \theta_k(t_ie_i) - \frac{2t_i}{\rho} \theta_k((\rho/2) e_i)\Big\|
	&
	\text{(by Lemma~\ref{lem:almost-additive-theta})}
	\\
	\ll 
	&
	\rho^{k-14C}+\sum_{i=1}^{{\done}} \Big\|\theta_k\Big(\frac{2t_i}{\rho} (\rho/2)e_i\Big)-\frac{2t_i}{\rho} \theta_k((\rho/2) e_i)\Big\| 
	\\
	\ll & \rho^{\frac{k}{2}-14C}
	&
	\text{(by Lemma~\ref{lem:theta-scaler-multiplication})},
\end{align*}
as we claimed in the corollary.
\end{proof}

Our next task is to show that $\theta_{2k}$ almost preserves Lie algebra commutators.	
This will be done in two steps: first we show that $\theta_{2k}([x,y])$ is close to 
$[\theta_k(x),\theta_k(y)]$, Lemma~\ref{lem:theta-commutator-2k-k}, 
then we show that $\theta_{2k}$ and $\theta_k$ are close to each other, Lemma~\ref{lem:theta-2k-k-connection}. 

Let us begin with the following consequence of the Baker-Campbell-Hausdorff-Dynkin formula, see \eqref{eq:BCHD}. 
For every $0<\eta\ll 1$ and $\|x\|,\|y\|<1$, we have 
\be\label{eq:BCHD-deg-3-approximation}
(\eta x)\#(\eta y)= \eta(x+y)+\frac{\eta^2}{2}[x,y]+\frac{\eta^3}{12}\Big([x,[x,y]]-[y,[x,y]] \Big)+\eta^4 z_4,
\ee 
for some $z_4:=z_4(x,y)\in \gfr_i$ with $\|z_4\|\ll \max\{\|x\|^3\|y\|, \|x\|^2\|y\|^2,\|x\|\|y\|^3\}$. By \eqref{eq:BCHD-deg-3-approximation}, we obtain the following 
\begin{align}
\notag
	(\eta x)\# (\eta y)\# (-\eta x)\# (-\eta y) = 
	&
	\Big( \eta(x+y)+\frac{\eta^2}{2}[x,y]+\frac{\eta^3}{12}\Big([x,[x,y]]-[y,[x,y]] \Big)+\eta^4 z_4 \Big)
	\\
	\notag 
	\#
	&
	\Big( -\eta(x+y)+\frac{\eta^2}{2}[x,y]-\frac{\eta^3}{12}\Big([x,[x,y]]-[y,[x,y]] \Big)+\eta^4 z_4' \Big)	
\\
\label{eq:BCHD-commutator}
=& 
\eta^2 [x,y]	+\eta^3 z_3', 
\end{align}
for some $z_3':=z_3'(x,y)\in \gfr_i$ with $\|z_3'\|\ll \max\{\|x\|^2\|y\|,\|x\|\|y\|^2\}$.

\begin{lem}\label{lem:theta-commutator-2k-k}
	In the above setting for $x,y\in 0_{\rho/16}$, $\rho\ll 1$, and $C\ll k<m$, we have 
	\[
	\|\theta_{2k}([x,y])-[\theta_k(x),\theta_k(y)]\|\ll \rho^{2k-14C}.
	\]
The implied constants are absolute. 
\end{lem}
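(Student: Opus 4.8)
The plan is to realize the Lie-algebra bracket $\rho^{2k}[x,y]$, up to a small perturbation, as a \emph{group} commutator of elements of $1^{(1)}_{\rho}$, push that identity through $f$, and then dismantle the group commutator on the image side with a second use of the Baker--Campbell--Hausdorff--Dynkin expansion. Concretely, I would first apply \eqref{eq:BCHD-commutator} with $\eta=\rho^k$ and the vectors $x,y\in 0_{\rho/16}$ to write
\[
\exp(\rho^k x)\exp(\rho^k y)\exp(-\rho^k x)\exp(-\rho^k y)=\exp\bigl(\rho^{2k}([x,y]+\rho^k z')\bigr),
\]
for some $z'=z'(x,y)$ with $\|z'\|\ll\max\{\|x\|^2\|y\|,\|x\|\|y\|^2\}$. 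Since $\|[x,y]\|\le 2\|x\|\|y\|$ and $\|z'\|$ are both $\ll\rho^2$, the vector $[x,y]+\rho^k z'$ lies well inside $0_{\rho/2}$, so after applying $f$ to the left-hand side and unwinding the defining relation \eqref{eq:interpretation-theta} the left-hand side becomes $\exp\bigl(\rho^{2k}\theta_{2k}([x,y]+\rho^k z')\bigr)$.

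On the other hand, because $f$ is a $\rho^m$-approximate homomorphism, evaluating $f$ on the four-fold product costs only $O(\rho^m)$ (a bounded number of applications of properties (2) and (3) of the definition, all intermediate partial products lying in $1^{(1)}_{\rho}$) and replaces each $f(\exp(\pm\rho^k x))$ by $\exp(\pm\rho^k\theta_k(x))$; hence $\exp\bigl(\rho^{2k}\theta_{2k}([x,y]+\rho^k z')\bigr)$ lies in the $O(\rho^m)$-neighborhood of $\exp(\rho^k\theta_k(x))\exp(\rho^k\theta_k(y))\exp(-\rho^k\theta_k(x))\exp(-\rho^k\theta_k(y))$. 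I would then apply \eqref{eq:BCHD-commutator} once more to this last product: the vectors $\theta_k(x),\theta_k(y)$ are not small, but by Lemma~\ref{lem:theta-lip} in the sharp form \eqref{eq:bound-norm-theta-1}, together with $x,y\in 0_{\rho/16}$, they have norm $\ll\rho^{-7C}\|x\|+\rho^{m-k}$, so $\rho^k\theta_k(x)$ has norm $\ll\rho^{k-7C}\|x\|$, which is small in the range $C\ll k$. Factoring out that scale, \eqref{eq:BCHD-commutator} identifies the image-side commutator with $\exp\bigl(\rho^{2k}[\theta_k(x),\theta_k(y)]+w\bigr)$, where the degree-three tail $w$ satisfies $\|w\|\ll\rho^{3(k-7C)}\max\{\|x\|^2\|y\|,\|x\|\|y\|^2\}$. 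Taking logarithms of the resulting inclusion---legitimate since everything in sight is $\ll 1$ in the stated range and $\log$ is then $2$-Lipschitz---and dividing by $\rho^{2k}$ gives
\[
\theta_{2k}([x,y]+\rho^k z')=[\theta_k(x),\theta_k(y)]+O\bigl(\rho^{-2k}\|w\|\bigr)+O(\rho^{m-2k}).
\]
It then remains to replace $\theta_{2k}([x,y]+\rho^k z')$ by $\theta_{2k}([x,y])$, which I would do via the almost-additivity Lemma~\ref{lem:almost-additive-theta} (whose error is exactly of the target order $\rho^{2k-14C}$) combined with the Lipschitz bound of Lemma~\ref{lem:theta-lip} applied to $\theta_{2k}(\rho^k z')$, and then to collect all the error terms.

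The one delicate point, which I expect to be the main obstacle, is the bookkeeping of the two Baker--Campbell--Hausdorff degree-three corrections $z'$ and $w$: one cannot afford the uniform bound $\|\theta_k\|<\rho^{-7C}$ here, but must carry the dependence on $\|x\|,\|y\|$ from \eqref{eq:bound-norm-theta-1} through the cubic tails, using $x,y\in 0_{\rho/16}$, so that after normalizing by $\rho^{2k}$ the accumulated error is of the order stated in the lemma. The $\rho^m$-slack coming from the approximate-homomorphism property is harmless because $m>k$, and the implied constants stay absolute since only the universal estimates around \eqref{eq:BCHD}--\eqref{eq:BCHD-commutator} and the absolute Lipschitz constant of $\log$ enter the argument.
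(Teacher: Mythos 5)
Your proposal follows essentially the same route as the paper: write $\exp(\rho^{2k}[x,y])$ as a perturbed group commutator of $\exp(\pm\rho^k x)$, $\exp(\pm\rho^k y)$ via \eqref{eq:BCHD-commutator}, push the identity through $f$ using the approximate-homomorphism property and the defining relation \eqref{eq:interpretation-theta}, and then unwind the image-side group commutator with a second application of \eqref{eq:BCHD-commutator}. The only structural difference is minor bookkeeping of the first degree-three tail: you keep it inside the argument of $\theta_{2k}$, writing $f(\exp(\rho^{2k}([x,y]+\rho^k z')))=\exp(\rho^{2k}\theta_{2k}([x,y]+\rho^k z'))$ and later removing the perturbation with Lemma~\ref{lem:almost-additive-theta} plus the Lipschitz bound, whereas the paper writes $\exp(\rho^{2k}[x,y])=[\exp(\rho^k x),\exp(\rho^k y)]u'$ with $u'\in 1^{(1)}_{O(\rho^{3k})}$ and controls $f(u')$ directly via Lemma~\ref{lem:image-of-balls-under-approx-hom}. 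These two devices are interchangeable; your variant is perfectly fine.

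However, the concluding claim that ``the accumulated error is of the order stated in the lemma'' ($\rho^{2k-14C}$) does not survive the exponent bookkeeping that you correctly identify as the delicate point. Carry it through: the degree-three tail on the image side is $w=\eta^3 z_3'$ with $\eta=\rho^{k-7C}$, and even with the sharp bound $\|\theta_k(x)\|\ll \rho^{-7C}\|x\|+\rho^{m-k}$ together with $\|x\|,\|y\|\ll\rho$, one gets $\|z_3'\|\ll\rho^3$, hence $\|w\|\ll\rho^{3k+3-21C}$ and $\rho^{-2k}\|w\|\ll\rho^{k+3-21C}$. Since $k+3-21C<2k-14C$ for $C\geq 1$, this is \emph{larger} than $\rho^{2k-14C}$, so the stated exponent is not reached; the almost-additivity error $\rho^{2k-14C}$ and the term $\|\theta_{2k}(\rho^k z')\|\ll\rho^{k+3-7C}$ do not change this. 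In fact the paper's own proof, which uses only the uniform bound $\|\theta_k\|<\rho^{-7C}$, arrives at the estimate $\rho^{k-21C}$ in \eqref{eq:theta-commutator-second-step}, and it is $\rho^{k-21C}$ (not $\rho^{2k-14C}$) that propagates through Corollary~\ref{cor:approx-Lie-algebra-hom} and into \eqref{eq:defining-Lie-hom-value-theta-tilde} and Lemma~\ref{lem:close-Lie-algebra-hom}. The exponent $2k-14C$ in the lemma's display appears to be a slip; all the downstream argument requires is a bound of the form $\rho^{k-O(C)}$, which both your computation ($\rho^{k+3-21C}$) and the paper's ($\rho^{k-21C}$) achieve. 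So: same method, sound derivation, but the final exponent you assert is not what the estimates yield, and you should record $\rho^{k-21C}$ (or your marginally sharper $\rho^{k+3-21C}$) rather than $\rho^{2k-14C}$.
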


\begin{proof}
Again using the definition of $\theta_k$,~\eqref{eq:interpretation-theta}, we have 
\begin{align}
\notag 
\exp(\rho^{2k} \theta_{2k}([x,y]))= & f(\exp(\rho^{2k}[x,y])) 
&
\text{(by \eqref{eq:interpretation-theta})}
\\
\notag
= &
f\Bigl(\exp\Bigl(\log([\exp(\rho^k x),\exp(\rho^k y)])-\rho^{3k} z_3'\Bigr)\Bigr)
&\text{ (by \eqref{eq:BCHD-commutator})	}
\\
\notag
= &
f([\exp(\rho^k x),\exp(\rho^k y)]u')
&
\text{(for some } u'\in 1^{(1)}_{O(\rho^{3k})} \text{)}
\\
\notag
= &
[f(\exp(\rho^k x)),f(\exp(\rho^k y))]f(u')w'
&
\text{(for some } w'\in 1^{(2)}_{O(\rho^m)} \text{)}
\\
\notag
=&
[\exp(\rho^k \theta_k(x)),\exp(\rho^k \theta_k(y))]f(u')w'
&
\text{(by \eqref{eq:interpretation-theta})}
\\
\label{eq:theta-commutator-first-step}
=& 
\exp(\rho^{2k}[\theta_k(x),\theta_k(y)]+\rho^{3k-21C} z_3'')f(u')w'
&
\text{ (by~\eqref{eq:bound-norm-theta} and \eqref{eq:BCHD-commutator})	}
\end{align}
	Moreover, by Lemma~\ref{lem:image-of-balls-under-approx-hom}, we have 
\be\label{eq:error}
f(u')w' \in 1^{(2)}_{O(\rho^{3k-7C}+\rho^m)}.
\ee
By \eqref{eq:theta-commutator-first-step} and \eqref{eq:error}, we obtain the following
\be\label{eq:theta-commutator-second-step}
 \theta_{2k}([x,y])=[\theta_k(x),\theta_k(y)]+\rho^{k-21C} z'',
\ee
for some $z''\in \gfr_2$ with $\|z''\|\ll 1$. The claim follows.
\end{proof}

We now show that $\theta_k(x)$ and $\theta_{2k}(x)$ are close to each other.

\begin{lem}\label{lem:theta-2k-k-connection}
	In the above setting for $x\in 0_{\rho/2}$, $\rho\ll 1$, and $C\ll k<m/3$, we have 
	\[
	\|\theta_{2k}(x)-\theta_k(x)\|\ll \rho^{k-7C}.
	\]
\end{lem}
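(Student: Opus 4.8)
\emph{The plan.} The key observation is that $\exp(\rho^{k}x)$ is an ``almost integer power'' of $\exp(\rho^{2k}x)$ and that raising to an integer power is almost multiplicative for the $\rho^{m}$-approximate homomorphism $f$, so no precise linearity of $\theta_k$ is needed. Put $n:=\lceil\rho^{-k}\rceil$, so that $|n\rho^{k}-1|\le\rho^{k}$ and hence $|n\rho^{2k}-\rho^{k}|\le\rho^{2k}$. Throughout I would use $\|\theta_{k}(x)\|,\|\theta_{2k}(x)\|<\rho^{-7C}$ from Lemma~\ref{lem:theta-lip} to absorb error terms, and the hypothesis $3k<m$ to see that every term of the form $\rho^{m}$ or $\rho^{m-k}$ is $\ll\rho^{2k+1-7C}$.

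\emph{Step 1: relate $f(\exp(\rho^{2k}x))^{n}$ to $f(\exp(\rho^{k}x))$ at the group level.} For $1\le j\le n$ the element $\exp(j\rho^{2k}x)$ lies in $1^{(1)}_{\rho}$, since its distance to $1^{(1)}$ is $\ll n\rho^{2k}\|x\|\ll\rho^{k+1}$; hence \eqref{eq: nth power approx hom} gives $f(\exp(\rho^{2k}x))^{n}\in\bigl(f(\exp(n\rho^{2k}x))\bigr)_{n\rho^{m}}$ with $n\rho^{m}\ll\rho^{m-k}$. Next write $\exp(n\rho^{2k}x)=\exp(\rho^{k}x)\,z$; bi-invariance of the metric and $|n\rho^{2k}-\rho^{k}|\le\rho^{2k}$ give $d(z,1^{(1)})\ll\rho^{2k+1}$, so property~(3) of Definition~\ref{def: approxmiate local hom} (all of $\exp(\rho^k x)$, $z$, $\exp(n\rho^{2k}x)$ lie in $1^{(1)}_\rho$) together with Lemma~\ref{lem:image-of-balls-under-approx-hom} yields $d\bigl(f(\exp(n\rho^{2k}x)),f(\exp(\rho^{k}x))\bigr)\le\rho^{m}+d(f(z),1^{(2)})\ll\rho^{2k+1-7C}+\rho^{m}$. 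Combining these estimates, $d\bigl(f(\exp(\rho^{2k}x))^{n},f(\exp(\rho^{k}x))\bigr)\ll\rho^{2k+1-7C}$.

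\emph{Step 2: pass back to $\gfr_{2}$.} By \eqref{eq:interpretation-theta} the two group elements above equal $\exp(n\rho^{2k}\theta_{2k}(x))$ and $\exp(\rho^{k}\theta_{k}(x))$, whose arguments have norm $\ll\rho^{k-7C}\ll1$. Setting $u:=\log\bigl(\exp(\rho^{k}\theta_{k}(x))^{-1}\exp(n\rho^{2k}\theta_{2k}(x))\bigr)$, bi-invariance gives $\|u\|\ll\rho^{2k+1-7C}$, and from $n\rho^{2k}\theta_{2k}(x)=(\rho^{k}\theta_{k}(x))\#u$ together with \eqref{eq:BCHD-2nd-approx} one gets $\|n\rho^{2k}\theta_{2k}(x)-\rho^{k}\theta_{k}(x)\|\ll\|u\|\ll\rho^{2k+1-7C}$. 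Finally, replacing $n\rho^{2k}$ by $\rho^{k}$ costs at most $|n\rho^{2k}-\rho^{k}|\cdot\|\theta_{2k}(x)\|\le\rho^{2k-7C}$, so $\|\rho^{k}\theta_{2k}(x)-\rho^{k}\theta_{k}(x)\|\ll\rho^{2k-7C}$; dividing by $\rho^{k}$ proves the lemma.

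\emph{Main obstacle.} The delicate point is Step~2: converting the group-distance bound of Step~1 into a Lie-algebra bound by the crude estimate ``$\exp$ is bi-Lipschitz'' contributes a quadratic correction of order $\|\rho^{k}\theta_{k}(x)\|^{2}+\|n\rho^{2k}\theta_{2k}(x)\|^{2}$, which after division by $\rho^{k}$ yields only $\rho^{\,k+2-14C}$, weaker than the claimed $\rho^{\,k-7C}$ (the exponent drops since $14C-2>7C$ for $C\ge1$). Writing one argument as the $\#$-translate of the other by the \emph{small} element $u$ circumvents this, because the Baker--Campbell--Hausdorff cross term then carries the extra factor $\|u\|$ and is harmless. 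Everything else is routine bookkeeping of exponents under the hypotheses $C\ll k<m/3$ and $\rho\ll1$.
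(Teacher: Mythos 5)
Your proof is correct and takes essentially the same route as the paper's. You take $n=\lceil\rho^{-k}\rceil$ where the paper takes $\ell=\lfloor\rho^{-k}\rfloor$, exploit $\exp(n\rho^{2k}x)\approx\exp(\rho^{k}x)$, push the integer power through $f$ via the approximate-homomorphism property and Lemma~\ref{lem:image-of-balls-under-approx-hom}, and then convert the resulting group-level bound into a Lie-algebra bound; the paper does exactly this in \eqref{eq:theta-2k-k-connection-step-1}--\eqref{eq:theta-2k-k-connection-step-3}. Your Step~2 simply makes explicit the passage from \eqref{eq:theta-2k-k-connection-step-2} to \eqref{eq:theta-2k-k-connection-step-3}, which the paper leaves implicit.

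One remark on your ``main obstacle'' paragraph: the worry there is not a real obstacle. The naive estimate is fine, because the error in $\exp(a)-\exp(b)=(a-b)+\cdots$ is of the form $O\bigl((\|a\|+\|b\|)\,\|a-b\|\bigr)$, not $O(\|a\|^{2}+\|b\|^{2})$, since $a^{j}-b^{j}=\sum_{i}a^{i}(a-b)b^{j-1-i}$. With $\|a\|,\|b\|\ll\rho^{k-7C}\ll1$ this multiplicative correction can simply be absorbed and one gets $\|a-b\|\ll\|\exp(a)-\exp(b)\|$ directly. Your BCH-based reformulation via $u=\log\bigl(\exp(b)^{-1}\exp(a)\bigr)$ is a perfectly valid (and arguably cleaner) way to phrase the same estimate, but it is not needed to dodge a genuine difficulty.
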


\begin{proof}
	Let $\ell:=\lfloor \rho^{-k} \rfloor$. By \eqref{eq:interpretation-theta}, we $\exp(\rho^{2k}\theta_{2k}(x))=f(\exp(\rho^{2k}x))$, and so
	\begin{align}
	\notag
	\exp(\ell \rho^{2k}\theta_{2k}(x))= & \bigl(f(\exp(\rho^{2k}x))\bigr)^{\ell} 
	\\
	\label{eq:theta-2k-k-connection-step-1}
	=&
	f(\exp(\ell \rho^{2k}x))u
	& 
	\text{(for some } u\in 1^{(2)}_{\ell\rho^m}\text{).}
	\end{align}
	Now note that $\|\ell \rho^{2k} x-\rho^kx\|\leq \rho^{2k}$, therefore, 
	\be\label{eq:l-power-exp-error}
	\exp(\ell \rho^{2k} x)= \exp(\rho^k x) u'
	\ee
	for some $u'\in 1^{(1)}_{O(\rho^{2k})}$. 
	By \eqref{eq:theta-2k-k-connection-step-1} and \eqref{eq:l-power-exp-error}, we deduce the following
	\begin{align}
	\notag 
		\exp(\ell \rho^{2k}\theta_{2k}(x))= &
		f(\exp(\rho^k x))f(u')w'' 
		&
    \text{(for some } w''\in 1^{(2)}_{(\ell+1)\rho^m}\text{).}
    \\
    \label{eq:theta-2k-k-connection-step-2}
    =&
    \exp(\rho^k\theta_k(x))\bar w 
    &
    \text{(for some } \bar w\in 1^{(2)}_{O(\rho^{2k-7C}+\rho^{m-k})}\text{),}
	\end{align}
	where in the last equality we used Lemma~\ref{lem:image-of-balls-under-approx-hom} 
	and the definition of $\theta_k$ in~\eqref{eq:interpretation-theta}. 
	
	In view of \eqref{eq:theta-2k-k-connection-step-2}, we have  
	\be\label{eq:theta-2k-k-connection-step-3}
	\|\ell \rho^{2k} \theta_{2k}(x)-\rho^k\theta_k(x)\|\ll \rho^{2k-7C}+\rho^{m-k}.
	\ee
	
	Recall now that $\ell:=\lfloor \rho^{-k} \rfloor$ and $\|\theta_k(x)\| < \rho^{-7C}$, see~\eqref{eq:bound-norm-theta}. 
	Therefore, 
	\[
	\|\ell \rho^{2k} \theta_{2k}(x)-\rho^k\theta_{2k}(x)\|\leq \rho^{2k-7C}.
	\]
	This and \eqref{eq:theta-2k-k-connection-step-3} imply that 
	$\|\rho^k\theta_{2k}(x)-\rho^k\theta_k(x)\|\ll \rho^{2k-7C}+\rho^{m-k}$. In consequence, we deduce
	\[
	\|\theta_{2k}(x)-\theta_k(x)\|\ll \rho^{k-7C}+\rho^{m-2k}\ll \rho^{k-7C}
	\]
	where we used $k<m/3$. 
	
	The proof is complete.
\end{proof}

\begin{corollary}\label{cor:theta-commutator}
	In the above setting, for $x,y\in 0_{\rho/16}$, $\rho\ll 1$, and $C\ll k<m/3$, we have 
	\[
	\| \theta_{2k}([x,y])-[\theta_{2k}(x),\theta_{2k}(y)]\| \ll \rho^{k-14C}.
	\]
\end{corollary}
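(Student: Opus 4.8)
The plan is to deduce this directly from Lemma~\ref{lem:theta-commutator-2k-k}, Lemma~\ref{lem:theta-2k-k-connection}, and the uniform bound of Lemma~\ref{lem:theta-lip}; no new ideas are needed. First I would observe that the hypothesis $C\ll k<m/3$ forces $7C<k<2k<m$, so Lemma~\ref{lem:theta-lip} applies both with exponent $k$ and with exponent $2k$; in particular
\[
\|\theta_k(x)\|,\ \|\theta_k(y)\|,\ \|\theta_{2k}(x)\|,\ \|\theta_{2k}(y)\|<\rho^{-7C}
\]
for all $x,y\in 0_{\rho/2}$, a fortiori for $x,y\in 0_{\rho/16}$, while Lemma~\ref{lem:theta-2k-k-connection} gives $\|\theta_{2k}(z)-\theta_k(z)\|\ll\rho^{k-7C}$ for $z\in 0_{\rho/2}$.

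Next I would bound the gap between the two commutators. Using the bilinearity identity
\[
[\theta_{2k}(x),\theta_{2k}(y)]-[\theta_k(x),\theta_k(y)]=[\theta_{2k}(x)-\theta_k(x),\,\theta_{2k}(y)]+[\theta_k(x),\,\theta_{2k}(y)-\theta_k(y)],
\]
the elementary estimate $\|[u,v]\|\le 2\|u\|\|v\|$ on each term, and the bounds just recalled, I get
\[
\bigl\|[\theta_{2k}(x),\theta_{2k}(y)]-[\theta_k(x),\theta_k(y)]\bigr\|\ll \rho^{k-7C}\cdot\rho^{-7C}=\rho^{k-14C}.
\]
Combining this with Lemma~\ref{lem:theta-commutator-2k-k}, which states $\|\theta_{2k}([x,y])-[\theta_k(x),\theta_k(y)]\|\ll\rho^{2k-14C}$, and the triangle inequality yields
\[
\bigl\|\theta_{2k}([x,y])-[\theta_{2k}(x),\theta_{2k}(y)]\bigr\|\ll\rho^{2k-14C}+\rho^{k-14C}\ll\rho^{k-14C},
\]
since $k\ge 0$, which is the assertion.

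There is no genuine obstacle here: the corollary is pure bookkeeping on top of the two preceding lemmas. The only point requiring a little care is to check that the hypotheses of Lemma~\ref{lem:theta-lip} and Lemma~\ref{lem:theta-2k-k-connection} hold for the index $2k$ as well as $k$ — which is precisely why the hypothesis is stated as $C\ll k<m/3$ rather than $C\ll k<m$ — and to track the exponents so that the sharper $\rho^{-7C}$ bound on $\|\theta_{2k}\|$ is used (rather than the looser $\rho^{-14C}$), which is what produces the stated error $\rho^{k-14C}$.
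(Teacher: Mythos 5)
Your proof is correct and essentially the same as the paper's, which cites Lemma~\ref{lem:theta-2k-k-connection}, Lemma~\ref{lem:theta-commutator-2k-k}, and the uniform bound~\eqref{eq:bound-norm-theta} in the same combination; you have simply unpacked the bilinear expansion of the commutator difference that the paper leaves implicit.
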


\begin{proof}
In view of Lemma~\ref{lem:theta-2k-k-connection}, $\theta_{2k}(\bullet)=\theta_k(\bullet)+z_{\bullet}$ where 
$\|z_{\bullet}\|\ll \rho^{k-7C}$ for $\bullet=x,y$. 
 
The claim thus follows using Lemma~\ref{lem:theta-commutator-2k-k} and~\eqref{eq:bound-norm-theta}.	
\end{proof}

\begin{corollary}\label{cor:approx-Lie-algebra-hom}
	In the above setting, suppose $\rho\ll 1$, $C\ll k<m/3$, $x,y\in 0_{\rho/16}$ and $\wt{\theta}_{2k}:\gfr_1\rightarrow \gfr_2$ is the linear map as in Corollary~\ref{cor:theta-approx-linear}. Then 
	\[
	\|\wt{\theta}_{2k}(x)-\theta_{2k}(x)\|\ll \rho^{k-14C} 
	\quad
	\text{and}
	\quad
	\| \wt{\theta}_{2k}([x,y])-[\wt{\theta}_{2k}(x),\wt{\theta}_{2k}(y)] \| \ll \rho^{k-21C}.
	\]
\end{corollary}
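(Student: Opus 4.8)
The plan is to obtain both estimates by transferring statements already known for $\theta_{2k}$ to the linear map $\wt{\theta}_{2k}$, exploiting that the two are uniformly close on the relevant ball.

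The first inequality, $\|\wt{\theta}_{2k}(x)-\theta_{2k}(x)\|\ll\rho^{k-14C}$, is simply Corollary~\ref{cor:theta-approx-linear} applied with the index $2k$ in place of $k$: its conclusion there reads $\ll\rho^{(2k)/2-14C}=\rho^{k-14C}$, valid on $0_{\rho/2}$ once the index $2k$ lies in the admissible range (which holds under $\rho\ll1$ and $C\ll k<m/3$). Combining this with Lemma~\ref{lem:theta-lip}, which gives $\|\theta_{2k}(x)\|<\rho^{-7C}$, and with $\rho^{k-14C}\le\rho^{-7C}$ for $k\geq 7C$, I record the a priori bound $\|\wt{\theta}_{2k}(x)\|\ll\rho^{-7C}$ for $x\in 0_{\rho/16}$; this is what will make the commutator cross terms manageable.

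For the second inequality I would start from Corollary~\ref{cor:theta-commutator}, namely $\|\theta_{2k}([x,y])-[\theta_{2k}(x),\theta_{2k}(y)]\|\ll\rho^{k-14C}$ for $x,y\in 0_{\rho/16}$. Since $[x,y]\in 0_{\rho/2}$ when $x,y\in 0_{\rho/16}$, the first inequality lets me write $\wt{\theta}_{2k}(\bullet)=\theta_{2k}(\bullet)+z_\bullet$ with $\|z_\bullet\|\ll\rho^{k-14C}$ for $\bullet\in\{x,y,[x,y]\}$. Expanding the bracket bilinearly,
\[
[\theta_{2k}(x),\theta_{2k}(y)]=[\wt{\theta}_{2k}(x),\wt{\theta}_{2k}(y)]-[\wt{\theta}_{2k}(x),z_y]-[z_x,\wt{\theta}_{2k}(y)]+[z_x,z_y],
\]
and using $\|[u,v]\|\le 2\|u\|\|v\|$ together with $\|\wt{\theta}_{2k}(x)\|,\|\wt{\theta}_{2k}(y)\|\ll\rho^{-7C}$, each cross term is $\ll\rho^{-7C}\cdot\rho^{k-14C}=\rho^{k-21C}$, while $[z_x,z_y]$ is even smaller. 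Chaining these with the $\theta_{2k}$-commutator estimate and the identity $\wt{\theta}_{2k}([x,y])=\theta_{2k}([x,y])+z_{[x,y]}$ yields $\|\wt{\theta}_{2k}([x,y])-[\wt{\theta}_{2k}(x),\wt{\theta}_{2k}(y)]\|\ll\rho^{k-21C}$.

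This is essentially bookkeeping, so I do not anticipate a genuine obstacle. The only points needing attention are: (i) one must linearize (pass from $\theta_{2k}$ to $\wt{\theta}_{2k}$) \emph{before} expanding the commutator, since $[\cdot,\cdot]\circ(\theta_{2k}\times\theta_{2k})$ is only approximately bilinear; and (ii) one must check that the sole degradation in the error is the factor $\rho^{-7C}$ coming from the a priori size of $\wt{\theta}_{2k}$, so that $\rho^{k-14C}$ worsens exactly to $\rho^{k-21C}$ and no further. It is also worth confirming that the index ranges required by Corollaries~\ref{cor:theta-approx-linear} and~\ref{cor:theta-commutator} and by Lemma~\ref{lem:theta-lip} are simultaneously satisfied for $C\ll k<m/3$.
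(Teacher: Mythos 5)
Your proof is correct and takes essentially the same route as the paper: the paper's own argument simply cites Corollary~\ref{cor:theta-approx-linear} (at index $2k$) for the first bound, and says the second follows from the first bound, Corollary~\ref{cor:theta-commutator}, and~\eqref{eq:bound-norm-theta}; your write-up fills in exactly that combination, linearizing first and then expanding the bracket bilinearly with the $\rho^{-7C}$ a priori bound on the arguments. The only small caveat worth flagging is the index range: Corollary~\ref{cor:theta-approx-linear} is stated for $C\ll k'<m/2$, so applying it with $k'=2k$ formally requires $k<m/4$ rather than $k<m/3$; this slack is absorbed in the paper by the flexibility in the implied constants of the various ``$\ll$'' conditions, but you are right to note it should be checked.
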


\begin{proof}
The first claim is proved in Corollary~\ref{cor:theta-approx-linear}. 

The second claim follows from the first claim, Corollary~\ref{cor:theta-commutator}, and~\eqref{eq:bound-norm-theta}.  
\end{proof}

Similar to the $p$-adic case, we consider the set of Lie algebra homomorphisms from $\gfr_1$ to $\gfr_2$ 
which can be viewed as an affine variety as follows: 
Let $\{e_1^{(i)},\ldots,e_{d_{0i}}^{(i)}\}$ be an orthonormal basis of $\gfr_i$, 
and suppose $c_{jks}^{(i)}\in \bbr$ are the corresponding structural constants. That is,
\[
[e_{j}^{(i)},e_{k}^{(i)}]=\sum_{s=1}^{d_{0i}} c_{jks}^{(i)} e_{s}^{(i)}.
\]
Then a linear map $T:\gfr_1\rightarrow \gfr_2, T(e_j^{(1)})=\sum_{s=1}^{{\dtwo}} x_{js} e_s^{(2)}$ 
is a Lie ring homomorphism if and only if $T([e_j^{(1)},e_k^{(1)}])=[T(e_j^{(1)}),T(e_k^{(1)})]$ for every $1\leq j,k\leq {\done}$. 
This is equivalent to having the following equations on ${\rm Mat}_{\done\times \dtwo}(\bbc)$:
\be\label{eq:equations-Lie-ring-hom-real}
f_{jkr}(\xbf):=\sum_{1\leq i_1,i_2\leq {\dtwo}} c_{i_1 i_2 r}^{(2)} x_{j i_1} x_{k i_2}-\sum_{i=1}^{{\dtwo}} c_{jki}^{(1)} x_{ri}=0
\ee
for every integers $1\leq j,k\leq d_1$ and $1\leq r\leq d_2$. 

Let $V$ be the real affine variety given by equations in \eqref{eq:equations-Lie-ring-hom-real}. Note that 
$V(\bbr)$ is non-empty as it contains the zero vector. Suppose 
\[
\wt{\theta}_{2k}(e_j^{(1)})=\sum_{s=1}^{{\dtwo}} a_{js} e_s^{(2)}.
\]

Put $\abf=(a_{js})$. In view of Corollary~\ref{cor:approx-Lie-algebra-hom}, for every $1\leq j\leq \done$, we have 
\[
\Bigl\|\wt{\theta}_{2k}\Bigl(\frac{\rho}{16} e_j^{(1)}\Bigr)\Bigr\|\ll \Bigl\|\theta_{2k}\Bigl(\frac{\rho}{16} e_j^{(1)}\Bigr)\Bigr\|+\rho^{k-14C} \ll \rho^{-7C},
\]
where we used~\eqref{eq:bound-norm-theta} in the last inequality. Therefore, 
\be\label{eq:norm-of-a}
\|\abf\|\ll_{d_{i}} \rho^{-7C}.
\ee

Moreover, by Corollary~\ref{cor:approx-Lie-algebra-hom}, we have
\be\label{eq:defining-Lie-hom-value-theta-tilde}
|f_{jkr}(\abf)|\ll_{d_{i}} \rho^{k-21C}.
\ee

\begin{lem}\label{lem:close-Lie-algebra-hom}
In the above setting, for $m'\ll_{d_{i}} k< m/3$ and $0<\rho\ll_{G_1,G_2} 1$, 
there is a Lie algebra isomorphism $\wh{\theta}:\gfr_1\rightarrow\gfr_2$ with the following properties: 
\begin{enumerate}
	\item $ \|\wh{\theta}-\wt{\theta}_{2k}\|_{\rm op} \leq \rho^{k/2}$.
	\item For every $x\in \gfr_1$ with $\|x\|<\rho/2$, $
\|\wh{\theta}(x)-\theta_{2k}(x)\|\leq \rho^{k/2}$.
\end{enumerate}
\end{lem}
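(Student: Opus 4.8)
The statement packages the passage from "$\wt\theta_{2k}$ approximately satisfies the Lie-homomorphism equations" to "there is an honest Lie algebra isomorphism $\wh\theta$ close to it." This is exactly an effective Nullstellensatz / {\L}ojasiewicz argument, mirroring the $p$-adic step where Greenberg's theorem was invoked. First I would record what we have: by~\eqref{eq:norm-of-a} the matrix $\abf=(a_{js})$ representing $\wt\theta_{2k}$ (rescaled to the basis $e_j^{(1)}$, not $\tfrac{\rho}{16}e_j^{(1)}$, so $\abf$ itself has norm $\ll_{d_i}\rho^{-7C}$) lies in a ball of radius $\ll_{d_i}\rho^{-7C}$, and by~\eqref{eq:defining-Lie-hom-value-theta-tilde} it satisfies $|f_{jkr}(\abf)|\ll_{d_i}\rho^{k-21C}$ for all the defining polynomials of the variety $V$ of Lie algebra homomorphisms $\gfr_1\to\gfr_2$.

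**Main step: {\L}ojasiewicz on $V$.** The core is the {\L}ojasiewicz inequality for the real variety $V\subseteq\mathrm{Mat}_{\done\times\dtwo}(\bbr)$: on any fixed compact set $K$ there are constants $c_L>0$ and an exponent $\alpha_L\ge 1$, depending only on the $f_{jkr}$ (hence only on the structure constants, hence only on $\gfr_1,\gfr_2$, i.e. on $G_1,G_2$), such that $\mathrm{dist}(\xbf,V)\le c_L\bigl(\max_{jkr}|f_{jkr}(\xbf)|\bigr)^{1/\alpha_L}$ for $\xbf\in K$. The subtlety is that $\abf$ lives in a ball of radius $\rho^{-7C}$, which blows up as $\rho\to 0$, so $K$ is not fixed. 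I would handle this by homogeneity-type rescaling: each $f_{jkr}$ is a sum of a quadratic part and a linear part, so on the ball of radius $R=\rho^{-7C}$ one can rescale $\xbf=R\ybf$ and get $f_{jkr}(R\ybf)=R^2 q_{jkr}(\ybf)+R\,\ell_{jkr}(\ybf)$; dividing by $R^2$ brings us onto the fixed unit ball at the cost of turning the near-vanishing into $|q_{jkr}(\ybf)+R^{-1}\ell_{jkr}(\ybf)|\ll \rho^{k-21C}\cdot\rho^{14C}=\rho^{k-7C}$. Applying {\L}ojasiewicz for the perturbed system (or more cleanly, applying it to $V$ directly after absorbing the extra factors into the constants — the exponent only depends on the degrees and number of variables, exactly as noted for $C_4(V)$ in the $p$-adic case) produces a point $\abf'\in V(\bbr)$ with $\|\abf-\abf'\|\ll_{G_1,G_2}\rho^{(k-21C)/\alpha_L}$, and hence $\|\abf-\abf'\|_{\rm op}\le\rho^{k/2}$ once $k\gg_{d_i}C$ and $\rho\ll_{G_1,G_2}1$ (this is where the hypothesis $C\ll_{d_i}k$ and $\rho$ small enter, and where I absorb the fixed constants and the exponent $\alpha_L$). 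Let $\wh\theta$ be the Lie algebra homomorphism with matrix $\abf'$.

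**Upgrading to an isomorphism, and conclusion.** It remains to see $\wh\theta$ is an \emph{isomorphism}, not just a homomorphism. Here I would use that $\gfr_1$ is simple: $\ker\wh\theta$ is an ideal of $\gfr_1$, so it is $0$ or all of $\gfr_1$; the latter is excluded because $\wh\theta$ is $\rho^{k/2}$-close in operator norm to $\wt\theta_{2k}$, and $\wt\theta_{2k}$ is far from zero — indeed $\theta_{2k}$ is approximately Lipschitz with a lower bound coming from the large-image hypothesis: Lemma~\ref{lem:image-of-balls-under-approximate-hom-lower-bound'} guarantees $f\bigl(1^{(1)}_{\rho^{k-1}}\bigr)$ contains a ball of radius $\approx\rho^{k+7Cm'}$, so $\theta_{2k}$ (hence $\wt\theta_{2k}$, up to the $\rho^{(k/2)-14C}$ error from Corollary~\ref{cor:theta-approx-linear}) has image of size $\gg\rho^{O_{d_i}(m')}$, which forces $\|\wt\theta_{2k}\|_{\rm op}\gg\rho^{O_{d_i}(m')}\gg\rho^{k/2}$ once $m'\ll_{d_i}k$; so $\wh\theta\neq 0$, hence $\ker\wh\theta=0$, hence $\wh\theta$ is injective. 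Then $\dim\gfr_1\le\dim\gfr_2$; by symmetry (or because $\wh\theta$ injective with $\gfr_2$ simple forces $\wh\theta(\gfr_1)$ to be an ideal of $\gfr_2$, so all of $\gfr_2$) we get $\dim\gfr_1=\dim\gfr_2$ and $\wh\theta$ is onto — an isomorphism. Finally, part~(1) is precisely the operator-norm bound $\|\wh\theta-\wt\theta_{2k}\|_{\rm op}=\|\abf'-\abf\|_{\rm op}\le\rho^{k/2}$ just obtained, and part~(2) follows by combining it with Corollary~\ref{cor:theta-approx-linear}: for $\|x\|<\rho/2$, $\|\wh\theta(x)-\theta_{2k}(x)\|\le\|\wh\theta-\wt\theta_{2k}\|_{\rm op}\|x\|+\|\wt\theta_{2k}(x)-\theta_{2k}(x)\|\ll \rho^{k/2}\cdot\rho+\rho^{(k/2)-14C}$, which is $\le\rho^{k/2}$ after adjusting constants and requiring $k\gg_{d_i}C$.

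**Expected main obstacle.** The genuinely delicate point is the uniformity of the {\L}ojasiewicz constants as $\rho\to0$: naively {\L}ojasiewicz is only on a \emph{fixed} compact set, whereas our approximate solution $\abf$ drifts out to radius $\rho^{-7C}$. The rescaling trick above is what makes it work, but one must be careful that the {\L}ojasiewicz exponent $\alpha_L$ truly depends only on the degrees and the number of variables (so only on $\dim\gfr_1,\dim\gfr_2$) and not on the particular structure constants — this is the real-analytic analogue of the remark after~\eqref{eq:equations-Lie-ring-hom} about $C_4(V)$ versus $c_4(V)$, and is exactly where one cites the effective {\L}ojasiewicz inequality. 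Everything else is bookkeeping of error exponents.
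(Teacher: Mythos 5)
The {\L}ojasiewicz step of your proposal coincides, in its ``more cleanly'' version, with what the paper actually does: the paper cites the effective global {\L}ojasiewicz inequality of \cite[Theorem 7]{Eff-Loj-SemiAlgebraic}, which provides
\[
\mathrm{dist}(\xbf,V(\bbr)) \leq \overline{C}\,\max_{j,k,r}|f_{j,k,r}(\xbf)|\cdot(1+\|\xbf\|)^{D}
\]
for all $\xbf$ with $\mathrm{dist}(\xbf,V(\bbr))\leq 1$, with $D$ depending only on $\done,\dtwo$ and $\overline{C}$ on $V$; the factor $(1+\|\xbf\|)^{D}$ is exactly what absorbs the drift of $\abf$ to radius $\rho^{-7C}$, so no rescaling is needed. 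Your rescaling variant is not sound as stated, since the rescaled system $\{q_{jkr}(\ybf)+R^{-1}\ell_{jkr}(\ybf)=0\}$ depends on $R=\rho^{-7C}$ and its own {\L}ojasiewicz constants could therefore depend on $R$. Quoting the global inequality, as the paper does, is the clean route.

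The genuine gap is in your surjectivity argument. Your injectivity argument (simple domain plus nonzero map) is correct and matches the paper, but neither of your two reasons for surjectivity holds. There is no ``symmetry'' available: the approximate homomorphism goes only from a ball in $G_1$ into $G_2$, and nothing in the reverse direction has been constructed. And the parenthetical claim --- that an injection $\wh\theta:\gfr_1\hookrightarrow\gfr_2$ with $\gfr_2$ simple has image an ideal, hence all of $\gfr_2$ --- is false: the image of a Lie algebra homomorphism is a subalgebra, not an ideal (consider $\mathfrak{su}(2)\hookrightarrow\mathfrak{su}(3)$). So your proof as written does not rule out $\dim\gfr_1<\dim\gfr_2$, in which case $\wh\theta$ would be injective but not surjective. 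You already have the right ingredient, namely the large-image estimate via Lemma~\ref{lem:image-of-balls-under-approximate-hom-lower-bound'}, but you use it only to lower-bound $\|\wt\theta_{2k}\|_{\rm op}$. The paper uses it harder: combining that lemma with part~(2) of the present lemma, it shows that the $\rho^{k/3}$-neighborhood of the linear subspace $\wh\theta(\gfr_1)$ contains the ball $0_{\rho^{2\vare_0 k+7Cm'+6}}$ in $\gfr_2$, and since a proper linear subspace has a $\rho^{k/3}$-tubular neighborhood that cannot contain a ball of that radius once $2\vare_0 k+7Cm'+6<k/3$ (which holds for $\vare_0\leq 0.01$ and $k\gg m'$), this forces $\dim\wh\theta(\gfr_1)=\dim\gfr_2$ directly, with no detour through ideals or symmetry.
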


\begin{proof}
Let $V$ be the variety which was defined above. Note that $V(\bbr)\neq\emptyset$, indeed $0\in V(\bbr)$.
By~\cite[Theorem 7]{Eff-Loj-SemiAlgebraic}, which is a quantitative version of \L ojasiewicz inequality,
there are positive numbers $\overline{C}:=\overline{C}(V)$ and $D:=D({\done},{\dtwo})$ 
such that for every $\xbf\in \bbr^{{\done}{\dtwo}}$ with ${\rm dist}(\xbf, V(\bbr))\leq 1$ we have 
\be\label{eq:LojasiewiczInequality}
{\rm dist}(\xbf,V(\bbr)) \leq \overline{C} \max_{j,k,r}\{|f_{j,k,r}(\xbf)|\}\cdot (1+\|\xbf\|)^{D}.
\ee
Using \eqref{eq:defining-Lie-hom-value-theta-tilde}, \eqref{eq:norm-of-a}, and \eqref{eq:LojasiewiczInequality}, 
there is $\wh{\abf}\in V(\bbr)$ such that 
\be\label{eq:approx-estimate}
\|\wh{\abf}-\abf\|\leq \overline{C}  \rho^{k-21C} \rho^{-8CD} \leq \rho^{3k/4} 
\ee
so long as $\max\{1, 21C+8CD\}\leq k/8$ and $\overline{C}\rho\leq 1$.

Since $\wh{\abf}\in V(\bbr)$, it induces a Lie algebra homomorphism $\wh{\theta}:\gfr_1(\bbr)\rightarrow\gfr_2(\bbr)$; 
moreover, \eqref{eq:approx-estimate} implies the following upper bound estimate:
\be\label{eq:Lie-hom-approx-linear}
\|\wh{\theta}-\wt{\theta}_{2k}\|_{\rm op} \leq \rho^{k/2}.
\ee
In view of \eqref{eq:Lie-hom-approx-linear}, for every $x\in 0_{\rho/2}$, we have $\|\wh{\theta}(x)-\wt{\theta}_{2k}(x)\|\leq \rho^{k/2}\|x\|\leq \rho^{1+\frac{k}{2}}$. Hence, using Corollary~\ref{cor:approx-Lie-algebra-hom}, we deduce the following
\be\label{eq:Lie-hom-approx-theta}
\|\wh{\theta}(x)-\theta_{2k}(x)\|\ll \rho^{k-14C}+\rho^{1+\frac{k}{2}}\leq \rho^{k/2},
\ee
so long as $k\geq 28C+1$ and $\rho$ is small enough.

We now combine the facts that image of $\theta_{2k}$ is {\em large}, 
see Lemma~\ref{lem:image-of-balls-under-approximate-hom-lower-bound'}, and that 
$\wh{\theta}$ is linear with~\eqref{eq:Lie-hom-approx-theta}
to show that $\wh{\theta}$ is surjective. More precisely, we will show that for every $0<\vare_0\leq 0.01$ and $\rho^{\vare_0}\ll 1$, we have 
\[
(\wh{\theta}(\gfr_1))_{\rho^{k/3}} \supseteq 0_{\rho^{2\vare_0k+ 7Cm'+6}}.
\]

To that end, let us first recall from~\eqref{eq: exp and log are inverses} that for every $g\in 1^{(i)}_{1/3}$ 
and every $x\in \gfr_i$ with $\|x\|\leq 1/3$ we have 
\be\label{eq:log-exp-almost-isometries'}	
b'^{-1} \|g-1^{(i)}\| \leq \|\log g\| \leq b' \|g-1^{(i)}\|,
\quad 
\text{and}
\quad
b'^{-1} \|x\| \leq \|\exp x-1^{(i)}\| \leq b' \|x\|,
\ee
where $b'=3$.  
Increasing $b'$, if necessary, we further 
assume that $\log(g_r) \subseteq (\log g)_{b'r}$ for every $g\in 1^{(i)}_{1/6}$ and $r<1/3$.
Recall also the parameter $0<\hat c\leq 1$ from Lemma~\ref{lem:image-of-balls-under-approximate-hom-lower-bound'}. Fix some $0<\vare_0\leq 0.01$. Choose $\rho$ small enough so that 
\be\label{eq: choose rho after vare0}
\rho^{\vare_0}\leq \min\{0.1, b'^{-1}, \hat c\}.
\ee

Let $\ell=2k+3$; then $b'\rho^{\ell-1}\leq \frac12\rho^{2k+1}$. 
Thus, Lemma~\ref{lem:image-of-balls-under-approximate-hom-lower-bound'} implies 
\be\label{lem: image is large at all scales used}
1^{(2)}_{\hat c 7^{-\ell}\rho^{\ell+ 7Cm'}}\subseteq f\bigl(\exp(0_{b'\rho^{\ell-1}})\bigr)_{18\ell\rho^m}.
\ee
Combining~\eqref{eq:log-exp-almost-isometries'} and~\eqref{lem: image is large at all scales used} implies that
\be\label{eq:image-theta-large-step-1}
0_{b'^{-1} \hat c 7^{-\ell}\rho^{\ell+7Cm'}}\subseteq\log \bigl(f\bigl(\exp(0_{b'\rho^{\ell-1}})\bigr)_{18\ell\rho^m}\bigr)\subseteq 
\bigl(\log f\bigl(\exp(0_{\frac12\rho^{2k+1}})\bigr)\bigr)_{18b'\ell\rho^m}.
\ee
We also recall from \eqref{eq:interpretation-theta} that $\exp(\rho^{2k} \theta_{2k}(x))=f(\exp(\rho^{2k} x))$
for every $x\in 0_{\rho/2}$. 
Altogether, we conclude that 
\be\label{eq:image-theta-large-step-2}
0_{b'^{-1} \hat c 7^{-\ell}\rho^{\ell+7Cm'}\rho^{-2k}}\subseteq ({\rm Im}(\theta_{2k}))_{18b'\ell \rho^{m-2k}}.
\ee

We now use~\eqref{eq:image-theta-large-step-2} to complete the proof of~(3).
Recall from~\eqref{eq: choose rho after vare0}
that $\rho^{\vare_0}\leq \min\{0.1, b'^{-1}, \hat c\}$, and also recall that $\ell=2k+3$. Hence 
\begin{align}
\label{eq:image-theta-large-step-3}
b'^{-1} \hat c 7^{-\ell}\rho^{\ell+7Cm'}\rho^{-2k}&=b'^{-1}\hat c 7^{-\ell} \rho^{7Cm'+3} 
 > \rho^{2\vare_0k+7Cm'+6},
\end{align}
where we also used $3\vare_0<1$.

Combining \eqref{eq:Lie-hom-approx-theta},~\eqref{eq:image-theta-large-step-2}, 
and \eqref{eq:image-theta-large-step-3}, we conclude that 
\be\label{eq:small-angel}
(\wh{\theta}(\gfr_1))_{\rho^{k/3}}\supseteq 0_{\rho^{2\vare_0 k+7Cm'+6}}.
\ee

Since $\vare_0\leq 0.01$, \eqref{eq:small-angel} implies that $\dim \wh\theta(\gfr_1)\geq \dim \gfr_2$ so long as $k\gg m'$.
This establishes part~(3) and also shows that $\wh\theta$ is surjective.

Furthermore, since $\gfr_1$ is a simple Lie algebra and $\wh{\theta}$ is not the zero morphism, $\wh{\theta}$ is injective. 
Altogether, we conclude that $\wh{\theta}$ is an isomorphism and the proof is complete.
\end{proof}

\begin{proof}[Proof of Theorem~\ref{thm:thm:approximate-hom-intro}]
In view of Lemma~\ref{lem: F1 R F2 Qp not possible}, Theorem~\ref{thm:p-adic-approx-hom}, 
and Corollary~\ref{cor:p-adic-to-real-not-possible}, we may assume $F_1=F_2=\bbr$.
 
Let $\wh{\theta}$ be as in Lemma~\ref{lem:close-Lie-algebra-hom}. 
By \cite[Theorem 10]{OniVin}, there is a group homomorphism $\Psi:\wt{G}_1\rightarrow G_2$ where $\wt{G}_1$ is the simply-connected cover of $G_1$ such that the following is a commuting diagram
\be\label{eq:group-hom-real}
\begin{tikzcd}
\wt{G}_1 \arrow[r, "\Psi"] \arrow[d,"\log"]
& G_2 \arrow[d, "\log"] \\ 
\gfr_1 \arrow[r, "\wh{\theta}"] &\gfr_2.
\end{tikzcd}	  
\ee
Let $\iota:\wt{G}_1\rightarrow G_1$ be the covering map. Since the kernel of $\iota$ is a finite central subgroup, $\iota$ induces a homeomorphism from $\wt{1}^{(1)}_{O_{G_1}(1)}$ to $1^{(1)}_{O_{G_1}(1)}$. Hence we will view $\Psi$ as a function on $1^{(1)}_{O_{G_1}(1)}$ as well. 

Note that by \eqref{eq:group-hom-real}, for every $x\in \gfr_1$ and $g\in \wt{G}_1$, we have
\be\label{eq:equivariant-action}
\wh{\theta}(\Ad(g)(x))=\Ad(\Psi(g))(\wh{\theta}(x)).
\ee

We will show that the theorem holds with this $\Psi$.
In view of the definition of $\theta_{2k}$, see \eqref{eq:interpretation-theta}, 
for every $x\in 0_{\rho/6}$ and $g\in 1^{(1)}_{\rho/6}$, we have 
\begin{align}
\notag
\exp\bigl(\rho^{2k} \Ad(f(g))(\theta_{2k}(x))\bigr) = &
	f(g) \exp(\rho^{2k} \theta_{2k}(x)) f(g)^{-1} 
\\
\notag
= & f(g)f(\exp(\rho^{2k} x))f(g)^{-1}
\\
\notag
= & f\bigl(g	\exp(\rho^{2k} x)g^{-1}\bigr) u
& \text{(where } u\in 1^{(2)}_{3\rho^m})
\\
\notag
= & f\bigl(\exp(\rho^{2k} \Ad(g)(x))\bigr) u
\\
\label{eq:almost-equivariant-1}
= & \exp\bigl(\rho^{2k} \theta_{2k}(\Ad(g)(x))\bigr) u.
\end{align}
By \eqref{eq:almost-equivariant-1}, we deduce that for every $x\in 0_{\rho/6}$ and $g\in 1^{(1)}_{\rho/6}$ the following holds
\be\label{eq:almost-equivariant-2}
\|\Ad(f(g))(\theta_{2k}(x))-\theta_{2k}(\Ad(g)(x))\|\ll \rho^{m-2k}\leq \rho^{k/3}.
\ee
Moreover, by part~(2) of Lemma~\ref{lem:close-Lie-algebra-hom}, we have 
\be\label{eq:consequence-Lie-alg-hom-real}
\begin{aligned}
&\|\Ad(f(g))(\theta_{2k}(x))-\Ad(f(g))(\wh{\theta}(x))\|\leq \rho^{k/2}
\quad
\text{and}\\
&\|\theta_{2k}(\Ad(g)(x))-\wh{\theta}(\Ad(g)(x))\|\leq \rho^{k/2}.
\end{aligned}
\ee
Altogether, \eqref{eq:equivariant-action}, \eqref{eq:consequence-Lie-alg-hom-real}, 
and \eqref{eq:almost-equivariant-2}, imply
\begin{align}
\notag
	\|\Ad(\Psi(g))(\wh{\theta}(x))-\Ad(f(g))(\wh{\theta}(x))\|\leq &
	\|\wh{\theta}(\Ad(g)(x))-\Ad(f(g))(\theta_{2k}(x))\|+ \rho^{k/3} \\
	\notag
	\leq & 
	\|\theta_{2k}(\Ad(g)(x))-\Ad(f(g))(\theta_{2k}(x))\|+ 2 \rho^{k/3}
	\\
	\label{eq:close-group-hom}
	\leq & 3 \rho^{k/3}.
\end{align}

Now using \eqref{eq:close-group-hom}, we deduce that  
$
\|\Ad(\Psi(g))-\Ad(f(g))\|_{\op} \ll \rho^{k/4}
$
for every $g\in 1^{(1)}_{\rho/6}$. 
Finally, using the fact that $\Ad$ induces a homeomorphism on $O_{G_2}(1)$-neighborhood of $1^{(2)}$, 
we get
\[
\|\Psi(g)-f(g)\|_{\op} \ll \rho^{k/4}.
\]

This establishes the theorem for $F_1=F_2=\bbr$, and completes the proof.
\end{proof}

\section{Discretization and couplings}\label{sec: disc coupling}

The objective of this section is to show that, under mild conditions on the groups $G_1$ and $G_2$,  
one may reduce the question of spectral independence of $G_1$ and $G_2$ 
to the case of measures on $G_1\times G_2$ whose marginals are Haar measures $m_1$ and $m_2$. 

We begin with the following definition.  

\begin{definition}\label{def: discretizable}
Let $(G,d)$ be a compact metric group, and let $0<\delta<1$. 
We say $(G, d)$ is $\delta$-discretizable if the 
there exists a partition $\{X_i\}$ of $G$ satisfying the following two properties: 
\begin{enumerate}
\item $X_i$ is a Borel set for all $i$, and $|X_i|=|X_j|$ for all $i$ and $j$. 
\item $\diam (X_i)\leq \delta$ and $X_i$ contains a ball of radius $\delta^2$ for all $i$. 
\end{enumerate} 
We refer to a partition $\{X_i\}$ satisfying~(1) and~(2) above as a $\delta$-discretization of $(G,d)$.  
\end{definition}

Note that in this section $X_i$'s denote a partition for $G$ unlike in the rest of the paper where generally $X_1, X_2$ and $X$ denote random variables. 

As we have done so throughout the paper, we often drop $d$ from the notation and simply write $G$ is $\delta$-discretizable.

\medskip

An important class of examples is provided by the following proposition. 

\begin{proposition}\label{prop: Lie gp discretizable}
Suppose $G$ is a compact analytic (real or $p$-adic) Lie group, equipped with a standard bi-invariant metric, see~\S\ref{sec: metric p-adic real Lie}. Then $G$ is $\delta$-discretizable for all $0<\delta\leq \delta_0$ where $\delta_0$ is $1$ in the $p$-adic case and depends only on the dimension of $G$ in the real case. 
\end{proposition}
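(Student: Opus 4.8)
The plan is to construct the partition $\{X_i\}$ directly from a $\delta$-net, splitting the group into Voronoi-type cells and then correcting their sizes to make them equal. First I would treat the $p$-adic case, which is essentially formal: when $G\subseteq\SL_n(\bbz_p)$ with the operator-norm metric, the ball $1_\delta$ of radius $\delta=p^{-k}$ is an open normal subgroup of finite index, the cosets $\{gG_{\cdot,k}\}$ form a partition of $G$ into sets each of diameter $\le\delta$ (in fact exactly $\delta$ by the ultrametric inequality), each containing a ball of radius $\delta^2$ (indeed of radius $\delta$), and all of the same Haar measure $|1_\delta|$. So $\delta_0=1$ works, since every $\delta\in(0,1]$ is a power of $p$ up to shrinking, and for non-powers one takes the coset partition at the next smaller power of $p$, whose diameter is still $\le\delta$ and which still contains a ball of radius $\ge(\delta/p)\ge\delta^2$ once $\delta$ is bounded — actually one must be slightly careful here and I would simply note $1_\delta=1_{p^{-\lceil\log_p(1/\delta)\rceil}}$ so the relevant radius is within a factor $p$ of $\delta$, giving the ball of radius $\delta^2$ for $\delta\le 1/p$, and handle $\delta\in(1/p,1]$ by the trivial partition $\{G\}$ if $\diam(G)\le\delta$, or by a finite check.

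For the real case the skeleton is the same but the cells must be repaired. Fix a maximal $\delta/2$-separated set $\{g_1,\dots,g_N\}$ in $G$; the balls $g_{i,\delta/2}$ cover $G$ and the balls $g_{i,\delta/4}$ are pairwise disjoint. Assign each point of $G$ to the nearest $g_i$ (breaking ties by index) to get a Borel partition $\{Y_i\}$ with $g_{i,\delta/4}\subseteq Y_i\subseteq g_{i,\delta/2}$, so $\diam(Y_i)\le\delta$ and each $Y_i$ contains a ball of radius $\delta/4$. The dimension condition $\Dim(C_1,d_0)$ from \S\ref{sec: metric p-adic real Lie} gives $|Y_i|\in[C_1^{-1}(\delta/4)^{d_0},\,C_1\delta^{d_0}]$, so all the $|Y_i|$ are within a bounded multiplicative factor $\kappa:=C_1^2 4^{d_0}$ of each other. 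Now equalize: set the common target mass to $t:=\min_i|Y_i|$ times a suitable factor, or more cleanly, since $\sum|Y_i|=1$ and $N\asymp\delta^{-d_0}$, aim for mass $1/N$ in each cell by moving the excess of the larger cells into the smaller ones. Concretely, for each $i$ carve off from $Y_i$ a Borel subset $Z_i\subseteq Y_i\setminus g_{i,\delta^2}$ with $|Z_i|=|Y_i|-1/N$ when $|Y_i|>1/N$ (possible since, for $\delta\le\delta_0(\dim G)$, the annulus $Y_i\setminus g_{i,\delta^2}$ has measure at least $(\delta/4)^{d_0}/C_1 - C_1\delta^{2d_0}\ge (\kappa-1)/N\cdot$const, comfortably more than the excess $|Y_i|-1/N\le(\kappa-1)/N$, using $d_0\ge 3$ and $\delta$ small); then redistribute $\bigsqcup Z_i$ among the deficient cells by any measurable bijection onto sets of the right masses, using that a non-atomic Borel measure space admits subsets of any prescribed measure $\le$ the total. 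Define $X_i:=(Y_i\setminus Z_i)\cup(\text{the piece allotted to }i)$. Then $|X_i|=1/N$ for all $i$, each $X_i\supseteq g_{i,\delta^2}$ contains a ball of radius $\delta^2$, and $\diam(X_i)\le\diam(G)\le 1$ — wait, that is too weak; to keep $\diam(X_i)\le\delta$ one must redistribute the $Z_i$ only among nearby cells. This is the one real subtlety.

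The main obstacle, then, is exactly this last point: equalizing the cell masses while preserving the diameter bound $\diam(X_i)\le\delta$. I would resolve it by never moving mass between far-apart cells — instead process the cells in a spanning-tree order of the adjacency graph on $\{g_i\}$ (edges between $g_i,g_j$ with $d(g_i,g_j)\le 2\delta$), pushing each cell's excess or deficit along tree edges to a neighbour, so that all mass transfers happen between cells whose union still has diameter $O(\delta)$; since the union of two adjacent Voronoi cells has diameter $\le 3\delta$, one should run the construction at scale $\delta/3$ from the start so the final cells have diameter $\le\delta$. The bounded-ratio estimate $|Y_i|/|Y_j|\le\kappa$ guarantees the flow along each edge is $O(1/N)$, hence absorbable inside the target annulus of a single cell, which is where $d_0\ge 3$ and the smallness of $\delta$ (depending only on $\dim G$, via $C_1,d_0$) enter. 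Everything else — Borel measurability, existence of subsets of prescribed measure in a non-atomic space, the net and Voronoi construction — is routine. This completes the proof outline.
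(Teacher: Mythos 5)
Your $p$-adic argument coincides with the paper's: the cosets of the congruence subgroup $1_\delta$ already form a $\delta$-discretization, since $1_\delta$ is an open subgroup whose cosets all have the same Haar measure, diameter $\le\delta$, and each contains a translated ball of radius $\delta\ge\delta^2$; your remark about $\delta$ not being a power of $p$ is a harmless normalization that the paper leaves implicit.

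For the real case the paper does not give an argument at all --- it invokes an external result (Theorem 2 of the reference labelled \texttt{Discretizable}) --- so what you propose is a self-contained proof, and it has a genuine gap. The Voronoi setup, the bounded-ratio bound $\kappa=C_1^2\,4^{d_0}$, and the observation that each cell's \emph{own} excess fits inside its \emph{own} annulus once $\delta$ is small are all fine. The problem is the spanning-tree equalization: you assert that, processing cells in tree order and pushing each cell's excess or deficit to a tree neighbour, ``the flow along each edge is $O(1/N)$.'' That does not follow from the bounded-ratio estimate. The flow across a tree edge equals the \emph{cumulative} imbalance of the entire subtree hanging below it, not the imbalance of one cell. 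If the surpluses and deficits are spatially segregated --- e.g.\ roughly half the cells each carry a surplus of order $(\kappa-1)/N$ and the remaining cells carry the compensating deficits, with the two groups separated in the tree by a single edge --- then that edge must carry on the order of $(\kappa-1)/2=\Theta(1)$ worth of mass. No single cell's annulus (of measure $O(1/N)$) can absorb that, and you cannot have each intermediate cell relay only its ``original'' mass, because the required throughput exceeds the total mass of any one cell; the piece that arrives at the far end therefore really does originate many cells away, destroying the diameter bound $\diam(X_i)\le\delta$. Nothing in the construction excludes such segregation (the bounded-ratio hypothesis only controls each cell's individual imbalance, not its spatial distribution), so the proposal as written does not close the real case. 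Repairing it requires either a much sharper control on the fluctuation of Voronoi-cell volumes or, as in the cited reference, a globally designed equal-measure decomposition rather than a local tree flow.
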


\begin{proof}
In the real case, the claim follows from~\cite[Theorem 2]{Discretizable}.  
Suppose $G$ is a compact $p$-adic analytic group, recall from \S\ref{sec: metric p-adic real Lie} that $1_\delta$ is a subgroup for all $\delta>0$. Let $X_i$'s be the cosets of the subgroup $1_\delta$. Then $X_i$'s form a partition of $G$ that satisfy the desired conditions.
\end{proof}

Let $G$ be a $\delta$-discretizable group, and let $\{X_i\}$ be a $\delta$-discretization of $G$. 
Then $|X_i|>0$ for all $i$. If we further assume that 
$\frac{1}{C_1}\eta^{d_0} \le |1_{\eta}| \le C_1 \eta^{d_0}$ for $\eta=\delta,\delta^2$, then 
\be\label{eq: discretizable Xi}
C_1^{-1}\delta^{2d_0}\leq |X_i|\leq C_1\delta^{d_0},\qquad\text{for all $i$.}
\ee

The following is the main result of this section.

\begin{theorem}\label{thm: continuity prop of couplings}
Let $G_1$ and $G_2$ be two compact groups. Suppose there are constants 
$C_0, C_1, L$, $d_{01}, d_{02}$, and $\rho\leq \frac{1}{100C_0(5C_1)^L}$ so that the following properties are satisfied. 

\begin{itemize}
\item $G_i$ is $L$-locally random with coefficient $C_0$ for $i=1,2$, see~\eqref{eq:L-loc-rand-given-scale}.
\item For all $\eta=\rho^j$, $j\in\mathbb N$, the group $G_i$ satisfies 
\be\label{eq:dimension condition coupling}
\frac{1}{C_1}\eta^{\dGi} \le |1_{\eta}| \le C_1 \eta^{\dGi},\qquad\text{for $i=1,2$.} 
\ee
\item For $i=1,2$, $G_i$ is $\delta$-discretizable for all $\delta=\rho^j$ with sufficiently large $j\in\mathbb N$.
\end{itemize}

Let $\mu$ be a symmetric Borel probability measure on $G_1\times G_2$ satisfying
\be\label{eq: spectral gap assum coupling}
\max\{\lambda(\pi_{1}\mu; G_1), \lambda(\pi_{2}\mu; G_1)\}=:\lambda <1
\ee
where $\pi_i$ denotes the projection onto the $i$-th factor for $i=1,2$. 

Then, there exists a symmetric coupling $\rhonu$ of 
$m_1$ and $m_2$ so that the following holds. 
Let $C>0$ and $f\in L^2(G_1\times G_2, m_1\times m_2)$ satisfy that $\|P_\crho\ast f-f\|_2\leq \crho^{C}\|f\|_2$.
Then 
\[
\bigl\|\mu^{(\ell)}\ast f-\rhonu\ast f\bigr\|_2\leq 6\crho^{C}\|f\|_2
\]
so long as $\ell\gg \log_\lambda (\rho/C_1)$, see~\eqref{eq: choice of ell coupling} for the dependence of the implied constant.  
\end{theorem}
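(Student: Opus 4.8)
The plan is to use the spectral gap hypothesis~\eqref{eq: spectral gap assum coupling} to ensure that for a suitably large integer $\ell$, the measure $\mu^{(\ell)}$ has marginals $\pi_i(\mu^{(\ell)})$ which are extremely close to the Haar measures $m_i$ — more precisely, close at a fine scale $\delta=\rho^{j}$ in a discretized sense. Indeed, since $\lambda(\pi_i\mu;G_i)<1$, the push-forward $\pi_i(\mu^{(\ell)})=(\pi_i\mu)^{(\ell)}$ satisfies $\|(\pi_i\mu)^{(\ell)}\ast h - \int h\|_2\le \lambda^\ell\|h\|_2$ for all $h\in L^2_0(G_i)$; applying this to $h=P_\delta - 1$ and using the dimension condition~\eqref{eq:dimension condition coupling} to control $\|P_\delta\|_2$, one gets that the marginals of $\mu^{(\ell)}$, averaged over balls of radius $\delta$, deviate from uniform by at most $\lambda^{\ell}\cdot(\text{poly in }\delta^{-1})$, which is $\le \delta^{2A}(N_1N_2)^{-A}$-type small once $\ell\gg \log_\lambda(\rho/C_1)$. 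Here $N_i$ is the number of pieces in a $\delta$-discretization $\{X^{(i)}_k\}$ of $G_i$, which by~\eqref{eq: discretizable Xi} satisfies $N_i\asymp \delta^{-d_{0i}}$.

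Next I would invoke Proposition~\ref{prop: finding coupling for finite sets}: let $\widetilde\mu$ be the probability measure on $Y_1\times Y_2$ (where $Y_i=\{X^{(i)}_k\}$ indexes the discretization pieces) obtained by pushing $\mu^{(\ell)}$ forward under the map $(g_1,g_2)\mapsto$ (piece containing $g_1$, piece containing $g_2$). The marginal-closeness just established gives exactly hypothesis~\eqref{eq: marginal close to uniform} with a large exponent $A$, so the proposition produces a coupling $\widetilde\nu$ of the uniform measures on $Y_1,Y_2$ with $|\widetilde\mu(y_1,y_2)-\widetilde\nu(y_1,y_2)|\le (N_1N_2)^{-(A-1)}$. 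I would then lift $\widetilde\nu$ to a coupling $\nu^\rho$ of $m_1,m_2$ on $G_1\times G_2$ by distributing the mass $\widetilde\nu(X^{(1)}_j,X^{(2)}_k)$ as the normalized restriction of $m_1\times m_2$ to $X^{(1)}_j\times X^{(2)}_k$ (this is a genuine coupling of the Haar measures because $|X^{(i)}_k|$ is independent of $k$); symmetrizing $\nu^\rho$ by averaging with its image under the flip $(g_1,g_2)\mapsto(g_1^{-1},g_2^{-1})$ (legitimate since $\mu$ is symmetric, hence $\mu^{(\ell)}$ is, hence $\widetilde\mu$ and the construction respect this) gives the desired symmetric coupling $\nu^\rho$.

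The final and main step is to transfer the closeness of $\mu^{(\ell)}$ and $\nu^\rho$ at scale $\delta$ into the operator estimate $\|\mu^{(\ell)}\ast f-\nu^\rho\ast f\|_2\le 6\rho^C\|f\|_2$ for $f$ almost invariant under $P_\rho$. The idea: both $\mu^{(\ell)}\ast f$ and $\nu^\rho\ast f$ differ negligibly from $\mu^{(\ell)}\ast P_\rho\ast f$ and $\nu^\rho\ast P_\rho\ast f$ respectively by the hypothesis $\|P_\rho\ast f-f\|_2\le \rho^C\|f\|_2$ together with $\|\mu^{(\ell)}\ast(\cdot)\|_2\le\|\cdot\|_2$ and $\|\nu^\rho\ast(\cdot)\|_2\le\|\cdot\|_2$ (Young). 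Then I would compare $\mu^{(\ell)}\ast P_\rho$ and $\nu^\rho\ast P_\rho$ as convolution operators: writing things in terms of the discretization at a scale $\delta$ much finer than $\rho$, the total variation of $\mu^{(\ell)}-\nu^\rho$ against functions constant on $\delta$-pieces is controlled by $N_1N_2\cdot(N_1N_2)^{-(A-1)}=(N_1N_2)^{-(A-2)}$, and convolving by $P_\rho$ (which smooths at scale $\rho\gg\delta$, so it is essentially constant on $\delta$-pieces up to the local randomness / Lipschitz bound~\eqref{eq:L-loc-rand-given-scale}) converts this $L^1$-type smallness of the measure difference into an $L^2\to L^2$ operator bound of size $\le\rho^C$ once $A$ is taken large enough — and $A$ can be made as large as we like by taking $\ell$ large, which is why the hypothesis $\ell\gg\log_\lambda(\rho/C_1)$ suffices (with the implied constant depending on $C$, $C_0$, $C_1$, $L$, $d_{01}$, $d_{02}$). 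Collecting the three error terms $\rho^C+\rho^C+\rho^C$ from the two almost-invariance swaps and the operator comparison, and being slightly generous with constants, yields the bound $6\rho^C\|f\|_2$.

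The hard part will be the last step: making precise the claim that $L^1$-closeness of $\mu^{(\ell)}$ and $\nu^\rho$ at the fine scale $\delta$, after convolving with $P_\rho$, upgrades to an $L^2\to L^2$ operator norm bound against all $P_\rho$-almost-invariant $f$. This requires carefully choosing the scale $\delta$ relative to $\rho$ and $C$, using the discretization to replace $f$ by a function constant on $\delta$-pieces (with error controlled by almost-invariance and the Lipschitz estimate from $L$-local randomness), and then bounding the difference operator by its action on the finite-dimensional space of step functions, where it becomes a matrix whose entries are the measure differences $\widetilde\mu-\widetilde\nu$. Keeping track of all the polynomial-in-$\delta^{-1}$ losses and confirming they are absorbed by choosing $A$ (hence $\ell$) large is the bookkeeping that needs the most care.
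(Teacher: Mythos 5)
Your construction of the coupling $\nu^\rho$ is correct and matches the paper essentially step for step: equidistribution of the marginals of $\mu^{(\ell)}$ (via the spectral gap and Lemma~\ref{lem: spectral gap implies close to Haar}), push-forward to the finite discretization, Proposition~\ref{prop: finding coupling for finite sets} to replace the push-forward by a genuine coupling of uniform measures, lifting back to $G_1\times G_2$ by distributing mass uniformly on pieces, and symmetrization at the end. One small discrepancy: the paper defines $\tilde\mu$ by pushing forward the $\rho$-smoothed measure $\mu^{(\ell)}_\rho=\mu^{(\ell)}\ast P_\rho$ rather than $\mu^{(\ell)}$ itself, which is a more convenient bookkeeping choice but does not change the architecture.

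The genuine gap is in the final step, and your own description of it as a physical-space argument---replace $f$ by a step function $\bar f$ constant on the $\delta$-pieces, then ``the difference operator becomes a matrix whose entries are the measure differences $\widetilde\mu-\widetilde\nu$''---cannot be made to work as stated. The obstruction is that convolution does not respect the discretization: for a step function $\bar f$ and a fixed $x$, the function $y\mapsto\bar f(y^{-1}x)$ is constant not on the pieces $X^1_j\times X^2_k$ but on their left-translates $x\cdot(X^1_j\times X^2_k)^{-1}$, and these translates have no reason to be unions of pieces. So the quantity $\int\bar f(y^{-1}x)\,d(\mu^{(\ell)}_\rho-\nu^\rho)(y)$ is \emph{not} controlled by the numbers $\tilde\mu(j,k)-c_{j,k}$, and no matrix picture emerges. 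Knowing that $\mu^{(\ell)}_\rho$ and $\nu^\rho$ assign nearly equal mass to each piece tells you nothing about the signed mass they assign to a generic translate of a piece, which is exactly what convolution probes. Because of this, the ``$L^1$-type smallness at scale $\delta$ converts, after convolving with $P_\rho$, into an $L^2\to L^2$ operator bound'' claim does not follow: you would need total-variation smallness of $\mu^{(\ell)}_\rho-\nu^\rho$, which the piece-wise mass estimates do not give.

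The paper sidesteps this alignment problem by working on the Fourier side (Peter--Weyl/Parseval), which is translation-invariant and so is the natural home for the local randomness hypothesis. Concretely, one shows via the Lipschitz estimate~\eqref{eq:L-loc-rand-given-scale} that for \emph{any} probability measure $\sigma$ and any $\varphi\in\widehat G$,
\[
\Bigl\|\widehat\sigma(\varphi)-\sum_{j,k}\sigma(X^1_j\times X^2_k)\,\varphi(g_{j,k})\Bigr\|_{\op}\le 2C_0(\dim\varphi)^L\delta,
\]
so the \emph{Fourier coefficients} of $\mu^{(\ell)}_\rho-\nu^\rho$ (not the physical-space differences) are controlled by the $\tilde\mu(j,k)-c_{j,k}$ plus a term of size $(\dim\varphi)^L\delta$. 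This is small for $\dim\varphi\le D$ once $\delta=\rho^{\hat C}$ is taken small enough. The contribution of $\dim\varphi>D$ is then killed not by the discretization but by the factor $P_\rho$: the high-frequency part of $P_\rho\ast f$ has $L^2$-norm $O\big((D|1_\rho|)^{-1/2}\big)$, which is $\le\rho^C$ for $D\gtrsim C_1\rho^{-2C-d_0}$. Splitting Parseval's sum at $D$ and combining these two bounds is the argument you are missing; without the Fourier decomposition and this low/high-frequency split, I do not see how to close the last step.
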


The proof will occupy the rest of this section and will be completed in several steps. 
Let us begin with the following lemma. 

\begin{lem}\label{lem: spectral gap implies close to Haar}
Let $H$ be a compact group; assume that for some $0<\eta<1$ and constants $C_1$ and $d_0$, we have 
\be\label{eq: dim cond conv close}
C_1^{-1}\eta^{d_0}\leq |1_{\eta}|\leq C_1\eta^{d_0}.
\ee
Let $\sigma$ be a symmetric Borel probability measure on $H$ and assume that $\lambda(\sigma; H)<1$. Then 
\[
|\sigma_\eta(X)-|X||\leq \lambda(\sigma; H)\Bigl({|X|}/{|1_\eta|}\Bigr)^{1/2}\leq \lambda(\sigma; H)C_1^{1/2}\eta^{-d_0/2}{|X|}^{1/2}
\]
\end{lem}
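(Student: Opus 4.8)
The plan is to prove Lemma~\ref{lem: spectral gap implies close to Haar} by a direct Fourier/spectral estimate, testing the operator $T_\sigma$ against the characteristic function $\cf_X$ after the mollification by $P_\eta$. First I would write $\sigma_\eta(X) = \int_H \cf_X \, d\sigma_\eta = \langle \sigma \ast P_\eta \ast \cf_X, \cf_H \rangle$ appropriately; more precisely, unfolding the definitions of $\sigma_\eta = \sigma \ast P_\eta$ and of convolution of a measure against a function, one gets $\sigma_\eta(X) = \int_H (\sigma \ast \cf_{X,\eta})(x)\,dm_H(x)$ where $\cf_{X,\eta} = P_\eta \ast \cf_X$, and using that $m_H$ is $\sigma$-invariant this integral equals $\int_H \cf_{X,\eta} = |X|$ plus a correction coming from the $L^2_0$-part. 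The cleanest bookkeeping: decompose $\cf_{X,\eta} = |X| \cdot \cf_H + g$ where $g \in L^2_0(H)$, so that $g = \cf_{X,\eta} - |X|\cf_H$ and $\sigma \ast \cf_{X,\eta} = |X|\cf_H + \sigma \ast g = T_\sigma g + |X|\cf_H$ on $L^2_0$. Then $\sigma_\eta(X) - |X| = \int_H \sigma \ast g \, dm_H$. Wait — $\int_H (\sigma\ast g) = \int_H g = 0$ since $g\in L^2_0$. So this particular pairing is not the right one; I need to pair against $\cf_X$ itself, not against $\cf_H$.

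So the correct approach: note $\sigma_\eta(X) = \int_X d\sigma_\eta = \langle \sigma_\eta \ast' \cdot, \cdot\rangle$-style, or most transparently $\sigma_\eta(X) = \int_H \cf_X \, (\sigma\ast P_\eta)$, and since $P_\eta$ is central and self-adjoint as a convolution operator, $\int_H \cf_X (\sigma \ast P_\eta) = \int_H (P_\eta \ast \cf_X) \, d\sigma = \int_H \cf_{X,\eta}\, d\sigma = \langle \sigma \ast \cf_H, \cf_{X,\eta}\rangle$... Let me instead use: $\sigma_\eta(X) = \int_H (\sigma \ast \cf_{X,\eta})\,dm_H$? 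No. The right identity is $\mu(A) = \int \cf_A \, d\mu$ and for $\mu = \sigma\ast P_\eta$ one has $\int \cf_A\, d(\sigma \ast P_\eta) = \int\int \cf_A(xy)\,d\sigma(x)\,dP_\eta(y) = \int (P_\eta \ast \cf_A)(x^{-1}\cdot)\dots$ — the cleanest is $\sigma_\eta(X) = (\sigma \ast \cf_{X,\eta^{-1}})(1)$-type formula. Rather than belabor the exact convolution gymnastics, the key point I would extract is: $\sigma_\eta(X) - |X| = \langle T_\sigma h, k\rangle$ for suitable $h, k \in L^2_0(H)$ with $\|h\|_2, \|k\|_2 \le \|\cf_X\|_2 = |X|^{1/2}$ (in fact one of them, say $k = \cf_{X,\eta} - |X|\cf_H$, satisfies $\|k\|_2 \le \|\cf_{X,\eta}\|_2 \le \|P_\eta\|_1\|\cf_X\|_2 = |X|^{1/2}$ by Young, and the other $h = \cf_X - |X|\cf_H$ similarly). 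Then by Cauchy–Schwarz and the definition of $\lambda(\sigma;H)$, $|\sigma_\eta(X) - |X|| \le \|T_\sigma h\|_2 \|k\|_2 \le \lambda(\sigma;H)\|h\|_2\|k\|_2 \le \lambda(\sigma;H)|X|$. But the claimed bound is sharper — it has $(|X|/|1_\eta|)^{1/2}$, not $|X|$ — so I must be more careful about which function absorbs the $P_\eta$: the gain comes from $\|k\|_2 \le (|X|/|1_\eta|)^{1/2}$, since $\cf_{X,\eta} = P_\eta \ast \cf_X$ has $\|P_\eta\ast\cf_X\|_\infty \le \|P_\eta\|_\infty \|\cf_X\|_1$? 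No: $\|\cf_{X,\eta}\|_2^2 \le \|P_\eta\|_2^2\|\cf_X\|_1^2$? That gives $|1_\eta|^{-1}|X|^2$, hence $\|\cf_{X,\eta}\|_2 \le |1_\eta|^{-1/2}|X|$, not what I want either. The right estimate is $\|\cf_{X,\eta}\|_2 \le \|P_\eta\|_1^{1/2}\|P_\eta\|_\infty^{1/2}\|\cf_X\|_1^{1/2}\|\cf_X\|_2^{1/2}$? This is getting complicated; the honest statement is $\|P_\eta \ast \cf_X\|_2^2 \le \|P_\eta\|_\infty \|\cf_X\|_1 \cdot \|P_\eta \ast \cf_X\|_1$ won't close. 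I'll instead bound $|\sigma_\eta(X)-|X|| \le \|T_\sigma \cf_{X,\eta}^0\|_2 \cdot \|\cf_X^0\|_2$ where superscript $0$ denotes projection to $L^2_0$; here $\|\cf_X^0\|_2 \le |X|^{1/2}$ and $\|\cf_{X,\eta}^0\|_2 \le \|\cf_{X,\eta}\|_2 \le (|X|/|1_\eta|)^{1/2}$ using Young's inequality $\|f\ast g\|_2 \le \|f\|_2\|g\|_1$ in the form $\|P_\eta \ast \cf_X\|_2 \le \|\cf_X\|_2 \|P_\eta\|_1 = |X|^{1/2}$ — still $|X|^{1/2}$. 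Hmm, alternatively $\|P_\eta\ast\cf_X\|_2 \le \|P_\eta\|_2\|\cf_X\|_1 = |1_\eta|^{-1/2}|X|$.

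Resolving this: the correct split is asymmetric. Write $\sigma_\eta(X) - |X| = \langle P_\eta \ast \cf_X - |X|\cf_H,\ T_\sigma(\cf_X) \rangle$? Actually $\sigma_\eta(X) = \langle \sigma \ast P_\eta \ast \cf_{X^{-1}}, \delta_1\rangle$-flavored; let me just commit to: $\sigma_\eta(X) = \langle \sigma\ast\cf_{X,\eta}, \cf_X\rangle_{L^2}$ after checking signs/inverses (using bi-invariance of $m_H$ and symmetry of $\sigma$ to fix orientation), so that $\sigma_\eta(X) - |X|^2 \cdot(\text{something})$... The final proof I would write: $\sigma_\eta(X) - |X| = \langle T_\sigma \bar g, h\rangle$ where $\bar g = \cf_{X,\eta} - |X|\cf_H \in L^2_0$ with $\|\bar g\|_2 \le \|\cf_{X,\eta}\|_2$, and $h = \cf_X - |X|\cf_H \in L^2_0$ with $\|h\|_2 \le |X|^{1/2}$; then $|\sigma_\eta(X)-|X|| \le \lambda(\sigma;H)\|\cf_{X,\eta}\|_2 |X|^{1/2}$. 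For the final claimed bound I then use $\|\cf_{X,\eta}\|_2^2 = \langle P_\eta\ast\cf_X, P_\eta\ast\cf_X\rangle = \langle P_{\eta}\ast P_\eta \ast \cf_X, \cf_X\rangle \le \|P_\eta\ast P_\eta\ast\cf_X\|_\infty \|\cf_X\|_1 \le \|P_\eta\|_\infty\|P_\eta\ast\cf_X\|_1\|\cf_X\|_1 \le \|P_\eta\|_\infty\|\cf_X\|_1^2 = |1_\eta|^{-1}|X|^2$, giving $\|\cf_{X,\eta}\|_2 \le |1_\eta|^{-1/2}|X|$, hence $|\sigma_\eta(X)-|X|| \le \lambda(\sigma;H)|1_\eta|^{-1/2}|X|^{3/2}$. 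That's still not $\lambda(\sigma;H)(|X|/|1_\eta|)^{1/2}$.

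Given this discrepancy, the main obstacle is clearly identifying the correct pairing that produces exactly the stated bound $\lambda(\sigma;H)(|X|/|1_\eta|)^{1/2}$; I suspect the intended estimate uses $\sigma_\eta(X) - |X| = \langle (\sigma - m_H)\ast P_\eta \ast \cf_X,\ \tfrac{\cf_{1_\eta}}{|1_\eta|}(\text{translated})\rangle$-style, i.e. testing against a normalized indicator of a single $\eta$-ball rather than against $\cf_X$, exploiting that $\sigma_\eta$ is a density bounded via $\|\mu_\eta\|_\infty$. Concretely: $\sigma_\eta(X) = \int_X (\sigma\ast P_\eta)$ and $(\sigma\ast P_\eta)(x) = \langle \sigma, P_\eta(x^{-1}\cdot)\rangle = \langle T_\sigma(\cf_H/1), \dots\rangle$; writing $(\sigma\ast P_\eta)(x) - 1 = (T_\sigma(P_{\eta,x}^0))(e)$-type where $P_{\eta,x} = \frac{\cf_{x_\eta}}{|1_\eta|}$ has $\|P_{\eta,x}\|_2 = |1_\eta|^{-1/2}$, so $|(\sigma\ast P_\eta)(x) - 1| \le \lambda(\sigma;H)|1_\eta|^{-1/2}\cdot(\text{test function }L^2\text{ norm})$; integrating $\cf_X \cdot$ this over $H$ and Cauchy–Schwarz in the $X$-variable yields $|\sigma_\eta(X) - |X|| \le \lambda(\sigma;H)|1_\eta|^{-1/2}\|\cf_X\|_1^{1/2}\cdot(\dots)$. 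I would work out which of these clean pairings lands exactly on $(|X|/|1_\eta|)^{1/2}$ — almost certainly it is $\sigma_\eta(X) - |X| = \langle T_\sigma u, v\rangle$ with $u = P_\eta\ast\cf_X - |X|\cf_H$ but pairing in the \emph{right} order so that $\|u\|_2$ is controlled via $\|u\|_2^2 \le \|P_\eta\|_1 \|P_\eta\|_\infty^{0}\dots$; in any case, once the pairing is pinned down, the remainder is: invoke the hypothesis $C_1^{-1}\eta^{d_0} \le |1_\eta| \le C_1\eta^{d_0}$ to replace $|1_\eta|^{-1/2}$ by $C_1^{1/2}\eta^{-d_0/2}$, which gives the second displayed inequality and completes the proof. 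The only genuinely non-routine point is the first inequality (finding the pairing with the correct norms); the passage to the second is a one-line substitution from~\eqref{eq: dim cond conv close}.
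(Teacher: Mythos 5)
There is a genuine gap, and you have in fact diagnosed it yourself: you never pin down the correct pairing, and all of the candidate pairings you explicitly work out produce bounds of the form $\lambda|X|$ or $\lambda|1_\eta|^{-1/2}|X|^{3/2}$, not the claimed $\lambda(|X|/|1_\eta|)^{1/2}$. The recurring error is that you keep putting the mollifier $P_\eta$ on $\cf_X$ and then trying to control $\|P_\eta * \cf_X\|_2$; by Young this is at best $|X|^{1/2}$, so the gain you need ($|1_\eta|^{-1/2}$ times $|X|^{1/2}$, with no extra power of $|X|$) cannot come from that side.

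The pairing the paper actually uses is much simpler and does not touch $\cf_X$ at all: write
\[
\sigma_\eta(X) = \langle T_\sigma(P_\eta), \cf_X\rangle, \qquad |X| = \langle T_\sigma(\cf_H), \cf_X\rangle,
\]
the first because $\sigma_\eta = \sigma * P_\eta = T_\sigma(P_\eta)$ is the density of $\sigma_\eta$ and $\sigma_\eta(X) = \int_H (T_\sigma P_\eta)\cdot \cf_X$, the second because $T_\sigma$ fixes constants. Subtracting and applying Cauchy--Schwarz together with the spectral gap (note $P_\eta - \cf_H \in L^2_0(H)$) gives
\[
|\sigma_\eta(X) - |X|| \le \|T_\sigma(P_\eta - \cf_H)\|_2 \,\|\cf_X\|_2 \le \lambda(\sigma;H)\,\|P_\eta - \cf_H\|_2\,|X|^{1/2}.
\]
The element of $L^2_0$ fed to $T_\sigma$ is thus $P_\eta - \cf_H$, whose $L^2$-norm is computed directly:
\[
\|P_\eta - \cf_H\|_2^2 = |1_\eta|\Bigl(\tfrac{1}{|1_\eta|}-1\Bigr)^2 + (1 - |1_\eta|) = \frac{1-|1_\eta|}{|1_\eta|} \le \frac{1}{|1_\eta|}.
\]
This yields the first claimed inequality with constant exactly $(|X|/|1_\eta|)^{1/2}$; the second then follows from \eqref{eq: dim cond conv close} as you note. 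So the missing idea is to recognize that the \emph{mollifier itself}, centered and viewed as an $L^2_0$ test function of norm $\asymp |1_\eta|^{-1/2}$, is the object to hit with $T_\sigma$, while $\cf_X$ enters only through its raw $L^2$-norm $|X|^{1/2}$.
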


\begin{proof}
First note that the second estimate in the above upper bound 
is a direct consequence of~\eqref{eq: dim cond conv close} and the first inequality.

We now show the first inequality. 
Recall that $\sigma_\eta(X)=\sigma\ast P_\eta(X)=\langle T_\sigma(P_\eta),\cf_X\rangle$. 
Similarly, we have $|X|=\langle T_\sigma(\cf_H),\cf_X\rangle$ (where we also used the invariance of the constant function). 
Thus, 
\begin{align}
\notag |\sigma_\eta(X)-|X||&=\langle T_\sigma(P_\eta-\cf_H), \cf_X\rangle \\
\notag&\leq \|T_\sigma(P_\eta-\cf_H)\|_2\|\cf_X\|_2\\
\label{eq: conv close to Haar}&\leq \lambda(\sigma; H)\|P_\eta-\cf_H\|_2|X|^{1/2}
\end{align}

Therefore, we need to compute $\|P_\eta-\cf_H\|_2$. By the definition we have 
\begin{align*}
\|P_\eta-\cf_H\|_2^2&=\int|P_\eta-1|^2dh\\
&=\int_{1_\eta}|\tfrac{1}{|1_\eta|}-1|^2dh+\int_{H\setminus 1_\eta}dh\\
&=|1_\eta|\tfrac{(1-|1_\eta|)^2}{|1_\eta|^2}+1-|1_\eta|= \tfrac{1-|1_\eta|}{|1_\eta|}.
\end{align*}
This and~\eqref{eq: conv close to Haar} imply that 
\[
|\sigma_\eta(X)-|X||\leq \lambda(\sigma; H)\Bigl(\tfrac{1-|1_\eta|}{|1_\eta|}\Bigr)^{1/2}|X|^{1/2}
\]
and complete the proof.
\end{proof}

We now begin the proof of the theorem.

\begin{proof}[Proof of Theorem~\ref{thm: continuity prop of couplings}]
As was mentioned before, the proof will be completed in some steps. 

Let $\hat C$ be an integer $\geq  \max\{L(C+d_{0i})+C+1,\frac{1}{d_{0i}}+1\}$, and let $\delta=\crho^{\hat C}$. 
Let   
\be\label{eq: choice of ell coupling}
\ell\geq -(2\hat C^2+0.5)\log_\lambda C_1+ \max\{d_{01}, d_{02}\}(0.5 +4\hat C^2)\log_\lambda \crho.
\ee
Then for $i=1,2$, we have 
\be\label{eq: lambda of proj of mu ell}
\lambda(\pi_i\mu^{(\ell)}; G_i)\leq \lambda^\ell\leq C_1^{-0.5-2\hat C^2}\crho^{(0.5 +4\hat C^2)\dGi}. 
\ee

Apply Lemma~\ref{lem: spectral gap implies close to Haar}, with $\eta=\crho$, $H=G_i$ and $\sigma=\pi_i\mu^{(\ell)}$ for $i=1,2$. 
Then~\eqref{eq: lambda of proj of mu ell} and~\eqref{eq:dimension condition coupling} imply
\begin{align}
\notag\bigl|\pi_i\mu^{(\ell)}_\rho(Y_i)-|Y_i|\bigr|&\leq C_1^{-0.5-2\hat C^2}\cdot \crho^{(0.5 +4\hat C^2)\dGi}\cdot C_1^{1/2}\cdot \crho^{-\dGi/2}{|Y_i|}^{1/2}\\
\label{eq: use lem sg Haar for marginals of conv}& \leq C_1^{-2\hat C^2}\rho^{4\hat C^2\dGi}{|Y_i|}^{1/2}
\end{align}
for any Borel subset $Y_i\subset G_i$. 

\subsection*{The definition of $\rhonu$} 
Recall that by our assumption $G_1$ and $G_2$ are $\delta$-discretizable. 
For $i=1,2$, let $\{X^{i}_j: 1\leq j\leq N_i\}$ be a $\delta$-discretization of $G_i$. 
In view of~\eqref{eq:dimension condition coupling} (with $\eta=\delta, \delta^2$) and~\eqref{eq: discretizable Xi}, we have
\be\label{eq: Ni bound}
C_1^{-1}\delta^{2\dGi}\leq 1/N_i\leq C_1\delta^{\dGi}\qquad\text{for $i=1,2$.}
\ee

Let $Z_i=\{1,\ldots, N_i\}$ for $i=1,2$, and define $\tilde\mu$ on $Z_1\times Z_2$ by
\[
\tilde\mu(j,k)=\mu^{(\ell)}_\rho(X_{j}^1\times X_{k}^{2}).
\]
Then for all $1\leq j\leq N_1$, we have 
\begin{align}
\notag\bigl|\pi_1\tilde\mu(j)-\tfrac{1}{N_1}\bigr|&=\bigl|\mu^{(\ell)}_\rho(X_{j}^1\times G_2)-|X_{j}^1\times G_2|\bigr|\\
\notag&=\bigl|\pi_1\mu^{(\ell)}_\rho(X_{j}^1)-|X_{j}^1|\bigr|\leq C_1^{-2\hat C^2}\rho^{4\hat C^2\dGi}{|X_{j}^1|}^{1/2}\\
\label{eq: hat mu satisfies transport hyp}&\leq (1/N_1N_2)^{\hat C}
\end{align}
where the second to the last inequality follows from~\eqref{eq: use lem sg Haar for marginals of conv} 
and the last inequality follows from~\eqref{eq: Ni bound}.

Similarly, for all $1\leq k\leq N_2$, we have $|\pi_2\tilde\mu(k)-\tfrac{1}{N_2}|\leq (1/N_1N_2)^{\hat C}$. 

Altogether, the conditions of Proposition~\ref{prop: finding coupling for finite sets} are satisfied for $\tilde\mu$ with $A=\hat C$.  
Therefore, by that proposition, there exist $\{c_{j,k}\in [0,1]: 1\leq j\leq N_1, 1\leq k\leq N_2\}$ so that all the following hold.
\begin{enumerate}
\item For every $1\leq j\leq N_1$, we have $\sum_{k=1}^{N_2} c_{j,k}=\frac{1}{N_1}$.
\item For every $1\leq k\leq N_2$, we have $\sum_{j=1}^{N_1} c_{j,k}=\frac{1}{N_2}$.
\item For all $1\leq j\leq N_1$ and all $1\leq k\leq N_2$ we have 
\be\label{eq: mu rho and cjk}
|\mu_\rho^{(\ell)}(X_j^1\times X_k^2)-c_{j,k}|\leq (1/N_1N_2)^{\hat C-1}.
\ee
\end{enumerate} 

Let $\rhonup$ be the probability measure on $G_1\times G_2$ defined using the density 
\[
N_1N_2\sum_{j,k}c_{j,k}\cf_{X_j^1\times X_k^2};
\] 
note that $\rhonup$ depends on $\rho$. Abusing the notation, we also refer to the density of $\rhonup$ by $\rhonup$.   

Bulk of the proof is to show that $\rhonup$ satisfies the claim in the theorem, (possibly) except for being symmetric; 
the proof will then be completed by symmetrizing $\rhonup$.  

\begin{sublemma}\label{lem: rho nu is a coupling of Haars}
The measure $\rhonup$ is a coupling of $m_1$ and $m_2$. 
\end{sublemma}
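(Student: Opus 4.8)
The plan is to compute the two marginals of $\rhonup$ directly from its density $N_1N_2\sum_{j,k}c_{j,k}\cf_{X_j^1\times X_k^2}$, and to use the fact that the $c_{j,k}$ were produced by Proposition~\ref{prop: finding coupling for finite sets}, so that they satisfy $\sum_{k} c_{j,k}=1/N_1$ for every $j$ and $\sum_{j} c_{j,k}=1/N_2$ for every $k$ (properties (1) and (2) listed just before the sublemma). The key quantitative input about the discretizations is that all the cells of a given $\delta$-discretization have the same Haar measure, so $|X_j^1|=1/N_1$ for all $j$ and $|X_k^2|=1/N_2$ for all $k$; this is part~(1) of Definition~\ref{def: discretizable}.

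First I would check that $\rhonup$ is a probability measure: integrating the density over $G_1\times G_2$ gives $N_1N_2\sum_{j,k}c_{j,k}\,|X_j^1|\,|X_k^2| = N_1N_2\sum_{j,k}c_{j,k}\cdot\tfrac{1}{N_1N_2}=\sum_{j,k}c_{j,k}=1$, using property~(1) summed over $j$ (or property~(2) summed over $k$), which gives $\sum_{j,k}c_{j,k}=\sum_j\tfrac1{N_1}=1$. Next, to identify $\pr_1(\rhonup)$, it suffices to integrate the density against $\cf_{A}\otimes \cf_{G_2}$ for an arbitrary Borel $A\subseteq G_1$; by the product structure of each cell $X_j^1\times X_k^2$,
\[
\rhonup(A\times G_2)=N_1N_2\sum_{j,k}c_{j,k}\,|A\cap X_j^1|\,|X_k^2|
= N_1 \sum_{j}|A\cap X_j^1|\Bigl(N_2\sum_k c_{j,k}\Bigr)
= N_1\sum_j |A\cap X_j^1|\cdot N_2\cdot\tfrac1{N_1}\cdot\ldots
\]
— more cleanly, $N_2\sum_k c_{j,k}=N_2/N_1$ is not what property~(1) gives; rather $\sum_k c_{j,k}=1/N_1$, so $N_1N_2\sum_k c_{j,k}=N_2$, and hence $\rhonup(A\times G_2)=\sum_j N_2\,|A\cap X_j^1|\,|X_k^2|$ summed appropriately. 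I will organize the computation as: $\rhonup(A\times G_2)=N_1N_2\sum_j\bigl(\sum_k c_{j,k}\bigr)|A\cap X_j^1|\cdot|X_k^2|$ — here I must be careful that $|X_k^2|$ does not depend on $k$, namely $|X_k^2|=1/N_2$, so $N_1N_2\sum_k c_{j,k}|X_k^2| = N_1\sum_k c_{j,k}=N_1\cdot\tfrac1{N_1}=1$; therefore $\rhonup(A\times G_2)=\sum_j|A\cap X_j^1|=|A|=m_1(A)$, since $\{X_j^1\}$ partitions $G_1$. The symmetric computation with property~(2) and $|X_j^1|=1/N_1$ gives $\rhonup(G_1\times B)=|B|=m_2(B)$ for every Borel $B\subseteq G_2$. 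Hence $\pr_i(\rhonup)=m_i$, which is the definition of a coupling.

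I do not expect any genuine obstacle here; the only thing to be careful about is bookkeeping of the normalizing factors $N_1,N_2$ and the use of the two distinct equal-mass facts $|X_j^1|=1/N_1$, $|X_k^2|=1/N_2$ together with the two distinct summation constraints on $c_{j,k}$ — matching the right constraint with the right factor is the whole content. (Note that symmetry of $\rhonup$ is \emph{not} claimed in this sublemma and is handled later by an explicit symmetrization step, so nothing about symmetry enters the proof.)
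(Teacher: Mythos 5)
Your proof is correct and is essentially the same as the paper's: both use $|X_j^1|=1/N_1$, $|X_k^2|=1/N_2$, and the row/column sums of $(c_{j,k})$ to evaluate the marginals. The paper integrates the density over one factor and checks that the resulting conditional density is the constant $1$, whereas you check $\rhonup(A\times G_2)=m_1(A)$ for arbitrary Borel $A$; these are two ways of organizing the identical computation.
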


\begin{proof}[Proof of the Sublemma]
Since $\rhonup$ is absolutely continuous with respect to $m_1\times m_2$, with density 
$N_1N_2\sum_{j,k}c_{j,k}\cf_{X_j^1\times X_k^2}$, it suffices to show that for $i=1,2$, we have 
\[
\int_{G_i}N_1N_2\sum_{j,k}c_{j,k}\cf_{X_j^1\times X_k^2}(g_1,g_2)dm_i=1.
\]
We prove this claim for $i=1$, the other case is proved similarly. 
Recall that $|X_j^1|=1/N_1$ and that $\sum_{j=1}^{N_1} c_{j,k}=\frac{1}{N_2}$ for all $k$. Thus we have 
\begin{align*}
\int_{G_1}&N_1N_2\sum_{j,k}c_{j,k}\cf_{X_j^1\times X_k^2}(g_1,g_2)dm_1(g_1)=N_1N_2\sum_{j,k}c_{j,k}|X_j^1|\cf_{X^2_k}(g_2)\\
&=N_2\sum_{j,k}c_{j,k}\cf_{X^2_k}(g_2)=N_2\sum_k\cf_{X^2_k}(g_2)\sum_jc_{j,k}=\sum_k\cf_{X^2_k}(g_2)=1  
\end{align*} 
where in the last equality we used the fact that $\{X_k^2\}$ is a partition of $G_2$. 
\end{proof}

Recall that $f\in L^2(G_1\times G_2, m_1\times m_2)$ and satisfies $\|f-f_\crho\|_2\leq \crho^{C}\|f\|_2$ where 
$f_\crho=P_\crho\ast f$. 

\begin{sublemma}\label{lem: replacing f by f rho}
We have
\be\label{eq: replacing f by f rho}
\bigl\|\mu^{(\ell)}\ast f-\rhonup\ast f\bigr\|_2\leq \bigl\|\mu^{(\ell)}_\crho\ast f_\rho-\rhonup\ast f_\crho\bigr\|_2+3\crho^{C}\|f\|_2.
\ee
\end{sublemma}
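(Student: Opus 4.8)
The statement is an elementary smoothing estimate, so the plan is to derive it from two applications of the triangle inequality together with Young's inequality~\eqref{eq:Young-ineq} and the fact that $P_\crho$ is a probability density lying in the center of $L^1(G_1\times G_2)$.

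First I would insert the intermediate quantities $\mu^{(\ell)}_\crho\ast f_\crho$ and $\rhonup\ast f_\crho$ and apply the triangle inequality to obtain
\[
\bigl\|\mu^{(\ell)}\ast f-\rhonup\ast f\bigr\|_2\leq \bigl\|\mu^{(\ell)}\ast f-\mu^{(\ell)}_\crho\ast f_\crho\bigr\|_2+\bigl\|\mu^{(\ell)}_\crho\ast f_\crho-\rhonup\ast f_\crho\bigr\|_2+\bigl\|\rhonup\ast f_\crho-\rhonup\ast f\bigr\|_2 ,
\]
so that it remains only to bound the first summand by $2\crho^{C}\|f\|_2$ and the third by $\crho^{C}\|f\|_2$. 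For the third summand, $\rhonup$ is a probability measure (it is even a coupling of $m_1$ and $m_2$, as shown above), so Young's inequality gives $\bigl\|\rhonup\ast(f_\crho-f)\bigr\|_2\leq \|f_\crho-f\|_2$, which is at most $\crho^{C}\|f\|_2$ by the hypothesis on $f$. For the first summand I would use $f_\crho=P_\crho\ast f$ and the centrality of $P_\crho$, together with $\|P_\crho\|_1=1$, to rewrite $\mu^{(\ell)}_\crho\ast f_\crho=\mu^{(\ell)}\ast\bigl(P_\crho\ast P_\crho\ast f\bigr)$; then $\|\mu^{(\ell)}\ast g\|_2\leq\|g\|_2$ reduces the task to bounding $\|f-P_\crho\ast P_\crho\ast f\|_2$. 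Since the hypothesis controls only the error of a single smoothing, I would split this as $\|f-f_\crho\|_2+\|P_\crho\ast(f-f_\crho)\|_2\leq 2\|f-f_\crho\|_2\leq 2\crho^{C}\|f\|_2$. Summing the two bounds produces the constant $3\crho^{C}\|f\|_2$.

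There is no genuine obstacle in this step; the only point needing mild attention is the bookkeeping of the smoothing operators. The hypothesis bounds $\|f-f_\crho\|_2$ for a single copy of $P_\crho$, so replacing $f$ by $f_\crho$ on the $\mu^{(\ell)}$ side effectively introduces two copies of $P_\crho$ and hence costs $2\crho^{C}\|f\|_2$, whereas the replacement on the $\rhonup$ side costs only $\crho^{C}\|f\|_2$; this accounts for the total error $3\crho^{C}\|f\|_2$. Everything else is immediate from the convolution identities recalled in \S\ref{sec:def}.
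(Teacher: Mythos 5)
Your proof is correct and is essentially the same as the paper's: the paper likewise introduces the intermediate quantities $\mu^{(\ell)}\ast f_\crho$, $\mu^{(\ell)}_\crho\ast f_\crho$, and $\rhonup\ast f_\crho$ and bounds each gap by $\crho^C\|f\|_2$ via Young's inequality; you merely collapse the first two of those error terms into a single estimate of $\|f-P_\crho\ast P_\crho\ast f\|_2\le 2\crho^C\|f\|_2$, which is the same algebra.
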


\begin{proof}[Proof of the Sublemma]
Indeed, by Young's inequality, we have 
\begin{subequations}
\begin{align*}
\bigl\|\mu^{(\ell)}\ast f-\mu^{(\ell)}\ast f_\crho\bigr\|_2&\leq\|f-f_\crho\|_2\leq \crho^C\|f\|_2,\\
\bigl\|\mu^{(\ell)}\ast f_{\crho}-\mu^{(\ell)}_\crho\ast f_\crho\bigr\|&=\bigl\|\mu^{(\ell)}\ast P_\crho\ast (f-f_\crho)\bigr\|_2\leq \|f-f_\crho\|_2\leq \crho^C\|f\|_2,\\
\bigl\|\rhonup\ast f_\crho-\rhonup\ast f\bigr\|_2&\leq\|f-f_\crho\|_2\leq \crho^C\|f\|_2.
\end{align*}
\end{subequations}
Now~\eqref{eq: replacing f by f rho} follows from these estimates and the triangle inequality. 
\end{proof}

In view of~\eqref{eq: replacing f by f rho}, thus we need to bound $\bigl\|\mu^{(\ell)}_\crho\ast f_\rho-\rhonup\ast f_\crho\bigr\|_2$.
This will be done using the Parseval's theorem. Let us begin with the following  
which is a consequence of the fact that $G_1$ and $G_2$ are 
locally random groups, together with the fact that $\diam(X^i_j)\leq \delta=\rho^{\hat C}$. 

\begin{sublemma}\label{lem: mu ell nu Fourier}
Let $\sigma$ be a Borel probability measure on $G=G_1\times G_2$.
Let $\varphi\in\widehat{G}$, and for all $1\leq j\leq N_1$ and $1\leq k\leq N_2$, let $g_{j,k}\in X_j^1\times X_k^2$. Then  
\[
\bigl\|\hat\sigma(\varphi)-\sum_{j,k}\sigma(X_j^1\times X_k^2)\varphi(g_{j,k})\bigr\|_{\op}\leq 2C_0(\dim\varphi)^L\delta.
\]
\end{sublemma}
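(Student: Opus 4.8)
The plan is to use the integral definition of the Fourier coefficient, break the integral over $G$ into the pieces $X_j^1 \times X_k^2$, and on each piece compare $\varphi(g)$ with $\varphi(g_{j,k})$ using local randomness. First I would write
\[
\hat\sigma(\varphi) = \int_{G} \varphi(g)^{\ast}\, d\sigma(g) = \sum_{j,k} \int_{X_j^1\times X_k^2} \varphi(g)^{\ast}\, d\sigma(g),
\]
using that $\{X_j^1\times X_k^2\}$ is a partition of $G$. Then I would subtract the proposed approximation, writing $\sum_{j,k}\sigma(X_j^1\times X_k^2)\varphi(g_{j,k})^{\ast} = \sum_{j,k}\int_{X_j^1\times X_k^2}\varphi(g_{j,k})^{\ast}\, d\sigma(g)$ (here I am tacitly using $\varphi(g)^\ast$ versus $\varphi(g_{j,k})$; the statement as written mixes $\varphi$ and $\varphi^\ast$, but since $\|\varphi(h)^\ast - \varphi(h')^\ast\|_{\op} = \|\varphi(h)-\varphi(h')\|_{\op}$ for unitary $\varphi$, this is harmless and I would just track it consistently). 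Thus
\[
\Bigl\|\hat\sigma(\varphi) - \sum_{j,k}\sigma(X_j^1\times X_k^2)\varphi(g_{j,k})\Bigr\|_{\op} \le \sum_{j,k}\int_{X_j^1\times X_k^2}\bigl\|\varphi(g) - \varphi(g_{j,k})\bigr\|_{\op}\, d\sigma(g).
\]

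Next I would invoke local randomness. Since $G = G_1\times G_2$ with each $G_i$ being $L$-locally random with coefficient $C_0$, the product $G$ is $L$-locally random with coefficient (a constant like) $C_0$ as well — more precisely, $\|\varphi(g)-\varphi(g')\|_{\op}\le C_0(\dim\varphi)^L d_G(g,g')$, where $d_G$ is the relevant metric on the product; this follows from Lemma~\ref{lem: locally random family}(1) applied to $G_1$ and $G_2$, together with~\cite[Lemma 5.2]{MMSG} on products (this is the same reasoning already used in the proof of Lemma~\ref{lem: locally random family}). For $g\in X_j^1\times X_k^2$ and $g_{j,k}\in X_j^1\times X_k^2$, we have $d_G(g,g_{j,k})\le \diam(X_j^1\times X_k^2)\le 2\delta$ (or $\le\delta$ depending on how the product metric is normalized; either way the constant is absorbed), since each $X_j^i$ has diameter at most $\delta$. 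Hence $\|\varphi(g)-\varphi(g_{j,k})\|_{\op}\le 2C_0(\dim\varphi)^L\delta$ pointwise on $X_j^1\times X_k^2$.

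Finally I would conclude by integrating this uniform bound: since $\sum_{j,k}\sigma(X_j^1\times X_k^2) = \sigma(G) = 1$,
\[
\sum_{j,k}\int_{X_j^1\times X_k^2}\bigl\|\varphi(g)-\varphi(g_{j,k})\bigr\|_{\op}\, d\sigma(g) \le 2C_0(\dim\varphi)^L\delta \sum_{j,k}\sigma(X_j^1\times X_k^2) = 2C_0(\dim\varphi)^L\delta,
\]
which is the claimed estimate.

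There is no real obstacle here; the only points requiring a little care are (i) making sure the product group $G_1\times G_2$ inherits the $L$-local randomness with the stated coefficient $C_0$ and with the metric being used to define $\diam(X^i_j)\le\delta$ (this is where I would cite the product lemma from~\cite{MMSG} as in the proof of Lemma~\ref{lem: locally random family}), and (ii) keeping track of $\varphi$ versus $\varphi^\ast$ in the definition of $\hat\sigma$, which changes nothing since conjugate-transpose is an isometry for the operator norm on unitaries. The factor of $2$ in the statement comfortably absorbs the diameter of the product cell, so no sharpness issues arise.
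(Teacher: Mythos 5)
Your proof is correct and follows the same route as the paper: decompose $\hat\sigma(\varphi)$ over the partition cells $X_j^1\times X_k^2$, compare $\varphi(g)$ to $\varphi(g_{j,k})$ on each cell via $L$-local randomness of the product (citing \cite[Lemma 5.2]{MMSG}), and sum the uniform bound using $\sigma(G)=1$. The paper records the product's local-randomness coefficient as $2C_0$ with $\diam(X^1_j\times X^2_k)\le\delta$ (max metric), rather than $C_0$ with diameter $2\delta$, but as you note the two bookkeeping choices give the same bound, and your observation about $\varphi$ versus $\varphi^{\ast}$ is correct and immaterial.
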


\begin{proof}[Proof of the Sublemma]
Since $\{X_j^1\times X_k^2\}$ is a partition of $G$ with Borel sets, we have 
\begin{align*}
\hat\sigma(\varphi)&=\int \varphi(g)d\sigma(g)=\sum_{j,k}\int_{X_j^1\times X_k^2}\varphi(g)d\sigma(g)\\
&=\sum_{j,k}\Bigl(\int_{X_j^1\times X_k^2}\varphi(g)-\varphi(g_{j,k})d\sigma(g)+\sigma(X_j^1\times X_k^2)\varphi(g_{j,k})\Bigr).
\end{align*}

Recall that $G_1$ and $G_2$ are $L$-locally random with coefficient $C_0$, thus, 
$G$ is $L$-locally random with coefficient $2C_0$,~\cite[Lemma 5.2]{MMSG}.
In consequence, for all $g\in X_j^1\times X_k^2$, we have 
\[
\|\varphi(g)-\varphi(g_{j,k})\|_{\op}\leq 2C_0\dim(\varphi)^Ld(g,g_{j,k})\leq 2C_0\dim(\varphi)^L\delta,
\]
where we used $\diam(X^1_j\times X^2_k)\leq \delta$. 

Altogether, we conclude that  
\[
\bigl\|\hat\sigma(\varphi)-\sum_{j,k}\sigma(X_j^1\times X_k^2)\varphi(g_{j,k})\bigr\|_{\op}\leq \sum_{j,k}\int_{X_j^1\times X_k^2}2C_0\dim(\varphi)^L\delta d\sigma(g)=2C_0\dim(\varphi)^L\delta
\]
where we used the fact that $\{X_j^1\times X_k^2\}$ is a Borel partition of $G$ and $\sigma(G)=1$.
\end{proof}

Applying the above with $\sigma=\rhonup$ and $\mu^{\ell}_\crho$, we conclude the following 
\begin{subequations}
\begin{align}
\label{eq: mu ell nu Fourier 1}&\bigl\|\hat\rhonup(\varphi)-\sum_{j,k}c_{j,k}\varphi(g_{j,k})\bigr\|_{\op}\leq 2C_0(\dim\varphi)^L\delta,\quad\text{and}\\
\label{eq: mu ell nu Fourier 2}&\bigl\|\widehat{\mu^{(\ell)}_\crho}(\varphi)-\sum_{j,k}\mu^{(\ell)}_\crho(X_j^1\times X_k^2)\varphi(g_{j,k})\bigr\|_{\op}\leq 2C_0(\dim\varphi)^L\delta
\end{align}
\end{subequations}
for all $\varphi\in\widehat G$, where we also used the fact that $\rhonup(X_j^1\times X_k^2)=c_{j,k}$. 

We now combine~\eqref{eq: mu ell nu Fourier 1},~\eqref{eq: mu ell nu Fourier 2}, and~\eqref{eq: mu rho and cjk} with Parseval's theorem to deduce the following 

\begin{sublemma}
We have 
\be\label{eq: nu almost satisfies the thm}
\bigl\|\mu^{(\ell)}_\crho\ast f_\crho -\rhonup\ast f_\crho\bigr\|_2\leq 3\rho^{C}\|f\|_2.
\ee
\end{sublemma}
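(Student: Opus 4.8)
The final Sublemma asserts the bound
\[
\bigl\|\mu^{(\ell)}_\crho\ast f_\crho -\rhonup\ast f_\crho\bigr\|_2\leq 3\rho^{C}\|f\|_2,
\]
which, combined with Sublemma~\ref{lem: replacing f by f rho}, immediately finishes the proof of Theorem~\ref{thm: continuity prop of couplings} (after the remaining symmetrization step, which is presumably carried out separately). The natural approach is via Parseval's theorem applied to the $G=G_1\times G_2$ Fourier expansion of $\mu^{(\ell)}_\crho\ast f_\crho-\rhonup\ast f_\crho$. First I would write, for each $\varphi\in\widehat G$,
\[
\widehat{\mu^{(\ell)}_\crho\ast f_\crho}(\varphi)-\widehat{\rhonup\ast f_\crho}(\varphi)=\widehat{f_\crho}(\varphi)\bigl(\widehat{\mu^{(\ell)}_\crho}(\varphi)-\widehat{\rhonup}(\varphi)\bigr),
\]
using the convolution rule $\widehat{\nu\ast f}(\varphi)=\widehat f(\varphi)\widehat\nu(\varphi)$, so that Parseval gives
\[
\bigl\|\mu^{(\ell)}_\crho\ast f_\crho -\rhonup\ast f_\crho\bigr\|_2^2=\sum_{\varphi\in\widehat G}\dim\varphi\,\bigl\|\widehat{f_\crho}(\varphi)\bigl(\widehat{\mu^{(\ell)}_\crho}(\varphi)-\widehat{\rhonup}(\varphi)\bigr)\bigr\|_{\HS}^2\le \sum_{\varphi}\dim\varphi\,\|\widehat{f_\crho}(\varphi)\|_{\HS}^2\,\bigl\|\widehat{\mu^{(\ell)}_\crho}(\varphi)-\widehat{\rhonup}(\varphi)\bigr\|_{\op}^2,
\]
using $\|TS\|_{\HS}\le\|S\|_{\HS}\|T\|_{\op}$.

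The next step is to bound the operator norm $\bigl\|\widehat{\mu^{(\ell)}_\crho}(\varphi)-\widehat{\rhonup}(\varphi)\bigr\|_{\op}$ uniformly in $\varphi$. Subtracting~\eqref{eq: mu ell nu Fourier 2} from~\eqref{eq: mu ell nu Fourier 1} and using the triangle inequality, this is at most
\[
4C_0(\dim\varphi)^L\delta+\sum_{j,k}\bigl|\mu^{(\ell)}_\crho(X_j^1\times X_k^2)-c_{j,k}\bigr|\,\|\varphi(g_{j,k})\|_{\op}\le 4C_0(\dim\varphi)^L\delta+N_1N_2\cdot(1/N_1N_2)^{\hat C-1},
\]
where I used~\eqref{eq: mu rho and cjk}, the fact that $\varphi$ is unitary so $\|\varphi(g_{j,k})\|_{\op}=1$, and that there are $N_1N_2$ pairs $(j,k)$. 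So the operator norm is at most $4C_0(\dim\varphi)^L\delta+(N_1N_2)^{-(\hat C-2)}$. Now $\delta=\rho^{\hat C}$ and, by~\eqref{eq: Ni bound}, $N_1N_2\ge C_1^{-2}\delta^{-(d_{01}+d_{02})}$; since $\hat C\ge \frac{1}{d_{0i}}+1$, a short computation shows $(N_1N_2)^{-(\hat C-2)}$ is dominated by a power of $\rho$ comparable to $\delta$, and the whole operator norm is bounded by, say, $C_0(\dim\varphi)^{L+1}\rho^{\hat C-\text{(small)}}$ — in particular by $(\dim\varphi)^{L}\cdot\rho^{C}\cdot(\text{something}\le 1)$ once we remember $\hat C\ge L(C+d_{0i})+C+1$ is large enough to absorb the $C_0$, the polynomial-in-$\dim\varphi$ loss, and the $(N_1N_2)$ factors.

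The key remaining point — and the one that requires the hypothesis $\|f-f_\crho\|_2\le\rho^C\|f\|_2$ and the precise relation between $\hat C$ and $C$ — is that $\widehat{f_\crho}(\varphi)=\widehat{P_\crho}(\varphi)\widehat f(\varphi)$ is negligible unless $\dim\varphi$ is small: since $P_\crho$ averages over the ball $1_\crho$ and $G$ is locally random, $\|\widehat{P_\crho}(\varphi)\|_{\op}$ (hence the contribution of $\varphi$ to $\|f_\crho\|_2^2$) decays once $(\dim\varphi)^L\crho$ is not small, so effectively only representations with $\dim\varphi\lesssim\crho^{-1/L}$ matter; for those, the operator-norm bound above times $(\dim\varphi)^L$ is still $\le\rho^{C}$ by the choice of $\hat C$. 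Inserting this into the Parseval sum and using $\sum_\varphi\dim\varphi\,\|\widehat{f_\crho}(\varphi)\|_{\HS}^2=\|f_\crho\|_2^2\le\|f\|_2^2$ (Young), one gets $\bigl\|\mu^{(\ell)}_\crho\ast f_\crho -\rhonup\ast f_\crho\bigr\|_2\le (\dim\text{-truncation factor})\cdot\rho^{C}\|f\|_2\le 3\rho^{C}\|f\|_2$. I expect the main obstacle to be this last bookkeeping: carefully splitting the Parseval sum into low- and high-dimensional representations, quantifying the decay of $\widehat{f_\crho}(\varphi)$ for large $\dim\varphi$ (this is exactly where $\|f-f_\crho\|_2$ smallness enters, rather than the crude Young bound $\|f_\crho\|_2\le\|f\|_2$ which would lose the $\dim\varphi$ gain), and checking that the inequality $\hat C\ge \max\{L(C+d_{0i})+C+1,\frac1{d_{0i}}+1\}$ is exactly what is needed to make the constant $3$ (rather than some $\rho$-dependent quantity) come out.
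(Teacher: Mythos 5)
Your overall structure — Parseval, a uniform operator-norm bound on $\widehat{\mu^{(\ell)}_\crho}(\varphi)-\widehat\rhonup(\varphi)$ from the discretization Sublemma, and a split of the Parseval sum at a dimension threshold — is the paper's approach, and your treatment of the low-frequency block matches the paper's.

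The high-frequency mechanism you propose, however, is not the one the paper uses, and as stated it has a genuine gap. You attribute the smallness of the high-frequency tail to two things: (a) ``local randomness makes $\|\widehat{P_\crho}(\varphi)\|_{\op}$ decay once $(\dim\varphi)^L\crho$ is not small,'' and (b) the hypothesis $\|f-f_\crho\|_2\le\crho^C\|f\|_2$. Neither is right. Local randomness~\eqref{eq:L-loc-rand-given-scale} is an \emph{upper} bound on $\|\varphi(x)-\varphi(y)\|_{\op}$ (it says high-dimensional representations are at worst Lipschitz, i.e.\ low-dimensional ones are close to trivial on small balls); it does not force $\widehat{P_\crho}(\varphi)$ to decay for large $\dim\varphi$, and indeed $\|\widehat{P_\crho}(\varphi)\|_{\op}\le1$ is all one can say pointwise. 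And the almost-invariance hypothesis on $f$ is consumed entirely in Sublemma~\ref{lem: replacing f by f rho} to replace $f$ by $f_\crho$; it plays no role in this Sublemma. The paper's high-frequency control is a purely Parseval-based trade: writing $H(\cdot;D)$ for the tail over $\dim\varphi\ge D$, one uses (via \cite[Lemma 6.1]{MMSG})
\[
H\bigl((\mu^{(\ell)}_\crho-\rhonup)\ast f\ast P_\crho;D\bigr)\le\tfrac1D\,H\bigl((\mu^{(\ell)}_\crho-\rhonup)\ast f;D\bigr)\,H(P_\crho;D)\le\tfrac1D\,\|(\mu^{(\ell)}_\crho-\rhonup)\ast f\|_2^2\,\|P_\crho\|_2^2\le\tfrac{4C_1}{D\crho^{d_0}}\|f\|_2^2,
\]
where the only group-theoretic input is the dimension condition giving $\|P_\crho\|_2^2=1/|1_\crho|\le C_1\crho^{-d_0}$, and Young's inequality gives $\|(\mu^{(\ell)}_\crho-\rhonup)\ast f\|_2\le2\|f\|_2$. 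The threshold $D$ must then be taken as large as $\asymp\crho^{-2C-d_0}$ to make this tail $\le\crho^{2C}\|f\|_2^2$; your proposed cutoff $D\lesssim\crho^{-1/L}$ is far too small for that, which is a second reason the bookkeeping in your sketch would not close (it is exactly why the hypothesis $\hat C\ge L(C+d_0)+C+1$ appears: it absorbs the much larger $D^L$ factor in the low-frequency block).
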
 

\begin{proof}[Proof of the Sublemma]
The argument is similar to arguments in~\cite[\S6]{MMSG}, 
and as was mentioned before, is based on Parseval's theorem:
\[
\bigl\|\mu^{(\ell)}_\crho\ast f_\crho -\rhonup\ast f_\crho\bigr\|_2^2=\sum_{\varphi\in\widehat G}\dim\varphi\bigl\|(\widehat{\mu^{(\ell)}_\crho}(\varphi)-\hat\rhonup(\varphi))\hat f_\crho(\varphi)\bigr\|_{\HS}^2
\]
where $G=G_1\times G_2$.

Let us write $d_0=d_{01}+d_{02}$, see~\eqref{eq:dimension condition coupling}. 
Let $D=\lceil 4C_1^2\rho^{-2C-d_0}\rceil$, this choice will be justified later in the proof. We separate the above sum into $\sum_{\dim\varphi\leq D}$ and $\sum_{\dim\varphi\geq D}$. Using the notation in~\cite[\S6]{MMSG}, the first sum will be denoted by $L(\mu^{(\ell)}_\crho\ast f_\crho -\rhonup\ast f_\crho; D)$ and second sum by $H(\mu^{(\ell)}_\crho\ast f_\crho -\rhonup\ast f_\crho; D)$.

First note that in view of~\cite[Lemma 6.1]{MMSG}, we have 
\begin{align}
\notag H\bigl((\mu^{(\ell)}_\crho-\rhonup)\ast f_\crho; D\bigr)&=H\bigl((\mu^{(\ell)}_\crho-\rhonup)\ast f\ast P_\crho; D\bigr)\leq \frac{1}{D}H\bigl((\mu^{(\ell)}_\crho-\rhonup)\ast f;D\bigr)H(P_\crho;D)\\
\notag&\leq \frac{1}{D}\|\mu^{(\ell)}_\crho-\rhonup)\ast f\|_2^2\|P_\crho\|_2^2\leq \frac{4}{D|1_\crho|}\|f\|_2^2\\
\label{eq: high frequency mu ell nu}&\leq \frac{4C_1}{D\crho^{d_0}}\|f\|_2^2
\end{align}
where we used Young's in equality in the second line and~\eqref{eq:dimension condition coupling} with $\eta=\rho$ in the last line.

We now investigate 
\[
L(\mu^{(\ell)}_\crho\ast f_\crho -\rhonup\ast f_\crho; D)=\sum_{\varphi\in\widehat G, \dim\varphi\leq D}\dim\varphi\bigl\|(\widehat{\mu^{(\ell)}_\crho}(\varphi)-\hat\rhonup(\varphi))\hat f_\crho(\varphi)\bigr\|_{\HS}^2.
\]
First note that $\|(\widehat{\mu^{(\ell)}_\crho}(\varphi)-\hat\rhonup(\varphi))\hat f_\crho(\varphi)\bigr\|_{\HS}^2\leq \|\widehat{\mu^{(\ell)}_\crho}(\varphi)-\hat\rhonup(\varphi)\bigr\|_{\op}^2\|\hat f_\crho(\varphi)\bigr\|_{\HS}^2$. Moreover, by the triangle inequality, we have 
\begin{multline*}
\bigl\|\widehat{\mu^{(\ell)}_\crho}(\varphi)-\hat\rhonup(\varphi)\bigr\|_{\op}\leq \bigl\|\widehat{\mu^{(\ell)}_\crho}(\varphi)-\sum_{j,k}\mu_\rho^{\ell}(X_j^1\times X_k^2)\varphi(g_{j,k})\bigr\|_{\op}+ \bigl\|\widehat{\rhonup}(\varphi)-\sum_{j,k} c_{j,k}\varphi(g_{j,k})\bigr\|_{\op}\\+ 
\bigl\|\sum_{j,k}\bigl(\mu_\rho^{\ell}(X_j^1\times X_k^2)-c_{j,k}\bigr)\varphi(g_{j,k})\bigr\|_{\op}.
\end{multline*}
Hence, using~\eqref{eq: mu ell nu Fourier 1},~\eqref{eq: mu ell nu Fourier 2}, and~\eqref{eq: mu rho and cjk}, we have 
\[
\bigl\|\widehat{\mu^{(\ell)}_\crho}(\varphi)-\hat\rhonup(\varphi)\bigr\|_{\op}\leq 4C_0(\dim\varphi)^L\delta+N_1N_2(1/N_1N_2)^{\hat C-1}\leq
4C_0(\dim\varphi)^L\delta+ (C_1^2\delta^{d_0})^{\hat C-2}
\]
where $d_0=d_{01}+d_{02}$ and we used~\eqref{eq: Ni bound} for the last inequality. From this we conclude that

\be\label{eq: low frequency mu ell nu}
\begin{split}      
L(\mu^{(\ell)}_\crho\ast f_\crho -\rhonup\ast f_\crho; D) & \leq \Bigl(4C_0D^L\delta+ (C_1^2\delta^{d_0})^{\hat C-2}\Bigr)^2\sum_{\varphi\in\widehat G, \dim\varphi\leq D} \dim\varphi\|\hat f_\crho(\varphi)\bigr\|_{\HS}^2
\\
& \leq \Bigl(4C_0D^L\delta+ (C_1^2\delta^{d_0})^{\hat C-2}\Bigr)^2\|f\|_2^2.
\end{split}
\ee

Recall that $D=\lceil 4C_1\rho^{-2C-d_0}\rceil$, then $\frac{4C_1}{D\rho^{d_0}}\leq \rho^{2C}$. 
Since $\hat C\geq L(C+d_0)+C+1$, and $\rho\leq \frac{1}{4C_0(5C_1)^L}$, we get 
\[
4C_0D^L\delta\leq 4C_0 (4C_1\rho^{-C-d_0}+1)^L\rho^{\hat C}\leq \rho^C;
\]
moreover, since $d_0(\hat C-1)\geq 1$ we have $(C_1^2\delta^{d_0})^{\hat C-2}\leq \rho^C$. 
Thus,~\eqref{eq: high frequency mu ell nu} and~\eqref{eq: low frequency mu ell nu} imply that
\[
\bigl\|\mu^{(\ell)}_\crho\ast f_\crho -\rhonup\ast f_\crho\bigr\|_2\leq 3\rho^C
\]
as we claimed. 
\end{proof}

We are now in the position to complete the proof of Theorem~\ref{thm: continuity prop of couplings}. 
First note that in view of~\eqref{eq: replacing f by f rho} and~\eqref{eq: nu almost satisfies the thm} we have 
\[
\bigl\|\mu^{(\ell)}\ast f-\rhonup\ast f\bigr\|_2 \leq 6\rho^C\|f\|_2
\]

As was mentioned before, $\rhonup$ need not be symmetric. 
Define $\rhonu=\frac{\check \rhonup+\rhonup}{2}$ to be the symmetrization of $\rhonup$ where 
$\check h(g)=h(g^{-1})$ for any $h\in L^2(G, m_1\times m_2)$.

Recall that $\|\check h\|_2=\|h\|_2$ for all $h\in L^2(G, m_1\times m_2)$. Since $\mu$ is symmetric, we have 
\[
\bigl\|\mu^{(\ell)}\ast f-\check\rhonup\ast f\bigr\|_2 = \bigl\|(\mu^{(\ell)}\ast \check f-\rhonup\ast \check f)\check{}\;\bigr\|_2= \bigl\|\mu^{(\ell)}\ast \check f-\rhonup\ast \check f\bigr\|_2\leq 6\rho^C\|\check f\|_2=6\rho^C\|f\|_2.
\]
Altogether, and using the facts that $\rho\leq 0.01$, we get that 
\[
\bigl\|\mu^{(\ell)}\ast f-\rhonu\ast f\bigr\|_2\leq 6\rho^C\|f\|_2\leq \rho^{C-0.5}\|f\|_2.
\]
The proof is complete. 
\end{proof}

\section{Contraction of couplings at small scales}\label{s:contraction-couplings}

The main goals of this section are to prove Proposition~\ref{prop:desired-entropy} and Proposition~\ref{prop:contraction-coupling}, which are crucial ingredients for the proof of Theorem~\ref{thm:main1}.  

In this section we will be working with groups $G_1$ and $G_2$ which are $L$-locally random with coefficients $C_{01}$ and $C_{02}$, respectively, and  satisfy ${\rm DC}(d_{0i},C_{1i})$.
Throughout this section, $\mathsf D_{\,\bullet}$ denotes a constant of the form 
\be\label{eq:const-kazaa}
\X;
\ee
this means the exponent in the definition of $\mathsf D_{\,\bullet}$ does not depend on other parameters introduced in the various statements throughout this section.

\begin{proposition}\label{prop:desired-entropy}
Suppose $F_1$ and $F_2$ are two local fields of characteristic zero, $\bbg_i$ is an almost $F_i$-simple group, and $\Lie(\bbg_1)(F_1)$ and $\Lie(\bbg_2)(F_2)$ are not isomorphic. For $i=1,2$, let $G_i\subseteq \bbg_i(F_i)$ be a compact open subgroup. For every $\bar\delta>0$, there exists 
$\eta_0\geq \ref{c: desired entropy}^{-1/\bar\delta}$ where $\constc \label{c: desired entropy}$ is a constant as in \eqref{eq:const-kazaa}, and a positive integer $m:=m(\bar\delta)$ such that for every $0<\eta\leq \eta_0$ and every coupling $\mu$ of the probability Haar measures $m_{G_1}$ and $m_{G_2}$, we have 
\[
H_2(\mu^{(2^{m})};\eta)\geq \big(d_{01}+d_{02}-\bar\delta\big) \log(1/\eta).
\] 
\end{proposition}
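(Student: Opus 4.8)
The goal is a lower bound on the R\'enyi entropy at scale $\eta$ of a high convolution power of an \emph{arbitrary} coupling $\mu$ of the two Haar measures, under the hypothesis that the Lie algebras are non-isomorphic. The plan is to argue by contradiction: if $H_2(\mu^{(2^m)};\eta)$ were smaller than $(d_{01}+d_{02}-\bar\delta)\log(1/\eta)$ for some small $\eta$, then $\mu^{(2^m)}$ would have an unusually large $L^2$-mass at scale $\eta$, i.e.\ $\|\mu^{(2^m)}_\eta\|_2^2 \geq |1_\eta|^{-1}\eta^{\bar\delta}$. Since the marginals of $\mu^{(2^m)}$ are the Haar measures $m_{G_1}$ and $m_{G_2}$ (couplings are preserved under convolution on the product group), the marginals are as spread out as possible, so this concentration can only come from correlations between the two factors. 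The strategy is to feed this into the multiscale Bourgain--Gamburd machinery of \cite[Theorem 2.12]{MMSG} together with the mixing inequality \cite[Theorem 2.6]{MMSG}: concentration that fails to improve under convolution forces the existence of an approximate subgroup structure, and in the product setting this should manifest as a partial approximate homomorphism between $G_1$ and $G_2$ at some scale (the translation of ``the coupling is concentrated'' into ``there is an approximate graph of a map'' is precisely Step 3 in the outline of the paper).

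First I would fix $\bar\delta$, choose $m$ large in terms of $\bar\delta$ and the locally-random/dimension constants, and choose $\eta_0$ of the required form $\ref{c: desired entropy}^{-1/\bar\delta}$. The key mechanism is that under one convolution step, R\'enyi entropy either increases by a definite amount or the measure already looks like (a smoothing of) the normalized indicator of an approximate subgroup; iterating $m$ times, either we reach the desired entropy bound, or we extract at scale $\eta^{O(1)}$ a set $A\subseteq G_1\times G_2$ with $|A\cdot A|\ll |A|^{1+\varepsilon}$ on which $\mu^{(2^{m'})}$-mass concentrates, with $|A|$ comparable to $\eta^{(d_{01}+d_{02}-O(\bar\delta))}$ in measure. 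Because $\pi_i(\mu^{(2^m)})=m_{G_i}$ is Haar, $\pi_i(A)$ must be a large (nearly full-measure, up to the deficit $\bar\delta$) approximate subgroup of $G_i$; since $G_i$ has simple Lie algebra, its only large approximate subgroups at small scales are essentially all of $G_i$ (a quantitative no-small-subgroups / bounded-generation statement, cf.\ Proposition~\ref{prop:conjugation-by-large-ball-multiplication-large-image}). So $A$ is a proper approximate subgroup of $G_1\times G_2$ whose two projections are both essentially everything --- forcing $A$ to be (a thickening of) the graph of a partial approximate homomorphism $f:1^{(1)}_\rho \to G_2$ with large image, at scale $\rho=\eta^{O(1)}$ and with approximation quality $\rho^{m_\fcal}$-good once $m$ is large enough.

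The final step is to invoke Theorem~\ref{thm:approximate-hom}: such an $f$ would force $\Lie(\bbg_1)(F_1)\simeq \Lie(\bbg_2)(F_2)$, contradicting the hypothesis. Hence the alternative in the entropy iteration cannot occur, and we get $H_2(\mu^{(2^m)};\eta)\geq (d_{01}+d_{02}-\bar\delta)\log(1/\eta)$ for all $\eta\leq\eta_0$. Along the way I would also need the trivial upper bound $H_2(\mu^{(2^m)};\eta)\leq (d_{01}+d_{02})\log(1/\eta)+O(1)$ (from $\|\mu_\eta\|_2^2\geq |1_\eta|$ together with $\Dim$ on the product group) to know the bound we are chasing is the right order.

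\textbf{Main obstacle.} The hard part will be carrying out Step 3 cleanly: converting ``$\mu^{(2^m)}$ concentrates at scale $\eta$ but its marginals are Haar'' into an honest partial approximate homomorphism defined on a genuine neighborhood $1^{(1)}_\rho$ of the identity with controlled approximation constant $\rho^{m_\fcal}$ and verifiable large-image condition. This requires (i) the correct bookkeeping of how the scale $\eta$, the number of convolution steps $2^m$, and the approximation quality exponent are related --- precisely the reason $m$ must be taken large depending on $\bar\delta$ and on the constants $\mathsf D_\bullet$ of the shape $\X$; and (ii) a quantitative structure theorem saying a large-measure approximate subgroup of a compact simple analytic group is a full neighborhood, which is where the bounded-generation input (Proposition~\ref{prop:conjugation-by-large-ball-multiplication-large-image}) and the locally-random hypothesis do the real work. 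Everything else --- the entropy monotonicity, Parseval bookkeeping, and the final appeal to Theorem~\ref{thm:approximate-hom} --- is comparatively routine.
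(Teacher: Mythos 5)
Your proposal follows essentially the same route as the paper: double the convolution power, use the multiscale Bourgain--Gamburd dichotomy (\cite[Theorem 2.12]{MMSG} via Lemma~\ref{lem:coupling-BG}) at each step so that either R\'enyi entropy gains a fixed $\gamma\log(1/\eta)$ or an approximate subgroup $H\subseteq G_1\times G_2$ with controlled entropy appears, convert $H$ into a partial approximate homomorphism $f:1^{(1)}_{\eta^{O(\gamma)}}\to G_2$ with large image (Lemmas~\ref{lem:constructing-approx-hom}--\ref{lem:constructing-approx-hom-nbhd}, Corollary~\ref{cor:almost-kernels-are-small}), and rule it out by Theorem~\ref{thm:approximate-hom}, so that after at most $\lceil d_0/\gamma\rceil$ doublings the entropy must reach $(d_0-\bar\delta)\log(1/\eta)$.

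One point worth sharpening in your item (ii): the paper does not use ``a large-measure approximate subgroup of a compact simple analytic group is a full neighborhood.'' Rather, the bounded-generation input (Proposition~\ref{prop:conjugation-by-large-ball-multiplication-large-image}) is applied inside the \emph{product}: it shows that if the quantity $\alpha_1(\prod_8H;\eta)$ measuring the size of the approximate kernel $H^{(1)}_k=\pr_1((G_1\times\{1\})\cap\prod_k H)$ were small (i.e.\ if the kernel contained an element far from the identity), then by conjugating and multiplying one would generate a ball $1^{(1)}_{\eta^{O(\gamma)+4\alpha_1}}$ inside $H^{(1)}_{50d_{01}^2}$, making its metric entropy too large to be compatible with the entropy deficit $\bar\delta$ forced by Lemmas~\ref{lem:lower-bound-metric-entropy} and~\ref{cor:upper-bound-metric-entropy-intersection-with-factors}. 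That lower bound on $\alpha_1,\alpha_2$ is precisely what makes the set $H$ graph-like, so that Lemma~\ref{lem:constructing-approx-hom-nbhd} extracts an approximate homomorphism with approximation quality $\eta^{\alpha_2}$; the large-image condition then comes from the large projections (Lemma~\ref{lem:coupling-plus-almost-equidistribution-imlpies-large-open-projections}) rather than from a structure theorem on a single factor. With that adjustment, your roadmap matches the paper's proof.
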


The proof of this proposition will occupy the rest of this section. We will then use this proposition to prove Proposition~\ref{prop:contraction-coupling} below. Before stating Proposition~\ref{prop:contraction-coupling}, we recall Definition~\ref{def:fun-at-scale}: A function $f\in L^2(G)$ is said to live at scale  $\eta$ (with parameter $0<a<1$) if 
\begin{itemize}
	\item (Averaging to zero) $\|f_{\eta^{1/a}}\|_2\le \eta^{1/(2a)} \|f\|_2$.
	\item (Almost invariant) $\|f_{\eta^{a^2}}-f\|_2\le \eta^{a/2} \|f\|_2$.
 \end{itemize}

\begin{proposition}\label{prop:contraction-coupling}
In the setting of Proposition~\ref{prop:desired-entropy}, there exist a positive integer $m_0$ and a positive number $c$, depending only on $L$, $d_{01}$, and $d_{02}$ such that for every $0<\eta\leq \ref{c: contraction-coupling}^{-1}$ where $\constc \label{c: contraction-coupling}$ is a constant as in \eqref{eq:const-kazaa}, every coupling $\mu$ of the probability Haar measures $m_{G_1}$ and $m_{G_2}$, and every function $f\in L^2(G_1\times G_2)$ which lives at scale $\eta$ with a parameter $a\geq 4L(d_{01}+d_{02})$, we have 
\[
\|\mu^{(2^{m_0})}\ast f\|_2\leq \eta^c \|f\|_2.
\]  
\end{proposition}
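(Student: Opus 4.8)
\textbf{Proof plan for Proposition~\ref{prop:contraction-coupling}.}

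The plan is to deduce the contraction estimate from the entropy growth in Proposition~\ref{prop:desired-entropy} together with the Littlewood--Paley machinery of \cite{MMSG}, following the same strategy that underlies Step~1--Step~3 of the outline in~\S\ref{sec: local randomness}. First I would apply Proposition~\ref{prop:desired-entropy} with a suitable $\bar\delta$ (to be chosen below in terms of $a$, $L$, $d_{01}$, $d_{02}$) to obtain an integer $m=m(\bar\delta)$ and $\eta_0$ so that
\[
H_2\bigl(\mu^{(2^m)};\eta\bigr)\geq (d_{01}+d_{02}-\bar\delta)\log(1/\eta)
\]
for all $0<\eta\le\eta_0$ and every coupling $\mu$. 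Set $d_0=d_{01}+d_{02}$, the ``dimension'' of $G_1\times G_2$ (which satisfies a dimension condition ${\rm DC}(C_1^2, d_0)$). The key point is that a Rényi entropy that is within $\bar\delta$ of the full value $d_0\log(1/\eta)$ means that $\mu^{(2^m)}$ is, at scale $\eta$, nearly ``flat''; in other words $\|\mu^{(2^m)}_\eta\|_2^2$ is only $\eta^{-O(\bar\delta)}$ larger than $|1_\eta|^{-1}$, which up to the dimension constant is the $L^2$-mass of the normalized Haar-like density $P_\eta$.

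Next I would invoke the mixing/flattening inequality of~\cite[Theorem 2.6]{MMSG} (the same input used in Step~3 of the outline): convolving a function $f$ that lives at scale $\eta$ with parameter $a$ against a measure whose Rényi entropy at the appropriate scale is close to maximal produces strong $L^2$-contraction. Concretely, one shows that $\|\mu^{(2^m)}\ast f\|_2\le \eta^{c'}\|f\|_2$ for a constant $c'>0$ depending only on $L$, $d_0$, and the gap $\bar\delta$, provided the parameter $a$ of the scale-$\eta$ function is large enough relative to $L d_0$ — this is where the hypothesis $a\ge 4L(d_{01}+d_{02})$ is used, to control the contribution of the low-dimensional representations via local randomness~\eqref{eq:L-loc-rand-given-scale} and the Littlewood--Paley decomposition into $L(\cdot;D)$ and $H(\cdot;D)$ pieces as in the proof of Sublemma~\ref{lem: nu almost satisfies the thm}. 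Choosing $\bar\delta$ to be a small absolute multiple of $1/(Ld_0)$ makes $c'$ a fixed positive constant depending only on $L$, $d_{01}$, $d_{02}$; setting $m_0:=m(\bar\delta)$ and $c:=c'$ then gives the statement, with the threshold $\ref{c: contraction-coupling}^{-1}$ absorbing the dependence of $\eta_0$ on $\bar\delta$ (itself of the form~\eqref{eq:const-kazaa}).

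The main obstacle I anticipate is the bookkeeping in the mixing step: one must verify that a function living at scale $\eta$ with parameter $a$ interacts correctly with the scale $\eta'=\eta^{\Theta(1)}$ at which the entropy lower bound is applied, so that the ``averaging to zero'' and ``almost invariant'' conditions of Definition~\ref{def:fun-at-scale} line up with the scale appearing in~\cite[Theorem 2.6]{MMSG}; in particular one needs the $2^m$-fold self-convolution $\mu^{(2^m)}$ (rather than $\mu$ itself) precisely so that the entropy has grown to near-maximal before the mixing inequality is applied, and then one more bounded number of convolutions, absorbed into $m_0$, converts the near-flatness into genuine $L^2$-contraction on functions orthogonal to near-constants. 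Keeping track of the exponents so that the final constant $c$ depends only on $L,d_{01},d_{02}$ and not on $\mu$, $\eta$, or the groups beyond these invariants is the delicate part; everything else is a direct appeal to~\cite{MMSG} and Proposition~\ref{prop:desired-entropy}.
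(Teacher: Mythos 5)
Your proposal follows essentially the same route as the paper's proof: the paper also applies Proposition~\ref{prop:desired-entropy} at the smaller scale $\rho=\eta^{a^2}$ with $\bar\delta=1/(8La^3)$, then invokes the mixing inequality of~\cite[Theorem 2.6]{MMSG} (packaged in this paper as Lemma~\ref{lem:lower-bound-entropy-gives-contraction}) to convert near-maximal R\'enyi entropy of $\mu^{(2^m)}$ into $L^2$-contraction on a function living at scale $\eta$, and finally uses the almost-invariance condition $\|f_{\eta^{a^2}}-f\|_2\le\eta^{a/2}\|f\|_2$ to pass from $\mu^{(2^m)}_{\rho}\ast f$ to $\mu^{(2^m)}\ast f$. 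You correctly identify all of these ingredients, the crucial scale mismatch $\rho=\eta^{\Theta(1)}$ versus $\eta$, and where the hypothesis $a\ge 4Ld_0$ enters. Two small imprecisions are worth flagging but do not affect the substance: the paper's $\bar\delta$ is of order $1/(La^3)$ rather than $1/(Ld_0)$ (consistent with your remark that you would tune it in terms of $a$), and there are no additional convolutions beyond the $2^m$ already supplied by Proposition~\ref{prop:desired-entropy} — the conversion from high entropy to contraction is a single application of the mixing inequality, not an extra round of flattening.
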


Proposition~\ref{prop:contraction-coupling}, whose proof is based on Proposition~\ref{prop:desired-entropy}, is a crucial ingredient in the proof of Theorem~\ref{thm:main1}.

\subsection{Contraction, R\'enyi entropy, and approximate subgroups.}
In this section, using the mixing inequality as in \cite[Theorem 2.6]{MMSG} and the multi-scale version of a result of Bourgain and Gamburd (see \cite[Theorem 2.12]{MMSG}), we justify why in the proofs of the aforementioned propositions one needs to study certain type of approximate subgroups of $G_1\times G_2$.

\medskip

We start by finding a lower bound for the R\'{e}nyi entropy of every coupling of the Haar measures $m_{G_1}$ and $m_{G_2}$. 
\begin{lem}\label{lem:coupling-entropy-lower-bound}
Let $G_1$ and $G_2$ be two compact groups and $\mu$ be a coupling of the Haar measures $m_{G_1}$ and $m_{G_2}$. Then for every $0<\eta<1$, we have
\[
H_2(\mu;\eta)\geq \max(\log(1/|1^{(1)}_\eta|),\log(1/|1^{(2)}_\eta|)).
\]
\end{lem}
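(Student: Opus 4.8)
The plan is to unwind the definition of the R\'enyi entropy and reduce the statement to an estimate on $\|\mu_\eta\|_2$ that uses nothing about $\mu$ beyond the fact that each of its marginals is Haar. Equip $G_1\times G_2$ with the bi-invariant metric $d\big((g_1,g_2),(h_1,h_2)\big)=\max\big(d_{G_1}(g_1,h_1),d_{G_2}(g_2,h_2)\big)$, so that the ball $1_\eta$ in $G_1\times G_2$ is exactly $1^{(1)}_\eta\times 1^{(2)}_\eta$; hence $|1_\eta|=|1^{(1)}_\eta|\,|1^{(2)}_\eta|$ and $P_\eta=P^{(1)}_\eta\otimes P^{(2)}_\eta$, where $P^{(i)}_\eta=\cf_{1^{(i)}_\eta}/|1^{(i)}_\eta|$. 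Since by definition
\[
H_2(\mu;\eta)=\log\frac{1}{|1^{(1)}_\eta|\,|1^{(2)}_\eta|}-\log\|\mu_\eta\|_2^2,
\]
it suffices to prove $\|\mu_\eta\|_2^2\le 1/|1^{(i)}_\eta|$ for $i=1,2$.

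First I would prove $\|\mu_\eta\|_2^2\le 1/|1^{(1)}_\eta|$. Write $\mu_\eta(g_1,g_2)=\int_{G_1\times G_2}P^{(1)}_\eta(h_1^{-1}g_1)\,P^{(2)}_\eta(h_2^{-1}g_2)\,d\mu(h_1,h_2)$, fix $g_2\in G_2$, and regard $g_1\mapsto\mu_\eta(g_1,g_2)$ as an element of $L^2(G_1)$. By Minkowski's integral inequality and the translation invariance of $m_{G_1}$,
\[
\|\mu_\eta(\cdot,g_2)\|_{L^2(G_1)}\le\int_{G_1\times G_2}\|P^{(1)}_\eta\|_{L^2(G_1)}\,P^{(2)}_\eta(h_2^{-1}g_2)\,d\mu(h_1,h_2)=|1^{(1)}_\eta|^{-1/2}\,\big((\pi_2\mu)\ast P^{(2)}_\eta\big)(g_2).
\]
Since $\mu$ is a coupling of $m_{G_1}$ and $m_{G_2}$, its second marginal $\pi_2\mu$ equals $m_{G_2}$, so $(\pi_2\mu)\ast P^{(2)}_\eta=m_{G_2}$ has constant density $1$. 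Thus $\|\mu_\eta(\cdot,g_2)\|_{L^2(G_1)}^2\le 1/|1^{(1)}_\eta|$ for every $g_2$, and integrating over $G_2$ (legitimate by Tonelli, all integrands being nonnegative) gives $\|\mu_\eta\|_2^2\le 1/|1^{(1)}_\eta|$. Interchanging the two factors yields $\|\mu_\eta\|_2^2\le 1/|1^{(2)}_\eta|$.

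Combining the two estimates, $\|\mu_\eta\|_2^2\le 1/\max\big(|1^{(1)}_\eta|,|1^{(2)}_\eta|\big)$, and substituting into the formula for $H_2$,
\[
H_2(\mu;\eta)\ge\log\frac{\max\big(|1^{(1)}_\eta|,|1^{(2)}_\eta|\big)}{|1^{(1)}_\eta|\,|1^{(2)}_\eta|}=\log\frac{1}{\min\big(|1^{(1)}_\eta|,|1^{(2)}_\eta|\big)}=\max\Big(\log\tfrac{1}{|1^{(1)}_\eta|},\,\log\tfrac{1}{|1^{(2)}_\eta|}\Big),
\]
which is exactly the asserted bound. I do not expect a genuine obstacle; the one point to handle with care is that the coupling $\mu$ may be singular with respect to $m_{G_1}\times m_{G_2}$ (for instance supported on a graph), so one cannot naively disintegrate it. Minkowski's integral inequality is precisely what lets one bypass this and work directly with the measure $\mu$, using only that one of its marginals is Haar; alternatively one could disintegrate $\mu$ over its $G_2$-marginal and apply Young's inequality fiberwise, at the cost of invoking existence of regular conditional probabilities.
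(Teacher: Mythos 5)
Your proof is correct. The key input in both proofs is the same — the fact that each marginal of $\mu$ is Haar — but the technical route differs. The paper works with the identity $\mu_\eta(x)=\mu(x_\eta)/|1_\eta|$ and observes pointwise that $\mu(x_\eta\times x'_\eta)\le\mu(G_1\times x'_\eta)=|x'_\eta|$, yielding the stronger bound $\|\mu_\eta\|_\infty\le\min\bigl(1/|1^{(1)}_\eta|,1/|1^{(2)}_\eta|\bigr)$, from which the $L^2$ estimate follows immediately via $\|\mu_\eta\|_2^2\le\|\mu_\eta\|_\infty\|\mu_\eta\|_1=\|\mu_\eta\|_\infty$. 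You instead work with the convolution integral directly and apply Minkowski's integral inequality, bypassing the $L^\infty$ bound and landing on $\|\mu_\eta\|_2^2\le 1/|1^{(i)}_\eta|$ straightaway. Both are sound and of comparable length; the paper's version is marginally more elementary (no Minkowski needed) and gives the pointwise bound as a byproduct, while yours illustrates cleanly how the Haar-marginal hypothesis enters as a Young-type $L^1$--$L^2$ estimate, and, as you note, handles a possibly singular coupling with no need to disintegrate.
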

\begin{proof}
By \cite[Lemma 8.2]{MMSG}, we have $\mu_{\eta}(x)=\mu(x_\eta)/|1_{\eta}|$ for every $x\in G_1\times G_2$. Therefore
\[
\mu_{\eta}(x,x')=\frac{\mu(x_\eta\times x'_\eta)}{|1^{(1)}_\eta||1^{(2)}_\eta|}\leq \frac{\mu(G_1 \times x'_\eta)}{|1^{(1)}_\eta||1^{(2)}_\eta|}=\frac{|x'_\eta|}{|1^{(1)}_\eta||1^{(2)}_\eta|}=\frac{1}{|1^{(1)}_\eta|}.
\]
By symmetry, we have 
\be\label{eq:coupling-contraction-towards-initial-entropy}
\|\mu_\eta\|_{\infty} \leq \min\Big(\frac{1}{|1^{(1)}_\eta|},\frac{1}{|1^{(2)}_\eta|}\Big). 
\ee
Since $\mu_\eta$ is a probability measure, we deduce from~\eqref{eq:coupling-contraction-towards-initial-entropy} that
\begin{align*}
H_2(\mu;\eta)= & \log(1/|1_{\eta}|)-\log \|\mu_{\eta}\|_2^2\\ 
\geq & 
\log(1/|1^{(1)}_\eta|)+\log(1/|1^{(2)}_\eta|)-\log \|\mu_\eta\|_\infty \\
\geq &
\max(\log(1/|1^{(1)}_\eta|),\log(1/|1^{(2)}_\eta|)),
\end{align*}
as we claimed.
\end{proof}

\begin{lem}\label{lem:lower-bound-entropy-gives-contraction}
Suppose $G$ is an $L$-locally random group with coefficient $C_0$ which satisfies the dimension condition ${\rm DC}(d_0,C_1)$. Let $0<\eta<(10C_0+C_1)^{-8La}$, where $a$ is a positive number. 
Let $0<\rho\leq \eta$ and suppose that $\nu$ is a probability measure on $G$ such that 
\be\label{eq:coupling-target-lower-bound-entropy}
H_2(\nu;\rho)\geq \Big(d_0-\frac{1}{8La\log_{\eta}\rho}\Big) \log(1/\rho).
\ee
Then for every function $f\in L^2(G)$ that lives at scale $\eta$, we have 
\[
\|\nu_{\rho}\ast f\|_2\leq \eta^{1/(8La)} \|f\|_2.
\] 
\end{lem}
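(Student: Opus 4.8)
The plan is a two–scale Fourier argument. The R\'enyi–entropy hypothesis will force the smoothed measure $\nu_\rho$ to be flat enough that it strongly contracts every unitary type of large enough dimension, while the ``averaging to zero'' clause of ``$f$ lives at scale $\eta$'' says $f$ carries essentially no mass in the low–dimensional types; a single frequency split then finishes.

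First I would turn \eqref{eq:coupling-target-lower-bound-entropy} into an $L^2$–bound for $\nu_\rho$. Since $H_2(\nu;\rho)=\log(1/|1_\rho|)-\log\|\nu_\rho\|_2^2$, using ${\rm DC}(d_0,C_1)$ and the identity $\log(1/\rho)/\log_\eta\rho=\log(1/\eta)$ gives
\[
\log\|\nu_\rho\|_2^2=\log(1/|1_\rho|)-H_2(\nu;\rho)\le \log C_1+d_0\log(1/\rho)-\Big(d_0-\tfrac{1}{8La\log_\eta\rho}\Big)\log(1/\rho)=\log C_1+\tfrac{\log(1/\eta)}{8La},
\]
so $\|\nu_\rho\|_2^2\le C_1\eta^{-1/(8La)}$. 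By Parseval, $\sum_{\varphi\in\widehat G}\dim\varphi\,\|\widehat{\nu_\rho}(\varphi)\|_{\HS}^2=\|\nu_\rho\|_2^2\le C_1\eta^{-1/(8La)}$, so with the cut–off $D:=C_1\eta^{-5/(8La)}$ every single type $\varphi$ with $\dim\varphi\ge D$ already obeys $\|\widehat{\nu_\rho}(\varphi)\|_{\op}^2\le\|\widehat{\nu_\rho}(\varphi)\|_{\HS}^2\le D^{-1}\|\nu_\rho\|_2^2\le\eta^{1/(2La)}$.

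Next I would compare $f$ with $f_{\eta^{1/a}}$ up to dimension $D$. By $L$–local randomness, $\|{\rm Id}-\widehat{P_{\eta^{1/a}}}(\varphi)\|_{\op}\le C_0(\dim\varphi)^L\eta^{1/a}$, which for $\dim\varphi\le D$ equals $C_0C_1^L\eta^{3/(8a)}\le\tfrac12$ — this is exactly where the hypothesis $\eta<(10C_0+C_1)^{-8La}$ is spent. Hence, if $P_{\le D}$ denotes the orthogonal projection onto matrix coefficients of types of dimension $\le D$, then $\|\widehat{f_{\eta^{1/a}}}(\varphi)\|_{\HS}\ge\tfrac12\|\widehat f(\varphi)\|_{\HS}$ for $\dim\varphi\le D$, whence $\|P_{\le D}f\|_2\le 2\|f_{\eta^{1/a}}\|_2\le 2\eta^{1/(2a)}\|f\|_2\le 2\eta^{1/(4La)}\|f\|_2$ (using $2L\ge1$ together with the averaging–to–zero clause). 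Now split $f=P_{\le D}f+(1-P_{\le D})f$: Young's inequality bounds $\|\nu_\rho\ast P_{\le D}f\|_2\le\|P_{\le D}f\|_2$, and the per–type operator bound of the previous step bounds $\|\nu_\rho\ast(1-P_{\le D})f\|_2\le\eta^{1/(4La)}\|f\|_2$; adding these and using $\eta<(10C_0+C_1)^{-8La}$ a second time to absorb the constant factor $3$, one arrives at $\|\nu_\rho\ast f\|_2\le\eta^{1/(8La)}\|f\|_2$.

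The one delicate point is the choice of $D$: it must be large enough that $\|\widehat{\nu_\rho}(\varphi)\|_{\op}$ is a genuine positive power of $\eta$ on $\dim\varphi\ge D$, and simultaneously small enough — $D\le\eta^{-1/(aL)}$ up to the constants $C_0,C_1$ — that local randomness still permits the comparison $f\leftrightarrow f_{\eta^{1/a}}$ below $D$; the inequalities $5/8<1$ and $L\ge1$ are precisely what keep this window of admissible $D$ nonempty, and the smallness of $\eta$ absorbs all the $C_0,C_1$–losses. Alternatively, the second and third steps can be packaged as a single invocation of the mixing inequality \cite[Theorem 2.6]{MMSG} applied to $\nu$ with smoothing scale $\rho$, after checking its hypotheses against \eqref{eq:coupling-target-lower-bound-entropy}.
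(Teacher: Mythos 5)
Your proof is correct. It takes a related but more hands-on route than the paper: where the paper invokes the pre-packaged mixing inequality of~\cite[Theorem~2.6]{MMSG} in a single line and then only needs to compute $\|\nu_\rho\|_2$ from the entropy hypothesis, you unpack the frequency split explicitly via Peter--Weyl, introducing an entropy-adapted cutoff $D$ and treating the high types (where the Parseval bound on $\|\nu_\rho\|_2$ forces each Fourier block of $\nu_\rho$ to have small operator norm) and the low types (where $L$-local randomness makes $\widehat{P_{\eta^{1/a}}}(\varphi)$ close to the identity, so the averaging-to-zero clause kills $\|P_{\le D}f\|_2$) separately. You have correctly identified that this is the content of the mixing inequality, as you note at the end. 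Your version is longer but more self-contained and makes transparent the exact interaction between the entropy bound and local randomness, including where the hypothesis $\eta<(10C_0+C_1)^{-8La}$ is really spent (absorbing the constants $C_0C_1^L$ when verifying $C_0(\dim\varphi)^L\eta^{1/a}\le\tfrac12$ on $\dim\varphi\le D$, and again to absorb the final constant factor~$3$); the paper's version is shorter because it outsources this bookkeeping. The arithmetic checks out: $\|\nu_\rho\|_2^2\le C_1\eta^{-1/(8La)}$ follows exactly as you compute, the choice $D=C_1\eta^{-5/(8La)}$ gives $\|\widehat{\nu_\rho}(\varphi)\|_{\op}\le\eta^{1/(4La)}$ above $D$, and the window $5/8<1$ together with $L\ge 1$ does keep the two constraints on $D$ compatible.
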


\begin{proof}
Notice that $C_0\eta^{1/(4a)}\leq 0.1$, hence by \cite[Theorem 2.6]{MMSG}, we have
\begin{align}
\notag \|\nu_{\rho}\ast f\|_2^2\leq &  
2 \|(\nu_{\rho})_{\eta^{1/a}}\ast f_{\eta^{1/a}}\|_2^2+\eta^{1/(2La)} \|\nu_{\rho}\|_2^2\h \|f\|_2^2
\\
\label{eq:coupling-contraction-mixing-inequality}
\leq & 
2 \eta^{1/(2a)} \|f\|_2+\eta^{1/(2La)} \|\nu_{\rho}\|_2^2\h \|f\|_2^2 & (f \text{ lives at scale } \eta).
\end{align}
On the hand, by \eqref{eq:coupling-target-lower-bound-entropy}, we obtain 
\begin{align}
\notag 
\log \|\nu_\rho\|_2^2 \leq & \log(1/|1_{\rho}|)-d_0\log(1/\rho)+\frac{1}{8La\log_\eta\rho}\log(1/\rho)
\\
\notag \leq & \log C_1+\frac{1}{8La}\log(1/\eta) &\text{(because of DC)}
\\
\label{eq:coupling-lower-bound-entropy-upper-bound-l2-norm} 
\leq & \frac{1}{4La}\log(1/\eta) &(\text{as } \eta<(10C_0+C_1)^{-8La}) 
\end{align}
By \eqref{eq:coupling-contraction-mixing-inequality} and \eqref{eq:coupling-lower-bound-entropy-upper-bound-l2-norm}, we deduce
\[
\|\nu_\rho\ast f\|_2^2\leq (2\eta^{1/(2a)}+\eta^{1/(4La)})\|f\|^2_2\leq \eta^{1/(8La)} \|f\|_2^2,
\]
and the claim follows.
\end{proof}

Our general strategy for the proofs of the main  propositions is as follows: starting with the initial entropy provided by Lemma~\ref{lem:coupling-entropy-lower-bound}, if we can show that each time after doubling the number of steps in the random walk we can gain $\gamma_0 \log(1/\eta)$ additional R\'{e}nyi entropy at scale $\eta$, then in $O(1)$-steps we reach to the desired lower bound for the R\'{e}nyi entropy that is given in Lemma~\ref{lem:lower-bound-entropy-gives-contraction}. 

The following lemma, which follows from \cite[Theorem 2.12]{MMSG}, is an important tool in carrying out the above strategy. Roughly speaking, it states that the failure to gain R\'{e}nyi entropy can happen only because of algebraic obstructions. 

\begin{lem}\label{lem:coupling-BG}
Suppose $G$ satisfies ${\rm DC}(d_0,C_1)$ and $X,X'$ are independent and identically distributed random variables with values in $G$. Then for every positive number $\gamma_0$, either 
\be\label{eq:entropy-gain-BG}
H_2(XX';\eta)\geq H_2(X;\eta)+\gamma_0 \log(1/\eta)
\ee
or there are $H\subseteq G$ and $x,y\in G$ such that 
\begin{enumerate}
\item (Approximate structure) $H$ is $R(1/\eta)^{R\gamma_0}$-approximate subgroup.
\item (Metric entropy) $|h(H;\eta)-H_2(X;\eta)|\leq R\gamma_0 \log(1/\eta)$.
\item (Almost equidistribution) Let $Z$ be a random variable with the uniform distribution over $1_{3\eta}$ independent of $X$. Then 
\[
\bbp(XZ\in (xH)_{\eta})\geq \eta^{R\gamma_0} \text{ and }
\bbp(XZ\in (Hy)_{\eta})\geq \eta^{R\gamma_0}.
\] 
Moreover,
\[
|\{h\in H_\eta|\h \bbp(X\in (xh)_{3\eta})\geq (C_12^{d_0})^{-R} \eta^{10\gamma_0} 
2^{-H_2(X;\mu)}\}|\geq \eta^{R\gamma_0} |H_\eta|
\]
where $R$ is a universal fixed number.
\end{enumerate}
\end{lem}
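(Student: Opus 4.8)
\textbf{Proof strategy for Lemma~\ref{lem:coupling-BG}.} The plan is to deduce this statement essentially verbatim from the multi-scale Bourgain--Gamburd result \cite[Theorem 2.12]{MMSG}, which is cited in the outline as the tool that converts failure of R\'enyi entropy growth into an algebraic obstruction. First I would set up the hypotheses of that theorem: $G$ satisfies the dimension condition ${\rm DC}(d_0,C_1)$, and $X,X'$ are i.i.d.\ $G$-valued random variables, so $XX'$ has law $\mu\ast\mu$ where $\mu$ is the law of $X$. The quantity $H_2(XX';\eta)-H_2(X;\eta)$ measures the entropy gained under one squaring step. The dichotomy we want is exactly the one furnished by the flattening/Bourgain--Gamburd machinery: either the R\'enyi entropy at scale $\eta$ increases by a definite fraction $\gamma_0\log(1/\eta)$, or $\mu_\eta$ (the $\eta$-smoothed measure) concentrates near a structured set, namely an approximate subgroup $H$.

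The main body of the argument is then bookkeeping to match the output of \cite[Theorem 2.12]{MMSG} to the three itemized conclusions. For (1), the approximate-subgroup parameter: the cited theorem produces a set that is a $K$-approximate subgroup with $K$ polynomial in $\eta^{-\gamma_0}$; absorbing the universal exponent into a single universal constant $R$ gives the stated $R(1/\eta)^{R\gamma_0}$ bound. For (2), the metric-entropy matching $|h(H;\eta)-H_2(X;\eta)|\le R\gamma_0\log(1/\eta)$: this is the statement that the structured set has covering number at scale $\eta$ comparable (up to the loss $\eta^{-R\gamma_0}$) to $2^{H_2(X;\eta)}$, which is precisely how the Bourgain--Gamburd step locates $H$ — the measure $\mu_\eta$ lives on roughly $2^{H_2}$ many $\eta$-balls and these are captured by $H_\eta$. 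For (3), the almost-equidistribution and pointwise mass statements: introducing the auxiliary uniform variable $Z$ on $1_{3\eta}$ is just the standard device for passing between the smoothed measure $\mu_{3\eta}$ and the random variable $X$; the inequalities $\bbp(XZ\in (xH)_\eta)\ge\eta^{R\gamma_0}$ and the lower bound on $|\{h\in H_\eta : \bbp(X\in(xh)_{3\eta})\ge (C_12^{d_0})^{-R}\eta^{10\gamma_0}2^{-H_2(X;\mu)}\}|\ge \eta^{R\gamma_0}|H_\eta|$ encode that a non-negligible portion of the mass of $X$ sits on the left (and right) translates of $H$, and that on a non-negligible subset of $H_\eta$ the local density of $X$ is within a polynomial factor of the average density $2^{-H_2}$ — a Chebyshev/pigeonhole consequence of the concentration. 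I would invoke ${\rm DC}(d_0,C_1)$ exactly where it is needed to convert between metric entropy and Haar measure of balls (the factor $C_12^{d_0}$ in (3) is the dimension-condition constant at consecutive dyadic scales).

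The one genuine subtlety — and the step I expect to need the most care — is ensuring that all the error exponents appearing in \cite[Theorem 2.12]{MMSG} can be bundled into a \emph{single} universal constant $R$ that is independent of $\gamma_0$, $\eta$, $C_0$, $C_1$, and the dimension. The cited theorem is stated with its own family of constants, and one must check (by tracking its proof, or by a suitable normalization) that they are all bounded by an absolute power; this is the content of the phrase ``$R$ is a universal fixed number'' and is the only place where one cannot simply quote. Once that uniformity is in hand, the proof is a direct translation: apply \cite[Theorem 2.12]{MMSG} to $\mu$ at scale $\eta$ with growth threshold $\gamma_0$, read off the first alternative as \eqref{eq:entropy-gain-BG}, and in the second alternative name the resulting approximate subgroup $H$ and translates $x,y$, then unwind the definitions of smoothed measures and the variable $Z$ to obtain items (1)--(3).
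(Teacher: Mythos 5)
Your proposal is correct and takes the same route the paper does: the paper's entire proof of Lemma~\ref{lem:coupling-BG} is the single sentence ``This is an immediate corollary of \cite[Theorem 2.12]{MMSG},'' and your argument is exactly that invocation, fleshed out with the (accurate) bookkeeping for how the three itemized conclusions and the universal constant $R$ fall out of the cited theorem.
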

\begin{proof}
This is an immediate corollary of \cite[Theorem 2.12]{MMSG}.
\end{proof}

\subsection{Approximate subgroups and approximate homomorphisms.}
In view of Lemma~\ref{lem:coupling-BG}, we focus on the understanding of almost subgroups $H$ of $G_1\times G_2$ which satisfy properties given in Lemma~\ref{lem:coupling-BG}.
Indeed, we will interpret this approximate structure, as a local approximate group homomorphism with {\em large} image from a {\em large} ball in $G_1$ to $G_2$. Then we apply Theorem~\ref{thm:approximate-hom} to complete the proof.

Let us begin with an application of a product result proved in \cite[Theorem 2.8]{MMSG}. 

\begin{lem}\label{lem:coupling-plus-almost-equidistribution-imlpies-large-open-projections}
Suppose $G_1$ and $G_2$ are $L$-locally random with coefficients $C_{01}$ and $C_{02}$, respectively. Suppose $G_i$ satisfies ${\rm DC}(d_{0i},C_{1i})$ for $i=1,2$. Then for every $0<\vare<1$, there is $\gamma:=\gamma(\vare,L,d_{01},d_{02})\ll_{L,d_{01}d_{02}}\vare$ such that for every $\eta$ that satisfies $\eta^\vare \le  \constc\label{c:lem coupl}^{-1}$, where $\ref{c:lem coupl}$ is a constant as in \eqref{eq:const-kazaa},
 the following holds. Suppose $X:=(X_1,X_2)$ is a random variable with values in $G:=G_1\times G_2$ such that $X_i$ is uniformly distributed in $G_i$. Let $Z$ be a random variable independent of $X$ and with uniform distribution over $1_{3\eta}\subseteq G_1\times G_2$ with respect to the maximum metric. Suppose $H\subseteq G$, $x\in G$, and $\bbp(XZ\in (xH)_\eta)\geq \eta^{R\gamma}$ where $R$ is a universal fixed number. Then 
\[
\textstyle
\pr_i(H_\eta H_\eta H_\eta^{-1} H_\eta^{-1})\supseteq 1^{(i)}_{\eta^\vare}
\]
for $i=1,2$ where $\pr_i:G\rightarrow G_i$ is the projection to the $i$-th component.
\end{lem}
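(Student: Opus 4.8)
The statement we must establish is Lemma~\ref{lem:coupling-plus-almost-equidistribution-imlpies-large-open-projections}: starting from the hypothesis that a translate of an $\eta$-approximate thickening of $H$ carries at least $\eta^{R\gamma}$ of the mass of $XZ$, we want to conclude that each coordinate projection of the four-fold product set $H_\eta H_\eta H_\eta^{-1}H_\eta^{-1}$ contains a ball of radius $\eta^{\vare}$. The strategy is to transfer the measure-theoretic largeness of $(xH)_\eta$ into a \emph{metric entropy} (covering-number) largeness of $H_\eta$ in $G = G_1\times G_2$, then run a projection argument coordinate by coordinate, and finally feed the resulting large subsets of $G_i$ into the product theorem \cite[Theorem 2.8]{MMSG} to manufacture an open ball inside the product set.

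\textbf{Step 1: from measure to metric entropy of $H$.} First I would unwind $\bbp(XZ\in (xH)_\eta)\geq \eta^{R\gamma}$. Since $X$ has uniform marginals on $G_1$ and $G_2$, $X$ itself is a coupling of $m_{G_1}$ and $m_{G_2}$, so $\|\mu_{3\eta}^X\|_\infty\le\min(1/|1^{(1)}_{3\eta}|,1/|1^{(2)}_{3\eta}|)$ by the computation in Lemma~\ref{lem:coupling-entropy-lower-bound}; convolving with $Z$ only smooths, so $XZ$ has $L^\infty$-density at scale $\eta$ bounded by a constant times $\min_i 1/|1^{(i)}_\eta|\le \mathsf D_\bullet\,\eta^{-\min(d_{01},d_{02})}$ using ${\rm DC}(d_{0i},C_{1i})$. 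Combining the density bound with the mass lower bound forces $|(xH)_\eta|\gg \eta^{R\gamma}\cdot\eta^{\min(d_{01},d_{02})}$, equivalently — after dividing by $|1^{(1)}_\eta||1^{(2)}_\eta|$ and using ${\rm DC}$ again — a lower bound $\ncal_\eta(H)\ge \eta^{-(d_{01}+d_{02})+O(R\gamma)}$ on the $\eta$-covering number of $H$. (Here one should be a little careful about the distinction between $H$ and its $\eta$-thickening, and between $(xH)_\eta$ and $x(H_\eta)$ in a bi-invariant metric; these are harmless at the level of an extra $O(1)$ factor in the covering number.)

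\textbf{Step 2: projecting to the coordinates.} Write $H^{[4]} := H_\eta H_\eta H_\eta^{-1}H_\eta^{-1}$. For each $i$, let $A_i:=\pr_i(H_\eta)\subseteq G_i$ and note $\pr_i(H^{[4]})\supseteq A_iA_iA_i^{-1}A_i^{-1}$. From Step 1, $A_1\times A_2\supseteq H_\eta$ has $\eta$-covering number $\ge \eta^{-(d_{01}+d_{02})+O(R\gamma)}$, and since $\ncal_\eta(A_1\times A_2)\le \ncal_\eta(A_1)\ncal_\eta(A_2)$ with $\ncal_\eta(A_i)\le \mathsf D_\bullet\,\eta^{-d_{0i}}$, both factors must be almost maximal: $\ncal_\eta(A_i)\ge \eta^{-d_{0i}+O(R\gamma)}$ for $i=1,2$. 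Thus each $A_i$ is a subset of $G_i$ whose metric entropy at scale $\eta$ differs from the maximal $d_{0i}\log(1/\eta)$ by only $O(R\gamma)\log(1/\eta)$; this is exactly the hypothesis needed to invoke a product/growth statement in a single locally random simple group.

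\textbf{Step 3: invoke the product theorem to produce a ball.} Apply \cite[Theorem 2.8]{MMSG} inside $G_i$ to the set $A_i$ (with the appropriate smoothing at scale $3\eta$, matching the role of $Z$). Because $A_i$ has near-maximal metric entropy and $G_i$ is $L$-locally random satisfying ${\rm DC}(d_{0i},C_{1i})$, a bounded number of product steps — here the specific four-fold pattern $A_iA_iA_i^{-1}A_i^{-1}$ is what the cited theorem is set up to handle — yields that $(A_iA_iA_i^{-1}A_i^{-1})_\eta$ (equivalently $\pr_i(H^{[4]})$) contains $1^{(i)}_{\eta^\vare}$, provided $\gamma$ is chosen $\ll_{L,d_{01},d_{02}}\vare$ so that the entropy deficit $O(R\gamma)\log(1/\eta)$ is small compared to $\vare\log(1/\eta)$, and provided $\eta^\vare\le \ref{c:lem coupl}^{-1}$ to absorb all the $\mathsf D_\bullet$-type constants. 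Taking $\gamma:=\gamma(\vare,L,d_{01},d_{02})$ to be a small enough constant multiple of $\vare$ (with the implied constant depending only on $L,d_{01},d_{02}$, as required) closes the argument.

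\textbf{Main obstacle.} The delicate point is Step 2–3 interface: I need to convert ``$A_1\times A_2$ has almost-maximal entropy in the product'' into ``each $A_i$ has almost-maximal entropy in $G_i$'' cleanly, and then make sure the cited product theorem \cite[Theorem 2.8]{MMSG} applies in the precise regime (scales $\eta$, $3\eta$, the specific product word, and the exponent bookkeeping $\eta^{R\gamma}$ versus $\eta^\vare$). The entropy subadditivity for products is elementary, but tracking the constants so that a single choice of $\gamma\ll\vare$ works uniformly — and verifying that the hypotheses of \cite[Theorem 2.8]{MMSG} (approximate-subgroup structure of $H$, which transfers to $A_i$) are genuinely met after projection — is where the real care is needed. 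Everything else is a routine manipulation of covering numbers and the ${\rm DC}$ bounds.
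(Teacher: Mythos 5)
Your Step 1 contains an arithmetic error that is fatal to the whole approach. Starting from the density bound $\|\mu^X_{3\eta}\|_\infty \le \min(1/|1^{(1)}_{3\eta}|,1/|1^{(2)}_{3\eta}|)$ and the mass hypothesis $\bbp(XZ\in (xH)_\eta)\ge \eta^{R\gamma}$, what one actually gets is
\[
|(xH)_\eta|\;\ge\; \eta^{R\gamma}\cdot \max\bigl(|1^{(1)}_{3\eta}|,|1^{(2)}_{3\eta}|\bigr)\;\gg\; \eta^{R\gamma+\min(d_{01},d_{02})},
\]
and dividing by $|1_\eta|\asymp \eta^{d_{01}+d_{02}}$ gives
\[
\ncal_\eta(H)\;\gg\; \eta^{\,R\gamma-\max(d_{01},d_{02})},
\]
\emph{not} $\eta^{-(d_{01}+d_{02})+O(R\gamma)}$ as you claim. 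The two exponents differ by $\min(d_{01},d_{02})$, which is a fixed positive number (at least~$3$ in the setting of the paper) and cannot be absorbed into the $O(R\gamma)$ term no matter how small $\gamma$ is chosen.

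With the correct bound, your Step 2 collapses. From $\ncal_\eta(A_1)\ncal_\eta(A_2)\ge \ncal_\eta(H)\gg \eta^{-\max(d_{01},d_{02})+R\gamma}$ and $\ncal_\eta(A_i)\le \mathsf D\,\eta^{-d_{0i}}$, assuming say $d_{01}=\max(d_{01},d_{02})$, you can only conclude $\ncal_\eta(A_2)\gg \eta^{O(R\gamma)}$, which is essentially trivial (it does not preclude $A_2$ being a single $\eta$-ball). The pigeonhole argument needs the total covering number to be within $\eta^{O(\gamma)}$ of the maximal $\eta^{-(d_{01}+d_{02})}$, which is simply not available from the $L^\infty$ bound on the coupling: a diagonal-type coupling has density as large as $\min_i|1^{(i)}_{3\eta}|^{-1}$, so you genuinely lose a factor of $\eta^{\min(d_{01},d_{02})}$. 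In short, going through the covering number of $H$ in the product group $G_1\times G_2$ loses too much information; near-maximal entropy of $H$ in $G$ does not follow from the hypotheses, and even when it does, it does not force both coordinate projections to be near-maximal.

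The paper's proof avoids the product entirely by exploiting the coupling structure directly and pointwise: since $\pr_i(X)=X_i$ is Haar on $G_i$, so is $X_iZ_i=\pr_i(XZ)$, and $\{XZ\in(xH)_\eta\}\subseteq\{X_iZ_i\in \pr_i((xH)_\eta)\}$ immediately gives $|\pr_i(H)_\eta|\ge \bbp(XZ\in(xH)_\eta)\ge\eta^{R\gamma}$ with \emph{no} dimension-dependent loss, from which $h(\pr_i(H);\eta)\ge(1-2R\gamma/d_{0i})h(G_i;\eta)$ follows by the ${\rm DC}$ condition, and \cite[Theorem 2.8]{MMSG} then applies cleanly. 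This monotonicity of probability under coordinate projection is the key observation your argument is missing; I would recommend rewriting Step 1 around it, which makes Step 2 unnecessary and saves the $\eta^{\min(d_{01},d_{02})}$ you cannot afford to lose.
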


\begin{proof}
Let $\gamma$ be a constant which will be determined in the proof. Notice that $X_iZ_i$ is uniformly distributed in $G_i$ where $Z_i:=\pr_i(Z)$. Therefore,
\be\label{eq:lower-bound-volume-projection}
|\pr_i(H)_{\eta}|=|\pr_i((xH)_{\eta})|=\bbp(X_iZ_i\in \pr_i((xH)_{\eta}))\geq \bbp(XZ\in (xH)_\eta)\geq \eta^{R\gamma}.
\ee
From \eqref{eq:lower-bound-volume-projection}, we deduce that 
\begin{align}
\notag h(\pr_i(H);\eta) \geq & \log(1/|1^{(i)}_\eta|)-R\gamma \log(1/\eta)-2\log C_{1i}\\
\notag \geq & d_{0i}\Big(1-\frac{R\gamma}{d_{0i}} \Big)\log(1/\eta)-3\log C_{1i}\\
\notag \geq & \Big(1-\frac{R\gamma}{d_{0i}} \Big) h(G_i;\eta)-4\log C_{1i}\\
\label{eq:lower-bound-metric-entropy-projections}\geq & \Big(1-\frac{2R\gamma}{d_{0i}} \Big) h(G_i;\eta),
\end{align}
where the last inequality holds as long as 
\begin{equation}\label{eq:etabetavaanerg}
\eta^{R \gamma}  \le (C_{11} C_{12})^{-5}
\end{equation}
 By \cite[Theorem 2.8]{MMSG}, there is $\delta:=\delta(\vare,L,d_{0i})\ll_{L,d_{0i}}\vare$ such that the following holds: if $h(A;\eta)\geq (1-\delta) h(G_i;\eta)$ and 
 \begin{equation}\label{eq:etabetavaaneepsilon}
\eta^\vare \le (2 C_{01}C_{02}C_{11}C_{12})^{- \overline{R} }
\end{equation}
  (where
 $ \overline{R} $ depends polynomially on $L$ and $d_{ 0i}$, $ i=1,2$)
, then $A_\eta A_\eta A_\eta^{-1} A_\eta^{-1}\supseteq 1^{(i)}_{\eta^\vare}$. 
We claim 
\[
\gamma=\frac{1}{2R}\min(\delta(\vare,L,d_{01}),\delta(\vare,L,d_{02}))
\]
satisfies the claim in the lemma so long as $\eta$ is small enough. 

Indeed, the above definition implies
\begin{equation}\label{eq:gamma-epsilon-linear}
\gamma \ll_{L,d_{0i}}\vare. 
\end{equation}
Moreover, in view of \eqref{eq:gamma-epsilon-linear}, there
exists $R'$ depending on $L, d_{0i}$ such that 
if $ \eta^{\vare}  \le (2 C_{01}C_{02}C_{11}C_{12} )^{- R' }$, then 
\eqref{eq:etabetavaanerg} and
\eqref{eq:etabetavaaneepsilon} both hold. Hence, as it was discussed above, the lemma follows by~\eqref{eq:lower-bound-metric-entropy-projections} and \cite[Theorem 2.8]{MMSG}.
\end{proof}

For a symmetric subset $H$ of $G_1\times G_2$ containing $(1^{(1)},1^{(2)})$ and $\eta>0$, set
\be\label{eq:largest-elements-in-factors}
\textstyle
\alpha_1(H;\eta):=\inf\{\alpha\in [0,1]|\h \exists g_1\in G_1, d(g_1,1^{(1)})\geq \eta^{\alpha},\h (g_1,1^{(2)})\in \prod_3 H_\eta\},
\ee
and similarly
\[
\textstyle
\alpha_2(H;\eta):=\inf\{\alpha\in [0,1]|\h \exists g_2\in G_2, d(g_2,1^{(2)})\geq \eta^{\alpha},\h (1^{(1)},g_2)\in \prod_3 H_\eta\};
\]
recall that $d$ always denotes our fixed bi-invariant metric on the underlying compact group. 

\begin{lem}\label{lem:constructing-approx-hom}
Suppose $G_1$ and $G_2$ are two compact groups, $\eta$ is a positive number, and $H\subseteq G_1\times G_2$ is symmetric containing the identity. Then there is  $f:\pr_1(H_\eta)\rightarrow G_2$ which is a $\pr_1(H_\eta)$-partial, $\eta^{\alpha_2(H;\eta)}$-approximate homomorphism; that means 
\begin{enumerate}
\item $f(1^{(1)})=1^{(2)}$,
\item if $g_1,g_1'\in \pr_1(H_\eta)$ and $g_1g_1'\in\pr_1(H_\eta)$, we have $d(f(g_1)f(g_1'),f(g_1g_1'))\leq \eta^{\alpha_2(H;\eta)}$, and 
\item for every $g_1\in \pr_1(H_\eta)$, $d(f(g_1^{-1}),f(g_1)^{-1})\leq \eta^{\alpha_2(H;\eta)}$.	
\end{enumerate}
Furthermore $\pr_2(H_\eta)\subseteq (\im f)_{\eta^{\alpha_2(H;\eta)}}$.
\end{lem}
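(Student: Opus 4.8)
The plan is to build $f$ by choosing, for each $g_1\in\pr_1(H_\eta)$, a pre-image in $H_\eta$ and taking its second coordinate. Concretely, for every $g_1\in\pr_1(H_\eta)$ fix some $g_2=g_2(g_1)\in G_2$ with $(g_1,g_2)\in H_\eta$, and set $f(g_1):=g_2$; we normalize the choice at the identity so that $f(1^{(1)})=1^{(2)}$ (this is possible since $H$ is symmetric and contains the identity, so $(1^{(1)},1^{(2)})\in H\subseteq H_\eta$). This immediately gives property~(1) and the last assertion $\pr_2(H_\eta)\subseteq(\im f)_{\eta^{\alpha_2(H;\eta)}}$: indeed, if $h_2\in\pr_2(H_\eta)$, pick $h_1$ with $(h_1,h_2)\in H_\eta$; then both $(h_1,h_2)$ and $(h_1,f(h_1))$ lie in $H_\eta$, so $(1^{(1)},h_2 f(h_1)^{-1})\in H_\eta H_\eta^{-1}\subseteq \prod_3 H_\eta$ (using that $(h_1,f(h_1))^{-1}=(h_1^{-1},f(h_1)^{-1})$ and symmetry), hence $d(h_2 f(h_1)^{-1},1^{(2)})<\eta^{\alpha_2(H;\eta)}$ by the definition of $\alpha_2(H;\eta)$ in~\eqref{eq:largest-elements-in-factors}, i.e. $h_2\in(f(h_1))_{\eta^{\alpha_2(H;\eta)}}$.

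For properties~(2) and~(3), the key mechanism is the same: whenever two elements of $G_2$ pair with the \emph{same} element of $G_1$ to land in $\prod_3 H_\eta$, their ``ratio'' in the second coordinate is $\eta^{\alpha_2(H;\eta)}$-close to $1^{(2)}$. For~(2), suppose $g_1,g_1',g_1g_1'\in\pr_1(H_\eta)$. Then $(g_1,f(g_1)),(g_1',f(g_1'))\in H_\eta$, so their product $(g_1g_1',f(g_1)f(g_1'))$ lies in $H_\eta H_\eta$, while $(g_1g_1',f(g_1g_1'))\in H_\eta$; multiplying the second by the inverse of the first yields $(1^{(1)},f(g_1g_1')(f(g_1)f(g_1'))^{-1})\in H_\eta H_\eta^{-1}\subseteq\prod_3 H_\eta$, whence $d(f(g_1g_1'),f(g_1)f(g_1'))<\eta^{\alpha_2(H;\eta)}$ using bi-invariance of the metric on $G_2$. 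Property~(3) is analogous: $(g_1,f(g_1))\in H_\eta$ gives $(g_1^{-1},f(g_1)^{-1})\in H_\eta^{-1}=H_\eta$ by symmetry, and $(g_1^{-1},f(g_1^{-1}))\in H_\eta$, so $(1^{(1)},f(g_1^{-1})f(g_1))\in H_\eta H_\eta^{-1}\subseteq\prod_3 H_\eta$, giving the desired bound.

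I do not expect a serious obstacle here; the lemma is essentially a bookkeeping argument, and the only subtlety is to be careful that all the auxiliary products stay inside $\prod_3 H_\eta:=H_\eta H_\eta H_\eta^{-1} H_\eta^{-1}$ (or a sub-product thereof) so that the defining infimum $\alpha_2(H;\eta)$ genuinely applies, and to invoke bi-invariance of $d$ at each step when passing from $d(ab,ac)$ or $d(ba,ca)$ to $d(b,c)$. One minor point to address explicitly is that $\pr_1(H_\eta)$ is symmetric and contains $1^{(1)}$, which follows from $H$ being symmetric and containing the identity together with $\pr_1$ being a group homomorphism; this makes the hypotheses ``$g_1^{-1}\in\pr_1(H_\eta)$'' in~(3) automatic. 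The choice function $g_1\mapsto g_2(g_1)$ requires no measurability — the lemma makes no such claim — so a plain application of the axiom of choice suffices.
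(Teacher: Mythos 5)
Your proof is correct and follows exactly the same strategy as the paper: choose $f(g_1)$ to be any second coordinate pairing with $g_1$ in $H_\eta$, normalize at the identity, and read off each estimate from the definition of $\alpha_2(H;\eta)$ after placing the relevant ``error'' element $(1^{(1)},\cdot)$ inside a short product of $H_\eta$ with itself. The only slips are cosmetic: you define $\prod_3 H_\eta$ as a four-fold product $H_\eta H_\eta H_\eta^{-1}H_\eta^{-1}$, and in item~(2) the element actually lies in $H_\eta H_\eta^{-1}H_\eta^{-1}$ rather than $H_\eta H_\eta^{-1}$; since $H_\eta$ is symmetric and contains the identity this changes nothing, and your final containments in $\prod_3 H_\eta$ are the ones that matter.
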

\begin{proof}
	For every $g_1\in \pr_1(H_\eta)$, choose $f(g_1)\in G_2$ such that $(g_1,f(g_1))\in H_\eta$. As $(1^{(1)},1^{(2)})\in H$, we can and will set $f(1^{(1)})=1^{(2)}$. For every $g_1,g_1'\in \pr_1(H_{\eta})$ with $g_1g_1'\in \pr_1(H_{\eta})$, 
	\[
	\textstyle
	(1^{(1)},f(g_1)f(g_1')f(g_1g_1')^{-1})\in \prod_3 H_\eta
	\quad \text{and}\quad
	(1^{(1)},f(g_1)f(g_1^{-1}))\in \prod_2 H_\eta\subseteq \prod_3 H_\eta.
	\]
	Hence $d(f(g_1)f(g_1'),f(g_1g_1'))\leq \eta^{\alpha_2(H;\eta)}$ and $d(f(g_1^{-1}),f(g_1)^{-1})\leq \eta^{\alpha_2(H;\eta)}$.  
	
	For every $g_2\in \pr_2(H)_\eta$, there is $g_1\in \pr_1(H_\eta)$ such that $(g_1,g_2)\in H_\eta$. Hence $(1^{(1)},g_2f(g_1)^{-1})$ is in $\prod_2 H_{\eta}$. Therefore $d(g_2,f(g_1))\leq \eta^{\alpha_2(H;\eta)}$. This completes the proof.
\end{proof}
By Lemma~\ref{lem:constructing-approx-hom}, we get a good approximate homomorphism if $\alpha_i(H;\eta)$ is large for some subset $H$ of $G_1\times G_2$. To get such a bound, inspired by Proposition~\ref{lem:coupling-BG}, we consider an $\eta^{R\gamma}$-approximate subgroup of $G_1\times G_2$ with an upper bound for its metric entropy, and study $\alpha_i(\prod_k H;\eta)$ for a fixed positive integer $k$ that will be determined later and depends on the dimensions of $G_i$'s. 

Let us recall the dimension condition of $G_i$'s. For every positive number $\eta$ we have 
\be\label{eq:DC}
C_{1i}^{-1} \eta^{d_{0i}} \leq |1^{(i)}_\eta|\leq C_{1i} \eta^{d_{0i}}
\ee
where $C_{1i}$  and $d_{0i}$ are positive numbers.  

We also recall the following two facts from~\cite[\S7]{MMSG}.
By \cite[Lemma 7.1]{MMSG}, for every non-empty subset $A$ of $G=G_1\times G_2$, we have 
\be\label{eq:metric-entropy}
\Big | h(A;\eta)-\log\Big(\frac{|A_{\eta}|}{|1_\eta|} \Big)\Big|\leq \log(\ref{c: bbbb}),
\ee
moreover, the same is true for the subsets of $G_i$'s. By \cite[Corollary 7.2]{MMSG}, for every non-empty subset $A$ of $G=G_1\times G_2$ and positive numbers $\eta$ and $a$, we have 
\be\label{eq:scale-metric-entropy}
|h(A;\eta)-h(A;a\eta)|\leq\log(\ref{c: bbbbbbb}).
\ee
where $\constc\label{c: bbbb}$ and $\constc\label{c: bbbbbbb}$ are constants as in \eqref{eq:const-kazaa}.

The following is an upgraded version of Lemma~\ref{lem:constructing-approx-hom}, and will be used the sequel.  

\begin{lem}\label{lem:constructing-approx-hom-nbhd}
Let $G_1$ and $G_2$ be two compact groups, $R, \gamma, \eta>0$, and let $H\subseteq G_1\times G_2$ be an $\eta^{-R\gamma}$-approximate subgroup. Assume further that for $i=1,2$ we have 
\be\label{eq: projection of 4fold prod is large}
1^{(i)}_{\eta^{C_2 \gamma}}\subseteq \pr_i(\textstyle\prod_4 H_\eta).
\ee
Then there exists an $\eta^{C_2\gamma}$-partial $\eta^{\alpha_2}$-approximate homomorphism 
$f:1^{(1)}_{\eta^{C_2\gamma}}\rightarrow G_2$ satisfying that 
\[
1^{(2)}_{\eta^{C_2'\gamma}}\subseteq (\im f)_{10\eta^{\alpha_2}}
\]
where $\alpha_2:=\alpha_2(\prod_8 H;\eta)$ is as in \eqref{eq:largest-elements-in-factors} and $C_2, C_2'$ depend only on $L,d_{01}$, 
and $d_{02}$. 
\end{lem}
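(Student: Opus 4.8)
The goal is to upgrade Lemma~\ref{lem:constructing-approx-hom}, which produces a partial approximate homomorphism defined on $\pr_1(H_\eta)$, to one defined on a genuine ball $1^{(1)}_{\eta^{C_2\gamma}}$ in $G_1$ and with \emph{large} image in $G_2$. The first step is to run Lemma~\ref{lem:constructing-approx-hom} with $\prod_8 H$ in place of $H$: this is again an approximate subgroup (with a slightly worse constant in the exponent, still of the form $\eta^{-\mathsf D\gamma}$ by the approximate-subgroup axioms), and it yields a function $f_0:\pr_1((\textstyle\prod_8 H)_\eta)\rightarrow G_2$ which is a partial $\eta^{\alpha_2}$-approximate homomorphism with $\alpha_2=\alpha_2(\prod_8 H;\eta)$, together with $\pr_2((\prod_8 H)_\eta)\subseteq(\im f_0)_{\eta^{\alpha_2}}$. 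By hypothesis~\eqref{eq: projection of 4fold prod is large}, $1^{(1)}_{\eta^{C_2\gamma}}\subseteq\pr_1(\prod_4 H_\eta)\subseteq\pr_1((\prod_8 H)_\eta)$, so the restriction $f:=f_0|_{1^{(1)}_{\eta^{C_2\gamma}}}$ is defined on the desired ball; since we only restricted the domain, conditions (1)--(3) of Definition~\ref{def: approxmiate local hom} are inherited, and thus $f$ is an $\eta^{C_2\gamma}$-partial, $\eta^{\alpha_2}$-approximate homomorphism.

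\textbf{The large-image condition.} It remains to show $1^{(2)}_{\eta^{C_2'\gamma}}\subseteq(\im f)_{10\eta^{\alpha_2}}$ for a suitable $C_2'$. Here the point is that $f$ and $f_0$ have the \emph{same} image up to the choices made: more precisely, for any $g_1\in\pr_1((\prod_8 H)_\eta)$ we can use that $\prod_8 H\supseteq\prod_4 H$ is not much larger than $H$ (the approximate-subgroup property again) to write $g_1$, up to scale $\eta$, as a bounded product of elements of $\pr_1(\prod_4 H_\eta)\subseteq 1^{(1)}_{\eta^{C_2\gamma}}$; applying the approximate-homomorphism property of $f$ finitely many times, $f_0(g_1)$ is within $O(\eta^{\alpha_2})$ of a corresponding product of values of $f$, hence within $O(\eta^{\alpha_2})$ of $\im f$ (note $\im f$ is symmetric up to $\eta^{\alpha_2}$ and almost closed under products). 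Combining this with the inclusion $\pr_2(\prod_8 H_\eta)\subseteq(\im f_0)_{\eta^{\alpha_2}}$ and then with the hypothesis $1^{(2)}_{\eta^{C_2'\gamma}}\subseteq\pr_2(\prod_8 H_\eta)$ — which follows from~\eqref{eq: projection of 4fold prod is large} applied to the index $i=2$, since $\prod_8 H\supseteq\prod_4 H$ — we get $1^{(2)}_{\eta^{C_2'\gamma}}\subseteq(\im f)_{10\eta^{\alpha_2}}$, absorbing the bounded number of $\eta^{\alpha_2}$-errors into the constant $10$ (which is why the statement allows $10\eta^{\alpha_2}$ rather than $\eta^{\alpha_2}$). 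All the constants $C_2, C_2'$ introduced in this bounded-product bookkeeping depend only on $L, d_{01}, d_{02}$ through the approximate-subgroup constant and the metric-entropy comparison estimates~\eqref{eq:metric-entropy}--\eqref{eq:scale-metric-entropy}.

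\textbf{Main obstacle.} The delicate part is the bookkeeping that lets one pass from the abstract domain $\pr_1((\prod_8 H)_\eta)$ to the metric ball $1^{(1)}_{\eta^{C_2\gamma}}$ while controlling the accumulated error: one must argue that elements of $\pr_1$ of the higher products $\prod_k H_\eta$ can be expressed as bounded-length words in elements lying in the ball, so that the number of times the $\eta^{\alpha_2}$-defect of $f$ is invoked is \emph{bounded independently of $\eta$}. This is where~\eqref{eq: projection of 4fold prod is large} and the approximate-subgroup hypothesis enter crucially — they guarantee that $\prod_8 H_\eta$ is covered by a bounded number of translates of $\prod_4 H_\eta$, and hence, after projecting, that its $\pr_1$-image is a bounded union of translates of a set contained in $1^{(1)}_{\eta^{C_2\gamma}}$. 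Once the word lengths are bounded, the rest is a routine triangle-inequality estimate using bi-invariance of the metric and the three defining inequalities of a partial approximate homomorphism.
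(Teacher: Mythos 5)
The first part of your argument — restricting the partial approximate homomorphism from Lemma~\ref{lem:constructing-approx-hom} (applied with $\prod_8 H$) to the ball $1^{(1)}_{\eta^{C_2\gamma}}$, using~\eqref{eq: projection of 4fold prod is large} — is correct and coincides with the paper's first step. The large-image part, however, has a genuine gap and does not match the paper's argument.

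Your plan is to show $(\im f_0)\subseteq(\im f)_{O(\eta^{\alpha_2})}$ by writing each $g_1\in\pr_1((\prod_8 H)_\eta)$ as a bounded product of elements of $\pr_1(\prod_4 H_\eta)\subseteq 1^{(1)}_{\eta^{C_2\gamma}}$. There are three separate problems. First, the inclusion $\pr_1(\prod_4 H_\eta)\subseteq 1^{(1)}_{\eta^{C_2\gamma}}$ is not a hypothesis; hypothesis~\eqref{eq: projection of 4fold prod is large} is the \emph{reverse} inclusion $1^{(1)}_{\eta^{C_2\gamma}}\subseteq\pr_1(\prod_4 H_\eta)$, and nothing in the lemma forces $\pr_1(\prod_4 H_\eta)$ to sit inside a small metric ball. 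Second, the approximate-subgroup axiom gives that $\prod_8 H$ is covered by $\eta^{-O(\gamma)}$-many translates of $H$, \emph{not} boundedly many; and in any case a covering by cosets does not give each element a short word in small generators — ``covered by translates'' is a cardinality statement, not a bounded-generation statement, and no bounded-generation input is available for an arbitrary compact group $G_1$. Third, even granting a decomposition $g_1=h_1\cdots h_k$ with $h_i$ in the ball, the product $f(h_1)\cdots f(h_k)$ need not lie near $\im f$: to push it back into $(\im f)_{\eta^{\alpha_2}}$ via the approximate-homomorphism defect one needs the partial products $h_1\cdots h_j$ to stay in the domain ball $1^{(1)}_{\eta^{C_2\gamma}}$, which fails the moment $g_1$ is outside that ball.

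The paper proves the large-image claim by an entirely different route: a metric-entropy dichotomy. Setting $E_1=\bigl(f(1^{(1)}_{0.1\eta^{C_2\gamma}})\bigr)_{\eta^{\alpha_2}}$ and $E_2=\pr_2(\prod_4 H_\eta)$, it considers whether $h(E_1;\eta^{\alpha_2})$ is large or small relative to $h(G_2;\eta^{\alpha_2})$. If large, the product theorem \cite[Theorem~2.8]{MMSG} shows that the fourfold product of $(E_1)_{\eta^{\alpha_2}}$ already contains a ball $1^{(2)}_{\eta^{C'\gamma}}$, which transfers to $(\im f)_{10\eta^{\alpha_2}}$ since $\prod_4 1^{(1)}_{0.1\eta^{C_2\gamma}}\subseteq 1^{(1)}_{\eta^{C_2\gamma}}$. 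If small, one covers $\pr_1(\prod_4 H_{\eta\cdot\eta})$ by $\eta^{-O(\gamma)}$-many translates $g\cdot 1^{(1)}_{0.1\eta^{C_2\gamma}}$, notes that the approximate-homomorphism property forces each $f(g\cdot 1^{(1)}_{0.1\eta^{C_2\gamma}})$ to have the same small metric entropy as $E_1$ (up to a scale-$2$ shift), and concludes that $h(E_2;\eta^{\alpha_2})$ is then too small to contain the ball $1^{(2)}_{\eta^{C_2\gamma}}$ — a contradiction with hypothesis~\eqref{eq: projection of 4fold prod is large} for $i=2$. This dichotomy is the essential missing idea in your proposal.
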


\begin{proof}
Apply Lemma~\ref{lem:constructing-approx-hom} with $\prod_8 H_\eta$ (instead of $H$), and let $f$ be thus obtained. We may assume, without loss of generality, that $(g, f(g))\in H':=\prod_4 H_\eta$ for all $g\in \pr_1(H')$. Then since $\alpha_2(H'\cdot H'; \eta)\leq \alpha_2(H';\eta)$, Lemma~\ref{lem:constructing-approx-hom}, applied with $H'$, implies that 
\be\label{eq: f retricted to 4fold prod}
\pr_2(H')\subseteq \bigl(f\bigl(\pr_1(H'_\eta)\bigr)\bigr)_{\eta^{\alpha_2}}.
\ee

In view of~\eqref{eq: projection of 4fold prod is large}, we may restrict $f$ to $1^{(1)}_{\eta^{C_2\gamma}}$ and obtain a 
$\eta^{C_2\gamma}$-partial $\eta^{\alpha_2}$-approximate homomorphism $f:1^{(1)}_{\eta^{C_2\gamma}}\rightarrow G_2$. We now show that $f$ also satisfies the last claim in the lemma.  

To see this claim, let $C\geq1$ be a constant which will be explicated later. 
Let us put
\[
E_1=\bigl(f\bigl(1^{(1)}_{0.1\eta^{C_2\gamma}}\bigr)\bigr)_{\eta^{\alpha_2}} \quad \text{and} \quad E_2=\pr_2(H').
\]
Assume first that 
\be\label{eq: metric entropy of E1}
h(E_1; \eta^{\alpha_2})\geq  (1-C\gamma/\alpha_2)h(G_2;\eta^{\alpha_2}).
\ee
We want to apply~\cite[Theorem 2.8]{MMSG} with $\eta^{\alpha_2}$ and under the assumption~\eqref{eq: metric entropy of E1}. By ~\cite[Theorem 2.8]{MMSG}, there exists $C'=O_{L,d_{02}}(C)$ so that $\vare=C'\gamma/\alpha_2$ and $\delta=C\gamma/\alpha_2$ satisfy the conditions in that theorem. Thus~\eqref{eq: metric entropy of E1} and~\cite[Theorem 2.8]{MMSG} imply that 
\[
(E_1)_{\eta^{\alpha_2}}(E_1)_{\eta^{\alpha_2}}(E_1)_{\eta^{\alpha_2}}^{-1}(E_1)_{\eta^{\alpha_2}}^{-1}\supseteq 1^{(2)}_{\eta^{C'\gamma}}.
\]
This and the fact that $\prod_4 1^{(1)}_{0.1\eta^{C_2\gamma}}\subseteq 1^{(1)}_{\eta^{C_2\gamma}}$ imply that 
\be\label{eq: case one with C'}
1^{(2)}_{\eta^{C'\gamma}}\subset \bigl(f(1^{(1)}_{\eta^{C_2\gamma}}))_{10\eta^{\alpha_2}}.
\ee

Hence, we assume that~\eqref{eq: metric entropy of E1} fails. 
Since $1^{(2)}_{\eta^{C_2\gamma}}\subseteq E_2$, see~\eqref{eq: projection of 4fold prod is large}, we have   
\be\label{eq: metric entropy of E2}
h(E_2; \eta^{\alpha_2})\geq (1-C_2\gamma/\alpha_2) h(G_2;\eta^{\alpha_2}).
\ee
We now cover $\pr_1(H'_\eta)$ with $\leq (20)^{d_1}C_{11}^2 \eta^{-C_2\gamma d_1}$ many sets of the form $g1^{(1)}_{0.1\eta^{C_2\gamma}}$ where $g\in\pr_1(H'_\eta)$. Since 
$g1^{(1)}_{0.1\eta^{C_2\gamma}}\subseteq \pr_1(H'_\eta\cdot H'_\eta)$, we have that $f(gx)$ is defined for all $x\in 1^{(1)}_{0.1\eta^{C_2\gamma}}$. Recall from~\eqref{eq: f retricted to 4fold prod} that 
$E_2\subseteq \bigl(f(\pr_1(H'_\eta))\bigr)_{\eta^{\alpha_2}}$. Therefore, 
\be\label{eq: metric entropy E2 upp}
h(E_2; 2\eta^{\alpha_2})\leq \max_{g\in {\rm pr}_1(H'_{\eta})}\bigl\{h\bigl(f(g1^{(1)}_{0.1\eta^{C_2\gamma}}); 2\eta^{\alpha_2}\bigr) \bigr\}+ C_2\gamma d_1\log(1/\eta) + \log\bigl((20)^{d_1}C_{11}^2\bigr).
\ee

Note on the other hands that 
\[
d(f(gx), f(g)f(x))\leq \eta^{\alpha_2},\qquad\text{for every $x\in 1^{(1)}_{0.1\eta^{C_2\gamma}}$}.
\]
Thus the failure of~\eqref{eq: metric entropy of E1} implies that 
\[
h\bigl(f(g1^{(1)}_{0.1\eta^{C_2\gamma}}); 2\eta^{\alpha_2}\bigr)\leq (1-C\gamma/\alpha_2)h(G_2;\eta^{\alpha_2}), \quad\text{for all $g\in\pr_1(H'_\eta)$}.
\]
This and~\eqref{eq: metric entropy E2 upp} imply
\be\label{eq: metric entropy E2 upp'}
h(E_2; \eta^{\alpha_2})\leq (\alpha_2-C\gamma) d_2\log(1/\eta)+ C_2\gamma d_1\log(1/\eta) + \log(\ref{c: approx hom rest 1});
\ee
where we used
\[
h(E_2; \eta^{\alpha_2})=h(E_2; 2\eta^{\alpha_2})+ \log(\ref{c: approx hom rest 2})\quad\text{and}\quad h(G_2;\eta^{\alpha_2})=\alpha_2d_2\log(1/\eta)+\log(\ref{c: approx hom rest 3}),
\]  
for constants $\constc \label{c: approx hom rest 1}$, $\constc \label{c: approx hom rest 2}$, and $\constc \label{c: approx hom rest 3}$ as in \eqref{eq:const-kazaa}. 
Thus,~\eqref{eq: metric entropy E2 upp'} contradicts~\eqref{eq: metric entropy of E2} so long as $\eta$ is small enough to account for the additive constants and $C\geq 3C_2\max\{d_1/d_2,1\}$.
This and~\eqref{eq: case one with C'} finish the proof.   
\end{proof}

The following two lemmas concern $k$ fold product of approximate subgroups.  

\begin{lem}\label{lem:lower-bound-metric-entropy}
Suppose $G_1$ and $G_2$ are two compact groups, $R, \gamma, \eta>0$, and $H\subseteq G_1\times G_2$ is an $\eta^{-R\gamma}$-approximate subgroup. For a positive integer $k$, let
\[
\textstyle
H^{(1)}_{k}:=\pr_1\Big((G_1\times \{1^{(2)}\})\cap \prod_k H\Big).
\] 
Then 
\[
\textstyle
h(H_{k}^{(1)};\eta)+h(\pr_2(\prod_k H);\eta)\leq h(H;\eta)+2kR\gamma \log(1/\eta)+\log(\ref{c: metric entropy lemma}),
\]
where $\constc\label{c: metric entropy lemma}$  multiplicatively depends on $\ref{c: bbbb}$ and $\ref{c: bbbbbbb}$.
\end{lem}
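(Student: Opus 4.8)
The plan is to bound the metric entropies of $H^{(1)}_k$ and $\pr_2(\prod_k H)$ simultaneously by comparing the product set $\prod_k H$ to a fibered product over $H^{(1)}_k$ and $\pr_2(\prod_k H)$. The starting observation is that, at scale $\eta$, the approximate-subgroup hypothesis controls the covering number of $\prod_k H$: since $H$ is an $\eta^{-R\gamma}$-approximate subgroup, $\prod_k H$ is covered by $\eta^{-kR\gamma}$-many translates of $H_\eta$ (up to the universal multiplicative constants from~\eqref{eq:metric-entropy} and~\eqref{eq:scale-metric-entropy}), so $h(\prod_k H;\eta)\leq h(H;\eta)+kR\gamma\log(1/\eta)+\log(\kappa)$ for a constant $\kappa$ of the allowed form. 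This reduces the claim to the lower bound
\[
h\Big(\textstyle\prod_k H;\eta\Big)\geq h(H^{(1)}_{k};\eta)+h\Big(\pr_2\big(\textstyle\prod_k H\big);\eta\Big)-kR\gamma\log(1/\eta)-\log(\kappa'),
\]
again up to admissible constants.

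The key step is to exhibit many well-separated points inside $(\prod_k H)_\eta$ (or inside $\prod_{k'}H$ for a slightly larger $k'\leq 2k$, which only costs another $O(R\gamma\log(1/\eta))$ in entropy by~\eqref{eq:scale-metric-entropy}). First take a maximal $\eta$-separated set $\{b_1,\dots,b_s\}\subseteq \pr_2(\prod_k H)$, so $s=e^{h(\pr_2(\prod_k H);\eta)}$ up to constants; for each $b_j$ pick $a_j\in G_1$ with $(a_j,b_j)\in\prod_k H$. Next take a maximal $\eta$-separated set $\{c_1,\dots,c_t\}$ in $H^{(1)}_k$, so each $(c_i,1^{(2)})\in\prod_k H$, and $t=e^{h(H^{(1)}_k;\eta)}$ up to constants. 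Then the products $(c_i a_j,\, b_j)=(c_i,1^{(2)})\cdot(a_j,b_j)\in \prod_{2k}H$. Using bi-invariance of the metric on $G_2$, the second coordinates $b_j$ already separate pairs with different $j$ by $\eta$; and for fixed $j$, the first coordinates $c_i a_j$ are an $\eta$-separated set (right translation by $a_j$ is an isometry). Hence $\{(c_ia_j,b_j)\}_{i,j}$ is a family of $st$ points in $\prod_{2k}H$ that is $\eta$-separated in the max-metric on $G_1\times G_2$. This yields $h(\prod_{2k}H;\eta)\geq \log(st)-\log(\text{const})=h(H^{(1)}_k;\eta)+h(\pr_2(\prod_k H);\eta)-\log(\text{const})$, and then passing from $\prod_{2k}H$ back to $H$ using the approximate-subgroup covering bound and~\eqref{eq:scale-metric-entropy} produces the stated inequality with the linear-in-$k$ loss $2kR\gamma\log(1/\eta)$ and a constant $\ref{c: metric entropy lemma}$ depending multiplicatively on $\ref{c: bbbb}$ and $\ref{c: bbbbbbb}$.

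The main obstacle I anticipate is bookkeeping the constants correctly rather than any conceptual difficulty: one must move between "metric entropy" $h(A;\eta)$ and "normalized volume" $\log(|A_\eta|/|1_\eta|)$ via~\eqref{eq:metric-entropy}, absorb the scale changes (e.g. $\prod_{2k}H$ versus $\prod_k H$, and $2\eta$ versus $\eta$ neighborhoods arising when one thickens before projecting) via~\eqref{eq:scale-metric-entropy}, and verify that each application of the approximate-subgroup property of $H$ contributes at most $R\gamma\log(1/\eta)$ per factor, so that $k$ factors cost at most $2kR\gamma\log(1/\eta)$ after accounting for both the upper bound on $h(\prod_k H;\eta)$ and the separated-set construction living in $\prod_{2k}H$. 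A minor subtlety is that the separated set $\{c_i a_j\}$ must lie in a fixed ball of $G_1$ independent of $j$ so that its cardinality is genuinely bounded below by $e^{h(H^{(1)}_k;\eta)}$; this holds because $a_j\in\pr_1(\prod_k H)$ and $\prod_k H$ sits in a fixed compact set, so $c_i a_j$ ranges over a translate of $H^{(1)}_k$, preserving its $\eta$-covering number up to the universal constant.
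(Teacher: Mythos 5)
Your proof is correct and takes essentially the same route as the paper's: bound $h(\prod_{2k}H;\eta)$ from above by $h(H;\eta)+2kR\gamma\log(1/\eta)$ using the approximate-subgroup covering $\prod_{2k}H\subseteq H\prod_{2k}A$ with $\#A\leq\eta^{-R\gamma}$, and from below by $h(H^{(1)}_k;\eta)+h(\pr_2(\prod_kH);\eta)$ using the inclusion $(H^{(1)}_k\times\{1^{(2)}\})\cdot\prod_kH\subseteq\prod_{2k}H$ --- the paper phrases the lower bound as a Fubini-type volume inequality $|(\prod_{2k}H)_{2\eta}|\geq|(H^{(1)}_k)_\eta|\,|\pr_2(\prod_kH)_\eta|$, you phrase it via a packing argument with an $\eta$-separated set, and these are interchangeable up to the constants absorbed by~\eqref{eq:metric-entropy} and~\eqref{eq:scale-metric-entropy}. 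The one slip is a passing misattribution in your second paragraph: going from $\prod_kH$ to $\prod_{2k}H$ costs an additional $kR\gamma\log(1/\eta)$ via the approximate-subgroup property, not via~\eqref{eq:scale-metric-entropy} (which governs changes of scale $\eta$, not of the number of product factors); your final paragraph implicitly corrects this, and your closing ``minor subtlety'' about the separated points living in a fixed ball is actually a non-issue, since all that matters is that the $st$-point set is $\eta$-separated and contained in $\prod_{2k}H$.
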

 \begin{proof}
 Notice that $|(\prod_{2k} H)_{2\eta}|\geq |(H_k^{(1)})_\eta| |\pr_2(\prod_k H)_\eta|$. Hence
 \[
 \log \Big( \frac{|(\prod_{2k} H)_{2\eta}|}{|1_\eta|} \Big)
 \geq 
  \log \Big( \frac{|(H_k^{(1)})_\eta| }{|1^{(1)}_\eta|} \Big)+ 
   \log \Big( \frac{|\pr_2(\prod_k H)_\eta|}{|1^{(2)}_\eta|} \Big).
 \]
 Therefore, by \eqref{eq:metric-entropy} and \eqref{eq:scale-metric-entropy}, we obtain 
 \be\label{eq:metric-entropy-kernel-image}
 \textstyle
 h(\prod_{2k} H;\eta)+\log(\ref{c:metric-entropy-kernel-image})\geq h(H^{(1)}_k;\eta)+h(\pr_2(\prod_k H);\eta),
 \ee
 where $\constc\label{c:metric-entropy-kernel-image}$ multiplicatively depends on $\ref{c: bbbb}$ and $\ref{c: bbbbbbb}$. 
 
 Since $H$ is an $\eta^{-R\gamma}$-approximate subgroup, there is a symmetric set $A$ of cardinality at most $\eta^{-R\gamma}$ such that $HH\subseteq HA$. Therefore, $\prod_{2k} H$ is a subset of $H\prod_{2k} A$, which implies that 
 $
 |(\prod_{2k} H)_\eta|\leq \eta^{-2kR\gamma} |H_{\eta}|.
 $
 Hence, there exists $\constc\label{c:metric-entropy-approx-subgroup}$, so that   
 \be\label{eq:metric-entropy-approx-subgroup}
 \textstyle
 h(\prod_{2k} H;\eta)\leq h(H;\eta)+2kR\gamma \log(1/\eta) +\log(\ref{c:metric-entropy-approx-subgroup})
 \ee
 By \eqref{eq:metric-entropy-kernel-image} and \eqref{eq:metric-entropy-approx-subgroup}, the claim follows.
 \end{proof}

\begin{lem}\label{cor:upper-bound-metric-entropy-intersection-with-factors}
Suppose $G_1$ and $G_2$ are $L$-locally random with coefficients $C_{01}$ and $C_{02}$, respectively. Suppose $G_i$ satisfies the ${\rm DC}(d_{0i},C_{1i})$. 
Suppose $k\geq 4$ is an integer and $\bar\delta>0$. 
Let $\eta$ and $\gamma\ll_{d_{0i},L,k}\bar\delta$ be positive numbers, and $\eta^\gamma\leq \ref{c: bound-scale-factors}^{-1}$ where $\constc \label{c: bound-scale-factors}$ is a constant as in \eqref{eq:const-kazaa}. Suppose $X$ and $X'$ are independent and identically distributed random variables with values in $G$ satisfying the following properties:   
\begin{itemize}
\item $H_2(XX';\eta)< H_2(X;\eta)+\gamma \log(1/\eta)$, and 
\item $H_2(X;\eta)< (d_{01}+d_{02}-\bar\delta)\log(1/\eta)$.
\end{itemize}
 Then there is an $\eta^{-R\gamma}$-approximate subgroup $H$ of $G_1\times G_2$, where $R$ is the universal constant given in Lemma~\ref{lem:coupling-BG}, satisfying both of the following properties  
 \[
 1^{(i)}_{\eta^{C_2 \gamma}}\subseteq \pr_i(\textstyle\prod_k H_\eta),
 \quad\text{and}
 \quad
  h(H_k^{(1)};\eta) \leq \Big(1-\frac{\bar\delta}{2d_{01}}\Big)h(G_1;\eta)
 \]
where $C_2$ depends only on $L,d_{01},$ and $d_{02}$ and $H_k^{(1)}$ is defined as in Lemma~\ref{lem:lower-bound-metric-entropy}. 
\end{lem}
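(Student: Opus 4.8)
The plan is to combine Lemma~\ref{lem:coupling-BG} with the metric-entropy bookkeeping of Lemma~\ref{lem:lower-bound-metric-entropy} and the projection estimate of Lemma~\ref{lem:coupling-plus-almost-equidistribution-imlpies-large-open-projections}. Since $H_2(XX';\eta)< H_2(X;\eta)+\gamma\log(1/\eta)$, the entropy-gain alternative~\eqref{eq:entropy-gain-BG} in Lemma~\ref{lem:coupling-BG} fails for $\gamma_0=\gamma$; hence there are an $R(1/\eta)^{R\gamma}$-approximate subgroup $H\subseteq G_1\times G_2$ and $x,y\in G$ satisfying properties~(1)--(3) of that lemma. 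In particular, by property~(2) we have $|h(H;\eta)-H_2(X;\eta)|\leq R\gamma\log(1/\eta)$, and by property~(3) we have $\bbp(XZ\in (xH)_\eta)\geq \eta^{R\gamma}$. This is the raw material; the work is to extract the two desired conclusions from it.

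First I would obtain the large-projection statement. Apply Lemma~\ref{lem:coupling-plus-almost-equidistribution-imlpies-large-open-projections} with $\vare:=C_2\gamma$ (for a suitable $C_2=C_2(L,d_{01},d_{02})$ absorbing the implied constant $\gamma\ll_{L,d_{0i}}\vare$ of that lemma): since $X_i$ is uniformly distributed in $G_i$ (this is where the coupling hypothesis — or more precisely the hypothesis that $X$ has uniform marginals — enters; in the application this $X$ will be the $2^j$-step walk of the coupling, whose marginals are $m_{G_i}$) and $\bbp(XZ\in (xH)_\eta)\geq \eta^{R\gamma}$, we conclude $\pr_i(H_\eta H_\eta H_\eta^{-1}H_\eta^{-1})\supseteq 1^{(i)}_{\eta^{C_2\gamma}}$ for $i=1,2$. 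Since $H_\eta H_\eta H_\eta^{-1}H_\eta^{-1}\subseteq \prod_4 H_\eta$ (using that $H$, hence $H_\eta$ up to a harmless enlargement of $\eta$, is symmetric and contains the identity), this gives $1^{(i)}_{\eta^{C_2\gamma}}\subseteq \pr_i(\prod_k H_\eta)$ for every $k\geq 4$, which is the first assertion. One must check the smallness hypothesis $\eta^{C_2\gamma}\leq \constc^{-1}$ is guaranteed by $\eta^\gamma\leq \ref{c: bound-scale-factors}^{-1}$ for a large enough exponent in $\ref{c: bound-scale-factors}$, and that $\gamma\ll_{d_{0i},L,k}\bar\delta$ is compatible with the constraint $\gamma\ll_{L,d_{0i}}\vare=C_2\gamma$, which it is for $\bar\delta$ of bounded size (the lemma will be invoked with $\gamma$ much smaller than $\bar\delta$).

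Next I would bound $h(H_k^{(1)};\eta)$. By Lemma~\ref{lem:lower-bound-metric-entropy},
\[
h(H_k^{(1)};\eta)\leq h(H;\eta)-h\bigl(\pr_2(\textstyle\prod_k H);\eta\bigr)+2kR\gamma\log(1/\eta)+\log(\ref{c: metric entropy lemma}).
\]
For the subtracted term, the first assertion already proved gives $1^{(2)}_{\eta^{C_2\gamma}}\subseteq \pr_2(\prod_k H_\eta)$, so using the dimension condition ${\rm DC}(d_{02},C_{12})$ and~\eqref{eq:scale-metric-entropy} we get $h(\pr_2(\prod_k H);\eta)\geq (1-C_2\gamma/1)\,d_{02}\log(1/\eta)-O(1)\geq d_{02}\log(1/\eta)-C_2\gamma d_{02}\log(1/\eta)-O(1)$; more simply, $h(\pr_2(\prod_k H);\eta)\geq h(G_2;\eta)-C_2\gamma d_{02}\log(1/\eta)-\log(\constc)$. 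For $h(H;\eta)$, combine property~(2) with the second hypothesis $H_2(X;\eta)<(d_{01}+d_{02}-\bar\delta)\log(1/\eta)$ to get $h(H;\eta)\leq (d_{01}+d_{02}-\bar\delta)\log(1/\eta)+R\gamma\log(1/\eta)$. Substituting and using $h(G_2;\eta)=d_{02}\log(1/\eta)-O(1)$ from ${\rm DC}$, the $d_{02}\log(1/\eta)$ terms cancel and one is left with
\[
h(H_k^{(1)};\eta)\leq (d_{01}-\bar\delta)\log(1/\eta)+\bigl(R+2kR+C_2 d_{02}\bigr)\gamma\log(1/\eta)+\log(\constc).
\]
Since $h(G_1;\eta)=d_{01}\log(1/\eta)-O(1)\geq d_{01}\log(1/\eta)-\log(\constc)$, and choosing $\gamma\ll_{d_{0i},L,k}\bar\delta$ small enough that $(R+2kR+C_2d_{02})\gamma\leq \tfrac{\bar\delta}{4d_{01}}\cdot d_{01}=\tfrac{\bar\delta}{4}$, together with $\eta^\gamma\leq \ref{c: bound-scale-factors}^{-1}$ (with a large enough exponent) to absorb the $\log(\constc)$ additive constants into a further $\tfrac{\bar\delta}{4}\log(1/\eta)$, we obtain $h(H_k^{(1)};\eta)\leq (d_{01}-\tfrac{\bar\delta}{2})\log(1/\eta)\leq (1-\tfrac{\bar\delta}{2d_{01}})h(G_1;\eta)$, as required. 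The main obstacle is the careful bookkeeping: tracking how the various constants $R$, $C_2$, $\ref{c: metric entropy lemma}$, $\ref{c: bound-scale-factors}$, $C_{1i}$ depend (or fail to depend) on the parameters so that the final choice of $\gamma$ as a linear function of $\bar\delta$ with implied constant depending only on $L, d_{0i}, k$ is legitimate, and ensuring the smallness condition on $\eta$ is of the required form~\eqref{eq:const-kazaa}; the geometric/probabilistic content is entirely supplied by Lemmas~\ref{lem:coupling-BG} and~\ref{lem:coupling-plus-almost-equidistribution-imlpies-large-open-projections}.
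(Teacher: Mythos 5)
Your proof is correct and follows essentially the same route as the paper: invoke Lemma~\ref{lem:coupling-BG} to get the approximate subgroup $H$, feed the almost-equidistribution property into Lemma~\ref{lem:coupling-plus-almost-equidistribution-imlpies-large-open-projections} (with $\vare=C_2\gamma$) to get the projection containment, then combine Lemma~\ref{lem:lower-bound-metric-entropy} with the lower bound on $h(\pr_2(\prod_k H);\eta)$ that containment provides, the entropy bound from property (2) of Lemma~\ref{lem:coupling-BG}, and the hypothesis $H_2(X;\eta)<(d_{01}+d_{02}-\bar\delta)\log(1/\eta)$; your arithmetic (the coefficient $(2k+1)R+C_2d_{02}$ on $\gamma$) matches the paper's \eqref{eq:upper-metric-factor-aux} exactly. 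Your remark that the uniform-marginals hypothesis of Lemma~\ref{lem:coupling-plus-almost-equidistribution-imlpies-large-open-projections} is needed (and is supplied in the application by taking $X$ to be the random walk of a coupling) is a real observation — the lemma's statement elides this, and the paper's proof likewise applies it without comment.
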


\begin{proof}
We let 
$0<\eta<(10C_{01}+10C_{02}+C_{11}+C_{12})^{-1/\bar\delta}$ be a small constant which will be determined later.  
By Lemma~\ref{lem:coupling-BG} and Lemma~\ref{lem:coupling-plus-almost-equidistribution-imlpies-large-open-projections}, there is an $\eta^{-R\gamma}$-approximate subgroup $H$ such that 
\[
\textstyle
1^{(i)}_{\eta^{C_2\gamma}}\subseteq \pr_i(\prod_4 H_\eta)\subseteq \pr_i(\prod_k H_\eta),
\]
where $C_2$ depends only on $d_{0i}$'s and $L$, and in the second containment we used $k\geq 4$.

To see the second claim, we have  
\begin{align}
\textstyle
\notag
h(\pr_2 (\prod_k H);\eta)\geq & 
\textstyle
h(\pr_2(\prod_4  H);4\eta)-\log (\ref{c: bbbbbbb}) 
 \geq  
 \log\Big(\frac{|\pr_2(\prod_4 H_{\eta})|}{|1^{(2)}_{\eta}|} \Big)-\log(\ref{c: bbbbbbb}\ref{c: bbbb}) \\
 \label{eq:projection-metric-entropy-lower-bound}
\geq & C_2 d_{02} \gamma \log(\eta)+d_{02}\log(1/\eta)-\log(\ref{c: lower-bound-metric-entropy-second-factor}),
\end{align}
where $\constc \label{c: lower-bound-metric-entropy-second-factor}$ is a constant as in \eqref{eq:const-kazaa}. By Lemma~\ref{lem:lower-bound-metric-entropy} and \eqref{eq:projection-metric-entropy-lower-bound}, we obtain that the following holds
\begin{align}
	\notag h(H^{(1)}_k;\eta) -C_2d_{02}\gamma \log(1/\eta)+d_{02} \log(1/\eta)
\leq
&
h(H;\eta) +2kR\gamma \log(1/\eta)+\log(\ref{c: metric entropy lemma}\ref{c: lower-bound-metric-entropy-second-factor})
\\
\notag
\leq  &
H_2(X;\eta)+(2k+1) R\gamma \log(1/\eta)+\log(\ref{c: metric entropy lemma}\ref{c: lower-bound-metric-entropy-second-factor})
\\
\notag 
\leq& 
\Big(d_{01}+d_{02}-\bar\delta\Big)\log(1/\eta) +(2k+1) R\gamma \log(1/\eta)\\ &
\label{eq:upper-bound-metric-entropy-intersection-with-factors}
 +\log(\ref{c: metric entropy lemma}\ref{c: lower-bound-metric-entropy-second-factor}),
\end{align}
where the second inequality follows from Lemma~\ref{lem:coupling-BG}. By \eqref{eq:upper-bound-metric-entropy-intersection-with-factors}, we obtain 
\be\label{eq:upper-metric-factor-aux}
h(H^{(1)}_k;\eta) \leq \Big(d_{01}-\bar\delta\Big)\log(1/\eta)+((2k+1)R+C_2d_{02})\gamma \log(1/\eta)+
\log(\ref{c: metric entropy lemma}\ref{c: lower-bound-metric-entropy-second-factor}).
\ee
Therefore, we can choose $\ref{c: bound-scale-factors}$ so that for $\eta^{\gamma}\leq \ref{c: bound-scale-factors}^{-1}$ and $\gamma\ll_{k,d_{0i},L} \bar\delta$, we have 
\[
h(H_k^{(1)};\eta) \leq \Big(1-\frac{\bar\delta}{2d_{01}} \Big)h(G_1;\eta);
\]
as we claimed. 
\end{proof}

\subsection{Proof of Propositions~\ref{prop:desired-entropy} and~\ref{prop:contraction-coupling} modulo a bounded generation result}

In this section we use the following bounded generation result, which is of independent interest, to complete the proofs of Propositions~\ref{prop:desired-entropy} and~\ref{prop:contraction-coupling}.  

\begin{proposition}\label{prop:conjugation-by-large-ball-multiplication-large-image}
Suppose $F$ is either $\bbr$ or $\bbq_p$, $\bbg\subseteq (\GL_{n_0})_F$ is a connected $F$-almost simple subgroup, and $G$ is a compact open subgroup of $\bbg(F)$. When $F=\bbr$, we assume that $G\subseteq O_{n_0}(\bbr)$, and when $F=\bbq_p$, we assume that $G\subseteq \GL_{n_0}(\bbz_p)$. In either case, we take the metric on $G$ that is induced by the operator norm on $\M_{n_0}(F)$. 
Let $p_0=2$ when $F=\bbr$ and $p_0=p$ when $F=\bbq_p$. 
Then for every $0<\rho\leq p_0^{-2}$, there are $g_{1},\ldots,g_{d^2}\in 1_{\rho}$ where $d:=\dim \bbg$ and positive numbers $C:=C(G)$ and $c:=c(G)$ 
such that the following holds. For $h\in 1_{1/4}$ and every $0<r\leq c \|h-I\| \rho^C$, we have
\[
\{(g_{1} [h,a_1] g_{1}^{-1})\cdots (g_{d^2}[h,a_{d^2}] g_{d^2}^{-1})|\h a_i\in 1_{r}\}\supseteq 1_{cr \rho^C \|h-I\|},
\]
where $[h,a_i]=ha_ih^{-1}a_i^{-1}$. Moreover, if $\bbg=\wt{\bbg}\otimes_{\bbq}\bbq_p$ where $\wt{\bbg}$ is an absolutely almost simple $\bbq$-group, then the constants $C$ and $c$ depend only on $\wt{\bbg}$. 
\end{proposition}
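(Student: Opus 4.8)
The plan is to reduce the statement to a single quantitative question at the Lie algebra level and then transport it back to the group via the exponential map. First I would fix a basis: since $\gfr=\Lie(\bbg)(F)$ is simple of dimension $d$, the map $(X_1,\dots,X_{d})\mapsto [\,\cdot\,, \exp]$-type bracket is surjective onto $\gfr$ generically, and more precisely one can choose $Y_1,\dots,Y_{d^2}\in\gfr$ with $\|Y_i\|\le 1$ and elements $\xi_1,\dots,\xi_{d^2}\in\gfr$ with $\|\xi_i\|\le 1$ so that the linear map
\[
\Phi:\gfr^{d^2}\to\gfr,\qquad \Phi(v_1,\dots,v_{d^2})=\sum_{i=1}^{d^2}\Ad(\exp(\rho\,\xi_i))\bigl([Y,v_i]\bigr)
\]
is surjective for every nonzero $Y\in\gfr$; here $Y=\log h$. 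The point is that $\{[\,Y,\cdot\,]:Y\neq 0\}$ spans a space of operators whose union of images is all of $\gfr$ (because $\gfr$ has trivial center, $\ad Y\neq 0$, and $\gfr$ is irreducible under $\ad$), and the conjugations by $\exp(\rho\xi_i)$, which differ from the identity by $O(\rho)$, do not destroy surjectivity when $\rho$ is small. Quantitatively, I would show that the smallest singular value of $\Phi$ (restricted to an appropriate complement) is $\gg_{\bbg} \|Y\|\cdot\rho^{C_0}$ for an explicit exponent $C_0=C_0(\bbg)$; this is where an effective Nullstellensatz / {\L}ojasiewicz-type lower bound for the resultant-type polynomial measuring degeneracy enters, exactly as in the $p$-adic (Greenberg) and real ({\L}ojasiewicz) ingredients already invoked in \S\ref{sec: proof of thm approx}. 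Uniformity over a $\bbq$-form follows because then the defining polynomials have bounded height.

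Next I would set $g_i:=\exp(\rho\,\xi_i)$ (so $g_i\in 1_\rho$ after rescaling $\xi_i$) and $a_i:=\exp(r\,v_i)$ with $\|v_i\|\le 1$, and expand the product
\[
(g_1[h,a_1]g_1^{-1})\cdots(g_{d^2}[h,a_{d^2}]g_{d^2}^{-1})
\]
using the Baker--Campbell--Hausdorff formula. Since $[h,a_i]=ha_ih^{-1}a_i^{-1}$ and $\|a_i-I\|\ll r$, the estimate \eqref{eq:BCHD-commutator} (applied with $\eta$ of size $\max(\|Y\|,r)$) gives $\log[h,a_i]=r[Y,v_i]+O(\max(\|Y\|,r)\cdot r\cdot\|Y\|)$ up to higher-order terms, hence $\log\bigl(g_i[h,a_i]g_i^{-1}\bigr)=r\,\Ad(g_i)[Y,v_i]+(\text{error})$ with error $\ll r\rho^{?}\|Y\|$ plus genuinely higher-order-in-$r$ terms. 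Multiplying the $d^2$ factors and using BCH once more, the logarithm of the whole product equals $r\,\Phi(v_1,\dots,v_{d^2})$ plus an error term of size $\ll (d^2)^2\cdot r^2$ (the quadratic-in-$r$ BCH correction) plus $\ll d^2 r\|Y\|\rho$-type cross terms. So the map $(v_i)\mapsto \log(\text{product})$ is, after dividing by $r$, a map whose linearization at $0$ is $\Phi$ and whose error is $O(r)$-small relative to the $\gg\|Y\|\rho^{C_0}$ lower bound on $\|\Phi\|$, provided $r\le c\|h-I\|\rho^{C}$ with $C$ a suitable multiple of $C_0$ and $c$ small.

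Finally I would invoke the quantitative inverse/open-mapping principle: a $C^1$ map from a ball in $\gfr^{d^2}$ whose derivative at the origin is surjective with smallest singular value $\ge \sigma$ and whose derivative varies by at most $\sigma/2$ over the ball of radius $r$ is open onto a ball of radius $\gg \sigma r$ around the image of the origin. Applying this with $\sigma\gg\|Y\|\rho^{C_0}$ and noting that all $v_i$ range over $1_r$ (equivalently $\|v_i\|\lesssim r$, absorbing constants), the image of $(v_i)\mapsto\log(\text{product})$ contains a ball of radius $\gg r\|Y\|\rho^{C_0}$ in $\gfr$; exponentiating (and using \eqref{eq:log-exp-almost-isometries'}, $1_\eta$ being a subgroup in the $p$-adic case) yields $1_{cr\rho^C\|h-I\|}$ inside the product set, which is the claim. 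I expect the main obstacle to be the \textbf{effective lower bound on the smallest singular value of $\Phi$ uniformly in $Y=\log h$ and in $\rho$}: one must rule out the near-degeneracy of $\sum_i\Ad(g_i)\ad(Y)$ with a polynomial-in-$\rho$ (and, in the arithmetic case, height-uniform) bound, which is precisely the point where the {\L}ojasiewicz inequality in the real case and Greenberg's theorem in the $p$-adic case have to be used with care, and where the choice of the $\xi_i$ and $Y_i$ (and the exponent $C$) is pinned down.
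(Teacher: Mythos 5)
Your overall skeleton is the same as the paper's: linearize the product map at the identity, prove a quantitative lower bound on the smallest singular value of the linearization, and conclude via a quantitative inverse function theorem (the paper's Theorems~\ref{thm:p-adic-inverse-function} and~\ref{thm:real-inverse-function}). So on the highest level the approach is correct. The three ingredients are the same.

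However, there are two substantive differences and one genuine gap worth flagging. First, the paper's linearization uses $(\Ad(h)-I)$ directly (so $\dif\Phi(0)(c_1,\dots,c_{d^2})=\sum_i c_i\,\Ad(g_i)(\Ad(h)-I)x$ with scalar parameters $t_i$ along a fixed unit vector $x$), whereas you Taylor-approximate $\Ad(h)-I$ by $\ad(Y)=[Y,\cdot]$; this is fine in spirit but introduces an $O(\|Y\|^2)$ error you would have to keep tracking, and since $h\in 1_{1/4}$ is not infinitesimal this adds unnecessary bookkeeping. The paper's scalar parametrization also avoids the over-determined domain $\gfr^{d^2}$ in favour of $F^{d^2}$. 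Second, the key construction of the conjugators is specific in the paper and absent in your plan: one takes a basis $\{\Ad(g_1),\dots,\Ad(g_m)\}$ of the associative algebra $\bbr[\Ad(G)]\subseteq\End(\gfr)$ and passes to powers $g_i^t$ (real case) or to a $\bbz_p$-basis of $\bbz_p[\Ad(G[p^l])]$ ($p$-adic case); Lemmas~\ref{lem:polytops-g-module}--\ref{lem:adjoint-action-small-ball-polytop} then establish exactly the polytope containment $\{\sum c_i\Ad(g_{i,\rho})(x)\}\supseteq 0_{\rho^C\|x\|}$ that feeds into the singular value bound. You only posit that suitable $\xi_i$ exist, without this mechanism.

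The genuine gap is in how you propose to obtain the quantitative lower bound on the singular value. You suggest {\L}ojasiewicz in the real case and Greenberg in the $p$-adic case. In the real case the paper uses something more elementary and more explicit: $f(t)=\det(A(t)^TA(t))$ is a nonvanishing analytic function of $t$ with $f(1)\neq0$, so some finite derivative at $0$ is nonzero and $f(t)\gg t^{\overline{C}}$; no {\L}ojasiewicz is needed, and the exponent is read off directly. More importantly, in the $p$-adic case Greenberg's theorem does not address this problem at all: it lifts approximate solutions of polynomial systems from $\bbz/p^n\bbz$ to $\bbz_p$, which is a statement about fibers of reduction maps, not a lower bound on a singular value or on the spread of a polytope $\{\sum c_i\Ad(g_{i,\rho})(x)\}$. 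The paper instead relies on structural results about $\bbz_p[\Ad(G)]$ from \cite{SG-super-approx-II} (Proposition~44' and Lemma~40 therein), and it is precisely these results that deliver the prime-uniformity needed for the ``moreover'' clause when $\bbg=\wt{\bbg}\otimes_\bbq\bbq_p$. So while your overall architecture is right, the mechanism you name for the crucial quantitative step in the $p$-adic case would not work as stated, and this is the part that actually carries the proposition.
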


We postpone the proof of Proposition~\ref{prop:conjugation-by-large-ball-multiplication-large-image} to \S\ref{sec: proof bdd generation}. This proposition is used in the proof of the following corollary, which in turn will play a crucial role in the proofs of Propositions~\ref{prop:desired-entropy} and~\ref{prop:contraction-coupling}.

\begin{corollary}\label{cor:almost-kernels-are-small}
Let $G_1$, $G_2$, $X$, and $X'$ be as in Lemma~\ref{cor:upper-bound-metric-entropy-intersection-with-factors}. Let $k=50d_{01}^2$ and $\bar\delta>0$.
Suppose $\eta$ and $\gamma$ are positive numbers such that $\gamma\ll_{C(G_i),d_{0i},L}\bar\delta $, where $C(G_i)$'s are given in Proposition~\ref{prop:conjugation-by-large-ball-multiplication-large-image}, and $\eta^\gamma\leq \ref{c: bound-scale-factors'}^{-1}$ where $\constc \label{c: bound-scale-factors'}$ is a constant as in \eqref{eq:const-kazaa}. Let $H$ be the approximate subgroup given in Lemma~\ref{cor:upper-bound-metric-entropy-intersection-with-factors}, applied with $k=50d_{01}^2$ and these $\bar\delta$, $\eta$, 
and $\gamma$.
Let $\alpha_i:=\alpha_i(\prod_8 H;\eta)$ be as in \eqref{eq:largest-elements-in-factors}. Then 
\be\label{eq: relation between bar delta and alphai}
\alpha_i\geq \frac{\bar\delta}{10d_{0i}},
\ee
and there is $\eta^{O_{d_{0i},L}(\bar{\delta})}$-approximate homomorphism 
\[
f:1^{(1)}_{\eta^{C_2\gamma}}\rightarrow G_2
\]
such that $1^{(2)}_{\eta^{C_2'\gamma}}\subseteq (\im f)_{\eta^{O_{d_{0i},L}(\bar\delta)}}$, where $C_2$ and $C_2'$ depend only on $L$, $d_{01}$ and $d_{02}$.
\end{corollary}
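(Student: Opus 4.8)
The plan is to combine the two structural facts established just before the corollary---the metric-entropy bound on $H_k^{(1)}$ from Lemma~\ref{cor:upper-bound-metric-entropy-intersection-with-factors}, and the bounded-generation statement of Proposition~\ref{prop:conjugation-by-large-ball-multiplication-large-image}---to force $\alpha_i(\prod_8 H;\eta)$ to be bounded below by a constant multiple of $\bar\delta$. The key observation is that $\alpha_i$ small means $\prod_3 H_\eta$ (hence $\prod_8 H_\eta$) contains an element $(g_1,1^{(2)})$ with $g_1$ a definite distance $\geq \eta^{\alpha_i}$ from the identity, i.e.\ $\|g_1-I\|\geq \eta^{\alpha_i}$; we want to rule this out when $\alpha_i < \bar\delta/(10 d_{0i})$.

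First I would set up the contrapositive for $i=1$: suppose $\alpha_1:=\alpha_1(\prod_8 H;\eta)<\bar\delta/(10d_{01})$. Then there is $h\in 1^{(1)}_{1/4}$ (after possibly shrinking, using $\rho<1/4$ and that $\eta$ is small) with $(h,1^{(2)})\in\prod_3 H_\eta$ and $\|h-I\|\geq \eta^{\alpha_1}$. Now apply Proposition~\ref{prop:conjugation-by-large-ball-multiplication-large-image} inside $G_1$ with this $h$: there are $g_1,\dots,g_{d_{01}^2}\in 1^{(1)}_\eta$ and constants $C=C(G_1)$, $c=c(G_1)$ so that the set of products $(g_1[h,a_1]g_1^{-1})\cdots(g_{d_{01}^2}[h,a_{d_{01}^2}]g_{d_{01}^2}^{-1})$ with $a_i\in 1^{(1)}_r$ covers $1^{(1)}_{cr\rho^C\|h-I\|}$, for any $0<r\leq c\|h-I\|\rho^C$. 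The crucial point is that each $(g_i,1^{(2)})$ and $(h,1^{(2)})$ and each $(a_i,1^{(2)})$ (for $a_i$ ranging over a ball) lie---up to the $\eta$-fattening and up to a bounded number of multiplications absorbed into a larger product $\prod_{50d_{01}^2}H$, which is why $k=50d_{01}^2$ is chosen---in $H_\eta$ together with $1^{(2)}$ in the second coordinate; taking commutators $[(h,1^{(2)}),(a_i,1^{(2)})]=([h,a_i],1^{(2)})$ and conjugating by $(g_i,1^{(2)})$ stays of the form $(\ast,1^{(2)})$. Hence a whole ball $1^{(1)}_{cr\rho^C\|h-I\|}\times\{1^{(2)}\}$ is contained in $\prod_k H_\eta$, which means $H_k^{(1)}=\pr_1((G_1\times\{1^{(2)}\})\cap\prod_k H)$ contains a ball of radius $\gtrsim \eta^{C}\cdot\eta^{\alpha_1}\cdot r$; choosing $r$ as large as allowed, $r\asymp \eta^{C+\alpha_1}$, gives a ball of radius $\gtrsim \eta^{2C+2\alpha_1}$ in $H_k^{(1)}$, so $h(H_k^{(1)};\eta)\geq (1-(2C+2\alpha_1)\gamma'')d_{01}\log(1/\eta)$ for an appropriate accounting of additive constants and the $\eta^{C_2\gamma}$-scale bookkeeping. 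Since $\alpha_1$ is tiny and we may take $\gamma$ (hence the relevant exponents) $\ll_{C(G_i),d_{0i},L}\bar\delta$, this contradicts the upper bound $h(H_k^{(1)};\eta)\leq (1-\bar\delta/(2d_{01}))h(G_1;\eta)$ from Lemma~\ref{cor:upper-bound-metric-entropy-intersection-with-factors}. This yields $\alpha_1\geq\bar\delta/(10d_{01})$, and symmetrically $\alpha_2\geq\bar\delta/(10d_{02})$, proving~\eqref{eq: relation between bar delta and alphai}.

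For the second assertion, I would feed the approximate subgroup $H$ into Lemma~\ref{lem:constructing-approx-hom-nbhd}. That lemma's hypotheses are exactly: $H$ an $\eta^{-R\gamma}$-approximate subgroup with $1^{(i)}_{\eta^{C_2\gamma}}\subseteq\pr_i(\prod_4 H_\eta)$, which is part of the output of Lemma~\ref{cor:upper-bound-metric-entropy-intersection-with-factors} (with room to spare since $k=50d_{01}^2\geq 4$). Its conclusion gives an $\eta^{C_2\gamma}$-partial $\eta^{\alpha_2}$-approximate homomorphism $f:1^{(1)}_{\eta^{C_2\gamma}}\to G_2$ with $1^{(2)}_{\eta^{C_2'\gamma}}\subseteq(\im f)_{10\eta^{\alpha_2}}$. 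Now I invoke~\eqref{eq: relation between bar delta and alphai}: $\alpha_2\geq\bar\delta/(10d_{02})$, so $\eta^{\alpha_2}\leq \eta^{\bar\delta/(10d_{02})}=\eta^{O_{d_{0i},L}(\bar\delta)}$, and likewise $10\eta^{\alpha_2}$ absorbs into the same bound after a trivial adjustment of the implied constant (and shrinking $\eta$). Thus $f$ is an $\eta^{O_{d_{0i},L}(\bar\delta)}$-approximate homomorphism with the stated large-image property, which is the claim.

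The main obstacle I anticipate is the bookkeeping in Step~1: tracking precisely how many factors of $H$ are consumed when one writes the generating product from Proposition~\ref{prop:conjugation-by-large-ball-multiplication-large-image} as a word in elements of $H_\eta$ (each conjugation $g[h,a]g^{-1}$ costs, say, $5$ factors, and there are $d_{01}^2$ of them, comfortably within $50d_{01}^2$), and how the $\eta$-fattenings compound through this bounded word---this is where the constant $\ref{c: bbbb}$, $\ref{c: bbbbbbb}$-type losses enter and must be shown to be absorbed by taking $\eta$ small and $\gamma\ll\bar\delta$. One must also be a little careful that the element $h$ produced by $\alpha_1$ small can be taken in $1^{(1)}_{1/4}$ rather than merely in $1^{(1)}_\rho$; since $\rho\ll 1$ this is automatic, but it should be stated. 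Once the counting is pinned down, the contradiction with the entropy upper bound is immediate, and the rest is a direct citation of Lemma~\ref{lem:constructing-approx-hom-nbhd} combined with~\eqref{eq: relation between bar delta and alphai}.
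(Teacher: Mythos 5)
Your proof follows essentially the same route as the paper's: assume $\alpha_1$ is small, produce an element $h$ far from the identity with $(h,1^{(2)})$ in a bounded product of $H_\eta$, feed it into Proposition~\ref{prop:conjugation-by-large-ball-multiplication-large-image} to obtain a ball of radius $\gtrsim \eta^{O(C(G_1))\gamma + O(\alpha_1)}$ inside $H^{(1)}_{50d_{01}^2}$, derive a lower bound on $h(H^{(1)}_k;\eta)$ that contradicts the upper bound of Lemma~\ref{cor:upper-bound-metric-entropy-intersection-with-factors}, and then cite Lemma~\ref{lem:constructing-approx-hom-nbhd} together with the bound $\alpha_2 \geq \bar\delta/(10d_{02})$ for the second assertion. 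Two small notational slips worth fixing: since $\alpha_1 := \alpha_1(\prod_8 H;\eta)$, the element $h$ lies in $\prod_3(\prod_8 H_\eta)$, not $\prod_3 H_\eta$; and asserting $\|h-I\|\geq\eta^{\alpha_1}$ presumes the infimum in~\eqref{eq:largest-elements-in-factors} is attained, which the paper sidesteps by taking $\|h-I\|\geq\eta^{2\alpha_1}$ (any exponent strictly above $\alpha_1$ but still $\ll \bar\delta/d_{01}$ works for the contradiction).
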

\begin{proof}
	We first prove~\eqref{eq: relation between bar delta and alphai}; in view of the symmetry, we show this for $i=1$. 
	Recall that 
	By the definition of $\alpha_1$, see~\eqref{eq:largest-elements-in-factors}, 
	there exists $h\in G_1$ such that $\|h-I\|\geq \eta^{2\alpha_1}$ and $(h,1)\in \prod_{24} H$.
		Since $1^{(1)}_{\eta^{C_2\gamma}}\subseteq \pr_1(\prod_4 H_{\eta})$, by Proposition~\ref{prop:conjugation-by-large-ball-multiplication-large-image} (applied with $\rho:=\eta^{C_2\gamma}$), we deduce that
	\[
	1^{(1)}_{c\eta^{2CC_2\gamma+4\alpha_1}}\subseteq (H^{(1)}_{50d_{01}^2})_{50d_{01}^2\eta},
	\]
	where  $C:=C(G_1)$ is as in Proposition~\ref{prop:conjugation-by-large-ball-multiplication-large-image} and $c$ is a multiple of $c(G_1)$ given in the same statement. Hence, we obtain
	\begin{align}
	\notag 
	h(H^{(1)}_{50d_{01}^2};\eta)\geq
	&
		 \log\biggl(\frac{|1^{(1)}_{c\eta^{2CC_2\gamma+4\alpha_1}}|}{|1^{(1)}_\eta|} \biggr)-\log(\ref{c: bbbb})	
		\\
		\notag
	\geq &
	d_{01}\Big(1-(2CC_2\gamma+4\alpha_1)\Big) \log(1/\eta)-\log(\ref{c: bbbbbbb}\ref{c: bbbb})
	\\
	\label{eq:lower-bound-metric-entropy-almost-kernel}
	\geq &
	\Big(1-(3CC_2\gamma+4\alpha_1)\Big) h(G_1;\eta);
	\end{align}
notice that we can drop $\log(\ref{c: bbbbbbb}\ref{c: bbbb})$ as 
$\eta^\gamma\leq \ref{c: bound-scale-factors'}^{-1}$ can be chosen small enough. By Lemma~\ref{cor:upper-bound-metric-entropy-intersection-with-factors}, applied with $k=50d_{01}^2$, and \eqref{eq:lower-bound-metric-entropy-almost-kernel},  we obtain the following inequality:
\[
3CC_2\gamma+4\alpha_1 \geq \frac{\bar\delta}{2d_{01}}.
\]
Therefore, for $\gamma\ll_{C,d_{01},L,a} \bar\delta$, we obtain that 
\[
\alpha_1 \geq \frac{\bar\delta}{10 d_{01}}.
\]
Recalling $1^{(1)}_{\eta^{C_2\gamma}}\subseteq \pr_i(\prod_4 H_{\eta})$ and~\eqref{eq: relation between bar delta and alphai} for $\alpha_2$, the remaining assertions follow from Lemma~\ref{lem:constructing-approx-hom-nbhd}.
\end{proof}

\begin{proof}[Proof of Proposition~\ref{prop:desired-entropy}]
Let $G=G_1\times G_2$, then $G$ is $L$-locally random with coefficient $C_0:=C_{01}+C_{02}$, see~\cite[Lemma 5.2]{MMSG}. It also satisfies ${\rm DC}(d_0, C_1)$ where $d_0:=d_{01}+d_{02}$ and $C_1:=C_{11}C_{12}$.  
Suppose 
\[
0<\eta<(10C_0+C_1)^{-1/\bar\delta}. 
\]

Let $X$ be a random variable whose probability law is $\mu$. 
Let $X_i$ be a $2^i$-random walk with respect to $\mu$. 

\medskip

\begin{claim}\label{claim:entropy-dicatomy}
For sufficiently small $\gamma$ (which will be determined later and will depend linearly on $\bar\delta$) and for every non-negative integer $i$ at least one of the following holds:
\begin{subequations}
\begin{align}
    \label{eq: gaining entropy}&H_2(X_{i+1};\eta)\geq H_2(X_i;\eta)+\gamma \log(1/\eta),\\
    \label{eq: final entropy}&H_2(X_i;\eta)\geq (d_0-\bar\delta) \log(1/\eta).
\end{align}
\end{subequations}
\end{claim}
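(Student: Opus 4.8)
The plan is to prove Claim~\ref{claim:entropy-dicatomy} by contradiction: suppose that for some index $i$ both \eqref{eq: gaining entropy} and \eqref{eq: final entropy} fail, i.e.
\[
H_2(X_{i+1};\eta) < H_2(X_i;\eta) + \gamma\log(1/\eta)
\qquad\text{and}\qquad
H_2(X_i;\eta) < (d_0-\bar\delta)\log(1/\eta).
\]
Since $X_{i+1}=X_i X_i'$ where $X_i,X_i'$ are i.i.d.\ with values in $G=G_1\times G_2$, these are exactly the two hypotheses of Lemma~\ref{cor:upper-bound-metric-entropy-intersection-with-factors} (and hence of Corollary~\ref{cor:almost-kernels-are-small}), once $\gamma$ is chosen small enough (linearly in $\bar\delta$, with the dependence on $C(G_i)$, $d_{0i}$, $L$ dictated by those statements) and $\eta^\gamma$ small enough. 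First I would record that the marginals of $X_i$ are the Haar measures $m_{G_1},m_{G_2}$ — this is where we use that $\mu$ is a coupling of $m_{G_1}$ and $m_{G_2}$, and it is preserved under convolution, so each $X_i$ is of the form $(X_{i,1},X_{i,2})$ with $X_{i,j}$ uniform on $G_j$, as required to invoke Lemma~\ref{lem:coupling-plus-almost-equidistribution-imlpies-large-open-projections} inside the cited results.

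Next I would feed this into Corollary~\ref{cor:almost-kernels-are-small} with $k=50d_{01}^2$: it produces an $\eta^{-R\gamma}$-approximate subgroup $H\subseteq G_1\times G_2$ with $1^{(i)}_{\eta^{C_2\gamma}}\subseteq \pr_i(\prod_4 H_\eta)$, together with an $\eta^{O_{d_{0i},L}(\bar\delta)}$-approximate homomorphism
\[
f:1^{(1)}_{\eta^{C_2\gamma}}\longrightarrow G_2
\qquad\text{with}\qquad
1^{(2)}_{\eta^{C_2'\gamma}}\subseteq (\im f)_{\eta^{O_{d_{0i},L}(\bar\delta)}}.
\]
The point is that $f$ is a $\rho$-partial, $\rho^{m}$-approximate homomorphism in the sense of Definition~\ref{def: approxmiate local hom} with $\rho = \eta^{C_2\gamma}$ and with $\rho^m$ and the large-image parameter $\rho^{m'}$ both of size $\eta^{O_{d_{0i},L}(\bar\delta)}$; because $\gamma$ depends linearly on $\bar\delta$, one can arrange $m \gg_{G_1,G_2} m'$ and $m,m'$ depending only on $\bar\delta$, $d_{0i}$, $L$. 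One then applies Theorem~\ref{thm:approximate-hom} (equivalently Theorem~\ref{thm:thm:approximate-hom}) to $f$ — its hypotheses are met since $G_i\subseteq\bbg_i(F_i)$ are compact open subgroups of $F_i$-almost simple groups with the standard operator-norm metric, and $\bar\delta$ is a fixed constant, so $\rho$ can be taken as small as needed and $m$ as large as needed by shrinking $\eta_0$. The conclusion of that theorem forces $F_1=F_2$ and $\Lie(\bbg_1)(F_1)\simeq\Lie(\bbg_2)(F_2)$, which contradicts the standing hypothesis of Proposition~\ref{prop:desired-entropy} that these Lie algebras are \emph{not} isomorphic. Hence no such $i$ exists, proving the claim.

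The bookkeeping to get right is the chain of quantifier dependencies: $\bar\delta$ is given; then Corollary~\ref{cor:almost-kernels-are-small} fixes the admissible $\gamma\ll_{C(G_i),d_{0i},L}\bar\delta$ and the constant $\ref{c: bound-scale-factors'}$; then Theorem~\ref{thm:approximate-hom} asks for $m\gg_{G_1,G_2} m'$ and $\rho\ll_{G_1,G_2}1$, which translates into further smallness of $\eta_0$ (so that $\eta^{C_2\gamma}$ is small and $\eta^{O(\bar\delta)}$ corresponds to a high enough power). I would state $\gamma$ explicitly as the minimum of the thresholds coming from Lemma~\ref{cor:upper-bound-metric-entropy-intersection-with-factors} and Corollary~\ref{cor:almost-kernels-are-small}, and then note that "$\gamma$ depends linearly on $\bar\delta$" is exactly what makes $m$ and $m'$ functions of $\bar\delta$ alone (together with $d_{0i},L,C(G_i)$, which are fixed data of $G_1,G_2$).

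The main obstacle is purely organizational rather than mathematical: one must verify carefully that the "approximate homomorphism with large image" output of Corollary~\ref{cor:almost-kernels-are-small} literally satisfies the numerical hypotheses \emph{(Large image)} and the $m\gg m'$ condition of Theorem~\ref{thm:approximate-hom}, matching $\rho=\eta^{C_2\gamma}$, the error exponent, and the image-radius exponent against each other. In particular one has to check that $C_2'\gamma < C_2\gamma$ is not needed but rather that $\rho^{m'}=\eta^{C_2'\gamma\cdot(\text{something})}$ dominates the neighborhood $\eta^{O(\bar\delta)}$ in which $\im f$ is dense, and that $C_2\gamma < $ the required smallness; all of this is arranged by taking $\bar\delta$-dependent constants and then $\eta_0$ small. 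Once this alignment of exponents is done, the contradiction with non-isomorphism of the Lie algebras is immediate.
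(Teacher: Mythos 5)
Your proposal is correct and follows essentially the same route as the paper's own proof: argue by contradiction, note that the failure of both alternatives puts you exactly in the hypotheses of Corollary~\ref{cor:almost-kernels-are-small} (after observing that the marginals of $X_i$ remain Haar under convolution), invoke that corollary to produce a $\rho$-partial $\rho^m$-approximate homomorphism with large image for $\rho=\eta^{C_2\gamma}$, and then contradict Theorem~\ref{thm:approximate-hom} using the standing hypothesis that $\Lie(\bbg_1)(F_1)\not\simeq\Lie(\bbg_2)(F_2)$. The exponent-matching issues you flag at the end are precisely what the paper handles by fixing $m$ from Theorem~\ref{thm:approximate-hom} and then choosing $\gamma$ small enough so that $\beta/(C_2\gamma)>m$, which is the linear dependence of $\gamma$ on $\bar\delta$ you anticipated.
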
 

\begin{proof}[Proof of the Claim]
Let us assume that~\eqref{eq: gaining entropy} and~\eqref{eq: final entropy} fail for some $i$. Then by
Corollary~\ref{cor:almost-kernels-are-small},
there exists $\hat c$ depending only on $C(G_i)$ (see Proposition~\ref{prop:conjugation-by-large-ball-multiplication-large-image}), $d_{0i}$, and $L$ so that if $0<\gamma<\hat c\bar\delta$ and $\eta^\gamma\leq \ref{c: bound-scale-factors'}^{-1}$, where $\ref{c: bound-scale-factors'}$ is a constant as in \eqref{eq:const-kazaa},
then there is an $\eta^{\beta}$-approximate homomorphism 
\[
f:1^{(1)}_{\eta^{C_2\gamma}}\rightarrow G_2
\]
such that $1^{(2)}_{\eta^{C_2'\gamma}}\subseteq (\im f)_{\eta^{\beta}}$, where $C_2$ and $C'_2$ depend only on $L$, $d_{01}$ and $d_{02}$, and $\beta=O_{d_{0i},L}(\bar\delta)$. 

Let $m$ be as in Theorem~\ref{thm:approximate-hom} applied with $G_1$ and $G_2$. 
For small enough $\gamma$, we have  
$\beta/(C_2\gamma)>m$.
Since $G_1$ and $G_2$ are not locally isomorphic, existence of $f$ contradicts Theorem~\ref{thm:approximate-hom} applied with $G_1$, $G_2$ and $\rho=\eta^{C_2\gamma}$ so long as $\eta^{\gamma}$ is small enough. The claim follows. 
\end{proof}

Returning to the proof of the proposition, 
first note that~\eqref{eq: gaining entropy} can hold at most $i_{\max}:=\lceil d_0/\gamma\rceil$-many times. Therefore, there exists some $i_0\leq i_{\max}$ so that~\eqref{eq: final entropy} holds. The proof is complete. 
\end{proof}

\begin{proof}[Proof of Proposition~\ref{prop:contraction-coupling}]

Fix some integer $a\geq 4Ld_0$, and let $\bar\delta=\frac{1}{8La\log_{\eta}\eta^{a^2}}=\frac{1}{8La^3}$. Let $0<\eta<\ref{c: desired entropy}^{-1/\bar\delta}$, where $\ref{c: desired entropy}$ is as in Proposition~\ref{prop:desired-entropy}. 

Recall that $f\in L^2(G_1\times G_2)$ lives at scale $\eta$ if both of the following properties are satisfied   
\[
\|f_{\eta^{a^2}}-f\|_2\leq \eta^{a/2}\|f\|_2\quad\text{and}\quad \|f_{\eta^{1/a}}\|_2\leq \eta^{1/(2a)}\|f\|_2.
\]
Apply Proposition~\ref{prop:desired-entropy} with 
$\eta^{a^2}$ and $\bar\delta$. In view of Proposition~\ref{prop:desired-entropy}, conditions of Lemma~\ref{lem:lower-bound-entropy-gives-contraction} are satisfied for $G$, $\nu=\mu^{(2^{m})}$ and $\rho=\eta^{a^2}$. Hence we have 
\[
\|\mu^{(2^{m})}_{\rho}\ast f\|_2\leq \eta^{1/(8La)} \|f\|_2.
\]
Now since $\|f_\rho-f\|_2\leq \eta^{a/2}\|f\|_2$, we conclude that
\[
\|\mu^{(2^{m})}\ast f\|_2\leq \|\mu^{(2^{m})}\ast f_\rho\|_2+\eta^{a/2}\|f\|_2=\|\mu^{(2^{m})}_{\rho}\ast f\|_2+\eta^{a/2}\|f\|_2\leq \eta^{1/(16La)} \|f\|_2
\]
This implies the proposition with $c=1/(16La)$ and $m_0=m$. 
\end{proof}

\subsection{Proof of Proposition~\ref{prop:conjugation-by-large-ball-multiplication-large-image}} \label{sec: proof bdd generation}

In this section we prove Proposition~\ref{prop:conjugation-by-large-ball-multiplication-large-image}. The proof is carried out in several steps, and among other things it relies on certain quantitative inverse function theorems that are proved in the appendix. We start with a lemma which is analogous to \cite[Lemma 40]{SG-super-approx-II} for real numbers.
\begin{lem}\label{lem:polytops-g-module}
In the setting of Proposition~\ref{prop:conjugation-by-large-ball-multiplication-large-image}, suppose  $\{\Ad(g_1),\ldots,\Ad(g_m)\}$ is a basis of the $\bbr$-subalgebra $\bbr[\Ad(G)]$ of $\End(\gfr)$, where $\gfr:=\gfr(\bbr)$ is the Lie algebra of $G$. Then there is a positive number $r_0$ depending on $g_i$'s such that for every unit element $x$ of $\gfr$, 
\[
M(x):=\Big\{\sum_{i=1}^m c_i \Ad(g_i)(x)|\h c_i\in [-1,1]\Big\} \supseteq 0_{r_0},
\]  
where $0_{r_0}$ is the ball of radius $r_0$ centered at $0$ in $\gfr$.
\end{lem}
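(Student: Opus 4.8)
The statement asserts a uniform lower bound: as $x$ ranges over the unit sphere of $\gfr$, the symmetric polytope $M(x)=\{\sum_{i=1}^m c_i\Ad(g_i)(x):c_i\in[-1,1]\}$ always contains a fixed ball $0_{r_0}$. The plan is to extract this from a pointwise non-degeneracy statement plus a compactness argument. First I would observe that for a \emph{fixed} unit vector $x$, the set $\{\Ad(g_1)(x),\dots,\Ad(g_m)(x)\}$ spans $\gfr$ as an $\bbr$-vector space. This is where the hypothesis that $\{\Ad(g_1),\dots,\Ad(g_m)\}$ is a basis of the algebra $\bbr[\Ad(G)]\subseteq\End(\gfr)$ enters: since $\gfr$ is simple, the $G$-module $\gfr$ (via $\Ad$) is irreducible, so by Burnside/Jacobson density $\bbr[\Ad(G)]$ acts without proper invariant subspace; hence for any nonzero $x$ the subspace $\bbr[\Ad(G)]\cdot x$ is all of $\gfr$, and since the $\Ad(g_i)$ span $\bbr[\Ad(G)]$, already $\mathrm{span}\{\Ad(g_i)(x)\}=\gfr$. (One must be slightly careful over $\bbr$: irreducibility of a real representation only gives that $\bbr[\Ad(G)]$ is a division algebra times matrices, but for a real form of a simple Lie algebra with $\gfr$ its own adjoint representation the commutant is $\bbr$, so the density argument applies verbatim; if there is any subtlety I would instead argue directly that $[\gfr,x]$ together with its iterates generates $\gfr$ as an ideal, which is nonzero hence everything.)

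Next, with $\{\Ad(g_i)(x)\}$ spanning, the linear map $T_x:\bbr^m\to\gfr$, $(c_i)\mapsto\sum c_i\Ad(g_i)(x)$ is surjective, so $T_x([-1,1]^m)=M(x)$ is a symmetric convex body with nonempty interior, and therefore contains some ball $0_{r(x)}$ with $r(x)>0$. Concretely I would take $r(x)$ to be, say, the largest radius of a ball centered at $0$ contained in $M(x)$; this is a positive real number depending on $x$. The remaining task is to show $\inf_{\|x\|=1} r(x)>0$, which is a compactness statement since the unit sphere $S\subseteq\gfr$ is compact.

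For the compactness step I would check that $x\mapsto r(x)$ is lower semicontinuous (indeed continuous) on $S$. The cleanest route: $r(x)=\min_{\|\xi\|=1}\,\max\{t\ge 0: t\xi\in M(x)\}$, and $\max\{t\ge0:t\xi\in M(x)\}=\max\{|\langle\xi,v\rangle|^{-1}\cdot(\text{support-function value})\}^{-1}$ — more simply, $r(x)^{-1}=\max_{\|\xi\|=1} h_{M(x)}(\xi)^{?}$... to avoid fiddly support-function bookkeeping I would instead just note: the map $x\mapsto M(x)$ is continuous in the Hausdorff metric (it's the image of the fixed cube $[-1,1]^m$ under the continuous family of linear maps $T_x$), the inradius of a symmetric convex body is a continuous function of the body in the Hausdorff metric, and inradius is strictly positive exactly when the body has nonempty interior, which we established holds for every $x\in S$. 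Hence $x\mapsto r(x)$ is continuous and positive on the compact set $S$, so $r_0:=\inf_{x\in S} r(x)=\min_{x\in S} r(x)>0$. By construction $0_{r_0}\subseteq M(x)$ for all unit $x$, and by homogeneity ($M(\lambda x)=\lambda M(x)$) the dependence on $g_i$'s is all that remains, as claimed.

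I expect the \emph{main obstacle} to be the first step — verifying carefully that $\{\Ad(g_i)(x)\}$ spans $\gfr$ for every nonzero $x$, i.e. that the $\bbr$-algebra generated by $\Ad(G)$ acts irreducibly (equivalently, with trivial invariant subspaces) on $\gfr$ over the \emph{real} field, where the adjoint representation of a real almost-simple group need not be absolutely irreducible. The safe workaround, which I would use if the density argument looks delicate, is: the $\bbr$-span of $\Ad(G)$ contains $\ad(\gfr)$ (differentiating one-parameter subgroups), so $M(x)$ contains a scaled copy of $\mathrm{span}\{[\,y,x\,]:y\in\gfr\}=[\gfr,x]$; and $[\gfr,x]$ is nonzero for $x\neq 0$ (center of $\gfr$ is trivial) and is $\ad(\gfr)$-invariant up to... actually $[\gfr,[\gfr,x]]+\bbr x$ — iterating, the subspace generated is an ideal, hence all of $\gfr$ by simplicity. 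That reduces the whole lemma to the elementary fact that a simple Lie algebra is generated as an ideal by $[\gfr,x]$ for any $x\neq 0$, plus the compactness argument above. Everything else (surjectivity $\Rightarrow$ interior point, Hausdorff-continuity of $T_x([-1,1]^m)$, continuity of inradius, $\inf$ over a compact set) is routine.
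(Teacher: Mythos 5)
Your proposal is correct and takes essentially the same route as the paper: use simplicity of $\gfr$ as a $G$-module to show $M(x)$ has nonempty interior for each unit $x$, then a compactness argument on the unit sphere. The one difference is purely cosmetic: the paper runs the compactness step by contradiction (take a sequence $x_i$ with $r(x_i)\to 0$, pass to a convergent subsequence, and use convexity of $M(x_n)$ plus $\|\Ad(g_i)(x_n)-\Ad(g_i)(x)\|\to 0$ to force $r(x_n)\ge r(x)/2$ eventually), whereas you argue directly that $r(\cdot)$ is (Lipschitz-)continuous via Hausdorff-continuity of $x\mapsto T_x([-1,1]^m)$ and continuity of the inradius of a symmetric convex body; both versions rest on the identical observation that a convex body Hausdorff-close to one containing $0_r$ must contain $0_{r-\delta}$. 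Your worry about real vs.\ absolute irreducibility is more caution than the lemma requires: you do not need Jacobson density or $\bbr[\Ad(G)]=\End(\gfr)$, only that there is no proper $\Ad(G)$-invariant subspace of $\gfr$, which follows at once because any such subspace is $\ad(\gfr)$-invariant (differentiate one-parameter subgroups) and hence an ideal of the simple Lie algebra $\gfr$; the paper states this in one line (``Since $\bbg$ is $\bbr$-simple, $\gfr$ is a simple $G$-module''), and your fallback via ideals is exactly the right justification.
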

\begin{proof}
	Since $\bbg$ is $\bbr$-simple, $\gfr$ is a simple $G$-module. Hence for every unit element $x$ of $\gfr$, $M(x)$ contains a neighborhood of $0$. Let $r(x)$ be the largest positive number such that $0_{r(x)}\subseteq M(x)$. Suppose to the contrary that there is a sequence $\{x_i\}_{i=1}^{\infty}$ of unit elements of $\gfr$ such that $\lim_{i\rightarrow \infty} r(x_i)=0$. By the compactness of the sphere of radius 1 in $\gfr$, after passing to a subsequence we can assume that $\{x_i\}_{i=1}^{\infty}$ converges to $x$, a unit element of $\gfr$. For every $y\in 0_{r(x)}$, there are $c_i\in[-1,1]$ such that $\sum_{i=1}^m \Ad(g_i)(x)=y$. For every $\vare>0$, if $n\gg_{\vare} 1$, then $\|\Ad(g_i)(x_n)-\Ad(g_i)(x)\|\leq \vare$. Therefore, 
	\[
	\|y-\sum_{i=1}^m \Ad(g_i)(x_n)\|\leq \sum_{i=1}^{m} |c_i| \|\Ad(g_i)(x)-\Ad(g_i)(x_n)\|\leq m\vare.
	\]
	Notice that $M(x_n)$ is a convex set which intersects every $m\vare$-neighborhood of points of $0_{r(x)}$. Therefore for $n\gg_{m,r(x)} 1$, we have $r(x_n)\geq r(x)/2$. This contradicts $\lim_{i\rightarrow \infty} r(x_i)=0$, and the claim follows.
\end{proof}
To formulate the next lemma, we start by recalling that for $g\in  {\rm O}_{n_0}(\bbr)$ or $g\in \GL_{n_0}(\bbz_p)$, if $\|g-I\|<1$ (if $p=2$, we assume $\|g-I\|<1/2)$), then for every $|t|\leq 1$ we can define 
\[
g^t:=\exp(t\log(g)).
\] 
Clearly $t\mapsto g^t$ is an analytic function, and one can see that 
\be\label{eq:exp-close-to-0-almost-linear}
|t|\ll_g \|g^t-I\|\ll_g |t|. 
\ee
\begin{lem}\label{lem:polytop-group-ring-adjoint}
	In the setting of Proposition~\ref{prop:conjugation-by-large-ball-multiplication-large-image}, suppose  $\{\Ad(g_1),\ldots,\Ad(g_m)\}$ is a basis of the $\bbr$-subalgebra $\bbr[\Ad(G)]$ of $\End(\gfr)$, where $\gfr:=\gfr(\bbr)$ is the Lie algebra of $G$. Suppose $\|g_i-I\|<1$ for every $i$. Then there is a positive integer $C:=C(g_1,\ldots,g_m)$ and a positive number $c:=c(g_1,\ldots,g_m)$ such that for every $0<t\leq c$ we have 
	\[
	\wt{M}_t:=\Big\{\sum_{i=1}^m c_i\Ad(g_i^t)|\h c_i\in [-1,1]\Big\}\supseteq 0_{t^C},
	\]
	where $0_{t^C}$ is the ball of radius $t^C$ centered at $0$ in $\bbr[\Ad(G)]$.
\end{lem}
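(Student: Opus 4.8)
The statement is the "group ring" refinement of Lemma~\ref{lem:polytops-g-module}: there we knew that the real span of $\Ad(G)$ acting on a unit vector $x$ fills a ball $0_{r_0}$ in $\gfr$ with $r_0$ uniform in $x$; now we want the analogous statement one level up, inside the associative algebra $\bbr[\Ad(G)]\subseteq \End(\gfr)$ itself, but with the elements $\Ad(g_i)$ replaced by their "shrunk" versions $\Ad(g_i^t)$ as $t\to 0^+$, and with quantitative (polynomial-in-$t$) control.

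\medskip

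The plan is to consider the linear map
\[
\Phi_t:\bbr^m\longrightarrow \bbr[\Ad(G)],\qquad \Phi_t(c_1,\dots,c_m):=\sum_{i=1}^m c_i\,\Ad(g_i^t),
\]
and observe that $\wt M_t$ is exactly the image under $\Phi_t$ of the cube $[-1,1]^m$. At $t=1$ we have $\Phi_1(\ebf_i)=\Ad(g_i)$, and by hypothesis $\{\Ad(g_1),\dots,\Ad(g_m)\}$ is a basis of the $m$-dimensional space $\bbr[\Ad(G)]$; hence $\Phi_1$ is a linear isomorphism $\bbr^m\to\bbr[\Ad(G)]$ and its image of $[-1,1]^m$ contains a ball around $0$. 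The point is that $t\mapsto \Ad(g_i^t)=\exp(t\log g_i)$ (composed with $\Ad$) is, entrywise in any fixed basis of $\End(\gfr)$, a real-analytic — indeed entire — function of $t$, so each matrix entry of $\Phi_t$ is analytic in $t$. Therefore $t\mapsto \det$ of $\Phi_t$ written in coordinates (i.e.\ the determinant of the $m\times m$ matrix expressing $\Ad(g_1^t),\dots,\Ad(g_m^t)$ in any fixed basis of $\bbr[\Ad(G)]$, or more robustly the largest $m\times m$ minor of the $(\dim\End(\gfr))\times m$ matrix of $\Phi_t$) is a real-analytic function $D(t)$ of $t$ on a neighborhood of $0$, not identically zero since $D(1)\neq 0$.

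\medskip

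From here the argument is soft: a non-identically-zero real-analytic function vanishes to finite order at $0$, so there are constants $c_1>0$ and an integer $N\ge 0$ with $|D(t)|\gg t^{N}$ for all $0<t\le c_1$ (shrinking $c_1$ so $D$ has no other zeros in $(0,c_1]$). Combined with the obvious polynomial \emph{upper} bound $\|\Ad(g_i^t)\|\ll_{g_i} 1$ from~\eqref{eq:exp-close-to-0-almost-linear} (the operator norm of $\Ad(g_i^t)$ is bounded by a constant, so the matrix of $\Phi_t$ has entries of size $O(1)$), the standard estimate relating the image of the unit cube under a linear map to its smallest singular value — the smallest singular value of $\Phi_t$ is at least $|D(t)|$ divided by a power of the largest singular value, hence $\gg t^{N'}$ for some integer $N'$ — gives that $\Phi_t([-1,1]^m)$ contains the ball of radius $\gg t^{N'}$ about $0$ in $\bbr[\Ad(G)]$. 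Choosing $C:=N'+1$ and $c:=c(g_1,\dots,g_m)\le c_1$ small enough that this $\gg t^{N'}$ exceeds $t^{C}$ for $0<t\le c$ finishes the proof.

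\medskip

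The only genuine subtlety — and the step I would be most careful with — is making the "finite order of vanishing" argument both correct and uniform: one must pick the right analytic function whose nonvanishing encodes surjectivity of $\Phi_t$ onto $\bbr[\Ad(G)]$ (using a basis of $\bbr[\Ad(G)]$ rather than all of $\End(\gfr)$, or taking a maximal minor), confirm it is analytic in $t$ near $0$ (clear, since $\exp(t\log g_i)$ is entire in $t$ and $\Ad$ is linear), and confirm it is not identically zero (clear from $t=1$). Everything else — the passage from "determinant bounded below by $t^N$, entries bounded above" to "image of cube contains a ball of radius $\gg t^{N'}$" — is elementary linear algebra (e.g.\ via the cofactor formula for the inverse, bounding $\|\Phi_t^{-1}\|_{\op}\le (m-1)!\,\|\Phi_t\|_{\op}^{m-1}/|D(t)|$), and the constants depend only on $g_1,\dots,g_m$ as required, since $c_1$, $N$, and the implied constants all come from the fixed analytic function $D$ attached to the fixed tuple $(g_1,\dots,g_m)$.
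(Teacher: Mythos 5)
Your proposal is correct and takes essentially the same route as the paper: you pick an analytic determinant-type function of $t$ (a maximal minor or determinant in a fixed basis, where the paper uses the Gram determinant $\det(A(t)^{T}A(t))$, which by Cauchy--Binet is a sum of squares of such minors), argue it is nonzero at $t=1$ hence vanishes to finite order at $0$, and convert the resulting polynomial lower bound together with the obvious $O(1)$ upper bound on the columns into a polynomial lower bound on the smallest singular value of $\Phi_t$. The linear-algebra conclusion that the image of $[-1,1]^m$ then contains a ball of polynomial radius is exactly the paper's last step, so no further comparison is needed.
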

\begin{proof}
We view $\Ad(g_i^t)$'s as $d^2\times 1$ column vectors, where $d:=\dim G$, and let  $A(t)$ be the $d^2\times m$ matrix that have $\Ad(g_i^t)$ in its $i$-th column. Consider $f(t):=\det(A(t)^T A(t))$ where $A(t)^T$ is the transpose of $A(t)$. Then $f$ is an analytic function, $f(1)\neq 0$, and $f(0)=0$. Since $f$ is an analytic function and non-zero, $f^{(\overline{C})}(0)\neq 0$ for some positive integer $\overline{C}$.  As $f$ is an analytic function,  for $0<t\leq \bar c$ we have $\frac{f^{(\overline{C})}(0)}{2\overline{C}!}t^{\overline{C}}\leq f(t)$. Since $\|A(t)\|_{\op}=\sqrt d$, $\|(A(t)^TA(t))^{-1}\|_{\op}\ll_{\{g_i\}} t^{-\overline{C}}$. Therefore, for every $y$ in $\bbr[\Ad(G)]$, we have 
\[
\Bigl\|\Bigl(A(t)^TA(t)\Bigr)^{-1}(A(t)^T)y\Bigr\|\ll t^{-\overline{C}} \|y\|
\quad \text{ and } \quad 
A(t)\Big((A(t)^TA(t))^{-1}(A(t)^T)y)\Big)=y.
\]
This implies the claim with $C=\overline{C}/2$ if we assume $0<t \leq c$ and $c\leq \bar c$ is sufficiently small to account for the implied multiplicative constant above.  
\end{proof}

\begin{lem}\label{lem:adjoint-action-small-ball-polytop}
	In the setting of Proposition~\ref{prop:conjugation-by-large-ball-multiplication-large-image}, for every $0<\rho\le p_0^{-2}$ (where $p_0= 2$ when $ {F}=\mathbb{R}$ and $ p_0=p$ when $F= \mathbb{Q}_p$) there are $g_{1,\rho},\ldots,g_{d^2,\rho}\in 1_{\rho}$ where $d:=\dim \bbg$ and positive number $C:=C(G)$ such that for every non-zero element $x\in \gfr(F)$ we have 
	\[
	\Big\{\sum_{i=1}^{d^2} c_i\Ad(g_{i,\rho})(x)|\h c_i\in F, |c_i|\leq 1 \Big\}\supseteq 0_{\rho^C\|x\|},
	\]
	where $0_{\rho^C\|x\|}$ is the ball of radius $\rho^C\|x\|$ centered at $0$ in $\gfr$. Moreover, if $F=\bbq_p$ and $\bbg=\wt{\bbg}\otimes_{\bbq}\bbq_p$ where $\wt{\bbg}$ is an absolutely almost simple $\bbq$-group, then the constant $C$ depends only on $\wt{\bbg}$.
\end{lem}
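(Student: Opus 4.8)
The plan is to deduce Lemma~\ref{lem:adjoint-action-small-ball-polytop} from the two preceding lemmas (Lemma~\ref{lem:polytops-g-module} and Lemma~\ref{lem:polytop-group-ring-adjoint}) by supplying elements $g_{i,\rho}\in 1_\rho$ whose adjoints span the group algebra $F[\Ad(G)]$ in a quantitatively controlled way. First I would fix a basis $\{\Ad(h_1),\dots,\Ad(h_m)\}$ of $F[\Ad(G)]$ inside $\End(\gfr)$ coming from \emph{finitely many} elements $h_j\in G$ with $\|h_j-I\|<1$ (when $p_0=2$ one needs $\|h_j-I\|<1/2$); such a basis exists because $\gfr$ is a simple $G$-module, hence the $F$-span of $\Ad(G)$ is all of $\End(\gfr)$ in the real/split cases, and in general it is the commutant-controlled subalgebra $F[\Ad(G)]$, which is exactly what Lemma~\ref{lem:polytop-group-ring-adjoint} is stated for. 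Then for a given $0<\rho\le p_0^{-2}$ I would take $t:=\rho$ (adjusting by a bounded power if $c(g_1,\dots,g_m)$ from Lemma~\ref{lem:polytop-group-ring-adjoint} forces $t\le c$, which only changes the exponent $C$ by a bounded factor) and set $g_{i,\rho}:=h_i^{\,t}$. By~\eqref{eq:exp-close-to-0-almost-linear} we have $\|g_{i,\rho}-I\|\ll_{h_i} t\le\rho$, so after possibly replacing $t$ by $\rho/\kappa$ for a constant $\kappa$ depending only on the $h_i$, we ensure $g_{i,\rho}\in 1_\rho$.

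Next I would invoke Lemma~\ref{lem:polytop-group-ring-adjoint} with this $t$: it gives $\wt M_t\supseteq 0_{t^{C_0}}$ in $F[\Ad(G)]$, i.e. every operator in $F[\Ad(G)]$ of norm $\le t^{C_0}$ is an $F$-combination $\sum c_i\Ad(g_i^t)$ with $|c_i|\le 1$. Now combine this with Lemma~\ref{lem:polytops-g-module}: for a unit $x\in\gfr$ the set $M(x)=\{\sum c_i\Ad(h_i)(x):|c_i|\le1\}$ contains $0_{r_0}$; more usefully, since $\gfr$ is a simple module, \emph{every} element of $\gfr$ of norm $\le r_0$ is realized as $(\sum_i c_i\Ad(h_i))(x)$ for some bounded coefficients, hence is $T(x)$ for some $T\in F[\Ad(G)]$ with $\|T\|\le C'$ depending only on the $h_i$. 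Rescaling, an arbitrary $y\in\gfr$ with $\|y\|\le r_0\|x\|$ is $T(x)$ with $\|T\|\le C'$; dividing $T$ by $C' t^{-C_0}$ puts $T/(C't^{-C_0})$ into the ball $0_{t^{C_0}}$ handled by $\wt M_t$, so $T=\sum_i (C't^{-C_0}) c_i'\,\Ad(g_i^t)$ with $|c_i'|\le 1$, i.e. $y=\sum_i a_i\,\Ad(g_{i,\rho})(x)$ with $|a_i|\le C' t^{-C_0}=O(\rho^{-C_0})$ (absorbing the constant $C'$ into a slightly larger exponent since $\rho\le p_0^{-2}<1$). Finally, scaling $x$ to a general nonzero vector and rescaling $y$ accordingly shows $\{\sum a_i\Ad(g_{i,\rho})(x):|a_i|\le 1\}\supseteq 0_{\rho^C\|x\|}$ for a suitable $C=C(G)$, after replacing the $g_{i,\rho}$ by the $O(\rho^{-C_0})$-fold adjustment; one pads the list with $d^2-m$ copies of the identity so that there are exactly $d^2$ elements.

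The only genuinely delicate point is tracking the \emph{uniformity} of the constant $C$ and the claim that when $F=\bbq_p$ and $\bbg=\wt\bbg\otimes_\bbq\bbq_p$ the exponent $C$ depends only on $\wt\bbg$. For this I would choose the auxiliary elements $h_j\in \wt\bbg(\bbq)\cap G$ (possible by strong approximation / density of $\bbq$-points), so that the analytic function $f(t)=\det(A(t)^TA(t))$ appearing in the proof of Lemma~\ref{lem:polytop-group-ring-adjoint} has $\bbq$-rational Taylor coefficients determined by $\wt\bbg$ alone; then the order of vanishing $\overline C$ of $f$ at $0$, and hence $C$, depends only on $\wt\bbg$ and not on $p$. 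The constant $c$ absorbing multiplicative errors likewise depends only on the chosen $h_j$, hence on $\wt\bbg$. I expect the main obstacle to be bookkeeping: carefully propagating the two separate exponents (from Lemma~\ref{lem:polytop-group-ring-adjoint} and from the rescaling in Lemma~\ref{lem:polytops-g-module}) through the chain of inclusions while keeping the coefficient bounds at $|a_i|\le 1$ after normalization, and verifying that the substitution $t\leftrightarrow\rho$ does not secretly introduce dependence on $p_0$ beyond the stated $\rho\le p_0^{-2}$ (this is where~\eqref{eq:exp-close-to-0-almost-linear}, whose implied constants depend on the fixed $h_i$ only, is essential). Everything else is a routine linear-algebra scaling argument.
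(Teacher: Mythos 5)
Your outline reproduces the paper's proof faithfully in the real case, where the argument does indeed chain Lemma~\ref{lem:polytop-group-ring-adjoint} (the inclusion $\wt M_t\supseteq 0_{t^C}$ in $\bbr[\Ad(G)]$) with Lemma~\ref{lem:polytops-g-module} (the inclusion $M(x)\supseteq 0_{r_0\|x\|}$) and then absorbs the constants $r_0/m$ into the exponent using $\rho\le 1/4$. But for the $p$-adic case the paper does something entirely different, and your attempt to run the same two lemmas there has a genuine gap.

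The issue is that Lemma~\ref{lem:polytops-g-module} and Lemma~\ref{lem:polytop-group-ring-adjoint} are both stated, and proved, over $\bbr$ only. Lemma~\ref{lem:polytop-group-ring-adjoint} in particular is proved by analysing the real analytic function $f(t)=\det(A(t)^{T}A(t))$ and invoking, for small $t>0$, the order-of-vanishing bound $\tfrac{f^{(\overline C)}(0)}{2\overline C!}\,t^{\overline C}\le f(t)$, which rests on $f\ge 0$ (positive semi-definiteness of $A^{T}A$) and on the structure of real Taylor expansions. You propose to port this to $\bbq_p$ by observing that the ultrametric gives $|f(t)|_p=|c_{\overline C}|_p|t|_p^{\overline C}$ for small $|t|_p$; that does hold, but it is a different estimate with a different proof, and nothing in the paper establishes it. You would need to state and prove $\bbq_p$-versions of both lemmas from scratch. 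Moreover, your uniformity claim — choose $h_j\in\wt\bbg(\bbq)\cap G$ so that the Taylor coefficients of $f$ are rational, hence $\overline C$ depends only on $\wt\bbg$ — is only half of what is needed. Even with $c_{\overline C}\in\bbq$ fixed, the threshold $\bar c$ below which the ultrametric estimate is valid is governed by the $p$-adic valuations of the \emph{other} coefficients of $f$, and the lower bound for $\sigma(\dif\Phi)$ inherits the factor $|c_{\overline C}|_p$; both can degrade (by finitely many $p$-powers) as $p$ varies. Absorbing this into a slightly larger exponent requires a quantitative control on these valuations which you do not supply. The paper sidesteps this entirely: in the $p$-adic case it sets $l=\lceil\log_p(1/\rho)\rceil$, chooses $g_{i,\rho}$ inside the congruence subgroup $G[p^l]=1_\rho$ whose adjoints $\bbz_p$-span $\bbz_p[\Ad(G[p^l])]$, and then cites \cite[Proposition~44']{SG-super-approx-II} for the inclusion $\sum_i\bbz_p\Ad(g_{i,\rho})\supseteq p^{\overline Cl}\bbz_p[\Ad(G[1])]$ and \cite[Lemma~40]{SG-super-approx-II} (simplicity of $\gfr(\bbq_p)$ as a $\bbq_p[\Ad(G[1])]$-module) for $\bbz_p[\Ad(G[1])]x\supseteq p^{\overline C'}\|x\|^{-1}(\gfr\cap\M_{n_0}(\bbz_p))$; those two cited results carry the uniformity in $p$ as part of their statement. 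So your proposal is correct and essentially the paper's argument over $\bbr$, but for $\bbq_p$ it is a different (unproved) route, and the uniformity step is the point where it is incomplete.
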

\begin{proof}
	We start with the $p$-adic case.  Let $l:=\lceil \log_p(1/\rho) \rceil$; then $G[p^l]:=1_\rho$ is the kernel of the residue map modulo $p^l$. Choose $\{g_{1,\rho},\ldots,g_{d^2,\rho}\}\subseteq G[p^l]$ such that the $\bbz_p$-linear span of $\Ad(g_{i,\rho})$'s is the $\bbz_p$-algebra $\bbz_p[\Ad(G[p^l])]$. Then by \cite[Proposition 44']{SG-super-approx-II}, there is a positive number $\overline{C}$ which depends on $\bbg$ (and depends only on $\wt{\bbg}$ if $\bbg=\wt{\bbg}\otimes_{\bbq}\bbq_p$) such that 
	\be\label{eq:z-p-span-ball}
	\sum_{i=1}^{d^2} \bbz_p\Ad(g_{i,\rho})\supseteq p^{\overline{C}l} \bbz_p[\Ad(G[1])]
	\ee 
	where $G[1]$ is the ball of radius 1 in $\bbg(\bbq_p)$. By \eqref{eq:z-p-span-ball}, for every $x\in \gfr$ we have
	\be\label{eq:small-nhbd-p-adic-large-ball-in-end-lie-alg}
	\sum_{i=1}^{d^2} \bbz_p\Ad(g_{i,\rho})(x) \supseteq p^{\overline{C}l} \bbz_p[\Ad(G[1])]x.
	\ee
	On the other hand, $\gfr(\bbq_p)$ is a simple $\bbq_p[\Ad(G[1])]$-module. Hence by \cite[Lemma 40]{SG-super-approx-II}, there is a positive number $\overline{C}'$ (depending only on $\wt{\bbg}$ if $\bbg=\wt{\bbg}\otimes_{\bbq}\bbq_p$) such that 
	\be\label{eq:fixed-large-ball-lie-algebra}
	\bbz_p[\Ad(G[1])]x\supseteq p^{\overline{C}'} \| x \|^{-1} (\gfr \cap \M_{n_0}(\bbz_p)).  
	\ee
	By \eqref{eq:small-nhbd-p-adic-large-ball-in-end-lie-alg} and \eqref{eq:fixed-large-ball-lie-algebra}, we deduce that
	\[
	\Big\{\sum_{i=1}^{d^2} c_i\Ad(g_{i,\rho})(x)|\h c_i\in F, |c_i|\leq 1 \Big\}\supseteq 0_{p^{-\overline{C}\, \overline{C}'l}\|x\|},
	\]
	and the $p$-adic case follows.
	
	Suppose $\{g_1,\ldots,g_m\}$ is as in Lemma~\ref{lem:polytop-group-ring-adjoint}. Notice that 
	\[
	\Big\{\sum_{i=1}^m c_i \Ad(g_i)|\h c_i\in[-1, 1] \Big\}\subseteq 0_{m},
	\]
	where $0$ is the zero of $\mathbb R[\Ad(G)]$. 
	Now by Lemma~\ref{lem:polytop-group-ring-adjoint}, we have 
	\[
	\Big\{\sum_{i=1}^m c_i \Ad(g_i^t)|\h c_i\in[-1, 1] \Big\}\supseteq 0_{t^C}\supseteq \frac{t^C}{m}\Big\{\sum_{i=1}^m c_i \Ad(g_i)|\h c_i\in[-1, 1] \Big\}, 
	\]
	for some positive numbers $C$. Combining this and Lemma~\ref{lem:polytops-g-module}, we have 
	\[
	\Big\{\sum_{i=1}^m c_i \Ad(g_i^t)(x)|\h c_i\in[-1, 1] \Big\}\supseteq 0_{r_0t^C \| x \|/m},
	\]
	where $r_0$ is the constant appearing in Lemma~\ref{lem:polytops-g-module}. The real case follows.
\end{proof}

\begin{proof}[Proof of Proposition~\ref{prop:conjugation-by-large-ball-multiplication-large-image}]
	Note that 
	\[
	\ad: \Lie(\bbg)\to \Lie(\bbg)
	\]
	is an $F$-isomorphism. 
	This implies that, for every $x\in \Lie(\bbg)(F)$, 
	\[
	\|x\|_{\op}\ll\|\ad(x)\|_{\op}\ll\|x\|_{\op}
	\]
	where the implied constant depends only on $\bbg(F)$.
	
	Moreover, if $\bbg=\wt\bbg\otimes_\bbq\bbq_p$, then $\ad$ is induced from a $\bbq$-isomorphism of $\Lie(\wt\bbg)$.
	Therefore, in this case the implied constants equal $1$ if $p$ is large enough depending only on $\wt\bbg$ --- indeed, note that $(\ad)^{-1}$ is a $\bbq$-isomorphism, and so for $p$ large enough (depending only on $\wt\bbg$) $\ad$ is an isometry.   
	
	We also notice that $\log: 1_{p_0^{-2}}\to \gfr$ is a bi-Lipschitz map where $p_0=2$ if $F=\bbr$ and $p_0=p$ if $F=\bbq_p$. Indeed if $F=\bbq_p$, then $\log$ is an isometry between $1_{p^{-2}}$ and $0_{p^{-2}}$. Thus
	\[ 
	\|\Ad(h)-I\|_{\op}\gg\|\log (\Ad(h))\|_{\op}\gg\|\ad(\log h)\|_{\op}\gg\|\log h\|_{\op}\gg\|h-I\|_{\op},
	\]
	for all $h\in 1_{p_0^{-2}}$. If $\bbg=\wt\bbg\otimes_\bbq\bbq_p$ and $p$ is large enough, $\gg$ may be replaced with $=$ in the above.
	
	We conclude that 
	\be\label{eq: lower bd Ad(h)}
	\|(\Ad(h)-I)(x)\|\ge c' \|h-I\| 
	\ee
	for a unit vector  $x \in \gfr$ and positive number $c'=c'(G)$ which depends only on $\wt\bbg$ if $\bbg=\wt\bbg\otimes_\bbq\bbq_p$.
	
Recall that $0<\rho\leq p_0^{-2}$.  
	 Let $I:=\{a\in F|\h |a|< 1/4\}$, and let $\{g_{1,\rho},\ldots,g_{d^2,\rho}\}$ be given by Lemma~\ref{lem:adjoint-action-small-ball-polytop}.
	 Set
	 	\[
	\Phi:I \times \cdots \times I \rightarrow G, \h \Phi(t_1,\ldots,t_{d^2}):=g_{1,\rho} [h,\exp(t_1x)] g_{1,\rho}^{-1} \cdots  g_{d^2,\rho} [h,\exp(t_{d^2}x)] g_{d^2,\rho}^{-1}.
	\]
	Then $\dif\Phi(0):F^{d^2}\rightarrow \gfr$ is given by
	\[
	\dif\Phi(0)(c_1,\ldots,c_{d^2})=\sum_{i=1}^{d^2} c_i \Ad(g_{i,\rho})(\Ad(h)-I)x.
	\]
	Hence by Lemma~\ref{lem:adjoint-action-small-ball-polytop} and the choice of $x$, as in~\eqref{eq: lower bd Ad(h)}, we obtain that
	\be\label{eq:lower-bound-sigma-dif}
	\sigma(\dif\Phi(0))\geq \frac{c'}{d} \|h-I\| \rho^C, 
	\ee
	where $\sigma$ of a matrix $A$ is given by $\sup\{r\in [0,\infty)|\h 0_r\subseteq A0_1\}$ and $C$ is a positive integer which depends on $G$. Writing the Taylor expansion of the exponential function, we obtain that in the $p$-adic case all the coefficients are $p$-adic integers and in the real case, for every $1\leq j,j'\leq d^2$ and $\xbf\in I\times \cdots \times I$, $\|\partial_{j,j'}\Phi(\xbf)\|\leq d^{O(1)}$. Therefore, by Theorem~\ref{thm:p-adic-inverse-function} and Theorem~\ref{thm:real-inverse-function}, for every $0<r\leq  c''\|h-I\| \rho^C$ (where $c''=c'd^{-O(1)}$), we have 
	\[
	1_{\frac{c'}{4d} r \|h-I\| \rho^C }\subseteq \Phi(0_r).
	\]
	This implies the claim with $c=\min\{\frac{c'}{4d}, c''\}$.   
\end{proof}

\section{Proof of Theorem~\ref{thm:main1}} 
In this section, we will complete the proof of Theorem~\ref{thm:main1}. 
We begin by recalling~\cite[Theorem~9.3]{MMSG}, which will be used both in this section and \S\ref{sec: proof of thm Q-form} below. 

 \begin{theorem}\label{thm:functions-scale-spec-gap}
 	Suppose $G$ is $L$-locally random with coefficients $C_0$ which satisfies the dimension condition {$\Dim(C_1, d_0)$}, see~\eqref{eq:dimension-condition-intro}. Let $\mu$ be a symmetric Borel probability measure on $G$, and the group generated by the support of $\mu$ is dense in $G$. Suppose that there exist $C_3>0$, $b>0$, and $ 0< \eta_0<1$ such that for every $\eta\le \eta_0$ and every function $g\in L^2(G)$ which lives at scale $\eta$ there exists $l\le C_3 \log(1/\eta)$ such that 
\[ 	\|\mu^{(l)}\ast g\|_2\le \eta^{b} \|g\|_2.\] 
Then there is a subrepresentation $\cal_0$ of $L^2(G)$ with $\dim \cal_0\le 2C_0\eta_0^{-d_0}$ 
such that 
\[
\lcal(\mu; L^2(G)\ominus \cal_0)\ge \frac{b}{C_3}.
\]
In particular, $\lcal(\mu;G)>0$. 
 \end{theorem}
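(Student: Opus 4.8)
The plan is to follow the Littlewood–Paley philosophy underlying the results quoted from \cite{MMSG}. The statement is essentially a bookkeeping device: it packages the hypothesis ``$f$ at any small scale $\eta$ contracts polynomially after $\le C_3\log(1/\eta)$ convolution steps'' into a genuine spectral gap statement, at the cost of carving out a single finite-dimensional exceptional subrepresentation $\cal_0$. I would first reduce to studying a single scale. Using the Littlewood–Paley decomposition for locally random groups developed in \cite[\S8]{MMSG}, any $g\in L^2_0(G)$ can be written (up to controlled errors) as a sum $g=\sum_j g_j$ where $g_j$ lives at a scale $\eta_j=\eta_0^{a^j}$ (for a fixed parameter $a$ appearing in Definition~\ref{def:fun-at-scale}), the pieces being almost orthogonal; a high-frequency tail below some dyadic cutoff is handled separately and is exactly what produces $\cal_0$.

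The key steps, in order, would be: \textbf{(1)} Fix the scale parameter $a$ and the corresponding Littlewood–Paley projections $P_j$ (smoothed characteristic functions of $1_{\eta_j}$ differences); show $\sum_j \|P_j g\|_2^2 \asymp \|g\|_2^2$ for $g$ orthogonal to the span $\cal_0$ of representations of dimension $\le \eta_0^{-d_0}$, where the bound $\dim\cal_0\le 2C_0\eta_0^{-d_0}$ comes from the dimension condition $\Dim(C_1,d_0)$ together with $L$-local randomness (each low-dimensional representation contributes a bounded-dimensional block, and there are $\ll \eta_0^{-d_0}$ of them by a counting argument as in \cite[\S6, \S9]{MMSG}). \textbf{(2)} For each Littlewood–Paley piece $g_j$, which lives at scale $\eta_j\le\eta_0$, apply the hypothesis to get an $\ell_j\le C_3\log(1/\eta_j)$ with $\|\mu^{(\ell_j)}\ast g_j\|_2\le \eta_j^{b}\|g_j\|_2$. \textbf{(3)} Convert this multi-step contraction into a one-step bound: since $\|T_\mu\|_{\op}\le 1$ on $L^2_0$, the operator $T_\mu$ restricted to the (near-)invariant piece at scale $\eta_j$ has norm $\le \eta_j^{b/\ell_j}\le \eta_j^{b/(C_3\log(1/\eta_j))}=2^{-b/C_3}$ after taking logs (here one uses that $T_\mu$ is self-adjoint, so $\|T_\mu^{\ell_j}\|=\|T_\mu\|^{\ell_j}$ on any invariant subspace; a little care is needed because $g_j$ is only \emph{almost} invariant under the relevant averaging, so one absorbs the error terms from the ``almost invariant'' and ``averaging to zero'' clauses of Definition~\ref{def:fun-at-scale}). \textbf{(4)} Reassemble: $\|T_\mu g\|_2^2\le \sum_j \|T_\mu g_j\|_2^2 + (\text{cross terms and tail}) \le 2^{-2b/C_3}\|g\|_2^2 + \text{small}$, yielding $\lcal(\mu;L^2(G)\ominus\cal_0)\ge b/C_3$ (after adjusting constants and, if necessary, replacing $b/C_3$ by a slightly smaller explicit value absorbed by the error analysis — the statement as quoted already allows this since it is stated as $\ge b/C_3$). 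Finally, $\cal_0$ is a genuine subrepresentation because $T_\mu$ commutes with the right regular representation and $\cal_0$ was defined as a sum of isotypic components; and $\lcal(\mu;G)>0$ follows since on the finite-dimensional $\cal_0$ the operator $T_\mu$ has spectral radius $<1$ (density of $\langle\supp\mu\rangle$ rules out eigenvalue $1$ on $L^2_0$, and finite dimension makes the sup a max).

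The main obstacle I expect is \textbf{Step (3)}: honestly controlling the passage from ``$\le C_3\log(1/\eta)$ steps give $\eta^b$ contraction on $g_j$'' to ``one step gives a fixed contraction factor,'' because $g_j$ is not exactly in an invariant subspace — the two clauses of Definition~\ref{def:fun-at-scale} only say it is approximately averaged and approximately invariant. One must track how these errors propagate under the $\ell_j$-fold convolution and show they remain of size $\ll\eta_j^{\text{something positive}}\|g_j\|_2$, so that they can be summed over $j$ without destroying the gap; this is where the interplay between the exponents ($1/a$, $a^2$, $b$, $C_3$, $d_0$) must be balanced, and where the hypothesis $\eta\le\eta_0$ with $\eta_0$ small is used to make the geometric tail converge. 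A secondary technical point is making the almost-orthogonality of the Littlewood–Paley pieces quantitative enough that the cross terms in Step (4) are genuinely lower order; both of these are carried out in \cite[\S6, \S8, \S9]{MMSG}, so in the write-up I would cite those sections and only indicate the bookkeeping, rather than redo the harmonic analysis from scratch.
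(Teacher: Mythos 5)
The theorem you are being asked to prove is not proved in this paper: the authors \emph{cite} it verbatim as \cite[Theorem~9.3]{MMSG} (``We begin by recalling~\cite[Theorem~9.3]{MMSG}'') and use it as a black box, so there is no in-paper proof to compare against. With that caveat, your proposal is the right plan and follows the Littlewood--Paley strategy of \cite[\S8--9]{MMSG}: decompose $L^2_0(G)$ into a low-frequency exceptional piece $\cal_0$ plus scale-bands, show each band sees a one-step contraction, and reassemble.

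The one place where your write-up, as phrased, has a real gap is Step~(3). You apply the hypothesis to $g_j$ (the scale-$\eta_j$ piece of your \emph{given} function $g$), obtaining $\|T_\mu^{\ell_j} g_j\|_2\le \eta_j^b\|g_j\|_2$ for some $\ell_j$ depending on $g_j$, and then write as if this directly yields $\|T_\mu|_{W_j}\|_{\op}\le \eta_j^{b/\ell_j}$, where $W_j$ is the scale-$\eta_j$ band. A bound $\|T^{\ell}v\|\le c\|v\|$ for one particular $v$ does not give $\|T\|^{\ell}\le c$, even for self-adjoint $T$: the extremal vector for the operator norm need not be $v$. What saves the argument, and needs to be said, is that the band projections $\Pi_j$ are convolutions on the \emph{right} by fixed central functions (smoothed $P_\eta$'s), hence commute with $T_\mu=\mu*(\,\cdot\,)$; so $W_j$ is a genuine $T_\mu$-invariant subspace, \emph{every} unit vector in $W_j$ lives at scale $\eta_j$ in the sense of Definition~\ref{def:fun-at-scale}, and in particular the hypothesis applies to a near-extremal spectral vector $g_j'\in W_j$ with $\|T_\mu^{\ell} g_j'\|\ge(\|T_\mu|_{W_j}\|_{\op}-\varepsilon)^{\ell}\|g_j'\|$ for all $\ell$. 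Running the inequality $\left(\|T_\mu|_{W_j}\|_{\op}-\varepsilon\right)^{\ell'}\le \eta_j^b$ with $\ell'\le C_3\log(1/\eta_j)$, dividing logarithms and letting $\varepsilon\to0$, then gives $\lcal(\mu;W_j)\ge b/C_3$ for each $j$. Once this is in place your Step~(4) is immediate, since $T_\mu$ preserves the orthogonal decomposition $L^2_0(G)\ominus\cal_0=\bigoplus_j W_j$ (up to the controlled error terms you already flag), so $\lcal(\mu;L^2_0(G)\ominus\cal_0)=\inf_j\lcal(\mu;W_j)\ge b/C_3$. In short: the proposal is correct in outline, but the passage from a single-vector $\ell$-step bound to an operator-norm bound must go through an extremal vector of the invariant band, not the particular piece $g_j$; once that is made explicit the proof closes.
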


In addition to Theorem~\ref{thm:functions-scale-spec-gap}, the proof also relies on results in~\cite[\S5]{MMSG}, 
as well as Theorem~\ref{thm: continuity prop of couplings} and Proposition~\ref{prop:contraction-coupling} in this paper. 

\begin{proof}[Proof of Theorem~\ref{thm:main1}]
The proof will be completed in some steps. We will write $F_i=\mathbb Q_{\nu_i}$. 
Recall that $G_i$ is a compact open subgroup of $\bbg_i(\bbq_{\nu_i})$ where for $i=1,2$, $\bbg_i$ is $\bbq_{\nu_i}$-simple group. 
Recall also our notation $p_\nu=\nu$ if $\nu$ is non-Archimedean and $p_\infty=2$.

Let $X=(X_1,X_2)$ be as in the statement, and let $\mu$ be the probability law of $X$. Since 
$\min\{\lcal(X_1),\lcal(X_2)\}\geq c_0>0$, we have  
\be\label{eq: marginals have gap}
\max\{\lambda(\pi_{1*}\mu; G_1), \lambda(\pi_{2*}\mu; G_2)\}=:\lambda <1
\ee
where $\pi_i$ denotes the projection onto the $i$-th factor.

\medskip

\noindent
{\bf Reduction to functions living at certain scale.}\ 
There exist $L$, $C_0$, $C_1$ and $d_{0i}$ for $i=1,2$ so that if we let $\rho_0\ll 1$, then all of the following properties are satisfied:  

\begin{enumerate}[label={(G-\arabic*)}]
\item\label{G-1} $G_i$ is $L$-locally random with coefficient $C_0$ for $i=1,2$.

\item\label{G-2} For all $\hat\eta=\rho_0^j$, $j\in\mathbb N$, the group $G_i$ satisfies 
\[
\frac{1}{C_1}\hat\eta^{d_{0i}} \le |1_{\hat\eta}^{(i)}| \le C_1 \hat\eta^{d_{0i}},\qquad\text{for $i=1,2$.} 
\]

\item\label{G-3} For $i=1,2$, $G_i$ is $\delta$-discretizable for all $\delta=\rho_0^j$, and all $j\in\mathbb N$ in the $p$-adic case and sufficiently large $j \in \mathbb N$ in the real case. 
\end{enumerate}
See~\cite[\S5]{MMSG} for the first statement, the second assertion holds in view of the choice of $\rho_0$, and the third property is satisfied by Proposition~\ref{prop: Lie gp discretizable}. We note that $L$ and $d_{01},d_{02}$ depend only on $\dim\G_i$, $C_0$ depends on $G_1$ and $G_2$, and $C_1$ depends on $\max\{p_{\nu_1},p_{\nu_2}\}$.

Let $m_i$ denote the probability Haar measure on $G_i$ for $i=1,2$, and let $m=m_1\times m_2$ denote the probability Haar measure on $G$.  

From \ref{G-1} and~\ref{G-2}, we conclude that $G=G_1\times G_2$ is $L$-locally random with coefficient $2C_0$, see~\cite[Lemma 5.2]{MMSG}. The group $G$ also satisfies ${\rm DC}(d_0, C_1)$ where $d_0=d_{01}+d_{02}$ and $C_1=C_{11}C_{12}$.

Fix some integer $a\geq 4Ld_0$. Let $\eta=\rho_0^{j}$ for some $j\geq j_0$; the parameter $j_0$ will be explicated later. Recall that $f\in L^2(G,m)$ lives at scale $\eta$ if both of the following properties are satisfied   
\be\label{eq: proof f leavs at eta}
\|f_{\eta^{a^2}}-f\|_2\leq \eta^{a/2}\|f\|_2,\quad\text{and}\quad \|f_{\eta^{1/a}}\|_2 \leq \eta^{1/(2a)}\|f\|_2.
\ee

We claim
 
\begin{claimm}\label{claim: proof 1}
There exists $\ell\leq \bar C\log(1/\eta)$ where $\bar C$ depends on $d_{0i}$, $L$ so that 
\be\label{eq: scale rho func}
\|\mu^{(\ell)} \ast f\|_2\leq \eta^{c/2} \|f\|_2
\ee
where $b$ depends only on $L$ and $d_{01}, d_{02}$
\end{claimm}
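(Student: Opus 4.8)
\textbf{Proof plan for Claim~\ref{claim: proof 1}.}
The plan is to combine the coupling-transfer result Theorem~\ref{thm: continuity prop of couplings} with the small-scale contraction Proposition~\ref{prop:contraction-coupling}, and then to upgrade from a fixed number of steps of the coupling to a uniform contraction by a $\mu$-random walk, all applied to a function $f$ that lives at scale $\eta$.

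First I would fix the parameter $a\geq 4L(d_{01}+d_{02})$ as in Proposition~\ref{prop:contraction-coupling} and invoke the dichotomy in the hypotheses \ref{G-1}--\ref{G-3}: the groups $G_1$ and $G_2$ satisfy the local randomness condition, the dimension condition at scales $\rho_0^j$, and are $\delta$-discretizable, so the structural hypotheses of Theorem~\ref{thm: continuity prop of couplings} are met with $\rho=\rho_0$ chosen small enough that $\rho_0\leq \tfrac{1}{100C_0(5C_1)^L}$, and those of Proposition~\ref{prop:contraction-coupling} as well. Since $f$ lives at scale $\eta$ with parameter $a$, in particular $\|f_\eta - f\|_2\le \eta^{a/2}\|f\|_2$ (this follows from almost-invariance together with $\|f_\eta-f\|_2\leq\|f_{\eta^{a^2}}-f\|_2$ after adjusting constants, or one applies the second bullet directly), so $f$ plays the role of the test function in Theorem~\ref{thm: continuity prop of couplings} with exponent $C=a/2$ (or more precisely with an exponent comparable to $a$). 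Applying Theorem~\ref{thm: continuity prop of couplings} yields a symmetric coupling $\nu$ of $m_1$ and $m_2$ and an integer $\ell_0\gg \log_\lambda(\rho_0/C_1)$ with
\[
\bigl\|\mu^{(\ell_0)}\ast f - \nu\ast f\bigr\|_2 \leq 6\,\rho_0^{C}\|f\|_2,
\]
where $C$ is chosen so that $6\rho_0^C\le \tfrac12\eta^{c}$; this is possible because $\eta=\rho_0^j$ and we are free to demand $j_0$ large (equivalently $\eta$ small) while keeping $\ell_0$ of size $O(\log(1/\eta))$ only after we iterate — see below. Next, since $f$ lives at scale $\eta$, Proposition~\ref{prop:contraction-coupling} applies to the coupling $\nu$ and gives a fixed $m_0=m_0(L,d_{01},d_{02})$ and $c>0$ with $\|\nu^{(2^{m_0})}\ast f\|_2\leq \eta^{c}\|f\|_2$. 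The key point is that $\nu^{(2^{m_0})}\ast f$ is $2^{m_0}$ applications of $\nu$, and by telescoping $\nu^{(2^{m_0})} - (\mu^{(\ell_0)})^{(2^{m_0})}$ against $f$ using Young's inequality $\|\mu\ast g\|_2\le\|g\|_2$ repeatedly, one controls $\|\mu^{(2^{m_0}\ell_0)}\ast f - \nu^{(2^{m_0})}\ast f\|_2$ by $2^{m_0}\cdot 6\rho_0^C\|f\|_2$; combining the two estimates gives $\|\mu^{(2^{m_0}\ell_0)}\ast f\|_2 \le (\eta^c + 2^{m_0+3}\rho_0^C)\|f\|_2$, and choosing the exponent $C$ in Theorem~\ref{thm: continuity prop of couplings} large enough (it can be taken proportional to $a$, hence as large as we like relative to the fixed $c$) makes the second term $\le \tfrac12\eta^{c}$, yielding \eqref{eq: scale rho func} with $\ell=2^{m_0}\ell_0$.

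The bookkeeping obstacle — and the step I expect to need the most care — is ensuring $\ell=2^{m_0}\ell_0\le \bar C\log(1/\eta)$ with $\bar C$ depending only on $d_{0i},L$. Here one uses that $\ell_0$ from Theorem~\ref{thm: continuity prop of couplings}, via \eqref{eq: choice of ell coupling}, is $O(\hat C^2\log_\lambda\rho_0 \cdot \max d_{0i})$ where $\hat C$ is itself bounded in terms of $C,d_{0i},L$; since we need $C\asymp a\asymp L(d_{01}+d_{02})$ to absorb $6\rho_0^C$ against $\eta^c=\rho_0^{jc}$, the requisite $j_0$ (and hence the threshold $\eta_0=\rho_0^{j_0}$) is a constant depending only on these data, and for $\eta\le\eta_0$ one has $\log(1/\eta)=j\log(1/\rho_0)\ge j_0\log(1/\rho_0)\gg \ell_0$, so $\ell\le\bar C\log(1/\eta)$ holds with $\bar C$ absolute in the allowed sense. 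One must also verify $\lambda<1$ is available from \eqref{eq: marginals have gap} so that $\log_\lambda(\cdot)$ is finite; this is exactly the hypothesis $\min\{\lcal(X_1),\lcal(X_2)\}\ge c_0>0$. Assembling these estimates completes the proof of the claim, and feeding \eqref{eq: scale rho func} into Theorem~\ref{thm:functions-scale-spec-gap} will then give the spectral gap asserted in Theorem~\ref{thm:main1}.
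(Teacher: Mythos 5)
Your overall strategy — couple $\mu^{(\ell_1)}$ to a coupling $\sigma$ of Haar measures via Theorem~\ref{thm: continuity prop of couplings}, contract $\sigma^{(2^{n_0})}$ on functions living at scale $\eta$ via Proposition~\ref{prop:contraction-coupling}, and telescope to transfer the contraction back to $\mu$ — is the right plan and is what the paper does. But two of your calibrations are off in ways that actually break the argument.

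First, the scale $\rho$ in Theorem~\ref{thm: continuity prop of couplings}. You propose to apply the theorem at the fixed scale $\rho=\rho_0$ and then "choose the exponent $C$ large enough, proportional to $a$, to make $6\rho_0^C\le\tfrac12\eta^c$." This does not work, because $C$ is not a free parameter: the hypothesis of Theorem~\ref{thm: continuity prop of couplings} is $\|P_\rho\ast f - f\|_2\le\rho^C\|f\|_2$, and the only almost-invariance you have from "$f$ lives at scale $\eta$" is $\|f_{\eta^{a^2}}-f\|_2\le\eta^{a/2}\|f\|_2$. With $\rho=\eta^{a^2}$ that gives exactly $C=1/(2a)$, a \emph{small} exponent. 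The paper instead applies the theorem with $\rho=\eta^{a^2}$ and $C=1/(4a)$, and the resulting error $6\rho^{1/(4a)}=6\eta^{a/4}$ is tiny relative to $\eta^c=\eta^{1/(16La)}$ not because $C$ is large but because $\rho$ is a very high power of $\eta$. At a fixed $\rho=\rho_0$ your error is a constant independent of $\eta$, and there is no admissible $C$ to fix that. (Also note your intermediate claim $\|f_\eta-f\|_2\le\|f_{\eta^{a^2}}-f\|_2$ is the wrong direction: $\eta^{a^2}<\eta$, so $P_{\eta^{a^2}}$ is the finer average.)

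Second, the telescoping is not purely formal. When you unroll
\[
\mu^{(2^{m_0}\ell_0)}\ast f-\sigma^{(2^{m_0})}\ast f=\sum_{i} \mu^{(\ell_0(2^{m_0}-i-1))}\ast\bigl(\mu^{(\ell_0)}-\sigma\bigr)\ast\sigma^{(i)}\ast f,
\]
each term requires applying Theorem~\ref{thm: continuity prop of couplings} to $g_i:=\sigma^{(i)}\ast f$. The hypothesis there is relative: $\|(g_i)_\rho-g_i\|_2\le\rho^C\|g_i\|_2$. Young's inequality preserves the absolute bound $\|(g_i)_\rho-g_i\|_2\le\|f_\rho-f\|_2\le\rho^{1/(2a)}\|f\|_2$, but this is $\le\rho^{1/(4a)}\|g_i\|_2$ only if $\|g_i\|_2\ge\rho^{1/(4a)}\|f\|_2$, and a priori $\|g_i\|_2$ could be much smaller than $\|f\|_2$. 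The paper resolves this by introducing the stopping time $n_1\le 2^{n_0}$: the smallest $i$ at which $\|\sigma^{(i)}\ast f\|_2\le\rho^{1/(4a)}\|f\|_2$. For $i<n_1$ the hypothesis holds and one telescopes only up to $\sigma^{(n_1)}\ast f$; if $n_1<2^{n_0}$ then $\|\sigma^{(n_1)}\ast f\|_2\le\rho^{1/(4a)}\|f\|_2=\eta^{a/4}\|f\|_2\le\eta^c\|f\|_2$ already, and if $n_1=2^{n_0}$ one invokes Proposition~\ref{prop:contraction-coupling} directly. Without this device, or a substitute for it, your estimate $\|\mu^{(2^{m_0}\ell_0)}\ast f-\nu^{(2^{m_0})}\ast f\|_2\le 2^{m_0}\cdot 6\rho^C\|f\|_2$ is unjustified. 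I would encourage you to redo the bookkeeping with $\rho=\eta^{a^2}$, $C=1/(4a)$, and the stopping-time $n_1$, at which point the bound $\ell=\ell_1 n_1\le\bar C\log(1/\eta)$ does follow as you indicate.
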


Note that in view of Theorem~\ref{thm:functions-scale-spec-gap},~\eqref{eq: scale rho func} finishes the proof. Thus it remains to prove the claim.

\proof[Proof of the Claim]
The proof relies on Theorem~\ref{thm: continuity prop of couplings} and Proposition~\ref{prop:contraction-coupling} as we now explicate. 

\medskip

\noindent 
{\bf Reduction to couplings of Haar measures.}\
Let $\eta=\rho_0^j$ for some $j\geq j_0$. Properties~\ref{G-1}, \ref{G-2}, \ref{G-3}, together with~\eqref{eq: marginals have gap} imply that Theorem~\ref{thm: continuity prop of couplings} is applicable with $G_1$, $G_2$, $\mu$, and $\rho=\eta^{a^2}$. In view of that theorem thus there exists a symmetric coupling $\sigma=\sigma^\rho$ of 
$m_1$ and $m_2$ so that the following holds. 
Let $f\in L^2(G, m)$ satisfy that $\|f_\rho-f\|_2\leq \crho^{1/(4a)}\|f\|_2$; then 
\be\label{eq: mu ell coupling}
\bigl\|\mu^{(\ell_1)}\ast f-\sigma\ast f\bigr\|_2\leq 6\crho^{1/(4a)}\|f\|_2,
\ee
so long as $\ell_1\gg_{a, d_{0i}, L} \log_\lambda (\rho/C_1)$, see~\eqref{eq: choice of ell coupling}.

\medskip

\noindent
{\bf Conclusion of the proof.}\
Let us write $\sigma=\sigma^\rho$. 
Let $j_0$ be large enough so that $\eta\leq \eta_0$ where $\eta_0$ is as in Proposition~\ref{prop:contraction-coupling}, in particular, $\eta_0=\max\{p_{\nu_1}, p_{\nu_2}\}^{-O_{\dim G}(1)}$. 
Then by Proposition~\ref{prop:contraction-coupling}, for every $f$ which lives at scale $\eta$ we have 
\be\label{eq: rhonu contracts}
\|\sigma^{(2^{n_0})}*f\|_2\leq \eta^c\|f\|_2
\ee
where $c$ and $n_0$ depend only on $L$ and $d_{01}, d_{02}$. Without loss of generality, we assume $c<1/(4a)$.

Let $0< n_1\leq 2^{n_0}$ be the smallest integer so that 
\[
\|\sigma^{(n_1)}*f\|_2\leq \rho^{1/4a}\|f\|_2
\]
if such exists, otherwise let $n_1=2^{n_0}$. Now for all $0\leq i<n_1$, we have $\rho^{-1/4a}\|\sigma^{(i)}*f\|_2\geq \|f\|_2$; hence, using \eqref{eq:Young-ineq}, we have  
\[
\|\sigma^{(i)}*f_\rho-\sigma^{(i)}*f\|_2 \leq \|f_\rho-f\|_2 \leq \rho^{1/2a}\|f\|_2\leq \rho^{1/4a}\|\sigma^{i}*f\|_2,
\]
where we also used $\sigma^{(i)}*f_\rho=(\sigma^{(i)}*f)_\rho$ and the first estimate in~\eqref{eq: proof f leavs at eta}. 
Therefore by \eqref{eq: mu ell coupling}, we deduce that 
\[
\|\mu^{(\ell_1)}\ast \sigma^{(i)}\ast f-\sigma^{(i+1)}\ast f\|_2\leq 6\rho^{1/(4a)}\|f\|_2,
\]
for every $i<n_1$. Using the triangle inequality and \eqref{eq:Young-ineq}, we get 
\begin{align*}
    \|\mu^{(\ell_1 n_1)}\ast f-\sigma^{(n_1)}\ast f\|_2\leq &
    \sum_{i=0}^{n_1-1} \|\mu^{(\ell_1 (n_1-i))}\ast \sigma^{(i)}\ast f-\mu^{(\ell_1(n_1-i-1))}\ast \sigma^{(i+1)}\ast f\|_2  
    \\
    = &
    \sum_{i=0}^{n_1-1} \|\mu^{(\ell_1 (n_1-i-1))}\ast (\mu^{(\ell_1)}\ast \sigma^{(i)}\ast f- \sigma^{(i+1)}\ast f)\|_2 
    \\
    \leq & 6n_1\rho^{1/(4a)}\|f\|_2. 
\end{align*}
By \eqref{eq: rhonu contracts} and the choice of $n_1$, we have that
$\|\sigma^{n_1}\ast f\|_2\leq \eta^c \|f\|_2$. Hence, 
\[
\|\mu^{(\ell_1 n_1)} \ast f\|_2\leq \eta^{c/2} \|f\|_2.
\]
This implies that~\eqref{eq: scale rho func} holds with $b=c/2$ and $\ell=\ell_1n_1\leq \bar C\log(1/\eta)$ where $\bar C$ depends on $d_{0i}$, $L$, and $a$. This completes the proof of the claim, and hence the proof of the theorem. 
\end{proof}

\section{Spectral independence and exceptional representations}\label{sec: spec indep and exceptional}

The objective of this section is to prove Proposition~\ref{prop: spectral indep exceptional}, which will be used in the proof of Theorem~\ref{thm:Q-form}. 
To obtain Theorem~\ref{thm:Q-form}, we apply Theorem~\ref{thm:main1} 
to the group $\Gamma_{\nu_1,\nu_2}$ where $\nu_i\in V_\Gamma$; see the notation in Theorem~\ref{thm:Q-form}. Then using Claim~\ref{claim: proof 1} in the proof of Theorem~\ref{thm:main1} and Theorem~\ref{thm:functions-scale-spec-gap}, we reduce the analysis to the study of {\em exceptional representations} of $\Gamma_{\nu_1,\nu_2}$ whose dimension is $\ll_\Gamma (\max\{p_{\nu_1},p_{\nu_2}\})^{O(1)}$. 
The proof of Proposition~\ref{prop: spectral indep exceptional} relies on the results in~\cite{Golsefidy-IMRN-SA, SG-super-approx-II} and 
Proposition~\ref{prop:conjugation-by-large-ball-multiplication-large-image} in this paper.

We begin by recalling some of the notation from Theorem~\ref{thm:Q-form}.  
Let $\bbg$ be an absolutely almost simple, connected, simply connected $\bbq$-algebraic group. 
As it was done before, we fix a $\bbq$-embedding $\bbg\subseteq(\SL_{N})_\bbq$, for some $N$. 
Let $\Omega \subseteq \bbg(\bbq)$ be a 
finite symmetric subset such that $\Gamma=\langle\Omega\rangle$ is Zariski dense in $ \bbg$. 
Denote by $V_{ \Gamma }$  the set of all places $\nu$ of $ \bbq$ such that $\Gamma$ is a bounded subset of $ \bbg( \bbq_\nu)$, 
and let $V_{{\rm f},\Gamma}\subseteq V_\Gamma$ denote the subset of finite places. 

For distinct $\nu_1, \nu_2 \in V_{ \Gamma}$, let $\Gamma_{\nu_1, \nu_2}$ denote the closure of $\Gamma$ in 
$ \bbg( \bbq_{\nu_1} ) \times   \bbg( \bbq_{\nu_2} )$. The following are our standing assumption in this section:
\[
\begin{aligned}
&\Gamma_\nu\subseteq \SL_N(\bbz_{p_\nu})&&\text{for all $\nu\in V_{{\rm f},\Gamma}$, and}\\
&\Gamma_{\nu_1,\nu_2}=\Gamma_{\nu_1}\times\Gamma_{\nu_2},&&\text{for all $\nu_1, \nu_2\in V_\Gamma$,}
\end{aligned}
\]
where $\Gamma_\nu$ denotes the closure of $\Gamma$ in $\bbg(\bbq_\nu)$. 

Let $W_\Gamma\subset V_{{\rm f},\Gamma}$ denote the set of places where $\bbg(\bbq_{\nu})\cap \SL_N(\bbz_{p_\nu})$ is a hyperspecial subgroup of $\bbg(\bbq_{p_\nu})$ and 
\[
\Gamma_\nu=\bbg(\bbq_{p_\nu})\cap \SL_N(\bbz_{p_\nu}).
\]

For every $n\geq 0$, let $\pi_{\nu,n}: \SL_N(\bbz_{p_\nu})\to\SL_N(\bbz/p_\nu^n\bbz)$.
For every $\nu\in V_{{\rm f},\Gamma}$, let 
\[
\Gamma_{\nu,n}=\Gamma_\nu\cap \ker\bigl(\pi_{\nu,n}\bigr)
\]
denote the $n$-th congruence subgroup of $\Gamma_{\nu}$. 

If $\infty\in V_\Gamma$, we put $\Gamma_{\infty,n}=\{1\}$ for all $n\in\bbn$ --- recall also our convention $p_\infty=2$.

\begin{proposition}\label{prop: spectral indep exceptional}
For all $D,d>0$, there exists $\varrho=\varrho(\Omega, d,D)>0$ so that the following holds. 
For $i=1,2$, let $\nu_i\in V_\Gamma$ and $n_i\in\bbn$.
Put $G_i=\Gamma_{\nu_i}/\Gamma_{\nu_i,n_i}$ and assume 
\be\label{eq: dimension bound}
\text{the number of connected components of $G_1\times G_2$}\leq D\max\{p_{\nu_1}, p_{\nu_2}\}^{d}.
\ee
Then $\mathcal{L} ( X; G_1\times G_2) > \varrho$,  
where $X$ is a random variable with the uniform distribution on $ \Omega$. 
\end{proposition}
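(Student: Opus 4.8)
The plan is to mimic the proof of Theorem~\ref{thm:main1} but to track the dependence of all implied constants on the residue characteristics $p_{\nu_1},p_{\nu_2}$, and then to handle the finitely-many exceptional representations separately. First I would recall that $\Gamma_{\nu_1,\nu_2}=\Gamma_{\nu_1}\times\Gamma_{\nu_2}$ is $L$-locally random with coefficient $C_0$ depending only on $\Gamma$ (this is Lemma~\ref{lem: locally random family}(2)), that it satisfies ${\rm DC}(d_0,C_1)$ with $d_0$ depending only on $\Gamma$ and $C_1$ depending polynomially on $\max\{p_{\nu_1},p_{\nu_2}\}$, and that each factor is $\delta$-discretizable. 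Passing to the finite quotient $G_1\times G_2=(\Gamma_{\nu_1}/\Gamma_{\nu_1,n_1})\times(\Gamma_{\nu_2}/\Gamma_{\nu_2,n_2})$, the marginal bound $\min\{\lcal(X_1;G_1),\lcal(X_2;G_2)\}\geq c_0>0$ — which holds uniformly because $\Gamma$ is a fixed finitely generated Zariski-dense subgroup and one invokes the known super-approximation results in~\cite{Golsefidy-IMRN-SA,SG-super-approx-II} together with strong approximation — gives $\lambda:=\max\{\lambda(\pi_1\mu;G_1),\lambda(\pi_2\mu;G_2)\}<1$ with $c_0$ depending only on $\Omega$.

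Next I would run Claim~\ref{claim: proof 1} from the proof of Theorem~\ref{thm:main1} verbatim: apply Theorem~\ref{thm: continuity prop of couplings} to reduce, for every $f$ living at a scale $\eta=\rho_0^j$ with $j$ large, to a coupling $\sigma$ of the Haar measures on $G_1,G_2$; then apply Proposition~\ref{prop:contraction-coupling} to get $\|\sigma^{(2^{n_0})}\ast f\|_2\leq \eta^c\|f\|_2$, where crucially $c$ and $n_0$ depend only on $L,d_{01},d_{02}$, hence only on $\Gamma$. The key point to emphasize is that the second (non-isomorphic) part of Proposition~\ref{prop:desired-entropy}/Proposition~\ref{prop:contraction-coupling} is applied via the uniform ``moreover'' clause of Theorem~\ref{thm:approximate-hom}: for a coherent family $\fcal=\{G_\nu\}$ attached to $\bbg$, the obstructing partial approximate homomorphism is ruled out once $\rho\leq\rho_\fcal\min\{p_{\nu_1}^{-\kappa},p_{\nu_2}^{-\kappa}\}$ and $m\geq m_\fcal$, with $\kappa,m_\fcal,\rho_\fcal$ depending only on $\bbg$ (equivalently on $\Omega$). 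This is exactly why the contraction exponent $c$ can be taken independent of the places while the threshold scale $\eta_0$ degrades like $\max\{p_{\nu_1},p_{\nu_2}\}^{-O_\Gamma(1)}$. Combining with Claim~\ref{claim: proof 1} one gets: for every $\eta\leq\eta_0$ with $\eta_0=\max\{p_{\nu_1},p_{\nu_2}\}^{-O_\Gamma(1)}$ and every $f$ living at scale $\eta$, there is $\ell\leq\bar C\log(1/\eta)$ with $\|\mu^{(\ell)}\ast f\|_2\leq\eta^{b}\|f\|_2$, where $b,\bar C$ depend only on $\Gamma$.

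Then I would apply Theorem~\ref{thm:functions-scale-spec-gap} with these data: it produces a subrepresentation $\cal_0$ of $L^2(G_1\times G_2)$ with $\dim\cal_0\leq 2C_0\eta_0^{-d_0}\ll_\Gamma \max\{p_{\nu_1},p_{\nu_2}\}^{O_\Gamma(1)}$ on whose orthogonal complement $\lcal(\mu;\cdot)\geq b/\bar C$. It remains to bound $\lcal(\mu;\cal_0)$ from below. Here is where hypothesis~\eqref{eq: dimension bound} on the number of connected components and the results of~\cite{Golsefidy-IMRN-SA,SG-super-approx-II} enter: since $\cal_0$ is a subrepresentation of the quasi-regular representation of the \emph{finite} group $G_1\times G_2$ of dimension $\ll_\Gamma(\max\{p_{\nu_1},p_{\nu_2}\})^{O_\Gamma(1)}$, each irreducible constituent has level (congruence depth) bounded polynomially in $\max\{p_{\nu_1},p_{\nu_2}\}$ by the metric-quasi-randomness estimate (as in the proof of Lemma~\ref{lem: locally random family}(2), using~\cite[Proposition 19]{SG-super-approx-II}); one then invokes the effective super-approximation spectral gap for bounded-level congruence quotients of $\Gamma_{\nu_i}$ from~\cite{Golsefidy-IMRN-SA,SG-super-approx-II}, applied on each factor and combined via the product structure $\Gamma_{\nu_1,\nu_2}=\Gamma_{\nu_1}\times\Gamma_{\nu_2}$, to conclude $\lcal(\mu;\cal_0)\geq\varrho_0(\Omega,d,D)>0$. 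Taking $\varrho=\min\{b/\bar C,\varrho_0\}$ finishes the proof. I expect the main obstacle to be the last step: controlling $\lcal(\mu;\cal_0)$ uniformly, i.e.\ showing the exceptional representations all factor through congruence quotients of bounded level and then feeding this into the quantitative super-approximation bounds for $\Gamma$ — the bookkeeping of which constants depend only on $\Omega$ versus on the places, especially across the two factors, is the delicate part.
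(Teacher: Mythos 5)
Your proposal takes a genuinely different route from the paper, and it has a substantive gap at the final step.

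The paper's proof of Proposition~\ref{prop: spectral indep exceptional} does \emph{not} re-run the Littlewood--Paley / functions-at-scale machinery (Theorem~\ref{thm: continuity prop of couplings}, Proposition~\ref{prop:contraction-coupling}, Theorem~\ref{thm:functions-scale-spec-gap}) and does not invoke the ``moreover'' clause of Theorem~\ref{thm:approximate-hom}. Instead it works \emph{directly} in the finite groups $G_1,G_2$: after using the marginal spectral gap to push $\sigma=\mu^{(\ell_1)}$ close to the Haar measures on each factor, it runs a Bourgain--Gamburd $\ell^2$-flattening argument (Lemma~\ref{lem: sigma m0 equidist}, Lemma~\ref{lem: l2 flattening}). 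The obstruction --- an approximate subgroup $H\subseteq G_1\times G_2$ whose projections are large but whose size is much smaller than $|G_1||G_2|$ --- is ruled out by three ingredients: (i) the bounded generation result Theorem~\ref{thm: SG II bdd generation} to force $\pi_i(\prod H)$ to contain a full congruence subgroup; (ii) a purely group-theoretic observation that because $p_{\nu_1}$ is large, $\nu_1$ is hyperspecial, and~\eqref{eq: dimension bound} forces $n_1\le d+1$, the simple factor $G_1/\Gamma_{\nu_1,1}$ cannot occur among composition factors of any subquotient of $G_2$, so the induced partial homomorphism $f:G_1\to G_2/\cdots$ must be trivial; and (iii) Proposition~\ref{prop:conjugation-by-large-ball-multiplication-large-image} to produce a large fiber in $H$ above the identity of $G_1$. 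This is a finite-group argument that has nothing to do with the Lie-algebra/approximate-homomorphism theory of \S\ref{sec: proof of thm approx}.

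The concrete gap in your proposal is the final step. You run Claim~\ref{claim: proof 1} and Theorem~\ref{thm:functions-scale-spec-gap} to produce an exceptional subrepresentation $\cal_0$ of dimension $\ll_\Gamma\max\{p_{\nu_i}\}^{O(1)}$, and then assert that $\lcal(\mu;\cal_0)\gg 1$ by ``invoking the effective super-approximation spectral gap \ldots applied on each factor and combined via the product structure.'' But spectral gap on the two marginals does \emph{not} imply spectral gap on the product: that implication is exactly spectral independence, the nontrivial content of the whole paper (see Appendix~\ref{app:abelian} for a counterexample in the abelian case, and note that Theorem~\ref{thm:main1} itself requires the local non-isomorphism hypothesis). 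For $\cal_0$ one still has to exclude couplings concentrated near the graph of an isomorphism between quotients of $G_1$ and $G_2$; the factor-wise super-approximation bounds from~\cite{Golsefidy-IMRN-SA,SG-super-approx-II} cannot see such couplings since each marginal is perfectly equidistributed. You also assert that the exceptional constituents ``factor through congruence quotients of bounded level''; this is false --- \eqref{eq: dimension bound} forces $n_1\le d+1$, but $n_2$ can be arbitrarily large when $p_{\nu_2}$ is small, which is precisely the regime~\eqref{eq: n2 is large} that the paper singles out. In effect your proposal reproduces the reduction performed in the proof of Theorem~\ref{thm:Q-form} (where Proposition~\ref{prop: spectral indep exceptional} is \emph{used} to treat the exceptional part), rather than proving Proposition~\ref{prop: spectral indep exceptional}: you reduce to an exceptional subrepresentation and then need to establish exactly the statement you set out to prove, which is circular. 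The paper avoids this by arguing from scratch in the finite quotient, where the coupling obstruction can be killed by the composition-factor mismatch rather than by the local Lie algebra non-isomorphism.
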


The following theorem is a special case of~\cite[Theorem 24]{SG-super-approx-II}, applied with $\mathcal C=p_\nu^n$ 
for $\nu\in V_{{\rm f},\Gamma}$, and will play a key role in the proof of Proposition~\ref{prop: spectral indep exceptional}.

\begin{theorem}\label{thm: SG II bdd generation}
There exist positive numbers $\bar E=\bar E(\bbg)$, $R_1=R_1(\Omega)$, and $\vare_0=\vare_0(\Omega)$, such that 
for every $0<\vare\leq \vare_0(\Omega)$, $0<\delta\ll_{\Omega,\vare}1$ and $R_{2}\gg_{\Omega,\vare}1$ the following holds. 
Let $n\in\bbn$ and assume ${n\vare^{\bar E}}\log(p_\nu)\geq \log(R_1)$. 
Suppose for a finite symmetric subset $A\subseteq \Gamma$ and some $\ell\geq (n\log p_\nu)/\delta$, we have 
\[
\mathcal P_\Omega^{(\ell)}(A)\geq p_\nu^{-\delta n}, 
\] 
then there exists a non-negative integer $n'\leq \vare n$ so that 
\[
\textstyle
\pi_{\nu, n}(\Gamma_{\nu,n'})\subseteq \prod_{R_2}\pi_{\nu, n}(A).
\]
\end{theorem}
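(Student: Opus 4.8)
The statement is explicitly a specialization of \cite[Theorem 24]{SG-super-approx-II}, so the plan is to match the two setups and transcribe the quantitative dependencies. First I would recall the setup of loc.\ cit.: one is given a finitely generated Zariski-dense subgroup $\Gamma\subseteq\bbg(\bbq)$ with $\bbg$ absolutely almost simple and simply connected, a finite symmetric generating set $\Omega$, and for an admissible modulus $\mathcal C$ the congruence-quotient map $\pi_{\mathcal C}$ of $\Gamma$ (equivalently, of its relevant closure) onto level $\mathcal C$; the theorem there asserts that there are $\bar E=\bar E(\bbg)$, $R_1=R_1(\Omega)$, $\vare_0=\vare_0(\Omega)$ so that for $0<\vare\le\vare_0$, $0<\delta\ll_{\Omega,\vare}1$, $R_2\gg_{\Omega,\vare}1$, and any $\mathcal C$ with $\vare^{\bar E}\log\mathcal C\ge\log R_1$, any symmetric $A\subseteq\Gamma$ carrying random-walk mass $\mathcal P_\Omega^{(\ell)}(A)\ge\mathcal C^{-\delta}$ at a time $\ell\ge(\log\mathcal C)/\delta$ has the property that $\prod_{R_2}\pi_{\mathcal C}(A)$ contains the image of a congruence subgroup of $\Gamma$ of level $\mathcal C'\mid\mathcal C$ with $\mathcal C'\le\mathcal C^{\vare}$. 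I would then quote the exact constants verbatim.

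Second I would check that $\mathcal C=p_\nu^n$, for $\nu\in V_{{\rm f},\Gamma}$, is an admissible modulus in this context and that the notation lines up. By our standing assumptions $\Gamma_\nu\subseteq\SL_N(\bbz_{p_\nu})$, so $\Gamma_{\nu,n}=\Gamma_\nu\cap\ker(\pi_{\nu,n})$ is exactly the principal $p_\nu^n$-congruence subgroup of $\Gamma_\nu$, the quotient $\Gamma_\nu/\Gamma_{\nu,n}$ is the $p_\nu^n$-congruence quotient, and $\log\mathcal C=n\log p_\nu$. Under this dictionary the hypothesis $n\vare^{\bar E}\log p_\nu\ge\log R_1$ is precisely $\vare^{\bar E}\log\mathcal C\ge\log R_1$, the step count $\ell\ge(n\log p_\nu)/\delta$ is $\ell\ge(\log\mathcal C)/\delta$, and a congruence subgroup of $\Gamma_\nu$ of level $\mathcal C'\mid p_\nu^n$ is $\Gamma_{\nu,n'}$ with $p_\nu^{n'}=\mathcal C'$; the constraint $\mathcal C'\le\mathcal C^{\vare}$ then reads $n'\le\vare n$, which is exactly the asserted conclusion $\pi_{\nu,n}(\Gamma_{\nu,n'})\subseteq\prod_{R_2}\pi_{\nu,n}(A)$.

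Third, should \cite[Theorem 24]{SG-super-approx-II} happen to be stated only for square-free $\mathcal C$, I would instead invoke the companion prime-power version in \cite{Golsefidy-IMRN-SA, SG-super-approx-II}: the chain of arguments (multiscale $L^2$-flattening, escape from proper algebraic subgroups and subvarieties via strong approximation, and the terminal bounded-generation step that produces the congruence subgroup) applies to $\mathcal C=p_\nu^n$ without change — indeed more simply, since there is a single prime — and the only point to verify is that $\nu$ is permitted as a place, which it is because $\nu\in V_\Gamma$ and $\Gamma_\nu\subseteq\SL_N(\bbz_{p_\nu})$, so the local picture at $\nu$ meets whatever admissibility (e.g.\ hyperspeciality at almost all places, boundedness at $\nu$) is assumed in loc.\ cit. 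The main — and essentially only — obstacle is bookkeeping: confirming the precise form of the quantitative hypotheses ($\bar E$, $R_1$, $\delta$, $R_2$) and the level/quotient conventions of \cite{SG-super-approx-II} against the $\pi_{\nu,n}$, $\Gamma_{\nu,n}$ notation used here; there is no new mathematical content.
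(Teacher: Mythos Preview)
Your proposal is correct and matches the paper's treatment: the paper does not prove this theorem but simply states that it is a special case of \cite[Theorem 24]{SG-super-approx-II} applied with $\mathcal C=p_\nu^n$ for $\nu\in V_{{\rm f},\Gamma}$. Your notational dictionary (matching $\log\mathcal C=n\log p_\nu$, $\mathcal C'\le\mathcal C^\vare$ with $n'\le\vare n$, etc.) is exactly the unpacking the paper leaves implicit.
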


Let us now return to the proof of Proposition~\ref{prop: spectral indep exceptional}. 
Note that increasing $D$ and $d$ makes the assumption weaker; therefore throughout the argument, we may assume $D, d>100$.

\subsection{Both places are finite}\label{sec: exceptional both finite}
We will first consider the case 
\be\label{eq: p adic p adic}
\text{$\nu_1, \nu_2\in V_{\Gamma}\quad$ are finite places.}
\ee
Let us write $G=G_1\times G_2$, and put $M=\max\{|G_1|, |G_2|\}$. 
As before, $m_{G_i}$ and $m_G$ denote the uniform measures on $G_i$ and $G$, respectively. 
To ensure consistency with most of the existing literature, in this case where $G$ is finite, we deviate from the notation in the rest of the paper and define the convolution of $f_1,f_2\in \ell^2(G)$ using the counting measure. That is:
\[
f_1\ast  f_2(x)=\sum_{y \in G}f_1(xy^{-1})f_2(y).
\]

\subsection*{Reduction to deep levels verses large primes}
Since almost every $\nu$ belongs to $W_\Gamma$, in the proof of Proposition~\ref{prop: spectral indep exceptional} we  may assume that $\{\nu_1,\nu_2\}\cap W_\Gamma\neq\emptyset$. Thus, for the rest of the proof we will assume 
that $\max\{p_{\nu_1},p_{\nu_2}\}=p_{\nu_1}\geq D$ and that $\nu_1\in W_\Gamma$. Note that 
\be\label{eq: IGI and p nu1}
p_{\nu_1}^{n_1}\leq |G_1\times G_2|\leq Dp_{\nu_1}^d\leq p_{\nu_1}^{d+1}.
\ee
In consequence, we will assume for the rest of the argument that 
\be\label{eq: n1 is less that d}
n_1\leq d+1.
\ee

Recall from Lemma~\ref{lem: locally random family} that $\Gamma_{\nu_1,\nu_2}$ is $L$-locally random with coefficient $C_0$ depending only on $\Gamma$. This in particular implies that
for every $\rho\in\hat G$, we have 
\be\label{eq: metric quasi random G}
\dim(\rho)\geq c_\Gamma [G:\ker\rho]^{\alpha_\Gamma},\qquad\text{for some $c_\Gamma, \alpha_\Gamma>0$. }
\ee

We will assume, for the rest of the argument, that 
\be\label{eq: n2 is large}
\max\{n_1, n_2\}> 8d^2(d+1)E_2/(\vare_0(\Omega) \alpha_\Gamma),
\ee
where $d$ is as in Proposition~\ref{prop: spectral indep exceptional}, $\vare_0(\Omega)$ 
is as in Theorem~\ref{thm: SG II bdd generation}, and $E_2=2C/c$ with $C$ and $c$ as in Proposition~\ref{prop:conjugation-by-large-ball-multiplication-large-image}, see~\eqref{eq: large fiber for H1} below --- as it will be explicated later, if~\eqref{eq: n2 is large} fails, Proposition~\ref{prop: spectral indep exceptional} follows from~\cite[Theorem 1]{SG-super-approx-II}.

\subsection*{Equidistribution of marginals}
Let $X=(X_1,X_2)$ be as in the statement of Proposition~\ref{prop: spectral indep exceptional}, and let $\mu$ be the probability law of $X$. 
Since $\langle \Omega\rangle =\Gamma$ is Zariski dense in $\bbg$, which is absolutely almost simple, we have   
$\min\{\lcal(X_1),\lcal(X_2)\}\geq c_0>0$ where $c_0=c_0(\Omega)$, see~\cite[Theorem 1]{Golsefidy-IMRN-SA} and references therein. Thus    
\be\label{eq: marginals have gap 1}
\max\{\lambda(\pi_{1*}\mu; G_1), \lambda(\pi_{2*}\mu; G_2)\}\leq \lambda <1
\ee
where $\pi_i$ denotes the projection onto the $i$-th factor and $\lambda:=\lambda(\Omega)$. 
Thus, there exists $E_1\geq 1$ so that for all $\ell_1\geq E_1 \log M$, we have  
\be\label{eq: marginals close to equi}
\|\pi_{i*}\mu^{(\ell_1)}-m_{G_i}\|_2\leq |G|^{-1};
\ee
this can be seen, e.g., by applying Lemma~\ref{lem: spectral gap implies close to Haar} with $\eta=1/M$ and choosing $E_1$ so that 
$\lambda^{\ell_1/2}M^2\leq 1$ for all $\ell_1\geq E_1 \log M$. 

Let us fix one such $\ell_1$, and put $\sigma:=\mu^{(\ell_1)}$. The goal now is to show the following 

\begin{lem}\label{lem: sigma m0 equidist}
With the above notation, there exists $m_0=m_0(\Omega)\in\bbn$ so that 
\[
\|\sigma^{(2^{m_0})}-m_G\|_2\ll_\Omega|G|^{-1}.
\]
\end{lem}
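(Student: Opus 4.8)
The strategy mirrors the argument used to prove Theorem~\ref{thm:main1}, but carried out in the finite-group setting with quantitative control on all constants in terms of $\Omega$ alone. Recall that $\sigma = \mu^{(\ell_1)}$ has both marginals $\ll_\Omega |G|^{-1}$-close to $m_{G_1}$ and $m_{G_2}$ by~\eqref{eq: marginals close to equi}. First I would use Proposition~\ref{prop: finding coupling for finite sets} (applied to the finite sets $G_1$, $G_2$ themselves, with the partition into singletons) to produce a genuine coupling $\nu$ of $m_{G_1}$ and $m_{G_2}$ with $\|\sigma - \nu\|_\infty$, hence $\|\sigma - \nu\|_2$, polynomially small in $|G|$; symmetrizing as in the proof of Theorem~\ref{thm: continuity prop of couplings} we may take $\nu$ symmetric. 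This replaces $\sigma$ by a coupling of Haar measures at the cost of an error that is negligible after the $O_\Omega(1)$ convolutions we will perform.

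Next I would run the entropy-increment machinery of \S\ref{s:contraction-couplings} on $\nu$. Since $\bbg$ is absolutely almost simple, $\Lie(\bbg)(\bbq_{\nu_1})$ and $\Lie(\bbg)(\bbq_{\nu_2})$ are non-isomorphic whenever $\nu_1\neq\nu_2$ (different residue characteristics, or one Archimedean), so Proposition~\ref{prop:desired-entropy} and Proposition~\ref{prop:contraction-coupling} apply to $G_1\times G_2$ — crucially, by Lemma~\ref{lem: locally random family}(2) the local randomness constants $L, C_0$ depend only on $\Gamma$, and by the ``moreover'' clauses in Theorem~\ref{thm:approximate-hom} and Proposition~\ref{prop:conjugation-by-large-ball-multiplication-large-image} the constants $m_\fcal$, $\rho_\fcal$, $C$, $c$ depend only on $\wt\bbg$, i.e.\ only on $\Omega$. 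One subtlety: $G_i$ here is the \emph{finite} quotient $\Gamma_{\nu_i}/\Gamma_{\nu_i,n_i}$, not the profinite group, so the dimension condition $\mathrm{DC}(d_{0i},C_{1i})$ holds only for scales $\eta \geq p_{\nu_i}^{-n_i}$; but by hypothesis~\eqref{eq: dimension bound} we have $|G_1\times G_2| \leq D\max\{p_{\nu_1},p_{\nu_2}\}^d$, so $p_{\nu_i}^{-n_i}$ is bounded below by a fixed power of $1/\max\{p_{\nu_1},p_{\nu_2}\}$, and since $\rho_\fcal$-type thresholds in Theorem~\ref{thm:approximate-hom} and the various $\eta_0$ in \S\ref{s:contraction-couplings} are themselves $\max\{p_{\nu_1},p_{\nu_2}\}^{-O_\Omega(1)}$, there is a genuine window of admissible scales $\eta$ as long as $n_1$ (say) is not too small. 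When $\max\{n_1,n_2\}$ fails~\eqref{eq: n2 is large}, i.e.\ both levels are bounded in terms of $\Omega$, the group $G$ is finite of size $\leq D p_{\nu_1}^{d}$ with $n_1 \leq d+1$ bounded, and Proposition~\ref{prop: spectral indep exceptional} follows directly from the super-approximation results~\cite[Theorem 1]{SG-super-approx-II} and~\cite[Theorem 1]{Golsefidy-IMRN-SA} applied to the one surviving unbounded place, together with the local-randomness estimate~\eqref{eq: metric quasi random G}; so we may assume we are in the ``deep level vs.\ large prime'' regime.

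In that regime, Proposition~\ref{prop:contraction-coupling} gives $m_0 = m_0(\Omega) \in \bbn$ and $c = c(\Omega) > 0$ such that $\|\nu^{(2^{m_0})} \ast f\|_2 \leq \eta^c \|f\|_2$ for every $f$ living at scale $\eta$ (for $\eta$ in the admissible window); combined with the observation that any $f \perp \text{constants}$ on the finite group $G$ can be decomposed into pieces living at the finitely many scales $\eta = \rho_0^j$ between $1$ and $\sim |G|^{-O(1)}$ — this is exactly the Littlewood--Paley decomposition of~\cite[\S9]{MMSG} specialized to finite $G$ — we get that $\nu^{(2^{m_0} \cdot O_\Omega(\log|G|))}$ contracts $L^2_0(G)$ by a definite factor, and iterating $O_\Omega(1)$ times past the trivial threshold forces $\|\nu^{(2^{m_0'})} - m_G\|_2 \ll_\Omega |G|^{-1}$ for a suitable $m_0' = m_0'(\Omega)$. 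Transferring back to $\sigma$ via the estimate $\|\sigma - \nu\|_2 \ll_\Omega |G|^{-C}$ and Young's inequality~\eqref{eq:Young-ineq} (each of the $2^{m_0'}$ factors contributes an error $\ll \|\sigma-\nu\|_2$) gives $\|\sigma^{(2^{m_0})} - m_G\|_2 \ll_\Omega |G|^{-1}$ after renaming $m_0$, which is the claim.

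\textbf{Main obstacle.} The principal difficulty is bookkeeping the uniformity of \emph{all} implied constants in $\Omega$ (equivalently $\wt\bbg$) and separating this cleanly from the dependence on $\max\{p_{\nu_1},p_{\nu_2}\}$: the thresholds $\eta_0$, $\rho_\fcal$, and the $\hat C$-type exponents in \S\ref{sec: disc coupling}–\S\ref{s:contraction-couplings} are allowed to be $\max\{p_{\nu_1},p_{\nu_2}\}^{-O_\Omega(1)}$, and one must check that the window of admissible $\eta$ (bounded below by $p_{\nu_i}^{-n_i} \geq (D\max\{p_{\nu_1},p_{\nu_2}\}^d)^{-1/n_i}$, which under~\eqref{eq: n2 is large} is not too small) is genuinely nonempty and long enough — of length $\gg_\Omega \log\max\{p_{\nu_1},p_{\nu_2}\}$ in logarithmic scale — to run the iteration $O_\Omega(1)$ times and still land below $|G|^{-1}$. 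A secondary technical point is that in the finite setting the ``function living at scale $\eta$'' formalism and the Littlewood--Paley splitting must be invoked at the bottom scale $\eta \approx |G|^{-O(1)}$ where $G$ genuinely looks like a point, which requires noting that below that scale every nonconstant $f$ already has $\|f_{\eta'} - f\|$ controlled trivially; this is routine but needs to be stated.
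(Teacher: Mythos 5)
Your proposal takes a genuinely different route from the paper, and unfortunately it has a real gap that I don't see how to repair without essentially reverting to the paper's argument.

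The paper's proof of Lemma~\ref{lem: sigma m0 equidist} is a direct Bourgain--Gamburd flattening argument on the \emph{finite} group $G=G_1\times G_2$: if $\sigma_m\ast\sigma_m$ fails to flatten, one extracts an approximate subgroup $H$ (via~\cite[Theorem 2.12]{MMSG}), lifts it to $\Gamma$, and applies Theorem~\ref{thm: SG II bdd generation} to show that the projections $\pi_i(H_1)$ contain congruence pieces. Because~\eqref{eq: dimension bound} forces $n_1\leq d+1$, one gets $n_1'=0$, so $\pi_1(H_1)=G_1$, and the function $s:G_1\to G_2$ built from $H_1$ would induce a nontrivial homomorphism $f:G_1\to G_2/\pi_{\nu_2,n}(\Gamma_{\nu_2,k_s})$ — unless it is trivial. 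The key step is a \emph{finite-group-theoretic} observation: since $p_{\nu_1}>p_{\nu_2}$, $n_1\leq d+1$, and $\Gamma_{\nu_1}$ is hyperspecial, the group $G_1/\Gamma_{\nu_1,1}$ does not arise as a composition factor of any subgroup of the target, so $f$ is trivial. That forces a large fiber for $H_1$ (via Proposition~\ref{prop:conjugation-by-large-ball-multiplication-large-image}), which contradicts the size bound~\ref{H-1} on $H$.

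Your approach instead proposes to replace $\sigma$ by an honest coupling $\nu$ of $m_{G_1}$ and $m_{G_2}$, then invoke Propositions~\ref{prop:desired-entropy} and~\ref{prop:contraction-coupling} together with a Littlewood--Paley decomposition. The problem is that those propositions are stated for compact open subgroups $G_i\subseteq\bbg_i(F_i)$; here $G_i=\Gamma_{\nu_i}/\Gamma_{\nu_i,n_i}$ is a finite quotient, and under~\eqref{eq: n1 is less that d} the group $G_1$ has only $n_1\leq d+1$ levels. The dimension condition $\mathrm{DC}(d_{01},C_{11})$ therefore degenerates below scale $p_{\nu_1}^{-(d+1)}$, so there is no admissible window of scales $\eta$ of length $\gg_\Omega\log\max\{p_{\nu_1},p_{\nu_2}\}$; the window in the $G_1$-factor has length $\leq (d+1)\log p_{\nu_1}$ with $d+1$ a fixed constant. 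Consequently the scale parameter $m$ in Theorem~\ref{thm:approximate-hom} is bounded by $d+1$, and the hypothesis $m\gg_{G_1,G_2}m'$ cannot be met uniformly in $p_{\nu_1}$. Even if one tries to run the argument on the ambient profinite groups $\Gamma_{\nu_1}\times\Gamma_{\nu_2}$ and push forward, the functions on $G_1\times G_2$ pull back to functions that are invariant at scale $p_{\nu_1}^{-n_1}\times p_{\nu_2}^{-n_2}$, hence do \emph{not} live at small scales; they sit precisely inside the exceptional subrepresentation $\cal_0$ of Theorem~\ref{thm:functions-scale-spec-gap}. Proposition~\ref{prop: spectral indep exceptional} — of which Lemma~\ref{lem: sigma m0 equidist} is an ingredient — is exactly the statement that handles $\cal_0$, so your strategy becomes circular: it routes the finite-level analysis back through the Littlewood--Paley theorem, which merely re-exposes the same exceptional piece. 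Finally, your replacement of the approximate-homomorphism theorem by the composition-factor argument is not optional here: it is the composition-factor count (different characteristics $p_{\nu_1}\neq p_{\nu_2}$ together with the hyperspecial structure of $\Gamma_{\nu_1}$ and the bound $n_1\leq d+1$) that kills the homomorphism $f$, and this mechanism is entirely absent from your proposal.

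Secondary remarks: (i) your transport-to-a-coupling step is not in itself wrong, but to apply Proposition~\ref{prop: finding coupling for finite sets} with some fixed $A>2$ you need the marginals to be $(N_1N_2)^{-A}$-close to uniform, which requires taking $\ell_1$ a larger multiple of $\log M$ — workable, but note that the paper does not need this step at all, since the flattening argument is run directly on $\sigma$ (one only uses the $|G|^{-1}$-closeness of marginals to seed the initial $\ell^2$-bound~\eqref{eq: L2 norm of sigma m}); (ii) your remark that ``below that scale every nonconstant $f$ already has $\|f_{\eta'}-f\|$ controlled trivially'' is not a solution to the window problem — the issue is not the almost-invariance clause of Definition~\ref{def:fun-at-scale} but the fact that the entire Fourier support of $L^2_0(G)$ lives at the coarsest scales of the profinite group.
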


The argument is similar to the one we used in the proof of Propositions~\ref{prop:desired-entropy} and~\ref{prop:contraction-coupling};
we now turn to the details. 

\begin{proof}
The proof will be completed in some steps. 
For every integer $m\geq0$, let $\sigma_m=\sigma^{(2^{m})}$.

\subsection*{Quasi randomness and its consequences}
Recall from~\eqref{eq: metric quasi random G} that
\be\label{eq: metric quasi random G'}
\dim(\rho)\geq c_\Gamma [G:\ker\rho]^{\alpha_\Gamma}\qquad\text{for any $\rho\in\hat G$.}
\ee

Let $\rho$ be an irreducible component of $\ell^2(G)$. 
If $[G:\ker\rho]< p_{\nu_1}$, then $G_1\subseteq \ker\rho$. That is: $\rho$ may be identified as a representation of 
$G/G_1\simeq G_2$ and the claim in the lemma follows from~\eqref{eq: marginals close to equi}. 
Therefore, in the proof of the lemma we may restrict to representations $\rho$ so that 
\be\label{eq: rho is not small}
[G:\ker\rho]\geq p_{\nu_1}\geq |G|^{1/(d+1)},
\ee
where in the last inequality we used~\eqref{eq: IGI and p nu1}. In view of~\eqref{eq: metric quasi random G'}, thus, 
it suffices to consider  
\be\label{eq: dim of not small rho}
\dim\rho\geq c_\Gamma|G|^{\alpha_\Gamma/(d+1)}.
\ee
Now if $f\in(\ell^2(G))_\rho$ (the $\rho$-isotypic submodule of $\ell^2(G)$) for some $\rho$ satisfying~\eqref{eq: dim of not small rho}, then  
\be\label{eq: mixing inequality finite}
\|\chi* f\|_2\ll_\Gamma |G|^{-\alpha_\Gamma/(2d+2)}|G|^{1/2}\|\chi\|_2\|f\|_2,\quad\text{for any $\chi\in\ell^2(G)$,}
\ee
see~\cite[Lemma 6.1]{MMSG} and references therein, note also that in this proof convolution operator is defined using the counting measure. 

Thus in order to prove the lemma, it suffices to show that 
\be\label{eq: G is D quasi random}
\|\sigma_{m_1}\|_2^2\leq |G|^{-1+\beta}
\ee
for some $0<\beta<\alpha:=\alpha_{\Gamma}/(2d+2)$ and some $m_1=m_1(\Omega)$.

\subsection*{Initial entropy and its consequences} 
In view of~\eqref{eq: marginals close to equi}, 
\[
\sigma_m((g_1,g_2))\leq \min\{\pi_{1*}\sigma(g_1),\pi_{2*}\sigma(g_2)\}\leq 2M^{-1}
\]
for all $m\geq 0$ and all $(g_1,g_2)\in G$. Since $\sigma_m$ is a probability measure, we conclude that 
\be\label{eq: L2 norm of sigma m}
\|\sigma_m\|_2^2\leq  2M^{-1}\qquad \text{for all $m\geq 0$;} 
\ee
see also Lemma~\ref{lem:coupling-entropy-lower-bound} with $\eta=1/|G|$.  

In view of~\eqref{eq: L2 norm of sigma m},~\eqref{eq: G is D quasi random} holds unless 
\be\label{eq: G1 and G2 are both large}
|G_i|\geq DM^{\alpha/4}\qquad\text{for $i=1,2$,}
\ee 
where $D$ is as in the statement of proposition. 
Therefore, for the rest of the argument we will assume that~\eqref{eq: G1 and G2 are both large} holds. 
 
Let $d$ be as in the statement of the proposition. For every $0<\vare<1$, put  
\[
{\rm Exc}(\vare):=\bigl\{(\nu, n)\in V_{{\rm f},\Gamma}\times \bbn: p_\nu^n\leq R_1^{1/\vare^{\bar E}}\bigr\},
\] 
where $\bar E$ and $R_1$ are as in Theorem~\ref{thm: SG II bdd generation}.

Let 
\be\label{eq: choose vare}
\vare=\vare_0(\Omega)\alpha/(8dE_2).
\ee  
Note that in the proof of Proposition~\ref{prop: spectral indep exceptional}, we are allowed to assume 
\be\label{eq: M is really large}
(\alpha\log M)/4> (\log R_1)/\vare^{\bar E}.
\ee
Since ${\rm Exc}(\vare)$ is a finite set,~\eqref{eq: M is really large} and~\eqref{eq: G1 and G2 are both large} imply that we may assume 
$(\nu_1,n_1), (\nu_2,n_2)\not\in{\rm Exc}(\vare)$ for the rest of the proof.

Let $\delta$ and $R_2$ be as in Theorem~\ref{thm: SG II bdd generation} applied with $\vare$, and let $m_2$ be so that $2^{m_2}E_1>1/\delta$. Then 
\be\label{eq: the choice of m1}
2^{m_2}\ell_1\geq 2^{m_2}E_1\log M\geq (\log M)/\delta. 
\ee

\subsection*{$\ell^2$-Flattening lemma} Let $K=M^\kappa$ for some $\kappa>0$ which will be explicated momentarily. 
Assume now that for some $m\geq m_2$, we have  
\be\label{eq: sigma m does not flatten}
\|\sigma_m*\sigma_m\|_2> K^{-1}\|\sigma_m\|_2.
\ee
Then, there exists $H\subseteq G$ which is symmetric and contains $1$, so that all of the following hold: 
\begin{enumerate}[label={(H-\arabic*)}]
\item\label{H-1} $K^{-E}\|\sigma_m\|_2^{-2}\leq |H|\leq K^{E}\|\sigma_m\|_2^{-2}$, 
\medskip
\item\label{H-2} $|H\cdot H\cdot H|\leq K^{E}|H|$, and  
\medskip
\item\label{H-3} $\sigma_m*\sigma_m(H)\geq K^{-E}$,
\end{enumerate}
where $E$ is absolute. See, e.g.,~\cite[Theorem 2.12]{MMSG} and references therein. 

We will apply the above with $\kappa=\alpha_\Gamma(\min\{\vare,\delta\})/(10R_2E\dim\bbg)$. This in particular implies   
\be\label{eq: fix kappa}
K^{-E}=M^{-E\kappa}\geq M^{-\delta\alpha/4}\geq p_{\nu_i}^{-\delta n_i},
\ee 
where in the last inequality we used~\eqref{eq: G1 and G2 are both large} and $|G_i|=p_{\nu_i}^{n_i}$.

\subsection*{Bounded generation and the structure of $H$} 
Let $\vare$ be as in~\eqref{eq: choose vare}, and let $A\subseteq \Gamma$ be a finite symmetric lift of $H$. 
Then by~\ref{H-3} and~\eqref{eq: fix kappa}, we have 
\be\label{eq: mathca P of A}
\mathcal P_\Omega^{(2^{m+1}\ell_1)}(A)\geq K^{-E}\geq p_{\nu_i}^{-\delta n_i}\quad\text{for $i=1,2$.}
\ee
Apply Theorem~\ref{thm: SG II bdd generation} with $\vare$ and $A$. 
Since $\pi_{\nu_i,n_i}(A)=\pi_i(H)$,~\eqref{eq: mathca P of A} and Theorem~\ref{thm: SG II bdd generation} 
imply that for $i=1,2$, there exists $0\leq n'_i\leq n_i\vare$ so that if we put $H_1=\prod_{R_2} H$, then 
\be\label{eq: marginals of H1 have structure}
\pi_{\nu_i,n_i}(\Gamma_{\nu_i,n'_i})\subseteq \pi_i(H_1),\qquad\text{for $i=1,2$.} 
\ee 
Recall from~\eqref{eq: n1 is less that d} that $n_1\leq d+1$. Hence, $\vare n_1\leq \vare (d+1)<1$, which implies $n'_1=0$. 
We also record for later use that $n_1\leq d+1$ and~\eqref{eq: n2 is large} imply
\be\label{eq: vare n2 is at least 2}
\vare n_2>2. 
\ee

\subsection*{A homomorphism from $G_1$ into $G_2$} 
In view of~\eqref{eq: marginals of H1 have structure} and since $n'_1=0$, we have  
\be\label{eq: marginals of H1 have structure'}
\textstyle
G_1=\pi_1(H_1)\qquad\text{and}\qquad \pi_{\nu_2,n_2}(\Gamma_{\nu_2, n'_2})\subseteq \pi_2(H_1),
\ee
where $0\leq n'_2\leq \vare n_2$. 

For every $g_2\in G_2$, put $\ell(g_2):=\max\{\ell\geq 0: g_2\in\ker(\pi_{\nu_2,\ell})\}$. 
Define a function $s: G_1\to G_2$ by 
\[
(g_1,s(g_1))\in H_1 \quad\text{and}\quad \ell(s(g_1))=\min\{\ell(g_2):(g_1,g_2)\in H_1\}.
\]
Define $\ell_s$ by ${\rm Im}(s(G_1))\subseteq \pi_{\nu_2, n_2}(\Gamma_{\nu_2,\ell_s})$ but ${\rm Im}(s(G_1))\not\subseteq \pi_{\nu_2, n_2}(\Gamma_{\nu_2,\ell_s+1})$. 
In view of the definition of $s$ and~\eqref{eq: marginals of H1 have structure'}, we have 
\be\label{eq: the size of ells}
\ell_s\leq n'_2\leq \vare n_2.
\ee
Let us also define $k_s=\min\{k'_s, k_s''\}$ where  
\[
\begin{aligned}
k_s'&=\min\bigl\{\ell\bigl(s(g_1)s(g_1^{-1})\bigr): g_1\in G_1\bigr\},\quad\text{and}\\
k_s''&=\min\bigl\{\ell\bigl(s(g_1g'_1)s(g_1')^{-1}s(g_1)^{-1}\bigr): g_1,g'_1\in G_1\bigr\}.
\end{aligned}
\]
Then $s$ induces a homomorphism $f: G_1\to G_2/\pi_{\nu_2, n}(\Gamma_{\nu_2, k_s})$. 
Recall now that $p_{\nu_1}> p_{\nu_2}$, $n_1\leq d+1$, and $\Gamma_{\nu_1}$ is a hyperspecial subgroup. Thus $G_1/\Gamma_{\nu_1,1}$ does not arise as a composition factor of any subgroup of $G_2/\pi_{\nu_2, n}(\Gamma_{\nu_2, k_s})$. We conclude that $f$ is trivial, and hence 
\[
k_s\leq\ell_s.
\]

\subsection*{Large fibers for $H_1$}
Note that 
\[
\text{$\bigl(1, s(g_1)s(g_1^{-1})\bigr)\in H_1\cdot H_1\quad$ and $\quad\bigl(1, s(g_1g'_1)s(g_1')^{-1}s(g_1)^{-1}\bigr)\in H_1\cdot H_1\cdot H_1$}
\]
 for all $g_1, g'_1\in G_1$. This and $k_s\leq \ell_s\leq \vare n_2$, see~\eqref{eq: the size of ells}, imply the following:
\be\label{eq: element with small level}
\text{There exists $h\in G_2$ with $\ell(h)\leq \vare n_2$ so that $(1,h)\in H_1\cdot H_1\cdot H_1$}
\ee

Apply Proposition~\ref{prop:conjugation-by-large-ball-multiplication-large-image} with the group 
$\Gamma_{\nu_2}$, $h$ as in~\eqref{eq: element with small level}, and $\rho=p_{\nu_2}^{-\vare n_2}$ 
(recall from~\eqref{eq: vare n2 is at least 2} that $\vare n_2>2$). 
Then by Proposition~\ref{prop:conjugation-by-large-ball-multiplication-large-image} combined with ~\eqref{eq: element with small level} and~\eqref{eq: marginals of H1 have structure'} (for $\nu_2$), there exists $E_2$ (depending on $\bbg$) 
so that if we put $t=E_2\vare n_2$, then 
\be\label{eq: large fiber for H1}
\{(1, g_2): g_2\in \pi_{\nu_2,n_2}(\Gamma_{\nu_2, t})\}\subset \textstyle\prod_{7\dim \bbg} H_1. 
\ee

\subsection*{The conclusion of the proof} 
Recall now from~\eqref{eq: marginals of H1 have structure'} that $G_1=\pi_1(H_1)$. 
This and~\eqref{eq: large fiber for H1} imply 
\be\label{eq: H1 is large}
\Bigl|\textstyle\prod_{R_2(1+7\dim \bbg)} H\Bigr|\geq |G_1||\pi_{\nu_2,n_2}(\Gamma_{\nu_2, t})|=|G_1||G_2|^{1-E_2\vare},
\ee
where we also used $H_1=\prod_{R_2}H$. 

In view of~\ref{H-1} and~\ref{H-2}, we have 
\[
\textstyle
K^{E+R_2E(1+7\dim \bbg)}\|\sigma_m\|_2^{-2}\geq K^{R_2E(1+7\dim \bbg)}|H|\geq \Bigl|\textstyle\prod_{R_2(1+7\dim \bbg)} H\Bigr| 
\]
Combining this with~\eqref{eq: H1 is large}, we get 
\[
K^{E+R_2E(1+7\dim \bbg)}\|\sigma_m\|_2^{-2}\geq |G_1||G_2|^{1-E_2\vare}.
\]
Recall that $K=\max\{|G_1|,|G_2|\}^\kappa$ where $\kappa=(\alpha\min\{\vare,\delta\})/(10R_2E\dim\bbg)$. 
In consequence, 
\[
\|\sigma_m\|_2^{-2}\geq |G_1||G_2|^{1-E_2\vare} K^{-8R_2E\dim \bbg}\geq |G|^{1-\frac{\alpha}{4}},
\]
where we also used the choice of $\vare$, see~\eqref{eq: choose vare}. 
In particular,~\eqref{eq: G is D quasi random} holds for $\sigma_m$. Altogether, 
\be\label{eq: sigma m does not flatten 1}
\text{for all $m\geq m_2$ either $\|\sigma_m*\sigma_m\|_2\leq K^{-1}\|\sigma_m\|_2$ or~\eqref{eq: G is D quasi random} holds for 
$\sigma_m$}
\ee
see~\eqref{eq: sigma m does not flatten}. 

Since $K=M^\kappa$ and $\|\sigma_m\|_2\ll M^{-1/2}$ for all $m$, see~\eqref{eq: L2 norm of sigma m}, 
we conclude from~\eqref{eq: sigma m does not flatten 1} that there exists $m_2\leq m_1\ll_\Omega 1$ 
so that~\eqref{eq: G is D quasi random} holds. The proof is complete.  
\end{proof}
 
\subsection{Infinite and finite place}\label{sec: exceptional finite and infinite}
We now investigate Proposition~\ref{prop: spectral indep exceptional} in the case 
\be\label{eq: real p adic}
\nu_1,\nu_2\in V_\Gamma,\quad\nu_2=\infty,\quad\text{and $\quad\nu_1$ is a finite place.}
\ee

\subsection*{Preliminary reductions}
Since almost every $\nu$ belongs to $W_\Gamma$, in the proof of Proposition~\ref{prop: spectral indep exceptional} we  may assume that $\nu_1\in W_\Gamma$.

Note that $G_1$ is a finite group, we thus equip $G$ with the metric $d$ induced by the admissible metric on $G_2$ and the discrete metric on $G_1$. To be more explicit, 
\[
1_\eta= \{1^{(1)}\}\times 1_\eta^{(2)} \qquad\text{for all $0<\eta<1$.}
\]

We let $m_{G_i}$ denote the probability Haar measures on $G_i$ for $i=1,2$, and let $m_G$ be the product measure. 
Then $|1_\eta|= |1^{(2)}_\eta/|G_1||$ for all $0<\eta<1$. In particular, we have 
\be\label{eq: dim cond G1}
\frac{1}{|G_1|C_1}\eta^{d_0}\leq |1_\eta|\leq \frac{C_1}{|G_1|}\eta^{d_0}.
\ee
where $d_0=\dim\bbg$ and $C_1\geq 2$ depends only on $d_0$. 
This in particular implies that for any $\theta>0$ and all $2^{-\ell-1}<\eta\leq 2^{-\ell}\leq |G_1|^{-\theta}$, we have  
\be\label{eq: dim cond eta small}
\frac{1}{2C_1}\eta^{d_0(\ell,\theta)}\leq |1_\eta|\leq 2C_1\eta^{d_0(\ell,\theta)}
\ee
where $d_0(\ell,\theta)=d_0+\frac{1}{\ell}\log_{2}|G_1|$; consequently, $d_0\leq d_0(\ell,\theta)\leq d_0+\theta^{-1}$. 

We will use the following notation:
\[
P_{\eta}^{(2)}=\frac{1}{|1_\eta^{(2)}|}1_\eta^{(2)}\quad\text{and}\quad P_\eta= \frac{1}{|1_\eta|}1_\eta. 
\]

Throughout the argument, and whenever necessary, we will assume 
\[
0<\eta\leq (C_1d_0)^{-O_{\Gamma}(1)}\qquad\text{and}\qquad p_{\nu_1}\geq (C_1d_0)^{O_{\Gamma}(1)}.
\]

As it was done in \S\ref{sec: exceptional both finite}, let $X=(X_1,X_2)$ be as in the statement of 
Proposition~\ref{prop: spectral indep exceptional}, and let $\mu$ be the probability law of $X$. 
Since $\langle \Omega\rangle =\Gamma$ is Zariski dense in $\bbg$, which is absolutely almost simple,  we have  
$\min\{\lcal(X_1),\lcal(X_2)\}\geq c_0>0$ where $c_0=c_0(\Omega)$, see~\cite{Benoist-Saxce-16, Golsefidy-IMRN-SA} and references therein. Thus    
\be\label{eq: marginals have gap 1'}
\max\{\lambda(\pi_{1*}\mu; G_1), \lambda(\pi_{2*}\mu; G_2)\}\leq \lambda <1
\ee
where $\pi_i$ denotes the projection onto the $i$-th factor and $\lambda:=\lambda(\Omega)$. 
Therefore, there exists $E_1\geq 1$ so that both of the following hold  
\be\label{eq: marginals close to equi'}
\begin{aligned}
&\bigl|\pi_{1*}\mu^{(\ell_1)}(f_1)|\leq |G_1|^{-1}\|f_1\|_2&&\text{for all $\ell_1\geq E_1\log|G_1|$}\\
&\bigl|\pi_{2*}\mu^{(\ell_1)}(f_{2,\eta})\bigr|\leq \eta\|f_{2}\|_2&&\text{for all $0<\eta<1$ and all $\ell_1\geq E_1 |\log \eta|$}
\end{aligned}
\ee
where $f_1\in L^2_0(G_1)$, $f_2\in L^2_0(G_2)$, and $f_{2,\eta}=f_2* P_\eta^{(2)}$.
This can be seen, e.g., by applying Lemma~\ref{lem: spectral gap implies close to Haar} with $\eta$ (and in the first case $\eta=|G_1|^{-1}$) and choosing $E_1$ so that $\lambda^{\ell_1/2}\eta^{-2}\leq 1$ for all $\ell_1\geq E_1 |\log \eta|$.

Recall that by the Peter--Weyl theorem we have 
\be\label{eq: Peter Weyl thm2}
L^2(G)=\bigoplus_{\rho_{1,i}\in \hat G_1, \rho_{2,j}\in \hat G_2} \dim(\rho_{1,i}\otimes\rho_{2,j})\rho_{1,i}\otimes\rho_{2,j}.
\ee

We will use the following fact in the sequel. Let $V_2=\oplus_{\rho_{2,j}\neq 1} \dim(1\otimes\rho_{2,j})1\otimes\rho_{2,j}$.
Then, $V_2$ is the natural embedding of $L^2_0(G_2)$ in $L^2_0(G)$, thus~\eqref{eq: marginals have gap 1'} implies that 
\be\label{eq: rep which factor through G2}
\mathcal L(\mu; V_2)\geq c_2(\Omega)>0. 
\ee

Finally, note that~\eqref{eq: dimension bound} and the fact that $G_2$ is connected imply that 
\[
p_{\nu_1}^{n_1}\leq |G_1|\leq Dp_{\nu_1}^d\leq p_{\nu_1}^{d+1}. 
\]
In consequence, we will assume for the rest of the argument that 
\be\label{eq: n1 is less that d'}
n_1\leq d+1.
\ee
In view of~\eqref{eq: n1 is less that d'} and part~(1) in Lemma~\ref{lem: locally random family}, we have: for any nontrivial representation $\rho\in \hat G_1$  
\be\label{eq: metric quasi random G''}
\dim(\rho)\geq c'_\Gamma\,|G_1|^{\alpha'_\Gamma},\qquad\text{for some $0<c_\Gamma', \alpha_\Gamma'\leq 1$. }
\ee

We begin with the following.

\begin{lem}\label{lem: quasi randomness}
There exists some $\alpha=\alpha(C_0, L, d_0)>0$ so that the following holds.
Assume there exists $E_1'$ so that for all $0<\eta\leq |G_1|^{-1}$ there exists some $\ell\leq E_1'|\log\eta|$ so that  
\[
\|(\mu^{(\ell)})_\eta\|_2\leq \eta^{-\alpha},
\]
then Proposition~\ref{prop: spectral indep exceptional} holds for $G=G_1\times G_2$. 
\end{lem}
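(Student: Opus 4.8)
The plan is to reduce the spectral gap statement for $\mu$ on $G = G_1 \times G_2$ to the study of functions living at a small scale $\eta$, and then to show that the contraction hypothesis forces such functions into a quasi-random regime where the mixing inequality applies. First I would invoke the Peter--Weyl decomposition~\eqref{eq: Peter Weyl thm2}, splitting $L^2_0(G)$ into three pieces: the subspace $V_2 \cong L^2_0(G_2)$ of representations that factor through $G_2$, the analogous subspace $V_1 \cong L^2_0(G_1)$ of representations factoring through $G_1$, and the ``genuinely product'' part $V_{12} = \bigoplus_{\rho_{1,i}\neq 1,\, \rho_{2,j}\neq 1} \dim(\rho_{1,i}\otimes\rho_{2,j})\,\rho_{1,i}\otimes\rho_{2,j}$. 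On $V_2$ the spectral gap is already known from~\eqref{eq: rep which factor through G2}; on $V_1$ it follows from the marginal bound~\eqref{eq: marginals close to equi'} (first line) together with $n_1 \le d+1$, which makes $G_1$ a bounded quotient. So the work is concentrated on $V_{12}$, and here one uses that by~\eqref{eq: metric quasi random G''} every nontrivial $\rho \in \hat G_1$ has $\dim(\rho) \ge c'_\Gamma |G_1|^{\alpha'_\Gamma}$, so that representations appearing in $V_{12}$ have large dimension in the $G_1$-direction, while the $p$-adic locally random property of $G_2$ controls the $G_2$-direction.

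Next I would set $\eta$ to be a negative power of $2$ small compared to $|G_1|^{-1}$, so that the dimension condition~\eqref{eq: dim cond eta small} holds on $G$ with exponent $d_0(\ell,\theta)$ bounded in terms of $d_0$ and $\theta$, and so that $G$ is $L$-locally random with coefficient $2C_0$ (by~\cite[Lemma 5.2]{MMSG} applied to the product, using part~(1) of Lemma~\ref{lem: locally random family} for $G_2$ and the discrete metric on $G_1$). The key point is then: if $f \in L^2(G)$ lives at scale $\eta$ and is orthogonal to $V_1 \oplus V_2$, I claim $\|\mu^{(\ell)} \ast f\|_2 \le \eta^{b}\|f\|_2$ for some $\ell \le C_3 \log(1/\eta)$ and $b > 0$ depending only on $C_0, L, d_0$. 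To prove this, apply the mixing inequality~\cite[Theorem 2.6]{MMSG} to $\mu^{(\ell)} \ast f$, splitting into $(\mu^{(\ell)})_{\eta^{1/a}} \ast f_{\eta^{1/a}}$ plus an error governed by $\|(\mu^{(\ell)})_{\eta^{1/a}}\|_2^2 \|f\|_2^2$; the first term is small because $f$ lives at scale $\eta$, and the second is controlled precisely by the hypothesis $\|(\mu^{(\ell)})_\eta\|_2 \le \eta^{-\alpha}$ (with $\alpha$ chosen small relative to $1/(La)$, mirroring the proof of Lemma~\ref{lem:lower-bound-entropy-gives-contraction}). One must also separate off the low-dimensional irreducible components: those $\rho$ with $[G:\ker\rho] < p_{\nu_1}$ are exactly the ones factoring through $G_2$ (since $p_{\nu_1}^{n_1} \le |G_1|$ and $n_1 \le d+1$), hence they live in $V_2$ and are handled by~\eqref{eq: rep which factor through G2}; the remaining components have large dimension by quasi-randomness, so the mixing inequality in the form~\cite[Lemma 6.1]{MMSG} applies with a genuine gain.

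Having established the scale-$\eta$ contraction on the orthogonal complement of $\mathcal{H}_0 := V_1 \oplus (\text{low-dim part of } V_2)$, I would apply Theorem~\ref{thm:functions-scale-spec-gap} to conclude $\lcal(\mu; L^2(G) \ominus \mathcal{H}_0) \ge b/C_3 > 0$ with $\dim \mathcal{H}_0 \le 2C_0 \eta_0^{-d_0}$, which is acceptable since we only need \emph{some} positive lower bound on $\lcal(X; G_1 \times G_2)$ depending on $\Omega$. Finally, combining with the gaps on $V_1$ (from the $G_1$ marginal and $n_1 \le d+1$) and on $V_2$ (from~\eqref{eq: rep which factor through G2}), and noting that all the relevant constants depend only on $\Omega$ through $\lambda(\Omega)$, $c_0(\Omega)$, $c_2(\Omega)$ and the locally-random parameters, I would obtain $\lcal(X; G_1\times G_2) > \varrho(\Omega, d, D)$. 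The main obstacle I anticipate is the careful bookkeeping in the mixing/quasi-randomness step: one has to be sure that the exponent $d_0(\ell,\theta)$ arising from the anisotropic dimension condition~\eqref{eq: dim cond eta small} does not degrade the gain as $|G_1|$ grows, and that the threshold for ``$\eta$ small enough'' and ``$p_{\nu_1}$ large enough'' can be chosen uniformly in $\Omega$ — this is exactly where the bound $n_1 \le d+1$ from the component-count hypothesis~\eqref{eq: dimension bound} does the essential work, pinning $G_1$ to a bounded-complexity quotient while $G_2$ is allowed to be arbitrarily deep.
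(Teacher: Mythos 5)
Your overall plan has the right ingredients — quasi-randomness of $G_1$ from~\eqref{eq: metric quasi random G''}, the mixing inequality, the marginal gaps on $V_1$ and $V_2$, and the $L^2$-norm hypothesis $\|(\mu^{(\ell)})_\eta\|_2\le\eta^{-\alpha}$ — and the anisotropy concern you raise at the end (the exponent $d_0(\ell,\theta)$ in~\eqref{eq: dim cond eta small}) is resolved exactly by $n_1\le d+1$, as you guess. But two steps in the middle don't go through as written.

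First, you cannot declare $\mathcal H_0 := V_1\oplus(\text{low-dim part of }V_2)$ and then ``apply Theorem~\ref{thm:functions-scale-spec-gap}'' to it. The exceptional subspace that Theorem~\ref{thm:functions-scale-spec-gap} produces is not chosen by the user: it is the span of the low-dimensional irreducible components (those with $\dim\le 2C_0\eta_0^{-d_0}$), and since $\eta_0\le|G_1|^{-1}$, this threshold is roughly $|G_1|^{d_0}$. Representations $\rho_{1,i}\otimes\rho_{2,j}$ in $V_{12}$ with $\rho_{1,i}\ne 1$ have dimension at least $c'_\Gamma|G_1|^{\alpha'_\Gamma}$ by quasi-randomness, but $\alpha'_\Gamma<d_0$, so plenty of them fall below the threshold and land in the exceptional subspace. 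These must be handled by a separate argument; they are not automatically covered by the scale-$\eta$ contraction, nor by the marginal gaps. You gesture at this (``the remaining components have large dimension\ldots the mixing inequality applies'') but fold it into the scale-$\eta$ claim, where it doesn't belong.

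Second, once you restrict to those low-dimensional components with $\rho_{1,i}\ne 1$, you cannot apply the mixing bound $\|\chi*f\|_2\ll|G_1|^{-\alpha'_\Gamma/2}\|\chi\|_2\|f\|_2$ with $\chi=\mu^{(\ell)}$ directly, because $\mu^{(\ell)}$ is a measure with no $L^2$ control; the hypothesis only controls $\|(\mu^{(\ell)})_\eta\|_2$. The missing link is~\cite[Lemma 8.8]{MMSG}: if $f$ lies in a representation of dimension at most $2C_0C_1^D|G_1|^D$, then $\|P_\delta*f-f\|_2\le\delta^{1/2}\|f\|_2$ for $\delta=|G_1|^{-D'}$ with $D'$ depending on $C_0,L,D$. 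This almost-invariance (which is a property of \emph{low}-dimensional components, not the same thing as ``living at scale $\eta$'', which concerns high-dimensional ones) is what lets you write $\|\mu^{(\ell)}*f\|_2\le\delta^{1/2}\|f\|_2+\|(\mu^{(\ell)})_\delta*f\|_2$ and then feed $\|(\mu^{(\ell)})_\delta\|_2\le\delta^{-\alpha}$ into the mixing bound. With $\alpha\le\alpha'_\Gamma/(4D')$ the two terms combine to $\ll\delta^{\alpha'_\Gamma/(4D')}\|f\|_2$, giving the gap on this piece. The paper's route is cleaner than yours: it cites~\cite[Theorem 2.10]{MMSG} directly (which accepts the $\|(\mu^{(\ell)})_\eta\|_2$ hypothesis without detouring through ``functions living at scale $\eta$''), localizes the problem to the low-dimensional $V\supseteq\mathcal H_0$, splits off the $\rho_{1,i}=1$ part via~\eqref{eq: rep which factor through G2}, and handles the remaining $V'$ by the three-step almost-invariance/quasi-randomness/mixing argument above. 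Your extra splitting of $V_1$ from $V_{12}$ is harmless (the $G_1$-marginal gap does close $V_1$), but it is also unnecessary, since $V_1\subseteq V'$ and the mixing argument handles it for free.
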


\begin{proof}
This is essentially proved in~\cite[\S8]{MMSG}, we explicate some of the details for the convenience of the reader. 
First note that by Lemma~\ref{lem: locally random family}, there exists $(C_0, L)$ depending only on $\Gamma$ so that $G=G_1\times G_2$ is $L$-locally random with coefficient $C_0$.  Moreover, $G$ satisfies the dimension condition~\eqref{eq: dim cond G1}.
Therefore,~\cite[Theorem 2.10]{MMSG} implies that there exists $D$, depending only on $d_0$, and $\alpha'$, depending only on $L$ and $C_0$, so that the following holds: there is an exceptional representation $\mathcal H_0$ of $G_1\times G_2$ with 
$\dim\mathcal H_0\leq 2C_0C_1^D|G_1|^{D}$ so that, if for all $\eta\leq |G_1|^{-D}$ there exists $\ell\leq E'|\log\eta|$ satisfying 
\[
\|(\mu^{(\ell)})_\eta\|_2\leq \eta^{-\alpha'},
\] 
then we have 
\be\label{eq: large dimensional rep}
\mathcal L(\mu; L^2(G)\ominus \mathcal H_0)\geq c,
\ee 
where $c=O_{E', L, C_0}(\alpha')>0$. 

Thus, we may restrict our attention to 
\[
V:=\oplus (\dim(\rho_{1,i}\otimes\rho_{2,j}))\rho_{1,i}\otimes\rho_{2,j}\supseteq \mathcal H_0
\] 
where the sum is over representations with $\dim(\rho_{1,i}\otimes\rho_{2,j})\leq 2C_0C_1^D|G_1|^{-D}$. 

By~\cite[Lemma 8.8]{MMSG}, there exists $D'\geq 1$, depending on $C_0, L, D$, so that if we put $\delta=|G_1|^{-D'}$, 
\be\label{eq: P eta almost inv}
\|P_\delta*f-f\|_2\leq \delta^{1/2}\|f\|_2
\ee
for any $f\in V$.  

Moreover, in view of~\eqref{eq: rep which factor through G2}, we may replace $V$ by $V'\subseteq V$ 
where in the sum above we further assume that $\rho_{1,i}\neq 1$. 
Then in view of~\eqref{eq: metric quasi random G''}, for all components of $V'$, we have 
\[
\dim(\rho_{1,i})\geq c'_\Gamma\, |G_1|^{\alpha'_\Gamma}.
\]
Recall again that there exists $(C_0, L)$ 
depending only on $\Gamma$ so that $G_1\times G_2$ is $L$-locally random with coefficient $C_0$, see Lemma~\ref{lem: locally random family}. Thus by~\cite[Lemma 6.1]{MMSG}, for any $\chi\in L^2(G)$ and any $f\in V'$,  
\be\label{eq: intermediate range}
\|\chi*f\|_2\ll_\Gamma |G_1|^{-\alpha'_\Gamma/2}\|\chi\|_2\|f\|_2. 
\ee
We will show the claim in this case holds if $\alpha\leq \alpha'_\Gamma/(4D')$. 
Recall that $\delta=|G_1|^{-D'}$, and let $\ell\leq E_1'|\log\delta|$ be so that the condition of the lemma holds for this choice of $\delta$.
Then applying~\eqref{eq: intermediate range} with $\chi=(\mu^{(\ell)})_\delta$ and $f\in V'$, 
we have
\be\label{eq: contraction for V'}
\begin{aligned}
\|\mu^{(\ell)}*f\|&\leq \delta^{1/2}\|f\|_2 + \|\mu^{(\ell)}*P_\delta*f\|_2 \\
&\ll \delta^{1/2}\|f\|_2+|G_1|^{-\alpha'_\Gamma/2}\delta^{-\alpha}\|f\|_2\leq 2\delta^{\alpha'_\Gamma/(4D')}\|f\|_2,
\end{aligned}
\ee
where we used~\eqref{eq: P eta almost inv} in the first inequality and~\eqref{eq: intermediate range} in the second. 

Now,~\eqref{eq: contraction for V'} implies that 
\[
E_1'|\log\delta|\mathcal L(\mu;V')\geq \alpha'_\Gamma|\log\delta|/(8D').
\]
This, together with~\eqref{eq: large dimensional rep} and~\eqref{eq: rep which factor through G2} finish the proof.  
\end{proof}

In view of Lemma~\ref{lem: quasi randomness}, thus, Proposition~\ref{prop: spectral indep exceptional} in this case will follow from the following. 
 
\begin{lem}\label{lem: l2 flattening}
There exists $E_1'$ so that for all $0<\eta\leq |G_1|^{-1}$, there is some $\ell\leq E_1'|\log\eta|$ with 
\[
\|(\mu^{(\ell)})_\eta\|_2\leq \eta^{-\alpha}, 
\]
where $\alpha$ is as in Lemma~\ref{lem: quasi randomness}. 
\end{lem}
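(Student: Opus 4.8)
\textbf{Proof proposal for Lemma~\ref{lem: l2 flattening}.}

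The plan is to run the same $\ell^2$-flattening machinery that was used in \S\ref{sec: exceptional both finite}, but adapted to the mixed Archimedean/non-Archimedean setting; the role played there by the finite group $G$ at scale $1/|G|$ is now played by $G=G_1\times G_2$ at the variable scale $\eta$, using the dimension condition~\eqref{eq: dim cond eta small}. First I would record the initial upper bound: by~\eqref{eq: marginals close to equi'} the marginal $\pi_{2*}\mu^{(\ell)}$ is close to $m_{G_2}$ in an averaged sense once $\ell\gg|\log\eta|$, and combined with the exact equidistribution of $\pi_{1*}\mu^{(\ell)}$ on the finite group $G_1$ we get $\|(\mu^{(\ell)})_\eta\|_\infty\ll |G_1|^{-1}\eta^{-d_0}\ll |1_\eta|^{-1}$ (cf.\ Lemma~\ref{lem:coupling-entropy-lower-bound}), hence $\|(\mu^{(\ell)})_\eta\|_2^2\ll |1_\eta|^{-1}$, a bound roughly $\eta^{-d_0(\ell,\theta)}$. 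So it suffices to gain a definite power $\eta^{\alpha}$ of entropy over this trivial bound in a bounded number of doublings.

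Next I would set $\sigma=\mu^{(\ell_1)}$ for a suitable $\ell_1\asymp|\log\eta|$ and argue by contradiction: if for all $m$ in a bounded range $(\sigma^{(2^m)})_\eta$ fails to flatten, i.e.\ $\|(\sigma_m*\sigma_m)_\eta\|_2> K^{-1}\|(\sigma_m)_\eta\|_2$ with $K=\eta^{-\kappa}$, then the asymmetric Balog--Szemer\'edi--Gowers / Bourgain--Gamburd structure theorem (the multiscale version, \cite[Theorem 2.12]{MMSG}, exactly as invoked in Lemma~\ref{lem:coupling-BG} and in \S\ref{sec: exceptional both finite}) produces an $\eta^{-O(\kappa)}$-approximate subgroup $H\subseteq G_1\times G_2$ at scale $\eta$ with metric entropy $\approx H_2(\sigma_m;\eta)$ and with $\sigma_m$-mass $\gg\eta^{O(\kappa)}$ near a coset $xH$. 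From Lemma~\ref{lem:coupling-plus-almost-equidistribution-imlpies-large-open-projections} (applicable since the marginals of $\sigma_m$ are uniform on $G_1$ resp.\ close to Haar on $G_2$) the projections $\pr_i(\prod_4 H_\eta)$ contain small balls $1^{(i)}_{\eta^{C_2\gamma}}$. I would then pull $H$ back to a finite symmetric subset $A\subseteq\Gamma$ and apply Theorem~\ref{thm: SG II bdd generation} at the finite place $\nu_1$ (with $\mathcal C=p_{\nu_1}^{n_1}$) to conclude, using $n_1\le d+1$ and~\eqref{eq: metric quasi random G''}, that $\pi_{\nu_1,n_1}(A)$ already fills $G_1$: i.e.\ $\pr_1$ of a bounded product of $H$ equals $G_1$. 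On the Archimedean side, exactly as in \S\ref{sec: exceptional both finite}, the levels $k_s\le\ell_s$ of the induced partial map $G_1\to G_2$ force (via the hyperspecial/composition-factor obstruction, since $G_1/\Gamma_{\nu_1,1}$ cannot be a section of a neighborhood of identity in a connected real group) an element $(1,h)\in\prod_3 H_1$ with $h$ far from identity, and then Proposition~\ref{prop:conjugation-by-large-ball-multiplication-large-image} applied to $\Gamma_{\nu_2}=G_2$-side gives $\{1\}\times 1^{(2)}_{\eta^{E_2'}}\subseteq \prod_{7\dim\bbg}H_1$ for some controlled $E_2'$.

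Combining the two, $\prod_{O(1)}H$ contains $G_1\times 1^{(2)}_{\eta^{E_2'}}$, so its measure is $\gg |G_1|^{-1}\eta^{E_2' d_0}$, which via the approximate-subgroup axioms~\ref{H-1}--\ref{H-2} forces $\|(\sigma_m)_\eta\|_2^{-2}\gg |1_\eta|^{-1}\eta^{O(\kappa)+\epsilon}$ for a small fixed $\epsilon>0$ provided $\kappa$ is chosen small enough; that is, $(\sigma_m)_\eta$ \emph{does} have entropy strictly above the trivial bound, contradicting the non-flattening hypothesis for that $m$. Since each genuine doubling costs only a bounded factor and $\|(\sigma_m)_\eta\|_2\le |1_\eta|^{-1/2}$ caps the total possible gain, within $O_\Omega(1)$ steps we reach $\|(\mu^{(\ell)})_\eta\|_2\le\eta^{-\alpha}$ with $\ell\le E_1'|\log\eta|$. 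I expect the main obstacle to be bookkeeping of the two different ``scales'' simultaneously: $\eta$ is a continuous scale for $G_2$ while $|G_1|$ is a fixed (but $p_{\nu_1}$-dependent) discrete quantity, and one must check that all the ``$\ll_\Gamma$'' constants coming from $\delta$-discretization, the BG theorem, Theorem~\ref{thm: SG II bdd generation}, and Proposition~\ref{prop:conjugation-by-large-ball-multiplication-large-image} remain independent of $p_{\nu_1}$ (using that the $\bbq$-group $\bbg$ is fixed), exactly as in the ``moreover'' clauses of those statements; the small-prime exceptional set ${\rm Exc}(\vare)$ and the case where $\max\{n_1,n_2\}$ is bounded must again be handled separately by appeal to~\cite[Theorem 1]{SG-super-approx-II}.
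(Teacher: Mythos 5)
Your overall skeleton matches the paper's: reduce to a flattening step for $\sigma_m=\mu^{(\ell_1 2^m)}$ at scale $\eta$, invoke the multiscale Bourgain--Gamburd structure theorem to produce an approximate subgroup $H$, show that $\pi_1$ of a bounded product of $H$ is all of $G_1$ (via Theorem~\ref{thm: SG II bdd generation} at the finite place $\nu_1$) and that $\pi_2$ of a bounded product contains a ball $1^{(2)}_{\eta^\vare}$ (via~\cite[Theorem 2.8]{MMSG} and the marginal being close to Haar on $G_2$), then locate an element $(1,h)$ with $h$ a definite distance $\eta^{O(\vare)}$ from the identity, feed it into Proposition~\ref{prop:conjugation-by-large-ball-multiplication-large-image} to fill a fiber, and close the loop against the approximate-subgroup size bounds.

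The genuine gap is your third step, finding the element $(1,h)$ far from the identity. You appeal to ``the levels $k_s\le\ell_s$'' and ``the hyperspecial/composition-factor obstruction,'' transplanted verbatim from \S\ref{sec: exceptional both finite}. That argument, however, is built on the $p$-adic filtration of $G_2$ by the open \emph{normal} congruence subgroups $\Gamma_{\nu_2,k}$: the level function $\ell(\cdot)$ and the reduction of the partial map $s:G_1\to G_2$ to an \emph{honest} group homomorphism $f:G_1\to G_2/\pi_{\nu_2,n_2}(\Gamma_{\nu_2,k_s})$ are what make the composition-factor argument precise. Over $\bbr$ the balls $1^{(2)}_\eta$ are not subgroups, so there is no quotient to pass to and no integer-valued ``level''; the map $s$ extracted from $H_1$ is only an approximate homomorphism, and the obstruction you cite is at best heuristic. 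The paper replaces this by introducing the metric quantity $\beta_2$ (a continuous surrogate for $\ell_s$) and proving $\beta_2\le 10\vare$ directly, via a quantitative ``no small subgroups'' argument: construct the $\eta^{\beta_2}$-approximate homomorphism $f$ with Lemma~\ref{lem:constructing-approx-hom}, note that if $\beta_2>10\vare$ one can find $g_1\in G_1$ with $\eta^\vare/2\le d(f(g_1),1^{(2)})\le\eta^\vare$, then combine the commutator bounded-generation fact $\langle g_1\rangle\subseteq\prod_C\{h[g_1,h']h^{-1}\}$ (from~\cite{SG-super-approx-II}, using perfectness of $G_1$) with the approximate-hom estimates to force $d(f(g_1)^k,1^{(2)})\le\eta^{\vare/3}$ for all $k\le\eta^{-\beta_2/2}$; but writing $f(g_1)=\exp(z)$ with $\|z\|\asymp\eta^\vare$, some power $\exp(kz)$ in that range escapes every $\eta^{\vare/3}$-ball, a contradiction. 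That power argument is exactly the Archimedean-side input your proposal is missing; without it you have no rigorous route from ``$s$ is approximately a hom on a perfect group'' to ``$(1,h)\in\prod_3 H_1$ with $h$ far from the identity.''

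Two smaller points. First, your trivial sup-norm bound $\|(\mu^{(\ell)})_\eta\|_\infty\ll |G_1|^{-1}\eta^{-d_0}$ is miscalibrated; the paper's initial estimate~\eqref{eq: infty norm of mu ell} is $\|\sigma_\eta\|_\infty\le|G_1|$, which is what makes the intermediate-range reduction $\eta\le|G_1|^{-1/\alpha}\Rightarrow\|\sigma_\eta\|_2\le\eta^{-\alpha}$ work and confines the flattening argument to the range $|G_1|^{-1/\alpha}\le\eta\le|G_1|^{-1}$. Second, Lemma~\ref{lem:coupling-plus-almost-equidistribution-imlpies-large-open-projections} as stated is for a coupling of the Haar measures, which $\sigma_m$ is not (it is only close to one); the paper instead argues directly from the marginal estimate~\eqref{eq: marginals close to equi'} and~\cite[Theorem 2.8]{MMSG}, so you should do the same rather than cite that lemma.
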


\begin{proof}
The proof of the lemma, which will be completed in several steps, is similar to the proof of Lemma~\ref{lem: sigma m0 equidist}.  
Let $E_1$ be as in~\eqref{eq: marginals close to equi'} and let $\ell_1\geq E_1|\log\eta|$. 
We will show that the claim holds with some $E'_1= 2^mE_1$ for some $m$.  
Let $\sigma=\mu^{(\ell_1)}$, and for every nonnegative integer $m$, let $\sigma_m=\sigma^{(2^m)}$. 

Let 
$\bar E$, $R_1$ and $\vare_0(\Omega)$ as in Theorem~\ref{thm: SG II bdd generation}, and $E_2=C/c$ with $C$ and $c$ as in Proposition~\ref{prop:conjugation-by-large-ball-multiplication-large-image}.
Throughout the argument, we will assume $p_{\nu_1}\geq R_1^{1/\vare^{\bar E}}$, where   
\be\label{eq: choose vare again}
\vare=\vare_0(\Omega)\alpha/(8dE_2).
\ee
We will also assume that $\eta^\vare$ is smaller than various constants which depend only on $d_0$, as needed.

\subsection*{Initial entropy and the range of $\eta$}
Arguing as in the proof of Lemma~\ref{lem:coupling-entropy-lower-bound}, with $\sigma$
for $\nu$ and $0<\eta\leq |G_1|^{-1}$, the estimate in~\eqref{eq: marginals close to equi'} implies that 
\be\label{eq: infty norm of mu ell}
\|\sigma_\eta\|_\infty\leq |G_1|. 
\ee
Now~\eqref{eq: infty norm of mu ell} implies that if $\eta\leq |G_1|^{-1/\alpha}$, then 
\be\label{eq: intermediate range of eta}
 \|\sigma_\eta\|_2\leq \eta^{-\alpha}.
\ee

In particular,~\eqref{eq: intermediate range of eta} implies the lemma with $\ell_1$ so long as $\eta\leq |G_1|^{-1/\alpha}$.
For the rest of the argument, thus, we assume 
\be\label{eq: range of eta}
|G_1|^{-1/\alpha}\leq \eta\leq |G_1|^{-1}.
\ee

We also recall that  
\be\label{eq: triangle inequality and doubling}
\frac{1}{C_1'}P_{\eta}*\sigma_m\leq (\sigma_\eta)^{(2^m)}\leq C_1'P_{2^m\eta}*\sigma_m
\ee
where $C_1'\geq 1$ depends only on $C_1$ and $d_0$. 
This and~\eqref{eq: infty norm of mu ell} imply that 
\be\label{eq: L2 norm of sigma m real p adic}
\|(\sigma_m)_\eta\|\leq |G_1|\leq \eta^{-1}\qquad\text{for all $m$.}
\ee

Let $\delta$ and $R_2\geq 4$ be as in Theorem~\ref{thm: SG II bdd generation} applied with $\vare$.
Apply~\cite[Theorem 2.8]{MMSG} with $\vare$ as above (for $\vare$ in loc.\ cit.); replacing $\delta$ with a smaller constant, if necessary, we assume that $\delta$ also satisfies the claim in~\cite[Theorem 2.8]{MMSG} with this $\vare$.   
  
Let $m_2$ be so that $2^{m_2}E_1>1/\delta$. Then 
\be\label{eq: the choice of m1'}
2^{m_2}\ell_1\geq 2^{m_2}E_1|\log \eta|\geq |\log \eta|/\delta\geq \log |G_1|/\delta. 
\ee

\subsection*{Flattening lemma}
Recall from~\eqref{eq: dim cond eta small} that in the range~\eqref{eq: range of eta}, $G=G_1\times G_2$ satisfies 
dimension condition with $2C_1$ and some $d'$ satisfying $d_0\leq d'\leq d_0+1$. Thus~\cite[Theorem 2.12]{MMSG} 
is applicable with $2C_1$ and $d'$ for $G$ and all $\eta$ in this range.  

Assume now that for some $m\geq m_2$, we have  
\be\label{eq: sigma m does not flatten'}
\|(\sigma_m*\sigma_m)_\eta\|_2> \eta^\kappa\|(\sigma_m)_\eta\|_2
\ee
for some $\kappa>0$, which is explicated in~\eqref{eq: fix kappa'}. 
Then, by~\cite[Theorem 2.12]{MMSG} there exists $E$ depending on $C_1$ and $d_0$, 
and $H\subseteq G$ which is symmetric and contains $1$, so that all of the following hold: 
\begin{enumerate}[label={(H-\roman*)}]
\item\label{H-i} $\eta^{E\kappa}\|(\sigma_m)_\eta\|_2^{-2}\leq |H_\eta|\leq \eta^{-E\kappa}\|(\sigma_m)_\eta\|_2^{-2}$, 
\medskip
\item\label{H-ii} $H\cdot H\subset T\cdot H$ where $T\subset H\cdot H$ satisfies $\#T\leq \eta^{-E\kappa}$, and  
\medskip
\item\label{H-iii} $\sigma_m*\sigma_m(H_{3\eta})\geq \eta^{E\kappa}$.
\end{enumerate}

We will apply the above with $\kappa=(\alpha\min\{\vare,\delta\})/(10R_2Ed_0)$. This in particular implies   
\be\label{eq: fix kappa'}
\eta^{E\kappa}\geq \eta^{\delta\alpha/(2d_0)}\geq |G_1|^{-\delta},
\ee 
where in the last inequality we used~\eqref{eq: range of eta}.

\subsection*{Bounded generation and the structure of $H$} 
Let $\vare$ be as in~\eqref{eq: choose vare again}, and let $A\subseteq \Gamma$ be a finite symmetric lift of $H$. 
Then by~\ref{H-iii} and~\eqref{eq: fix kappa'}, we have 
\be\label{eq: mathcal P of A real p adic}
\mathcal P_\Omega^{(2^{m+1}\ell_1)}(A)\geq \eta^{E\kappa}\geq |G_1|^{-\delta}
\ee
Apply Theorem~\ref{thm: SG II bdd generation} with $\vare$ and $A$. 
Since $\pi_{\nu_1,n_1}(A)=\pi_1(H)$,~\eqref{eq: mathcal P of A real p adic} and Theorem~\ref{thm: SG II bdd generation} 
imply that there exists $0\leq n'_1\leq n_1\vare$ so that if we put $H_1=\prod_{R_2} H_{4\eta}$, then 
$\pi_{\nu_1,n_1}(\Gamma_{\nu_1,n'_1})\subseteq \pi_1(H_1)$. 
Recall from~\eqref{eq: n1 is less that d} that $n_1\leq d+1$. Hence, $\vare n_1\leq \vare (d+1)<1$, and we conclude $n'_1=0$. Altogether,  
\be\label{eq: pi-1 of H1 real p-adic}
\pi_1(H_1)=G_1
\ee

Let $Y=(\pi_2(H_{3\eta}))_\eta$, then $|Y|\asymp |\pi_2(H_{3\eta})|$, see~\eqref{eq: triangle inequality and doubling}. Moreover, by~\eqref{eq: marginals close to equi'}, we have 
\[
\bigl|(\pi_{2*}\sigma_{m+1})(Y)-|Y|\bigr|\leq \eta |Y|^{1/2}\leq \eta
\] 
Using~\eqref{eq: fix kappa'}, this and~\ref{H-iii} imply that 
\[
|\pi_2(H_{3\eta})|\geq \eta^{2E\kappa}\geq \eta^{\delta/d_0},
\]
we also used $|Y|\asymp |\pi_2(H_1)|$ and assume $\eta$ is small. 

Since $R_2\geq 4$, the above and~\cite[Theorem 2.8]{MMSG}, recall also the choice of $\delta$, imply that  
\be\label{eq: pi-2 of H1 real p-adic}
\pi_2(H_1)\supseteq 1_{\eta^\vare}^{(2)}.
\ee

As it was done in~\eqref{eq:largest-elements-in-factors}, let
\[
\textstyle\beta_2:=\inf\{\beta\in [0,1]|\; \exists g_2\in G_2, d(g_2,1^{(2)})\geq \eta^{\beta},\, (1^{(1)},g_2)\in \prod_3 (H_1)_\eta\};
\]
where $d$ denotes our fixed bi-invariant metric on $G_2$.  

We claim 
\be\label{eq: beta2 and vare}
\beta_2\leq 10\vare.
\ee
Let us first assume~\eqref{eq: beta2 and vare} and complete the proof of the lemma.

\subsection*{Large fibers for $H_1$}
In view of~\eqref{eq: beta2 and vare}, 
\be\label{eq: element with small level'}
\text{There exists $h\in G_2$ with $d(1, h)\geq \eta^{10\vare}$ so that $(1,h)\in \textstyle\prod_3(H_1)_\eta$}
\ee

Apply Proposition~\ref{prop:conjugation-by-large-ball-multiplication-large-image} with the group 
$G_2$, $h$ as in~\eqref{eq: element with small level'}, and $\rho=\eta^{10\vare}$. 
Then by Proposition~\ref{prop:conjugation-by-large-ball-multiplication-large-image} combined with ~\eqref{eq: element with small level'} and~\eqref{eq: pi-2 of H1 real p-adic}, there exists $E_2$ depending only on $d_0$ (indeed $E_2=C/c$) 
so that 
\be\label{eq: large fiber for H1'}
\bigl\{(1, g_2): g_2\in 1^{(2)}_{\eta^{E_2\vare}}\bigr\}\subset \textstyle\prod_{7d_0} (H_1)_\eta. 
\ee

\subsection*{The conclusion of the proof} 
Recall now from~\eqref{eq: pi-1 of H1 real p-adic} that $G_1=\pi_1(H_1)$. 
This and~\eqref{eq: large fiber for H1'} imply that there exists some $t$, again depending only on $d_0$, so that 
\be\label{eq: H1 is large'}
\Bigl|\textstyle\prod_{R_2(1+7d_0)} H_{t\eta}\Bigr|\geq m_{G_2}\bigl(1^{(2)}_{\eta^{E_2\vare}}\bigr)\geq C_1^{-1}\eta^{E_2d_0\vare}
\ee
where we also used $H_1=\prod_{R_2}H_{4\eta}$ and the triangle inequality.

Then using~\ref{H-i} and~\ref{H-ii}, we have 
\[
\textstyle
\
\eta^{-E\kappa(1+R_2(1+7d_0))}\|(\sigma_m)_\eta\|_2^{-2}\geq \eta^{-E\kappa(R_2(1+7\dim \bbg))}|H_\eta|\geq c'_1 \Bigl|\textstyle\prod_{R_2(1+7d_0)} H_{t\eta}\Bigr| 
\]
where $c'_1$ depends only on $d_0$. 

Combining this with~\eqref{eq: H1 is large'}, we get 
\[
\eta^{-E\kappa(1+R_2(1+7d_0))}\|(\sigma_m)_\eta\|_2^{-2}=\eta^{-\kappa(E+R_2E(1+7d_0))}\|(\sigma_m)_\eta\|_2^{-2}\geq c'_1C_1^{-1}\eta^{E_2d_0\vare}
\]
Recall that $\kappa=(\alpha\min\{\vare,\delta\})/(10R_2Ed_0)$. 
In consequence, 
\[
\|(\sigma_m)_\eta\|_2^{-2}\geq c'_1C_1^{-1}\eta^{E_2d_0\vare} \eta^{8R_2Ed_0\kappa}\geq \eta^{2E_2d_0\vare}.
\]
This and the choice of $\vare$, see~\eqref{eq: choose vare again}, imply that  
\be\label{eq: L2 norm of sigma m eta}
\|(\sigma_m)_\eta\|_2\leq \eta^{-E_2d_0\vare}\leq \eta^{-\alpha/4}.
\ee 
Altogether, 
\be\label{eq: sigma m does not flatten' 1}
\text{for all $m\geq m_2$ either $\|\sigma_m*\sigma_m\|_2\leq \eta^{E\kappa}\|\sigma_m\|_2$ or~\eqref{eq: L2 norm of sigma m eta} holds for $\sigma_m$}
\ee
see~\eqref{eq: sigma m does not flatten'}. 

Since $\|(\sigma_m)_\delta\|_2\leq \eta^{-1}$ for all $m$, see~\eqref{eq: L2 norm of sigma m real p adic}, 
we conclude from~\eqref{eq: sigma m does not flatten' 1} that there exists $m_2\leq m_1\ll_\Omega 1$ 
so that~\eqref{eq: L2 norm of sigma m eta} holds. This completes the proof of the lemma assuming~\eqref{eq: beta2 and vare}. 
\end{proof}

\proof[Proof of~\eqref{eq: beta2 and vare}]
In view of Lemma~\ref{lem:constructing-approx-hom} applied with $H_1$, and using~\eqref{eq: pi-1 of H1 real p-adic} 
and~\eqref{eq: pi-2 of H1 real p-adic}, there exists 
\[
f:G_1\rightarrow G_2\qquad\text{with}\qquad  f(1^{(1)})=1^{(2)}
\]
so that all the following hold
\begin{subequations}
\begin{align}
\label{eq: almost hom thm 2 real p adic 1}&d(f(g_1)f(g_1'),f(g_1g_1'))\leq \eta^{\beta_2}&&\text{for all $g_1, g'_1\in G_1$,}\\
\label{eq: almost hom thm 2 real p adic 2}&d(f(g_1^{-1}),f(g_1)^{-1})\leq \eta^{\beta_2}&&\text{for all $g_1\in G_1$, and}\\
\label{eq: image of f thm 2 real p adic}&1_{\eta^\vare}^{(2)}\subseteq (\im f)_{\eta^{\beta_2}}
\end{align}
\end{subequations}

Assume contrary to the claim that $\beta_2>10\vare$. 
We will now use a simpler version of the argument in \S\ref{sec: target Lie} in the case 
$F_2=\bbr$ and $F_1=\bbq_p$ to get a contradiction. 

Using~\eqref{eq: image of f thm 2 real p adic} and $\beta_2>10\vare$, there exists $g_1\in G_1$ so that 
$f(g_1)\in 1_{\eta^\vare}^{(2)} \setminus 1_{\eta^\vare/2}^{(2)}$.
Recall now from~\cite{SG-super-approx-II} that there exists some $C$ depending only on $d_0$ so that  
\[
\langle g_1\rangle \subseteq \textstyle\prod_C\{h[g_1,h']h^{-1}: h,h'\in G_1\},
\]
This, the bi-invariance of the metric, and the triangle inequality imply that    
\be\label{eq: f g1k thm 2}
d(f(g_1^k),1^{(2)})\leq C'(\eta^{\vare}+\eta^{\beta_2})\leq  \eta^{\vare/2} \qquad\text{for all $k$}
\ee
where $C'$ depends only on $d_0$, see the proof of Lemma~\ref{lem:image-large-ball-under-approx-hom}.

Now sing~\eqref{eq: almost hom thm 2 real p adic 1}, ~\eqref{eq: almost hom thm 2 real p adic 2}, and~\eqref{eq: f g1k thm 2}, 
for all $\eta^{-\vare}/100\leq k\leq \eta^{-\beta_2/2}$, we have  
\[
d(f(g_1)^k, 1^{(2)})\leq d(f(g_1^k), 1^{(2)})+\vare^{\beta_2/2}\leq \eta^{\vare/3}.
\] 
However, if we write $f(g_1)=\exp(z)$ where $z\in\mathfrak g_2$ satisfies $\eta^\vare/2\leq \|z\|\leq \eta^\vare$, 
then there exists some $k$ in the above range so that 
\[
d(f(g_1)^k, 1^{(2)})=d(\exp(kz), 1^{(2)})\gg 1
\] 
This contradiction finishes the proof.   
\qed

\begin{proof}[Proof of Proposition~\ref{prop: spectral indep exceptional}]
We first assume that both $\nu_1$ and $\nu_2$ are finite places. 

If we have
\be\label{eq: n2 is not large}
\max\{n_1, n_2\}\leq 4d^2(d+1)E_2/(\vare_0(\Omega)\alpha_\Gamma),
\ee
the proposition follows from~\cite[Theorem 1]{SG-super-approx-II}, with $M_0$ in that theorem equal to 
$\frac{4d^2(d+1)E_2}{\vare_0(\Omega)\alpha_\Gamma}$. If~\eqref{eq: n2 is not large} fails on the other hand, the proposition follows from Lemma~\ref{lem: sigma m0 equidist}. 

Altogether, the proof is complete if $\nu_1,\nu_2\in V_{{\rm f},\Gamma}$.  

The case $\nu_2=\infty$ and $\nu_1\in V_{{\rm f},\Gamma}$ follows from the discussion in \S\ref{sec: exceptional finite and infinite}, see in particular, Lemma~\ref{lem: quasi randomness} and Lemma~\ref{lem: l2 flattening}. 
\end{proof}

\section{Proof of Theorem~\ref{thm:Q-form}}\label{sec: proof of thm Q-form}

We now begin the proof of Theorem~\ref{thm:Q-form}. As it was mentioned before, the proof relies on Theorem~\ref{thm:main1}, Theorem~\ref{thm:functions-scale-spec-gap} and Proposition~\ref{prop: spectral indep exceptional}. 

\begin{proof}[Proof of Theorem~\ref{thm:Q-form}]
Recall that we fixed a $\bbq$-embedding $\bbg \subseteq(\SL_N)_{\bbq}$; our constants are allowed to depend on this embedding. 

Step 1: By the strong approximation theorem, there exists a finite index subgroup $\Lambda \subset \Gamma$ so that 
\be\label{lem: finite index sgrp Lambda}
\Lambda_{\nu_1,\nu_2}=\Lambda_{\nu_1}\times \Lambda_{\nu_2}\qquad\text{for all $\nu_1,\nu_2\in V_\Gamma$,}
\ee
where $\Lambda_\nu$ denotes the closure of $\Lambda$ in $\bbg(\bbq_\nu)$. In view of Proposition~\ref{prop:passing-to-open-subgroup}, it suffices to prove the theorem for $\Lambda$. Thus, we assume~\eqref{lem: finite index sgrp Lambda} holds for $\Gamma$.

Step 2: There is a finite set of places $E_\Gamma$ so that for all $\nu\not\in E_\Gamma$, we have 
\[
\Gamma\subseteq \SL_N(\bbz_\nu).
\]
For every finite place $\nu\in  E_\Gamma$, there exists $g_{\nu}\in \GL_N(\bbq_\nu)$ such that 
\[
g_{\nu}\Gamma g_{\nu}^{-1}\subseteq \SL_N(\bbz_{\nu}).
\]
In view of the strong approximation theorem, there exists $g\in \PGL_N(\bbq)$ so that $g\in \PGL_N(\bbz_\nu)$ for all $\nu\not\in E_\Gamma$ and $g^{-1}g_\nu\in \PGL_N(\bbz_\nu)$ for every finite place $\nu\in E_\Gamma$. Therefore, 
\be\label{eq: conjugate into SLN Zp}
g\Gamma g^{-1}\subseteq \SL_N(\bbz_{\nu})\quad\text{for all non-archimedean places $\nu$.}
\ee
In view of~\eqref{eq: conjugate into SLN Zp}, we will assume 
\be\label{eq: conjugate into SLN Zp'}
\Gamma \subseteq \SL_N(\bbz_{\nu})\quad\text{for all non-archimedean places $\nu$.}
\ee
for the rest of the argument.

Step 3: If the archimedean place belongs to $V_\Gamma$, i.e., $\bbg(\bbr)$ is compact,  fix some $g\in \SL_N(\bbr)$ so that 
\[
g \bbg(\bbr)g^{-1}\subseteq \SO_N(\bbr). 
\]
In this case, we replace $\bbg(\bbr)$ by $g \bbg(\bbr)g^{-1}$ and we work with the corresponding copy of $\Gamma$ in the archimedean place. This means, for every non-archimedean place $\nu$, we are replacing $\Gamma_{\infty, \nu}$ with 
\[
(g,1)\Gamma_{\infty, \nu}(g^{-1}, 1)\subseteq \SO_N(\bbr)\times \SL_N(\bbz_\nu).
\]
Note that $\bbg(\bbq_\nu)$ is unchanged for all non-archimedean places $\nu$.

Step 4: Let $\nu_1,\nu_2\in V_\Gamma$. In view of Lemma~\ref{lem: locally random family}, we have $\Gamma_{\nu_1,\nu_2}$ is $L$-locally random with coefficient $C_0$, where the parameters $L$ and $C_0$ depend only on $\Gamma$. 

Let $\mu$ be the law of $X$. By Claim~\ref{claim: proof 1} in the proof of Theorem~\ref{thm:main1}, applied with $G_1:=\Gamma_{\nu_1}$ and $G_2:=\Gamma_{\nu_2}$, we have   
\be\label{eq: scale rho func'}
\|\mu^{(\ell)}\ast f\|_2\leq \eta^b\|f\|_2,
\ee
for all functions $f$ which live at scale $\eta\leq \eta_0$ and $\ell\leq \bar C\log(1/\eta)$, where $\bar C$ and $b$ depend only on $\dim \bbg$, $L$, and 
$\eta_0=\max\{p_{\nu_1}, p_{\nu_2}\}^{-O_{\bbg}(1)}$.

In view of~\eqref{eq: scale rho func'} and Theorem~\ref{thm:functions-scale-spec-gap}, applied with $G=\Gamma_{\nu_1,\nu_2}$, there exists a subrepresentation $\cal_0:=\cal_{\nu_1,\nu_2, 0}\subset L^2_0(\Gamma_{\nu_1,\nu_2})$ with  

\begin{subequations}
\begin{align}
\label{eq:dim hcal0}&\dim\cal_{0}\le \eta_0^{-c_0(\Gamma)}\eta_0^{-O_{\bbg}(1)}\quad\text{and} \\
\label{eq:spectral gap in complement of hcal0}&\lcal(X; L^2_0(\Gamma_{\nu_1,\nu_2})\ominus \cal_{0})\ge \frac{b}{\bar C}.
\end{align}
\end{subequations}

Step 5: We now investigate $\mathcal L(X; \mathcal H_0)$. In view of~\eqref{eq:dim hcal0} and~\cite[Proposition 33]{Golsefidy-IMRN-SA}, the representation $\mathcal H_0$ of $\Gamma_{\nu_1,\nu_2}$ factors through $G_1\times G_2$ where 
\[
G_i=\Gamma_{\nu_i}/\Gamma_{\nu_i,n_i}
\]
and the number of connected components of $G_1\times G_2$ is $\leq \eta_0^{-O_\Gamma(1)}\eta_0^{-O_{\bbg}(1)}$, see \S\ref{sec: spec indep and exceptional} for the notation. In particular, condition~\eqref{eq: dimension bound} of Proposition~\ref{prop: spectral indep exceptional} is satisfied. Hence, by Proposition~\ref{prop: spectral indep exceptional}, we have   
\[
\mathcal L(X; \mathcal H_0)\geq \mathcal L(X; G_1\times G_2)> \varrho_\Gamma>0.
\]
This and~\eqref{eq:spectral gap in complement of hcal0} complete the proof. 
\end{proof}

\appendix

\section{Passing to an open subgroup}\label{sec: finite index}
The main goal of this section is to show how we can study the spectral gap property of an action on a compact group by passing to an open subgroup and vice versa (see Proposition~\ref{prop:passing-to-open-subgroup}). Along the way, we review the connection between the spectral gap property and \emph{almost invariant} functions, and we also give the connection between $\lcal(\pcal_{\Omega_1})$ and $\lcal(\pcal_{\Omega_2})$ where $\Omega_1$ and $\Omega_2$ generate the same dense subgroup $\Gamma$ of $G$. 

We start by recalling the concept of \emph{almost invariant functions} with respect to a given finite symmetric set. For a compact group $G$, a finite symmetric subset $\Omega$ of $G$, and a non-zero function $f\in L^2(G)$, we let 
\[
\delta_{\Omega}(f):=\max_{w\in \Omega} \frac{\|w\cdot f-f\|_2}{\|f\|_2}.
\]
We let $\delta(\Omega):=\inf \{ \delta_{\Omega}(f) \mid f\in L^2(G)^\circ, \|f\|_2=1\}$ where $L^2(G)^\circ$ is the space of functions in $L^2(G)$ that are orthogonal to the constant functions on $G$. A function is called $\vare$\emph{-almost invariant with respect to $\Omega$} if $\delta_{\Omega}(f)\leq \vare$. It is well-known that $\lcal(\pcal_{\Omega})>0$ if and only if $\delta(\Omega)>0$. 
The next lemma is a quantitative version of this statement. 

\begin{lem}\label{lem:lyapunov-displacement}
Suppose $G$ is a compact group and $\Omega$ is a finite symmetric subset of $G$. Then 
\[
 \frac{1}{|\Omega|} \delta(\Omega)^2\ll \lcal^{\bullet}(\pcal_{\Omega})\ll \delta(\Omega),
\]
where $\lcal^\bullet(\pcal_{\Omega}):=\min\{\lcal(\pcal_{\Omega}),1\}$ and the implied constants are fixed positive numbers.
\end{lem}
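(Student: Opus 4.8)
The plan is to funnel both inequalities through the single scalar $\gamma:=1-\lambda(\pcal_\Omega)$ — the spectral gap of the averaging operator $T_{\pcal_\Omega}$ on $L^2(G)^\circ$ — by establishing the two-sided comparison $\tfrac1{|\Omega|}\delta(\Omega)^2\ll\gamma\ll\delta(\Omega)^2$ via a Dirichlet-form identity, and then translating between $\gamma$ and $\lcal^\bullet(\pcal_\Omega)$ by an elementary calculus estimate. The starting observation is this: fix a unit vector $f\in L^2(G)^\circ$; since $\Omega$ is symmetric, $T_{\pcal_\Omega}$ is self-adjoint and $\sum_{w\in\Omega}\langle w\cdot f,f\rangle$ is real, so expanding $\|w\cdot f-f\|_2^2=2-2\langle w\cdot f,f\rangle$ and averaging over $w\in\Omega$ yields, up to the fixed multiplicative normalisation built into the (lazy) measure $\pcal_\Omega$,
\[
1-\langle T_{\pcal_\Omega}f,f\rangle=\frac{c}{|\Omega|}\sum_{w\in\Omega}\|w\cdot f-f\|_2^2
\]
for an absolute constant $c>0$ (e.g. $c=\tfrac14$). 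As the average of the nonnegative numbers $\|w\cdot f-f\|_2^2$ lies between $\tfrac1{|\Omega|}\delta_\Omega(f)^2$ and $\delta_\Omega(f)^2$, this gives $\tfrac{c}{|\Omega|}\delta_\Omega(f)^2\le 1-\langle T_{\pcal_\Omega}f,f\rangle\le c\,\delta_\Omega(f)^2$.

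Next I would take the infimum over unit $f\in L^2(G)^\circ$. Because $\pcal_\Omega$ is a lazy (hence positive) symmetric measure, $T_{\pcal_\Omega}$ is a nonnegative operator on $L^2(G)^\circ$, so $\lambda(\pcal_\Omega)=\|T_{\pcal_\Omega}|_{L^2(G)^\circ}\|_{\op}=\sup\{\langle T_{\pcal_\Omega}f,f\rangle:\|f\|_2=1,\ f\in L^2(G)^\circ\}$, and therefore $\inf_f\bigl(1-\langle T_{\pcal_\Omega}f,f\rangle\bigr)=\gamma$. Taking the infimum in the displayed sandwich gives
\[
\frac{c}{|\Omega|}\,\delta(\Omega)^2\le\gamma\le c\,\delta(\Omega)^2 .
\]
To relate $\gamma$ to $\lcal^\bullet(\pcal_\Omega)$, use that $1-t\le-\log t$ for $t\in(0,1]$, so $\gamma\le\lcal(\pcal_\Omega)$ and (since also $\gamma\le1$) $\gamma\le\lcal^\bullet(\pcal_\Omega)$; conversely $-\log t\le 2(1-t)$ when $t\ge\tfrac12$, while for $t<\tfrac12$ one has $\min\{-\log t,1\}=1<2(1-t)$, so $\lcal^\bullet(\pcal_\Omega)\le 2\gamma$ in all cases. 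Hence $\gamma\asymp\lcal^\bullet(\pcal_\Omega)$ with absolute constants.

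To conclude, note $\|w\cdot f-f\|_2\le 2\|f\|_2$ forces $\delta(\Omega)\le 2$, hence $\delta(\Omega)^2\le 2\,\delta(\Omega)$. Combining the last two paragraphs: on one side $\tfrac1{|\Omega|}\delta(\Omega)^2\le c^{-1}\gamma\le c^{-1}\lcal^\bullet(\pcal_\Omega)$, which is the left-hand inequality of the lemma; on the other side $\lcal^\bullet(\pcal_\Omega)\le 2\gamma\le 2c\,\delta(\Omega)^2\le 4c\,\delta(\Omega)$, which is the right-hand inequality. All implied constants are absolute.

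The genuinely delicate point is the infimum step: one must use that $\pcal_\Omega$ is the \emph{lazy} random-walk measure, so that $\lambda(\pcal_\Omega)$ is governed by the top of the spectrum of $T_{\pcal_\Omega}$ rather than by a large negative eigenvalue. For the non-lazy uniform measure on $\Omega$ the left-hand inequality is false — for instance with $G=\ZZ/2\times H$, $H$ a group with a spectral gap, and $\Omega$ projecting onto the nontrivial element of $\ZZ/2$, the vector $\mathrm{sgn}\otimes\mathbf 1_H\in L^2(G)^\circ$ is a $(-1)$-eigenvector, so $\lambda=1$ and $\lcal^\bullet=0$ while $\delta(\Omega)>0$ — so positivity of $T_{\pcal_\Omega}$ is essential and I would make sure the normalisation of $\pcal_\Omega$ used in the paper is invoked at exactly that place. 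The Dirichlet-form identity and the calculus estimate are routine; the latter also explains why the truncation $\min\{\,\cdot\,,1\}$ in $\lcal^\bullet$ is present, namely to make the comparison with $\gamma$ uniform both when $\lambda(\pcal_\Omega)\to1$ and when it is bounded away from $1$.
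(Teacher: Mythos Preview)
Your argument is correct and considerably more explicit than the paper's, which simply invokes \cite[Proposition~I, parts (4) and (6)]{HRV} together with the one-line remark that self-adjointness identifies spectral radius with operator norm. The Dirichlet-form identity $1-\langle T_{\pcal_\Omega}f,f\rangle=\tfrac{1}{2|\Omega|}\sum_{w\in\Omega}\|w\cdot f-f\|_2^2$, the sandwich on $\delta_\Omega(f)^2$, and the calculus comparison $1-\lambda\asymp\lcal^\bullet$ are precisely what that citation packages, and you have unpacked them cleanly.

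Your caveat on positivity is well placed and in fact sharper than what the paper records. In the paper $\pcal_\Omega$ is the uniform probability measure on $\Omega$, not an explicitly lazy walk, so $T_{\pcal_\Omega}$ is not a priori nonnegative, and your $\bbz/2\bbz$-type obstruction to the left inequality is genuine. What rescues matters is that in the only place the lemma is used (the proof of Proposition~\ref{prop:passing-to-open-subgroup}) the generating sets are built to contain the identity via the section $s$ with $s(H)=1\in\Omega$; this bounds the bottom of the spectrum of $T_{\pcal_\Omega}$ above $-1+2/|\Omega|$, and a short case split on whether $\lambda(\pcal_\Omega)$ is realised at the top or the bottom of the spectrum then recovers the left inequality with absolute constants. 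So the potential gap you flag is real but harmless in context, and your diagnosis of where the argument needs care is exactly right.
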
  
\begin{proof}
The lemma is proved in~\cite[Proposition I]{HRV}. See in particular, parts (4) and (6) in that proposition,
and note that since $\pcal_{\Omega}$ is self adjoint, its spectral radius equals its operator norm.
\end{proof}

In the next lemma, we show how the Lyapunov exponents with respect to two generating sets are related to each other. 
\begin{lem}\label{lem:lyapunov-two-generators}
Suppose $G$ is a compact group and $\Omega_1$ and $\Omega_2$ are two finite symmetric subsets of $G$ such that $\langle \Omega_1\rangle=\langle \Omega_2\rangle$ is dense in $G$. Suppose $k_0$ is a positive integer such that $\Omega_1\subseteq \prod_{k_0} \Omega_2$ and $\Omega_2\subseteq \prod_{k_0} \Omega_1$. Then 
\[
\frac{1}{k_0}\delta(\Omega_1)\leq \delta(\Omega_2)\leq k_0 \delta(\Omega_1),
\]
and 
\[
\frac{1}{k_0^2|\Omega_2|}\lcal^\bullet(\pcal_{\Omega_1})^2 
\ll
\lcal^\bullet(\pcal_{\Omega_2}) 
\ll
k_0|\Omega_1|^{1/2} \lcal^\bullet(\pcal_{\Omega_1})^{1/2},
\]
where $\lcal^\bullet(\pcal_{\Omega_i}):=\min\{\lcal(\pcal_{\Omega_i}),1\}$ and the implied constants are fixed positive numbers.
\end{lem}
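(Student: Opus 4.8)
The statement has two parts: a comparison of displacement constants $\delta(\Omega_1)$ and $\delta(\Omega_2)$, and a comparison of the (truncated) spectral gaps $\lcal^\bullet(\pcal_{\Omega_1})$ and $\lcal^\bullet(\pcal_{\Omega_2})$. The plan is to prove the displacement comparison directly from the triangle inequality, and then deduce the spectral-gap comparison by feeding the displacement comparison into Lemma~\ref{lem:lyapunov-displacement}.

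\emph{Step 1: Displacement comparison.} First I would recall that for $g,h\in G$ and $f\in L^2(G)$, the action is isometric, so $\|(gh)\cdot f - f\|_2 \le \|g\cdot(h\cdot f) - g\cdot f\|_2 + \|g\cdot f - f\|_2 = \|h\cdot f - f\|_2 + \|g\cdot f - f\|_2$. Iterating, if $w = s_1\cdots s_j$ with each $s_i\in\Omega_2$ and $j\le k_0$, then $\|w\cdot f - f\|_2 \le \sum_{i=1}^j \|s_i\cdot f - f\|_2 \le k_0\,\delta_{\Omega_2}(f)\,\|f\|_2$. Since every $w\in\Omega_1$ lies in $\prod_{k_0}\Omega_2$, this gives $\delta_{\Omega_1}(f)\le k_0\,\delta_{\Omega_2}(f)$ for every nonzero $f\in L^2(G)^\circ$ (note $L^2(G)^\circ$ is $G$-invariant, so $w\cdot f - f$ stays in it); taking the infimum over unit vectors $f\in L^2(G)^\circ$ yields $\delta(\Omega_1)\le k_0\,\delta(\Omega_2)$, i.e.\ $\frac{1}{k_0}\delta(\Omega_1)\le \delta(\Omega_2)$. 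The symmetric hypothesis $\Omega_2\subseteq\prod_{k_0}\Omega_1$ gives $\delta(\Omega_2)\le k_0\,\delta(\Omega_1)$ the same way. This establishes the first displayed inequality.

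\emph{Step 2: Spectral-gap comparison.} Here I would apply Lemma~\ref{lem:lyapunov-displacement} to each of $\Omega_1$ and $\Omega_2$. It gives constants so that $\frac{1}{|\Omega_i|}\delta(\Omega_i)^2 \ll \lcal^\bullet(\pcal_{\Omega_i}) \ll \delta(\Omega_i)$ for $i=1,2$. For the lower bound on $\lcal^\bullet(\pcal_{\Omega_2})$: using the left inequality of Lemma~\ref{lem:lyapunov-displacement} for $\Omega_2$, then Step~1 ($\delta(\Omega_2)\ge \frac{1}{k_0}\delta(\Omega_1)$), then the right inequality of Lemma~\ref{lem:lyapunov-displacement} for $\Omega_1$ ($\delta(\Omega_1)\gg \lcal^\bullet(\pcal_{\Omega_1})$),
\[
\lcal^\bullet(\pcal_{\Omega_2}) \gg \frac{1}{|\Omega_2|}\delta(\Omega_2)^2 \ge \frac{1}{k_0^2|\Omega_2|}\delta(\Omega_1)^2 \gg \frac{1}{k_0^2|\Omega_2|}\lcal^\bullet(\pcal_{\Omega_1})^2 .
\]
For the upper bound: using the right inequality of Lemma~\ref{lem:lyapunov-displacement} for $\Omega_2$, then Step~1 ($\delta(\Omega_2)\le k_0\,\delta(\Omega_1)$), then the left inequality of Lemma~\ref{lem:lyapunov-displacement} for $\Omega_1$ ($\delta(\Omega_1)\ll |\Omega_1|^{1/2}\lcal^\bullet(\pcal_{\Omega_1})^{1/2}$),
\[
\lcal^\bullet(\pcal_{\Omega_2}) \ll \delta(\Omega_2) \le k_0\,\delta(\Omega_1) \ll k_0\,|\Omega_1|^{1/2}\,\lcal^\bullet(\pcal_{\Omega_1})^{1/2}.
\]
This gives the second displayed chain of inequalities, completing the proof.

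\emph{Main obstacle.} There is essentially no deep obstacle here: the content is entirely a bookkeeping exercise once Lemma~\ref{lem:lyapunov-displacement} is available, and the only point requiring a little care is tracking which direction of the two-sided estimates in Lemma~\ref{lem:lyapunov-displacement} and in Step~1 to invoke at each stage so that the exponents ($2$ on the left, $1/2$ on the right) and the combinatorial factor $k_0$ come out as claimed. One should also note that truncating at $1$ is harmless: the inequalities $\delta(\Omega_1)\lessgtr k_0\,\delta(\Omega_2)$ hold for the genuine displacement constants, and $\lcal^\bullet$ is what Lemma~\ref{lem:lyapunov-displacement} controls, so no separate argument about the truncation is needed.
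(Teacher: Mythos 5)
Your proof is correct and follows essentially the same route as the paper: prove the displacement comparison $\frac{1}{k_0}\delta(\Omega_1)\le\delta(\Omega_2)\le k_0\delta(\Omega_1)$ via the triangle inequality and the containments $\Omega_i\subseteq\prod_{k_0}\Omega_{3-i}$, then sandwich with Lemma~\ref{lem:lyapunov-displacement}. The paper writes out only the upper bound $\lcal^\bullet(\pcal_{\Omega_2})\ll k_0|\Omega_1|^{1/2}\lcal^\bullet(\pcal_{\Omega_1})^{1/2}$ and invokes symmetry (swapping $\Omega_1,\Omega_2$ and rearranging) for the lower bound, whereas you spell out both chains; this is only a presentational difference.
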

\begin{proof}
Since $\Omega_2\subseteq \prod_{k_0}\Omega_1$, for every function $f\in L^2(G)^\circ$ we have
\[
\delta_{\Omega_2}(f)\leq k_0 \delta_{\Omega_1}(f).
\]
Hence $\delta(\Omega_2)\leq k_0\delta(\Omega_1)$. By symmetry, we have that $\delta(\Omega_1)\leq k_0\delta(\Omega_2)$. This completes proof of the first set of inequalities. 

By Lemma~\ref{lem:lyapunov-displacement} and the first set of inequalities, we obtain that 
\[
\lcal^\bullet(\pcal_{\Omega_2})\ll \delta(\Omega_2) \leq k_0 \delta(\Omega_1) \ll k_0 |\Omega_1|^{1/2} \lcal^\bullet(\pcal_{\Omega_1})^{1/2}.
\]
The proof follows by symmetry.
\end{proof}
\begin{lem}\label{lem:finite-group-extreme-displacement}
Suppose $F$ is a finite group. Then $\delta(F)\geq (2|F|^{-1})^{1/2}$.
\end{lem}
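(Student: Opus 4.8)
The statement is: if $F$ is a finite group, then $\delta(F)\geq (2/|F|)^{1/2}$. Here $\delta(F)$ denotes $\delta(\Omega)$ with $\Omega=F$ (the whole group as the generating set), so it measures the worst displacement $\max_{w\in F}\|w\cdot f - f\|_2/\|f\|_2$ minimized over $f\in L^2(F)^\circ$ with $\|f\|_2=1$. Note that $L^2(F)$ here is with respect to the counting measure (or equivalently normalized counting measure — we just need to be consistent), and $L^2(F)^\circ$ is the orthogonal complement of constants.

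\textbf{Key steps.} First I would fix a unit vector $f\in L^2(F)^\circ$ and observe that since $f\perp \mathbf 1$, we have $\sum_{w\in F} f(w) = 0$, hence $\sum_{w\in F}(w\cdot f) = 0$ as a function (each coordinate sums the values of $f$, which is zero). Equivalently, averaging $w\cdot f$ over all $w\in F$ gives the zero function. Now write
\[
\frac{1}{|F|}\sum_{w\in F}(w\cdot f - f) = -f,
\]
so taking norms and using the triangle inequality,
\[
1 = \|f\|_2 = \Bigl\|\frac{1}{|F|}\sum_{w\in F}(w\cdot f - f)\Bigr\|_2 \leq \frac{1}{|F|}\sum_{w\in F}\|w\cdot f - f\|_2 \leq \max_{w\in F}\|w\cdot f - f\|_2 = \delta_F(f).
\]
This already gives $\delta(F)\geq 1$, which is stronger than $(2/|F|)^{1/2}$ for $|F|\geq 2$. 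So in fact the bound claimed is weak; the natural clean statement is $\delta(F)\geq 1$. To match the stated inequality one can simply note $(2/|F|)^{1/2}\leq 1$ whenever $|F|\geq 2$, and the case $|F|=1$ is vacuous (then $L^2(F)^\circ = 0$ and the infimum is over the empty set, conventionally $+\infty$, or one excludes it). Thus the displayed chain of inequalities suffices.

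Alternatively, if one is worried about the degenerate regime or wants a self-contained parity with the $(2/|F|)^{1/2}$ form, I would argue via a single cleverly chosen $w$: pick any $w$ with $w\cdot f\neq f$ (such $w$ exists since $f$ is non-constant, as $f\perp\mathbf 1$ and $f\neq 0$), and then bound using Cauchy--Schwarz and Parseval on the regular representation — but this is strictly more work and the averaging argument above is cleaner, so I would present only that.

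\textbf{Main obstacle.} There is essentially no obstacle; the only subtlety is bookkeeping about the normalization of $L^2(F)$ (counting vs. normalized counting measure) and the convention for $\delta(F)$ when $|F|=1$. One should double-check that the definition of $\delta(\Omega)$ from the excerpt, applied with $\Omega = F$, is what is intended (it is: $\delta(F)$ is shorthand for $\delta(\Omega)$ with $\Omega=F$), and that $w\cdot f$ means the left translation $(w\cdot f)(x) = f(w^{-1}x)$, under which $\sum_w w\cdot f$ is constant with value $\sum_{x} f(x)\cdot(\text{something})$ — more carefully, $\sum_{w\in F}(w\cdot f)(x) = \sum_{w\in F} f(w^{-1}x) = \sum_{y\in F} f(y) = 0$ by the change of variable $y = w^{-1}x$, which is a bijection of $F$ for each fixed $x$. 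So the averaging identity is valid coordinatewise, and the proof goes through verbatim.
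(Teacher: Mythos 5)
Your proof is correct, and it takes a genuinely different route from the paper's. The paper works at the level of second moments: it computes $\sum_{x\in F}\|x\cdot f - f\|_2^2$ by expanding the square and using that $f$ integrates to zero, then divides by $|F|$ to get a lower bound on the maximum term. You instead work at the level of first moments: the averaging identity $\tfrac{1}{|F|}\sum_{w\in F}(w\cdot f - f) = -f$ (valid because $\sum_w w\cdot f \equiv 0$ coordinatewise, by the change of variable $y = w^{-1}x$ and $f\perp\mathbf 1$), followed by the triangle inequality and then $\text{(average)}\leq\text{(max)}$. Both arguments are elementary and short; yours is slightly cleaner in that it avoids expanding $|f(x)-f(y)|^2$, and it makes transparent that the true bound is $\delta(F)\geq 1$, independent of $|F|$. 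The paper's argument, when the algebra is carried through with a consistent normalization, in fact yields the even stronger bound $\delta(F)\geq\sqrt 2$ (the expansion should give $\sum_x\|x\cdot f - f\|_2^2 = 2|F|\|f\|_2^2$ under normalized Haar measure, not $2\|f\|_2^2$ as displayed). Both bounds dominate the stated $(2/|F|)^{1/2}$ whenever $|F|\geq 2$, and for $|F|=1$ the space $L^2(F)^\circ$ is trivial, so your proof suffices.
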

\begin{proof}
Suppose $f\in L^2(F)^\circ$. Then 
\begin{align}
\notag
\sum_{x\in F} \|x\cdot f-f\|_2^2=
&
\sum_{x,y\in F}|f(x^{-1}y)-f(y)|^2 
\\
\notag 
= & \sum_{x,y\in F}
\big(
|f(x)|^2+|f(y)|^2-2  {\rm Re}(f(x)\overline{f(y)})
\big)
\\
\label{eq:sum-displacement-finite-groups}
= &
2 \|f\|_2^2+2|\sum_{x\in F}f(x)|^2=2 \|f\|_2^2.
\end{align}
By \eqref{eq:sum-displacement-finite-groups}, we obtain that 
$
\delta_F(f)\geq (2|F|^{-1})^{1/2},
$
which completes the proof.
\end{proof}
The next lemma is a well-known result which provides us with a generating set with interesting properties for a subgroup of finite index in a finitely generated group.
\begin{lem}\label{lem:generating-set-subgroup-finite-index}
	Suppose $G$ is a compact group, $\Gamma$ is a finitely generated dense subgroup of $G$, and $H$ is an open subgroup of $G$. Suppose $\Omega$ is a finite symmetric generating set of $\Gamma$ which intersects all the cosets of $H$ in $G$. Let $s:G/H\rightarrow \Omega$ be a section; that means $s(xH)\in xH$ for every $x\in G$. Then $\Omega_H:=\overline{\Omega}_H\cup \overline{\Omega}_H^{-1}$ is a symmetric generating set of $\Gamma\cap H$ where
	\[
	\overline{\Omega}_H:=\{s(w_1w_2H)^{-1}w_1w_2\mid w_1,w_2\in \Omega\}.
	\]
	Moreover for every $x\in \Gamma\cap H$ we have
	\[
	\frac{1}{3}\ell_{\Omega}(x)\leq \ell_{\Omega_H}(x)\leq \ell_{\Omega}(x)
	\]
	where $\ell_{\Omega}(x)$ is the least non-negative integer $k$ such that $x\in \prod_k \Omega$ and $\ell_{\Omega_H}(x)$ is defined in a similar way.
\end{lem}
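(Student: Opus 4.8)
The statement is the classical Reidemeister–Schreier construction with a length-control estimate, so the plan is to exhibit the Schreier generating set and then verify the two containments in the length inequality directly. First I would set up notation: write $\Gamma'=\Gamma\cap H$, and for each coset $xH\in G/H$ fix the chosen representative $s(xH)\in\Omega\cap xH$ provided by the hypothesis that $\Omega$ meets every coset of $H$. Note $s(H)$ can and should be normalized to the identity (replace $s(H)$ by $1$, which lies in $\Gamma\cap H$ and is harmless), and that $s(xH)^{-1}x\in\Gamma'$ whenever $x\in\Gamma$. The set $\overline\Omega_H=\{s(w_1w_2H)^{-1}w_1w_2 : w_1,w_2\in\Omega\}$ consists of elements of $\Gamma'$ since $w_1w_2\in\Gamma$ and $s(w_1w_2H)$ is in the same $H$-coset; symmetrizing gives $\Omega_H$.

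\textbf{Generation.} To see $\Omega_H$ generates $\Gamma'$, I would take $x\in\Gamma'$ and a word $x=w_1\cdots w_k$ with $w_i\in\Omega$, and perform the standard Schreier rewriting: introduce coset representatives $t_0=1$, $t_j=s(w_1\cdots w_j H)$ for $1\le j\le k$ (so $t_k=s(xH)=s(H)=1$ because $x\in H$), and write
\[
x=w_1\cdots w_k=(t_0 w_1 t_1^{-1})(t_1 w_2 t_2^{-1})\cdots(t_{k-1}w_k t_k^{-1}).
\]
Each factor $t_{j-1}w_j t_j^{-1}$ has the form $s(yH)\,w\,s(ywH)^{-1}$ with $w\in\Omega$; its inverse $s(ywH)\,w^{-1}\,s(yH)^{-1}$, together with the observation $s(yH)=s(y'H)$ where one of $y',y'w$ can be arranged to be the stored representative, shows each factor lies in $\overline\Omega_H\cup\overline\Omega_H^{-1}=\Omega_H$ (one uses here that $s$ takes values in $\Omega$, so $t_{j-1}w_j$ is a product of two elements of $\Omega$, hence $t_{j-1}w_jt_j^{-1}\in\overline\Omega_H$ after absorbing $t_j^{-1}=s(\cdot H)^{-1}$ — more precisely one checks $t_{j-1}w_jt_j^{-1} = s(w_1\cdots w_{j-1}H)^{-1}{}^{-1}\cdots$; the cleanest route is to note $t_{j-1}=s(z_jH)$ and $z_jw_j$ is a two-letter $\Omega$-word so $s(z_jw_jH)^{-1}z_jw_j\in\overline\Omega_H$, and then relate $t_{j-1}w_jt_j^{-1}$ to a conjugate/translate of such an element). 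This also yields $\ell_{\Omega_H}(x)\le k$, and optimizing over the word gives $\ell_{\Omega_H}(x)\le\ell_\Omega(x)$.

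\textbf{Lower bound on $\Omega_H$-length.} For the inequality $\tfrac13\ell_\Omega(x)\le\ell_{\Omega_H}(x)$, I would observe that each generator in $\Omega_H$ is, by construction, a product of at most three elements of $\Omega$: an element $s(w_1w_2H)^{-1}w_1w_2$ uses $s(\cdot H)\in\Omega$ and $w_1,w_2\in\Omega$, so it lies in $\prod_3\Omega$, and the same holds for inverses since $\Omega$ is symmetric. Hence if $x\in\prod_k\Omega_H$ then $x\in\prod_{3k}\Omega$, i.e. $\ell_\Omega(x)\le 3\ell_{\Omega_H}(x)$. Combining the two bounds gives the claimed chain $\tfrac13\ell_\Omega(x)\le\ell_{\Omega_H}(x)\le\ell_\Omega(x)$.

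\textbf{Main obstacle.} The only genuinely fiddly point is the bookkeeping in the generation/rewriting step — namely checking that each telescoping factor $t_{j-1}w_jt_j^{-1}$ is literally an element of $\overline\Omega_H\cup\overline\Omega_H^{-1}$ as defined (with $\overline\Omega_H$ built from \emph{two}-letter products $w_1w_2$ rather than a single letter), and in particular handling the normalization $s(H)=1$ and the cases where $w_1w_2H=H$. This requires care but is routine; everything else (the two length inequalities) is immediate from the "product of $\le 3$ generators" observation and its converse. No deep input is needed beyond the Schreier construction.
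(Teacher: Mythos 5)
The lower bound $\tfrac13\ell_\Omega(x)\le\ell_{\Omega_H}(x)$ in your proposal is correct and identical to the paper's: each element of $\Omega_H$ lies in $\prod_3\Omega$, done. The problem is in the generation/upper-bound part. Your telescoping
\[
x=(t_0 w_1 t_1^{-1})(t_1 w_2 t_2^{-1})\cdots(t_{k-1}w_k t_k^{-1}),\qquad t_j=s(w_1\cdots w_j H),
\]
is the Schreier rewriting for \emph{right} cosets, but the lemma uses a section of \emph{left} cosets $s(xH)\in xH$. With left cosets the factors $t_{j-1}w_jt_j^{-1}$ do not lie in $H$ in general. Concretely: $t_{j-1}=w_1\cdots w_{j-1}h_{j-1}$ for some $h_{j-1}\in H$, so
\[
t_{j-1}w_jt_j^{-1}=(w_1\cdots w_{j-1})\bigl(h_{j-1}w_jh_j^{-1}w_j^{-1}\bigr)(w_1\cdots w_{j-1})^{-1},
\]
which is in $H$ only if $w_j$ normalizes $H$. (Try $G=S_3$, $H=\{1,(12)\}$, $x=(12)=(13)(23)(13)$ with $s(H)=1,\ s((13)H)=(13),\ s((23)H)=(23)$: the middle telescoping factor is $(13)\notin H$.) You correctly observe that $t_j^{-1}t_{j-1}w_j=s(t_{j-1}w_jH)^{-1}t_{j-1}w_j$ belongs to $\overline\Omega_H$, but the telescoping factor is its conjugate by $t_j\notin H$, so this does not give a factorization of $x$ into $\Omega_H$-elements. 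This is not a bookkeeping issue; it is a genuine gap.

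The correct left-coset rewriting uses \emph{suffix} cosets: set $r_j:=s(w_j\cdots w_kH)$ for $1\le j\le k$ and $r_{k+1}:=1$. Then $r_j^{-1}w_jr_{j+1}=s(w_jr_{j+1}H)^{-1}w_jr_{j+1}\in\overline\Omega_H$ (here one uses $r_{j+1}\in\Omega$ and $w_jr_{j+1}H=w_j\cdots w_kH$), and since $r_1=s(xH)=s(H)$ one gets $x=r_1\cdot\prod_{j=1}^k r_j^{-1}w_jr_{j+1}\cdot r_{k+1}^{-1}$, which expresses $x$ as a product of $k$ elements of $\overline\Omega_H$ once $s(H)=1$. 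Two edge cases remain: $r_{k+1}=1$ must be allowed as an $\Omega$-letter (so one needs $1\in\Omega$), and one needs the normalization $s(H)=1$. The paper sidesteps both by inducting on $\ell_\Omega(x)$ and peeling off a \emph{pair} of letters from the right at each step: with $x=w_1\cdots w_n$, put $w':=s(w_{n-1}w_nH)^{-1}w_{n-1}w_n\in\overline\Omega_H$ and $x':=w_1\cdots w_{n-2}\,s(w_{n-1}w_nH)$; then $x=x'w'$, $\ell_\Omega(x')\le n-1$, and $x'\in\Gamma\cap H$, so the inductive hypothesis gives $\ell_{\Omega_H}(x)\le n$. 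This two-at-a-time peeling needs no assumption on $s(H)$ or on $1\in\Omega$, which is why the paper phrases it that way. So: fix the rewriting to go from the suffix end (or adopt the paper's inductive peeling), and the rest of your argument goes through.
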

\begin{proof}
Since every element of $\Omega_H$ is a product of at most three elements of $\Omega$, we have that $\ell_{\Omega}(x)\leq 3\ell_{\Omega_H}(x)$ for every $x$ in the group generated by $\Omega_H$. Next  by induction on $\ell_{\Omega}(x)$, we prove that $\ell_{\Omega_H}(x)\leq \ell_{\Omega}(x)$ for every $x\in \Gamma\cap H$; this, in particular, implies that $\Omega_H$ generates $\Gamma\cap H$. The base case of $\ell_{\Omega}(x)=0$ is clear. Suppose $\ell_{\Omega}(x)=n$ for some $x\in \Gamma\cap H$. Then $x=w_1\ldots w_n$ where $w_i$'s are in $\Omega$. Let $x':=w_1\cdots w_{n-2} s(w_{n-1}w_n H)$ and $w':=s(w_{n-1}w_n H)^{-1}w_{n-1}w_{n}$. Then $w'\in \Omega_H$, $\ell_{\Omega}(x')\leq n-1$, and $x=x'w'$. Since $x$ and $w'$ are in $\Gamma\cap H$, so is $x'$. Therefore, by the induction hypothesis, we have $\ell_{\Omega_H}(x')\leq n-1$. Hence $\ell_{\Omega_H}(x)=\ell_{\Omega_H}(x'w')\leq \ell_{\Omega_H}(x')+1\leq n$. This completes the proof.  
\end{proof}

\begin{proposition}\label{prop:passing-to-open-subgroup}
Let $ G$ be a compact group and $\Gamma$ be finitely generated dense subgroup of $G$. Let 
$H$ be an open subgroup of $G$. Then $\Gamma \acts G$ has spectral gap if and only if 
$\Gamma \cap H \acts H$ has spectral gap.  More precisely, if $\Gamma$ has a generating set of size $n$, then $\Gamma$ has a finite symmetric generating set $\Omega$ and $\Gamma\cap H$ has a finite symmetric generating set $\Omega'$ such that $|\Omega|,|\Omega'|\ll_{n,[G:H]} 1$ and 
\[
\lcal^\bullet(\pcal_{\Omega})^8\ll_{n,[G:H]} \lcal^\bullet(\pcal_{\Omega'}) \ll_{n,[G:H]} \lcal^\bullet(\pcal_{\Omega})^{1/8},
\]
where $\lcal^\bullet(\cdot)=\min\{\lcal(\cdot),1 \}$ and $\lcal(\pcal_{\Omega'})$ is defined with respect to a random walk in the compact group $H$. 
\end{proposition}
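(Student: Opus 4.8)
The plan is to reduce the whole statement, via Lemma~\ref{lem:lyapunov-displacement}, to a two‑sided comparison between the displacement constant $\delta_G(\Omega):=\delta(\Omega)$ of $\Omega$ acting on $G$ and the displacement constant $\delta_H(\Omega')$ of $\Omega'$ acting on $H$. First I would fix the generating sets: symmetrise a size‑$n$ generating set of $\Gamma$ and adjoin $1$, then adjoin at most $[G:H]$ further elements so that the resulting symmetric set $\Omega$ still generates $\Gamma$ and meets every left coset of $H$ in $G$; thus $|\Omega|\ll_{n,[G:H]}1$. I take $\Omega':=\Omega_H$ to be the generating set of $\Gamma\cap H$ produced by Lemma~\ref{lem:generating-set-subgroup-finite-index}; then $|\Omega'|\le 2|\Omega|^{2}\ll_{n,[G:H]}1$, every element of $\Omega'$ is a product of at most three elements of $\Omega$ (the section $s$ takes values in $\Omega$, and $\Omega$ is symmetric), and $\ell_{\Omega'}(x)\le\ell_{\Omega}(x)$ for all $x\in\Gamma\cap H$. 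Granting the comparison
\[
\tfrac13\,\delta_G(\Omega)\ \le\ \delta_H(\Omega')\ \ll_{[G:H]}\ \delta_G(\Omega),
\]
Lemma~\ref{lem:lyapunov-displacement} converts this into the asserted relation between $\lcal^{\bullet}(\pcal_{\Omega})$ and $\lcal^{\bullet}(\pcal_{\Omega'})$, using $|\Omega|,|\Omega'|\ll_{n,[G:H]}1$ and the elementary inequalities $x^{1/2}\le x^{1/8}$ and $x^{2}\ge x^{8}$ valid for $x=\lcal^{\bullet}(\cdot)\in(0,1]$ (so in fact one gets exponents $\tfrac12$ and $2$ in place of $\tfrac18$ and $8$); the qualitative ``if and only if'' is the special case of vanishing versus non‑vanishing.

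Before the comparison I would record a $\Gamma$‑invariant orthogonal splitting $L^{2}(G)^{\circ}=V_{\mathrm{fin}}\oplus V_{\mathrm{var}}$, where $V_{\mathrm{fin}}$ is the space of mean‑zero functions that are constant on each left coset of $H$, and $V_{\mathrm{var}}=\{g\in L^{2}(G):\int_{\gamma H}g\,dm_G=0\text{ for every coset }\gamma H\}$. Left translation by $\Gamma$ permutes the finitely many left cosets of $H$ (note $\Gamma H=G$ because $\Gamma$ is dense and $H$ is open), so both summands are $\Gamma$‑invariant; on $V_{\mathrm{fin}}$ the action factors through the finite group $F:=\mathrm{image}\bigl(\Gamma\to\mathrm{Sym}(G/H)\bigr)$, which acts transitively, hence with no nonzero invariant vector. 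A word‑length telescoping (every element of $F$ has length $\le|F|\le[G:H]!$ in the image of $\Omega$) together with Lemma~\ref{lem:finite-group-extreme-displacement} then yields $\delta_{\Omega}(V_{\mathrm{fin}})\gg_{[G:H]}1$, so the finite part never obstructs a spectral gap and it remains to compare the two actions on $V_{\mathrm{var}}$ and on $L^{2}(H)^{\circ}$.

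On $V_{\mathrm{var}}$ I would argue by restriction in one direction and induction in the other. For restriction, given $g\in V_{\mathrm{var}}$ I choose a coset representative $\gamma_{i_0}\in\Omega$ with $\|g(\gamma_{i_0}\,\cdot\,)\|_{L^{2}(H)}\ge\|g\|_{L^{2}(G)}$; then $\gamma_{i_0}^{-1}\cdot g$ again lies in $V_{\mathrm{var}}$, its restriction to $H$ is mean‑zero of $L^{2}(H)$‑norm at least $\|g\|_{L^{2}(G)}$, and restriction to $H$ commutes with left translation by elements of $H$, while each $w'\in\Omega'$ is a word of length $\le 3$ in $\Omega$; combining these gives $\delta^{G}_{\Omega}(g)\gg_{[G:H]}\delta_H(\Omega')$ for every $g\in V_{\mathrm{var}}$, and with the previous paragraph $\delta_G(\Omega)\gg_{[G:H]}\min(\delta_H(\Omega'),1)\gg_{[G:H]}\delta_H(\Omega')$. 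For induction, given a unit $g\in L^{2}(H)^{\circ}$ I set $\tilde g(\gamma_i h):=g(h)$ on the coset $\gamma_iH$; then $\tilde g\in L^{2}(G)^{\circ}$ is a unit vector, and for $w\in\Omega$ one computes $(w\cdot\tilde g)(\gamma_i h)=g\bigl(\eta(w,i)h\bigr)$ where $\eta(w,i)=\gamma_{j}^{-1}w^{-1}\gamma_i\in\Gamma\cap H$ ($\gamma_j$ being the representative of the coset of $w^{-1}\gamma_i$) is a product of three elements of $\Omega$, hence $\ell_{\Omega'}(\eta(w,i))\le 3$ by Lemma~\ref{lem:generating-set-subgroup-finite-index}. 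Telescoping in $\Omega'$ gives $\|w\cdot\tilde g-\tilde g\|_{2}\le 3\,\delta^{H}_{\Omega'}(g)$ for every $w\in\Omega$, so $\delta_H(\Omega')\ge\tfrac13\delta^{G}_{\Omega}(\tilde g)\ge\tfrac13\delta_G(\Omega)$, completing the comparison.

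I expect the only real obstacle to be keeping the coset bookkeeping uniform in $[G:H]$: in the induction step one must know that the ``cocycle'' elements $\eta(w,i)$ attached to the section $s$ lie in a bounded power of $\Omega'$, and in the restriction step one must peel off the ``constant on cosets'' component of a function and handle it separately. These two points are exactly what the particular generating set $\Omega_H$ of Lemma~\ref{lem:generating-set-subgroup-finite-index} (with its built‑in word‑length comparison) and the finite‑group displacement estimate of Lemma~\ref{lem:finite-group-extreme-displacement} are there to supply; everything else is triangle inequalities, the orthogonality of the splitting $V_{\mathrm{fin}}\oplus V_{\mathrm{var}}$, and the two cited lemmas converting between $\lcal^{\bullet}$ and $\delta$.
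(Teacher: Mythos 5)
Your proposal is correct and takes a genuinely different route from the paper's.

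The paper first treats the case where $H\trianglelefteq G$ is normal, proving two separate claims via the projection $f\mapsto f_H$ onto $H$-invariant functions (Claim~1) and a lifting construction $f\mapsto\tilde f$ (Claim~2), each of which loses a square when converting through $\lcal^\bullet$. For a general open $H$, the paper then passes to the normal core $N\le H$, comparing $G$ to $N$, $H$ to $N$, and the two generating sets of $\Gamma\cap N$; chaining three quadratic losses produces the exponent $8$. You avoid the normal core entirely: you note that $\Gamma$ permutes left cosets of any open $H$ (since $\Gamma H=G$), so $L^2(G)^\circ$ splits $\Gamma$-invariantly and orthogonally into $V_{\mathrm{fin}}$ (functions constant on left cosets, on which $\Gamma$ acts through the finite transitive permutation group $F\le\operatorname{Sym}(G/H)$) and $V_{\mathrm{var}}$ (functions with zero mean on each coset). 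You treat $V_{\mathrm{fin}}$ by the same finite-group displacement lemma the paper uses, but applied to the transitive permutation action (the identity $\sum_{g\in F}\|g\cdot\bar f-\bar f\|^2=2|F|\|\bar f\|^2$ still holds since each $\bar f(z)$ appears $|F|/|Y|$ times in $\sum_g\bar f(gy)$), and you treat $V_{\mathrm{var}}$ by a direct restriction-to-a-coset argument that is the natural inverse of the lifting of Claim~2. Crucially, you phrase everything at the level of the displacement constant $\delta$ rather than $\lcal^\bullet$, so both directions are linear in $\delta$ (up to $[G:H]$-dependent constants), and only a single application of Lemma~\ref{lem:lyapunov-displacement} at the end loses a square — yielding exponents $2$ and $1/2$, which is stronger than the paper's $8$ and $1/8$. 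The trade-offs: your route buys sharper exponents and a shorter, more symmetric argument, while the paper's route through the normal core keeps each step within the familiar ``project onto $H$-invariants in a normal extension'' template and avoids the (small) extra bookkeeping of the $V_{\mathrm{fin}}\oplus V_{\mathrm{var}}$ splitting for a non-normal $H$. One point worth stating explicitly in your write-up: when restricting $\gamma_{i_0}^{-1}\cdot g$ to $H$ and acting by $w'\in\Omega'$, the relevant translate of $g$ on $G$ is by the conjugate $\gamma_{i_0}w'\gamma_{i_0}^{-1}$, which has $\Omega$-length at most $5$; the constant $5\sqrt{[G:H]}$ (rather than $3$) is what actually comes out, which is harmless.
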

\begin{proof}
Since $H$ is an open subgroup and $\Gamma$ is a dense subgroup of $G$, every coset of $H$ has a representative in $\Gamma$. Let $s:G/H\rightarrow \Gamma$ be a section such that $s(H)=1$. Let $\Omega$ be a finite symmetric generating set of $\Gamma$ which contains the image of $s$ as a subset. Let $\Omega_H$ be as in Lemma~\ref{lem:generating-set-subgroup-finite-index} with respect to $\Omega$ and section $s$.  

We start with the case that $H$ is a normal subgroup of $G$. 

\medskip

\textbf{Claim 1}.\ \emph{In the above setting, 
$\lcal^\bullet(\pcal_{\Omega})\gg \frac{1}{|\Omega|[G:H]}
\lcal^\bullet(\pcal_{\Omega_H})^2$ where the implied constant is a fixed positive number.}

\begin{proof}[Proof of Claim 1]  If $\lcal(\pcal_{\Omega_H})=0$, there is nothing to prove. Thus assume that $c_H:=\lcal(\pcal_{\Omega_H})>0$.  
We will estimate $\delta(\Omega)$ from below in terms of $c_H$, the claim will then follow from Lemma~\ref{lem:lyapunov-displacement}. 
 
To that end, suppose $f\in L^2(G)$ and $\|f\|_2=1$. 
Let $f_H$ denote the projection of $f$ into the space of $H$ invariant functions $L^2(G)^H$, then 
\be\label{eq:orthogonal-to-h-invariant-contraction}
\|\pcal_{\Omega_H}\ast f-f_H\|_2^2\leq 2^{-2c_H} \|f-f_H\|_2^2,
\ee 

We include the proof of~\eqref{eq:orthogonal-to-h-invariant-contraction} for completeness. 
Let us first recall the definition of $f_H$: for every coset $\overline{x}:=xH$, let $f_x:=f \bb1_{\overline{x}}$ where $\bb1_{\overline{x}}$ is the characteristic function of the coset $\overline{x}$. Then 
 \be\label{eq:norm-in-terms-cosets}
f=\sum_{\overline{x}\in G/H} f_{\overline{x}} \quad\text{and $f_{\overline{x}}$'s are pairwise orthogonal}
\ee
Similarly, since for every $\overline{x}\in G/H$ and $y\in H$, $(y\cdot f-f)\bb1_{\overline{x}}=y\cdot f_{\overline{x}}-f_{\overline{x}}$, we have  
\be\label{eq:norm-displace-with-cosets}
y\cdot f-f=\sum_{\overline{x}\in G/H} y\cdot f_{\overline{x}}-f_{\overline{x}}
\ee
For every $\overline{x}=xH\in G/H$, let $a_{\overline{x}}:=\int_G f_{\overline{x}}(z) dz$. Then $f_{\overline{x}}-a_{\overline{x}} \bb1_{\overline{x}}$ is in the space $L^2(Hx)^{\circ}$ of functions on $Hx=xH$ that are orthogonal to the constant functions; moreover 
\[
f_H=\sum_{\overline{x}\in G/H} a_{\overline{x}}\bb1_{\overline{x}}.
\]

In particular, we have  
\begin{subequations}
\begin{align}
\label{eq:spec-gap-H-contraction} &\|f\|_2^2=\|f_H\|_2^2+\|f-f_H\|_2^2\quad\text{and}\\
\label{eq:spec-gap-H-contraction'} &\|\pcal_{\Omega_H}\ast (f_{\overline{x}}-a_{\overline{x}} \bb1_{\overline{x}})\|_2\leq 2^{-c_H} \|f_{\overline{x}}-a_{\overline{x}} \bb1_{\overline{x}}\|_2
\end{align}
\end{subequations}
for every $\overline{x}\in G/H$. By \eqref{eq:norm-in-terms-cosets}, \eqref{eq:norm-displace-with-cosets}, and \eqref{eq:spec-gap-H-contraction'}, we obtain that 
\[
\|\pcal_{\Omega_H}\ast f-f_H\|_2^2=\|\pcal_{\Omega_H}\ast (f-f_H)\|_2^2=
\sum_{\overline{x}\in G/H} \|\pcal_{\Omega_H} \ast (f_{\overline{x}}-a_{\overline{x}} \bb1_{\overline{x}})\|_2^2
\leq 2^{-2c_H} \|f-f_H\|_2^2,
\]
where we also used $\pcal_{\Omega_H}\ast f_H=f_H$ in the first equality. 
As it was claimed in~\eqref{eq:orthogonal-to-h-invariant-contraction}. 

We will now use~\eqref{eq:orthogonal-to-h-invariant-contraction} to bound 
$\|w\cdot f_H-f_H\|_2$, for $w\in\Omega$, in terms on $c_H$ and $\delta_\Omega(f)$, see~\eqref{eq:projection-almost-invariant} below.   
Indeed, since $\Omega_H$ is a subset of the three fold product of $\Omega$, for every $y\in \Omega_H$ we have
\be\label{eq:almost-invariant}
\|y\cdot f-f\|_2\leq 3 \delta_{\Omega}(f).
\ee
The estimate \eqref{eq:almost-invariant} implies that  
\be\label{eq:average-stays-close-by}
\|\pcal_{\Omega_H}\ast f-f\|_2=\|\sum_{y\in \Omega_H} \pcal_{\Omega_H}(y) (y\cdot f-f)\|_2\leq 3 \delta_{\Omega}(f).
\ee
Now \eqref{eq:orthogonal-to-h-invariant-contraction} and \eqref{eq:average-stays-close-by} give $\|f-f_H\|_2\leq 2^{-c_H}\|f-f_H\|_2+3\delta_{\Omega}(f)$, which we write as
\be\label{eq:almost-h-invariant}
\|f-f_H\|_2\leq 3(1-2^{-c_H})^{-1}\delta_{\Omega}(f).
\ee
By \eqref{eq:almost-h-invariant} and the definition of $\delta_{\Omega}(f)$, we conclude that for every $w\in \Omega$ the following holds
\be\label{eq:projection-almost-invariant}
\|w\cdot f_H-f_H\|_2\leq (6(1-2^{-c_H})^{-1}+1) \delta_{\Omega}(f).
\ee

We find a lower bound for $\sup_\Omega\|w\cdot f_H-f_H\|_2$. Indeed, $L^2(G)^H$ can be isometrically identified with $L^2(G/H)$. 
This way we view $f_H$ as an element of $L^2(G/H)$. Since $f$ is orthogonal to constant functions on $G$, $f_H$ is orthogonal to constant functions on $G/H$. Therefore, by Lemma~\ref{lem:finite-group-extreme-displacement}, we 
obtain that there is $\overline{x}_0\in G/H$ such that 
\be\label{eq:max-displacement-finite-groups}
\|\overline{x_0}\cdot f_H-f_H\|_2\geq (2[G:H]^{-1})^{1/2}\|f_H\|_2.
\ee

Since the image of $s$ is in $\Omega$,~\eqref{eq:max-displacement-finite-groups} and~\eqref{eq:projection-almost-invariant} imply 
\be\label{eq:max-displacement-finite-groups'}
(2[G:H]^{-1})^{1/2}\|f_H\|_2\leq \|\overline{x_0}\cdot f_H-f_H\|_2\leq (6(1-2^{-c_H})^{-1}+1) \delta_{\Omega}(f).
\ee
Moreover, we have 
\be\label{eq:max-displacement-finite-groups''}
\|f_H\|_2\geq \|f\|_2-\|f-f_H\|_2=1-\|f-f_H\|_2\geq 1- 3(1-2^{-c_H})^{-1}\delta_\Omega(f),
\ee
where in the last estimate we used~\eqref{eq:almost-h-invariant}.
Using~\eqref{eq:max-displacement-finite-groups''} in~\eqref{eq:max-displacement-finite-groups'}, we conclude 
\[
(2[G:H]^{-1})^{1/2}\bigl(1- 3(1-2^{-c_H})^{-1}\delta_\Omega(f)\bigr)\leq (6(1-2^{-c_H})^{-1}+1) \delta_{\Omega}(f).
\]
This gives 
\be\label{eq:lower-bound-displacement-f-1}
\delta_{\Omega}(f)\geq (12[G:H]^{1/2})^{-1}(1-2^{-c_H}).
\ee
Using \eqref{eq:lower-bound-displacement-f-1} and the fact that $1-e^{-x}\geq \frac13 \min\{x,1\}$ for every positive $x$, we conclude that 
\be\label{eq:lower-bound-displacement-f}
\delta(\Omega)\gg [G:H]^{-1/2} \lcal^\bullet(\pcal_{\Omega_H}).
\ee
By \eqref{eq:lower-bound-displacement-f} and Lemma~\ref{lem:lyapunov-displacement}, we deduce that 
\[
\lcal(\pcal_{\Omega})\gg \frac{1}{|\Omega|[G:H]}\lcal^\bullet(\pcal_{\Omega_H})^2,
\]
which completes proof of Claim 1.
\end{proof}

\textbf{Claim 2.} \emph{In the above setting $\lcal^\bullet(\pcal_{\Omega_H})\gg \frac{1}{|\Omega|^2} \lcal^\bullet(\pcal_{\Omega})^2$ where the implied constant is a fixed number.}

\begin{proof}[Proof of Claim 2] If $\lcal(\pcal_{\Omega})=0$, there is nothing to prove. 
So without loss of generality, we can assume that $c_G:=\lcal(\pcal_\Omega)>0$. 
Let $f\in L^2(H)^\circ$ with $\|f\|_2^2=1$, and define $\wt{f}:G\rightarrow \bbc, \wt{f}(x):=f(s(xH)^{-1}x)$. Notice that 
\be\label{eq:l-2-norm-new-function}
\|\wt{f}\|_2^2=\frac{1}{[G:H]}\sum_{xH\in G/H} \int_H |\wt{f}(s(xH)y)|^2 dm_H(y)=\frac{1}{[G:H]}\sum_{xH\in G/H}\int_H |f(y)|^2 dy=1
\ee
and 
\be\label{eq:orthogonal-to-constant-functions}
\int_G \wt{f}(x) dx= \frac{1}{[G:H]}\sum_{xH\in G/H} \int_H\wt{f}(s(xH)y) dm_H(y)=\frac{1}{[G:H]}\sum_{xH\in G/H} \int_H f(y) dm_H(y)=0,
\ee
where $m_H$ is the probability Haar measure on $H$. 
In view of \eqref{eq:l-2-norm-new-function} and \eqref{eq:orthogonal-to-constant-functions}, Lemma~\ref{lem:lyapunov-displacement} implies that there exists $w_0\in \Omega$ such that 
\be\label{eq:not-almost-invariant}
\|w_0\cdot \wt{f}-\wt{f}\|_2=\delta_{\Omega}(\wt{f})\geq \delta(\Omega)\gg \lcal^\bullet(\pcal_\Omega).
\ee

Next, we want to give an upper bound for $\delta_{\Omega}(\wt{f})$  in terms of $\delta_{\Omega_H}(f)$. 
Note that for every $w\in \Omega$, 
\be\label{eq:in-gen-set-subgroup}
w'(\overline{x},w):=s(\overline{x})^{-1} w \h s(w^{-1}\overline{x})\in \Omega_H,
\ee
where as before $\overline{x}:=xH\in G/H$.

For every $w\in \Omega$ and $y\in \overline{x}=xH$, we have
\be\label{eq:shift-new-function}
\begin{aligned}
(w\cdot \wt{f})(y)=\wt{f}(w^{-1}y) &= f(s(w^{-1}\overline{x})^{-1}w^{-1} y)\\
&=f(w'(\overline{x},w)^{-1}s(\overline{x})^{-1}y)=(w'(\overline{x},w)\cdot f)(s(\overline{x})^{-1}y).
\end{aligned}
\ee
Let now $w\in \Omega$, then 
\be\label{eq:new-function-almost-invariant}
\begin{aligned}
\|w\cdot \wt{f}-\wt{f}\|_2^2
=
&
\frac{1}{[G:H]}\sum_{\overline{x}\in G/H} 
\int_H |(w\cdot \wt{f})(s(\overline{x})y) -\wt{f}(s(\overline{x})y)|^2 dm_H(y)
\\
=
&
\frac{1}{[G:H]}\sum_{\overline{x}\in G/H} 
\int_H |(w'(\overline{x},w)\cdot f)(y) -f(y)|^2 dm_H(y)&&\text{\small{by~\eqref{eq:shift-new-function}}}
\\
=
&
\frac{1}{[G:H]}\sum_{\overline{x}\in G/H} 
\|w'(\overline{x},w)\cdot f-f\|_2^2 \leq \delta_{\Omega_H}(f)^2.
\end{aligned}
\ee
By \eqref{eq:not-almost-invariant} and \eqref{eq:new-function-almost-invariant}, we obtain that 
\be\label{eq:no-almost-invariant-function-subgroup}
 \delta_{\Omega_H}(f)\gg \lcal^\bullet(\pcal_{\Omega}).
\ee
By \eqref{eq:no-almost-invariant-function-subgroup} and Lemma~\ref{lem:lyapunov-displacement}, we deduce that 
 \be\label{eq:contraction-subgroup}
 \lcal^\bullet(\pcal_{\Omega})\ll |\Omega_H|^{1/2} \lcal^\bullet(\pcal_{\Omega_H})\leq |\Omega|\lcal^\bullet(\pcal_{\Omega_H}).
 \ee
 This completes the proof of Claim 2.
\end{proof}
Next we consider the case where $H$ is an arbitrary open subgroup. Let $N$ be the normal core of $H$ in $G$, i.e., $N$ is the largest normal subgroup of $G$ which is a subgroup of $H$. 
Since $H$ is an open subgroup of $G$ and $G$ is a compact group, $N$ is also an open subgroup of $G$. 
Let $s:G/N\rightarrow \Gamma$ be a section such that $s(N)=1$, and $\Omega$ be a finite symmetric generating set of $\Gamma$ which contains the image of $s$ as a subset. Let $\Omega_N$ be as in Lemma~\ref{lem:generating-set-subgroup-finite-index} with respect $\Omega$ and the section $s$. Let 
\[
\Omega'_H:=\Omega_N\cup\{s(xN)^{\pm 1}\mid x\in H\}.
\]
Notice that the restriction of $s$ to $H/N$ gives us a section whose image is a subset of $\Omega'_H$. Let $\Omega'_N$ be a generating set of $\Gamma\cap N$ which is given by Lemma~\ref{lem:generating-set-subgroup-finite-index} with respect to $\Omega'_H$ and the section $s:H/N\rightarrow \Omega'_H$. Then by Claim 1 and Claim 2, we have 
\be\label{eq:from-the-normal-case-1}
\frac{1}{|\Omega|^2}\lcal^\bullet(\pcal_{\Omega})^2 \ll \lcal^\bullet(\pcal_{\Omega_N})\ll |\Omega|^{1/2} [G:N]^{1/2} \lcal^\bullet(\pcal_{\Omega})^{1/2}
\ee
and
\be\label{eq:from-the-normal-case-2}
\frac{1}{|\Omega'_H|^2}\lcal^\bullet(\pcal_{\Omega'_H})^2 \ll \lcal^\bullet(\pcal_{\Omega'_N})\ll |\Omega'_H|^{1/2} [H:N]^{1/2} \lcal^\bullet(\pcal_{\Omega'_H})^{1/2}.
\ee
Notice that since $s(N)=1$, $1$ is in $\Omega$. Therefore for every $x\in \Omega_N$, $s(1xN)^{-1}1x\in \Omega'_N$ as $1,x\in \Omega'_H$. Notice that $s(xN)=1$, and so $x\in \Omega'_N$ for every $x\in \Omega_N$. This means $\Omega_N\subseteq \Omega'_N$. 

Notice that by Lemma~\ref{lem:generating-set-subgroup-finite-index}, $\ell_{\Omega}(w)\leq 3$ for every $w\in \Omega'_H$. There by another application of Lemma~\ref{lem:generating-set-subgroup-finite-index}, we deduce that for every $x\in \Omega'_N$, $\ell_{\Omega_N}(x)\leq 7$. This is the case as $x^{\pm 1}$ is equal to $s(w_1w_2N)^{-1}w_1w_2$ for some $w_1,w_2\in \Omega'_H$, and so 
\begin{align*}
\ell_{\Omega_N}(x)=\ell_{\Omega_N}(s(w_1w_2N)^{-1}w_1w_2)&\leq \ell_{\Omega}(s(w_1w_2N)^{-1}w_1w_2)\\
& \leq \ell_{\Omega}(s(w_1w_2N)^{-1})+\ell_{\Omega}(w_1)+\ell_{\Omega}(w_2)\leq 7.
\end{align*}
This means that $\Omega'_{N}\subseteq \prod_7\Omega_N$. Hence by Lemma~\ref{lem:lyapunov-two-generators}, we have
\be\label{eq:between-two-gen-normal-core}
\frac{1}{|\Omega'_N|}\lcal^\bullet(\pcal_{\Omega_N})^2 \ll 
\lcal^\bullet(\pcal_{\Omega'_N})
\ll
|\Omega_N|^{1/2}\lcal^\bullet(\pcal_{\Omega_N})^{1/2}
\ee
By \eqref{eq:from-the-normal-case-1}, \eqref{eq:from-the-normal-case-2}, \eqref{eq:between-two-gen-normal-core}, and the facts that $|\Omega'_H|\ll |\Omega|^2$, $|\Omega'_N|\ll |\Omega|^4$, $|\Omega_N|\leq |\Omega|^2$, we conclude 
\be\label{eq:passing-to-an-open-subgroup-almost-done}
|\Omega|^{-18} [G:N]^{-1} \lcal^\bullet(\pcal_{\Omega})^8 \ll 
\lcal^\bullet(\pcal_{\Omega'_H})
\ll 
|\Omega|^{21/8} [G:N]^{1/8}\lcal^\bullet(\pcal_{\Omega})^{1/8}.
\ee
Notice that $[G:N]\leq [G:H]!$, hence, the claim follows from \eqref{eq:passing-to-an-open-subgroup-almost-done}.
\end{proof}

\section{Quantitative inverse function theorem}\label{sec: Inv Func}
In this section, we recall and state a quantitative version of the inverse function theorem. We start with setting up a few notation. Here $F$ is either $\bbr$ or $\bbq_p$. For $\vbf\in \bbq_p^d$, $\|\vbf\|$ denotes the max norm, and for $\vbf\in \bbr^d$, $\|\vbf\|$ denotes the Euclidean norm. For a positive real number $r$ and $\vbf\in F^d$, $\vbf_r$ denotes the closed ball of radius $r$ centered at $\vbf$. For $A\in \M_{m,n}(F)$, we let
\[
\sigma(A):=\sup\{r\in [0,\infty)|\h 0_r\subseteq A 0_1\}.
\]
 For $r_0\in \bbr^+$, $\xbf_0\in F^n$, and an analytic function $\Phi:(\xbf_0)_{r_0}\rightarrow F^m$, we view $\dif\Phi(\xbf)$ as an $m$-by-$n$ matrix with entries in $F$; the $ij$-entry of $\dif\Phi(\xbf)$ is $\partial_j \Phi_i(\xbf)$ where $\Phi=(\Phi_1,\ldots,\Phi_m)$. 

Next we state a $p$-adic analytic inverse function theorem which is essentially given in \cite[Lemma 54]{SG-super-approx-II}.
\begin{theorem}[Quantitative inverse function theorem: the $p$-adic case]\label{thm:p-adic-inverse-function}
Suppose $r_0\leq 1$, $\xbf_0\in \bbz_p^n$ and $\Phi:(\xbf_0)_{r_0}\rightarrow \bbz_p^m$ is an analytic function with the following properties.
\begin{enumerate}
\item There are $c_{\ibf,j}\in \bbz_p$ such that \[\Phi(\xbf)=\sum_\ibf (c_{\ibf,1} (\xbf-\xbf_0)^\ibf,\ldots,c_{\ibf,m} (\xbf-\xbf_0)^\ibf),\] where $(\xbf-\xbf_0)^{\ibf}=\prod_{j=1}^n (x_j-x_{0j})^{i_j}$ for a multi-index $\ibf=(i_1,\ldots,i_n)$ and $\xbf=(x_1,\ldots,x_n)$ and $\xbf_0=(x_{01},\ldots,x_{0n})$.
\item $\sigma(\dif \Phi(\xbf_0))\geq p^{-k_0}$ for some positive integer $k_0$.
\end{enumerate}
Then for every integer $l\geq k_0+1$ we have 
\[
\Phi(\xbf_0)_{p^{-k_0-l}}\subseteq \Phi((\xbf_0)_{p^{-l}}).
\]
\end{theorem}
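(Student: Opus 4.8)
The statement is a quantitative (Hensel/Newton-type) inverse function theorem over $\bbz_p$, and the natural strategy is an iteration argument exactly analogous to the proof of the classical inverse function theorem, with the ultrametric replacing the Euclidean estimates so that the Newton iteration converges after a single controlled step. First I would fix a target value $\ybf\in \Phi(\xbf_0)_{p^{-k_0-l}}$, so $\|\ybf-\Phi(\xbf_0)\|\leq p^{-k_0-l}$, and set up the affine-linear corrector: let $A:=\dif\Phi(\xbf_0)$, which by hypothesis~(2) satisfies $\sigma(A)\geq p^{-k_0}$, hence $A$ has a right inverse $B\in \M_{n,m}(\bbq_p)$ with $\|B\|\leq p^{k_0}$ (this is the content of $\sigma(A)\geq p^{-k_0}$: the image of the unit ball under $A$ contains the ball of radius $p^{-k_0}$, so one can solve $A\vbf=\wbf$ with $\|\vbf\|\leq p^{k_0}\|\wbf\|$; pick such a $B$). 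The iteration is $\xbf_{j+1}:=\xbf_j+B(\ybf-\Phi(\xbf_j))$, starting from $\xbf_0$.

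The key estimate is the second-order Taylor bound coming from hypothesis~(1): because all coefficients $c_{\ibf,s}$ lie in $\bbz_p$, for $\xbf,\xbf'\in (\xbf_0)_{r_0}$ with $r_0\leq 1$ one gets
\[
\|\Phi(\xbf')-\Phi(\xbf)-\dif\Phi(\xbf)(\xbf'-\xbf)\|\leq \|\xbf'-\xbf\|^2,
\]
and similarly $\|\dif\Phi(\xbf)-\dif\Phi(\xbf_0)\|\leq \|\xbf-\xbf_0\|$, so that on the ball of radius $p^{-l}$ with $l\geq k_0+1$ (hence $p^{-l}\leq p^{-k_0-1}<\sigma(A)$) the derivative $\dif\Phi(\xbf)$ stays in the same "non-degeneracy class" as $A$; in particular $B$ remains a good approximate right inverse on this ball. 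One then checks by induction that $\|\xbf_{j+1}-\xbf_j\|\leq p^{-k_0-l}\cdot\big(p^{k_0}\cdot p^{-l}\big)^{j}$ up to the obvious bookkeeping: the first correction has size $\|B(\ybf-\Phi(\xbf_0))\|\leq p^{k_0}\cdot p^{-k_0-l}=p^{-l}$, so $\xbf_1\in (\xbf_0)_{p^{-l}}$, and the residual $\|\ybf-\Phi(\xbf_1)\|$ is quadratically smaller, of size $\leq p^{2k_0}p^{-2l}\cdot(\text{stuff})\leq p^{-k_0-l-1}$ once $l\geq k_0+1$; feeding this back, all later corrections have norm $\leq p^{-l-1}$ and decay geometrically, so $(\xbf_j)$ is Cauchy, stays inside $(\xbf_0)_{p^{-l}}$, and converges to a limit $\xbf_\infty\in (\xbf_0)_{p^{-l}}$ with $\Phi(\xbf_\infty)=\ybf$. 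This shows $\ybf\in\Phi((\xbf_0)_{p^{-l}})$, which is the claim.

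\textbf{Where the work is.} The main obstacle is not conceptual but a matter of tracking the ultrametric constants cleanly: one must verify that $\sigma(\dif\Phi(\xbf))\geq p^{-k_0}$ (equivalently, that the chosen right inverse $B$ still satisfies $\|B(\dif\Phi(\xbf))\vbf - \vbf\|<\|\vbf\|$, so $B$ is a right inverse of $\dif\Phi(\xbf)$ too) for all $\xbf$ in the relevant ball, which requires $p^{-l}<p^{-k_0}$, i.e. $l\geq k_0+1$ — this is precisely why that hypothesis appears. One also has to be slightly careful that the iterates never leave the domain $(\xbf_0)_{r_0}$ of analyticity, but since $p^{-l}\leq p^{-k_0-1}\leq 1\leq$ (we may assume $r_0$ as small as needed, or note $p^{-l}\le r_0$ is forced by the hypothesis that $\Phi$ is already restricted) this is automatic. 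Since the paper cites \cite[Lemma 54]{SG-super-approx-II} for essentially this statement, in the write-up I would either invoke that lemma directly after matching notation, or reproduce the three-line Newton iteration above; I would favor the latter for self-containedness, as it is short once the coefficient-integrality bound is isolated as the one nontrivial input.
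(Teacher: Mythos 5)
Your proposal is correct, and it is the standard Hensel/Newton iteration argument: the paper itself gives no proof but merely cites \cite[Lemma 54, Lemma 54']{SG-super-approx-II}, which almost certainly contains the same lifting argument. Two small points worth tightening in a write-up: (i) the existence of a \emph{linear} right inverse $B$ of $A=\dif\Phi(\xbf_0)$ with $\|B\|\le p^{k_0}$ follows most cleanly from the Smith normal form of $A$ over $\bbz_p$ — writing $A=UDV$ with $U,V$ in $\GL(\bbz_p)$ and $D=\diag(p^{a_1},\ldots,p^{a_m})$ padded with zero columns, one has $\sigma(A)=p^{-\max_i a_i}$ and $B:=V^{-1}D^{+}U^{-1}$ works, where $D^+$ has $p^{-a_i}$ on the diagonal; (ii) the one place where $l\ge k_0+1$ is actually used is in ensuring $p^{2k_0}\epsilon_0=p^{k_0-l}<1$, i.e.\ the rescaled residuals $\delta_j:=p^{2k_0}\epsilon_j$ satisfy $\delta_{j+1}\le\delta_j^2$ and $\delta_0\le p^{-1}$, giving convergence of the iteration inside $(\xbf_0)_{p^{-l}}$; your phrasing of this bookkeeping is slightly loose but the estimates you state are all correct.
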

\begin{proof}
See proofs of~\cite[Lemma 54, Lemma 54']{SG-super-approx-II}.
\end{proof}
Now the real case of Theorem~\ref{thm:p-adic-inverse-function} will be discussed. 
\begin{theorem}[Quantitative inverse function theorem: the real case]\label{thm:real-inverse-function}
Suppose $r_0\leq 1$, $\xbf_0\in \bbr^n$ and $\Phi:(\xbf_0)_{r_0}\rightarrow \bbr^m$ is a $C^2$-function with the following properties.
\begin{enumerate}
\item For every $1\leq j,j'\leq n$ and $\xbf\in (\xbf_0)_{r_0}$, $\|\partial_{j,j'}\Phi(\xbf)\|\leq \alpha$.
\item For some positive number $\sigma_0$, $\sigma(\dif \Phi(\xbf_0))\geq \sigma_0$.
\end{enumerate}
Then for every $0<r<\max(\frac{\sigma_0}{2mn\sqrt{\alpha}} ,r_0)$ we have
\[
\Phi(\xbf_0)_{(\frac{\sigma_0}{4})r}\subseteq \Phi((\xbf_0)_r).
\]
\end{theorem}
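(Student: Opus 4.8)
\textbf{Proof plan for Theorem~\ref{thm:real-inverse-function}.}
The plan is to run the standard Newton/contraction-mapping proof of the inverse function theorem, but keeping explicit track of all quantitative constants. Fix $\ybf \in \Phi(\xbf_0)_{(\sigma_0/4)r}$, i.e. $\|\ybf - \Phi(\xbf_0)\| < (\sigma_0/4)r$; we want to produce $\xbf \in (\xbf_0)_r$ with $\Phi(\xbf) = \ybf$. Let $A := \dif\Phi(\xbf_0)$. Since $\sigma(A) \ge \sigma_0$, the linear map $A$ is onto and admits a right inverse $B$ (e.g. $B = A^T(AA^T)^{-1}$) with $\|B\|_{\op} \le \sigma_0^{-1}$; here I would first record the elementary fact that $\sigma(A)\ge \sigma_0$ for an $m\times n$ matrix forces $m\le n$ and gives such a $B$. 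Define the iteration map $T(\xbf) := \xbf - B(\Phi(\xbf) - \ybf)$, whose fixed points are exactly the solutions of $\Phi(\xbf) = \ybf$ (using $AB = \mathrm{Id}$).

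The two key estimates are: (i) $T$ is a contraction on $(\xbf_0)_r$, and (ii) $T$ maps $(\xbf_0)_r$ into itself. For (i): $\dif T(\xbf) = \mathrm{Id} - B\,\dif\Phi(\xbf) = B(A - \dif\Phi(\xbf))$, and by Taylor's theorem with the $C^2$-bound $\|\partial_{j,j'}\Phi\| \le \alpha$ one gets $\|A - \dif\Phi(\xbf)\|_{\op} \le (\text{poly in } m,n)\cdot \alpha \|\xbf - \xbf_0\|$; combined with $\|B\|_{\op}\le \sigma_0^{-1}$ and $\|\xbf-\xbf_0\|\le r < \sigma_0/(2mn\sqrt\alpha)$ this yields $\|\dif T(\xbf)\|_{\op} \le 1/2$ on the ball, so $T$ is $\tfrac12$-Lipschitz there. (Here one must be slightly careful about which matrix norm is used and absorb the dimensional factors into a constant of the shape appearing in the statement; the precise exponent of $m,n$ is exactly what the hypothesis $r < \sigma_0/(2mn\sqrt\alpha)$ is calibrated to.) For (ii): for $\xbf \in (\xbf_0)_r$,
\[
\|T(\xbf) - \xbf_0\| \le \|T(\xbf) - T(\xbf_0)\| + \|T(\xbf_0) - \xbf_0\| \le \tfrac12\|\xbf - \xbf_0\| + \|B\|_{\op}\|\Phi(\xbf_0) - \ybf\| \le \tfrac12 r + \sigma_0^{-1}\cdot \tfrac{\sigma_0}{4}r = \tfrac34 r < r,
\]
so $T(\,(\xbf_0)_r\,) \subseteq (\xbf_0)_r$.

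With (i) and (ii) in hand, the Banach fixed point theorem on the complete metric space $\overline{(\xbf_0)_r}$ (or on the open ball, noting the iterates stay in $(\xbf_0)_{3r/4}$) produces a unique fixed point $\xbf \in (\xbf_0)_r$ with $\Phi(\xbf) = \ybf$. Since $\ybf$ was arbitrary in $\Phi(\xbf_0)_{(\sigma_0/4)r}$, this gives $\Phi(\xbf_0)_{(\sigma_0/4)r} \subseteq \Phi((\xbf_0)_r)$, as claimed. The only mild subtlety (the ``main obstacle'', such as it is) is bookkeeping: making sure the dimensional constants coming from the Taylor remainder and the chosen matrix norms are exactly compatible with the stated radius $\max(\sigma_0/(2mn\sqrt\alpha),\, r_0)$ and the contraction factor $\sigma_0/4$, and checking that the domain $(\xbf_0)_{r_0}$ is large enough for the Taylor estimate to apply on all of $(\xbf_0)_r$ (which is why $r < r_0$ is allowed as an alternative bound). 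Everything else is the textbook argument.
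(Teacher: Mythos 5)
Your proof is correct and takes a genuinely different route from the paper's. The paper does not use the contraction mapping principle at all: instead it passes to an $m$-dimensional affine slice $\vbf\mapsto L(\vbf)+\xbf_0$ via a singular value decomposition of $\dif\Phi(\xbf_0)$, shows the lower bound $\|\Phi(L(\vbf)+\xbf_0)-\Phi(\xbf_0)\|\geq\tfrac{\sigma_0}{2}\|\vbf\|$, and then, for a fixed target $\ybf$, minimizes $f_{\ybf}(\vbf)=\|\Phi(L(\vbf)+\xbf_0)-\ybf\|^2$ over the closed ball $\overline{0}_r$. The lower bound forces the minimum to be interior, so the minimizer is a critical point; the chain rule and the fact that $\dif\Phi(L(\vbf_\ybf)+\xbf_0)L$ has full rank (Lemma~\ref{lem:inverse-function-sigma-differential-under-perturbation}) then force $\Phi(L(\vbf_\ybf)+\xbf_0)=\ybf$. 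You instead run the standard Newton/Banach-fixed-point argument with the pseudoinverse $B=A^T(AA^T)^{-1}$ of $A=\dif\Phi(\xbf_0)$, using $\|B\|_{\op}\leq\sigma_0^{-1}$. Both yield the same conclusion with the same quantitative calibration. What the paper's route buys: it never has to invert or estimate the inverse of a perturbed Jacobian, only a lower bound on the singular value $\sigma(\cdot)$ on the slice, and the solution is automatically produced inside the $m$-dimensional slice. What your route buys: it is the textbook argument and yields, for free, local uniqueness of the preimage within the ball and a constructive iteration scheme. One caveat that applies equally to your write-up and the paper's: the precise dimensional constants do not quite come out as stated (a mean-value estimate gives $\|\partial_j\Phi(\xbf)-\partial_j\Phi(\xbf_0)\|\leq\sqrt{mn}\,\alpha\,\|\xbf-\xbf_0\|$, i.e.\ $\alpha$ rather than $\sqrt{\alpha}$), so the denominator $2mn\sqrt{\alpha}$ in the statement should really be $2n\sqrt{m}\,\alpha$ or similar; you correctly hedge on this, and the paper's own derivation has the same slip. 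Also note that ``$\max$'' in the statement should evidently be ``$\min$'', since one needs $r<r_0$ to stay in the domain and $r<\sigma_0/(2mn\sqrt{\alpha})$ to run the estimates — another point both you and the paper implicitly assume.
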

We start with a linear algebra lemma.
\begin{lem}\label{lem:sigma-function}
Suppose $\vbf_1,\ldots,\vbf_n,\vbf_1',\ldots,\vbf_n'\in \bbr^m$, and let $A:=[\vbf_1 \cdots \vbf_n]$ and $A':=[\vbf_1' \cdots \vbf_n']$. 
Suppose $\sigma_0$ is a positive number and $0<\vare<\sigma_0/\sqrt{n}$. 
Suppose $A=K_1[D\h 0_{n-m}] K_2$ is a singular value decomposition of $A$; that means $K_1\in O_m(\bbr)$, $K_2\in O_n(\bbr)$, and $D=\diag(s_1,\ldots,s_m)$ for some 
$s_1\geq \cdots\geq s_m\geq 0$. 
If $\sigma(A)\geq \sigma_0$ and $\|\vbf_i-\vbf_i'\|<\vare$ for every $i$, then 
\[
\sigma(A')\geq \sigma(A'L)\geq \sigma_0-\sqrt{n}\vare,
\]
where $L:=K_2^{-1} \left[\begin{array}{c} I_{m}\\0_{n-m}\end{array}\right]$.  
\end{lem}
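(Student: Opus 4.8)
The statement is a perturbation result for the ``smallest singular value-like'' quantity $\sigma(\cdot)$ of an $m\times n$ matrix with $m\le n$. The plan is to reduce the perturbation of $A'$ to that of $A'L$, where $L$ is the isometric embedding $\bbr^m\hookrightarrow\bbr^n$ picking out the row space of $A$, and then estimate $\sigma(A'L)$ directly. First I would observe that since $L$ has orthonormal columns, $L0_1\subseteq 0_1$ in $\bbr^n$, hence $A'L\,0_1\subseteq A'\,0_1$, which gives the first inequality $\sigma(A')\ge\sigma(A'L)$ for free. So the whole task is the lower bound $\sigma(A'L)\ge\sigma_0-\sqrt n\,\vare$.

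For the second inequality I would first record what $\sigma(A)\ge\sigma_0$ says in terms of the singular value decomposition $A=K_1[D\ 0_{n-m}]K_2$: since $K_1,K_2$ are orthogonal, $A0_1$ is the ellipsoid $K_1 D0_1^{(m)}$, so $\sigma(A)=s_m$, and $\sigma_0\le s_m$. Next, note $AL=K_1[D\ 0]K_2K_2^{-1}\left[\begin{smallmatrix}I_m\\0\end{smallmatrix}\right]=K_1 D$, so $AL$ is an $m\times m$ matrix whose smallest singular value is $s_m\ge\sigma_0$. Now $A'L=(A'-A)L+AL$. The columns of $A'L$ differ from those of $AL$ by the columns of $(A'-A)L$; writing $B:=A'-A$, whose columns are $\vbf_i'-\vbf_i$ with $\|\vbf_i'-\vbf_i\|<\vare$, I would bound $\|BL\|_{\mathrm{op}}$. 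Since $L$ has operator norm $1$ and $B$ has $n$ columns each of norm $<\vare$, we get $\|B\|_{\mathrm{op}}\le\|B\|_{\mathrm{HS}}<\sqrt n\,\vare$, hence $\|BL\|_{\mathrm{op}}<\sqrt n\,\vare$. Then for the square matrix $A'L$, by the standard Weyl-type bound on the smallest singular value (equivalently $\sigma(M)=\inf_{\|v\|=1}\|Mv\|$ for square $M$, and this infimum is $1$-Lipschitz in $M$ under the operator norm), $\sigma(A'L)\ge\sigma(AL)-\|BL\|_{\mathrm{op}}\ge\sigma_0-\sqrt n\,\vare>0$. The hypothesis $\vare<\sigma_0/\sqrt n$ is exactly what keeps this positive, so $A'L$ is invertible and $\sigma(A'L)$ genuinely equals its smallest singular value.

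The only mildly delicate point is making sure the elementary identity $\sigma(M)=\inf_{\|v\|\le 1}\|Mv\|$ for square $M$ (so that $\sigma$ is $1$-Lipschitz) is used correctly, and that $\sigma(AL)=s_m=\sigma(A)$; both follow from the singular value decomposition and the fact that $L$ is an isometry onto the orthogonal complement of $\ker A$. I expect the main (very minor) obstacle to be bookkeeping: checking that $K_2K_2^{-1}\left[\begin{smallmatrix}I_m\\0_{n-m}\end{smallmatrix}\right]$ really collapses $[D\ 0_{n-m}]$ to $D$, i.e.\ that $L$ was defined so that $AL=K_1D$, and making the norm inequality $\|BL\|_{\mathrm{op}}\le\sqrt n\,\vare$ rather than a worse constant. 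No deeper ideas are needed; this is a one-paragraph linear algebra argument once the singular value decomposition is in hand.
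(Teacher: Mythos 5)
Your proof is correct and takes essentially the same route as the paper's: reduce to the square matrix $A'L = K_1D - (A-A')L$ via $\sigma(A)=s_m$ and $AL=K_1D$, bound $\|(A-A')L\|_{\mathrm{op}}\le\|A-A'\|_{\mathrm{HS}}<\sqrt n\,\vare$, and conclude from a Lipschitz bound for the least singular value. The only difference is presentational: the paper proves the Lipschitz step by hand via a Neumann series for $(D-\vare X)^{-1}$, whereas you cite the equivalent fact that $\sigma(M)=\inf_{\|v\|=1}\|Mv\|$ is $1$-Lipschitz in the operator norm.
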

\begin{proof}
 Since $K_1$ and $K_2$ are orthogonal, 
\be\label{eq:SVD}
\sigma(A'L)=\sigma(K_1^{-1}A'L)\quad \text{ and } \quad 
\sigma(A)=\sigma(K_1^{-1}AK_2^{-1})=\sigma(D)=s_m.
\ee
By the assumption,  all the columns of $Y:=\vare^{-1} (A-A')$ have length at most $1$. As $K_1$ and $K_2$ are orthogonal, all the columns of $X:=K_1^{-1} Y L$ have length at most $\sqrt{n}$. Notice that by the definition of $X$, we have $K_1^{-1}A'L=D-\vare X$. By \eqref{eq:SVD}, we deduce that $\sigma(D-\vare X)= \sigma(A'L )$, where $D:=\diag(s_1,\ldots,s_m)$. Notice that for every $\wbf\in \bbr^m$ we have
\begin{align}
\notag
	\|(D-\vare X)^{-1}\wbf\|= & \|D^{-1} (I-\vare XD^{-1})^{-1}\wbf\| \leq  s_m^{-1} \|(I-\vare XD^{-1})^{-1}\wbf\| \\
\notag
	\leq & \sigma_0^{-1} \Big(\sum_{i=0}^{\infty} (\vare\sqrt{n} \sigma_0^{-1})^i \Big) \|\wbf\|
	=  \sigma_0^{-1}\frac{1}{1-\vare\sqrt{n} \sigma_0^{-1}} \|\wbf\|
	\\
\label{eq:last-step-sigma-function}
= & \frac{1}{\sigma_0-\sqrt{n}\vare}.
\end{align}
By \eqref{eq:last-step-sigma-function}, we deduce that $\sigma(D-\vare X)\geq \sigma_0-\sqrt{n}\vare$. and the claim follows.
\end{proof}
\begin{lem}\label{lem:inverse-function-sigma-differential-under-perturbation}
Under the assumptions of Theorem~\ref{thm:real-inverse-function}, let $\dif\Phi(\xbf_0)=K_1 [D \h 0_{m,n-m}] K_2$ be a singular value decomposition; that means $K_1\in O_m(\bbr)$, $K_2\in O_n(\bbr)$, and $D=\diag(s_1,\ldots,s_m)$ for some $s_1\geq \cdots\geq s_m\geq 0$. Let $L:=K_2^{-1} \left[\begin{array}{c} I_{m}\\0_{n-m}\end{array}\right]$.
If $\|\xbf-\xbf_0\|\leq \min(r_0, \frac{\sigma_0}{2n\sqrt{m\alpha}})$, then the following holds
\[
\sigma(\dif \Phi(\xbf)L )\geq \sigma_0/2.
\]
\end{lem}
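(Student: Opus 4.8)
The statement to prove is Lemma~\ref{lem:inverse-function-sigma-differential-under-perturbation}, which asserts that $\sigma(\dif\Phi(\xbf)L)\geq \sigma_0/2$ whenever $\|\xbf-\xbf_0\|\leq \min(r_0,\tfrac{\sigma_0}{2n\sqrt{m\alpha}})$. The natural approach is to combine the second-derivative bound in hypothesis~(1) of Theorem~\ref{thm:real-inverse-function} with the stability statement for $\sigma$ under column perturbations, namely Lemma~\ref{lem:sigma-function}. The idea is: the columns of $\dif\Phi(\xbf)$ are the partial derivatives $\partial_j\Phi(\xbf)$, and by the mean value theorem (coordinate-wise, using that $\Phi$ is $C^2$) each such column differs from the corresponding column of $\dif\Phi(\xbf_0)$ by at most $\|\partial_j\Phi(\xbf)-\partial_j\Phi(\xbf_0)\|\leq \sqrt{m}\,\alpha\,\|\xbf-\xbf_0\|$, since each of the $m$ components has gradient (in $\xbf$) bounded by $\sqrt{n}\,\alpha$ — or more crudely each second partial is bounded by $\alpha$, giving a bound of the shape $n\alpha\|\xbf-\xbf_0\|$ per component and hence $\sqrt m\, n\,\alpha\,\|\xbf-\xbf_0\|$ per column; I would track the constants so that they match the threshold $\tfrac{\sigma_0}{2n\sqrt{m\alpha}}$ stated in the lemma.

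First I would record that $\sigma(\dif\Phi(\xbf_0))\geq \sigma_0$ by hypothesis~(2) of Theorem~\ref{thm:real-inverse-function}, and fix the singular value decomposition $\dif\Phi(\xbf_0)=K_1[D\ 0_{m,n-m}]K_2$ and $L=K_2^{-1}\left[\begin{smallmatrix}I_m\\0_{n-m}\end{smallmatrix}\right]$ exactly as in the statement. Then I would estimate $\vare:=\max_j\|\partial_j\Phi(\xbf)-\partial_j\Phi(\xbf_0)\|$ using hypothesis~(1): writing $\partial_j\Phi=(\partial_j\Phi_1,\dots,\partial_j\Phi_m)$ and applying the mean value theorem to each $\partial_j\Phi_i$ along the segment from $\xbf_0$ to $\xbf$ (which lies in $(\xbf_0)_{r_0}$ since $\|\xbf-\xbf_0\|\leq r_0$), one gets $|\partial_j\Phi_i(\xbf)-\partial_j\Phi_i(\xbf_0)|\leq \bigl(\sum_{j'}|\partial_{j',j}\Phi_i|^2\bigr)^{1/2}\|\xbf-\xbf_0\|\leq \sqrt{n}\,\alpha\,\|\xbf-\xbf_0\|$, hence $\|\partial_j\Phi(\xbf)-\partial_j\Phi(\xbf_0)\|\leq \sqrt{mn}\,\alpha\,\|\xbf-\xbf_0\|$. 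Using $\|\xbf-\xbf_0\|\leq \tfrac{\sigma_0}{2n\sqrt{m\alpha}}$ this gives $\vare\leq \tfrac{\sigma_0}{2\sqrt{n}}$ (after a routine manipulation of $\sqrt{mn}\,\alpha\cdot\tfrac{1}{2n\sqrt{m\alpha}}=\tfrac{\sqrt\alpha}{2\sqrt n}\leq\tfrac{1}{2\sqrt n}$ when $\alpha\leq 1$, or more carefully keeping $\alpha$ without assuming it is $\leq1$ and adjusting — I expect the intended reading makes $\vare\sqrt n\leq\sigma_0/2$). With $\vare<\sigma_0/\sqrt n$ verified, Lemma~\ref{lem:sigma-function} applies to $A=\dif\Phi(\xbf_0)$, $A'=\dif\Phi(\xbf)$, yielding $\sigma(\dif\Phi(\xbf)L)\geq \sigma_0-\sqrt n\,\vare\geq \sigma_0/2$, which is the claim.

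\textbf{Main obstacle.} The only delicate point is bookkeeping the constants so that the threshold $\tfrac{\sigma_0}{2n\sqrt{m\alpha}}$ in the lemma is exactly what is needed to push $\vare$ below $\sigma_0/(2\sqrt n)$; in particular one must be careful about whether the hypothesis bound $\|\partial_{j,j'}\Phi\|\leq\alpha$ is interpreted entrywise or as an operator/vector norm, and about where the factors $\sqrt m$, $\sqrt n$, and $\sqrt\alpha$ enter (the appearance of $\sqrt\alpha$ rather than $\alpha$ in the denominator suggests a specific normalization in the source, e.g.\ $\alpha\geq 1$ or a slightly different counting of terms). I would reconcile this by being generous in the mean-value estimate and, if necessary, noting that the constant in the threshold can be absorbed since we only need the final bound $\sigma_0/2$ rather than an optimal one; everything else is a direct citation of Lemma~\ref{lem:sigma-function} and the hypotheses of Theorem~\ref{thm:real-inverse-function}.
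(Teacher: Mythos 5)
Your argument matches the paper's proof exactly: mean value theorem applied coordinate-wise to $\partial_j\Phi_i$, take column norms to get $\|\partial_j\Phi(\xbf)-\partial_j\Phi(\xbf_0)\|$ small, then invoke Lemma~\ref{lem:sigma-function}. The $\alpha$-versus-$\sqrt{\alpha}$ discrepancy you flagged is real — the paper's bound $\sqrt{mn\alpha}\|\xbf-\xbf_0\|$ in equation~\eqref{eq:sigma-dif-column-perturbation} should read $\sqrt{mn}\,\alpha\,\|\xbf-\xbf_0\|$ under a literal reading of hypothesis~(1), and the threshold in the lemma statement should have $\alpha$ in place of $\sqrt{\alpha}$ to compensate — but this is a harmless typo carried consistently through the theorem and its application (where $\alpha = d^{O(1)}$ and only the order of magnitude matters), so your decision to note it and proceed was correct.
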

\begin{proof}
By the mean value theorem, for every $i$, $j$, and $k$, there is a point $\xbf_{ijk}$ on the segment connecting $\xbf_0$ to $\xbf$ such that 
\be\label{eq:mean-value-controling-sigma-of-dif}
\partial_j\Phi_i(\xbf)-\partial_j\Phi_i(\xbf_0)=\sum_{k=1}^n \partial_{k,j} \Phi_i(\xbf_{ijk}) (x_k-x_{0k}).
\ee
By \eqref{eq:mean-value-controling-sigma-of-dif}, we deduce that for every $j$ the following holds
\be\label{eq:sigma-dif-column-perturbation}
\|\partial_j\Phi(\xbf)-\partial_j\Phi(\xbf_0)\|\leq \sqrt{mn\alpha} \|\xbf-\xbf_0\|\leq \sqrt{mn\alpha}\frac{\sigma_0}{2n\sqrt{m\alpha}}=\frac{\sigma_0}{2\sqrt{n}}.
\ee
By Lemma~\ref{lem:sigma-function} and \eqref{eq:sigma-dif-column-perturbation}, the claim follows.
\end{proof}
\begin{proof}[Proof of Theorem~\ref{thm:real-inverse-function}]
By the singular value decomposition of $\dif\Phi(\xbf_0)$, there are $K_1\in O_m(\bbr)$ and $K_2\in O_n(\bbr)$ and $s_1\geq\ldots\geq s_m>0$ such that $\dif \Phi(\xbf_0)=K_1[D \h 0_{m,n-m}]K_2$ where $D:=\diag(s_1,\ldots,s_m)$ is the diagonal matrix with diagonal entries $s_1,\ldots,s_m$. Then $s_m=\sigma(\dif\Phi(\xbf_0))$. Let $L:\bbr^m\rightarrow \bbr^n, L(\vbf):=K_2^{-1} \begin{pmatrix}
\vbf \\ 0_{n-m}
\end{pmatrix}$. Notice that $\|L(\vbf)\|=\|\vbf\|$ for every $\vbf\in \bbr^m$ and $L$ can be represented by the matrix 
$K_2^{-1} \left[\begin{array}{c}I_m \\0_{m,n-m} \end{array}\right]$; this matrix is denoted by $L$. Furthermore for every $\vbf$ we have
\be\label{eq:lower-bound-directional-derivative}
\|\dif\Phi(\xbf_0)L(\vbf)\|\geq \sigma(\dif \Phi(\xbf_0)) \|\vbf\|\geq \sigma_0 \|\vbf\|.
\ee
Let 
\be\label{eq:coordinate-changing-inverse-function}
\Psi:0_{r_0}\rightarrow \bbr^m, \Psi(\vbf):=\Phi(L(\vbf)+\xbf_0)-\Phi(\xbf_0)-\dif\Phi(\xbf_0)L(\vbf).
\ee

Suppose $\Psi=(\Psi_1,\ldots,\Psi_m)$. By the mean value theorem, for every $i$, there is $\vbf_i$ on the segment connecting $\vbf$ to $0$ such that
\be\label{eq:mean-value-after-coordinate-change}
\Psi_i(\vbf)-\Psi_i(0)=\nabla\Psi_i(\vbf_i)\cdot \vbf.
\ee
Notice that $\nabla\Psi_i(\wbf)$ is the $i$-th row of $\dif \Psi(\wbf)$, and by the chain rule, we have 
\be\label{eq:dif-change-coordinate}
\dif \Psi(\wbf)= \Big(\dif\Phi(L(\wbf)+\xbf_0) - \dif \Phi(\xbf_0)\Big) L.
\ee
By \eqref{eq:sigma-dif-column-perturbation}, we have 
\be\label{eq:dif-perturbation}
\|\partial_j\Phi(L(\wbf)+\xbf_0)-\partial_j \Phi(\xbf_0)\|\leq \sqrt{mn\alpha}\|L(\wbf)\|=\sqrt{mn\alpha}\|\wbf\|.
\ee
By \eqref{eq:mean-value-after-coordinate-change}, \eqref{eq:dif-change-coordinate}, and \eqref{eq:dif-perturbation}, the following holds
\be\label{eq:upper-bound-difference-coordinates-psi}
|\Psi_i(\vbf)-\Psi_i(0)|\leq (\sqrt{n} \|\vbf\|)(\sqrt{mn\alpha}\|\vbf\|)=n\sqrt{m\alpha} \|\vbf\|^2.
\ee
Inequality given in \eqref{eq:upper-bound-difference-coordinates-psi} implies 
\be\label{eq:upper-bound-difference-psi}
\|\Psi(\vbf)-\Psi(0)\|\leq mn\sqrt{\alpha}\|\vbf\|^2.
\ee
Since $\Psi(0)=0$, by \eqref{eq:coordinate-changing-inverse-function}, \eqref{eq:upper-bound-difference-psi}, and \eqref{eq:lower-bound-directional-derivative}, we obtain
\begin{align}
\notag 
\|\Phi(L(\vbf)+\xbf_0)-\Phi(\xbf_0)|\geq &
 \|\dif\Phi(\xbf_0)L(\vbf)\|-mn\sqrt{\alpha}\|\vbf\|^2
 \\
\notag 
\geq & \sigma_0\|\vbf\|-mn\sqrt{\alpha}\|\vbf\|^2
\\
\label{eq:lower-bound-on-difference-psi-1}
\geq & \Big(\sigma_0-mn\sqrt{\alpha}\|\vbf\| \Big)\|\vbf\|
\end{align}
By \eqref{eq:lower-bound-on-difference-psi-1}, if $\|\vbf\|\leq \frac{\sigma_0}{2mn\sqrt{\alpha}}$, we obtain
\be\label{eq:lower-bound-difference-psi}
\|\Phi(L(\vbf)+\xbf_0)-\Phi(\xbf_0)\|\geq \frac{\sigma_0}{2}\|\vbf\|.
\ee
Let $0<r<\frac{\sigma_0}{2mn\sqrt{\alpha}}$, and for $\ybf\in \Phi(\xbf_0)_{\sigma_0 r/4}$, consider the function
\[
f_\ybf:\overline{0}_r\rightarrow \bbr, f(\vbf):=\|\Phi(L(\vbf)+\xbf_0)-\ybf\|^2,
\]
where $\overline{0}_r$ is the closed ball of radius $r$ centered at $0$. By \eqref{eq:lower-bound-difference-psi}, if $\|\vbf\|=r$, then
\begin{align}
\notag
\sqrt{f_\ybf(\vbf)}\geq &
\|\Phi(L(\vbf)+\xbf_0)-\Phi(\xbf_0)\|-\|\Phi(\xbf_0)-\ybf\|
\\
\notag
 \geq & \frac{\sigma_0}{2}\|\vbf\|-
\|\Phi(\xbf_0)-\ybf\|
\\ 
\label{eq:lower-bound-values-boundary}
> & \frac{\sigma_0r}{2}-\frac{\sigma_0r}{4}=\frac{\sigma_0r}{4}> \sqrt{f_\ybf(0)}.
\end{align}
By \eqref{eq:lower-bound-values-boundary}, the minimum of $f_\ybf$ occurs at at a critical point $\vbf_\ybf\in 0_r$ of $f_\ybf$. Knowing that $\nabla f_\ybf(\vbf_\ybf)=0$, using the chain rule, we obtain that the following holds
\[
(\Phi(L(\vbf_\ybf)+\xbf_0)-\ybf)^T\dif \Phi(L(\vbf_\ybf)+\xbf_0)L=0,
\]
where $(\Phi(L(\vbf_\ybf)+\xbf_0)-\ybf)^T$ is the row matrix form of the vector $\Phi(L(\vbf_\ybf)+\xbf_0)-\ybf$. By Lemma~\ref{lem:inverse-function-sigma-differential-under-perturbation}, $\sigma(\dif \Phi(L(\vbf_\ybf)+\xbf_0)L)\geq \sigma_0/2>0$. Therefore $\dif \Phi(L(\vbf_\ybf)+\xbf_0)L$ is injective, which implies that $\ybf=\Phi(L(\vbf_\ybf)+\xbf_0)$. The claim follows.
\end{proof}


 \section{The case of abelian groups}\label{app:abelian}

In this appendix we will prove the following. 
\begin{theorem}\label{thm:abelians}
The groups $\ZZ_p$ and $\RR/\ZZ$ are not spectrally independent. 
\end{theorem}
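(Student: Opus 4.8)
The plan is to disprove spectral independence directly, by exhibiting a single symmetric coupling $\rho$ of the Haar measures $m_{\ZZ_p}$ and $m_{\RR/\ZZ}$ on $G:=\ZZ_p\times(\RR/\ZZ)$ that has no spectral gap. This is enough: each Haar marginal has $\lambda=0$, so the marginal bound in the definition of spectral independence is (trivially) satisfied, whereas the pair $(X,Y)$ with law $\rho$ has $\lambda(\rho;G)=1$. One should first note that $\rho$ is a genuine coupling and not a degenerate one: its support projects onto a dense subset of each factor, so the closed subgroup $H$ it generates surjects onto both factors; since every continuous quotient of $\ZZ_p$ is profinite while every continuous quotient of $\RR/\ZZ$ is connected, the only common quotient is trivial, and Goursat's lemma forces $H=G$. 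Thus $\rho$ is an ergodic, non-product coupling, and the lack of spectral gap is not an artifact of non-density of the support.

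To construct $\rho$, I would use the base-$p$ digit structure on both sides. Write $x\in\ZZ_p$ by its (unique) expansion $x=\sum_{n\ge 0}x_np^n$, $x_n\in\{0,\dots,p-1\}$, and $y\in\RR/\ZZ$ as $y=\sum_{n\ge 1}y_np^{-n}\bmod 1$. Fix $L_i:=i$ and a rapidly increasing sequence $k_1<k_2<\cdots$ with $k_i-k_{i-1}>2i$, so that the blocks $B_i^W:=\{k_i+1,\dots,k_i+i\}$ are pairwise disjoint and the blocks $B_i^X:=\{k_i-i,\dots,k_i-1\}$ are pairwise disjoint. Define $\Phi:\ZZ_p\to\RR/\ZZ$ by prescribing the digits of $\Phi(x)$: for $n=k_i+l\in B_i^W$ set the $n$-th digit of $\Phi(x)$ to $(p-1)-x_{k_i-l}$, and for every other position $n$ set it to $x_{g(n)}$, where $g$ is a fixed bijection from the positions outside $\bigcup_iB_i^W$ onto the positions outside $\bigcup_iB_i^X$. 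Then $n\mapsto(\text{$n$-th digit of }\Phi(x))$ is a bijective reindexing of $(x_n)_n$ followed by position-dependent bijections of $\{0,\dots,p-1\}$; since a Haar-random $x$ has i.i.d.\ uniform digits, so does $\Phi(x)$, i.e.\ $\Phi_*m_{\ZZ_p}=m_{\RR/\ZZ}$. I would let $\rho_0$ be the law of $(X,\Phi(X))$, $X\sim m_{\ZZ_p}$, and take $\rho:=\tfrac12(\rho_0+\check\rho_0)$ (with $\check\rho_0$ the image of $\rho_0$ under $g\mapsto g^{-1}$); its marginals are still $m_{\ZZ_p}$ and $m_{\RR/\ZZ}$.

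The remaining step is to show $\lambda(\rho;G)=1$. For each $i$, test $\rho$ against the character $\chi_i=(\alpha_i,\beta_i)\in\widehat{G}$, where $\alpha_i:x\mapsto e^{2\pi i\,(x\bmod p^{k_i})/p^{k_i}}$ has order $p^{k_i}$ and $\beta_i:=p^{k_i}\in\ZZ=\widehat{\RR/\ZZ}$; both are nontrivial. A direct base-$p$ computation gives $(x\bmod p^{k_i})/p^{k_i}=0.x_{k_i-1}x_{k_i-2}\cdots x_0$, while the fractional part $\{p^{k_i}\Phi(x)\}=0.W_{k_i+1}W_{k_i+2}\cdots$ has its first $i$ digits equal to $(p-1-x_{k_i-1}),\dots,(p-1-x_{k_i-i})$ by construction; since $1-0.x_{k_i-1}\cdots x_0=0.(p-1-x_{k_i-1})(p-1-x_{k_i-2})\cdots$ (complemented digits, with no carrying, because $\sum_{j\ge1}(p-1)p^{-j}=1$), the two character phases sum to within $p^{-i}$ of an integer, for \emph{every} $x$. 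Hence $\bigl|\widehat{\rho_0}(\chi_i)-1\bigr|\le 2\pi p^{-i}$, so $\widehat{\rho}(\chi_i)=\operatorname{Re}\widehat{\rho_0}(\chi_i)\to 1$ and $\lambda(\rho;G)=\sup_{\chi\ne 1}|\widehat{\rho}(\chi)|=1$. Therefore the marginals of $(X,Y)$ have spectral gap while $(X,Y)$ does not, proving Theorem~\ref{thm:abelians}.

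The only part requiring genuine care is the combinatorial set-up: one must simultaneously (i) keep the digit-reindexing $g$ a true bijection on positions, so that $\Phi$ pushes Haar to Haar, and (ii) position the "reverse–complement" blocks $B_i^W$ and $B_i^X$ so that the chosen character pair $\chi_i$ is almost perfectly correlated under $\rho_0$ — which forces the lacunarity condition $k_i-k_{i-1}>2i$ (or any choice making all blocks disjoint with $L_i\to\infty$). Everything else is routine manipulation of base-$p$ expansions; in particular, measure-zero ambiguities (terminating expansions, etc.) are irrelevant since only the measure $\rho$ matters.
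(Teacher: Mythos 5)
Your construction is correct and is essentially the paper's: both build a coupling by rearranging base-$p$ digit blocks and then exhibit a sequence of nontrivial characters $\chi_i$ that are nearly constant on the support of the coupling, forcing $|\widehat{\rho}(\chi_i)| \to 1$ and hence $\lambda(\rho;G)=1$. The only cosmetic differences are that the paper reverses the blocks $\{2^j,\dots,2^{j+1}-1\}$ and matches digits (yielding near-equality of $\alpha_j$ and $\beta_j$, so the test character is a quotient), whereas you place the $\ZZ_p$ and $\RR/\ZZ$ blocks on opposite sides of $k_i$ and complement digits (so the test character is a product), and the paper factors the coupling through a common Bernoulli space $(Z,\nu)$ rather than a direct measure-preserving map $\Phi:\ZZ_p\to\RR/\ZZ$.
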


We start by a general criterion characterizing when a coupling of the Haar measures on two compact abelian groups has spectral gap. 

\begin{lem}\label{lem: gamma i to 1}
Let $G$ be a compact abelian group, and let $\mu$ be a symmetric probability measure on $G$. Then $\mu$ does not have spectral gap iff there exists $\gamma_j \in \widehat{G} \setminus \{1\}$ such that 
$\gamma_j$ converges to the constant function $1$ on $G$, $\mu$-a.e. 
\end{lem}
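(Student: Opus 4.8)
\textbf{Proof strategy for Lemma~\ref{lem: gamma i to 1}.}
The plan is to use the Peter--Weyl/Fourier decomposition of $L^2(G)$ for a compact abelian group $G$, where the irreducible unitary representations are exactly the characters $\gamma \in \widehat{G}$, each one-dimensional, and $\{\gamma : \gamma \in \widehat{G}\}$ is an orthonormal basis of $L^2(G)$. Since $\mu$ is symmetric, the operator $T_\mu$ on $L^2_0(G)$ is self-adjoint, and each character $\gamma \neq 1$ is an eigenvector: $T_\mu \gamma = \widehat{\mu}(\gamma)\,\gamma$, where $\widehat{\mu}(\gamma) = \int_G \overline{\gamma(g)}\,d\mu(g) = \int_G \gamma(g)\,d\mu(g)$ is real (using symmetry of $\mu$ and $\overline{\gamma}=\gamma^{-1}\in\widehat G$). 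Hence the spectrum of $T_\mu$ on $L^2_0(G)$ is the closure of $\{\widehat{\mu}(\gamma) : \gamma \in \widehat{G}\setminus\{1\}\}$, and
\[
\lambda(\mu;G) = \sup_{\gamma \in \widehat{G}\setminus\{1\}} |\widehat{\mu}(\gamma)|.
\]
Therefore $\mu$ fails to have spectral gap if and only if there is a sequence $\gamma_j \in \widehat{G}\setminus\{1\}$ with $|\widehat{\mu}(\gamma_j)| \to 1$.

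First I would translate the condition $|\widehat{\mu}(\gamma_j)|\to 1$ into the claimed pointwise convergence. Since $|\gamma_j(g)|=1$ for all $g$, we have $|\widehat{\mu}(\gamma_j)| \le \int_G |\gamma_j|\,d\mu = 1$, with equality in the limit forcing the unit vectors $\gamma_j(g)$ (for $\mu$-a.e.\ $g$) to align. Concretely, after replacing $\gamma_j$ by $c_j\gamma_j$ for suitable unimodular constants $c_j$ — note $c_j\gamma_j$ is still a character only if $c_j=1$, so instead one argues with the phase of $\widehat{\mu}(\gamma_j)$: write $\widehat{\mu}(\gamma_j) = |\widehat{\mu}(\gamma_j)|\,e^{i\theta_j}$; actually since $\widehat\mu(\gamma_j)$ is real we have $\widehat\mu(\gamma_j)\to \pm 1$, and passing to a subsequence we may assume $\widehat\mu(\gamma_j)\to \varepsilon\in\{+1,-1\}$. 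Then
\[
\int_G |1 - \varepsilon^{-1}\gamma_j(g)|^2\, d\mu(g) = 2 - 2\varepsilon^{-1}\,\mathrm{Re}\,\widehat{\mu}(\gamma_j)\;\longrightarrow\; 0,
\]
using that $\gamma_j$ has $\mu$-integral $\widehat\mu(\gamma_j)$ (note $\overline{\gamma_j}$ also has integral $\widehat\mu(\gamma_j)$, so $\mathrm{Re}\,\widehat\mu(\gamma_j)=\widehat\mu(\gamma_j)$). Thus $\varepsilon^{-1}\gamma_j \to 1$ in $L^2(\mu)$, hence along a further subsequence $\mu$-a.e. If $\varepsilon = -1$ this gives $\gamma_j \to -1$ $\mu$-a.e., which is not quite the stated form; here I would note that $\gamma_j^2 \in \widehat G$, $\gamma_j^2 \to 1$ $\mu$-a.e., and handle separately the case where $\gamma_j^2 = 1$ for infinitely many $j$: then $\gamma_j$ takes values in $\{\pm1\}$ and $\widehat\mu(\gamma_j)\to -1$ means $\mu(\{\gamma_j = -1\})\to 1$, so $\gamma_j \to -1$ and $-\gamma_j$... — the cleanest fix is to observe that the lemma as stated should be read with $\gamma_j$ converging to $1$ $\mu$-a.e.\ possibly after passing to squares or to a subsequence, and that $\widehat\mu(\gamma_j)\to 1$ (not merely $|\widehat\mu(\gamma_j)|\to 1$) can always be arranged by replacing $\gamma_j$ with $\gamma_j^{k_j}$ for appropriate powers when $G$ has enough torsion, or directly when $\widehat\mu(\gamma_j)\to 1$ along a subsequence. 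I would present the argument so that the statement is matched exactly, choosing the subsequence on which $\widehat\mu(\gamma_j)\to+1$ when one exists, and otherwise deriving a contradiction-free description in the $-1$ case.

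For the converse, suppose $\gamma_j \in \widehat{G}\setminus\{1\}$ and $\gamma_j \to 1$ $\mu$-a.e. Since $|\gamma_j|\le 1$, the dominated convergence theorem gives $\widehat{\mu}(\gamma_j) = \int_G \gamma_j\,d\mu \to \int_G 1\,d\mu = 1$, so $|\widehat{\mu}(\gamma_j)|\to 1$ and $\lambda(\mu;G)=1$, i.e.\ $\mu$ has no spectral gap. This direction is immediate.

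\textbf{Main obstacle.} The only delicate point is the bookkeeping in the forward direction: extracting genuine a.e.\ convergence of a \emph{subsequence} from $L^2(\mu)$-convergence (standard), and reconciling the sign ambiguity (whether $\widehat\mu(\gamma_j)\to +1$ or $-1$) with the precise wording "$\gamma_j$ converges to the constant function $1$". I expect this to be handled by passing to a subsequence on which $\widehat\mu(\gamma_j)$ has a definite limit in $\{+1,-1\}$, and, in the $-1$ case, either noting that the squares $\gamma_j^2$ then do the job or simply recording that the hypothesis of the lemma is used elsewhere only through $|\widehat\mu(\gamma_j)|\to1$, so the stated form is the one that is actually needed. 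Everything else is a direct consequence of the fact that for abelian $G$ the convolution operator $T_\mu$ is simultaneously diagonalized by $\widehat{G}$ with real eigenvalues $\widehat\mu(\gamma)$.
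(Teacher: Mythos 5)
Your approach coincides with the paper's in every step: diagonalize $T_\mu$ by the characters, use symmetry of $\mu$ to see $\widehat{\mu}(\gamma)$ is real, pass to a subsequence with $\widehat{\mu}(\gamma_j)\to\epsilon\in\{\pm1\}$, note $\int|\gamma_j-\epsilon|^2\,d\mu = 2-2\epsilon\,\widehat{\mu}(\gamma_j)\to0$, and extract a $\mu$-a.e.\ convergent subsequence; the converse is dominated convergence, exactly as you say.

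Your hedging over the $\epsilon=-1$ case is not a defect of your writeup but a genuine loose end in the lemma itself. The paper's own proof ends at $\gamma_j^2\to1$ $\mu$-a.e.\ (its \eqref{eq: gammaj to 0}) and never checks $\gamma_j^2\neq1$, and in fact the ``only if'' direction is false as literally stated. Counterexample: $G=\mathbb{Z}/2\mathbb{Z}=\{\pm1\}$ with $\mu=\delta_{-1}$; this is symmetric, its support generates $G$, and $\lambda(\mu)=1$, yet the unique nontrivial character equals $-1$ $\mu$-a.e., not $1$. An infinite variant is $G=(\mathbb{Z}/2\mathbb{Z})\times K$ with $\mu=\delta_{-1}\times m_K$. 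So the phenomenon is not cured by passing to squares. In the favorable case where there really is an infinite sequence of pairwise distinct $\gamma_j$ with $|\widehat{\mu}(\gamma_j)|\to1$, the clean repair you were circling is ratios rather than squares: set $\delta_j:=\gamma_{j+1}\gamma_j^{-1}\in\widehat{G}\setminus\{1\}$; since $\gamma_j\to\epsilon$ $\mu$-a.e., one gets $\delta_j=\gamma_{j+1}\overline{\gamma_j}\to\epsilon\,\overline{\epsilon}=1$ $\mu$-a.e.\ regardless of the sign of $\epsilon$. None of this affects the rest of the paper, which invokes only the easy ``if'' direction (dominated convergence) in the proof of Theorem~\ref{thm:abelians}, and that direction is correct and identical in your proposal. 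If you keep the forward implication, either restate it as ``there exist $\gamma_j\in\widehat{G}\setminus\{1\}$ with $\gamma_j\to\pm1$ $\mu$-a.e.'', or add the hypothesis that infinitely many distinct $\gamma_j$ realize the spectral radius and close with the ratio argument; as it stands your text should not leave the case analysis dangling.
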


\begin{proof}
Since $G$ is an abelian group, the spectrum of the convolution operator $T_\mu$ consists of $\widehat{\mu}(\gamma)$ for 
$\gamma \in \widehat{G}$. Since $\mu$ is a symmetric probability measure $\wh{\mu}(\gamma)\in [-1,1]$ for all $\gamma\in \wh{G}$.
Assuming $\mu$ does not have spectral gap, we obtain a sequence $\{\gamma_j\}_{j=1}^\infty\subseteq \wh{G}$ such that $ |\widehat{\mu}(\gamma_j) | \to 1$ and $\gamma_j$'s are pairwise distinct. 
We will show that passing to a subsequence, if necessary, we have 
\be\label{eq: gammaj to 0}
\gamma_j^2\to 1 \qquad\text{$\mu$-a.e.}
\ee
To see this, write $\gamma_j=x_j+iy_j$. 
Indeed, after passing to a subsequence, which we continue to denote by $\{\gamma_j\}$, we have 
\[
\int\gamma_jd\mu=\int x_jd\mu\to \epsilon
\]
where $\epsilon=\pm1$. Since $|\gamma_j|=1$, we conclude that
$\int|\gamma_j-\epsilon|\h d\mu\to 0$. Thus passing to a further subsequence, if necessary, we get that $\gamma_j\to\epsilon$, $\mu$-a.e.; this implies~\eqref{eq: gammaj to 0}.  

For the converse, note that if $\gamma_j \to 1$, $\mu$-a.e., then by Lebesgue's dominant convergence theorem, we have $ \widehat{\mu}(\gamma_i) \to 1$; hence $\mu$ does not have spectral gap. 
\end{proof}

\begin{lem}\label{lem:push}
Let $(Z,\nu)$, $(X_1,\mu_1)$, and $(X_2,\mu_2)$ be probability spaces. Assume that for $i=1,2$, $f_i:(Z,\nu)\to (X_i,\mu_i)$ is a measurable map so that $f_{i*}\nu=\mu_i$. Let $\delta: Z \to X_1 \times X_2$ be defined by 
$\delta(z)=(f_1(z), f_2(z))$. Then 
\begin{equation}\label{eq:push}
\mu= \delta_{\ast}(\nu)
\end{equation}
is a coupling of $\mu_1$ and $\mu_2$. 
\end{lem}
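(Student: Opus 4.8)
The statement to prove (Lemma~\ref{lem:push}) is essentially a formal bookkeeping exercise: given measurable maps $f_i\colon(Z,\nu)\to(X_i,\mu_i)$ with $f_{i*}\nu=\mu_i$, the diagonal-type map $\delta=(f_1,f_2)\colon Z\to X_1\times X_2$ pushes $\nu$ forward to a probability measure $\mu$ on $X_1\times X_2$ whose marginals are $\mu_1$ and $\mu_2$. So the plan is to verify the two defining properties of a coupling.

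First I would note that $\delta$ is measurable because each component $f_i$ is measurable and the product $\sigma$-algebra on $X_1\times X_2$ is generated by rectangles $A_1\times A_2$ with $A_i\in\mathcal B_i$; indeed $\delta^{-1}(A_1\times A_2)=f_1^{-1}(A_1)\cap f_2^{-1}(A_2)$ is measurable in $Z$. Hence $\mu:=\delta_*\nu$ is a well-defined Borel probability measure on $X_1\times X_2$, and $\mu(X_1\times X_2)=\nu(Z)=1$.

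Next I would compute the marginals. Writing $\pr_i\colon X_1\times X_2\to X_i$ for the coordinate projections, observe that $\pr_i\circ\delta=f_i$ as maps $Z\to X_i$. Therefore, by functoriality of pushforward,
\[
\pr_i{}_*\mu=\pr_i{}_*(\delta_*\nu)=(\pr_i\circ\delta)_*\nu=f_i{}_*\nu=\mu_i
\]
for $i=1,2$. This is exactly the condition $\pr_i(\mu)=\mu_i$ in the definition of $\cpl(\mu_1,\mu_2)$ recalled in \S\ref{sec: transport}, so $\mu\in\cpl(\mu_1,\mu_2)$, which is the claim.

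There is essentially no obstacle here: the only point requiring a sentence of care is the measurability of $\delta$ into the product $\sigma$-algebra, which follows from the generating-rectangles argument above; everything else is the identity $\pr_i\circ\delta=f_i$ and functoriality of pushforward. If one prefers, the marginal identity can instead be checked on test functions: for bounded measurable $g\colon X_i\to\CC$ one has $\int g\,d(\pr_i{}_*\mu)=\int g\circ\pr_i\,d\mu=\int g\circ\pr_i\circ\delta\,d\nu=\int g\circ f_i\,d\nu=\int g\,d\mu_i$, giving the same conclusion.
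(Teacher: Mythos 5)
Your proof is correct and matches the paper's own argument: both rely on the identity $\pr_i\circ\delta=f_i$ and functoriality of pushforward to conclude $\pr_{i*}\mu=f_{i*}\nu=\mu_i$. The measurability check for $\delta$ is a reasonable extra sentence but is otherwise the same approach.
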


\begin{proof} 
For $i=1,2$, denote the canonical projections from $X_1 \times X_2$ onto $X_i$ by $\pi_i$. Then since $\pi_i \circ \delta = f_i$ and $f_{i*}\nu=\mu_i$, we have 
\[
(\pi_i)_\ast \mu = (\pi_i)_\ast \circ \delta_\ast (\nu)= \mu_i\qquad\text{for $i=1,2$,}
\]
as we claimed.
\end{proof}

For the rest of this section, put $Z=\{0,1,\ldots, p-1\}^{\bbn_0}$ and let $\nu$ denote the product measure of probability counting measure on $\{0,1,\ldots, p-1\}$. 
We will apply Lemma~\ref{lem:push} with $(Z,\nu)$ and $(X_1,\mu_1)=(\bbr/\bbz, m_{\bbr/\bbz})$ and $(X_2,\mu_2)=(\bbz_p,m_{\bbz_p})$.

We will also need the following to define the marginals. 

\begin{lem}
    \label{lem:digits}
    Let $f_{\infty}:Z\to \bbr/\bbz$ and $f_p:Z\to\bbz_p$ be defined by 
    \[
    f_{\infty}(\{x_i\}_{i=0}^{\infty}):=\left(\sum_{i=0}^\infty x_i p^{-i-1}\right)+\bbz,\qquad\text{and}\qquad 
    f_{p}(\{x_i\}_{i=0}^{\infty}):=\sum_{i=0}^\infty x_i p^{i}.
    \]
    Then $f_{\infty,*}\nu=m_{\bbr/\bbz}$ and $f_{p,*}\nu=m_{\bbz_p}$.
\end{lem}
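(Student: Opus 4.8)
\textbf{Proof plan for Lemma~\ref{lem:digits}.} The statement asserts that the two "digit-assembly" maps push the Bernoulli measure $\nu$ on $Z = \{0,1,\dots,p-1\}^{\bbn_0}$ forward to the respective Haar measures. The plan is to verify each pushforward by checking it agrees with Haar measure on a generating algebra of measurable sets, then invoke uniqueness of Haar measure (or a $\pi$-$\lambda$ argument).

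First I would treat $f_p : Z \to \bbz_p$, which is the cleaner of the two since $f_p$ is actually a homeomorphism. For each $n$ and each residue $a \in \bbz/p^n\bbz$, the preimage $f_p^{-1}(a + p^n\bbz_p)$ is exactly a cylinder set $\{x \in Z : x_0 = a_0, \dots, x_{n-1} = a_{n-1}\}$ where $a_0,\dots,a_{n-1}$ are the base-$p$ digits of $a$; this cylinder has $\nu$-measure $p^{-n}$, which is precisely $m_{\bbz_p}(a+p^n\bbz_p)$. Since the cosets of the subgroups $p^n\bbz_p$ form a basis of clopen sets generating the Borel $\sigma$-algebra of $\bbz_p$ and are closed under intersection, two Borel measures agreeing on all of them coincide; hence $f_{p,*}\nu = m_{\bbz_p}$. (Alternatively: $f_p$ is a continuous bijection of compact Hausdorff spaces, hence a homeomorphism, carrying the product structure to the profinite structure, and $\nu$ is manifestly the normalized Haar measure of the profinite group under this identification.)

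Next, for $f_\infty : Z \to \bbr/\bbz$, the map is no longer injective (it fails on the countable set of eventually-$(p-1)$ sequences, which is $\nu$-null and $m_{\bbr/\bbz}$-null), but this causes no trouble at the level of measures. I would compute $f_{\infty,*}\nu$ on the dyadic-type intervals $[a p^{-n}, (a+1)p^{-n}) + \bbz$ for $0 \le a < p^n$: up to a $\nu$-null set, $f_\infty^{-1}$ of this interval is again the cylinder fixing the first $n$ digits (the base-$p$ digits of $a$), so its $\nu$-measure is $p^{-n} = m_{\bbr/\bbz}\big([ap^{-n},(a+1)p^{-n})\big)$. These half-open intervals with $p$-power-denominator endpoints generate the Borel $\sigma$-algebra of $\bbr/\bbz$ and form a $\pi$-system, so $f_{\infty,*}\nu = m_{\bbr/\bbz}$. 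One clean way to package both computations uniformly is to note $f_\infty = q \circ f_p$ where $q : \bbz_p \to \bbr/\bbz$ is... actually no such continuous homomorphism exists, so I would instead just carry out the two interval/coset computations in parallel.

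The only mild obstacle is bookkeeping the null sets in the $f_\infty$ case — making sure that "$f_\infty^{-1}(\text{interval})$ equals a cylinder up to a $\nu$-null set" is stated carefully (the ambiguity is exactly at sequences ending in all $(p-1)$'s, i.e. the "carrying" boundary), and confirming those exceptional sequences form a countable, hence $\nu$-null, set. Everything else is routine: identify preimages of basic sets, match measures, appeal to uniqueness. I expect the whole proof to be under half a page.
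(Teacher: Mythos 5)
Your proposal is correct and follows essentially the same strategy as the paper: check that preimages of the $p$-adic intervals (resp.\ cosets $a+p^n\bbz_p$) are cylinders of the matching $\nu$-measure, note these generate the Borel $\sigma$-algebras and form $\pi$-systems, and dispose of the countable exceptional set for $f_\infty$ as $\nu$- and $m_{\bbr/\bbz}$-null. The only cosmetic difference is your use of half-open intervals where the paper uses open ones, which is immaterial since the endpoints form a null set.
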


\begin{proof} 
    First, we observe that $f_{\infty}$ and $f_p$ are continuous surjective functions; in particular, they are measurable functions.
    We also notice that $f_p$ is injective, therefore it is a homeomorphism. As for $f_\infty$, note that $f_\infty^{-1}(x+\bbz)$ has one element unless $x$ is of the form $\frac{j}{p^{k}}$ in which case $f_{\infty}^{-1}(x+\bbz)$ has two elements.
    Therefore, there are $\nu$-null subset  $\mathcal N_1\subseteq Z$ and $m_{\bbr/\bbz}$-null subset $\mathcal N_2\subseteq \bbr/\bbz$, so that $f_\infty:Z\setminus \mathcal N_1\to (\bbr/\bbz)\setminus \mathcal N_2$ is a bijective measurable map whose inverse is also measurable.   
    
    To prove the lemma for $f_\infty$, it suffices to give a generating set $\ucal$ of the Borel $\sigma$-algebra of $\bbr/\bbz$ such that for every $U\in \ucal$, the sets $U\setminus \mathcal N_2$ and $f_\infty^{-1}(U\setminus \mathcal N_2)$ have the same measure. The proof for $f_p$ is similar. For $i\in \bbn_0$ and $0 \le j\le p^{i+1}-1$, let
    \[
    U_{i,j}:=\left(\frac{j}{p^{i+1}},\frac{j+1}{p^{i+1}}\right)+\bbz.
    \]
    Notice that $\ucal:=\{U_{i,j}\mid i\in \bbn_0,0\leq j\leq p-1\}$ generates the Borel $\sigma$-algebra on $\bbr/\bbz$, and 
    \[
    f_{\infty}^{-1}(U_{i,j})=\left(\{a_0\}\times\cdots\times\{a_{i}\}\times \prod_{k=i+1}^\infty \{0,\ldots,p-1\}\right)\setminus \mathcal N_1
    \]
    where $j=a_0+a_1p+\cdots + a_i p^i$ and $a_0,\ldots,a_i\in [0,p-1]$. Therefore, both $U_{i,j}$ and $f_\infty^{-1}(U_{i,j})$ have measure $\frac{1}{p^{i+1}}$. Altogether, we deduce that $f_\infty$ is measure-preserving, which means $f_{\infty,\ast}\nu=m_{\bbr/\bbz}$. The $f_p$ case is similar.  
\end{proof}

We note that for any permutation $\sigma:\bbn_0\to\bbn_0$, the induced map  $h_\sigma:Z\to Z$ is a homeomorphism which preserves $\nu$. Therefore, combining Lemma~\ref{lem:push} and Lemma~\ref{lem:digits}, we have 

\begin{corollary}\label{cor:combine push and digits}
Let $\sigma:\bbn_0\to\bbn_0$ be a permutation, and let $\delta_\sigma:Z\to \bbr/\bbz\times\bbz_p$
be  
\[
\delta_\sigma(z)=\bigl(f_\infty(z), f_p(h_\sigma(z))\bigr),
\]
and let $\mu_\sigma=\delta_{\sigma,*}\nu$. Then $\mu_\sigma$ is a coupling of $m_{\bbr/\bbz}$ and $m_{\bbz_p}$.  
\qed
\end{corollary}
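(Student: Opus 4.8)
The plan is simply to assemble Lemma~\ref{lem:push} and Lemma~\ref{lem:digits} together with the $\nu$-invariance of coordinate permutations noted just before the statement. Concretely, I would apply Lemma~\ref{lem:push} with the probability space $(Z,\nu)$, the two target spaces $(\bbr/\bbz, m_{\bbr/\bbz})$ and $(\bbz_p, m_{\bbz_p})$, and the maps $f_1 := f_\infty$ and $f_2 := f_p\circ h_\sigma$. Then $\delta(z) = (f_1(z),f_2(z)) = (f_\infty(z), f_p(h_\sigma(z))) = \delta_\sigma(z)$, so Lemma~\ref{lem:push} identifies $\mu_\sigma = \delta_{\sigma,*}\nu$ with the measure $\mu = \delta_*(\nu)$ of~\eqref{eq:push}, which it asserts is a coupling of the two marginals. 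It therefore remains only to check the hypotheses of Lemma~\ref{lem:push}: measurability of $f_1,f_2$, and the two marginal identities $f_{1,*}\nu = m_{\bbr/\bbz}$ and $f_{2,*}\nu = m_{\bbz_p}$.

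For measurability: $f_\infty$ and $f_p$ are continuous (this is recorded in the proof of Lemma~\ref{lem:digits}) and $h_\sigma$ is a homeomorphism of $Z$, so $f_1$ and $f_2$ are continuous, hence Borel measurable, and consequently $\delta_\sigma$ is Borel measurable. The first marginal identity $f_{1,*}\nu = f_{\infty,*}\nu = m_{\bbr/\bbz}$ is exactly Lemma~\ref{lem:digits}. For the second, recall from the discussion immediately preceding the corollary that for any permutation $\sigma:\bbn_0\to\bbn_0$ the induced coordinate permutation $h_\sigma:Z\to Z$ is a homeomorphism preserving $\nu$, i.e.\ $h_{\sigma,*}\nu = \nu$ (the product measure $\nu$ is symmetric under permutation of its factors). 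Using functoriality of push-forward under composition, $(f_p\circ h_\sigma)_* = f_{p,*}\circ h_{\sigma,*}$, we get
\[
f_{2,*}\nu = f_{p,*}\bigl(h_{\sigma,*}\nu\bigr) = f_{p,*}\nu = m_{\bbz_p},
\]
the last equality being Lemma~\ref{lem:digits} again. Thus all hypotheses of Lemma~\ref{lem:push} are met, and the corollary follows.

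I do not expect any genuine obstacle here: the statement is a direct consequence of the two preceding lemmas plus the (elementary) invariance $h_{\sigma,*}\nu = \nu$, and the only step needing an explicit word is the identity $(g\circ h)_* = g_*\circ h_*$ for push-forwards of measures, which is routine. The proof is short enough that it is already flagged with \qed in the excerpt, and my write-up would be a one-paragraph verification of exactly the three bullet points of Lemma~\ref{lem:push}.
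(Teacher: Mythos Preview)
Your proposal is correct and matches the paper's intended argument: the corollary is marked with \qed\ precisely because it follows immediately from Lemma~\ref{lem:push} and Lemma~\ref{lem:digits} together with the observation that $h_{\sigma,*}\nu=\nu$, exactly as you spell out.
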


\begin{proof}[Proof of Theorem \ref{thm:abelians}]
Let $ \sigma: \bbn_0 \to \bbn_0$ be the permutation fixing $0$ and $1$ and inverting each block $\{ 2^j, \dots, 2^{j+1}-1 \}$. More precisely,  all $j \ge 1 $ and $ 0 \le i \le 2^{j}-1$ we have  $ \sigma( i+2^j)= 2^{j+1}-i-1$. Let $\mu=\mu_\sigma$ be as in Corollary~\ref{cor:combine push and digits} applied with this $\sigma$. Then $ \mu$ is a coupling of the probability Haar measures on $\RR/\ZZ$ and $\ZZ_p$ supported on 
\be\label{eq:support coupling}
\bigl\{(\textstyle\sum_{i=0}^\infty x_i p^{-(i+1)},\textstyle\sum_{ i =0}^{\infty} y_i p^i)\in\bbr/\bbz\times\bbz_p | x_{i+2^j}= y_{ 2^{j+1}-i-1}, \text{for all $j \ge 1 $, $0 \le i \le 2^{j}-1$}\bigr\}.
\ee

We will use Lemma~\ref{lem: gamma i to 1} to show that $\mu$ does not have spectral gap, which will finish the proof of the Theorem as $\mu$ is a coupling of the probability Haar measures on $\RR/\ZZ$ and $\ZZ_p$.

To apply Lemma~\ref{lem: gamma i to 1}, we construct a family of characters for $ \RR/\ZZ$ and 
$\ZZ_p$. Let 
\[\ef:\bbr\to \bbc^\times, \ef(x):=e^{2\pi i x},\]
and  recall that 
\[
\wh{\bbr/\bbz}=\{e_{\infty,n}\mid n\in \bbz\},
\]
where $e_{\infty,n}(x+\bbz):=\ef(nx)$ --- notice that $e_{\infty,n}$ is well-defined as $\ef$ is $\bbz$-invariant. 

To describe the dual of $\bbz_p$, we recall that every element $x \in \bbq_p$ can be written as a sum of a rational number $r_x$ and a $p$-adic integer $z_x$. Let 
\[
e_p:\bbq_p\to \bbc^\times,\quad e_p(x):=\ef(r_x),
\]
and notice that $e_p$ does not depend on the choice of decomposition $x=r_x+z_x$ as $\ef$ is $\bbz$-invariant. Then
\[
\wh{\bbz_p}=\bigl\{e_{p,r}\mid r=\tfrac{k}{p^j}, k, j \in \bbz \bigr\},
\]
where $e_{p,r}:\bbz_p\to \bbc^\times, e_{p,r}(x)=e_p(rx)$.

For $j \ge 1$, define $\alpha_j:= e_{\infty,p^{2^j}}$, $\beta_j:=e_{p,p^{-2^{j+1}}}$, and 
\[
\gamma_j:\bbr/\bbz\times \bbz_p\to \bbc^\times,\qquad 
\gamma_j(x,y):=\alpha_j(x)/\beta_j(y).
\]
We claim that for $\mu$-a.e.\ $(x,y)$ we have $ \gamma_j(x,y) \to 1$. By \eqref{eq:support coupling}, we have that if $(x,y)$ is in the support of $\mu$, then
there are digits $x_i,y_i\in \{0,1,\ldots, p-1\}$ such that $x=\sum_{i=0}^\infty x_i p^{-(i+1)}$ and $y=\sum_{i=0}^\infty y_i p^{i+1}$, where 
\be\label{eq:x y comp support coupling}
y_{2^{j+1}-i-1}=x_{2^j+i} \qquad\text{for all $j\geq 1$ and $0\leq i<2^j$.}
\ee

Then for all $j$
\be\label{eq: estimate character x comp}  
\begin{aligned}
\alpha_j(x) = e_{\infty,p^{2^j}}(x) &= \ef  \Bigl( \sum_{i=0}^{\infty} x_i p^{ 2^j-i-1}   \Bigr)\\
&= \ef  \Bigl( \sum_{i=0}^{\infty} x_{i+2^j} p^{-(i+1)}   \Bigr)  =
\ef  \Bigl( \sum_{i=0}^{2^j-1} x_{i+2^j} p^{-(i+1)} \Bigr) + O(p^{-2^j}).
\end{aligned}
\ee

Similarly, we have
\be\label{eq: estimate character y comp}
\begin{aligned}
    \beta_j(y)&= e_{p,p^{-2^{j+1}}}(y) 
    =
    e_p\Bigl(\sum_{i=0}^\infty y_i p^{i+1-2^{j+1}}\Bigr)
    \\
    &=
    \ef  \Bigl(\sum_{i=0}^{2^{j+1}-1} y_i p^{i+1-2^{j+1}}\Bigr)
    =
    \ef  \Bigl( \sum_{i=0}^{2^{j}-1} y_{ 2^{j+1}-i-1}
p^{-(i+1)} \Bigr) + O( p^{-2^j}).
\end{aligned}
\ee
Hence, by \eqref{eq:x y comp support coupling}, \eqref{eq: estimate character x comp}, and \eqref{eq: estimate character y comp}, we obtain that 
\[
 |\gamma_j(x,y)-1|= |\alpha_j(x) - \beta_j(y)| = O( p^{-2^j}).
\]
Therefore by Lemma~\ref{lem: gamma i to 1}, we deduce that $\mu$, which is a coupling of the probability Haar measures of $\bbr/\bbz$ and $\bbz_p$, does not have spectral gap property. Thus, $\bbr/\bbz$ and $\bbz_p$ are not spectrally independent.
\end{proof}


\section{Commutator of small neighborhoods in compact semisimple Lie groups.} In~\cite{G-commutator} it is proved that if $G$ is a perfect connected compact group, then every element of $G$ is a commutator element. More recently in~\cite{dA-M-commutator}, it is proved that the commutator map 
\[
\psi:G\times G\rightarrow G, \psi(g_1,g_2):=[g_1,g_2]=g_1g_2g_1^{-1}g_2^{-1}
\]
 is an open function if $G$ is a compact semisimple Lie group. Here we show the following quantitative version of their result. For the case of unitary groups, this is part of the Solovay-Kitaev  algorithm.
\begin{proposition}\label{prop:commutator-small-nhbds}
Suppose $G$ is a compact semisimple Lie group. Suppose $G\subseteq {\rm SO}(n)$, and it is equipped with the metric induced by the operator norm.  Let $\psi:G\times G\rightarrow G, \psi(g_1,g_2):=[g_1,g_2]$ be the commutator map. Then there is a positive number $c'':=c''(G)$ such that for every $0<\rho_1,\rho_2<1$ we have $\psi(1_{\rho_1}\times 1_{\rho_2})\supseteq 1_{c''\rho_1\rho_2}$.
\end{proposition}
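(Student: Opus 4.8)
The goal is a quantitative surjectivity-of-commutators statement: $\psi(1_{\rho_1}\times 1_{\rho_2})\supseteq 1_{c''\rho_1\rho_2}$ for a constant $c''=c''(G)$. The natural strategy is to linearize and combine with a quantitative inverse function theorem, in the spirit of the proof of Proposition~\ref{prop:conjugation-by-large-ball-multiplication-large-image}. First I would pass to the Lie algebra $\gfr=\Lie(G)$ via $\log$, which on $1_{1/3}$ is a bi-Lipschitz homeomorphism with universal constants; fix an orthonormal basis $e_1,\dots,e_d$ of $\gfr$. The key algebraic input is the second-order Baker--Campbell--Hausdorff expansion~\eqref{eq:BCHD-commutator}: for $x,y\in\gfr$ with small norm and scaling parameters,
\[
\log[\exp(x),\exp(y)] = [x,y] + (\text{higher order terms bounded by } O(\|x\|\|y\|(\|x\|+\|y\|))).
\]
So to first order the commutator map is the Lie bracket $\gfr\times\gfr\to\gfr$, which is bilinear.

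Because the bracket is bilinear rather than linear, one cannot apply the inverse function theorem directly to $\psi$ near $(1,1)$; instead I would fix one argument away from the identity. Since $\gfr$ is semisimple, it is perfect: $[\gfr,\gfr]=\gfr$. Hence there exist fixed unit vectors $v_1,\dots,v_k\in\gfr$ (with $k\le d$, say $k=d$ by padding) and fixed elements $w_1,\dots,w_d\in\gfr$ of bounded norm so that the linear map $\gfr^d\to\gfr$, $(c_1,\dots,c_d)\mapsto\sum_i c_i[w_i,v_i]$ is surjective --- equivalently, $\{[w_i,v_i]\}$ spans $\gfr$; here one must be slightly careful and allow a sum over several bracket terms, exactly as in Lemma~\ref{lem:polytops-g-module}: the set $\{\sum_i c_i[w_i,v_i]: |c_i|\le 1\}$ contains a ball $0_{r_0}$ around the origin for a fixed $r_0=r_0(G)>0$, because $\gfr$ is a simple (or semisimple) $G$-module / the bracket is nondegenerate. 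Actually, for the cleanest argument I would instead directly consider, for a single pair, the map $t\mapsto \log[\,h\,,\exp(ty)\,]$ with $h=\exp(\rho_1 x_0)$ for a suitable unit vector $x_0$; its derivative at $t=0$ is $\rho_1\,[x_0, y]+O(\rho_1^2)$, and choosing $y$ ranging over a basis and taking a product of $d$ such commutators (conjugated if necessary) lets one hit a full neighborhood. This is the same bounded-generation bookkeeping already carried out in the proof of Proposition~\ref{prop:conjugation-by-large-ball-multiplication-large-image}.

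Concretely, the plan is: (i) choose a unit vector $x_0\in\gfr$ and a basis-like family $y_1,\dots,y_d\in\gfr$ of unit vectors so that, using semisimplicity, the vectors $[x_0,y_j]$ span $\gfr$ up to a fixed radius (invoke Lemma~\ref{lem:polytops-g-module} applied to the compact semisimple case, or prove the analogous statement directly by a compactness argument on the unit sphere); (ii) set $h=\exp(\rho_1 x_0)\in 1_{\rho_1}$ (after rescaling $x_0$ by a fixed constant so that $h\in 1_{\rho_1}$ and $\|h-I\|\asymp\rho_1$), and define
\[
\Phi:(t_1,\dots,t_d)\mapsto \psi(h,\exp(t_1 y_1))\cdots\psi(h,\exp(t_d y_d)),
\]
a real-analytic map from a box in $\RR^d$ into $G$ near the identity; (iii) compute $\dif\Phi(0)$: by~\eqref{eq:BCHD-commutator} and the product-to-sum BCH estimate~\eqref{eq:BCHD-2nd-approx}, $\dif\Phi(0)(c_1,\dots,c_d)=\sum_j c_j[\rho_1 x_0, y_j]+O(\rho_1^2)$, so $\sigma(\dif\Phi(0))\gg_G \rho_1$; (iv) bound the second derivatives of $\Phi$ by $O_G(1)$ (uniform on the box, since $\Phi$ is built from fixed analytic group operations); (v) apply Theorem~\ref{thm:real-inverse-function} with $\sigma_0\asymp_G\rho_1$ to conclude that $\Phi(0_r)\supseteq 1_{(\sigma_0/4)r}$ for all $0<r\ll_G\rho_1$; (vi) finally, take $r\asymp_G\rho_2$ so that each $\exp(t_j y_j)$ lies in $1_{\rho_2}$, and observe that since each $\psi(h,\exp(t_j y_j))\in 1_{O_G(\rho_1\rho_2)}$ the product $\Phi(t_1,\dots,t_d)$ still lies in $1_{O_G(\rho_1\rho_2)}$ --- but the point is it covers a ball $1_{c''\rho_1\rho_2}$ of that order, and a single commutator $\psi(g_1,g_2)$ with $g_1\in 1_{\rho_1}$, $g_2\in 1_{\rho_2}$ realizes any product-of-commutators element of this form since $G$ perfect / by rewriting. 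Here one must either argue that a single commutator already covers $1_{c''\rho_1\rho_2}$, or --- more honestly --- replace $\psi$ by the analogous map with a single pair of arguments, using that the product $\prod_j[h,\exp(t_jy_j)]$ can itself be written as a single commutator $[h', k]$ with $h'\in 1_{O(\rho_1)}$, $k\in 1_{O(\rho_2)}$; alternatively just prove the displayed inclusion for $\psi$ directly by choosing $\Phi$ to depend analytically on \emph{both} a rescaled first variable near $x_0$ and the $t_j$'s and applying the inverse function theorem in all $2d$ variables, checking $\sigma(\dif\Phi(0))\gg_G\rho_1\rho_2$... I would go with the cleanest route: realize the full neighborhood using the single commutator map by exploiting that $[\exp(\rho_1 x),\exp(\rho_2 y)]$, as $(x,y)$ ranges over unit spheres, has differential in $y$ of rank $d$ for generic fixed $x$, by semisimplicity.

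\textbf{Main obstacle.} The technical heart, and the place where care is needed, is step (i)/(iii): showing that one can choose the fixed data ($x_0$ and the $y_j$'s, or the conjugating elements) so that the linearization is surjective with quantitative lower bound on $\sigma(\dif\Phi(0))$ \emph{linear in $\rho_1$ and $\rho_2$}, uniformly, and with second-derivative bounds that do not destroy the gain --- i.e. ensuring the error terms in the BCH expansions are genuinely of higher order in the relevant small parameters so that Theorem~\ref{thm:real-inverse-function}'s hypothesis $r<\sigma_0/(2mn\sqrt\alpha)$ is met with $r\asymp\rho_2$ and $\sigma_0\asymp\rho_1$. This is exactly the kind of bilinear-to-linear reduction handled for the adjoint action in Lemma~\ref{lem:polytops-g-module} and Lemma~\ref{lem:adjoint-action-small-ball-polytop}; here the analogue is the statement that the bracket $[\,\cdot\,,v]:\gfr\to\gfr$, averaged over a bounded family of $v$'s (or after conjugation), covers a fixed-radius ball for every unit $v$, which follows from simplicity of $\gfr$ as a $G$-module together with a compactness argument on the unit sphere exactly as in the proof of Lemma~\ref{lem:polytops-g-module}. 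The remaining pieces --- the BCH estimates, the second-derivative bound, and the application of the inverse function theorem --- are then routine and parallel the proof of Proposition~\ref{prop:conjugation-by-large-ball-multiplication-large-image}.
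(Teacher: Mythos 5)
There is a genuine gap at the final step of your plan, and it stems from an incorrect claim about the differential of the single-commutator map. For a fixed $g_1 = \exp(\rho_1 x_0)$, the map $g_2 \mapsto [g_1, g_2]$ has differential at $g_2 = 1$ essentially given by $\Ad(g_1) - \mathrm{id}$, which to leading order is $\rho_1\,\ad(x_0)$; and $\ad(x_0):\gfr\to\gfr$ always has nontrivial kernel (at minimum $x_0 \in \ker\ad(x_0)$; more precisely the kernel is the centralizer, whose dimension is at least $\mathrm{rank}\,\gfr \ge 1$). So $\ad(x_0)$ is never of rank $d = \dim\gfr$, for any $x_0$ --- not generically, not ever. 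This makes your "cleanest route" untenable as stated. The earlier steps (i)--(v), which genuinely do work, produce surjectivity of a product $\prod_j [h, \exp(t_j y_j)]$ of $d$ commutators onto a ball (a bounded-generation statement very close to Proposition~\ref{prop:conjugation-by-large-ball-multiplication-large-image}), but the proposition requires a \emph{single} commutator. Your suggested fix --- that a product of commutators "can itself be written as a single commutator $[h',k]$ with $h'\in 1_{O(\rho_1)}$, $k\in 1_{O(\rho_2)}$" --- is circular: it is precisely the content of the proposition and not something one has a priori.

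The paper's proof sidesteps this by working at the level of the bilinear map $\overline\psi(x,y)=[x,y]$ on $\gfr\times\gfr$, where surjectivity onto $0_{c_1''\rho\rho'}$ holds because \emph{both} variables vary (citing openness of the Lie bracket on a semisimple Lie algebra, \cite[Theorem~2.1]{dA-M-commutator}); this is the analogue, for a bilinear map, of the full-rank condition you could not get by fixing one argument. It then converts this to the group level not by an inverse-function-theorem argument for $\psi$, but by the exact identity $\exp(-\xi(x,y)) = [A,B]$ obtained from a BCH-type factorization $\exp(u+v)=\exp(\Ad(P)u)\exp(\Ad(Q)v)$ with $P,Q$ analytic and equal to $1$ at the origin, applied to $u=x$, $v=-\exp(\ad y)(x)$; here $A$ is a conjugate of $\exp(x)$ and $B$ a conjugate of $\exp(y)$ by elements $O(\rho)$-close to $1$. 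Since $\xi(x,y)=[y,\phi(\ad y)x]$ with $\phi(\ad y)$ close to the identity operator, the surjectivity of $\xi$ at scale $\rho\rho'$ follows from that of the bracket, and hence the commutator $[A,B]$ with $A\in 1_{O(\rho)}$, $B\in 1_{O(\rho+\rho')}$ covers $1_{\Omega(\rho\rho')}$. This mechanism --- an algebraic identity expressing $\exp$ of the linearized quantity exactly as a single commutator of two nearby group elements --- is precisely the ingredient your plan is missing and cannot be replaced by the inverse function theorem applied with one argument frozen.
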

\begin{proof}
Let $\gfr$ be the Lie algebra of $G$, and $\overline{\psi}:\gfr\times \gfr\rightarrow \gfr, \overline{\psi}(x,y):=[x,y]$. By \cite[Theorem 2.1]{dA-M-commutator}, there is a positive number $c''_1$ such that $\overline{\psi}(0_1\times 0_1)\supseteq 0_{c''_1}$.  Since $\overline{\psi}$ is bilinear, for every positive numbers $\rho$ and $\rho'$ we have 
\be\label{eq:commutator-Lie-algebra}
\overline{\psi}(0_{\rho}\times 0_{\rho'})\supseteq 0_{c''_1\rho\rho'}.
\ee
Let $\phi(t):=\frac{e^t-1}{t}$. Notice that $\phi$ is an analytic function, $\phi(0)=1$, and $|\phi(t)-1|\leq \frac{te^t}{2}$ for every $0<t<1$.  We set
\[
\xi:\gfr\times \gfr\rightarrow \gfr, \quad \xi(x,y):=\exp(\ad(y))(x)-x,
\]
and we notice that 
\begin{align}
\notag
\xi(x,y)=&(\exp(\ad(y))-{\rm id})(x)= \ad(y)(\phi(\ad(y))(x))\\
\label{eq:step-one-going-to-group-commutator}  =& [y,\phi(\ad(y))(x)]= \psi(y,\phi(\ad(y))(x)).
\end{align}
For every  $0<\rho\ll_{\dim G} 1$, $0<\rho'<1$, and $y\in 0_{\rho}$,  $\phi(\ad(y))(0_{\rho'})\supseteq 0_{\rho'/2}$. Therefore by \eqref{eq:commutator-Lie-algebra} and \eqref{eq:step-one-going-to-group-commutator}, we obtain
\be\label{eq:second-step-going-to-group-comm}
\xi(0_{\rho}\times 0_{\rho'})\supseteq 0_{\frac{1}{2}c''_1\rho\rho'}
\ee
for every $0<\rho<1$ and $0<\rho'\ll_{\dim G} 1$. Using the Baker-Campbell-Hausdorff formula, it is deduced in \cite[Proposition 3.1]{dA-M-commutator} that 
there are analytic functions $P$ and $Q$ from a neighborhood $\ocal$ of $(0,0)\in \gfr\times \gfr$ to $G$ such that 
\be\label{eq:BCH}
P(0,0)=Q(0,0)=1, \h\h \exp(x+y)=\exp(\Ad(P(x,y))(x))\exp(\Ad(Q(x,y))(y)),
\ee 
for every $(x,y)\in \ocal$. By \eqref{eq:BCH}, we have
\begin{align}
\label{eq:exp-commutator}
	\exp(-\xi(x,y))= &[A,B], \quad\text{where}\\
	\notag
	A:=& P(x,-\exp(\ad(y))(x)) \exp(x) P(x,-\exp(\ad(y))(x))^{-1}  \text{ and }\\
	\notag
	B:=& Q(x,-\exp(\ad(y))(x)) \exp(y) P(x,-\exp(\ad(y))(x))^{-1}.
\end{align}
For $x\in 0_\rho$, we have $\|-\exp(\ad(y))(x)\|=\|\Ad(\exp(y))(x)\|\leq \rho$. Since $P$ and $Q$ are analytic and $P(0,0)=Q(0,0)=1$, for $0<\rho\ll_G 1$ and $x\in 0_\rho$ we have 
\be\label{eq:analytic-functions-in-BCH}
P:=P(x,-\exp(\ad(y))(x))\in 1_{C''\rho}\quad\text{and} \quad 
Q:=Q(x,-\exp(\ad(y))(x))\in 1_{C''\rho},
\ee
for some $C'':=C''(G)$. By \eqref{eq:analytic-functions-in-BCH}, for $x\in 0_\rho$ and $y\in 0_{\rho'}$, we have 
\be\label{eq:elements-in-commutator}
A\in 1_{2\rho}\quad\text{and} \quad B\in 1_{4C''\rho+2\rho'}.
\ee
By  \eqref{eq:second-step-going-to-group-comm}, \eqref{eq:exp-commutator}, and  \eqref{eq:elements-in-commutator}, we deduce that the following holds:
\[
[1_{2\rho},1_{4C''\rho+2\rho'}]\supseteq 1_{\frac{1}{4}c''_1\rho\rho'}.
\]
For every $0<\rho\ll_G 1$ and $\rho\leq \rho''\ll 1$, we have 
\[
[1_{\rho},1_{\rho'}]\supseteq [1_{2\frac{\rho}{8C''}},1_{4C''\frac{\rho}{8C''}+2\frac{\rho'}{4}}]\supseteq 1_{\frac{c''_1}{128C''}\rho\rho'}.
\]
This completes the proof.
\end{proof}

\bibliographystyle{plain}
\bibliography{Ref}

\begin{thebibliography}{10}

\bibitem{Benoist-Saxce-16}
Yves Benoist and Nicolas de~Saxc\'e.
\newblock A spectral gap theorem in simple {L}ie groups.
\newblock {\em Invent. Math.}, 205(2):337--361, 2016.

\bibitem{Bourgain-Gamburd-08}
Jean Bourgain and Alex Gamburd.
\newblock Uniform expansion bounds for {C}ayley graphs of {${\rm SL}_2({\mathbb
  F}_p)$}.
\newblock {\em Ann. of Math. (2)}, 167(2):625--642, 2008.

\bibitem{Bourgain-Gamburd-Sarnak}
Jean Bourgain, Alex Gamburd, and Peter Sarnak.
\newblock Affine linear sieve, expanders, and sum-product.
\newblock {\em Invent. Math.}, 179(3):559--644, 2010.

\bibitem{Bourgain-Varju}
Jean Bourgain and P\'{e}ter~P. Varj\'{u}.
\newblock Expansion in {$SL_d({\bf Z}/q{\bf Z}),\,q$} arbitrary.
\newblock {\em Invent. Math.}, 188(1):151--173, 2012.

\bibitem{dA-M-commutator}
Alessandro D'Andrea and Andrea Maffei.
\newblock Commutators of small elements in compact semisimple groups and {L}ie
  algebras.
\newblock {\em J. Lie Theory}, 26(3):683--690, 2016.

\bibitem{HRV}
Pierre de~la Harpe, A.~Guyan Robertson, and Alain Valette.
\newblock On the spectrum of the sum of generators for a finitely generated
  group.
\newblock {\em Israel J. Math.}, 81(1-2):65--96, 1993.

\bibitem{Book-pAdicAnalyticProPGroups}
J.D. Dixon, M.P.F. Du~Sautoy, A.~Mann, and D.~Segal.
\newblock {\em Analytic Pro-P Groups}.
\newblock Cambridge Studies in Advanced Mathematics. Cambridge University
  Press, 2003.

\bibitem{Farah2}
Ilijas Farah.
\newblock Approximate homomorphisms.
\newblock {\em Combinatorica}, 18(3):335--348, 1998.

\bibitem{Farah1}
Ilijas Farah.
\newblock Approximate homomorphisms. {II}. {G}roup homomorphisms.
\newblock {\em Combinatorica}, 20(1):47--60, 2000.

\bibitem{Discretizable}
Giacomo Gigante and Paul Leopardi.
\newblock Diameter bounded equal measure partitions of {A}hlfors regular metric
  measure spaces.
\newblock {\em Discrete Comput. Geom.}, 57(2):419--430, 2017.

\bibitem{Golsefidy-Varju12}
A.~Salehi Golsefidy and P\'{e}ter~P. Varj\'{u}.
\newblock Expansion in perfect groups.
\newblock {\em Geom. Funct. Anal.}, 22(6):1832--1891, 2012.

\bibitem{G-commutator}
Morikuni Got\^{o}.
\newblock A theorem on compact semi-simple groups.
\newblock {\em J. Math. Soc. Japan}, 1:270--272, 1949.

\bibitem{Greenberg-local-solutions}
Marvin~J. Greenberg.
\newblock Strictly local solutions of {D}iophantine equations.
\newblock {\em Pacific J. Math.}, 51:143--153, 1974.

\bibitem{GKR-AlmostHom}
Karsten Grove, Hermann Karcher, and Ernst~A. Ruh.
\newblock Group actions and curvature.
\newblock {\em Invent. Math.}, 23:31--48, 1974.

\bibitem{Saxce-He}
Weikun He and Nicolas de~Saxcé.
\newblock Trou spectral dans les groupes simples, 2021.

\bibitem{Kazhdan-approx-hom}
D.~Kazhdan.
\newblock On {$\varepsilon $}-representations.
\newblock {\em Israel J. Math.}, 43(4):315--323, 1982.

\bibitem{Klee-Witzgall}
Victor Klee and Christoph Witzgall.
\newblock Facets and vertices of transportation polytopes.
\newblock In {\em Mathematics of the {D}ecision {S}ciences, {P}art 1
  ({S}eminar, {S}tanford, {C}alif., 1967)}, pages 257--282. Amer. Math. Soc.,
  Providence, R.I., 1968.

\bibitem{LPS}
A.~Lubotzky, R.~Phillips, and P.~Sarnak.
\newblock Ramanujan graphs.
\newblock {\em Combinatorica}, 8(3):261--277, 1988.

\bibitem{Lubotzky-book}
Alexander Lubotzky.
\newblock {\em Discrete groups, expanding graphs and invariant measures}.
\newblock Modern Birkh\"{a}user Classics. Birkh\"{a}user Verlag, Basel, 2010.
\newblock With an appendix by Jonathan D. Rogawski, Reprint of the 1994
  edition.

\bibitem{MMSG}
Keivan Mallahi-Karai, Amir Mohammadi, and Alireza~Salehi Golsefidy.
\newblock Locally random groups, 2020.

\bibitem{Margulis73}
G.~A. Margulis.
\newblock Explicit constructions of expanders.
\newblock {\em Problemy Pereda\v{c}i Informacii}, 9(4):71--80, 1973.

\bibitem{Margulis88}
G.~A. Margulis.
\newblock Explicit group-theoretic constructions of combinatorial schemes and
  their applications in the construction of expanders and concentrators.
\newblock {\em Problemy Peredachi Informatsii}, 24(1):51--60, 1988.

\bibitem{OniVin}
A.~L. Onishchik and \`E.~B. Vinberg.
\newblock {\em Lie groups and algebraic groups}.
\newblock Springer Series in Soviet Mathematics. Springer-Verlag, Berlin, 1990.
\newblock Translated from the Russian and with a preface by D. A. Leites.

\bibitem{Golsefidy-IMRN-SA}
Alireza Salehi~Golsefidy.
\newblock Super-approximation, {I}: {$\pfr$}-adic semisimple case.
\newblock {\em Int. Math. Res. Not. IMRN}, 2017(23):7190--7263, 2017.

\bibitem{SG-super-approx-II}
Alireza Salehi~Golsefidy.
\newblock Super-approximation, {II}: the {$p$}-adic case and the case of
  bounded powers of square-free integers.
\newblock {\em J. Eur. Math. Soc. (JEMS)}, 21(7):2163--2232, 2019.

\bibitem{Eff-Loj-SemiAlgebraic}
Pablo Solern\'{o}.
\newblock Effective \l ojasiewicz inequalities in semialgebraic geometry.
\newblock {\em Appl. Algebra Engrg. Comm. Comput.}, 2(1):2--14, 1991.

\bibitem{Varju12}
P\'{e}ter~P. Varj\'{u}.
\newblock Expansion in {${\rm{SL} }_d(\mathcal{O}_K/I)$}, {$I$} square-free.
\newblock {\em J. Eur. Math. Soc. (JEMS)}, 14(1):273--305, 2012.

\end{thebibliography}
\end{document}